\documentclass[11pt,a4paper, reqno]{amsart}
\usepackage{mathrsfs}

\usepackage{mathtools}

\usepackage{amsmath,amsfonts,verbatim}
\usepackage{latexsym}
\usepackage{amssymb,leftidx}
\usepackage{extarrows}
\usepackage{overpic}
\usepackage{color}
\usepackage{epsfig}
\usepackage{subfigure}
\usepackage{tikz}
\usetikzlibrary{graphs}
\usetikzlibrary{positioning}
\usepackage{tikz-cd}
\usepackage{caption}
\usepackage{yhmath}
\usepackage{geometry}

\newcommand{\mk}{\mathfrak}
\newcommand{\msf}{\mathsf}

\newcommand{\Econe}{X_0}
\newcommand{\WF}{\mathrm{WF}}

\newcommand{\supp}{\ensuremath{\mathrm{supp}}}
\newcommand{\sct}{\ensuremath{\mathrm{sc}}}

\newcommand{\conic}{\ensuremath{\mathrm{conic}}}
\newcommand{\Ymetric}{\mk{h}_0}
\newcommand{\XYmetric}{\mk{h}}

\newcommand{\Rprojlow}{\tilde{\Pi}_{\calclow}}

\newcommand{\Lprojlow}{\Pi_{\calclow}}
\newcommand{\Lprojhigh}{\Pi_{\calchigh}}
\newcommand{\hatLprojlow}{\hat{\Pi}_{\calclow}}
\newcommand{\hatLprojhigh}{\hat{\Pi}_{\calchigh}}
\newcommand{\RSlow}{\mathrm{RS}^{\calclow}}

\newcommand{\projhighlow}{\Pi_{\smf}}

\newcommand{\RYstar}{\mathscr{R}_Y^*}

\newcommand{\RY}{\mathscr{R}_Y}
\newcommand{\Ryz}{\mathscr{R}_{y_0}}

\newcommand{\RXstarb}{\mathscr{R}_{X,\rmb}^{*}}

\newcommand{\Diagb}{\mathrm{Diag_b}}

\newcommand{\Lsharplow}{L^\#}
\newcommand{\Lsharphigh}{G^{\#}}
 
\newcommand{\srange}{\mathsf{E}}
\newcommand{\srangehigh}{\mathsf{E}^{\calchigh}}

\newcommand{\ULCPlow}{\mathcal{U}_{\mathrm{LCP}}^{\calclow}}
\newcommand{\ULCPhigh}{\mathcal{U}_{\mathrm{LCP}}^{\calchigh}}
\newcommand{\Jlow}{\mathfrak{J}_{\calclow}}
\newcommand{\Jhigh}{\mathfrak{J}_{\calchigh}}

\newcommand{\calc}{\flat,\#} 
\newcommand{\calclow}{\flat} 
\newcommand{\calchigh}{\#} 
\newcommand{\rmb}{\mathrm{b}}

\newcommand{\NDb}{N^*\Delta_{\rmb}}
\newcommand{\NDhigh}{N^*\Delta_{\smf}}
\newcommand{\bundlehigh}{\Phi}

\newcommand{\IF}{\mathfrak{F}_X}
\newcommand{\IFint}{\mathfrak{F}_X^{\circ}}
\newcommand{\IFz}{\mathfrak{F}_{X_0}}
\newcommand{\IFintz}{\mathfrak{F}_{X_0}^{\circ}}

\newcommand{\la}{\ensuremath{\langle}}
\newcommand{\ra}{\ensuremath{\rangle}}

\newcommand{\LB}{\ensuremath{\mathrm{LB}}}
\newcommand{\RB}{\ensuremath{\mathrm{RB}}}
\newcommand{\BFS}{\mathrm{BF}}

\newcommand{\lb}{\ensuremath{\mathrm{lb}}}
\newcommand{\rb}{\ensuremath{\mathrm{rb}}}

\newcommand{\smf}{\mathrm{sf}}

\newcommand{\bfs}{\mathrm{bf}}

\newcommand{\zf}{\mathrm{zf}}

\newcommand{\paramconic}{\mathsf{P}}

\usepackage[colorlinks=true,linkcolor=blue,backref=page]{hyperref}
\hypersetup{urlcolor=blue, citecolor=red}

\usepackage{amssymb}
\usepackage{mathrsfs}
\usepackage{amscd}
\usepackage{bbm}
\usepackage{cite}

\newcommand\specm{dE_{\sqrt{P}}(\lambda)}

\newcommand\R{\mathbb{R}}

\newcommand\Z{\mathbb{Z}}
\newcommand\N{\mathbb{N}}
\newcommand\Id{\mathrm{Id}}

\newcommand\A{\textbf{A}}

\usepackage{graphicx}
\usepackage{float}

\usepackage{xcolor}
\usepackage{booktabs}

\numberwithin{equation}{section}
\newtheorem{proposition}{Proposition}[section]
\newtheorem{definition}{Definition}[section]
\newtheorem{lemma}{Lemma}[section]
\newtheorem{theorem}{Theorem}[section]
\newtheorem{corollary}{Corollary}[section]
\newtheorem{remark}{Remark}[section]

\begin{document}
\title[Geometric focusing and dispersive estimates]{The effect of geometric focusing on dispersive estimates for Schr\"odinger and wave equations}

\author{Qiuye Jia}
\address{Mathematical Sciences Institute, the Australian National University; }
\email{Qiuye.Jia@anu.edu.au; }

\author[Junyong Zhang]{Junyong Zhang}
\address{Junyong Zhang\newline
  Department of Mathematics, Beijing Institute of Technology, Beijing 100081}
\email{zhang\_junyong@bit.edu.cn}

\begin{abstract}
We classify the long-time decay rate in dispersive estimates for the Schr\"odinger and wave equations on non-trapping asymptotically conic manifolds and exact metric cones in terms of the intensity of geometric focusing. 
Letting $X_0$ be a metric cone, one of our main results demonstrates that each multiplicity of conjugate points within distance $\pi$ on $Y=\partial X_0$ leads to a $|t|^{1/2}$-loss in the long-time decay order and a half-order shift in the regularity index in the dispersive estimate for the Schr\"odinger equation.
Unexpectedly, conjugate point pairs on $Y$ at distance $\pi$ do not cause loss when the Legendre submanifold carrying the wave propagation satisfies a natural admissible condition that we propose. In sum, we give a robust framework for proving dispersive estimates that is stable under geometric perturbations and also accommodates perturbations by potentials.
\end{abstract}

\maketitle

\begin{center}
 \begin{minipage}{120mm}
   { \small {\bf Key Words:  Dispersive estimates, Geodesic flow focusing, Schr\"odinger equation, Legendre distribution, Asymptotically conic manifolds}
      {}
   }\\
    { \small {\bf AMS Classification:}
      { 42B37, 35Q40, 35Q41.}
      }
 \end{minipage}
 \end{center}


\tableofcontents

\section{Introduction}

\subsection{The setup}
\label{subsec:set-up}

Dispersive estimates have been a central topic in the study of dispersive equations for decades (see, e.g. Bony-H\"afner\cite{Bony-Hafner}, Bouclet-Burq \cite{Bouclet-Burq}, Royer\cite{Royer}, Koch-Tataru\cite{Koch-Tataru}, Tataru \cite{Tataru}, Schlag \cite{Schlag} and the references therein). However, deriving $L^1-L^\infty$ decay estimates on curved spaces poses new significant challenges and the literature remains sparse. 
This is due to the fact that the refocusing effect of the geodesic flow can change the dispersion mechanism of the PDE compared with the flat setting.
More precisely, the energy of solutions to Schr\"odinger and wave equations propagates along the geodesic flow of the background space, so when geodesics rejoin, caustic-like concentration of energy can appear and lead to a slower decay rate compared with the flat case.

 In this paper, we investigate the question of how the focusing effect of the geodesic flow affects pointwise decay estimates on manifolds. 
We present in this paper a classification of decay rates for solutions to the Schrödinger and wave equations on asymptotically conic manifolds, which to some extent quantifies the propagation of singularities for the Schrödinger equation \cite{Hassell-Tao-Wunsch,hassell-wunsch2005schrodinger}.
This classification is governed by the strength of the focusing effect of the geodesic flow, which we define precisely in the sequel \eqref{eq:IF-full-def}. 
In fact, as we will discuss in Remark~\ref{remark:microlocalized-est-Schrodinger}, we can have even more refined microlocal information: one can partition the spectral measure with each piece associated with a part of the geodesic flow (see Section~\ref{subsec:microlocal-partition-low} and Section~\ref{subsec:microlocal-partition-high-combined}), and the decay rate in the dispersive estimate will depend only on the focusing effect associated with this part of the geodesic flow.

Let $Y$ be a compact manifold without boundary. The cone over $Y$ is $X_0=C(Y)=(0,\infty)_r\times Y$. Here $Y$ is called the cross section or link of $X_0$.
A conic metric on $X_0$ is one of the form
\begin{equation}\label{eq:def-exact-conic-metric}
g_0 = dr^2+r^2 \Ymetric(y),
\end{equation}
where $\Ymetric$ is a Riemannian metric on $Y$. An asymptotically conic metric is one on a manifold $X^\circ$ which, outside a compact region in the interior, is identified with $(r_0,\infty)_r\times Y$, with a metric that on this conic end tends to $g_\infty$ as $r\to\infty$ in a specified way. An example is the Euclidean metric, for which the cross section is the standard
sphere, and indeed metrics asymptotic to the Euclidean one at
infinity, or more generally to perturbations of the Euclidean metric by changing the metric on the link, namely the sphere `at infinity'.

For our purposes, it is useful to compactify our space. Let $x=r^{-1}$, so $r\to\infty$ corresponds to $x\to 0$. We add a boundary $\{0\}_x\times Y$ to $X^\circ$, thus compactifying it to $X$. An asymptotically conic metric, as introduced by Melrose \cite{Melrose1994}, is a Riemannian metric on $X$ which is of the form
\begin{equation}\label{conic_metric_2}
g=\frac{dx^2}{x^4}+\frac{\XYmetric(x,y)}{x^2}
\end{equation}
near $\partial X$, where $\XYmetric$ is a smooth symmetric 2-cotensor on $X$; $g$ is thus asymptotic to the metric of the exact cone with $\Ymetric = \XYmetric|_{x=0}$ on the cross section $Y$.
In this paper, our metric $g$ will always be assumed to be \emph{non-trapping}: all unit speed geodesics on $X$ tend to $\partial X$ eventually.
This requirement enters in the high-energy regime, where the geodesic flow over the interior of $X$ matters as well.

Let $\Delta_g \geq 0$ be the (positive) Laplacian on $X$ associated with $g$. 
Though our main interest lies in the interplay between the geometry of $X$ and the dispersion effect, we still allow quite general potential perturbations that include potentials that decay at the scaling-critical level and are not necessarily small. Concretely, the operator we will consider is
\begin{equation} \label{eq:P-def}
P = \Delta_g + V,
\end{equation}
where $V \in x^2C^\infty(X)$ and the restriction on it arises from the b-operator living at zero (scattering) energy at infinity: 
\begin{equation}
P_{\rmb} = x^{-1}Px^{-1} =  -(x\partial_x)^2 + \Delta_{\Ymetric} + \frac{(n-2)^2}{4} + V_0+xW,
\end{equation}
where $V_0 = x^{-2}V|_{x=0}$, $W \in \mathrm{Diff}^2_{\rmb}(X)$.    
The order of the spectral measure, as a Legendre distribution, will be affected by the elliptic parametrix of $P_{\rmb}$, which in turn will be affected by the eigenvalues of the operator 
\begin{equation} \label{eq:def-PY}
P_Y = \Delta_{\Ymetric} + \frac{(n-2)^2}{4} + V_0
\end{equation}
on the cross section $Y$.
In this paper, we make the assumption that $V_0$ is positive in the sense that the smallest eigenvalue $\nu_0^2$ of $P_Y$ satisfies
\begin{equation} \label{eq:condition-nu-0} 
\nu_0 \geq (n-2)/2.
\end{equation}
In particular, this condition is satisfied for any $V \in x^{2+\epsilon}C^\infty(X)$ with $\epsilon>0$ or any $V_0 \geq 0$.
We also make the assumption that 
\begin{equation} \label{assumption:P-positive}
P \text{ has no non-positive eigenvalues or zero-resonance. }    
\end{equation}
By a zero-resonance, we mean a solution to $Pu=0$ with $u \notin L^2(X)$.

Our analysis could also handle operators on exact metric cones with inverse square potentials. Recall that an exact cone is $X_0 = (0,\infty)_r \times Y$ mentioned above equipped with a metric of the form \eqref{eq:def-exact-conic-metric}, and the operator in this case takes the form
\begin{equation}  \label{eq:P-def-exact-cone}
    P = \Delta_{g_0} + r^{-2}V_0(y).
\end{equation}
We still impose conditions \eqref{eq:condition-nu-0}  and \eqref{assumption:P-positive}.
Notice that this is not covered by the asymptotically conic manifolds directly since both the potential and the manifold have singularities as $r \to 0$.
The singularity of the potential is straightforward to see. 
To see the singularity of the manifold itself, a direct computation shows that the sectional curvature of such $X_0$ has an $r^{-2}$ level singularity as $r \to 0$ in general. Such a singularity can be removed only if the cross section $Y$ has constant sectional curvature $1$, which means that $Y=\mathbb{S}^{n-1}$ and $X_0$ is the Euclidean space with the origin removed. 
In this case over exact metric cones, we still impose the requirement \eqref{eq:condition-nu-0}.

\begin{remark}
When \eqref{eq:condition-nu-0} is not satisfied, one can still obtain the same type of oscillatory integral representation of the spectral measure but with orders shifted. See \cite[Corollary~1.5, \, Section~3,7]{GHS2} about how $\nu_0$ enters the order of the spectral measure and decay of propagators of PDEs.
With this expression, one can obtain the corresponding dispersive estimates with loss. 
Because this phenomenon has been addressed in \cite{JZ1,JZ2,JZ3} and the classification according to the geometric scenario is already complicated enough, the details about the effect of $V_0$ violating \eqref{eq:condition-nu-0} will be pursued elsewhere.
\end{remark}

In our setting, since we allow fairly general geometric behaviour over any fixed compact region in the interior of $X$, bicharacteristic lines can 
be confined to a bounded spatial region. 
These bicharacteristic lines will lead to the phenomenon that the energy will remain concentrated and violate the dispersive estimates. 
To preclude this, we assume that our metric $g$ is non-trapping in the following sense.

\begin{definition} \label{def:non-trapping}
Let $(X,g)$ be an asymptotically conic manifold. We say that $(X,g)$ is non-trapping if any unit speed geodesic with respect to $g$ escapes any bounded region $K \subset X^{\circ}$ in finite time.
\end{definition}

We study the decay property of solutions to the Cauchy problems of the Schr\"odinger equation:
\begin{equation} \label{eq:IVP-Schrodinger}
\begin{cases}
(i\partial_t+P) u=0,\\
u|_{t=0}=u_0;
\end{cases}
\end{equation}
and the wave equation:
\begin{equation} \label{eq:IVP-wave}
\begin{cases}
(\partial_t^2 + P) u=0,\\
u|_{t=0}=u_0,\quad \partial u|_{t=0}=u_1.
\end{cases}
\end{equation}

Concretely, we will investigate the dispersive estimates that their solutions satisfy.

\subsection{The focusing effect of the geodesic flow and main results}
\label{subsec:the_focusing_effect_of_the_geodesic_flow_and_main_results}

Now we discuss how we quantify the focusing effect of the geodesic flow on $X$, which gives a criterion for the dispersion effect on asymptotically conic manifolds in terms of a simple geometric quantity. 
Roughly speaking, the quantity that we use to quantify the focusing effect of the geodesic flow is the maximum of the multiplicity of conjugate point pairs on $X$.

For simplicity, we only consider the part restricted to $ \partial X = Y$ and postpone the full definition to Section~\ref{subsec:focusing-effect}.

We will show that only conjugate points at distance less than or equal to $\pi$ on $Y$ will affect the dispersive estimate.
The reason for this range arises from the structure of the geodesic flow of $X$, which we briefly recall next. 
First, the scattering cotangent bundle ${}^{\sct}T^*X$ on $X$ is characterized as follows. Let $x$ be the defining function of $\partial X$ at infinity as in \eqref{conic_metric_2} and $y$ be a coordinate system on $Y = \partial X$. Then over the interior, ${}^{\sct}T^*X$ is the same as the usual cotangent bundle $T^*X^{\circ}$, while near infinity it takes
\begin{equation} \label{eq:sc-frame}
\frac{dx}{x^2}, \; \frac{dy}{x}
\end{equation}
as a local frame and we write the canonical one form as
\begin{equation} \label{eq:sc-1-form}
 \tau \frac{dx}{x^2} + \mu \cdot \frac{dy}{x}.
\end{equation}
It has a natural symplectic structure `extending' the one on $T^*X^{\circ}$. When we consider Hamilton vector fields of a function $\bullet$ on ${}^{\sct}T^*X$, this will be the symplectic structure with respect to which it is defined and we denote it by $H_{\bullet}$.
Then the threshold $\pi$ arises from the model case when $X$ is an exact metric cone near infinity.
Let $g_0$ be as in \eqref{eq:def-exact-conic-metric}, which is the exact conic metric that \eqref{conic_metric_2} is asymptotic to. Here $\Ymetric$ is the 2-tensor obtained by extending $\XYmetric|_{x=0}$ so that it is independent of $x$ near $\partial X$.

The computation in \cite[Section~3]{melrose1996scattering} shows that the rescaled Hamilton vector of $G_0=g_0^{-1}(\zeta,\zeta)$ (with $g_0^{-1}$ being the inverse of $g_0$,
and $\zeta=(\tau,\mu)$) takes the form
\begin{align} \label{eq: rescaled HG}
\msf{H}_{G_0}:=\frac{1}{2}x^{-1}|\mu|^{-1}H_{G_0} = \tau \frac{x}{|\mu|}\partial_x - |\mu|^{-1}\Ymetric(y,\mu)\partial_\tau + \tau \frac{\mu}{|\mu|} \cdot \partial_{\mu} + \frac{1}{2|\mu|}H_{\Ymetric}.
\end{align}
Here the rescaling is chosen so that it has unit speed on the cross section $Y$. Correspondingly, the rescaled bicharacteristic lines of $g_0$, i.e., integral curves of $\msf{H}_{G_0}$ on conic manifolds take the form:
\begin{equation}\begin{aligned}\label{eq:conic-bichar}
    & x=\frac{x_0}{\sin s_0}\sin (s+s_0),\ \tau=\cos (s+s_0),\ |\mu|=\sin (s+s_0),\\
    & (y,\hat\mu)=\exp(sH_{\frac{1}{2} \Ymetric })(y_0,\hat\mu_0),\ s\in(-s_0,-s_0+\pi),
\end{aligned}\end{equation}
where $\tau,\mu$ are scattering frequencies as in \eqref{eq:sc-1-form} and $\exp(sH_{\frac{1}{2} \Ymetric })$ means $s$-time flow along this vector field.
We refer the reader to \cite[Section~3]{melrose1996scattering} for more discussions about the dynamics. It is then straightforward to see from \eqref{eq:conic-bichar} that only geodesic flow within time $\pi$ on $Y$ enters the geodesic flow on $X$, which in turn captures the main part of the propagation of waves on $X$. Indeed, as shown in \cite{JZ1}, the geometric focusing on $Y$ at time strictly larger than $\pi$ does not affect dispersive estimates on conic manifolds, while the focusing exactly at time $\pi$ is more delicate to analyze and we will discuss it in Section~\ref{subsec:Strategy-of-proof}.

Due to the extra complication when we take the interior of $X$ into consideration, we postpone the precise definition of the \emph{indices of geometric focusing} $\IF$ and $\IFint$ to Section~\ref{subsec:focusing-effect}. Roughly speaking, $\IF$ and $\IFint$ are both the maximal multiplicity of conjugate points on $X$ except that $\IF$ counts conjugate point pairs in which both points may lie on the boundary, whereas $\IFint$ only counts conjugate point pairs with at most one point on the boundary. 
Then our main result is that $\IF$ (or $\IFint$) is precisely the change of decay rate or the power in weights in dispersive estimates compared with the Euclidean case. We state two types of results concerning the Schr\"odinger equation \eqref{eq:IVP-Schrodinger} first. The effect of geometric focusing is reflected in different ways in them. We start with the pointwise estimate on the propagator. 
\begin{theorem}[Estimate for the Schr\"odinger propagator]
\label{thm:Schrodinger-propagator-est-1}
Let $(X,g)$ be an asymptotically conic manifold of dimension $n \geq 3$ that is non-trapping in the sense of Definition~\ref{def:non-trapping}, let
$P$ be as in \eqref{eq:P-def}, and let $\IF$ be as in \eqref{eq:IF-full-def} below, then
\begin{equation} \label{eq:Schrodinger-kernel-1}
|e^{itP}(z,z')| \lesssim |t|^{-\frac{n}{2}}\Big(1+\big(\frac{\la z \ra \la z' \ra}{|t|}\big)^{\IF/2}\Big).
\end{equation}
If in addition $X$ is admissible in the sense of Definition~\ref{definition:Lbf-boundary-admissible}, then $\IF$ above can be replaced by $\IFint$ in \eqref{eq:IFint-def}:
\begin{equation} \label{eq:Schrodinger-kernel-2}
|e^{itP}(z,z')| \lesssim |t|^{-\frac{n}{2}}\Big(1+\big(\frac{\la z \ra \la z' \ra}{|t|}\big)^{\IFint/2}\Big).
\end{equation}
For $P$ on an exact cone $(X_0,g_0)$ as in \eqref{eq:P-def-exact-cone}, \eqref{eq:Schrodinger-kernel-1} (resp. \eqref{eq:Schrodinger-kernel-2}) holds with $\IF$ (resp. $\IFint$) replaced by $\IFz$ (resp. $\IFintz$) defined in \eqref{eq:IFz-def} (resp. \eqref{eq:IFintz-def}).
\end{theorem}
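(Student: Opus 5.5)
The proof goes through the spectral measure. I write
\[
e^{itP}=\int_0^\infty e^{it\lambda^2}\, dE_{\sqrt P}(\lambda),
\]
and split with a smooth cutoff $\chi(\lambda)+(1-\chi(\lambda))$ into low and high energies. For each piece I use the Hassell--Vasy/Guillarmou--Hassell--Sikora description of $dE_{\sqrt P}(\lambda)$ as a Legendre distribution on the scattering (low energy: the blown-up space including $\zf$, $\bfs$, $\lb$, $\rb$) double space, associated with the propagating Legendrian $L$ (resp.\ $L^\#$ at high energy). I then microlocally partition the spectral measure as in Sections~\ref{subsec:microlocal-partition-low} and \ref{subsec:microlocal-partition-high-combined}, into pieces near the diagonal, pieces where $L$ projects diffeomorphically to the base, and pieces near conjugate pairs. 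Near a conjugate pair the projection drops rank by at most $k\le \IF$. There I choose a local parametrization
\[
\int e^{i\lambda\Phi(z,z',v)/x}\, a\, dv \quad\text{with } v\in\R^k,
\]
and the symbol order is shifted by $k/2$ relative to the non-degenerate case. The condition \eqref{eq:condition-nu-0} guarantees that the symbols at $\zf$/$\bfs$ have no worse than the Euclidean vanishing order $\lambda^{n-1}$ as $\lambda\to0$.

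Next I insert this representation into the $\lambda$-integral. In the scattering regime the phase is $\lambda\, d(z,z')$-like, of size $\lambda\langle z\rangle$ or $\lambda\langle z\rangle\langle z'\rangle$-type via $1/x$. I perform stationary phase in $\lambda$ jointly with the $v$ variables, or first integrate by parts in $\lambda$ using $t\lambda$ against the phase. For the non-degenerate pieces this gives the standard $|t|^{-n/2}$ bound, exactly as in \cite{JZ1}. For a degenerate piece, after rescaling $\lambda\mapsto\lambda|t|^{-1/2}$, the amplitude gains a factor $(\lambda\langle z\rangle\langle z'\rangle)^{k/2}$ relative to the nondegenerate case. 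This factor comes from counting the extra fibre variables that cannot be eliminated by stationary phase: each contributes $(\lambda/x)^{1/2}$, with $1/x\sim\langle z\rangle\langle z'\rangle$ in the $\bfs$ regime. Evaluating at $\lambda\sim |t|^{-1/2}$ yields the factor $(\langle z\rangle\langle z'\rangle/|t|)^{k/2}$. Since $k\le\IF$, the terms are bounded by $1+(\cdot)^{\IF/2}$. The high-energy part uses the non-trapping assumption to get a global Legendrian $L^\#$ and the same counting, with the short-time behaviour handled by the usual $|t|^{-n/2}$ semiclassical parametrix.

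For the admissible case, the extra step is to show that conjugate pairs with both points on the boundary (distance $\pi$ on $Y$) do not actually produce the loss. Using Definition~\ref{definition:Lbf-boundary-admissible}, I would show that near $\bfs$ the Legendrian can be parametrized so that the degenerate directions enter the phase linearly in $x$-weighted variables. Integrating in these directions then produces compensating decay, so only the interior conjugacy counted by $\IFint$ survives. The exact-cone case is the same argument with the explicit geometry \eqref{eq:conic-bichar} and the Cheeger functional calculus replacing the interior parametrix, giving $\IFz$ and $\IFintz$.

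The main obstacle is the uniform oscillatory-integral bound near caustics. It must hold uniformly as $(z,z')$ vary across the corners $\bfs\cap\lb$ and $\bfs\cap\rb$ and as $\lambda\to0$, with sharp dependence on the number of fibre variables. I would first try a degenerate stationary phase (van der Corput-type) estimate in $v$ that is uniform in parameters and only uses the rank drop, not finer normal forms.
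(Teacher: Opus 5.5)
Your proposal has a genuine gap at the Legendrian conic pair, and it affects both halves of the theorem.

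\textbf{The estimate with $\IF$.} Counting only the $v\in\mathbb{R}^k$ fibre variables is not enough. The spectral measure is associated with the \emph{intersecting pair with conic points}: $(L^{\mathrm{bf}},L^\#)$ at low energy and $(L^{\mathrm{bf},\#},G^\#)$ at high energy. (In the paper $L^\#$ is the purely outgoing/incoming Legendrian, not the high-energy propagating one.) Near the intersection the microlocalized pieces have the form \eqref{QiEQj-s-lo-2}, with the radial variable $s$ in addition to $v$. The minimal number of parameters there is $k+1$, equal to the rank drop, whereas $\IF$ records only $k$ because of the $-1$ in \eqref{eq:IF-full-def}. Your counting therefore gives $(\langle z\rangle\langle z'\rangle/|t|)^{(k+1)/2}$, which exceeds the claim. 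For $\mathbb{R}^n$, where $\IF=n-2$ comes exactly from this intersection, you would get exponent $(n-1)/2$.

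The paper removes the extra half power at the end of the proof of Proposition~\ref{prop:microlocal-dispersive-1}. It Taylor expands the amplitude in $x'/(\lambda s)$ (in $x'/s$ at high energy) and absorbs the remainder into the factor $s^{\frac{n+k}{2}-1}$. It then applies stationary phase in $s$ to the leading term, which is nondegenerate because $\partial_s Y'_{n-1}=-1+O(s)$.

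Your bookkeeping is also off. One has $1/x\sim\langle z\rangle$, not $\langle z\rangle\langle z'\rangle$, and the $\lambda$-integral concentrates at the stationary point $\lambda\sim\langle z\rangle/|t|$, not at $\lambda\sim|t|^{-1/2}$. Your factors actually give $(\langle z\rangle\langle z'\rangle|t|^{-1/2})^{k/2}$. The correct count is that each parameter costs $(\lambda/x')^{1/2}$, which is the $\sigma^{k/2}$ weight in \eqref{eq:spectral-measure-symbol-bound-low-1}. At $\lambda\sim\langle z\rangle/|t|$ this equals $(\langle z\rangle\langle z'\rangle/|t|)^{1/2}$. The paper implements this by rescaling $\overline{\lambda}=t^{1/2}\lambda$ and bounding the near-critical window, of length $O(1)$, by $(\overline{d}\,\overline{d'})^{k/2}$; the $v$-integral is simply bounded in absolute value.

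\textbf{The admissible case.} This is where the real work lies, and your sketch does not contain the mechanism. The degenerate directions do not enter the phase linearly in $x$-weighted variables. A degenerate van der Corput bound in $v$ that sees only the rank drop will not do either: the argument needs the precise form of the degeneracy, and without a convexity condition the volume bound near the critical set can fail (see the remark in Appendix~\ref{appendix:relax-condition}). The missing ingredients are:
\begin{itemize}
\item[(i)] the conic-pair amplitude carries the vanishing factor $s^{\frac{n}{2}-1+\frac{k}{2}}$, coming from the order $p=\frac{n-2}{2}$ at $L^\#$, exactly where the $v$-Hessian degenerates;
\item[(ii)] Definition~\ref{definition:Lbf-boundary-admissible} forces $\partial^2_{vv}\psi=sH_I$ with $H_I$ uniformly invertible, so the degeneracy is linear in $s$ and uniform in all directions;
\item[(iii)] the phase restricted to the $v$-critical point is uniformly convex in $s$ (Lemma~\ref{lemma:phase-restricted-convex-low-high-region1}).
\end{itemize}

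With these, Proposition~\ref{prop:conic-pair-pointwise-bound-low} bounds the region $s\lesssim(\lambda/x')^{-1/2}$ trivially. For larger $s$ it applies stationary phase in $v$ with large parameter $s^2\lambda/x'$, then integration by parts or van der Corput in $s$ on three regions, and obtains the $k=0$ pointwise bound.

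Even this fixed-$\lambda$ bound does not suffice for the $\lambda$-integral. Proposition~\ref{prop:microlocal-dispersive-LCP} further decomposes by $|2\overline{\lambda}-|\overline{\mathfrak{d}_{X_0}}||$ and by $\big||\overline{\mathfrak{d}_{X_0}}|-\Phi/\overline{x}\big|$. It then uses $\sigma^{1/2}s|x\mathfrak{d}_X-\Phi|^{1/2}\lesssim|\partial_v\Phi|+|s\partial_s\Phi|$ (Proposition~\ref{prop:1st-dPhi-est}) to integrate by parts in $(v,s)$ away from the critical set.

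Two smaller corrections. At high energy the conic intersection also contains endpoint pairs of geodesics through the interior of $X$, not only pairs at distance $\pi$ on $Y$. And $\IFint$ still counts boundary conjugate pairs at distance $<\pi$, so it is not only interior conjugacy.
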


See Corollary~\ref{coro:Schrodinger-propagator-est-3} for the proof of Theorem~\ref{thm:Schrodinger-propagator-est-1}.
In each setting listed above, note that
\begin{equation}
\Big(1+\big(\frac{\la z \ra \la z' \ra}{|t|}\big)^{\IF/2}\Big) \sim \big(1+\frac{\la z \ra}{|t|^{1/2}}\big)^{\IF/2}\big(1+\frac{\la z' \ra}{|t|^{1/2}}\big)^{\IF/2},
\end{equation}
the pointwise estimate on the kernel implies the following dispersive estimates by pairing with the initial data in $u_0$, multiplying corresponding weights, and integrating in $z'$ directly.

\begin{theorem}  \label{thm:dispersive-Schrodinger-1}
With the notation as above, suppose that $u$ is the solution to \eqref{eq:IVP-Schrodinger}, then
\begin{equation} \label{eq:est-dispersive-1-1}
\| \big(1+\la z \ra/|t|^{1/2}\big)^{-\IF/2} u(t,\cdot) \|_{L^\infty} \leq C 
|t|^{-\frac{n}{2}} \| (1+\la z \ra/|t|^{1/2})^{\IF/2} u_0 \|_{ L^1 } .
\end{equation}
If in addition $X$ is admissible in the sense of Definition~\ref{definition:Lbf-boundary-admissible}, then $\IF$ above can be replaced by $\IFint$ in \eqref{eq:IFint-def}:
\begin{equation} \label{eq:est-dispersive-1-2}
\|\big(1+\la z \ra/|t|^{1/2}\big)^{-\IFint/2} u(t,\cdot) \|_{L^\infty} \leq C 
|t|^{-\frac{n}{2}} \| \big(1+\la z \ra/|t|^{1/2}\big)^{\IFint/2} u_0 \|_{ L^1 } .
\end{equation}
For an exact cone $(X_0,g_0)$, the same estimate as above holds, except that $\IF$ and $\IFint$ are replaced by $\IFz$ in \eqref{eq:IFz-def} and $\IFintz$ in \eqref{eq:IFintz-def}, respectively.
\end{theorem}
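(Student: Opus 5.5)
Theorem~\ref{thm:dispersive-Schrodinger-1} follows from the kernel bound in Theorem~\ref{thm:Schrodinger-propagator-est-1}, which we take as given. We run a weighted $L^1\to L^\infty$ argument: factor the weight in the kernel bound symmetrically in $z$ and $z'$, then integrate against the data. We treat the asymptotically conic case with $\IF$; the admissible case with $\IFint$ and the exact-cone cases with $\IFz$, $\IFintz$ are the same argument with the corresponding kernel bound substituted.

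\textbf{Step 1: factor the weight.} Set $a=\la z\ra/|t|^{1/2}$ and $b=\la z'\ra/|t|^{1/2}$, both positive. The elementary inequality
\[
1+(ab)^{k}\le (1+a)^{k}(1+b)^{k}, \qquad k=\IF/2\ge 0,
\]
holds because $ab\le(1+a)(1+b)$ and $1\le(1+a)^k(1+b)^k$, and the two together give the sum bound up to a factor $2$. Combined with \eqref{eq:Schrodinger-kernel-1}, this gives
\[
|e^{itP}(z,z')|\lesssim |t|^{-n/2}\big(1+\la z\ra/|t|^{1/2}\big)^{\IF/2}\big(1+\la z'\ra/|t|^{1/2}\big)^{\IF/2}.
\]
This is the one inequality used in the proof; the reverse comparability noted after Theorem~\ref{thm:Schrodinger-propagator-est-1} is not needed.

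\textbf{Step 2: integrate against the data.} Write
\[
u(t,z)=\int_X e^{itP}(z,z')\,u_0(z')\,dg(z').
\]
This holds for $u_0\in C_c^\infty(X^\circ)$, where the integral converges absolutely by the kernel bound. Multiply both sides by $(1+\la z\ra/|t|^{1/2})^{-\IF/2}$. Then use the factored bound from Step 1: the $z$-weight cancels, and the $z'$-weight is absorbed into $u_0$. This yields
\[
(1+\la z\ra/|t|^{1/2})^{-\IF/2}|u(t,z)|\lesssim |t|^{-n/2}\int_X \big(1+\la z'\ra/|t|^{1/2}\big)^{\IF/2}|u_0(z')|\,dg(z'),
\]
uniformly in $z$. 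Taking the supremum over $z$ gives \eqref{eq:est-dispersive-1-1} for this class of data.

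\textbf{Step 3: extend to general data.} For general $u_0$ with finite weighted $L^1$ norm, approximate by $u_0^{(j)}\in C_c^\infty(X^\circ)$ converging in the weighted $L^1$ space. The estimate from Step 2 shows that $e^{itP}u_0^{(j)}$ is Cauchy in the weighted $L^\infty$ space, so the estimate passes to the limit. The limit agrees with the $L^2$-defined solution whenever $u_0\in L^2$ as well.

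\textbf{Where the difficulty lies.} The genuine work is entirely inside Theorem~\ref{thm:Schrodinger-propagator-est-1}, which we assume. There the $|t|^{1/2}$ loss per multiplicity of conjugate points arises from the Legendre structure of the spectral measure at the boundary face. Given that bound, the only point to check here is Step 1. In particular, the weights must be split symmetrically as $\IF/2$ on each side, matching the form of \eqref{eq:est-dispersive-1-1}.
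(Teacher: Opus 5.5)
Your proposal is correct and follows the paper's own route: bound the weight in \eqref{eq:Schrodinger-kernel-1} by the product $(1+\la z\ra/|t|^{1/2})^{\IF/2}(1+\la z'\ra/|t|^{1/2})^{\IF/2}$, pair the kernel with $u_0$, and integrate in $z'$, substituting the corresponding kernel bounds for $\IFint$, $\IFz$, $\IFintz$. You are also right that only this one-sided inequality is needed. The two-sided comparability stated after Theorem~\ref{thm:Schrodinger-propagator-est-1} actually fails when $\IF>0$, for example when $\la z'\ra\sim 1$ and $|t|^{1/2}\ll\la z\ra\ll|t|$, so phrasing the step as you did is the correct way to state it.
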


Some typical examples that this main theorem treats are as follows.
\begin{corollary} \label{coro:after-main-thm}
Let the cross section $Y$ be the unit sphere $\mathbb{S}^{n-1}$. Then $X_0$ is $\R^n\backslash\{0\}$ equipped with the flat metric and it is admissible with $\IFintz=0$. So \eqref{eq:est-dispersive-1-2} with $\IFint$ replaced by $\IFintz$ gives the dispersive estimate on $\R^n$, allowing potentials like $r^{-2}V_0(y)$ satisfying \eqref{eq:condition-nu-0}. 
Let $Y$ be a closed manifold with sectional curvature less than $1$. Then $X_0=C(Y)$ satisfies $\IFz=\IFintz=0$. Hence \eqref{eq:est-dispersive-1-1} shows that the solutions of the Schr\"odinger equations on such manifolds satisfy the same dispersive estimate as on $\R^n$.
\end{corollary}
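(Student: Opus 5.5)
The plan is to deduce Corollary~\ref{coro:after-main-thm} from Theorem~\ref{thm:dispersive-Schrodinger-1} by computing the focusing indices in the two model geometries. For exact cones, the bicharacteristics \eqref{eq:conic-bichar} show that the only relevant conjugate point pairs are those on $Y$ at distance at most $\pi$. Since $X_0$ has no interior compact region, $\IFz$ and $\IFintz$ should reduce to maximal multiplicities of conjugate point pairs along geodesics of $(Y,\Ymetric)$ of length in $(0,\pi]$. For $\IFintz$ the length-$\pi$ pairs are excluded, or handled through admissibility. Once an index is shown to vanish, the weights $(1+\la z\ra/|t|^{1/2})^{0}$ are identically $1$, and \eqref{eq:est-dispersive-1-1} or \eqref{eq:est-dispersive-1-2} becomes the flat estimate $\|u(t)\|_{L^\infty}\lesssim |t|^{-n/2}\|u_0\|_{L^1}$. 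The potential $r^{-2}V_0(y)$ is already allowed by \eqref{eq:P-def-exact-cone}, provided \eqref{eq:condition-nu-0} and \eqref{assumption:P-positive} hold.

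\emph{Case $Y=\mathbb{S}^{n-1}$.} Here $dr^2+r^2\Ymetric$ is the flat metric on $\R^n\setminus\{0\}$. On the round sphere every geodesic has its first conjugate point exactly at distance $\pi$ (the antipode, with multiplicity $n-2$), and there are no conjugate points at distance less than $\pi$. So pairs of multiplicity $n-2$ occur only at distance exactly $\pi$.
\begin{itemize}
\item These pairs enter $\IFz$ but, by the definitions \eqref{eq:IFz-def} and \eqref{eq:IFintz-def}, not $\IFintz$. Hence $\IFintz=0$.
\item It remains to check that $X_0$ is admissible in the sense of Definition~\ref{definition:Lbf-boundary-admissible}. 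I would use the explicit structure of the Legendrian carrying the flat spectral measure, or the known Euclidean spectral measure $dE(\lambda)(z,z')$, which is $\lambda^{n-1}$ times a combination of $e^{\pm i\lambda|z-z'|}$ symbols. This should show that the antipodal focusing produces the nondegenerate intersection demanded by admissibility.
\end{itemize}
Then \eqref{eq:est-dispersive-1-2} with $\IFintz=0$ gives the Euclidean dispersive estimate.

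\emph{Case $\sec(Y)<1$.} By compactness of $Y$ the curvature is bounded by $K_0<1$. Rauch (or Morse--Schoenberg) comparison then gives: along any unit-speed geodesic, no conjugate point occurs before $\pi/\sqrt{K_0}>\pi$. So there are no conjugate pairs at distance $\le\pi$, giving $\IFz=\IFintz=0$, and \eqref{eq:est-dispersive-1-1} applies directly without needing admissibility. One should check that sectional curvature comparison is valid with strict inequality; if $n-1=1$, $Y$ is a circle with no conjugate points at all.

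\emph{Main obstacle.} The hardest step is the admissibility verification for $\mathbb{S}^{n-1}$, which depends on the precise form of Definition~\ref{definition:Lbf-boundary-admissible}. I would try to exhibit the Legendrian explicitly as the flat one, $\{(z,z'): \text{phase } \lambda|z-z'|\}$, and check the condition on the boundary face $\bfs$ by direct computation. As a fallback, I would cite the paper's general proof that round-sphere links are admissible.
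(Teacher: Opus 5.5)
Your outline is the paper's proof. For $Y=\mathbb{S}^{n-1}$ the paper simply invokes Proposition~\ref{prop:example-admissible}, which is your ``fallback''. For $Y$ of sectional curvature $<1$ it uses Rauch comparison together with Proposition~\ref{prop:Lbf-RYstar-diffeomorphic} to get $\IFz=\IFintz=0$, and then applies \eqref{eq:est-dispersive-1-1}. Proposition~\ref{prop:Lbf-RYstar-diffeomorphic} is exactly the reduction, which you state with a ``should'', of the rank drop of $\hatLprojlow$ to conjugate-point multiplicity. Your explicit bound $\pi/\sqrt{K_0}>\pi$ is a slightly sharper phrasing of the Rauch step. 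It also correctly covers the length-$\pi$ geodesics that enter $\IFz$ through $\beta_{\mathrm{LCP},\calclow}^{-1}(L^{\bfs}\cap\Lsharplow)$.

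The one step you try to do on your own, admissibility for the round sphere, is aimed at the wrong target. Admissibility is not a nondegenerate-intersection condition. In the exact-cone form \eqref{eq:defn-condition-YI-Yn-1-s=0} it requires $\partial_{\hat{\mu'}_I}(Y'_I,Y'_{n-1})|_{s=0}=0$. In other words, at the conic intersection the endpoint map must be \emph{completely} degenerate in the angular parameters, so the focusing is uniform.

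For $\mathbb{S}^{n-1}$ this follows at once because the time-$\pi$ geodesic flow sends every $(y,\hat\mu)$ to the antipode of $y$. Use coordinates on the two factors related by the antipodal map, so that the antipode of $y$ has coordinate $-y$. Then one may take $I=\{1,\dots,n-2\}$ and $II=\emptyset$, and $Y'|_{s=0}=-y$ is independent of $\hat{\mu'}_I$.

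The closed form of the flat spectral measure does not check this by itself, and it is not available for $\Delta_{g_0}+r^{-2}V_0$ anyway. What you can use is the geometry it reflects. Since $\cos d(y,y')=\langle y,y'\rangle$ is smooth on the round sphere, $L^{\bfs}$ near the antipodal set is the graph of $d\big(\sqrt{1+\sigma^2-2\sigma\langle y,y'\rangle}/x\big)$. Solving for $y'$ in the coordinates $(\sigma,y,\hat{\mu'},s)$ on $\hat{L}^{\bfs}$ then shows that at $s=0$ it is the antipode of $y$ for every $\hat{\mu'}$.

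With that correction, or with the citation you list as a fallback, the argument is complete.
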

\begin{remark}
  We postpone the proof of Corollary~\ref{coro:after-main-thm} to the end of Section~\ref{subsec:dispersive-conic-pair-contribution}.
  The first part of Corollary~\ref{coro:after-main-thm} answers the question in \cite[Remark~1.12]{Fenelli-etal-timedecay} on the dispersive estimates for inverse square potentials. Concretely, Corollary~\ref{coro:after-main-thm} generalizes \cite[Theorem~1.11.(i)]{Fenelli-etal-timedecay}, which assumes $n=3$ and that $V_0(y)$ is a constant.
One can see from the proof of Corollary~\ref{coro:after-main-thm} that its second part holds under the weaker condition that conjugate points on $Y$ have distance larger than $\pi$. In this way, we partially recover the main result of \cite{JZ1} when \eqref{eq:condition-nu-0} is satisfied.
\end{remark}

\begin{remark}
    As one can see from the main technical result, Theorem~\ref{thm:Schrodinger-propagator-est-1}, and the proof that deduces Theorem~\ref{thm:dispersive-Schrodinger-1} from that, \eqref{eq:est-dispersive-1-1} holds with the following more general family of weights:
\begin{equation}
\| w_1(z,t)^{-1} u(t,\cdot) \|_{L^\infty} \leq C 
|t|^{-\frac{n}{2}} \| w_2(z,t) u_0 \|_{ L^1 } ,   
\end{equation}
where $w_1(z,t), w_2(z',t) \gtrsim 1$ and
\begin{equation}
 w_1(z,t) w_2(z',t) \gtrsim   1+(\la z \ra \la z' \ra/|t|\big)^{\IF/2}.
\end{equation}
The same generalization applies to \eqref{eq:est-dispersive-1-2} with $\IF$ replaced by $\IFint$, and to the case of exact cones as well. 
We are stating the particular choice in Theorem~\ref{thm:dispersive-Schrodinger-1} because it is the most symmetric version.
\end{remark}

The second way to encode the loss caused by geometric focusing uses the homogeneous Sobolev space associated with $P$. That is, for $m \in \N$, we define $\dot{W}^{1,m}$ to be the completion of the Schwartz function class with respect to the norm
\begin{equation}
\| u \|_{\dot{W}^{1,m}} = \| P^{m/2} u \|_{L^1}.
\end{equation}
This is indeed a norm under the assumption \eqref{assumption:P-positive}. Then our result is as follows.

\begin{theorem}[Estimate for the Schr\"odinger propagator, the second version]
\label{thm:Schrodinger-propagator-est-2}
For $P$ in \eqref{eq:P-def} on $(X,g)$ and $\IF$ defined by \eqref{eq:IF-full-def}, we have
\begin{equation} \label{eq:Schrodinger-propagator-est-2-1}
\big|e^{itP}(z,z') P^{-\IF/2}\big| \lesssim |t|^{-\frac{n-\IF}{2}}.
\end{equation}
If in addition $X$ is admissible in the sense of Definition~\ref{definition:Lbf-boundary-admissible}, then $\IF$ above can be replaced by $\IFint$ in \eqref{eq:IFint-def}:
\begin{equation} \label{eq:Schrodinger-propagator-est-2-2}
\big|e^{itP}(z,z') P^{-\IFint/2}\big| \lesssim |t|^{-\frac{n-\IFint}{2}}.
\end{equation}
If $P$ is instead defined in \eqref{eq:P-def-exact-cone} on an exact cone $(X_0,g_0)$, then the same estimate holds with $\IF$ (resp. $\IFint$) replaced by $\IFz$ (resp. $\IFintz$) defined in \eqref{eq:IFz-def}.
\end{theorem}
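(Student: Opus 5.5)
We write the propagator through the spectral measure,
\begin{equation*}
e^{itP}P^{-\IF/2}(z,z') = \int_0^\infty e^{it\lambda^2}\lambda^{-\IF}\, dE_{\sqrt P}(\lambda)(z,z')\, d\lambda ,
\end{equation*}
and split the spectral parameter with a cutoff $\chi(\lambda)+(1-\chi(\lambda))=1$ into a low energy piece ($\lambda\lesssim 1$) and a high energy piece. For each piece we use the microlocal partition of the spectral measure, as in Sections~\ref{subsec:microlocal-partition-low} and~\ref{subsec:microlocal-partition-high-combined}: a finite sum of operators $Q_j(\lambda)\,dE_{\sqrt P}(\lambda)\,Q_j(\lambda)^*$, each microlocally supported near one piece of the geodesic flow. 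Each such piece has an oscillatory integral representation as a Legendre distribution on the low (resp. high) energy double space. The key input is that, near a conjugate pair of multiplicity $k\le\IF$, the Legendre submanifold projects with corank $k$. Consequently the kernel has the form
\begin{equation*}
\lambda^{n-1}\sum_{\pm} e^{\pm i\lambda d(z,z')}\, a_\pm(\lambda,z,z'),
\end{equation*}
where $a_\pm$ is a symbol satisfying $|\partial_\lambda^\alpha a_\pm|\lesssim \lambda^{-\alpha}\,(1+\lambda d(z,z'))^{-(n-1-k)/2}$. Alternatively it is a degenerate oscillatory integral with $k$ extra phase variables, which costs at most $(1+\lambda\la z\ra\la z'\ra)^{k/2}$ relative to the nondegenerate case. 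When $X$ is admissible, the boundary-to-boundary conjugate pairs at distance $\pi$ give no loss, so we may replace $k\le\IF$ by $k\le\IFint$. We take both facts as established by the earlier structural results on the spectral measure.

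Given this representation, the bound $|t|^{-(n-\IF)/2}$ follows from a uniform stationary phase and scaling argument in $\lambda$. The weight $\lambda^{-\IF}$ exactly cancels the loss $\lambda^{k}$ produced by the degeneracy at large $\lambda d$, since $(\lambda d)^{k/2}$ is bounded by $\lambda^{k}$ times a power of $d$. After this cancellation the amplitude behaves like $\lambda^{n-1-\IF}(1+\lambda d)^{-(n-1-\IF)/2}$, which is the flat amplitude in the reduced dimension $n-\IF$.

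We then treat two regimes of $\lambda$.
\begin{itemize}
\item For $\lambda\le |t|^{-1/2}$ we do no integration by parts. The integral is bounded directly by $\int_0^{|t|^{-1/2}}\lambda^{n-1-\IF}\,d\lambda\sim |t|^{-(n-\IF)/2}$. This is integrable because $\IF\le n-2$, as the maximal multiplicity of a conjugate pair in dimension $n-1$ is at most $n-2$.
\item For $\lambda\ge |t|^{-1/2}$ we integrate by parts with respect to the phase $t\lambda^2\pm\lambda d$. Away from the stationary point $\lambda_c=d/2|t|$, each integration by parts gains $(|t|\lambda^2)^{-1}$. Near $\lambda_c$ we use the stationary phase bound $|t|^{-1/2}\lambda_c^{n-1-\IF}(\lambda_c d)^{-(n-1-\IF)/2}$. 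Substituting $d=2|t|\lambda_c$ turns this into exactly $|t|^{-(n-\IF)/2}$.
\end{itemize}

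The symbol estimates on $a_\pm$ are needed uniformly up to the boundary faces $\bfs$, $\lb$, $\rb$, $\zf$. They come from the low energy calculus, and the hypothesis $\nu_0\ge (n-2)/2$ guarantees that the orders at $\zf$ and $\bfs$ do not worsen the $\lambda$ powers. In the high energy part the amplitude is $O(\lambda^{n-1-\IF})$ times the same structure, and non-trapping ensures the parametrix is global. The exact cone case is identical, with the low energy analysis replaced by the homogeneity of $P$: the kernel scales as $\lambda^{n}K(\lambda r,\lambda r',y,y')$, and we use $\IFz$ and $\IFintz$ in place of $\IF$ and $\IFint$.

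The main obstacle is the uniform stationary phase on the degenerate pieces near the Lagrangian/Legendre caustics. There, the $k$ extra phase variables must be treated together with $\lambda$, without losing more than $\lambda^{k}$. I would first try to reduce the $k$ degenerate directions by applying a van der Corput type $L^\infty$ bound in those variables. Each direction then contributes at most a factor $\lambda^{1}$, rather than the $\lambda^{1/2}$ it would contribute if nondegenerate. This gives precisely the extra $\lambda^{k/2}$ per direction in total, i.e.\ $(\lambda\la z\ra\la z'\ra)^{k/2}$, which is absorbed by $\lambda^{-\IF}$ as described above.
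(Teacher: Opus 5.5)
Your scaling and stationary-point numerology is the same as the paper's argument for pieces away from the conic pair: after the weight the amplitude is the flat one in dimension $n-\IF$, which gives $|t|^{-(n-\IF)/2}$. However, the argument has gaps, and one of them is serious.

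\textbf{The single-phase symbol representation.} The form $\lambda^{n-1}e^{\pm i\lambda d}a_\pm$ with $a_\pm$ a symbol in $\lambda$ is not available once there are extra parameters. Near a caustic, integrating out $v$ does not give one phase times a symbol: $\partial_\lambda$ brings down $(\Phi(v)-xd)/x$, which is not $O(\lambda^{-1})$. Your fallback of bounding the $v$-integral in $L^\infty$ first (van der Corput) runs in the wrong order, because it destroys the $\lambda$-oscillation you need afterwards. The paper's route is simpler:
\begin{itemize}
\item Freeze $v$ and run the $\lambda$-argument with phase $t\lambda^2\pm\lambda\Phi(z,z',v)/x$.
\item Use $\Phi\ge\epsilon$ for $j\ne j'$ (Corollary~\ref{coro:phase-value-lowerbound-low}), so the stationary point sits at $\overline\lambda\sim t^{-1/2}x^{-1}$, the same variable that controls the amplitude decay.
\item Integrate trivially over the compact $v$-support. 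The $k$-parameter loss is already encoded in the amplitude normalization, so no van der Corput is needed.
\end{itemize}

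\textbf{The missing radial parameter at the conic pair.} Your key input undercounts phase variables there. Pieces microlocalized near $L^{\bfs,\calchigh}\cap\Lsharphigh$ (or $L^{\bfs}\cap\Lsharplow$) carry the radial variable $s$ in addition to $k$ variables $v$. $\IF$ only bounds $k$; this is the meaning of the $-1$ in \eqref{eq:IF-full-def} and of Proposition~\ref{prop:IF-relation-parameter-number}. Corollaries~\ref{coro:microlocalized-spectral-measure-osc-int-form-low} and~\ref{coro:microlocalized-spectral-measure-osc-int-form-high} therefore only give decay $(1+\lambda|z|)^{-(n-2-k)/2}$ there, one half power too weak. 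With $k=\IF$, your stationary-point computation leaves an unbounded factor $\sim(\lambda_c d)^{1/2}$.

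The paper recovers this half power in the last part of the proof of Proposition~\ref{prop:microlocal-dispersive-1}, which is invoked in Proposition~\ref{prop:microlocal-dispersive-2}:
\begin{itemize}
\item Taylor expand the amplitude in $x'/(\lambda s)$ (resp.\ $x'/s$ at high energy).
\item The remainder's prefactor already supplies the missing $(x'/\lambda)^{1/2}$.
\item Apply stationary phase in $s$ to the leading term, using that $\partial_s^2(s\psi)$ is uniformly nondegenerate (Proposition~\ref{prop:LCP-phase-derivatives} with \eqref{eq:Y'-n-1-expansion-concrete-low}).
\end{itemize}
Without this step, \eqref{eq:Schrodinger-propagator-est-2-1} is not proved.

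\textbf{The admissible case is assumed, not proved.} For \eqref{eq:Schrodinger-propagator-est-2-2} you take ``admissible conic pairs give no loss'' as an earlier structural fact, but it is not one. Propositions~\ref{prop:conic-pair-pointwise-bound-low}, \ref{prop:conic-pair-pointwise-bound-high-region1} and~\ref{prop:conic-pair-pointwise-bound-high-region2} give only an $L^\infty$ bound at fixed $\lambda$, with no symbol structure in $\lambda$. Such a bound cannot be fed into your one-dimensional stationary phase in $\lambda$, and integrating it over $\lambda\gtrsim|t|^{-1/2}$ without using oscillation diverges.

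Turning it into a dispersive bound is the content of Proposition~\ref{prop:microlocal-dispersive-LCP}, which is where the real work for the admissible case lies:
\begin{itemize}
\item Decompose in $2\overline\lambda-|\overline{\mk{d}_{X_0}}|$ and in $|\overline{\mk{d}_{X_0}}|-\Phi/\overline x$.
\item Apply the fixed-$\lambda$ bound only where $\Phi$ is close to $x\mk{d}$.
\item Elsewhere, integrate by parts in $(v,s)$ using the lower bound $|\partial_v\Phi|+|s\partial_s\Phi|\gtrsim\sigma^{1/2}s|x\mk{d}_X-\Phi|^{1/2}$ from Proposition~\ref{prop:1st-dPhi-est}, with the anisotropic operator $L_2$.
\end{itemize}
Your plan has no substitute for this step, so the admissible half of the theorem is assumed rather than proved.
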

See Corollary~\ref{coro:Schrodinger-propagator-est-4} for the proof of Theorem~\ref{thm:Schrodinger-propagator-est-2}.
In the same way as the discussion after Theorem~\ref{thm:Schrodinger-propagator-est-1}, this implies the first part of Theorem~\ref{thm:dispersive-Schrodinger-2}, i.e. \eqref{eq:est-dispersive-2-1}, by pairing with the initial data in $u_0$ and integrating in $z'$ directly.

\begin{theorem}  \label{thm:dispersive-Schrodinger-2}
Suppose $(X,g)$ is an asymptotically conic manifold that is non-trapping in the sense of Definition~\ref{def:non-trapping}, let $\IF$ be as in \eqref{eq:IF-full-def} and let $u$ be the solution to \eqref{eq:IVP-Schrodinger}; then we have
\begin{equation} \label{eq:est-dispersive-2-1}
\| u(t,\cdot) \|_{L^\infty} \leq C |t|^{ -\frac{n-\IF}{2} } \| u_0 \|_{ \dot{W}^{1,\IF}  } .
\end{equation}
If in addition $X$ is admissible in the sense of Definition~\ref{definition:Lbf-boundary-admissible}, then $\IF$ above can be replaced by $\IFint$ in \eqref{eq:IFint-def}:
\begin{equation} \label{eq:est-dispersive-2-2}
\|  u(t,\cdot) \|_{L^\infty} \leq C |t|^{ -\frac{n-\IFint}{2} } \| u_0 \|_{ \dot{W}^{1,\IFint}  } .
\end{equation}
For an exact cone $(X_0,g_0)$, the same estimate holds as above, except that $\IF$ and $\IFint$ are replaced by $\IFz$ in \eqref{eq:IFz-def} and $\IFintz$ in \eqref{eq:IFintz-def}, respectively.
\end{theorem}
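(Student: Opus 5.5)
The first estimate \eqref{eq:est-dispersive-2-1} follows directly from Theorem~\ref{thm:Schrodinger-propagator-est-2}, so I would only write out that reduction. Take $u_0$ in the Schwartz class; density then extends the bound to all of $\dot{W}^{1,\IF}$. Set $f = P^{\IF/2}u_0$. Assumption \eqref{assumption:P-positive} makes $P$ a positive operator with trivial kernel, so the functional calculus gives $u_0 = P^{-\IF/2} f$. Hence
\begin{equation*}
u(t,z) = e^{itP}u_0(z) = \int_X \big(e^{itP}P^{-\IF/2}\big)(z,z')\, f(z')\, dg(z').
\end{equation*}
Inserting the kernel bound \eqref{eq:Schrodinger-propagator-est-2-1}, which holds uniformly in $(z,z')$, gives
\begin{equation*}
|u(t,z)| \lesssim |t|^{-\frac{n-\IF}{2}} \|f\|_{L^1} = |t|^{-\frac{n-\IF}{2}}\|u_0\|_{\dot{W}^{1,\IF}},
\end{equation*}
and taking the supremum over $z$ proves \eqref{eq:est-dispersive-2-1}.

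The admissible case \eqref{eq:est-dispersive-2-2} is the same argument with $\IF$ replaced by $\IFint$, using \eqref{eq:Schrodinger-propagator-est-2-2}. The exact-cone case uses the corresponding version of Theorem~\ref{thm:Schrodinger-propagator-est-2} with $\IFz$ and $\IFintz$.

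Two details need care, and neither should be a real obstacle.

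\textbf{Sign convention.} I must check that $e^{itP}P^{-\IF/2}$ in Theorem~\ref{thm:Schrodinger-propagator-est-2} means the kernel of the composed operator $e^{itP}\circ P^{-\IF/2}$. That is how it is produced in the paper, by inserting the factor $\lambda^{-\IF}$ into the spectral-measure integral, and the two factors commute in any case. I should also note that the propagator of \eqref{eq:IVP-Schrodinger} is $e^{itP}$ up to the sign of $t$, and the estimate is symmetric in $t$.

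\textbf{Density.} The kernel bound gives the estimate first for Schwartz $u_0$, where $f = P^{\IF/2}u_0$ is in $L^1$ because $\IF$ is an integer and $P^{\IF/2}$ is then either a differential operator or $P^{1/2}$ times one. It then passes to the completion $\dot{W}^{1,\IF}$ by continuity. When $\IF$ is odd, I should confirm that $P^{\IF/2}$ of a Schwartz function is integrable at infinity. If this is delicate, it is enough to state the estimate on the dense class, since the $\dot{W}^{1,\IF}$ norm is defined by completion.
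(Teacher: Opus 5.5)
Your proposal is correct and follows the paper's own route. The paper obtains \eqref{eq:est-dispersive-2-1} by pairing the kernel bound of Theorem~\ref{thm:Schrodinger-propagator-est-2} with $P^{\IF/2}u_0$ and integrating in $z'$, and gets \eqref{eq:est-dispersive-2-2} and the exact-cone versions the same way from Corollary~\ref{coro:Schrodinger-propagator-est-4} (i.e.\ \eqref{eq:Schrodinger-propagator-est-2-2}); note also that with the convention $(i\partial_t+P)u=0$ the solution is exactly $u=e^{itP}u_0$, so no sign adjustment in $t$ is needed.
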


\begin{remark} 
We illustrate the intuition behind the numerology by giving a few examples of conic manifolds.
Let $X_0$ be an exact cone whose cross section $Y$ is a sphere $\mathbb{S}^{n-1}_\varsigma$ of radius $\varsigma$.
When $\varsigma > 1$, as shown in Proposition~\ref{prop:example-admissible} below, $X_0$ is admissible with $\IFz = 0$, $\IFintz = 0$. 
In the case $\varsigma=1$, $X_0$ recovers $\R^n$ as a special case with $\IFz = n-2$, $\IFintz = 0$. 
Notice that the singularity of the cone at $r=0$ is removable in this case, and we can interpret it as an asymptotically conic manifold with $\IF = n-2,\IFint=0$ as well.
When $\varsigma<1$, such $X_0$ with cross section $\mathbb{S}^{n-1}_\varsigma$ is also admissible, but with $\IFintz = n-2$. In this case, $\IFz$ does not enter the dispersive estimate directly since it is admissible and the conclusion on $\IFz$ is more involved,  
see Proposition~\ref{prop:example-admissible} for details.
The main result of \cite{Taira-dispersive-cone} shows that in this case, the estimates in Theorem~\ref{thm:dispersive-Schrodinger-1} and Theorem~\ref{thm:dispersive-Schrodinger-2} are saturated and the loss is optimal.
The method of \cite{Taira-dispersive-cone} relies on explicit eigenfunctions on $Y=\mathbb{S}_{\varsigma}^{n-1}$ and the expression of the propagator $e^{itP}$ near antipodal points, and therefore cannot be applied to general cases.
\end{remark}
\begin{remark} In particular, the result also shows that all cones over a cross section $Y$ with sectional curvature less than $1$ have the same decay rate in the dispersive estimate as in the Euclidean case.
\end{remark}

\begin{remark} 
Local energy decay estimates (i.e. $L^2\to L^2$ estimates with spatial weights) in a similar geometric setting are proved by Bony–Häfner \cite{Bony-Hafner}, Bouclet–Burq \cite{Bouclet-Burq}, Grasselli \cite{Grasselli-conic-decay}, and Royer \cite{Royer}. The results here provide pointwise estimates, which differ from theirs.
\end{remark}

The focusing of the geodesic flow plays a similar role in the dispersive estimate of the wave equation, which is our next main result. We will state results concerning $e^{\pm it\sqrt{P}}$ for convenience, but they imply dispersive estimates for Cauchy problems \eqref{eq:IVP-wave} just by some simple combinations like $\cos(t\sqrt{P}) = \frac{1}{2}(e^{it\sqrt{P}}+ e^{-it\sqrt{P}})$.

\begin{theorem}  \label{thm:dispersive-wave}
Let $\IF$ be as in \eqref{eq:IF-full-def}, $u_0 \in L^1(X)$, and suppose that $\varphi_K(\lambda) \in C_c^\infty(\R)$ is supported in the region $\lambda \sim 2^K$ with $K\in\Z$, then
\begin{equation} \label{eq:dispersive-wave-main-1}
    \| \varphi_K(\sqrt{P}) e^{it\sqrt{P}} u_0 \|_{L^\infty}   \lesssim 
    2^{Kn } (1+2^K|t|)^{-\frac{n-1}{2}}
    (1+2^K|t|)^{\frac{\IF}{2}} \| \varphi_K(\sqrt{P})  u_0 \|_{L^1}.
\end{equation}
If in addition $X$ is admissible in the sense of Definition~\ref{definition:Lbf-boundary-admissible}, then $\IF$ above can be replaced by $\IFint$ in \eqref{eq:IFint-def}:
\begin{equation} \label{eq:dispersive-wave-main-2}
    \| \varphi_K(\sqrt{P}) e^{it\sqrt{P}} u_0 \|_{L^\infty}   \lesssim 
    2^{Kn} (1+2^K|t|)^{-\frac{n-1}{2}}
    (1+2^K|t|)^{\frac{\IFint}{2}} \| \varphi_K(\sqrt{P})  u_0 \|_{L^1}.
\end{equation}
For an exact cone $(X_0,g_0)$, we have the same estimate as above except that $\IF$ and $\IFint$ are replaced by $\IFz$ in \eqref{eq:IFz-def} and $\IFintz$ in \eqref{eq:IFintz-def} respectively.
\end{theorem}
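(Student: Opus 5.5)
Our plan is to reduce Theorem~\ref{thm:dispersive-wave} to a pointwise bound on the kernel of the frequency-localized propagator. Concretely, we aim to show
\begin{equation*}
\big|\varphi_K(\sqrt{P})^2 e^{it\sqrt{P}}(z,z')\big| \lesssim 2^{Kn}(1+2^K|t|)^{-\frac{n-1}{2}+\frac{\IF}{2}},
\end{equation*}
uniformly in $z,z'$. Once this holds, \eqref{eq:dispersive-wave-main-1} follows by replacing $\varphi_K$ with a slightly enlarged cutoff $\tilde\varphi_K$ satisfying $\tilde\varphi_K\varphi_K=\varphi_K$, writing $\varphi_K(\sqrt P)e^{it\sqrt P}u_0=\tilde\varphi_K(\sqrt P)e^{it\sqrt P}\tilde\varphi_K(\sqrt P)\varphi_K(\sqrt P)u_0$, and integrating in $z'$. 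To bound the kernel we use the spectral measure representation
\begin{equation*}
\tilde\varphi_K(\sqrt P)^2e^{it\sqrt P}=\int_0^\infty \tilde\varphi_K(\lambda)^2e^{it\lambda}\,dE_{\sqrt P}(\lambda)\,d\lambda .
\end{equation*}
We then substitute the microlocal partition of $dE_{\sqrt P}(\lambda)$ from Sections~\ref{subsec:microlocal-partition-low} and \ref{subsec:microlocal-partition-high-combined}. The low-energy partition is used for $2^K\lesssim 1$ and the high-energy one for $2^K\gtrsim1$. In this partition each piece is a Legendre distribution in the $\flat$- or $\#$-calculus, carried by the flowout Legendrian, with an explicit oscillatory integral representation. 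Using the scaling $\lambda=2^K\rho$ with $\rho\sim1$, it suffices to estimate $\int \tilde\varphi(\rho)e^{i2^Kt\rho}\,dE(2^K\rho)\,d\rho$. Both calculi are designed to be uniform under this rescaling, so only the combination $2^K|t|$ and the rescaled distance $2^K d(z,z')$ appear.

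The next step is the pieces near the diagonal and the pieces where the Legendrian projects diffeomorphically to the base. There the spectral measure has the form $\lambda^{n-1}\sum_\pm e^{\pm i\lambda d(z,z')}a_\pm(\lambda,z,z')$ with symbols $a_\pm$ bounded by $(1+\lambda d)^{-\frac{n-1}{2}}$. Integrating by parts in $\rho$ away from the stationary set $t=\mp d$ gives arbitrary decay in $2^K|t-d|$. Near that set we combine the symbol bound $(1+2^Kd)^{-(n-1)/2}\sim(1+2^K|t|)^{-(n-1)/2}$ with the prefactor $2^{Kn}$, which yields the Euclidean rate. For $2^K|t|\lesssim1$ the trivial bound $2^{Kn}$ suffices.

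For the pieces near conjugate points we need the general oscillatory representation $\int e^{i\lambda\Phi(z,z',v)}a(\lambda,z,z',v)\,dv$ with $k$ fiber variables, where $k$ is the local corank of the projection and $k\le\IF$ by \eqref{eq:IF-full-def}. The order of such a Legendre distribution increases by $k/2$ relative to the nondegenerate case. After the $\rho$-integration, estimating the $v$-integral trivially (with no stationary phase in $v$) loses a factor $(2^Kd)^{k/2}\lesssim(1+2^K|t|)^{\IF/2}$. This gives \eqref{eq:dispersive-wave-main-1}.

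Under admissibility (Definition~\ref{definition:Lbf-boundary-admissible}) we would redo only the pairs in which both points lie over $\partial X$, that is, pieces microsupported at the corner $\bfs\cap\lb\cap\rb$. The admissibility condition should provide a nondegenerate phase in the fiber variables, or enough symbol decay, for these boundary-boundary conjugate pairs, including those at distance $\pi$. Then the loss is counted by $\IFint$ instead of $\IF$, which gives \eqref{eq:dispersive-wave-main-2}. For the exact cone, the same argument runs with the exact-cone spectral measure, and the indices become $\IFz$ and $\IFintz$.

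We expect the main obstacle to be uniformity of the symbol estimates across the transition regions. These include the passage between low and high energy at $2^K\sim1$ and the regime where $2^Kd(z,z')$ is bounded while conjugate pieces are present. The concern is that the loss must be capped by $(1+2^K|t|)^{\IF/2}$ and not by a power of $2^K$ alone. Our first attempt would be to reuse the same order bookkeeping employed for the Schr\"odinger estimates (Corollary~\ref{coro:Schrodinger-propagator-est-3}), replacing $e^{it\lambda^2}$ by $e^{it\lambda}$. With the wave phase, the stationary point in $\rho$ is nondegenerate only through the distance, which is why the decay exponent is $\frac{n-1}{2}$ rather than $\frac n2$.
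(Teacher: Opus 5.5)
Your reduction to a kernel bound for $\tilde\varphi_K(\sqrt P)e^{it\sqrt P}$ follows the paper (Theorem~\ref{thm:wave-propagator-est}, Proposition~\ref{prop:microlocal-dispersive-wave}). So do the microlocal decomposition of the spectral measure and your treatment of projectable pieces and of conjugate pieces away from the conic intersection. The counting fails, however, at the pieces microsupported near $L^{\bfs,\calchigh}\cap\Lsharphigh$ (resp.\ $L^{\bfs}\cap\Lsharplow$), and these must be handled already for \eqref{eq:dispersive-wave-main-1}. They are not of the form $\int e^{i\lambda\Phi}a\,dv$ with $k\le\IF$ fiber variables. They carry the radial parameter $s\ge 0$ in addition to $v\in\R^k$. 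Moreover, $\IF$ only dominates $k$, the number of parameters other than $s$ (this is what the $-1$ in \eqref{eq:IF-full-def} accounts for; see Proposition~\ref{prop:IF-relation-parameter-number}). Integrating trivially in $(v,s)$ against the amplitude bound $s^{\frac{n+k}{2}-1}(1+\lambda|z|)^{-\frac{n-k-2}{2}}\sigma^{\frac{k+1}{2}}$ of Corollary~\ref{coro:microlocalized-spectral-measure-osc-int-form-low} loses $(2^K d(z,z'))^{\frac{k+1}{2}}$, which can be as large as $(2^K|t|)^{\frac{\IF+1}{2}}$. The paper recovers this half power in the last part of the proof of Proposition~\ref{prop:microlocal-dispersive-1}, reused for the wave equation. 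It expands the amplitude in $x'/(\lambda s)$ (in $x'/s$ at high energy) and applies stationary phase in $s$, using $\partial_s(s\psi)=y'_{n-1}-Y'_{n-1}$ and $\partial_sY'_{n-1}=-1+O(s)$.

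For \eqref{eq:dispersive-wave-main-2} you misplace both where admissibility acts and how it acts. Admissibility only neutralizes the degeneracy at the lift of $L^{\bfs,\calchigh}\cap\Lsharphigh$, i.e.\ at pairs of endpoints of a bicharacteristic. These are points of $Y$ at distance exactly $\pi$ with $\mu,\mu'\to0$, or the two ends at infinity of a geodesic crossing the interior. It does not concern all pairs over $\partial X$, and in any case the corner $\bfs\cap\lb\cap\rb$ is empty in $X_{\rmb}^2$. Boundary conjugate pairs at distance $<\pi$ on $Y$ still cost and are counted by $\IFint$. For example, the cone over $\mathbb{S}^{n-1}_\varsigma$ with $\varsigma<1$ is admissible yet has $\IFintz=n-2$ (Proposition~\ref{prop:example-admissible}), and that loss is sharp.

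Nor does admissibility make the fiber Hessian nondegenerate: $\partial^2_{\hat{\mu'}_I\hat{\mu'}_I}\psi=sH_I$ still vanishes at $s=0$. What it provides is the exact degeneration rate $\partial_{\hat{\mu'}_I}Y'_I=-sH_I$ and uniform convexity in $s$ of $s\psi$ restricted to $\hat{\mu'}_I=\hat{M'}_I$ (Lemma~\ref{lemma:phase-restricted-convex-low-high-region1}). Combined with the vanishing factor $s^{\frac{n+k}{2}-1}$ of the amplitude at the intersection (a damped oscillatory integral), this gives the lossless pointwise bound of Propositions~\ref{prop:conic-pair-pointwise-bound-low}, \ref{prop:conic-pair-pointwise-bound-high-region1} and \ref{prop:conic-pair-pointwise-bound-high-region2}. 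That bound is proved as follows:
\begin{itemize}
\item cut at $s\sim(\lambda/x')^{-1/2}$;
\item integrate by parts in $v$ when $y'_I$ lies outside the range of $Y'_I$;
\item otherwise apply stationary phase in $v$ with large parameter $s^2\lambda/x'$;
\item then use a three-region decomposition and van der Corput in $s$.
\end{itemize}
The heart of the proof is inserting this bound when $|t|\sim|\mk{d}_X(z,z')|$, and using non-stationary phase in $\lambda$ otherwise. Your ``should provide'' leaves exactly this step open. The energy-transition uniformity you single out is not where the difficulty lies.
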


We conclude this part by discussing the magnetic potential. Our analysis can handle certain magnetic potentials on $\R^n$ as well, which in particular includes the famous Aharonov-Bohm type potential\footnote{The physically well-known Aharonov-Bohm potential originates in the two dimensional setting. While our analysis is carried out in three or higher dimensions.}. The corresponding Schr\"odinger operator takes the form
\begin{align} \label{eq:magnetric-Schrodinger}
\Big(i\nabla+\frac{{\A}(\hat{z})}{|z|}\Big)^2+\frac{a(\hat{z})}{|z|^2}, \quad \A \cdot \hat{z} = 0.
\end{align}
We are not including this in the main results because the main theme of this article is about the focusing effect of the geodesic flow, and we will remove the spectral condition \eqref{eq:condition-nu-0} to investigate the effect of attractive potentials on the dispersive estimate in a forthcoming work \cite{JZ-magnetic}.
The extension to operators taking the form \eqref{eq:magnetric-Schrodinger} can be carried out as follows.
As pointed out in \cite[Remark~4.2]{GHS2}, the construction of the resolvent and in turn that of the spectral measure, only relies on the condition that the sub-principal symbol of the operator vanishes at $\Lsharplow$, which is the Legendre submanifold carrying the the purely outgoing or incoming oscillation.
In terms of the construction in \cite{hassell2001resolvent}, this corresponds to the fact that the conclusion of \cite[Lemma~2.3]{hassell2001resolvent} still holds in this setting even though the short-range condition there is violated. Then the construction of the resolvent goes through without extra complications.


\subsection{Strategy of the proof}
\label{subsec:Strategy-of-proof}

We discuss the strategy of the proof in this subsection. 
We will use the characterization of the spectral measure of $P$ given in \cite{GHS2,Hassell-Wunsch-semiclassical-resolvent}, which we recall here informally in the case \eqref{eq:condition-nu-0}. 
\begin{theorem}\label{thm:spectral-measure-concise}
The spectral measure associated with $P$ in \eqref{eq:P-def} is a (conormal) Legendre distribution associated with the intersecting pair of Legendre submanifolds with conic points $(L^{\bfs}, \Lsharplow)$ in the low-energy regime, and is a Legendre distribution associated with the intersecting pair of Legendre submanifolds with conic points $(L^{\bfs,\calchigh},\Lsharphigh)$ in the high-energy regime.
For $P$ in \eqref{eq:P-def-exact-cone}, its spectral measure is the same type of Legendre distribution associated with $(L^{\bfs}, \Lsharplow)$ in both low and high-energy regimes.
\end{theorem}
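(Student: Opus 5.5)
This statement is essentially a recollection of known structure theorems, so my plan is to assemble it from the existing constructions rather than prove it from scratch. The starting point is Stone's formula,
\[
dE_{\sqrt{P}}(\lambda) = \frac{\lambda}{\pi i}\big(R(\lambda+i0)-R(\lambda-i0)\big), \qquad R(\lambda\pm i0)=(P-(\lambda\pm i0)^2)^{-1}.
\]
This reduces the problem to the microlocal structure of the outgoing and incoming resolvents on the relevant compactified double space.

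\emph{Low-energy regime.} Here I follow the low-energy resolvent construction of \cite{GHS2} (building on \cite{hassell2001resolvent}) on the blown-up space $X^2_{k,\sct}\times[0,\lambda_0)$. On this space $R(\lambda\pm i0)$ is an intersecting Legendre distribution associated with the pair $(N^*\Delta_{\sct},\, L^{\bfs})$, together with the outgoing or incoming Legendre $L^\sharp_\pm=\Lsharplow$ near the boundary face. The boundary behaviour at $\zf$ and at the b-faces is governed by the inverse of the model b-operator $P_{\rmb}$. Condition \eqref{eq:condition-nu-0} and the absence of zero modes/resonances \eqref{assumption:P-positive} guarantee two things: the indicial roots $\pm\nu_j$ of $P_Y$ produce the standard, unshifted orders, and the $\zf$ model operator is invertible.

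Taking the difference $R(\lambda+i0)-R(\lambda-i0)$ removes the diagonal conormal singularity, since both resolvents share the same symbol on $N^*\Delta$ and the symbols cancel. What remains lies on $L^{\bfs}\cup\Lsharplow$, the conic pair. To identify the orders, I use the fact that the spectral measure solves $(P-\lambda^2)dE=0$ in each variable. This places it in the class of Legendre distributions with conic points in the sense of \cite{GHS2}, so I can read off the orders there.

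\emph{High-energy regime.} Here I would instead invoke the semiclassical resolvent construction of \cite{Hassell-Wunsch-semiclassical-resolvent} with $h=\lambda^{-1}$. This is where the non-trapping hypothesis enters: it makes the Legendre $L^{\bfs,\calchigh}$ generated by the full interior geodesic flow a global object, without trapped components. The same subtraction argument then yields a Legendre distribution associated with $(L^{\bfs,\calchigh},\Lsharphigh)$.

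\emph{Exact cone.} For the operator \eqref{eq:P-def-exact-cone}, dilation invariance gives
\[
dE(\lambda)(r,y,r',y')=\lambda^{n-1}\,dE(1)(\lambda r,y,\lambda r',y').
\]
Consequently the high- and low-energy regimes are the same object up to scaling, and the low-energy description with $(L^{\bfs},\Lsharplow)$ covers both. The singularity at $r=0$ is handled through the same b-analysis at the cone tip, where $r^{-2}V_0$ only modifies $P_Y$.

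\emph{Main obstacle.} I expect the hardest step to be verifying that the potential $V\in x^2C^\infty$, which is only critically decaying, does not change the Legendre structure. Concretely, one must check that its contribution is absorbed into the indicial operator through $V_0$, and that the subprincipal symbol still vanishes on $\Lsharplow$. This is exactly the condition used in \cite[Remark~4.2]{GHS2}, and I would verify it by direct computation in the coordinates \eqref{eq:sc-1-form}.
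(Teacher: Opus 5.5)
Your plan matches the paper's proof, which is likewise a citation argument: Stone's formula together with \cite[Theorem~3.10]{GHS2} at low energy and \cite[Corollary~1.2]{Hassell-Wunsch-semiclassical-resolvent} at high energy. For the exact cone, the paper applies Stone's formula to the cone resolvent characterized in \cite[Theorem~5.1]{GHS2}. That result is the concrete source your ``same b-analysis at the cone tip'' needs, and your dilation identity is what makes those local expressions valid uniformly up to $\lambda\to\infty$.

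The ``main obstacle'' you single out does not require any extra work. The case $V\in x^2C^\infty(X)$ is already within the hypotheses of \cite{GHS2}: $V$ enters only through $V_0$ in $P_{\rmb}$ and does not affect the subprincipal symbol at $\Lsharplow$.
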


We refer the reader to Theorem~\ref{thm:spectral-measure-complete} for a complete version.
This is a concise way to say that the spectral measure is a sum of oscillatory integrals that we will describe in detail in Section~\ref{sec:microlocalized-spectral-measure}. 
The key observation that allows us to quantify the effect of geometric focusing (which is quantified by the degeneracy of the exponential map) is that the number of extra parameters in a non-degenerate parametrization of a Legendre distribution\footnote{One can think of this as a Fourier integral operator, but with the `boundary face' being the spatial infinity instead of fiber infinity.}  is equal to the rank drop of the projection from the `propagating Legendrian' (see the discussion about \eqref{eq:Lbf-definition-gamma^2}), which in turn equals the rank drop of the exponential map.

The order of the spectral measure and also the propagators of Schr\"odinger or wave equations as Legendre distributions or Fourier integral operators, or their mapping property on $L^2$-based Sobolev spaces, does not depend on the concrete behaviour of the geodesic flow.
On the other hand, as one can see from the definitions in Section~\ref{sec:microlocalized-spectral-measure}, the number of parameters in the parametrization affects the $L^\infty$-type bound on spectral measures in a very straightforward way: it becomes worse by $\frac{1}{2}$ order in terms of the vanishing order at all boundary surfaces whenever there is one extra parameter\footnote{The numerology of Fourier integral operators in \cite{hormander2009analysis} says that the symbol order of the amplitude becomes lower by $\frac{1}{2}$-order whenever there is one more parameter, which seems to say that the $L^\infty$-behaviour of the kernel becomes better. But this is an illusion: the volume form carries one order growth. In sum, the same phenomenon occurs: the $L^\infty$-bound of the kernel gets worse by $\frac{1}{2}$-order.}. One of the major goals of this article is to translate this into the effect on the dispersion of solutions to the Schr\"odinger and wave equations.

On the other hand, for conjugate points at distance greater than $\pi$ on $Y$, we see from \eqref{eq:conic-bichar} that they are not connected by geodesics on $X$. Indeed, our results confirm that those conjugate points do not affect the decay rate in dispersive estimates.
As aforementioned, this has been confirmed in \cite{JZ1} from a different approach in the exact conic case in the presence of singular potentials.

 
Now we turn to discuss the effect near `endpoints' of geodesics, which turns out to be more delicate than the interior part.
From \eqref{eq:conic-bichar}, it seems that the geometric focusing at time $\pi$ (in terms of the geodesic flow on $Y$) will enter the geodesic flow on $X$ and potentially will affect dispersive estimates.
However, the dispersive estimate on the flat Euclidean space is well-known and this corresponds to the case $Y = \mathbb{S}^{n-1}$, which corresponds to the case where $Y$ has geometric focusing at time $\pi$ but no such focusing before time $\pi$. 
In this case, not only the multiplicity of conjugate points itself, but the precise characterization of the geodesic flow also plays a role.
So we will seek a general condition that enables one to `ignore' the focusing effect of the geodesic flow at time $\pi$ on $Y$, which corresponds to the geodesic flow on $X$ from one end at infinity to the other end, in dispersive estimates.
Such a condition is given in Definition~\ref{definition:Lbf-boundary-admissible} mentioned above in our main results. Roughly speaking, it says that the focusing of the geodesic flow on $Y$ at time $\pi$ is uniform in all directions.

Technically, estimating the contribution of this part  comes down to the pointwise estimate of the piece associated with the intersecting Legendre pair with conic points, see Section~\ref{subsec:conic-pair-geometry-and-phase-function} and  Section~\ref{subsec:Legendrian-distributions-high} for its precise local expression as oscillatory integrals. 
This was introduced by Melrose-Zworski \cite{melrose1996scattering} and further extended by Hassell-Vasy \cite{hassell1999spectral,hassell2001resolvent} to the case with codimension two corners and by Guillarmou-Hassell-Sikora \cite{GHS1,GHS2} to the low-energy regime. This distribution class gives a calculus that includes many objects of fundamental importance (e.g. resolvents, Poisson operator, spectral measure of Laplace type operators). 
The estimate in Proposition~\ref{prop:conic-pair-pointwise-bound-low} shows that the pointwise estimate of such Legendre distributions obeys the same numerology as if there is no conjugate point, or equivalently no extra parameters in the oscillatory integral.
Conceptually, this is because the amplitude of our oscillatory integral has a factor that vanishes at the intersection of the pair of Legendre submanifolds, and this is exactly where the Hessian of the phase function in our oscillatory integral degenerates. This extra vanishing compensates the potential loss in the pointwise estimate of the oscillatory integral. This is similar to the `damping oscillatory integrals' studied by Sogge-Stein \cite{Sogge-Stein-average}, Cowling-Disney-Mauceri-M\"uller \cite{Cowling-etal-damping}, Gressman \cite{damping-osc-Gressman}, Lee-Oh \cite{lee2025damping} and many others.
In order to show that the vanishing factor is enough to compensate the loss in the pointwise estimate, in Section~\ref{sec:projection-and-phase}, we prove that though the projection from general Legendre submanifolds can degenerate in an uncontrolled manner, the projection from one that arises as a `propagating Legendre submanifold', which is the flow out of the geodesic flow, can only degenerate simply. That is, whenever the projection degenerates, its derivative vanishes to the first order. In addition, those singular points will not accumulate.
This allows us to obtain a lower bound for the Hessian matrix of the phase around its degenerate critical points and trade off this degeneracy against the vanishing of the amplitude.

\subsection{Literature review and related problems}
\label{subsec:literature_review}

Dispersive estimates, encompassing both time-decay estimates and Strichartz estimates, have been extensively studied over several decades. In the constant-coefficient setting, these estimates are intimately connected to the restriction theorem in harmonic analysis. A wide range of dispersive estimates are known to be valid for fundamental models such as the wave equation and the Schr\"odinger equation; we refer to Ginibre-Velo \cite{Ginibre-Velo} and Keel-Tao \cite{Keel-Tao} for results. These estimates have been proved to be valuable tools in the analysis of nonlinear problems.

These estimates are considerably more delicate in the variable coefficients case, which introduces new features tied to the nontrivial behaviour (e.g. focusing and trapping) of the Hamilton flow in the phase space. Early progress on Strichartz estimates was made for the wave equation with variable coefficients, starting with the smooth case in Kapitanski\u{\i} \cite{Kapitanski} and Mockenhaupt-Seeger-Sogge \cite{Mockenhaupt-Seeger-Sogge}, and later extended to $C^2$-coefficients by Smith \cite{Smith} and Tataru \cite{Tataru1,Tataru2}. 
Parallel results for the Schrödinger equation were established by Staffilani-Tataru \cite{Staffilani-Tataru} for $C^2$ coefficients and by Burq-Gérard-Tzvetkov \cite{Burq-Gerard-Tzvetkov} for smooth coefficients. In the broader context of elliptic variable-coefficient operators, Sogge's $L^q$-eigenfunction bounds on compact manifolds are also fundamental. Long-range perturbations were considered in Burq-Tzvetkov\cite{Burq-Tzvetkov}. 
In the same setting as the present paper, Hassell–Tao–Wunsch \cite{Hassell-Tao-Wunsch} established local-in-time Strichartz estimates and conjectured their global-in-time counterparts (see \cite[Section 7]{Hassell-Tao-Wunsch}). This conjecture was subsequently resolved by Hassell and the last author \cite{Hassell-Zhang2016Strichartz,Zhang}.

In contrast to Strichartz estimates, decay estimates—in particular pointwise dispersive estimates—are more delicate, as they require more precise information about the underlying characteristic dynamics. For example, the decay estimates in weighted 
$L^2$ space are studied in Bouclet-Burq \cite{Bouclet-Burq} and Royer\cite{Royer}. We refer to Iandoli-Ivanovici \cite{Iandoli-Ivanovici} for the pointwise dispersive estimates outside a cylinder.

To study the dispersive estimates in the asymptotically conical setting, we need the theory of Legendre distributions on manifolds with boundaries or corners developed by Melrose-Zworski \cite{melrose1996scattering}, 
Hassell-Vasy \cite{hassell1999spectral,hassell2001resolvent}, Hassell-Wunsch \cite{hassell-wunsch2005schrodinger,Hassell-Wunsch-semiclassical-resolvent} and Guillarmou-Hassell-Sikora \cite{GHS1,GHS2}. 
To obtain the behaviour of the propagator using the spectral measure, one needs to investigate the Schwartz kernel of the resolvent and spectral measure in different frequencies, that is,  in the cases for a fixed $\lambda$ (e.g. \cite{hassell1999spectral,hassell2001resolvent}), for the high-energy i.e. $\lambda\to\infty$ (e.g. see \cite{Hassell-Wunsch-semiclassical-resolvent}) and for the low-energy i.e. $\lambda\to 0$ (e.g. see \cite{GH-2008, GHS1, GHS2}) respectively. These constructions are also inspired by the general program for geometric scattering theory initiated by Melrose \cite{Melrose-APS,Melrose1994,Melrose-geometric-sc}, the classical theory of Fourier integral operators by H\"ormander \cite{FIO1} and Duistermaat-H\"ormander \cite{FIOII} and the theory of paired Lagrangian distributions treated by Melrose-Uhlmann \cite{Melrose-Uhlmann-intersecting}, Guillemin-Uhlmann \cite{Guillemin-Uhlmann-intersecting} and Joshi \cite{Joshi:Thesis}. 
We also remark that the characterizations of the Schr\"odinger propagator as a Legendre distribution in various settings are given in Hassell-Wunsch \cite{hassell-wunsch2005schrodinger,Hassell-Wunsch-semiclassical-resolvent}, where wavefront bounds are derived.

It is worth mentioning that the worst-case scenario for $L^p \to L^p$ estimates of Fourier integral operators is precisely the opposite of that for $L^1 \to L^\infty$ estimates, or more generally for $L^p \to L^{p'}$ estimates. Concretely, for the $L^p \to L^p$ estimate of $e^{it\sqrt{P}}$ in the exact conic case, a rescaling argument shows that the regularity difference in the estimate at $t=1$ determines the growth rate in $t$. See also the classical results in \cite{Peral-Lp} and \cite{Myachi-Hp-Lp}. The same phenomenon is studied in \cite{Seeger-Sogge-Stein-FIO} for general Fourier integral operators associated with canonical graphs. As pointed out in \cite[Theorem~2.1]{Seeger-Sogge-Stein-FIO}, the estimate is saturated precisely when the projection from the associated Lagrangian submanifold to the base manifold has full rank at certain points.
This means that the $L^p \to L^p$ estimates behave in the opposite way to $L^1 \to L^\infty$ dispersive estimates since, as we show here, the non-degeneracy of such projections is in favour of the best possible decay rate in dispersive estimates.



\subsection{Organization of the Paper}

The article is organized as follows. In Section~\ref{sec:microlocal-tools}, we introduce the (parametrized) double space and the pseudodifferential algebra that we will use.
Then we recall the definition of Legendre submanifolds that we will use and the theory of Legendre distributions in Section~\ref{sec:Legendrian-distributions}. 
Afterwards, oscillatory integral expressions of the spectral measure of $P$ are given in Section~\ref{sec:microlocalized-spectral-measure}. 
The delicate part of the dispersive estimates is the contribution from Legendre distributions associated with the conic intersecting pair of Legendre submanifolds. In Section~\ref{sec:projection-and-phase}, we analyze the projection from those Legendre submanifolds in the phase space to the base manifold and phase functions used in Legendre distributions. 
Then we give the pointwise bound of such Legendre distributions in Section~\ref{sec:conic-points-pointwise-bound}.
Finally, our main results, dispersive estimates for Schr\"odinger equations and wave equations, are proved in Section~\ref{sec:dispersive-Schrodinger} and Section~\ref{sec:dispersive-wave}.

Readers willing to accept the theory of Legendre distributions and the oscillatory integral expression of the microlocalized spectral measure as a black box can start with Section~\ref{sec:microlocalized-spectral-measure} and read Sections~\ref{sec:projection-and-phase}-\ref{sec:dispersive-wave} for the proof of dispersive estimates.

\section{Microlocal Tools}
\label{sec:microlocal-tools}

In this section, we review the theory of Legendre distributions on manifolds with boundaries and corners developed by Melrose-Zworski \cite{melrose1996scattering}, 
Hassell-Vasy \cite{hassell1999spectral,hassell2001resolvent}, and Guillarmou-Hassell-Sikora \cite{GHS1,GHS2}.

\subsection{The low-energy, high-energy, and combined spaces}
\label{subsec:low-high-combined-spaces}

In this subsection we introduce a space that combines the construction of the low-energy space in \cite{GH-2008, GHS1,GHS2} and the high-energy (semiclassical) space in \cite{Hassell-Wunsch-semiclassical-resolvent}. 
The basic building block is the b-double space obtained by blowing up the corner $\partial X \times \partial X \subset X \times X$:
\begin{equation}
\label{eq:b-double-space}
X_\rmb^2 = [X \times X; \partial X \times \partial X],
\end{equation}
and we denote the blow down map by 
\begin{equation} \label{eq:beta-b}
\beta_{\mathrm{b}}: X_{\rmb}^2 \to X^2.
\end{equation}
We denote the lift of $\partial X \times X$ by $\lb$, the lift of $X \times \partial X$ by $\rb$ and the lift of $\partial X \times \partial X$ by $\bfs$. We refer the reader to \cite{Melrose-APS} for its detailed definition and the definition of blow ups.

Now we turn to the construction of the low-energy space in \cite{GH-2008, GHS1,GHS2}. 
Let $X_I^2: = [0,1] \times X \times X$ be the family of double spaces $X \times X$ parametrized by the energy level $\lambda$. To analyze the behaviour near the corner $\partial X \times \partial X$ and the behaviour as $\lambda \to 0$, we consider the b-low-energy space $X_{\rmb,\flat}^2$, which is obtained by blowing up $X_I^2$ along three corners of codimension two:
\begin{align*}
C_{ZL} = \{0\} \times \partial X \times X,
\;
C_{ZR} = \{0\} \times X \times \partial X,
\;
C_{LR} = [0,1] \times \partial X \times \partial X,
\end{align*}
and the corner of codimension three:
\begin{align*}
C_{ZLR} = \{0\} \times \partial X \times \partial X.
\end{align*}
That is, we set
\begin{align} \label{eq:def-low-space}
X_{\rmb,\flat}^2: = [ X_I^2 ; 
C_{ZLR},C_{ZL}, C_{ZR}, C_{LR}],
\end{align}
with blow-down map $\beta_{\rmb,\flat}: X_{\rmb,\flat}^2 \to X_I^2$. See Figure~\ref{figure:b-low-energy-space} for a graphic illustration.

\begin{figure}[h!] 
\includegraphics[scale = 0.7]{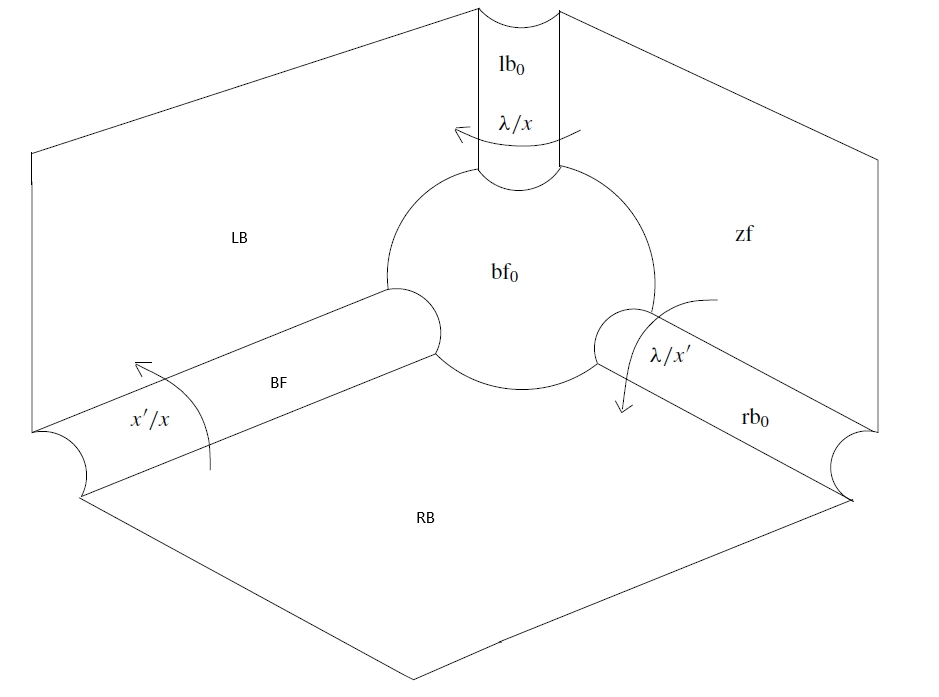}
\caption{The b-low-energy space $X_{\rmb,\flat}^2$.}
\label{figure:b-low-energy-space}
\end{figure}



We should mention that $\lambda$, the parameter in the $[0,1]$-component 
should be interpreted as the $\sct$-energy. The frequencies (or energy) measured in terms of $\rmb$-cotangent bundle are parameters like $\rho=\frac{\lambda}{x}, \rho'=\frac{\lambda}{x'}$ on those surfaces introduced by the blow up, where $x,x'$ are defining functions of $\partial X$ of the left and right $X$-factors respectively.
We also mention here that as pointed out in \cite{GH-2008}, the $\zf$ (the zero face), which is the lift of $\{0\} \times X \times X$ in $X_{\rmb,\flat}^2$ is canonically diffeomorphic to $X_{\rmb}^2$. This reflects the connection between this low-energy space and the recent work on the second microlocalization in \cite{Vasy2021resolvent, Vasy-resLag} that encodes the b-analysis and the sc-analysis simultaneously. See \cite[Section~5]{Vasy2021resolvent} and \cite[Section~2]{Vasy-resLag} for more details.\footnote{One can see this connection more clearly if we do not perform the resolution of Vasy for the resolved sc-b-cotangent bundle at $\lambda>0$ (which is called $\sigma$ there) but instead only at $x=\lambda=0$.
See the construction of the `compressed cotangent bundle' in \cite[Section~2.3]{GHS2}. }

Compared with the low-energy case, the high-energy space $X_{\rmb, \#}^2$ is simpler:
\begin{align} \label{eq:def-high-energy-space}
X_{\rmb, \calchigh}^2: = [0,h_0) \times X_{\rmb}^2, \quad h_0>1,
\end{align}
where $X_{\rmb}^2 = [X \times X; \partial X \times \partial X]$ is the b-double space as before.
Here the semiclassical parameter $h \in [0,h_0)$ is related to the $\lambda$ above by viewing $h^{-1}$ as the energy level, which means the part with $h$ near $0$ is indeed the `high-energy' part.

Finally, we combine the low-energy and high-energy spaces above to define the combined space $X_{\calc}^2$ to be the manifold with corners obtained by gluing the low-energy space and the high-energy (semiclassical) space via the identification $\lambda \sim h^{-1}$:
\begin{equation} \label{eq: definition, combined double space}
X_{\calc}^2: = X_{\rmb,\flat}^2 \sqcup X_{\rmb, \#}^2 / \sim,
\end{equation}
where $\sim$ identifies $(p,\lambda)$ with $(p,h^{-1})$ for $\lambda \in [h_0^{-1},1]$ and $h \in [1,h_0)$ and $p \in X_{\rmb}^2$.


In summary, $X_{\calc}^2$ has eight boundary faces 
\begin{equation} \label{eq:faces}
 \zf,\, \lb_0, \, \bfs_0, \, \rb_0,\, \LB, \, \BFS, \, \RB,\, \smf,
\end{equation}
and we summarize their boundary defining functions below. Let $\bullet$ be one of the faces listed in \eqref{eq:faces}. We use $\rho_{\bullet}$ to denote its boundary defining function. 
Then for `low-energy' faces at $\lambda = 0$, we have:
\begin{align} \label{eq:defining-functions-low-1}
\begin{split}
& \zf = \big\{ \frac{\lambda}{x} = 0, \frac{\lambda}{x'} = 0  \big\}, \quad \rho_{\zf} = \frac{\lambda}{\lambda+x}+\frac{\lambda}{\lambda+x'}; 
\\ & \lb_0 = \big\{ \frac{x}{x'} = 0,\lambda = 0  \big\}, \quad \rho_{\lb_0} = \frac{x}{x+x'}+\lambda;
\\ &  \bfs_0 = \big\{ x=x'=\lambda =0 \big\}, \quad \rho_{\bfs_0} = \lambda+x+x';
\\& \rb_0 = \big\{ \frac{x'}{x} = 0, \lambda = 0 \big\} , \quad \rho_{\rb_0}  = \frac{x'}{x+x'} + \lambda.
\end{split}
\end{align}

The faces with $\lambda \in [0,\infty]$ \footnote{$[0,\infty]$ means compactified $[0,+\infty)$ using the identification $\lambda \sim h^{-1}$ for large $\lambda$ and $h \in [0,h_0)$.} which are away from $\zf$ are:
\begin{align}
\label{eq:defining-functions-low-3}
\begin{split}
& \LB = \big\{ \frac{x}{x'} = 0, \frac{x}{\lambda}=0  \big\},
\quad \rho_{\LB} = \frac{x}{\lambda+x} + \frac{x}{x+x'};
\\ & \BFS = \big\{ x = x' = 0, \frac{x+x'}{\lambda} = 0  \big\}, \quad \rho_{\BFS}= \frac{x+x'}{\lambda};
\\ & \RB = \big\{ \frac{x'}{x} = 0, \frac{x'}{\lambda}=0  \big\},
\quad \rho_{\RB} = \frac{x'}{\lambda+x'}+\frac{x'}{x+x'};
\\ & \smf = \big\{ \lambda = +\infty \big\}, \qquad
\rho_{\smf} = h = \lambda^{-1}.
\end{split}
\end{align}




Then the conormal bundle of the lifted diagonal (but only down to the boundary of $\bfs_0$, away from $\zf$) in $X_{\calc}^2$ gives the phase space through which we will construct our microlocal partition of unity, and the important feature of it is the scaling of the frequency variables.
This is effectively a vector bundle over $[0,\infty]_\lambda \times X$ after identifying the lifted diagonal with $X$. 
One can think of this as a family of scattering cotangent bundles in which the characteristic variety of $\Delta_g-\lambda^2$ is rescaled to the unit cosphere bundle.
Concretely, we write a point in the cotangent bundle as:
\begin{equation}\label{eq:contact-form}
\lambda \nu  d(\frac{1}{x}) + \sum_{i=1}^{n-1} \lambda \mu_i \frac{dy_i}{x}
\end{equation}
near $\partial X$, or
\begin{equation}\label{eq:contact-form-interior}
 \sum_{i=1}^{n} \lambda \zeta_i dz_i
\end{equation}
away from $\partial X$.
This construction is valid up to $\lambda \to \infty$. In the region with large $\lambda$, one can rewrite the canonical one form above using the semiclassical parameter $h = \lambda^{-1}$:
\begin{equation} \label{eq:1-form-high-single}
\nu  h^{-1}d(\frac{1}{x}) + \sum_{i=1}^{n-1}  \mu_i \frac{dy_i}{hx},
\end{equation}
and it gives a vector bundle down to $h=0$ and we denote this part with $h$ close to $0$ by ${}^{\calchigh}T^*X$, which effectively is just the semiclassical scattering cotangent bundle. 
Finally we glue these two parts together via the identification $h \sim \lambda^{-1}$ again and denote this combined phase space by ${}^{\calc}T^*X$.
We further radially compactify its fibers to obtain the compactified combined cotangent bundle, which is an $n$-ball bundle over $[0,\infty]_\lambda \times X$ that we denote by
\begin{equation} \label{eq:compactified-combined-cotangent-bundle}
{}^{\calc}\overline{T}^*X.
\end{equation}

\subsection{The low-energy, high-energy, and combined pseudodifferential algebra}
\label{subsec:low-high-combined-PsiDO}

In this subsection, we introduce a pseudodifferential algebra that combines the ones introduced in \cite{GH-2008,Hassell-Wunsch-semiclassical-resolvent,Hassell-Zhang2016Strichartz} that are adapted to the low- and high-energy spaces.


The space of $m$-th order low-energy pseudodifferential operators\footnote{See \cite[Section~2.3]{GH-2008} for a more general version of this pseudodifferential algebra. }, denoted by 
\begin{equation} \label{eq:PsiDO-low-def}
\Psi_{\flat}^{m}(X;\Omega_{\flat}^{1/2}).
\end{equation}
consists of those operators that have Schwartz kernels that are sections of $\Omega_{\flat}^{1/2}$ of the form characterized below.
Here $\Omega_{\flat}^{1/2}$ is the half-density bundle whose sections, near $\BFS \cap \bfs_0$ and the lifted diagonal, are non-zero smooth multiples of
\begin{equation} \label{eq:low-energy-density-1}
\Big| \frac{d\rho d\rho' dydy' d\lambda}{\rho^{n+1} (\rho')^{n+1} \lambda }\Big|^{1/2} 
\sim \lambda^n \Big|\frac{dgdg'd\lambda}{\lambda}\Big|^{1/2},
\end{equation}
where $dg,dg'$ are Riemannian densities with respect to $g$ on $X$ lifted from the left and right factors to $X_{\rmb,\flat}^2$.
Near $\zf$, they are smooth multiples of
\begin{equation} \label{eq:low-energy-density-2}
\Big|\frac{dxdx'dydy'd\lambda}{xx'\lambda}\Big|^{1/2}.
\end{equation}
Finally, near $\LB \cap \lb_0$, they are smooth non-zero multiples of
\begin{equation} \label{eq:low-energy-density-3}
\lambda^{n/2} \Big| \frac{dgdg'_{\rmb}d\lambda}{\lambda} \Big|^{1/2},  
\end{equation}
and similarly near $\RB \cap \rb_0$ with primed and unprimed objects switched.
The advantage of this density bundle is that it incorporates the transition from the sc-region to the b-region as we approach $\zf$.
See \cite[Section~2,4]{GHS1} for detailed discussions on this density bundle, which is denoted by $\Omega_{k,b}^{1/2}$ there.

We now characterize the kernels of operators in $\Psi_{\flat}^{m}(X;\Omega_{\flat}^{1/2})$.
They can be written as a sum of two operators of the following types.
The first type consists of pseudodifferential operators with kernels that are supported in the region $\rho = \frac{x}{\lambda},\rho' = \frac{x'}{\lambda} \leq C<\infty$.
The other type consists of multiplication operators by smooth functions of $\rho$ that are supported on $\rho \geq \frac{C}{2}$, which has Schwartz kernel that is a multiple of the delta function on the diagonal.
For the first type, near the diagonal the operators can be represented by oscillatory integrals of the form
\begin{align} \label{eq:quant-PsiDO}
\lambda^n \Big( \int_{\R^n} e^{ \frac{i\lambda}{x} ( (1-\sigma)\nu + (y-y') \cdot \mu ) }
a(\lambda,\rho,y,\mu,\nu)d\mu d \nu \Big) \big|dgdg'\frac{d\lambda}{\lambda}\big|^{1/2},
\end{align}
where $\sigma= \frac{\rho}{\rho'} = \frac{x}{x'}$ and $a(\lambda,\rho,y,\mu,\nu)$ is a classical symbol of order $m$ in $(\mu,\nu)$, smooth in other variables and supported where $\rho \leq C$. 
Here $dg,dg'$ are volume forms associated with $g$ lifted from the left and right factors.
When we compose such pseudodifferential operators with Legendre distributions below, the composition is taken only in the $X$-variables, with the spectral parameter $\lambda$ fixed. Consequently, we suppress the $|\frac{d\lambda}{\lambda}|^{1/2}$-factor in the pseudodifferential operator and retain this factor only in the Legendre distribution.
Also, since we are only concerned with the low-energy part now, we will only consider those operators with $a(\lambda,\rho,y,\mu,\nu)$ supported only on $\lambda \leq \lambda_0$ for a constant $\lambda_0>0$.

One can define a more general class of pseudodifferential operators like $x^{-l}\lambda^{-k_\flat}\Psi_{\flat}^{m}$, but we will not pursue it here, since the class without extra weights is already sufficient to include our microlocal partition of unity.

Recall that ${}^{\calc}\overline{T}^*X$ is the compactified phase space constructed in Section~\ref{subsec:low-high-combined-spaces}. 
Then its partial boundary consists of three parts:
\begin{itemize}
\item The semiclassical part: ${}^{\calc}\overline{T}_{ \{h=0\} }^*X$.

\item The fiber infinity part: ${}^{\calc} S^*X$, which is the $(n-1)$-sphere bundle formed by the part with $|(\nu,\mu)| = \infty$ in ${}^{\calc}\overline{T}^*X$

\item The spacetime boundary part ${}^{\calc}T^*_{ [0,\infty] \times \partial X}X$, which is the restriction of ${}^{\calc}\overline{T}^*X$ to $[0,\infty]_\lambda \times \partial X$.
\end{itemize}

\begin{remark}
Here `partial boundary' is emphasizing that the part at $\lambda=0$ is not completely included, but only the part over $\partial X$, hence this is smaller than the boundary of ${}^{\calc}\overline{T}^*X$.
In contrast, in the high-energy regime $h=0$, we will include the entire bundle, even over the interior at finite frequencies. 
\end{remark}

Then the wavefront set in the low-energy regime is defined as follows.
\begin{definition} \label{def:low-energy-WF}
For $A \in \Psi_{\flat}^{m}(X;\Omega_{\flat}^{1/2})$ with left symbol $a(\lambda,\rho,y,\mu,\nu)$
as in \eqref{eq:quant-PsiDO}, we define $\WF'_{\flat}(A)$ as a subset of ${}^{\calc} S^*X \cup {}^{\calc}T^*_{ [0,\infty] \times \partial X}X$
(in fact only the part with $\lambda \leq \lambda_0$) as follows.
For $q \in \big({}^{\calc} S^*X \cup {}^{\calc}T^*_{ [0,\infty] \times \partial X}X\big)$, we say $q \notin \WF'_{\flat}(A)$ if there is a $\chi \in C_c^\infty({}^{\calc}\overline{T}^*X)$ with $\chi(q)=1$ such that $\chi a \in \mathcal{S}({}^{\calc}\overline{T}^*X)$.\footnote{Here the Schwartz function space is defined to be the space of smooth functions that are rapidly decaying as $x \to 0$ and $\mu,\nu \to \infty$, while the latter is automatic by the assumption on $a,\chi$.}
\end{definition}

%

For the high-energy part, as in \cite[Section~3]{Hassell-Zhang2016Strichartz}, we use the semiclassical scattering pseudodifferential algebra $\Psi_{\sct,h}(X;\Omega_{\calchigh}^{1/2})$ introduced by Wunsch and Zworski \cite[Appendix~A]{wunsch-zworski-resonance-distribution}.
Concretely, for $h>0$, this is just the scattering pseudodifferential algebra. 
We do not actually perform the resolution at the diagonal at $h=0$ and give the conormality criterion, but rather give an oscillatory integral representation of $\Psi_{\sct,h}(X;\Omega_{\calchigh}^{1/2})$ directly.

The half-density bundle $\Omega_{\calchigh}^{1/2}$ we are using for the high-energy regime is simpler than the low-energy one, since only scattering type behaviour is needed, whereas the low-energy case concerns both the scattering and the b-regimes.
Its sections are non-zero smooth multiples of
\begin{equation} \label{eq:high-energy-half-density}
\Big|\frac{dgdg'dh}{ h^{2n+2} }\Big|^{1/2} = h^{-n} |dgdg'd\lambda|^{1/2},
\end{equation}
which is essentially the same as \eqref{eq:low-energy-density-1}.

Now we define the pseudodifferential algebra $\Psi_{\sct,h}(X;{}^{\Phi}\Omega^{1/2})$. For $A \in \Psi_{\sct,h}^{m,0,0}(X;{}^{\Phi}\Omega^{1/2})$, away from $[0,h_0) \times \mathrm{bf}$ but near $[0,h_0) \times \Delta_{\rmb}$, it is just a semiclassical pseudodifferential operator and can be written as 
\begin{align} \label{eq:PsiDO-high-def-1}
h^{-n} \Big( \int_{\R^n} e^{i(z-z') \cdot \zeta/h}a(h,z,\zeta)d\zeta \Big) |dg dg'|^{1/2},
\end{align}
where $a$ is a symbol of order $m$ with respect to $\zeta$. 
Near $[0,h_0) \times (\mathrm{bf} \cap \Delta_{\rmb})$ , it can be represented by
\begin{align} \label{eq:PsiDO-high-def-2}
h^{-n} \Big( \int_{\R^n} e^{i \frac{(1-\sigma)\nu + (y-y') \cdot \mu}{hx} } a(h,x,y,\nu,\mu)d\mu d\nu \Big) |dg dg'|^{1/2}.
\end{align}
Here $h_0 \geq \lambda_0^{-1}$.
More generally, we set
\begin{equation}
  \Psi_{\calchigh}^{m,l,k}(X;{}^{\Phi}\Omega^{1/2})=\Psi_{\sct,h}^{m,l,k}(X;{}^{\Phi}\Omega^{1/2})
  = x^{-l}h^{-k}\Psi_{\sct,h}^{m,0,0}(X;{}^{\Phi}\Omega^{1/2}).
\end{equation}

Then similar to Definition~\ref{def:low-energy-WF}, we define the wavefront set $\WF'_{\sct,h}(A)$ as follows.
\begin{definition} \label{def:high-WF}
For $A \in \Psi_{\sct,h}^{m,0,0}(X;{}^{\Phi}\Omega^{1/2})$,
its wavefront set $\WF'_{\sct,h}(A)$ is defined, as a subset of ${}^{\calc}\overline{T}_{ \{h=0\} }^*X \cup {}^{\calc} S^*X \cup {}^{\calc}T^*_{ [0,\infty] \times \partial X}X$ (in fact, only the part with $h \in [0,h_0)$, or $\lambda \geq h_0^{-1}$) as follows.
For $q \in \big( {}^{\calc}\overline{T}_{ \{h=0\} }^*X \cup {}^{\calc} S^*X \cup {}^{\calc}T^*_{ [0,\infty] \times \partial X}X \big)$, we say $q \notin \WF'_{\sct,h}(A)$ if there is a $\chi \in C_c^\infty({}^{\calc}\overline{T}^*X)$ with $\chi(q)=1$ such that $\chi a \in \mathcal{S}({}^{\calc}\overline{T}^*X)$. 
\end{definition}
By definition, $\WF'_{\sct,h}(A)$ coincides with $\WF'_{\flat}(A)$ over the region where $\lambda \in [h_0^{-1},\lambda_0]$.


Now we combine the above definitions to define the combined pseudodifferential algebra. We will use $[0,\infty]$ to denote $[0,\lambda_0]_\lambda \sqcup [0,h_0)_h/\sim$ via the identification $h^{-1} \sim \lambda$ and it is equipped with the smooth structure inherited from $[0,h_0)_h$ near $\infty$.

\begin{definition} \label{def:combined-PsiDO}
The operator class $\Psi_{\calc}^{m,l,k}$ consists of operators $A$ such that
\begin{itemize}
\item For $\phi,\psi \in C^\infty([0,\infty] \times X)$ supported in $\lambda \leq 1$, we have
\begin{align*}
\phi A \psi \in \Psi_{\flat}^{m}.
\end{align*}

\item For $\phi,\psi \in C^\infty([0,\infty] \times X)$ supported in $h \leq h_0$ (or $\lambda \geq h_0^{-1}$), we have
\begin{align*}
\phi A \psi \in \Psi_{\calchigh}^{m,l,k}=\Psi_{\sct,h}^{m,l,k}.
\end{align*}

\item For $\phi,\psi \in C^\infty([0,\infty] \times X)$ with disjoint supports, $\phi A \psi$ has kernel that is Schwartz on $X^2_{\calc}$. 

\end{itemize}
\end{definition}
Correspondingly, the combined wavefront set of $A \in \Psi_{\calc}^{m,0,0,}$, which we denote by $\WF'_{\calc}(A)$ is defined by
\begin{equation} \label{eq:combined-WF-def}
\WF'_{\calc}(A) = \WF'_{\flat}(A) \cup \WF'_{\sct,h}(A).
\end{equation}

\section{Legendre submanifolds and Legendre distributions}
\label{sec:Legendrian-distributions}

We briefly recall the theory of Legendre distributions in this section. 
The aim of this section is just to save readers from unravelling the theory of Legendre distributions and the scattering fibered structure in the series of work \cite{melrose1996scattering,hassell1999spectral,hassell2001resolvent,Hassell-Wunsch-semiclassical-resolvent}, so we only state results at the level of generality that is enough for our applications, but refer to \cite{melrose1996scattering,hassell1999spectral,hassell2001resolvent,GHS2,Hassell-Wunsch-semiclassical-resolvent} for more details and more general situations.


\subsection{Legendre submanifolds and Legendre distributions at low energies}
\label{subsec:Legendrian-geometry-low}

As mentioned above, \cite[Theorem~3.10]{GHS2} states that the spectral measure for the Laplacian on an asymptotically conic manifold is, for low energies, a Legendre distribution associated with a pair of Legendrian submanifolds with conic points: the `propagating Legendrian' $L^{\bfs}$ and the `incoming/outgoing Legendrian' $L^\sharp$.
Pairs of such Legendre submanifolds are called \emph{Legendrian conic pairs} for short and denoted by $\mathrm{LCP}$ in subscripts below.

The goal of this subsection is to briefly introduce those geometric objects carrying the `important part' of the spectral measure. 
We first have to introduce the contact manifold in which these Legendre submanifolds live.


To begin with, let 
\begin{equation} \label{def:pulled-back-bundle}
{}^{\Phi}T^* X_{\mathrm{b}}^2 = \beta_{\rmb}^*({}^{\sct}T^*X \times {}^{\sct}T^*X )
\end{equation}
be the bundle formed by pulling back ${}^{\sct}T^*X \times {}^{\sct}T^*X$ (viewed as a bundle over $X^2$) to $X_{\mathrm{b}}^2$ via the natural blow down map $\beta_{\mathrm{b}}$ in \eqref{eq:beta-b}. 
In particular, the coordinates away from $\partial (X_{\rmb}^2)$ can be determined by writing the canonical 1-form as
\begin{equation} \label{eq:1-form-Phi-bundle-interior}
\zeta \cdot dz + \zeta' \cdot dz'.
\end{equation}
When we approach the boundary in terms of the left or right variables, we use coordinates as in \eqref{eq:sc-1-form} lifted from the left or right factor instead.

Then ${}^{\Phi}T^* X_{\mathrm{b}}^2$ inherits a symplectic structure\footnote{This symplectic structure degenerates over the boundary. Nevertheless, the contact structure induced below is non-degenerate.}
from the natural one of ${}^{\sct}T^*X \times {}^{\sct}T^*X$, which in turn induces a contact structure on ${}^{\Phi}T^*_{\bfs} X_{\mathrm{b}}^2$, which is its restriction to the boundary hypersurface $\bfs$ created by the blowup \eqref{eq:b-double-space}.
Our Legendre submanifolds will be those of ${}^{\Phi}T^*_{\bfs} X_{\mathrm{b}}^2$. In terms of $X_{\rmb,\flat}^2$, one can think of this $\bfs$ as $\bfs \times \{ 1 \} \subset \BFS$, but it encodes information of its dilated copies at all fixed $\lambda$. See \cite[Section~2]{GHS2} for more details.

More concretely, in local coordinates 
\begin{equation} \label{eq:coordinates-bf-fibered-bundle}
(\sigma, y, y', \nu, \nu',\mu, \mu') 
\end{equation}
for ${}^\Phi T^*_{\bfs} X^2_b$, where $\sigma = x/x'$, $(\mu, \nu)$ are as in \eqref{eq:contact-form}, and the un-primed/primed coordinates are lifted from the left/right copies of ${}^{\sct} T^* X$\footnote{This convention will be kept throughout this paper, unless otherwise stated.}, the contact form has an expression
\begin{equation} \label{eq:contact-form-main}
d\nu - \mu \cdot dy + \sigma (d\nu' - \mu' \cdot dy').
\end{equation}
A Legendre submanifold should be defined to be a $2n-1$-dimensional submanifold of this $4n-1$-dimensional space on which the contact form vanishes. 
However, we need to address the degeneracy of \eqref{eq:contact-form-main} as $\sigma \to 0$, which corresponds to the situation when we approach the corner $\bfs \cap \lb$.
To deal with this, we introduce fibrations of $\lb,\rb$ as follows.
Let $Z_{\lb},Z_{\rb}$ be $\partial X$ parametrized by the left and right factors of $\lb$ and $\rb$ respectively.
Then we consider
\begin{equation} \label{eq:def-phi-lb}
    \phi_{\lb}: \quad \lb \to Z_{\lb} 
\end{equation}
defined by
\begin{equation}
    (q,q') \to q \in \partial X,
\end{equation}
where we identified $\lb$ with $\partial X \times X$. We define $\phi_{\rb}$ in the same way, except for switching the left and right factors.
Then we define
\begin{equation}  \label{eq:NZ-lb-def}
    {}^{\Phi}N^*Z_{\lb} =  {}^{\sct}T^*_{Z_{\lb}}\big([0,x_0)_x \times Z_{\lb} \big) ={}^{\sct}T^*_{\partial X}X,
\end{equation}
and similarly for ${}^{\Phi}N^*Z_{\rb}$. The identification between ${}^{\Phi}N^*Z_{\lb}$ and ${}^{\sct}T_{\partial X}^*X$ gives it a contact structure and this precisely corresponds to the first part $d\nu - \mu \cdot dy$ in \eqref{eq:contact-form-main}.
The fibration in \eqref{eq:def-phi-lb} induces a fibration 
\begin{equation} \label{eq:def-tilde-phi-lb}
    \tilde{\phi}_{\lb}:
    {}^{\Phi}T_{\lb \cap \bfs}^*X_{\rmb}^2 \to {}^{\Phi}N^*Z_{\lb}
\end{equation}
defined by
\begin{equation}
    (y,y',\nu,\nu',\mu,\mu') \to (y,\nu,\mu).
\end{equation}
Similarly, we define 
\begin{equation}
    \tilde{\phi}_{\rb}:
    {}^{\Phi}T_{\lb \cap \bfs}^*X_{\rmb}^2 \to {}^{\Phi}N^*Z_{\rb}
\end{equation}
by
\begin{equation}
    (y,y',\nu,\nu',\mu,\mu') \to (y',\nu',\mu').
\end{equation}

Letting $\bullet = \lb$ or $\rb$, we use $F_{\bullet}$ to denote the fiber of \eqref{eq:def-phi-lb}, which is just $X$ parametrized by the other variable (that is, if $\bullet = \lb$, then this $X$ is parametrized by the right variable, while if $\bullet = \rb$ then it is parametrized by the left variable). In addition, $\tilde{\phi}_{\bullet}$ is connected to $\phi_{\bullet}$ in the way that the fiber of $\tilde{\phi}_{\bullet}$ is ${}^{\sct}T_{\partial F_{\bullet} }^*F_{\bullet}$, which can be identified with ${}^{\sct}T_{\partial X}^*X$ parametrized by variables from the other side as above. 


\begin{definition} \label{def:Legendre-submanifold-codim2}
    A Legendre submanifold $L$ of ${}^{\Phi}T_{\bfs}^*X_{\rmb}^2$ is a submanifold (with boundary) such that
    \begin{itemize}
        \item the contact form \eqref{eq:contact-form-main} vanishes on $L$;
        \item $L$ is transversal to ${}^{\Phi}T_{\lb \cap \bfs}^*X_{\rmb}^2$ and ${}^{\Phi}T_{\rb \cap \bfs}^*X_{\rmb}^2$.
    \end{itemize}
\end{definition}

Let $L$ be as above. Its boundary has two parts: the part over the corner $\lb \cap \bfs$ and the part over the corner $\rb \cap \bfs$.
We denote them by
\begin{equation}
    \partial_{\bullet}L = L \cap {}^{\Phi}T_{\bullet \cap \bfs}^*X_{\rmb}^2,
\end{equation}
where $\bullet=\lb$ or $\rb$.

The following property of $L$ was taken as an assumption in \cite{hassell1999spectral,hassell2001resolvent}, but as shown in \cite{Hassell-Wunsch-semiclassical-resolvent}, in fact it can be proved from the assumptions in Definition~\ref{def:Legendre-submanifold-codim2} and we list it as a proposition.

\begin{proposition} {\cite[Proposition~4.3]{Hassell-Wunsch-semiclassical-resolvent}}
\label{prop:Legendre-fibration-low}
    Let $\tilde{\phi}_{\bullet}^{L}$ be the restriction of $\tilde{\phi}_{\bullet}$ to $\partial_{\bullet}L$, with $\bullet = \lb$ or $\rb$. Its image $L_{\bullet}$ is an immersed Legendre submanifold of ${}^{\Phi}N^*Z_{\bullet}$ (in the usual sense, instead of Definition~\ref{def:Legendre-submanifold-codim2}) and
    \begin{equation} 
\tilde{\phi}_{\bullet}^{L}: \; \partial_{\bullet}L  \to L_{\bullet}
    \end{equation}
    is locally a fibration with fibers being Legendre submanifolds of ${}^{\sct}T_{\partial F_{\bullet}}^*F_{\bullet}$.
\end{proposition}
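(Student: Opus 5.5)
Work near a point $q\in\partial_{\lb}L$, say; the $\rb$ case follows by switching primed and unprimed variables. Use the coordinates \eqref{eq:coordinates-bf-fibered-bundle} with $\sigma$ defining $\lb\cap\bfs$ inside $\bfs$. The argument follows the standard derivation of the fibred Legendrian structure from the contact condition, and runs in four steps.

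\emph{Step 1: $\partial_{\lb}L$ is isotropic for the two pieces of the contact form.} By the transversality assumption in Definition~\ref{def:Legendre-submanifold-codim2}, $\sigma|_L$ is a defining function for $\partial_{\lb}L$, which is therefore a smooth $(2n-2)$-dimensional submanifold. Write the contact form \eqref{eq:contact-form-main} as $\alpha+\sigma\alpha'$, where
\[
\alpha=d\nu-\mu\cdot dy, \qquad \alpha'=d\nu'-\mu'\cdot dy'.
\]
Restricting to $\sigma=0$ shows that $\alpha$ vanishes on $T\partial_{\lb}L$. Next take the exterior derivative of $(\alpha+\sigma\alpha')|_L=0$:
\[
d\alpha+d\sigma\wedge\alpha'+\sigma\,d\alpha'=0 \quad\text{on } L .
\]
Restrict this to vectors $v,w\in T_p\partial_{\lb}L$ and use that $d\sigma$ annihilates both. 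This gives $d\alpha(v,w)=0$. To obtain information on $\alpha'$, pair the same identity with a transversal vector $\partial$ satisfying $d\sigma(\partial)=1$. At $\sigma=0$ this gives
\[
\alpha'(v)=-d\alpha(\partial,v) \quad\text{for } v\in T\partial_{\lb}L .
\]
So $\alpha'$ restricted to the kernel of $d\tilde\phi_{\lb}$ is controlled by $d\alpha$.

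\emph{Step 2: the image $L_{\lb}$ is Legendrian.} Since $\alpha$ is the pullback under $\tilde\phi_{\lb}$ of the contact form on ${}^{\Phi}N^*Z_{\lb}$, and $\alpha$ vanishes on $\partial_{\lb}L$, the image $L_{\lb}$ is isotropic. The substantive part is a rank statement. We need that $d\tilde\phi^L_{\lb}$ has constant rank $n-1$ near $q$, so that the image is an immersed submanifold of dimension $n-1$ (half of $\dim {}^{\Phi}N^*Z_{\lb}-1=2n-2$). Isotropy already gives rank $\le n-1$.

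For the reverse inequality, argue by dimension count. Let $K=\ker d\tilde\phi^L_{\lb}$. The fibre directions of $\tilde\phi_{\lb}$ are the $(y',\nu',\mu')$-directions, i.e.\ ${}^{\sct}T^*_{\partial F_{\lb}}F_{\lb}$, which has dimension $2n-1$. On $K$ the form $d\alpha$ vanishes identically, because $d\alpha$ is pulled back from the base. Step 1 then gives $\alpha'|_K=0$. Pairing the differentiated identity with $v\in K$ and $w\in T\partial_{\lb}L$ gives moreover $d\alpha'|_K=0$. Hence $K$ is tangent to an isotropic submanifold of the fibre contact manifold, so $\dim K\le n-1$. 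Since $\dim\partial_{\lb}L=2n-2=\operatorname{rank}+\dim K$, both bounds must be equalities: the rank is exactly $n-1$ and $\dim K=n-1$.

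\emph{Step 3: local fibration.} Constant rank gives local submersion onto the immersed image $L_{\lb}$, by the constant-rank theorem. Each fibre is then an $(n-1)$-dimensional integral manifold of $\alpha'$ inside ${}^{\sct}T^*_{\partial F_{\lb}}F_{\lb}$, hence Legendrian.

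\emph{Main obstacle.} The delicate point is Step 2: correctly extracting $\alpha'|_K=0$ and $d\alpha'|_K=0$ from the $\sigma$-derivative of the contact identity. This requires careful handling of $d\sigma\wedge\alpha'$ at $\sigma=0$, using transversality so that a smooth vector field $\partial$ with $d\sigma(\partial)=1$ exists along $L$. First I would check this in local coordinates where $L$ is given by a generating phase function as in \cite{hassell1999spectral}.
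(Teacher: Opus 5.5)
The paper gives no argument of its own here (it simply quotes Hassell--Wunsch), so your proposal has to stand on its own. Most of it is sound: Step 1 is correct, and so is the bound $\operatorname{rank} d\tilde\phi^L_{\lb}\le n-1$ from isotropy of the image. The deduction $\alpha'|_K=0$ from $\alpha'(v)=-d\alpha(\partial,v)$ is also correct, because $\iota_k\,d\alpha=0$ for $k\in K$, as $d\alpha$ is pulled back by the projection to the $(y,\nu,\mu)$-variables. The rank--nullity count and Step 3 are fine too.

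The gap is exactly the step you flagged: $d\alpha'|_{K\times K}=0$ does not follow from pairing the differentiated identity. On $\partial_{\lb}L$ the term $\sigma\,d\alpha'$ vanishes, so the identity reduces to $d\alpha+d\sigma\wedge\alpha'=0$ on $TL$. It contains no $d\alpha'$ at all. Pairing it with $v\in K$ and $w\in T\partial_{\lb}L$, both killed by $d\sigma$, only returns $d\alpha(v,w)=0$, which you already had. Without isotropy you only know $K\subset\ker\alpha'\cap V$, where $V$ denotes the $(y',\nu',\mu')$-directions. That space has dimension $2n-2$, so the dimension count yields nothing. Also, ``tangent to an isotropic submanifold'' presupposes the constant-rank conclusion you are proving; what you need is a pointwise linear statement.

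The missing idea is that $d\alpha'$ on $K$ is a second-order effect, obtained by differentiating your Step~1 identity tangentially along $\partial_{\lb}L$. Let $\iota:\partial_{\lb}L\hookrightarrow L$, and let $W$ be a smooth section of $TL|_{\partial_{\lb}L}$ with $d\sigma(W)=1$ (your $\partial$). Step~1 says $\iota^*\alpha'=-\iota^*(\iota_W\,d\alpha)$ as $1$-forms on $\partial_{\lb}L$. Since $d\alpha=-\sum_i d\mu_i\wedge dy_i$, we have
\[
\iota^*(\iota_W\,d\alpha)=\sum_i\big(dy_i(W)\,\iota^*d\mu_i-d\mu_i(W)\,\iota^*dy_i\big).
\]
Applying $d$ on $\partial_{\lb}L$ gives
\[
\iota^*d\alpha'=-\sum_i\big(d(dy_i(W))\wedge\iota^*d\mu_i-d(d\mu_i(W))\wedge\iota^*dy_i\big).
\]
Every term has a factor $d\mu_i$ or $dy_i$, and these annihilate $K=\ker d\tilde\phi^L_{\lb}$. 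Hence $d\alpha'(k_1,k_2)=0$ for all $k_1,k_2\in K$. So $K$ is an isotropic subspace of the $(2n-2)$-dimensional symplectic space $(\ker\alpha'\cap V,\,d\alpha')$, which gives $\dim K\le n-1$ at every point. With this inserted, the rest of your argument goes through unchanged.
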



\begin{remark}
In terms of the topological or smooth structure, $\partial_{\bullet}L$ in fact almost has a product structure.
But we call it a fibration because the contact structures on the two factors are not of equal importance over the interior of $\lb,\rb$ or corners $\lb \cap \bfs$ and $\rb \cap \bfs$, which is reflected by the fact that the second part on the right hand side of \eqref{eq:contact-form-main} becomes degenerate as $\sigma \to 0$.
In terms of analysis, near the corner $\bullet \cap \bfs$ with $\bullet$ being $\lb$ or $\rb$, this corresponds to the fact that the oscillation carried by the Legendre submanifold of ${}^{\sct}T_{\partial F_{\bullet}}^*F_{\bullet}$ is much slower (with $O(\sigma)$ being the ratio of frequencies) than the oscillation carried by $L_{\bullet}$. So such a fibration is a way of conducting multi-scale analysis.
\end{remark}

We now describe concrete Legendre submanifolds of ${}^{\Phi}T_{\bfs}^*X_{\rmb}^2$ that we will use. The first Legendre submanifold $L^\sharp$ is easy to define: it is the submanifold
\begin{equation} \label{eq:Lsharplow-def}
\Lsharplow = \Lsharplow_+ \cup \Lsharplow_- ,
\end{equation}
where 
\begin{equation} \label{eq:Lsharplow-pm-def}
  \Lsharplow_\pm = \{ (\sigma, y, y', \mu, \mu', \nu, \nu') \mid \mu = \mu' = 0, \ \nu = \nu' = \pm 1 \}.
\end{equation}


The other Legendre submanifold, $L^{\bfs}$, is more intricate and interesting.
It encodes the geodesic flow on the cone over $(Y, \Ymetric)$ where $\Ymetric$ is as in \eqref{eq:def-exact-conic-metric}. Let $(y, \eta)$ be an element of the cosphere bundle $S^* \partial X$ and $\gamma(s) = (y(s), \eta(s))$ be the geodesic with $(y(0), \eta(0)) = (y, \eta)$. Then  $L^{\bfs}$ is given by the union of the leaves $\gamma^2 = \gamma^2(y, \eta)$, with $\textrm{clos}$ standing for taking closure:
\begin{multline}\label{eq:Lbf-definition-gamma^2}
\gamma^2 = \textrm{clos} \big\{ (\sigma = x/x', y, y', \mu, \mu', \nu, \nu') \mid y = y(s_l), y' = y(s_r), \mu = \eta(s_l) \sin s_l, \\ \mu' = -\eta(s_r) \sin s_r, \nu = -\cos s_l, \nu' = \cos s_r, \sigma = \sin s_l/\sin s_r, (s_l, s_r) \in (0, \pi)^2 \big\},
\end{multline}
as $(y, \eta)$ ranges over $S^* \partial X$. We note that this closure includes the sets
\begin{equation}\label{eq:Tpm}
T_\pm = \big\{ (\sigma, y, y', \mu, \mu', \nu, \nu') \mid y=y', \ \sigma \in \R, \  \mu = \mu' = 0, \ \nu = -\nu' = \pm 1 \},
\end{equation}
corresponding to the limit $s_l, s_r \to 0$ and $s_l, s_r \to \pi$. Notice that this definition of $L^{\bfs}$ applies to $(X_0,g_0)$ directly and we will still denote it by $L^{\bfs}$ in that case.

In terms of \eqref{eq:Lbf-definition-gamma^2}, $\Lsharplow_+$ (resp. $\Lsharplow_-$) in \eqref{eq:Lsharplow-pm-def} will intersect $L^{\bfs}$ at points corresponding to $s_l \to \pi$ and $s_r \to 0$ (resp. $s_l \to 0$ and $s_r \to \pi$).

Conceptually, one can think of Legendre submanifolds above as families of Legendre submanifolds that are linear in $\lambda$ (or $h=\lambda^{-1}$, in the high-energy regime) in the fiber component, and rescaled to the one with $\lambda = 1$ to capture its geometric or dynamical structure. 

The statement that the spectral measure is a Legendre distribution with respect to the pair of Legendre submanifolds $(L^{\bfs}, L^\sharp)$ means that the Schwartz kernel of the spectral measure can be expressed as an oscillatory function or an oscillatory integral, with a phase function that `parametrizes' the Legendre submanifold. 
We now discuss the definition of parametrization of Legendre submanifolds. We state the definition of $L_{\lb}$ and $L_{\rb}$ first.

\begin{definition} \label{def:parametrization-lb-rb-low}
Let $L_{\lb}$ be a Legendre submanifold as in Proposition~\ref{prop:Legendre-fibration-low}. We say that $\Phi_{1}(y,v)$ with extra parameter $v \in \R^k$ (locally) parametrizes it, if $L_{\lb}$ locally can be written as 
\begin{equation}
    L_{\lb} = \{ \nu = \Phi_1, \mu = d_y\Phi_1 | \; d_v\Phi_1 = 0 \}.
\end{equation}
Parametrizations of $L_{\rb}$ are defined in the same way, except that $(y,\nu,\mu)$ is replaced by $(y',\nu', \mu')$.
\end{definition}

\begin{definition} \label{def:Legendre-parametrization-low}
Let $L$ be a Legendre submanifold of ${}^{\Phi}T^*_{\bfs} X_{\mathrm{b}}^2$ in the sense of Definition~\ref{def:Legendre-submanifold-codim2}. 
Away from corners $\lb \cap \bfs$, we say that $\Phi(\sigma, y, y', v)$ (depending on extra variables $v=(v_1, \dots, v_k) \in \R^k$), or more accurately $\Phi/x$,  (locally) parametrizes $L$, if $L$ locally can be written as
\begin{multline}
L = \big\{ \mu = d_y \Phi(\sigma, y, y',v), \ \mu' = \sigma^{-1} d_{y'} \Phi(\sigma, y, y',v), \\ \nu = \Phi(\sigma, y, y',v) - \sigma d_\sigma \Phi(\sigma, y, y',v), \ \nu' =  d_\sigma \Phi(\sigma, y, y',v) \mid d_v \Phi = 0 \big\}.
\label{LPhiparam}\end{multline}
Near the corner $\lb \cap \bfs$, we say 
\begin{equation} \label{eq:Phi-low-codim2}
    \Phi(\sigma,y,y',v,w) = \Phi_1(y,v) + \sigma \Phi_2(\sigma,y,y',v,w),
\end{equation}
parametrizes $L$, if $L$ can be locally written as 
\begin{align}    
\begin{split}
L = \big\{ \mu = d_y \Phi, &\ \mu' = \sigma^{-1} d_{y'} \Phi= d_{y'} \Phi_2, \nu = \Phi - \sigma d_\sigma \Phi, \\ 
&\nu' =   d_\sigma \Phi \mid d_v \Phi = 0, d_w\Phi_2 =0 \big\}.
\end{split}\label{LPhiparam-codim2}
\end{align}

\end{definition}

If we define the critical set of $\Phi$ by
\begin{equation} \label{eq:C-Phi-defn-1}
    C_\Phi = \{ (\sigma,y,y',v): d_v\Phi=0 \}, 
\end{equation}
in the case away from corners, while by 
\begin{equation} \label{eq:C-Phi-defn-2}
    C_\Phi = \big\{ (\sigma,y,y',v,w): d_v\Phi=0, d_w\Phi_2 = 0 \big\}, 
\end{equation}
when we are near $\lb \cap \bfs$, then the map sending $(\sigma,y,y',v)$ (resp. $(\sigma,y,y',v,w)$, in the second case) to $(\sigma,y,y',\mu,\mu',\nu,\nu')$ as in \eqref{LPhiparam} (resp. \eqref{LPhiparam-codim2}) is a diffeomorphism from $C_{\Phi}$ to $L^\bfs$ locally.
Next, we recall the basic property of parametrizations.


\begin{proposition} \label{prop:minimal-parametrization-low}
Let $L, \Phi$ be as in Definition~\ref{def:Legendre-parametrization-low}. Then $\Phi(\sigma, y, y',v)$ always exists locally. 
In addition, let $k$ be the drop of rank of the projection $L \to \bfs$ at $\msf{q}_0 \in L$, 
then the smallest number of extra parameters in a parametrization of $L$ near $\msf{q}_0$ is $k$.
In particular, $k \leq n-2$ when $L=L^{\bfs}$.
We will call such a parametrization \emph{minimal}.
\end{proposition}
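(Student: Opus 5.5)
Three things need proof: local existence of a parametrization, that the minimal number of extra parameters equals the rank drop $k$, and that $k\le n-2$ when $L=L^{\bfs}$. The first two are the standard H\"ormander argument for Lagrangian submanifolds, moved to the contact setting. The bound is a computation with the leaves $\gamma^2$ in \eqref{eq:Lbf-definition-gamma^2}.

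\emph{Existence and minimality away from $\lb\cap\bfs$.} The idea is to symplectize. Add one fiber variable, say $\lambda$, with the $\nu$ direction playing the role of the radial direction. Then $L$ corresponds to a conic Lagrangian $\Lambda$ in $T^*$ of the $(2n-1)$-dimensional manifold $\bfs$, with coordinates $(\sigma,y,y')$, extended by one extra base variable $1/x$ that carries the homogeneity. The contact form \eqref{eq:contact-form-main} is exactly what makes $\Phi/x$ a generating function, as in \eqref{LPhiparam}.

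Near $\msf{q}_0$, let the differential of the projection $L\to\bfs$ have rank $2n-1-k$. Choose coordinates on the fiber $(\mu,\mu',\nu')$ and split the base coordinates as $(\theta_1,\theta_2)$ with $\theta_2$ of dimension $k$. Arrange that $L$ is locally a graph over $(\theta_1,\xi_2)$, where $\xi_2$ is dual to $\theta_2$. This is possible by the usual linear-algebra lemma: some choice of $k$ coordinates dual to the kernel directions works.

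Set $\Phi=S(\theta_1,\xi_2)+\theta_2\cdot\xi_2$, with $v=\xi_2\in\R^k$. Here $S$ is the partial Legendre transform of the local contact primitive, and it exists because the contact form vanishes on $L$. Then:
\begin{itemize}
\item $d_v\Phi=0$ recovers $L$.
\item $d_v\Phi=0$ is a clean (nondegenerate) condition, so this is a parametrization with $k$ parameters.
\end{itemize}

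For minimality, take any nondegenerate parametrization with $k'$ parameters. The map $C_\Phi\to L$ is a diffeomorphism. The kernel of the projection $C_\Phi\to\bfs$ is contained in $\{d\sigma=dy=dy'=0\}$, i.e. in directions of $v$ alone, so its dimension is at most $k'$. Hence $k\le k'$.

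\emph{Near $\lb\cap\bfs$.} Here I use Proposition~\ref{prop:Legendre-fibration-low}.
\begin{enumerate}
\item Parametrize the base Legendrian $L_{\lb}$ by $\Phi_1(y,v)$.
\item Parametrize the fiber Legendrians in ${}^{\sct}T^*_{\partial F}F$ by $\Phi_2$ with extra variables $w$, smoothly in the fiber parameter.
\item Combine them as in \eqref{eq:Phi-low-codim2}. Transversality of $L$ to the corner, from Definition~\ref{def:Legendre-submanifold-codim2}, lets $\Phi_2$ extend smoothly in $\sigma$.
\end{enumerate}
The rank drop of $L\to\bfs$ at the corner splits as the drop for $L_{\lb}$ plus the drop for the fibers. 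The minimal counts for these two pieces add, so minimality follows from the two separate minimality statements.

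\emph{The bound for $L^{\bfs}$.} I would use coordinates $(y,\eta,s_l,s_r)$ on $L^{\bfs}$, which has dimension $(2n-3)+2=2n-1$. The projection is
\[
(y,\eta,s_l,s_r)\mapsto\Big(\frac{\sin s_l}{\sin s_r},\,y(s_l),\,y(s_r)\Big).
\]
Compute its differential as follows:
\begin{itemize}
\item Moving $s_l$ and $s_r$ separately gives rank $2$ among the $(\sigma,y)$ directions. The $s_l$-derivative has a component along $\dot y(s_l)$, and the $\sigma$ component is nonzero unless the point is special.
\item Shifting the base point along the geodesic is accounted for by the $s$-variables.
\item The remaining variations of $(y,\eta)$ transverse to the flow give $2n-4$ dimensions, namely $n-2$ position directions and $n-2$ direction directions.
\end{itemize}
The map from these $2n-4$ directions to $(\delta y(s_l),\delta y(s_r))$ is governed by Jacobi fields: its kernel consists of perpendicular Jacobi fields vanishing at both endpoints. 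Those form a space of dimension at most $n-2$, the conjugate multiplicity along $Y$, which has dimension $n-1$.

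The main obstacle is the limiting points $T_\pm$ and the intersections with $\Lsharplow$, where $s_l$ or $s_r$ tends to $0$ or $\pi$ and the coordinates degenerate. There I would switch to the corner coordinates and the fibration picture above, checking rank there directly. The expected outcome is that the drop is still at most $n-2$, e.g. on $\R^n$ at antipodal points.
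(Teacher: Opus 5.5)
Your existence argument is the partial Legendre transform for the contact form $d\overline{\nu}-\nu_1\,d\sigma-\mu\cdot dy-\sigma\mu'\cdot dy'$. For $\sigma>0$ this is literally the contact form of $J^1(\bfs)$, so no symplectization is needed. That is the construction the paper only cites, and your minimality argument (the kernel of $C_\Phi\to\bfs$ lies in the $v$-directions) is the paper's own. The problems are in the other two parts.

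\textbf{The bound $k\le n-2$.} Your rank count for $L^{\bfs}$ mishandles one direction. In the coordinates $(y,\eta,s_l,s_r)$ the $2n-1$ tangent directions are:
the shift $\partial_t$ of the base point along the geodesic,
$\partial_{s_l}$ and $\partial_{s_r}$,
and $2n-4$ perpendicular Jacobi directions.
The shift is not ``accounted for by the $s$-variables''; if it were, your dimension count would be off by one.

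Since $\sigma=\sin s_l/\sin s_r$ and perpendicular Jacobi fields stay perpendicular, the images of the first three directions lie in $\R_\sigma\oplus\R\dot y(s_l)\oplus\R\dot y(s_r)$. The images of the last $2n-4$ lie in $0\oplus\dot y(s_l)^\perp\oplus\dot y(s_r)^\perp$. So the ranks add. In the components $(\delta\sigma,\dot y(s_l),\dot y(s_r))$, with rows $\partial_t,\partial_{s_l},\partial_{s_r}$, the first block is
\[
\begin{pmatrix} 0 & 1 & 1\\ \cos s_l/\sin s_r & 1 & 0\\ -\sin s_l\cos s_r/\sin^2 s_r & 0 & 1\end{pmatrix},\qquad \det=-\frac{\sin(s_r-s_l)}{\sin^2 s_r}.
\]
For $s_l\neq s_r$ this block has rank $3$. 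The drop is then the dimension of perpendicular Jacobi fields vanishing at $s_l$ and $s_r$, i.e. the conjugate multiplicity, which is $\le n-2$. This completes your argument and is a direct version of the paper's exponential-map remark.

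On $\{s_l=s_r\}=L^{\bfs}\cap N^*\Diagb$, however, the block has rank $2$. The Jacobi kernel there is every $J$ with $J(s_l)=0$, of dimension $n-2$, so the drop is $n-1$. Indeed, the fibre of $L^{\bfs}\to\bfs$ over $(1,y_0,y_0)$ contains the whole sphere $\{|\mu|^2+\nu^2=1\}$. So $k\le n-2$ holds only away from the diagonal, and you must state that restriction. The paper's one-line justification misses this too. The bound is only used off a neighbourhood of the diagonal: that neighbourhood is excluded via $U_{\Delta^{\calchigh}}$ in $\IFint$, and the $j=j'$ pieces are handled by Hassell--Zhang.

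\textbf{The corner.} Your claim that the rank drop at $\lb\cap\bfs$ equals the drop of $L_{\lb}\to Z_{\lb}$ plus the fibre drop is false in general. A $dy$-null direction of $L_{\lb}$ lifts to a $dy'$-null direction of $\partial_{\lb}L$ only if the induced $dy'$-component lies in the image of the fibre projection.

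Example: take one-dimensional $y,y'$ and $\Phi=yv+\sigma(y'-v)w$. This gives
\[
L=\{y=\sigma w,\ y'=v,\ \mu=v,\ \mu'=w,\ \nu=\sigma vw,\ \nu'=0\}.
\]
Both the $L_{\lb}$ drop and the fibre drop equal $1$, so every corner-form parametrization needs two parameters. But $(\sigma,v,w)\mapsto(\sigma,\sigma w,v)$ has Jacobian $-\sigma$, so the drop at $\sigma=0$ is only $1$.

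For $L^{\bfs}$ this is harmless. $L^{\bfs}_{\lb}$ projects diffeomorphically (Proposition~\ref{prop:Lbf-Legendrian-dis-simplification-low}), so the corner drop is exactly the fibre drop and your fibred construction is minimal there.
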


\begin{proof}
 The existence of such parametrizations and in particular with number of parameters equal to the rank drop follows from the construction in \cite[Proposition~5]{melrose1996scattering} and \cite[Proposition~3.3]{hassell1999spectral}. 
 The minimality of such $k$ was shown in \cite[Equation~(6.6)]{melrose1996scattering} since $k_{\min}$ there is precisely the rank drop of the projection $L \to \bfs$ and we briefly explain this.
This is because, with a parametrization, one can select some components of $v$ and $(\sigma,y,y')$ to form a coordinate system of $C_{\Phi}$, which effectively gives a coordinate system on $L$ when we identify them via the parametrization map. Then the number of selected $v_i$ has to be at least the number of $dy_j$ or $dy'_j$ such that $dy_j|_{T_{\mathsf{q}_0}L}=0$ or $dy'_j|_{T_{\mathsf{q}_0}L}=0$, which is the rank drop of the projection, as desired. In addition, $k \leq n-2$ is because the rank drop of the projection $L^{\bfs}\to\bfs$ can be at most $n-2$, which follows from the fact that the exponential map (on $Y$) is always non-degenerate on the radial direction and can at most have rank drop $n-2$.
\end{proof}

As an example, consider the case of a Legendre submanifold $L$ that projects diffeomorphically to the base $\bfs$, in the sense that the projection from ${}^\Phi T^* _{\bfs}X^2_b$ to $\bfs$ restricts to a (local) diffeomorphism from $L$ to $\bfs$. In this case, we can take $v$ to be empty and there exists a function $\Phi : \bfs \to \R$ such that (locally) $L$ is the graph of the differential of the function $\Phi/x$, or in coordinates,
\begin{multline} \label{eq:Lparam-example-projectable}
L = \Big\{ \mu = d_y \Phi(\sigma, y, y'), \ \mu' = \sigma^{-1} d_{y'} \Phi(\sigma, y, y'), \\ \nu = \Phi(\sigma, y, y') - \sigma d_\sigma \Phi(\sigma, y, y'), \ \nu' = d_\sigma \Phi(\sigma, y, y') \Big\}.
\end{multline}

Observe that if we take the union of the points of \eqref{eq:Lbf-definition-gamma^2} with $s_l=s_r$, over all $(y, \eta)\in S^* \partial X$, then we get a codimension one submanifold of $L^{\bfs}$, which is also a codimension one submanifold of the conormal bundle of the diagonal $N^* \Diagb$, given by
$$
N^* \Diagb = \big\{ (\sigma, y, y', \mu, \mu', \nu, \nu') \mid y=y', \ \sigma = 1, \ \mu = -\mu', \ \nu = -\nu' \big\}.
$$
It turns out that in a deleted neighbourhood of $N^* \Diagb$, $L^{\bfs}$ projects in a 2:1 fashion to the base $\bfs$, i.e. $L^{\bfs} \setminus N^* \Diagb$ consists of 2 sheets, each of which projects diffeomorphically to the base $\bfs$, and is parametrized by the function $\pm d_{\conic}$, where $d_{\conic}$ is  the distance function  on the cone over $\partial X$.
 The conic distance $d_{\conic}$ has an explicit expression when
$d_{\partial X}(y,y') < \pi$.  Writing $r = 1/x$, $r' = 1/x' =
\sigma /x$, it takes the form
\begin{equation} \label{dconic}
d_{\conic}(y,y', r, r') = \sqrt{ r^2 + {r'}^2 - 2r r' \cos
d_{\partial X}(y, y')} = r \sqrt{1 + \sigma^2 -2\sigma \cos
d_{\partial X}(y, y')}. \end{equation}
Notice that $d_{\conic}(y,y', r, r')/r$ indeed has the form $\Phi(\sigma, y, y')/x$, and is smooth provided that $\cos d_{h}(y, y')$ is smooth, i.e., $d_{h}(y, y')$ is less than the injective radius on $(Y, h)$.


We now introduce the class of Legendre distributions. We first consider the low-energy regime. 
Near $\BFS \cup \LB \cup \RB$, it is a family of Legendre distributions parametrized by $\lambda$ and conormal in $\lambda$ at $\lambda=0$. Away from $\BFS \cup \LB \cup \RB$, it is polyhomogeneous with index family 
\begin{equation} \label{eq:cal-B-index}
\mathcal{B} = (\mathcal{B}_{\bfs_0}, \mathcal{B}_{\lb_0}, \mathcal{B}_{\rb_0} , \mathcal{B}_{\zf}) 
\end{equation}
consisting of index sets for each of the boundary hypersurfaces $\{\bfs_0,\lb_0,\rb_0, \zf \}$ of $X_{\rmb,\flat}^2$.
And we let $m,r_{\LB},r_{\RB} \in \R$ be orders associated with the `main face' $\BFS$ and $\LB,\RB$.

We first recall the definition of Legendre distributions at a fixed energy level introduced in \cite[Section~4]{hassell1999spectral} and \cite[Section~2]{hassell2001resolvent}.
Recall $X_{\rmb}^2$ in \eqref{eq:b-double-space} and $\lb,\rb,\bfs$ defined after it, we use ${}^{\Phi}\Omega^{1/2}$
to denote the half-density bundle obtained from lifting the scattering density of left and right factors to $X_{\rmb}^2$.
\footnote{This was introduced in \cite[Equation~(4.5)]{hassell1999spectral}, where it was called ${}^{\Phi}\Omega^{1/2}$. We changed the notation to match our notation for ${}^{\Phi}T^*X_{\rmb}^2$. }
So a typical non-vanishing section of it is $|dgdg'|^{1/2}$, where $|dg|$ and $|dg'|$ are Riemannian densities lifted to $X_{\rmb}^2$ from the left and right factors.

\begin{definition} \label{def:Legendre-fixed-energy}
Let $L$ be a Legendre submanifold of ${}^{\Phi}T_{\bfs}^* X_{\rmb}^2$. It in fact gives a family of Legendre submanifolds of ${}^{\Phi}T_{\bfs}^* X_{\rmb}^2$ via dilating by $\lambda$ in the fiber. We denote this Legendre submanifold by $\lambda L$. 
Then $I^{m,r_\lb,r_\rb}(X_{\rmb}^2,\lambda L;{}^{\Phi}\Omega^{1/2})$, 
which we call the class of Legendre distributions (for fixed energy level) associated with $\lambda L$, consists of distributions that can be written as 
\begin{equation*}
    u = \sum_{j=1}^6 u_j,
\end{equation*}
where
\begin{itemize}
  \item 
$u_1$ is a finite sum of terms supported near a point in the interior of $\lb$, and of the form
\begin{equation} \label{eq:Legendrian-dis-fixed-lambda-lb}
 x^{r_\lb-\frac{k}{2}} \int_{\R^k}  e^{i\lambda \Phi(y,v)/x} a(x,y,z',v)dv |dgdg'|^{1/2}
\end{equation}
where $z'$ is the coordinate system on $X$ lifted to $X_{\rmb}^2$ from the right factor, and $a \in C_c^\infty$, and $\Phi(y,v)$ parametrizes $L_{\lb}$ in Proposition~\ref{prop:Legendre-fibration-low} in the sense of Definition~\ref{def:parametrization-lb-rb-low} with $v \in \R^k$ being the parameter;
 

\item 
$u_2$ is a finite sum of terms supported near a point in the corner $\lb \cap \bfs$ taking the form
\begin{equation} \label{eq:Legendrian-dis-fixed-lambda-lb-bf}
(x')^{m - \frac{k+k'}{2} + \frac{n}{2} }\sigma^{r_\lb-\frac{k}{2}}
\int_{\R^{k+k'}} e^{i\lambda \Phi(\sigma,y,y',v,w)/x}
a(x',\sigma,y,y',v,w)  dvdw |dgdg'|^{1/2},
\end{equation}
where $\Phi$ is as in \eqref{eq:Phi-low-codim2}, with $v \in \R^{k}$, $w \in \R^{k'}$ as extra parameters and parametrizes $L$ locally, and $a \in C_c^\infty$;

\item $u_3$ is a finite sum of terms supported near a point in the interior of $\bfs$ taking the form
\begin{equation} \label{eq:Legendrian-dis-fixed-lambda-bf}
x^{m-\frac{k}{2}+\frac{n}{2}}\int_{\R^k} e^{i\lambda \Phi(\sigma,y,y',v) / x} a(x,\sigma,y,y',v) dv  |dgdg'|^{1/2},
\end{equation}
where $\Phi(\sigma,y,y',v)$ with $v \in \R^k$ being the extra parameter parametrizes $L$ as in \eqref{LPhiparam} and $a \in C_c^\infty$;

\item $u_4$ is a finite sum of terms supported near a point in the interior of $\rb$ of the same form as \eqref{eq:Legendrian-dis-fixed-lambda-lb}, except with primed and un-primed variables switched and with $k$ replaced by $k'$ as well.

\item $u_5$ is a finite sum of terms supported near a point in $\rb \cap \bfs$, and it is of the  same form as \eqref{eq:Legendrian-dis-fixed-lambda-lb-bf}, except with primed and un-primed variables switched and $\sigma$ replaced by $\theta = \sigma^{-1}$ and $k,k'$ interchanged as well;

\item $u_6 \in \mathcal{S}(X_{\rmb}^2)$.
\end{itemize}

\end{definition}

Using Definition~\ref{def:Legendre-fixed-energy}, the definition of low-energy Legendre distributions is as follows.
\begin{definition} \label{def:Legendre-distribution-low-energy}
Let $L$ be a Legendre submanifold of ${}^{\Phi}T_{\bfs}^* X_{\rmb}^2$ and denote the corresponding dilated Legendre submanifold by $\lambda L$ as above.
Then the class $I_{\calclow}^{m,r_{\LB},r_{\RB};\mathcal{B}}(X_{\rmb,\flat}^2,L ; \Omega_{\flat}^{1/2})$ of Legendre distributions consists of half-densities $u$ on $X_{\rmb,\flat}^2$ that can be decomposed into
\begin{equation*}
    u = \sum_{j=0}^6 u_j,
\end{equation*}
where 
\begin{enumerate}
\item $u_0$ is supported in $\{ \lambda \geq \epsilon \}$ for some $\epsilon > 0$, and $u_0 \otimes |d\lambda/\lambda|^{-1/2}$ is a family of Legendre distributions in $I^{m, r_{\LB}, r_{\RB}}(X_{\rmb}^2,  \lambda L; {}^{\Phi}\Omega^{1/2})$ with symbol depending smoothly on $\lambda$;

\item $u_1$ is supported close to $\bfs_0 \cap \BFS$ and away from $\LB \cup \RB$, and is given by a finite sum of expressions
\begin{equation}
\rho^{m-k/2+n/2} \lambda^n  \int_{\R^k} e^{i \Phi(\sigma, y, y' , v)/\rho} a(\lambda, \rho, \sigma, y, y' ,v) \, dv \Big| \frac{dg dg' d\lambda}{\lambda}\Big|^{1/2}
\label{u1}\end{equation}
where $\sigma = x/x'$, $\rho=x/\lambda$, $ \Phi$ locally parametrizes $L$ in the sense of Definition~\ref{def:Legendre-parametrization-low} 
and $a$ is polyhomogeneous conormal in $\lambda$, with respect to the index set $\mathcal{B}_{\bfs_0}$, at $\lambda = 0$ and is smooth in all other variables;

\item $u_2$ is supported close to $\bfs_0 \cap \BFS \cap \LB$, and is given by a finite sum of expressions
\begin{multline}
{\rho'}^{m-(k+k')/2+n/2} \sigma^{r_{\LB} - k/2} \lambda^n  \\ \times \int_{\R^{k+k'}} e^{i\big(  \Phi_1(y,v) + \sigma  \Phi_2(\sigma, y, y' , v, v') \big)/\rho}  a(\lambda, \rho', \sigma, y, y' , v, v') \, dv \, dv' \,  \Big| \frac{dg dg' d\lambda}{\lambda}\Big|^{1/2};
\label{u2}\end{multline}
where $\rho'=x'/\lambda$, $ \Phi_1 + \sigma  \Phi_2$ locally parametrizes $L$ in the sense of Definition~\ref{def:Legendre-parametrization-low} and $a$ is polyhomogeneous conormal in $\lambda$, with respect to the index set $\mathcal{B}_{\bfs_0}$, at $\lambda = 0$ and is smooth in all other variables;

\item $u_3$ is supported close to $\bfs_0 \cap \BFS \cap \RB$, and is given by an expression similar to $u_2$, with $(\sigma,x,y),k$ and $(\theta=x'/x,x',y'),k'$ interchanged, and $r_{\LB}$ replaced by $r_{\RB}$;

\item $u_4$ is supported close to $\LB \cap \bfs_0$ and away from $\BFS$, and is given by a finite sum of expressions of the form
\begin{equation}
\rho^{r_{\LB} - k/2} \lambda^n \int_{\R^k}
e^{i \Phi_1(y,v) /\rho} a(\rho, x', 1/\rho', y, y',  v) \, dv \Big| \frac{dg dg' d\lambda}{\lambda}\Big|^{1/2}
\label{u4}\end{equation}
where $ \Phi_1$ locally parametrizes $L_{\lb}$ and $a$ is polyhomogeneous conormal  in $(x', 1/\rho')$, with respect to the index sets $(\mathcal{B}_{\bfs_0}, \mathcal{B}_{\lb_0})$ and is smooth in all other variables;

\item $u_5$ is supported close to $\RB \cap \bfs_0$ and away from $\BFS$, and is given by a similar expression to $u_4$ with $(x,y,\rho)$ and $(x',y',\rho')$ interchanged, $L_{\lb}$ and $r_{\LB}$ replaced by $L_{\rb}$, $r_{\RB}$, $k$ replaced by $k'$, and $\mathcal{B}_{\lb_0}$ replaced by $\mathcal{B}_{\rb_0}$;

\item $u_6$ is supported away from $\BFS \cup \LB \cup \RB$ and is of the form $a \tau$ where $\tau$ is a smooth nonvanishing section of $\Omega_{\flat}^{1/2}$ and  $a$ is polyhomogeneous with index family $\mathcal{B}$ at $\bfs_0, \lb_0, \rb_0, \zf$.

\end{enumerate}
\end{definition}


Now we discuss some further simplifications due to the special structure of $L^{\bfs}$ and $L^{\#}$.

\begin{proposition} \label{prop:Lbf-Legendrian-dis-simplification-low}
    Let $L = L^{\bfs}$. Oscillatory integrals in the local representation of $u \in I_{\calclow}^{m,r_{\LB},r_{\RB};\mathcal{B}}(X_{\rmb,\flat}^2,L ; \Omega_{\flat}^{1/2})$ can be simplified so that no parameter is needed to parametrize $L^{\bfs}_{\lb}$ and $L^{\bfs}_{\rb}$. Concretely, we have:
    \begin{enumerate}
        \item  \label{Lbf-one-side-parametrization-fixed-energy}
        For $u \in I^{m,r_{LB},r_{\RB}}(X_{\rmb}^2,\lambda L;{}^{\Phi}\Omega^{1/2})$ as in Definition~\ref{def:Legendre-fixed-energy},
        the number of extra parameters $k$ in $u_1,u_2$ and $k'$ in $u_4,u_5$ can be taken to be $0$.

    \item \label{Lbf-one-side-parametrization-low}
    For $u \in I_{\calclow}^{m,r_{\LB},r_{\RB};\mathcal{B}}(X_{\rmb,\flat}^2,L ; \Omega_{\flat}^{1/2})$ in  Definition~\ref{def:Legendre-distribution-low-energy}, 
    we have a simplification for the $u_0$ term
    as in \eqref{Lbf-one-side-parametrization-fixed-energy} above, and in addition the number of extra parameters $k$ in $u_2,u_4$ and $k'$ in $u_3,u_5$ can be taken to be $0$.
        \end{enumerate}
When $L = L^{\#}$ instead, no extra parameter is needed in all parts.
\end{proposition}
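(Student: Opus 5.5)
The plan is to reduce the statement to the structure of the boundary Legendrians $L^{\bfs}_{\lb}$, $L^{\bfs}_{\rb}$ (and $L^\#_{\lb}, L^\#_{\rb}$) from Proposition~\ref{prop:Legendre-fibration-low}. By Proposition~\ref{prop:minimal-parametrization-low}, applied to the Legendre submanifolds $L_{\lb}\subset {}^{\Phi}N^*Z_{\lb}={}^{\sct}T^*_{\partial X}X$ in the usual sense, the minimal number of parameters needed to parametrize $L_{\lb}$ near a point equals the rank drop of the projection $L_{\lb}\to \partial X$. So it suffices to show that $L^{\bfs}_{\lb}$ projects diffeomorphically (locally) onto $\partial X$, i.e. it is the graph of the differential of a function $\Phi_1(y)$ in the sense of Definition~\ref{def:parametrization-lb-rb-low} with $k=0$.

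To compute $L^{\bfs}_{\lb}$, I will take the limit $\sigma=\sin s_l/\sin s_r\to 0$ in \eqref{eq:Lbf-definition-gamma^2}. With $s_r$ in the interior of $(0,\pi)$, this forces $s_l\to 0$ or $s_l\to\pi$. In the first case $\mu=\eta(s_l)\sin s_l\to 0$, $\nu=-\cos s_l\to -1$, and $y=y(s_l)\to y(0)$; in the second case $\mu\to 0$, $\nu\to 1$. Hence the image under $\tilde\phi_{\lb}$ is contained in $\{\mu=0,\ \nu=\pm1\}$, which is a Legendre submanifold of ${}^{\sct}T^*_{\partial X}X$ parametrized by the constant functions $\Phi_1=\pm1$ with no extra parameter. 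Since $L^{\bfs}_{\lb}$ is, by Proposition~\ref{prop:Legendre-fibration-low}, an $(n-1)$-dimensional immersed Legendrian, dimension counting shows it coincides locally with these sheets. The same argument with primed and unprimed variables swapped handles $L^{\bfs}_{\rb}$, giving $\nu'=\pm1$, $\mu'=0$. For $L^\#$ this is immediate from \eqref{eq:Lsharplow-pm-def}, and $L^\#$ itself projects diffeomorphically to $\bfs$ with phase $\pm(1-\sigma)$ (or $\pm 1$ in the form $\Phi_1+\sigma\Phi_2$). So no parameter is needed in any part for $L^\#$.

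Next I will transfer this to the oscillatory integrals. In $u_1$ (fixed energy, \eqref{eq:Legendrian-dis-fixed-lambda-lb}) and $u_4$ (low energy, \eqref{u4}), the phase only needs to parametrize $L_{\lb}$, so I can take $\Phi_1(y)=\pm1$ and $k=0$. Any representation with $k>0$ and a non-minimal phase can be converted by the standard equivalence-of-phase-functions argument, eliminating nondegenerate parameters by stationary phase. This exchange of parameters against the $\rho^{-k/2}$ (resp. $x^{-k/2}$) prefactor preserves the stated orders. For $u_2$ (resp. \eqref{eq:Legendrian-dis-fixed-lambda-lb-bf}), the phase has the form $\Phi_1(y,v)+\sigma\Phi_2$. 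Here $v$ parametrizes $L_{\lb}$ and $w$ parametrizes the fibres of $\tilde\phi^L_{\lb}$. Choosing $\Phi_1=\pm1$ removes $v$, leaving only the $w$-parameters, which are counted in $k'$. The symmetric statements give $u_3,u_5$.

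I expect the main obstacle to be justifying the reduction in representations near the corner $\lb\cap\bfs$. This requires checking that the phase $\Phi_1(y,v)+\sigma\Phi_2(\sigma,y,y',v,w)$ can be replaced by one with $\Phi_1$ independent of $v$ while $\Phi_2$ still parametrizes the fibration structure. It also requires that the extra $v$-integration can be removed by stationary phase in $v$ uniformly as $\sigma\to0$, since the large parameter is $1/\rho$ (or $\lambda/x$), which is independent of $\sigma$. I would first reparametrize $C_\Phi$ using the diffeomorphism with $L$. I would then invoke the Hörmander-type equivalence theorem for Legendre parametrizations, as in \cite[Proposition~5]{melrose1996scattering} and \cite[Proposition~3.3]{hassell1999spectral}, adapted to the fibred setting.
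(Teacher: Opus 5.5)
Your proposal is correct and follows the paper's argument. Like the paper, you identify $L^{\bfs}_{\lb}=\{\nu=\pm1,\mu=0\}$ and $L^{\bfs}_{\rb}=\{\nu'=\pm1,\mu'=0\}$, observe that their projections to $Z_{\lb},Z_{\rb}$ are local diffeomorphisms (a two-sheeted cover) so no extra parameter is needed, and treat $L^\#$ through its diffeomorphic projection to $\bfs$; your limit computation and your remarks on equivalence of phase functions near $\lb\cap\bfs$ just fill in details the paper leaves implicit. One small slip: in the sense of Definition~\ref{def:Legendre-parametrization-low} the phase for $L^\#_\pm$ is $\pm(1+\sigma)$, not $\pm(1-\sigma)$, since the latter gives $\nu'=-\nu$; this does not affect the conclusion.
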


\begin{proof}
The conclusion for $L = L^{\#}$ follows from the fact that it projects to the base diffeomorphically and the discussion in the example \eqref{eq:Lparam-example-projectable}.

Now we consider the case $L^{\bfs}$. The claimed simplification is because
\begin{equation}
    L^{\bfs}_{\lb} = \{ \nu = \pm 1, \mu = 0  \}  \subset {}^{\Phi}N^*Z_{\lb}, 
    \quad L^{\bfs}_{\rb} = \{ \nu' = \pm 1, \mu' = 0  \}  \subset {}^{\Phi}N^*Z_{\rb},
\end{equation}
which leads to the fact that the projections 
\begin{equation}
   L^{\bfs}_{\lb} \to Z_{\lb}, \quad L^{\bfs}_{\rb} \to Z_{\rb}
\end{equation}
are local diffeomorphisms (in fact, a $2$-sheet cover). So no extra parameter is needed for parametrizing $L^{\bfs}_{\lb}$ and $L^{\bfs}_{\rb}$ and the conclusion follows.
    
\end{proof}


\subsection{Legendre distributions of Legendrian conic pairs at low energies} 
\label{subsec:conic-pair-geometry-and-phase-function}

In this subsection, we briefly review the geometric structure of the intersecting pair of Legendre submanifolds with conic points $(L^{\bfs}, L^\#)$ and recall the class of Legendre distributions associated with it.
We will first recall the definition in a slightly more general setting and then point out a further simplification of the oscillatory integral expression of the spectral measure due to the special property of $L^{\bfs}$.


Consider the case away from codimension two corners first. 
We introduce coordinates on the bundle ${}^{\Phi}T^* X_{\mathrm{b}}^2$ introduced in Section~\ref{subsec:Legendrian-geometry-low} that we will use, which is convenient to define parametrization of Legendre submanifolds. 
Concretely, over a point $\msf{p} \in {}^{\Phi}T^* X_{\mathrm{b}}^2$ that is away from $\bfs \cap \rb$, we write the canonical 1-form as
\begin{equation} \label{eq:tautological-1-form-bf}
\alpha = \overline{\nu} d(\frac{1}{x}) + \nu_1 \frac{d\sigma}{x} + \mu \cdot \frac{dy}{x} + \mu'\cdot \frac{dy'}{x'},
\end{equation}
and use the coordinate system $(\sigma,y,y',\overline{\nu},\nu_1,\mu,\mu')$ on ${}^{\Phi}T^*_{\bfs} X_{\mathrm{b}}^2$.

If we consider coordinates $(x,y,x',y',\nu,\mu,\nu',\mu')$ on ${}^{\sct}T^*X \times {}^{\sct}T^*X$ by writing the contact form as in \eqref{eq:sc-1-form} for both left and right factors, then we know coordinates in \eqref{eq:tautological-1-form-bf} are related to them by \footnote{Rigorously speaking, only when $x'>0$.}
\begin{equation}
  \sigma = x/x', \, \overline{\nu} = \nu + \sigma \nu',  \, \nu_1 = \nu'. 
\end{equation}

Next we discuss the structure of $(L^{\bfs},L^\#)$ and the precise meaning of the statement that it is an intersecting conic pair.
We consider the resolved ${}^{\Phi}T^*_{\bfs} X_{\rmb}^2$:
\begin{equation} \label{eq:resolved-bundle}
[{}^{\Phi}T^*_{\bfs} X_{\rmb}^2; J^{\calclow}],
\end{equation}
where 
\begin{equation} \label{eq:def-J-low}
  J^{\calclow} = \{ x'=0,\overline{\nu}-(1+\sigma)\nu_1=0,\mu= \mu' =0 \}
\end{equation}
in terms of coordinates as in \eqref{eq:contact-form} and $x$ is a boundary defining function lifted from the left factor of $X_{\rmb}^2$. Here the condition $\overline{\nu}-(1+\sigma)\nu_1=0$ is equivalent to $\nu=\nu'$, when both of them are valid coordinates. 
We then denote the blow down map by
\begin{equation} \label{eq:beta-LCP-low-1}
\beta_{\mathrm{LCP},\calclow}:  [{}^{\Phi}T^*_{\bfs} X_{\rmb}^2; J^{\calclow}] \to {}^{\Phi}T^*_{\bfs} X_{\rmb}^2.
\end{equation}
Then the lift of $L^{\bfs}$ is defined to be
\begin{equation} \label{eq:hat-Lbf-def}
  \hat{L}^{\bfs} = \beta_{\mathrm{LCP},\calclow}^*(L^{\bfs}) := \mathrm{clos}(\beta_{\mathrm{LCP},\calclow}^{-1}(L^{\bfs} \backslash J^{\calclow})).
\end{equation}
The statement that $(L^{\bfs},L^\#)$ is a conic intersecting Legendre pair means that $\hat{L}^{\bfs}$ is a compact manifold with corners, which is proved in \cite[Section~5]{melrose1996scattering} and \cite[Proposition~4.1]{hassell2001resolvent}.

The local coordinates of $[{}^{\Phi}T^*_{\bfs} X_{\rmb}^2; J^{\calclow}]$, for example over the region 
$\frac{ \overline{\nu}-(1+\sigma)\nu_1 }{|\mu'|} \lesssim 1, \frac{|\mu|}{|\mu'|} \lesssim 1$ are given by
\begin{equation} \label{eq:coordinates-resolved-low-1}
  \Big(\sigma,y,y', \overline{\nu}, \hat{\nu}_1 = \frac{ \overline{\nu}-(1+\sigma)\nu_1 }{|\mu'|}, \frac{\mu}{|\mu'|},\hat{\mu'},|\mu'|\Big).
\end{equation}
As pointed out in the discussion after \cite[Equation~(7.22)]{hassell1999spectral}, this is the only region that we need to consider when we are considering the part of $\hat{L}^{\bfs}$ that is away from the corner $\rb \cap \bfs$.

Similarly, when we consider the part near $\rb \cap \bfs$, we only need to consider a similar region on which $|\mu|$ dominates other defining functions of the resolved part.

Now we discuss phase functions parametrizing $(L^{\bfs},\Lsharplow)$. We only discuss the case near $L^{\bfs} \cap \Lsharplow_+$ since simply switching the sign of phase functions handles the other case near $L^{\bfs} \cap \Lsharplow_-$.

We say that\footnote{We have used the simplification as in Proposition~\ref{prop:Lbf-Legendrian-dis-simplification-low} so that the first part of the phase function is just $1+\sigma$. In more general settings, this should be a function of $y$, and even a function depending on extra parameters, if one does not assume either member of the pair projects locally diffeomorphically to the base. }
\begin{align} \label{eq:phase-conic-bf}
\Phi(\sigma,y,y',v,s) = 1 + \sigma +  s \sigma \psi(\sigma,y,y', v,s).
\end{align}
parametrizes the conic pair $(L^{\bfs},L^{\#})$ near $\hat{q}_0  \in \partial\hat{L}^{\bfs} = \hat{L}^{\bfs} \cap \beta_{\mathrm{LCP},\calclow}^*(\Lsharplow)$ 
if locally $\hat{L}^{\bfs}$ can be represented as
\begin{equation} \label{eq:hat-Lbf-parametrization-bf}
\hat{L}^{\bfs} = \beta_{\mathrm{LCP},\calclow}^*
 \Big\{ d\big(\frac{\Phi}{\sigma x'}\big)  \big| \;  (\sigma,y,y',v,s) \in C_{\Phi} \Big\}
\end{equation}
where the last two components are the polar coordinates introduced by the resolution \eqref{eq:resolved-bundle} and $C_{\Phi}$ is the critical set of $\Phi$:
\begin{equation} \label{eq:C-Phi-conic-pair-defn}
C_{\Phi} = \{ (\sigma,y,y',v,s) : d_s\Phi = 0, \, d_v\psi = 0, \, s \geq 0 , v \in \R^k \}.
\end{equation}
More concretely, in terms of \eqref{eq:coordinates-resolved-low-1}, the right hand side of \eqref{eq:hat-Lbf-parametrization-bf} is given by
\begin{equation} \label{eq:hat-Lbf-parametrization-bf-2}
  \beta_{\mathrm{LCP},\calclow}^*\Big\{ \Big(\sigma,y,y',\Phi,\frac{-(1+\sigma)\sigma d_{\sigma}\psi - \psi}{|d_{y'}\psi|},\frac{d_y(\sigma\psi)}{|d_{y'}\psi|},\frac{d_{y'}\psi}{|d_{y'}\psi|},s|d_{y'}\psi|\Big) \Big| \; (\sigma,y,y',v,s) \in C_{\Phi} \Big\}.  
\end{equation}

Let $q_0=(\sigma_0,y_0,y_0',v_0,s=0) \in C_{\Phi}$ so that it is sent to $q_0$ as in \eqref{eq:hat-Lbf-parametrization-bf}. We say this parametrization is \emph{non-degenerate} if
\begin{equation} \label{eq:non-degenerate-parametrization-conic-low}
 ds, \; d_{(\sigma,y,y',v,s)}\psi, \; d_{(\sigma,y,y',v,s)}(\frac{\partial \psi}{\partial v^j}), \; j=1,2,...,k  \text{ \; are linearly independent at \; } q_0.
\end{equation}

Now we turn to the part near the corner $\lb \cap \bfs$.
Because we are in the region that $\nu,\nu'$ are close to $1$ and $\sigma \lesssim 1$, this part over $\lb \cap \bfs$ corresponds to the part, in terms of \eqref{eq:Lbf-definition-gamma^2}, $s_l \to \pi, \, s_r \to 0$ and with $\pi-s_l \ll s_r$. So correspondingly $|\mu| \ll |\mu'| \ll 1$ and $|\mu'|$ is chosen to be the `large parameter' for the polar coordinates in the tangential frequency.

All the definitions above hold uniformly down to $\sigma \to 0$, except that we make a stronger requirement for the non-degeneracy of the parametrization. 
Let $q_0=(0,y_0,y_0',v_0,s=0) \in C_{\Phi}$ so that it is sent to $q_0$ as in \eqref{eq:hat-Lbf-parametrization-bf}. We say this parametrization is non-degenerate near $q_0$ if
\begin{equation} \label{eq:non-degenerate-parametrization-conic-low-lb-corner}
 d_{y'}\psi, \;  d_{(y',v)}(\frac{\partial \psi}{\partial v^j}), \; j=1,2,...,k  \text{ \; are linearly independent at \; } q_0.
\end{equation}
Near the corner $\rb \cap \bfs$, we instead use phase functions of the form
\begin{align} \label{eq:phase-conic-low-bf-rb-corner}
\Phi(\theta,y,y',v,s) = 1 + \theta +  s \theta \psi(\theta ,y,y', v,s),
\end{align}
where $\theta = x'/x$. All the definitions of parametrization and its non-degeneracy are the same, except with $\sigma$ replaced by $\theta$.

Now we give the characterization of the minimal number of extra parameters in this setting, which is an analogue of Proposition~\ref{prop:minimal-parametrization-low}. 
\begin{proposition} \label{prop:minimal-parametrization-low-conic}
Let $\hat{L}^{\bfs}, \Phi$ be as above. Then $\Phi(\sigma, y, y',v,s)$ always exists locally. 
In addition, let $k$ be the drop of rank of the projection $\hat{L}^{\bfs} \to \bfs$ at $\msf{q}_0 \in L$, 
then the minimal number of extra parameters, including $s$, in a parametrization of $\hat{L}^{\bfs}$ near $\msf{q}_0$ is $k+1$. In addition $k \leq n-2$. 
We will call such a parametrization \emph{minimal}.
\end{proposition}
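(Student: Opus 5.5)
We would prove Proposition~\ref{prop:minimal-parametrization-low-conic} by reducing it to Proposition~\ref{prop:minimal-parametrization-low}, applied on the resolved space $[{}^{\Phi}T^*_{\bfs} X_{\rmb}^2; J^{\calclow}]$.

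\textbf{Existence.} We follow the construction of \cite[Section~5--7]{melrose1996scattering} and \cite[Proposition~4.1]{hassell2001resolvent}. Near $\hat{\msf q}_0\in\partial\hat L^{\bfs}$, the pair $(L^{\bfs},\Lsharplow_+)$ is conic, and $\Lsharplow_+$ is parametrized by $1+\sigma$ (Proposition~\ref{prop:Lbf-Legendrian-dis-simplification-low}). Subtracting this phase, we write $L^{\bfs}$ near the intersection as a flow-out whose distance from $\Lsharplow$ is measured by the polar radius $|\mu'|$. Then $s$ serves as the radial variable and $\psi$ generates the angular part, i.e.\ the Legendrian $\hat L^{\bfs}\cap\{\text{front face}\}$. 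Concretely, we take a parametrization of the Legendrian obtained by freezing $s$ as in Proposition~\ref{prop:minimal-parametrization-low}, and we extend it smoothly in $s$ by homogeneity of the blow-up. This gives \eqref{eq:phase-conic-bf} with $k$ parameters $v$, one more parameter $s$, and the non-degeneracy \eqref{eq:non-degenerate-parametrization-conic-low}.

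\textbf{Minimality.} On $C_\Phi$ in \eqref{eq:C-Phi-conic-pair-defn}, the non-degeneracy condition makes $C_\Phi$ a smooth manifold, and the map \eqref{eq:hat-Lbf-parametrization-bf-2} is a local diffeomorphism onto $\hat L^{\bfs}$. A coordinate system on $C_\Phi$ is formed from some of the base variables $(\sigma,y,y')$ together with some of $(v,s)$. The count of base directions killed by the projection then forces at least $k$ of the $v_j$. In addition, $s$ can never be removed: $s=0$ is exactly the boundary $\hat L^{\bfs}\cap\beta^*_{\mathrm{LCP},\calclow}(\Lsharplow)$, which lies over the same base points as the interior of the front face. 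Hence the projection $\hat L^{\bfs}\to\bfs$ collapses the radial direction $|\mu'|$, so the boundary defining function $s$ must be an independent fibre parameter. This gives the total $k+1$. For the reverse inequality, the construction above uses exactly $k$ parameters $v$ plus $s$.

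\textbf{The bound $k\le n-2$.} We argue as in Proposition~\ref{prop:minimal-parametrization-low}. Points of $\hat L^{\bfs}$ correspond to limits of geodesics on $Y$ via \eqref{eq:Lbf-definition-gamma^2}, with $s_l\to\pi$, $s_r\to0$. The rank drop of the projection is the rank drop of $\exp$ on $Y$ restricted to directions transverse to the radial one, and the radial direction is always non-degenerate. Therefore $k\le (n-1)-1$.

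\textbf{Main obstacle.} The hardest step is the careful identification of the rank drop of $\hat L^{\bfs}\to\bfs$ at $s=0$, uniformly down to the corner $\lb\cap\bfs$. There one must use the stronger non-degeneracy condition \eqref{eq:non-degenerate-parametrization-conic-low-lb-corner} and the fibration of Proposition~\ref{prop:Legendre-fibration-low}, working in the coordinates \eqref{eq:coordinates-resolved-low-1}. We would check it there directly.
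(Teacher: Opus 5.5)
The real gap is your justification of the extra parameter $s$. Your proof of $k\le n-2$ is the paper's. Deferring existence to the Melrose--Zworski/Hassell--Vasy construction is also what the paper does (Section~\ref{subsec:Leg-concrete-setup}). However, the paper does not ``freeze $s$ and extend by homogeneity'': $\hat{L}^{\bfs}$ has no homogeneity in $s$. Instead one writes $\hat{L}^{\bfs}$ as a graph over $\mathcal{Z}_{\calclow}=(\sigma,y,y'_{II},\hat{\mu}'_I,s)$ and takes \eqref{eq:conic-pair-phase-construction-low}, verified with \eqref{eq:1-form-vanish-low}.

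You claim that $\hat{L}^{\bfs}\to\bfs$ collapses the radial direction $|\mu'|$, and this is false. At fixed $(\sigma,y)$, moving transversally off $\partial\hat{L}^{\bfs}$ moves $y'$ along the geodesic with non-zero speed. In the paper's coordinates this is $\partial_sY'_{n-1}=-1+O(s)$ (cf.\ \eqref{eq:Y'-n-1-expansion-concrete-low}). That is exactly why $s$ can replace $y'_{n-1}$ as a coordinate in \eqref{eq:splitted-coordinate-1}, and why the phase is later non-degenerate in $s$ (Lemma~\ref{lemma:phase-restricted-convex-low-high-region1}).

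Your claim also contradicts your own third paragraph. For $Y=\mathbb{S}^{n-1}$, all $n-2$ transverse directions already collapse at $s=0$ (Proposition~\ref{prop:example-admissible}), so an additional radial collapse would give rank drop $n-1$. Moreover, if the radial direction were collapsed it would already be counted in $k$, so ``$k$ of the $v_j$ plus one for $s$'' would double count.

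Relatedly, selecting coordinates on $C_\Phi$ from $(\sigma,y,y',v,s)$ only shows that at least $k$ of the \emph{combined} parameters $(v,s)$ are needed, not $k$ of the $v_j$. In the paper's construction $s$ is itself one of the selected coordinates.

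What closes the argument is the opposite of your claim. Let $J=\partial_{(v,s)}\big(\partial_s(s\psi),\partial_v\psi\big)$ and let $k_v$ be the number of $v$'s. The kernel of the differential of $C_\Phi\to\bfs$ is $\{(\delta v,\delta s): J\cdot(\delta v,\delta s)=0\}$, so $k=k_v+1-\operatorname{rank}J$.

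At a point of $\partial\hat{L}^{\bfs}$, the kernel of $d\hatLprojlow$ is tangent to $\partial\hat{L}^{\bfs}$. Variations inside the boundary at fixed $(\sigma,y)$ move $y'$ orthogonally to $\hat{\mu}'$ (Gauss's lemma), while the transverse direction moves it along $\hat{\mu}'$. Hence $\delta s=0$ on the kernel, so $k\le k_v$ and the total number of parameters is at least $k+1$. The construction with $v=\hat{\mu}'_I$, and $I$ adapted to the kernel of the Jacobi map, gives equality.

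So $s$ is a parameter forced by the conic form \eqref{eq:phase-conic-bf} and matched by no rank drop. This is the content of the paper's remark after Proposition~\ref{prop:minimal-parametrization-high-conic} that the low-energy count is the consequence of ``ignoring the $x$-direction''.
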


The existence of the parametrization will follow from the concrete construction that we will give in Section~\ref{subsec:Leg-concrete-setup}. 
The property of $k$ follows in the same way as in the proof of Proposition~\ref{prop:minimal-parametrization-low}.

Now we discuss the class of Legendre distributions associated with $(L^{\bfs},L^{\#})$ using Definition~\ref{def:Legendre-distribution-low-energy} with $L=L^{\bfs},L^{\#}$ as building blocks.
\begin{definition} \label{def:Legendrian-dis-conic-intersecting-low}
Let $(L^{\bfs}, L^{\#})$ be a pair of intersecting Legendre submanifolds with conic points in ${}^{\Phi}T^*_{\bfs}X_{\rmb}^2$. 
For $m,p, r_{\LB},r_{\RB} \in \R$ and $\mathcal{B}$ as in \eqref{eq:cal-B-index}, the space of Legendre distributions associated with $(L^{\bfs}, L^{\#})$, which we denote by  $I_{\calclow}^{m, p; r_{\LB}, r_{\RB}; \mathcal{B}}(X_{\rmb,\flat}^2, (L^{\bfs}, L^{\#}); \Omega_{\flat}^{1/2})$ consists of half-densities $u$ on $X_{\rmb,\flat}^2$ that can be written as a finite sum of terms $u = \sum_{j=0}^5 u_j$, where
\begin{itemize}

\item $u_0$ is supported in $\{ \lambda \geq \epsilon \}$ for some $\epsilon > 0$, and $u \otimes |d\lambda/\lambda|^{-1/2}$ is a family of 
Legendre distributions in $I_{\calclow}^{m, p; r_{\LB}, r_{\RB}}(X_{\rmb}^2, ( \lambda L^{\bfs},  \lambda L^{\#}); {}^{\Phi}\Omega^{1/2})$ with symbol depending smoothly on $\lambda$;

\item $u_1$ is an element of $I_{\calclow}^{m, r_{\LB}, r_{\RB}; \mathcal{B}}(X_{\rmb,\flat}^2, L^{\bfs}; \Omega_{\flat}^{1/2})$, microsupported away from $L^{\#}$;

\item $u_2$ is an element of $I_{\calclow}^{p, r_{\LB}, r_{\RB}; \mathcal{B}}(X_{\rmb,\flat}^2, L^{\#}; \Omega_{\flat}^{1/2})$, microsupported away from $L^{\bfs}$;

\item $u_3$ is supported close to $\bfs_0 \cap \bfs$, and away from $\lb \cup \rb$,  and is given by a finite sum of expressions 
\begin{multline}
 \lambda^n \int_0^\infty ds \int_{\R^k} e^{i \Phi(\sigma, y, y' , v,s)/\rho} \big( \frac{\rho}{s} \big)^{m-(k+1)/2+n/2} s^{p+n/2-1} \\ 
 \times
 a(\lambda, \frac{\rho}{s}, \sigma, y, y' ,v,s) \, dv \Big| \frac{dg dg' d\lambda}{\lambda}\Big|^{1/2}
\label{u3con}\end{multline}
where $ \Phi$ locally parametrizes $(L^{\bfs}, L^{\#})$ in the sense of \cite[Eq. (3.31)]{hassell1999spectral} 
and $a$ is polyhomogeneous conormal in $\lambda$, with respect to the index set $\mathcal{B}_{\bfs_0}$, at $\lambda = 0$ and is smooth in all other variables;

\item $u_4$ is supported close to $\bfs_0 \cap \bfs \cap \lb$ and is given by a finite sum of expressions
\begin{multline}
 \lambda^n \int_0^\infty ds \int_{\R^k} e^{i \Phi(\sigma, y, y' , v,s)/\rho} \big( \frac{\rho'}{s} \big)^{m-(k+1)/2+n/2} s^{p+n/2-1} \sigma^{r_{\RB}} \\ \times
 a(\lambda, \frac{\rho'}{s}, \sigma, y, y' ,v,s) \, dv \Big| \frac{dg dg' d\lambda}{\lambda}\Big|^{1/2}
\label{u4con}\end{multline}
where $ \Phi$ locally parametrizes $(L^{\bfs}, L^{\#})$ in the sense of \cite[Eq. (3.31)]{hassell1999spectral} and $a$ is polyhomogeneous conormal  in $\lambda$, with respect to the index set $\mathcal{B}_{\bfs_0}$, at $\lambda = 0$ and is smooth in all other variables;

\item $u_5$ is supported close to $\bfs_0 \cap \bfs \cap \rb$ and is given by a finite sum of expressions analogous to \eqref{u4con}, with $(\sigma,x,y,\rho')$ and $(\theta=x'/x,x',y',\rho)$ interchanged, and $r_{\LB}$ replaced by $r_{\RB}$.

\end{itemize}

\end{definition}

\subsection{Legendre submanifolds and distributions at high energies}
\label{subsec:Legendrian-geometry-high}

We discuss Legendre submanifolds and distributions in the high-energy regime in this subsection. The major difference with the low-energy regime is that we need to take the dynamics over the interior of $X$ into consideration now. 
We only discuss definitions at the level of generality that is sufficient for our applications instead of the more general scattering fibered setting in \cite{Hassell-Wunsch-semiclassical-resolvent}. 
In particular, in the notation used there, we always have $y_3,\mu_3,v_3$ being empty.

We start by recalling the bundle ${}^{\bundlehigh}T^*X_{\rmb,\calchigh}^2$ over $X_{\rmb,\calchigh}^2 = [0,h_0) \times X_{\rmb}^2$ that captures this high-energy propagation, which is also the contact manifold that our Legendre submanifold lives in. 
We give expressions for writing points in this cotangent bundle (which are covectors over the base point in $X_{\rmb,\calchigh}^2$) in different regions directly, but refer the reader to \cite[Section~3, 11]{Hassell-Wunsch-semiclassical-resolvent} for a detailed construction of this bundle, which needs extra fibrations characterized there, in particular \cite[Example~3.5]{Hassell-Wunsch-semiclassical-resolvent}.

Over $[0,h_0) \times (X_{\rmb}^2)^{\circ}$ \footnote{Here $(X_{\rmb}^2)^{\circ}$ denotes the interior of $X_{\rmb}^2$, which is just $X^\circ \times X^\circ$.},
let $z,z'$ be coordinates on $X^{\circ}$ lifted to $X^\circ \times X^\circ$ from the left and right factors respectively. We write the contact form as
\begin{equation} \label{eq:semiclassical-form-interior}
   \zeta \cdot \frac{dz}{h} + \zeta' \cdot \frac{dz'}{h} + \tau d(\frac{1}{h}).
\end{equation}
Here and below, letters appearing in front of $d(\bullet)$ are frequency variables dual to $\bullet$. When we are near the boundary but away from $\bfs$, say $x' \to 0$ but away from $x=0$, we instead write
\begin{equation} \label{eq:semiclassical-form-boundary-1}
   \zeta \cdot \frac{dz}{h} + \xi' \cdot d(\frac{1}{x'h}) + \mu \cdot \frac{dy}{xh}
   + \mu' \cdot \frac{dy'}{x'h} +  \tau' d(\frac{1}{h}).
\end{equation}
Near $\bfs$, say in the region that $\theta = \frac{x'}{x} \lesssim 1$, then the coordinate system on $X_{\rmb,\calchigh}^2$ is $(h,\theta,x,y,y')$ and we write the contact form as
\begin{equation} \label{eq:semiclassical-form-bf}
\xi d(\frac{1}{x h}) + \xi' d(\frac{1}{x\theta h}) +  \mu \cdot \frac{dy}{xh} + \mu' \cdot \frac{dy'}{x\theta h} + \overline{\tau} d(\frac{1}{h}) .
\end{equation}
In the region $\sigma = x/x' \lesssim 1$, we can write it in a way that is more parallel to \eqref{eq:tautological-1-form-bf} (with a different $\overline{\tau}$):
\begin{equation} \label{eq:semiclassical-form-bf-lb}
 h^{-1} \Big( \overline{\nu}d(\frac{1}{x}) + \nu_1 \frac{d\sigma}{x}+  \mu \cdot \frac{dy}{x} + \mu' \cdot \frac{dy'}{x'} \Big) + \overline{\tau} d(\frac{1}{h}) .
\end{equation}
The boundary face $\{h=0\} \times X_{\rmb}^2 \subset X_{\rmb,\calchigh}^2$ is denoted by $\smf$ (standing for `semiclassical face'), and then the contact manifold we consider is ${}^{\Phi}T_{ \smf }^*X_{\rmb,\calchigh}^2$.



Now we start to characterize our high-energy propagating Legendre submanifold, which we denote by $L^{\bfs,\calchigh}$. 
Let ${}^{\calchigh}\overline{T}^*X$ be the high-energy phase space defined in Section~\ref{subsec:low-high-combined-spaces}. We denote the characteristic set of $P$ by
\begin{equation} \label{eq:def-Sigma}
\Sigma = \mathrm{clos} \big\{  (z,\zeta) \in {}^{\calchigh}T_{\{h=0\}}^*X:  g^{ij}(z)\zeta_i\zeta_j = 1  \big\} \subset {}^{\calchigh}\overline{T}_{\{h=0\}}^*X,
\end{equation}
where the coordinate system $(z,\zeta)$ is over some region in the interior of ${}^{\calchigh}\overline{T}_{\{h=0\}}^*X$.
Roughly speaking, this is just the unit (semiclassical scattering) sphere bundle.
Let $G_{\tau}$ be the geodesic flow on ${}^{\calchigh}T^*_{\{h=0\}}X$ (identified with ${}^{\sct}T^*X$, on which the definition of the geodesic flow becomes natural) with unit speed. 
Then $L^{\bfs,\calchigh}$ over $\{ 0 \}_h \times (X^{\circ} \times X^{\circ})$ is given by
\begin{equation} \label{eq:L-high-interior}
(L^{\bfs,\calchigh})^\circ =  \big\{ (q,(q')',h=0,\tau): \; q,q' \in \Sigma, \; q = G_{\tau}(q' ) \big\}
\end{equation}
in terms of coordinates in \eqref{eq:semiclassical-form-interior}, where $q = (z,\zeta), \, q' = (z',\zeta') \in ({}^{\calchigh}\overline{T}_{\{h=0\}}^*X)^{\circ}$ and $(q')'$ means switching the sign of the frequency component of $q'$.
Finally, the complete $L^{\bfs,\calchigh}$ is taken as the closure of \eqref{eq:L-high-interior} in ${}^{\Phi}T_{ \smf }^*X_{\rmb,\calchigh}^2$.
As discussed in \cite[Section~3]{Hassell-Zhang2016Strichartz}, this is connected to $L^{\bfs}$, which carries the propagation for finite energies in the following way: if we restrict it to $\{0\}_h \times \bfs$ and forget the $\tau$-component, then it equals $L^{\bfs}$.

\begin{remark}
Switching from coordinates in \eqref{eq:semiclassical-form-interior} to \eqref{eq:semiclassical-form-bf}, in particular from $\tau$ to $\overline{\tau}$ is effectively a renormalization process.  
As shown in \eqref{eq:L-high-interior}, $\tau$ in \eqref{eq:semiclassical-form-interior} is precisely the travelling time on $L^{\bfs,\calchigh}$. 
This $\tau$-coordinate does tend to infinity as $z,z' \to \partial X$, but if we switch to coordinates in \eqref{eq:semiclassical-form-bf}, then $\overline{\tau}$ tends to a finite limit as $z,z' \to \partial X$ along a geodesic.
This in fact gives the sojourn time introduced by Guillemin \cite{Guillemin-sojourn}, see \cite[Section~15]{Hassell-Wunsch-semiclassical-resolvent} for detailed discussions.
This is also used in \cite{Jia26-sc-inverse} to determine the metric from the scattering map in the time-dependent setting.
\end{remark}

\begin{remark}
It might seem strange that the $L^{\bfs}$ seems to live only at $\BFS \cap \bfs_0$, where $\lambda = 0$, while suddenly this $L^{\bfs,\calchigh}$ lives at $\lambda = \infty$ and includes the interior part. But in fact, one should think of $L^{\bfs}$ as living at all $\lambda$ finite via the dilation. 
This phenomenon is similar to what happened in \cite{gell2022propagation}: the flow corresponding to infinite frequencies enters the interior while the flow corresponding to the finite-frequency part remains at the boundary.
\end{remark}

Now we recall from \cite[Section~4.2]{Hassell-Wunsch-semiclassical-resolvent} the definition of parametrizations of Legendre submanifolds of ${}^{\bundlehigh}T^*_{ \smf }X_{\rmb, \calchigh}^2$.
Let $G \subset {}^{\bundlehigh}T^*_{ \smf }X_{\rmb, \calchigh}^2$ be a Legendre submanifold. The case near the boundary and away from corners is the same as in \cite{melrose1996scattering}. The case near codimension two corners is the same as in \cite[Section~3]{hassell1999spectral} and \cite{hassell2001resolvent}. So we only discuss the definition of parametrization near $q_0 \in {}^{\bundlehigh}T^*_{ \{ \smf \cap \LB \cap \BFS \} }X_{\rmb, \calchigh}^2$. 
In this region, a local coordinate system on $X_{\rmb, \calchigh}^2$ is $(h,\theta=\frac{x'}{x},x,y,y')$.
Let the coordinate of $q_0$ be $q_0=(0,0,0,y_0,y'_0,\xi_0,\xi_0',\mu_0,\mu_0',\overline{\tau}_0)$ in terms of coordinates in \eqref{eq:semiclassical-form-bf}. The phase function we use is
\begin{equation}
\Phi(x,y,y',v_1,v_2) = \psi_1(y,v_1)+ \theta \psi_2(\theta,y,y',v_1,v_2) + \theta x \psi_3(\theta,x,y,y',v_1,v_2),
\end{equation}
where $v_j \in \R^{k_j}$ are parameters.
Here $\psi_1,\psi_2,\psi_3$ are smooth functions supported near $(y_0,v_{10})$, $(0,y_0,y'_0,v_{10},v_{20})$, $q'_0 = (0,0,y_0,y'_0,v_{10},v_{20})$ respectively, where
 $v_{j0} \in \R^{k_j}$ are arbitrarily chosen (hence in particular can be chosen to be $0$) and
\begin{equation}
d(\frac{\Phi}{\theta x h})(q'_0) = q_0, \quad d_{v_1,v_2}\Phi(q'_0) = 0.
\end{equation}

Then we say that $\Phi$ parametrizes $G$ near $q_0$ if $G$ is locally given by
\begin{equation} \label{eq:def-parametrization-high}
G = \{ d(\frac{\Phi}{\theta x h})(q') | q' \in C_{\Phi}  \},
\end{equation}
where $C_{\Phi}$ is the critical set\footnote{ All points here are restricted to a neighborhood of $q'_0$. } of $\Phi$:
\begin{equation} \label{eq:C-Phi-high-1}
C_{\Phi} = \{ q'=(\theta,x,y,y',v_1,v_2) | d_{v_1,v_2}\Phi(q') = 0 \}.
\end{equation}

Such a parametrization always exists locally, see \cite[Section~4.3]{Hassell-Wunsch-semiclassical-resolvent}.
Let  $v_i^j$ be the $j$-th component of $v_i$, then we say this parametrization or $\Phi$ is non-degenerate if
\begin{equation}
d_{(y,v_1)} \partial_{v_1^j}\psi_1,
\end{equation}
are independent at\footnote{Hence this non-degeneracy holds near there as well. The same applies to \eqref{eq:dy'v2psi2}.} $(y_0,v_{10})$,  and
\begin{equation} \label{eq:dy'v2psi2}
d_{(y',v_2)} \partial_{v_2^j}\psi_2
\end{equation}
are independent at $(y_0,y'_0,v_{10},v_{20})$.
We have the following analogue of Proposition~\ref{prop:minimal-parametrization-low} in the high-energy regime characterizing minimal parametrizations.  
\begin{proposition} \label{prop:minimal-parametrization-high}
Let $G, \Phi$ be as above. Then $\Phi$ always exists locally. In addition, let $k$ be the drop of rank of the projection $G \to \smf$ at $\msf{q}_0 \in G$, 
then the smallest number of extra parameters in a parametrization of $G$ near $\msf{q}_0$ is $k$.
We will call such a parametrization \emph{minimal}.
\end{proposition}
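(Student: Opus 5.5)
The proof runs parallel to that of Proposition~\ref{prop:minimal-parametrization-low}, carried out in the high-energy contact manifold ${}^{\bundlehigh}T^*_{\smf}X_{\rmb,\calchigh}^2$. There are three steps: existence of a parametrization, existence of one with exactly $k$ parameters, and a lower bound showing that no parametrization uses fewer than $k$.

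\emph{Existence.} In the interior $\{0\}_h\times(X^\circ\times X^\circ)$ this is the classical H\"ormander construction of a generating function for a Lagrangian submanifold, applied to $G$ after passing to the homogeneous (conic) picture in the variable $1/h$. Near the boundary faces away from corners it is \cite[Proposition~5]{melrose1996scattering}. Near codimension-two corners it is \cite[Proposition~3.3]{hassell1999spectral}. Near $\smf\cap\LB\cap\BFS$ I would invoke \cite[Section~4.3]{Hassell-Wunsch-semiclassical-resolvent}; this is the reference already cited just before the statement.

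\emph{Exactly $k$ parameters.} At $\msf{q}_0$, I would choose a splitting of the base coordinates on $\smf$, namely $(\theta,x,y,y')$ in the corner region or $z,z'$ in the interior, so that the tangent space $T_{\msf{q}_0}G$ projects onto a complementary set of base coordinates. The $k$ base coordinates whose differentials vanish on $T_{\msf{q}_0}G$ are then replaced by their dual fiber variables, which serve as parameters $v$. The Legendrian analogue of the standard Lagrangian argument (a Legendre transform in those $k$ directions) then produces a phase $\Phi$ with exactly $k$ extra parameters. In the corner case this choice must respect the fibration structure $\psi_1(y,v_1)+\theta\psi_2+\theta x\psi_3$. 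This is possible by Proposition~\ref{prop:Legendre-fibration-low} and its high-energy counterpart: the rank drop splits as $k_1+k_2$, where $k_1$ is the rank drop of the boundary Legendrian $G_{\lb}$ projecting to $Z_{\lb}$ and $k_2$ is the rank drop on the fibers. I would choose $v_1\in\R^{k_1}$ and $v_2\in\R^{k_2}$ accordingly.

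\emph{Minimality.} Given any parametrization, the map $C_\Phi\to G$ is a local diffeomorphism. The non-degeneracy conditions \eqref{eq:dy'v2psi2} and the analogous condition on $\psi_1$ guarantee that $C_\Phi$ is a smooth manifold of the right dimension. One can then select coordinates on $C_\Phi$ from among the base coordinates and the $v$-components. Every base coordinate whose differential vanishes on $T_{\msf{q}_0}G$ must be compensated by a $v$-component in this coordinate system. Hence the number of parameters is at least the rank drop $k$, exactly as in the proof of Proposition~\ref{prop:minimal-parametrization-low}.

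\emph{Main obstacle.} I expect the hardest part to be the corner $\smf\cap\LB\cap\BFS$. There the contact form \eqref{eq:semiclassical-form-bf} degenerates as $\theta\to0$, so the rank drop must be computed compatibly with the two-scale fibration. In particular, one has to check that the total number of parameters $k_1+k_2$ equals the rank drop of the full projection $G\to\smf$ measured in the scaled coordinates $(\theta,x,y,y')$. I would establish this by verifying that the differential of the projection is block-triangular with respect to the splitting $(y)$ versus $(\theta,x,y')$, so that the rank drops of the two blocks simply add.
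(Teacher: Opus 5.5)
Your outline is the one the paper intends. The paper gives no separate proof of this proposition: it cites \cite[Section~4.3]{Hassell-Wunsch-semiclassical-resolvent} for existence and relies on the coordinate-counting argument from the proof of Proposition~\ref{prop:minimal-parametrization-low}. Your lower bound is fine: the kernel of $d(C_\Phi\to\smf)$ has dimension $k$ and can only be carried by the $v$-directions. Your existence arguments in the interior and away from corners are also fine.

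The gap is exactly the corner step you flag as the main obstacle. Block-triangularity
\[
d\big(\partial G\to(y,y')\big)=\begin{pmatrix}A&0\\ B&C\end{pmatrix},
\]
with $A$ the differential of the boundary Legendrian $G_{\lb}\to Z_{\lb}$ and $C$ that of a fiber $\to y'$, only gives $\operatorname{rank}\ge\operatorname{rank}A+\operatorname{rank}C$. So it yields total rank drop $\le k_1+k_2$, not equality: when $A$ is singular, the block $B$ can contribute rank. Nor can you trade $v_1$ against $v_2$ in a fibered phase $\psi_1(y,v_1)+\theta\psi_2+\theta x\psi_3$. Here $\psi_1$ parametrizes $G_{\lb}$, forcing $\dim v_1\ge k_1$. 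Also, $\psi_2$ at a fixed point of $C_{\psi_1}$ parametrizes the fiber, forcing $\dim v_2\ge k_2$. So the equality you want is false in general, not merely unproved.

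Here is an example with $n=2$ and $\psi_3=0$. Take $\Phi=v_1y+\theta\big(v_2(y'-c(v_1))+e(v_1)\big)$ with $c'\neq0$.
\begin{itemize}
\item It is non-degenerate: $d_{(y,v_1)}\partial_{v_1}\psi_1=dy$ and $d_{(y',v_2)}\partial_{v_2}\psi_2=dy'$.
\item $G_{\lb}$ lies over the single point $y=0$, so $k_1=1$; each fiber lies over the single point $y'=c(v_1)$, so $k_2=1$.
\item The map $C_\Phi\ni(\theta,x,v_1,v_2)\mapsto\big(\theta,x,-\theta(e'(v_1)-v_2c'(v_1)),c(v_1)\big)$ has rank $3$ at $\theta=0$, so $k=1$.
\end{itemize}
Yet every fibered parametrization needs at least $2$ parameters.

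What makes the statement true where the paper uses it is that $A$ is invertible. For $L^{\bfs}$ and $L^{\bfs,\calchigh}$ the boundary Legendrians are $\{\nu=\pm1,\mu=0\}$, which project diffeomorphically onto $Z_{\lb}$ and $Z_{\rb}$ (Proposition~\ref{prop:Lbf-Legendrian-dis-simplification-low}). Then $k_1=0$, a row operation removes $B$, and the total rank drop equals the fiber rank drop. In that case your Legendre transform in the fiber directions alone produces a parametrization with exactly $k$ parameters. You therefore need to do one of two things: add the hypothesis that the boundary Legendrians of $G$ project diffeomorphically, or read $k$ at corner points as $k_1+k_2$. As written, your plan does not establish the claim at $\smf\cap\LB\cap\BFS$ for a general $G$.
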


Next, we discuss Legendre distributions in the high-energy regime. Recall that $\Omega_{\calchigh}^{1/2}$ is the half-density bundle with its typical section taking the form \eqref{eq:high-energy-half-density}. 
Here we adopt the modification of the order convention as in \cite[Remark~4.3]{GHS1}\cite{Hassell-Zhang2016Strichartz} as compared to \cite{Hassell-Wunsch-semiclassical-resolvent} so that the order matches that of the low-energy part.
In this definition and also other classes of Legendre distributions below, we get one further simplification compared with \cite{Hassell-Wunsch-semiclassical-resolvent}: powers like $-\frac{f_i}{2}+\frac{N}{4}$ there arising from the scattering fibered structure vanish. 


\begin{definition} \label{def:Legendre-distribution-high-energy}
Let $G$ be a Legendre submanifold in ${}^{\bundlehigh}T^*_{\smf}X_{\rmb,\calchigh}^2$. 
The class of Legendre distributions associated with $G$, which we denote by $I^{m_{\calchigh},m ,r_{\LB},r_{\RB}}_{\calchigh}(X_{\rmb,\calchigh}^2,G; \Omega_{\calchigh}^{1/2})$
with orders $m_{\calchigh},m,r_{\LB},r_{\RB}$ being orders associated with $\smf, \BFS, \LB,\RB$ respectively, consists of distributions that can be written as a sum
\begin{equation*}
    u =  \sum_{j=0}^5 u_j,
\end{equation*}
with $u_j$ characterized below.

\begin{itemize}

\item  $u_0$ is supported near a point in the interior of $\RB \cap \smf$ and it is a finite sum of oscillatory integrals of the following form, satisfying the support conditions below
\begin{equation} \label{eq:high-Legendre-RB}
  h^{m_{\calchigh}-\frac{k_2+k_1}{2}-\frac{n}{2}} \theta^{r_{\RB}- \frac{k_2}{2} } \int_{\R^{k_1+k_2}} e^{i \frac{\Phi}{x'h}} a(h,x,\theta,y,y',v_1,v_2) dv_1dv_2   \big|dgdg'\frac{dh}{h^2}\big|^{1/2},
\end{equation}
where $v_1 \in \R^{k_1}, \, v_2 \in \R^{k_2}$ and $\Phi = \psi_1(y',v_1)+ \theta \psi_2(\theta,y,y',v_1,v_2)$ parametrizes $G$ and $a \in C_c^\infty$.

\item $u_1$ is supported near a point in the interior of $\LB \cap \smf$ and takes the same form as $u_0$, except with primed and un-primed variables switched, $r_{\RB}$ replaced by $r_{\LB}$ and $\theta$ replaced by $\sigma = \theta^{-1}$.

\item $u_2$ is supported near a point in the interior of $\BFS \cap \smf$ and it is a finite sum of oscillatory integrals taking the form
\begin{equation} \label{eq:high-Legendre-BF}
 h^{m_{\calchigh}-\frac{k_2}{2}-\frac{n}{2}} (x')^{m-\frac{k_2}{2}+\frac{n}{2}} \int_{\R^{k_2}} e^{i \frac{\Phi}{x'h}} a(h,x,\theta,y,y',v_2) dv_2   \big|dgdg'\frac{dh}{h^2}\big|^{1/2},
\end{equation}
where $v_2 \in \R^{k_2}$ and $\Phi(\theta,y,y',v_2)$ parametrizes $G$ and $a \in C_c^\infty$.

\item $u_3$ is a finite sum of oscillatory integrals of the following form in the region $x' \lesssim x$:
\begin{equation}
h^{m_{\calchigh} - \frac{k_2+k_1}{2}  - \frac{n}{2} } x^{m- \frac{k_2+k_1}{2} + \frac{n}{2} } \theta^{r_{\RB} -\frac{k_2}{2} }  \int_{\R^{k_1+k_2}} e^{ i \frac{\Phi}{h x'} } a(h,x,\theta,y,y',v_1,v_2)dv_1dv_2  \big|dg dg'\frac{dh}{h^2}\big|^{1/2},
\end{equation}
where $v_1 \in \R^{k_1},\, v_2 \in \R^{k_2}$ and $\Phi = \psi_1(y',v_1) + \theta\psi_2(\theta,y,y',v_1,v_2)$ parametrizes $G$;
while on the region $x \lesssim x'$, we consider oscillatory integrals of the same form but switch primed and un-primed variables and replace $r_{\RB}$ by $r_{\LB}$, $\theta$ by $\sigma = \theta^{-1}$.

\item $u_4$ is given by a finite sum of oscillatory integrals of the following form in the region $x' \lesssim x$:
\begin{equation}
x^{m- \frac{k+k_1}{2} + \frac{n}{2} } \theta^{r_{\RB} -\frac{k_2}{2} }  \int_{\R^{k_1+k_2}} e^{ i \frac{\Phi}{h x'} } a(h,x,\theta,y,y',v_1,v_2)dv_1dv_2  \big|dg dg'\frac{dh}{h^2}\big|^{1/2},
\end{equation}
where $v_1 \in \R^{k_1}, \, v_2 \in \R^{k_2}$, $\Phi = \psi_1(y',v_1) + \theta\psi_2(\theta,y,y',v_1,v_2)$ parametrizes $G$, and $a = O(h^N)$ for any $N$ with any fixed $C^k$-norm in all other variables.

\item $u_5 \in \mathcal{S}(X_{\rmb,\calchigh}^2)$.
\end{itemize}
\end{definition}

\subsection{Legendre distributions associated with Legendrian conic pairs at high energies}
\label{subsec:Legendrian-distributions-high}

In this subsection, we briefly recall the theory of Legendre distributions associated with Legendrian conic pairs in the high-energy regime from \cite{Hassell-Wunsch-semiclassical-resolvent}.
We only discuss the resolution and parametrization in the setting of codimension three (that is, at a codimension two corner of $X_{\rmb}^2$ at $h=0$), which is the most complicated case.
For details about the resolution and the parametrization in cases with lower codimensions, we refer the reader to \cite[Section~6.1.1.,\, 6.2.1,\, 6.5.1]{Hassell-Wunsch-semiclassical-resolvent}.
But we still give the definition of the corresponding class of Legendre distributions in Definition~\ref{def:Legendre-dis-conic-high-RB-LB} and use it as a building block in Definition~\ref{def:Legendrian-dis-conic-high} below.

Similar to the low-energy case, apart from $L^{\bfs,\calchigh}$, the other Legendre submanifold carrying purely outgoing/incoming oscillations is simpler.
In terms of coordinates in \eqref{eq:semiclassical-form-bf}, it is
\begin{equation} \label{eq:def-Lsharphigh}
\Lsharphigh = \Lsharphigh_+ \cup \Lsharphigh_-,
\end{equation}
where, similar to \eqref{eq:Lsharplow-pm-def} and in terms of coordinates in \eqref{eq:semiclassical-form-bf}, $\Lsharphigh_\pm$ are defined by
\begin{equation}
  \Lsharphigh_\pm = \{ \xi = \xi' = \pm 1, \; \mu=\mu' =0 \} \cap {}^{\Phi}T_{ \smf \cap \BFS }^*X_{\rmb,\calchigh}^2.
\end{equation}
In terms of coordinates in \eqref{eq:tautological-1-form-bf}, this means that our boundary defining function and other coordinates are chosen so that $\Lsharphigh_\pm$ is given by $\overline{\nu} = \pm (1+\sigma),\mu=\mu'=0$.

Then as shown in\footnote{The referred proposition only showed half of this claim for the forward propagating part, but the other half follows in the same way.} \cite[Proposition~11.1]{Hassell-Wunsch-semiclassical-resolvent}, 
$(L^{\bfs,\calchigh},\Lsharphigh)$ forms a pair of intersecting Legendre submanifolds with conic points in the sense that is similar to Section~\ref{subsec:conic-pair-geometry-and-phase-function}.
We blow up the span of $\Lsharphigh$, which is
\begin{equation} \label{eq:def-J-high}
J^{\calchigh} = \{ \xi = \xi', \; \mu=\mu' =0 \} \cap {}^{\Phi}T_{ \smf \cap \BFS }^*X_{\rmb,\calchigh}^2,
\end{equation}
and denote the blow-down map by
\begin{equation} \label{eq:def-high-blow-down}
    \beta_{\mathrm{LCP}, \calchigh}:  \; [{}^{\Phi}T^*_{\smf}X_{\rmb,\calchigh}^2;J^{\calchigh}] \to {}^{\Phi}T^*_{\smf}X_{\rmb,\calchigh}^2.
\end{equation}
The lift of $L^{\bfs,\calchigh}$ is defined to be 
\begin{equation} \label{eq:hatLbf-high-def}
    \hat{L}^{\bfs,\calchigh} = 
    \beta_{\mathrm{LCP}, \calchigh}^*(L^{\bfs,\calchigh})
    = \mathrm{clos} \Big( \beta_{\mathrm{LCP}, \calchigh}^{-1}( (L^{\bfs,\calchigh})^{\circ}) \Big).
\end{equation}
Then the statement that $(L^{\bfs,\calchigh},\Lsharphigh)$ forms a pair of intersecting Legendre submanifolds with conic points means $\hat{L}^{\bfs,\calchigh}$ is a smooth Legendrian submanifold (with corner) in the resolved bundle $[{}^{\Phi}T^*_{\smf}X_{\rmb,\calchigh}^2;J^{\calchigh}]$. See \cite[Section~6.1, Section~11]{Hassell-Wunsch-semiclassical-resolvent} for more details.

If one uses coordinates in \eqref{eq:semiclassical-form-bf-lb}, then similar to \eqref{eq:def-J-low}, $J^{\calchigh}$ can be instead defined by
\begin{equation} \label{eq:def-J-high-2}
J^{\calchigh} = \{x'=0,\, \overline{\nu}-(1+\sigma)\nu_1 = 0 , \, \mu=\mu' =0 \} \cap {}^{\Phi}T_{ \smf \cap \BFS }^*X_{\rmb,\calchigh}^2,
\end{equation}
and $\Lsharphigh$ is defined by
\begin{equation} \label{eq:concrete-Lsharphigh}
    \Lsharphigh = \{\nu_1=1,\overline{\nu}=1+\sigma,x'=0,\mu=\mu'=0\}.
\end{equation}
In the region $x'\lesssim|\mu'|$and without loss of generality we assume $\mu'_{n-1}$ is the dominating component in $\mu'$, then a coordinate system near the front face is 
\begin{equation} \label{eq:coordinates-resolved-high-region1}
\big( \sigma,\varrho=x'/\mu'_{n-1}, \, \overline{\nu},\hat{\nu}_1=\frac{\overline{\nu}-(1+\sigma)\nu_1}{\mu'_{n-1}}, \mu/\mu'_{n-1},\mu'_1/\mu'_{n-1},...,\mu'_{n-2}/\mu'_{n-1},\mu'_{n-1} \big),
\end{equation}
while in the region $|\mu'|\lesssim x'$, we use coordinates
\begin{equation} \label{eq:coordinates-resolved-high-region2}
\big( \sigma,x', \overline{\nu},\hat{\nu}_1=\frac{\overline{\nu}-(1+\sigma)\nu_1}{x'}, \mu/x',\mu'/x' \big).
\end{equation}
As explained in Remark~\ref{remark:no-need-part}, $\hat{L}^{\bfs,\calchigh}$ does not meet the region where $|\overline{\nu}-(1+\sigma)\nu_1| \gg x',|\mu'|$ and we do not need to consider this region below.

Similar to the low-energy case, the phase function that we use to parametrize those Legendrian conic pairs is different from the one\footnote{These are also different from phase functions parametrizing an intersecting pair of Legendrian submanifolds (without conic points). The typical example of this type of Legendrian pair is the conormal bundle of the (lifted) diagonal and our propagating Legendrian. We did not include them as we are not using those directly, but they do arise in the construction of resolvents, hence the spectral measure. See \cite{hassell2001resolvent}\cite{Hassell-Wunsch-semiclassical-resolvent}. } 
we are using for a single Legendre submanifold.
Concretely, near $q$ that is in the lift of ${}^{\bundlehigh}T^*_{\smf \cap \BFS \cap \RB}X_{\rmb,\calchigh}^2$, we use
\begin{equation} \label{eq:Phi-high-codim3-conic}
\Phi =  1 + \theta +s \theta \psi_2(\theta,y,y',s,v) + x \theta \psi_3(s,\theta, \frac{x}{s},y,y',v) ,    \quad  s \geq 0, \; v \in \R^{k}
\end{equation}
to parametrize $L^{\bfs,\calchigh}$. The case with smaller codimension is simpler and we refer the reader to \cite[Section~6.2]{Hassell-Wunsch-semiclassical-resolvent}.
Parametrization in this setting means that near the lift of $\BFS \cap \smf$, we have
\begin{equation} \label{eq:def-parametrization-high-energy-conic-codim3}
\hat{L}^{\bfs,\calchigh} = \beta_{\mathrm{LCP},\calchigh}^{-1} \big(\{ d(\frac{\Phi}{\theta x h}) (q')  | \; q' \in C_{\Phi} \} \big), 
\end{equation}
where $q'$ and $C_{\Phi}$ are:
\begin{equation} \label{eq:def-C-Phi-high-energy-conic}
    C_{\Phi} = \{ q' = (\theta, \frac{x}{s},y,y',v,s), v \in \R^k | d_{(s,v)}\Phi(q') = 0  \}.
\end{equation}
In addition, we say such a parametrization is non-degenerate if $C_{\Phi}$ is (locally) diffeomorphic to $\hat{L}^{\bfs,\calchigh}$. Equivalently, in terms of $\Phi$, with $j$-th component of $v$ denoted by $v^j$, this can explicitly be written as
\begin{equation}
   d_{y'}\psi_2, \quad d_{y',v}( \frac{\partial \psi_2}{\partial v^j})
\end{equation}
are linearly independent at $q' \in C_{\Phi}$ for $1\leq j \leq k$.
On the other hand, away from the lift of ${}^{\bundlehigh}T^*_{\smf \cap \BFS \cap \RB}X_{\rmb,\calchigh}^2$, which corresponds to being away from the part $\frac{x}{s}=0$ in the parametrization above, we use the phase function
\begin{equation} \label{eq:Phi-high-energy-conic-codim2}
    \Phi = 1 + \theta + \theta x \psi(\theta, x, y, y', v), \quad v \in \R^k.
\end{equation}
Now parametrization means
\begin{equation} \label{eq:def-parametrization-high-energy-conic-codim2}
    \hat{L}^{\bfs,\calchigh} = \beta_{\mathrm{LCP},\calchigh}^{-1}\big(\{ d(\frac{\Phi}{x \theta h}(q') | \;  q' \in C_{\Phi}  \} \big) ,
\end{equation}
where 
\begin{equation}
    C_{\Phi} = \{  q' = (\theta,x,y,y',v) |  d_v\psi = 0 \}.
\end{equation}
We say that this parametrization is non-degenerate if
\begin{equation}
d_{y'}\psi, \quad d_{y',v}( \frac{\partial \psi}{\partial v^i} )
\end{equation}
are linearly independent for $1 \leq i \leq k$.


\begin{proposition} \label{prop:minimal-parametrization-high-conic}
Let $\hat{L}^{\bfs,\calchigh}, \Phi$ be as above, then $\Phi$ always exists locally. 
In addition, let $k+1$ be the drop of rank of the projection $\hat{L}^{\bfs,\calchigh} \to \smf$ at $\msf{q}_0 \in \hat{L}^{\bfs,\calchigh} \cap \{x/|\mu|\} = 0$, 
then after potentially shrinking the region of parametrization, the smallest number of extra parameters, including $s$, in a parametrization of $\hat{L}^{\bfs,\calchigh}$ near $\msf{q}_0$ is $k+1$.
We will call such a parametrization \emph{minimal}.
\end{proposition}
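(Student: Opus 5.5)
The plan follows the proof of Proposition~\ref{prop:minimal-parametrization-low}, adapted to the resolved bundle $[{}^{\Phi}T^*_{\smf}X_{\rmb,\calchigh}^2;J^{\calchigh}]$. There are three steps: existence of a parametrization, a lower bound on the number of parameters via coordinates on $C_{\Phi}$, and a reduction of any non-degenerate parametrization to exactly $k+1$ parameters.

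\emph{Existence.} For existence we invoke the construction in \cite[Section~6.2, Section~11]{Hassell-Wunsch-semiclassical-resolvent}. Near a point $\msf{q}_0$ of the lift of ${}^{\bundlehigh}T^*_{\smf \cap \BFS \cap \RB}X_{\rmb,\calchigh}^2$ it produces a non-degenerate phase of the form \eqref{eq:Phi-high-codim3-conic}, possibly with many parameters. Away from $\{x/s=0\}$ it produces one of the form \eqref{eq:Phi-high-energy-conic-codim2}.

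\emph{Lower bound.} Non-degeneracy means $C_{\Phi}$ is locally diffeomorphic to $\hat{L}^{\bfs,\calchigh}$. So from $(\theta, x/s, y, y', v, s)$ we may select a subset forming coordinates on $C_{\Phi}$, and transport them to $\hat{L}^{\bfs,\calchigh}$. The base coordinates on $\smf$ are $(\theta,x,y,y')$, with $x = s\cdot(x/s)$. Every base direction $dy_j$, $dy'_j$ (and the $dx$-direction, which at $x/|\mu|=0$ is replaced by the polar variable $s$) that vanishes on $T_{\msf{q}_0}\hat{L}^{\bfs,\calchigh}$ must be compensated by a selected parameter among $(v,s)$. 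Hence the number of parameters is at least the rank drop $k+1$.

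The $s$-direction is always one of the lost directions at the front face. This is because $\hat{L}^{\bfs,\calchigh}$ meets $\{x/|\mu|=0\}$ along the lift of $\Lsharphigh$, where the projection collapses the radial blow-up variable. This explains the ``$+1$''.

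\emph{Upper bound.} Given a non-degenerate $\Phi$ with parameters $(v,s)$, $v\in\R^{k'}$, $k'>k$, we reduce the number of parameters. Counting ranks shows that the Hessian $d^2_{vv}\psi_2$ restricted to $C_{\Phi}$ at $\msf{q}_0$ has rank $k'-k$. After a linear change of $v$, we may assume $\partial_{v^1}^2\psi_2 \neq 0$ there. Since $\Phi$ depends on $v$ only through $s\theta\psi_2 + x\theta\psi_3$, the equation $\partial_{v^1}\Phi=0$ is equivalent (dividing by $s\theta$) to $\partial_{v^1}\psi_2 + (x/s)\,\partial_{v^1}\psi_3 = 0$. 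By the implicit function theorem this can be solved for $v^1$ as a smooth function of the other variables, including $x/s$, near $x/s=0$. We substitute back and reorganize the resulting phase as $1+\theta + s\theta\tilde\psi_2 + x\theta\tilde\psi_3$. Here $\tilde\psi_2$ is the value of $\psi_2$ at $x/s=0$, and $\tilde\psi_3$ is the remainder divided by $x/s$, which is smooth by Taylor's formula. Iterating this step leaves exactly $k$ parameters $v$ plus $s$, and the linear-independence conditions for non-degeneracy persist.

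\emph{Main obstacle.} The delicate point is this elimination step. We must check that the stationary reduction preserves the specific structure \eqref{eq:Phi-high-codim3-conic}, in particular smoothness in $x/s$ uniformly down to $s=0$. We must also check that no $s$-dependence is lost, since $s$ itself cannot be eliminated: $\partial_s^2\Phi$ vanishes at $s=0$, reflecting the conic degeneracy. If the uniformity fails near $s=0$, we first shrink the region, as allowed in the statement, so that the implicit function theorem applies with a fixed neighbourhood. Away from $\{x/s=0\}$ the same argument applies to \eqref{eq:Phi-high-energy-conic-codim2}, and is simpler.
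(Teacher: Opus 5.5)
Your existence step and your lower bound (choosing coordinates on $C_\Phi$ from base variables and parameters, with the $x$-direction lost at the corner because $x=s\cdot(x/s)$) match what the paper does. You depart from the paper in the upper bound. The paper does not reduce an arbitrary phase. It writes the minimal phase down directly from the splitting $\mathcal{Z}_{\calchigh}=(\sigma,y,y'_{II},\hat{\mu}'_I,s,x'/s)$ of Section~\ref{subsec:Leg-concrete-setup}, so the only parameters in \eqref{eq:conic-pair-phase-construction-high-region1} are $(\hat{\mu}'_I,s)$, and $x'/s$ enters as a coordinate.

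Your elimination route rests entirely on the sentence ``counting ranks shows that $d^2_{vv}\psi_2$ has rank $k'-k$''. That is where the gap is: the H\"ormander count does not transfer verbatim to the conic corner. Write $\varrho=x/s$, so $\msf{q}_0$ sits at $s=\varrho=0$. There $d(s\varrho)=0$, so a vector in $\ker d\hatLprojhigh$ on $T_{\msf{q}_0}C_\Phi$ is a tangent vector with $\delta\theta=\delta y=\delta y'=0$, i.e.\ some $(\delta v,\delta s,\delta\varrho)$ tangent to $C_\Phi$. The linearized critical equations are $\partial^2_{vv}\psi_2\,\delta v+\partial_s\partial_v\psi_2\,\delta s+\partial_v\psi_3\,\delta\varrho=0$ and, taking $\partial_s$ at fixed $x$, $\partial_s\psi_2\,\delta s=0$. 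The kernel vectors with $\delta s=\delta\varrho=0$ are exactly $\ker\partial^2_{vv}\psi_2$. Hence $\mathrm{rank}\,\partial^2_{vv}\psi_2=k'-k-1+d$, where $d\in\{0,1,2\}$ is the dimension of the image of the kernel in the $(\delta s,\delta\varrho)$-plane. Your claim is precisely $d=1$. If $d=0$ (say $\partial_s\psi_2\neq0$ but $\partial_v\psi_3\notin\mathrm{Ran}\,\partial^2_{vv}\psi_2$), your iteration halts with $\partial^2_{vv}\psi_2=0$ and $k+2$ parameters.

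Proving $d=1$ is exactly the ``$+1$'' numerology the paper asserts: the rank drop at the corner exceeds the low-energy one by exactly one. It needs two pieces of geometric input that your proposal does not supply.
\begin{itemize}
\item First, $\partial_s\psi_2\neq0$ at $\msf{q}_0$, which forces $\delta s=0$. For the paper's phase, $\partial_s^2(s\psi)=-\partial_sY'_{n-1}\neq0$ by \eqref{eq:Y'-n-1-expansion-concrete-high-region1}.
\item Second, the $x/s$-direction can be taken in the kernel, i.e.\ $\partial_v\psi_3\in\mathrm{Ran}\,\partial^2_{vv}\psi_2$. This holds when the endpoints $Y'|_{s=0}$ do not move to first order in $x/s$ beyond the low-energy image, e.g.\ under \eqref{eq:defn-condition-YI-Yn-high-region1}.
\end{itemize}

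Your heuristics point the opposite way on three counts.
\begin{itemize}
\item The lost tangent direction of $\hat{L}^{\bfs,\calchigh}$ is the $x/s$-direction, not the $s$-direction, since moving in $s$ moves $y'_{n-1}$.
\item $\partial_s^2\Phi$ does not vanish at $\msf{q}_0$: up to the factor $\theta$ it equals $2\partial_s\psi_2$ there. Its nonvanishing (compare Lemma~\ref{lemma:phase-restricted-convex-low-high-region1}) is what pins $\delta s=0$.
\item The reason $s$ is never eliminated is structural. The phase must keep the conic form with $s\geq0$, and the face $\{s=0\}$ is the lift of $\Lsharphigh$. By contrast, $\{x/|\mu|=0\}$ is the face over $\BFS\cap\smf$, not the lift of $\Lsharphigh$ as you wrote.
\end{itemize}

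Once $d=1$ is established, the rest of your reduction goes through. The implicit function theorem applies smoothly in $x/s$. Substituting $v^1=V$ preserves the critical set because $\partial_{v^1}\Phi\equiv0$ on the graph. Splitting the result into $\tilde\psi_2,\tilde\psi_3$ keeps the form \eqref{eq:Phi-high-codim3-conic}. With that input, your route is a legitimate H\"ormander-style alternative to the paper's direct construction.
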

The existence still follows from the construction that we will recall in Section~\ref{subsec:Leg-concrete-setup}. The minor difference in the numerology compared with Proposition~\ref{prop:minimal-parametrization-low-conic} is that the projection to the component $x = \frac{x}{|\mu|} \cdot |\mu|$ is always degenerate at $\{x=0=|\mu|\}$ since $dx = |\mu|d(\frac{x}{|\mu|}) + \frac{x}{|\mu|}d|\mu|$, and hence this rank drop always exceeds that in Proposition~\ref{prop:minimal-parametrization-low-conic} by exactly one. In some sense, this shows that the numerology in Proposition~\ref{prop:minimal-parametrization-high-conic} is actually the one that is compatible with the general principle that `the number of extra parameters equals the rank drop of the projection from the Legendre or Lagrangian submanifold', while the numerology in Proposition~\ref{prop:minimal-parametrization-low-conic} is the consequence of ignoring the $x$-direction.

Now we recall the theory of Legendre distributions associated with $(L^{\bfs,\calchigh},\Lsharphigh)$.
Let $G_1^{\calchigh}$ be the submanifold of ${}^{\bundlehigh}T_{\smf}^*X_{\rmb,\calchigh}^2$ defined by \footnote{ $G_1^{\calchigh}$ was used in \cite[Section~6.1.1]{Hassell-Wunsch-semiclassical-resolvent} to denote the projection of the current $G_1^{\calchigh}$ to the so-called ${}^{\Phi}N^*Z_1$ there, in which this projection becomes Legendrian. However, what is resolved in ${}^{\bundlehigh}T_{\smf}^*X_{\rmb,\calchigh}^2$ is still our current $G_1^{\calchigh}$, hence we just omit this complication.  }
\begin{equation}
    G_1^{\calchigh} = \{ \xi = 1, \mu = 0 \}
\end{equation}
in terms of coordinates in \eqref{eq:semiclassical-form-boundary-1} and we will only consider the case away from the part over $\BFS \cap \smf$. We use $J_1^{\calchigh}$ to denote the span of $G_1^{\calchigh}$, which means $J_1^{\calchigh} = \{ \mu  = 0 \}$.
The class of Legendre distributions associated with the pair $(L^{\bfs,\calchigh},G_1^{\calchigh})$, which is a building block of our final class of Legendre distributions, is defined as follows.

\begin{definition} \label{def:Legendre-dis-conic-high-RB-LB}
    The class of Legendre distributions associated with $(L^{\bfs,\calchigh},G_1^{\calchigh})$ with order $(m,p;r_{\LB},r_{\RB})$, which we denote by $I_{\calchigh}^{m_{\calchigh},p;r_{\LB},r_{\RB}}(X_{\rmb,\calchigh}^2,(L^{\bfs,\calchigh},G_1^{\calchigh});\Omega_{\calchigh}^{1/2})$, consists of distributions that can be written as
\begin{equation*}
    u = \sum_{j=1}^5 u_j 
\end{equation*}
with each piece characterized below and supported away from $\BFS$. 
\begin{itemize}
    \item $u_1 \in I_{\calchigh}^{m_{\calchigh},\infty;r_{\LB},r_{\RB} }(X_{\rmb,\calchigh}^2,L^{\bfs,\calchigh}; \Omega_{\calchigh}^{1/2})$ as defined in Definition~\ref{def:Legendre-distribution-high-energy}.
    \item $u_2$ is a finite sum of oscillatory integrals taking the form below and satisfying the following support conditions:
    \begin{itemize}
        \item Terms supported near a point in the interior of $\RB \cap \smf$ and having the form
\begin{equation} \label{eq:conic-high-codim2-RB}
 h^{m_{\calchigh} - \frac{k+1}{2} - \frac{n}{2} }   \int_{\R^k} \int_0^\infty  
 e^{i\frac{\Phi}{x'h}}
 (\frac{x'}{s})^{r_{\RB} - \frac{k+1}{2} } s^{p-1} a(s,\frac{x'}{s},h,z,y',v) ds dv   |dgdg'\frac{dh}{h^2}|^{1/2} ,
\end{equation}
where $s \in [0,\infty), \; v \in \R^k$, $\Phi=1+s\psi_1(y',s,v) + x'\psi_2(s,\frac{x'}{s},z,y',v)$ parametrizes $(L^{\bfs,\calchigh},G_1^{\calchigh})$ locally in the sense of \cite[Equation~(6.8)]{Hassell-Wunsch-semiclassical-resolvent} (and discussions there) and $a \in C_c^\infty$.

\item Terms supported near a point in the interior of $\LB \cap \smf$ and taking the same form as \eqref{eq:conic-high-codim2-RB} but with primed and un-primed variables switched, $\RB$ replaced by $\LB$.
      \end{itemize}
 \item  $u_3$ is a finite sum of oscillatory integrals of the following form, satisfying the support conditions below:
\begin{itemize}
    \item Terms supported near a point in the interior of $\RB \cap \smf $ and of the form:
\begin{equation}
 h^{m_{\calchigh} - \frac{k}{2} - \frac{n}{2} } (x')^{p-1} \int_{\R^k} e^{i\frac{\Phi}{x'h}}  a(h,x',z,y',v) dv \;|dgdg'\frac{dh}{h^2}|^{1/2},
\end{equation}
where $\Phi = 1+x'\psi_1(x',y',z,v)$ parametrizes $(L^{\bfs,\calchigh},G_1^{\calchigh})$ locally in the sense of \cite[Equation~(6.11)]{Hassell-Wunsch-semiclassical-resolvent}  (and discussions there) and $a \in C_c^\infty$. 

\item Terms that are supported near a point in the interior of $\LB \cap \smf$ and take the same form as \eqref{eq:conic-high-codim2-RB} but with primed and un-primed variables switched, $\RB$ replaced by $\LB$.
\end{itemize}

\item $u_4$ corresponds to the part that is away from $\smf$ and is given by a finite sum of oscillatory integrals of the following form, satisfying the support conditions below:
 \begin{itemize}
    \item Terms that are supported near a point in the interior of $\RB$
\begin{equation}
   h^{m_{\calchigh} - \frac{k+1}{2} - \frac{n}{2}}   \int_{\R^k}\int_0^\infty  e^{ i \frac{\Phi}{x'h} } (\frac{x'}{s})^{ r_{\RB} - \frac{k+1}{2} } s^{p-1} a(h,x',s,\frac{x'}{s},y',z,v) ds dv \;|dgdg'\frac{dh}{h^2}|^{1/2},
\end{equation}
where $\Phi = 1+s\psi_1(s,z,y',v)$ parametrizes $(L^{\bfs,\calchigh},G_1^{\calchigh})$.
Here we allow the support to reach $\smf$ but require $a(h,x',s,\frac{x'}{s},y',z,v)$ (hence the entire kernel) to be $O(h^\infty)$.

\item Terms that are supported near a point in the interior of $\LB$, with the same properties as above, except with primed and un-primed variables switched and $\RB$ replaced by $\LB$.
 \end{itemize}

\item  $u_5 \in x^{p}(x')^p h^{\infty} e^{i\Phi}C^\infty(X_{\rmb,\calchigh}^2)$, where $\Phi = \frac{1}{x'h}$ away from $\LB \cup \BFS$, and $\Phi = \frac{1}{xh}$ away from $\RB \cup \BFS$.
\end{itemize}
    
\end{definition}

\begin{remark}
    We explain the numerology compared with \cite[Section~6.5.1]{Hassell-Wunsch-semiclassical-resolvent}, in addition to the $\frac{1}{4}$-shift mentioned in \cite[Remark~4.3]{GHS1}.
    In that part, one should take $x_2$ there as the semiclassical parameter $h$, while the $x_1$ there is the boundary defining function of either $\LB$ or $\RB$.
    Now the fibration has only a one-dimensional fiber parametrized by $h$. So $-\frac{f_1}{2}$ can no longer cancel $\frac{N}{2}$. However, the fibered scattering half density there with the interior of $\RB$ or $\LB$ being a boundary face has extra $-(\frac{n-f_1}{2}) = -\frac{n-1}{2}$ power of $x'$ (say over the interior of $\RB$) now compared with $|dgdg'|^{1/2}$, hence the overall power compared with $|dgdg'|^{1/2}$ is the same as in the case with higher codimensions.
    More explicitly, in terms of this numerology, we have $f_1 = n$ in the codimension 2 case while $f_2 = 0, f_1 = n$ in the codimension 3 case.
\end{remark}

Finally we turn to the class of Legendre distributions associated with $(L^{\bfs,\calchigh},\Lsharphigh)$.

\begin{definition} \label{def:Legendrian-dis-conic-high}
The class of Legendre distributions associated with the pair of Legendre submanifolds with conic points $(L^{\bfs,\calchigh},\Lsharphigh)$, which we denote by 
\begin{equation}
I_{\calchigh}^{m_{\calchigh}, p;m,r_{\LB},r_{\RB} }(X_{\rmb,\calchigh}^2, (L^{\bfs,\calchigh},\Lsharphigh) ; \Omega_{\calchigh}^{1/2}),
\end{equation}
 consists of distributions that can be written as 
\begin{equation*}
    u = \sum_{j=1}^6 u_j,
\end{equation*}
where $u_j$ are as described below.
\begin{itemize}
    \item $u_1 \in I^{m_{\calchigh},m ,r_{\LB},r_{\RB}}_{\calchigh}(X_{\rmb,\calchigh}^2, L^{\bfs,\calchigh} ; \Omega_{\calchigh}^{1/2})$ as defined in Definition~\ref{def:Legendre-distribution-high-energy} and microsupported away from $\Lsharphigh$.
    
    \item $u_2 \in I_{\calchigh}^{m_{\calchigh},p;r_{\LB},r_{\RB}}(X_{\rmb,\calchigh}^2,(L^{\bfs,\calchigh},G_1^{\calchigh}); \Omega_{\calchigh}^{1/2})$ as defined in Definition~\ref{def:Legendre-dis-conic-high-RB-LB}.

\item $u_3$ is a finite sum of local expressions satisfying support conditions and taking the form below.
\begin{itemize}
    \item Terms that are supported away from $\smf \cap \BFS \cap \LB$ and take the form
    \begin{equation}
    h^{ m_{\calchigh} -\frac{k+1}{2} -\frac{n}{2} }  \int_{\R^k} \int_0^\infty e^{i\frac{\Phi}{hx\theta}} a(h,\theta,\frac{x}{s},y,y',v)
(\frac{x}{s})^{m-\frac{k+1}{2}+\frac{n}{2}}  s^{p-1+\frac{n}{2}} \theta^{ r_{\RB} } dsdv \;  |dgdg'\frac{dh}{h^2}|^{1/2},
    \end{equation}
    where $v \in \R^k$, $\Phi$ is as in \eqref{eq:Phi-high-codim3-conic} and parametrizes $(L^{\bfs,\calchigh},\Lsharphigh)$.
    \item Terms that are supported away from $\smf \cap \BFS \cap \RB$ with the same type of expression as above except with primed and un-primed variables switched, $\theta$ replaced by $\sigma = \theta^{-1}$, $r_{\RB}$ replaced by $r_{\LB}$.
\end{itemize}

\item $u_4$ is a finite sum of local expressions satisfying support conditions and taking the form below.
\begin{itemize}
    \item Terms that are supported away from $\smf \cap \BFS \cap \LB$ and take the form
    \begin{equation} \label{eq:conic-pair-u4-high}
  h^{ m_{\calchigh} -\frac{k}{2} -\frac{n}{2} } x^{p-1+\frac{n}{2}} \theta^{ r_{\RB} } \int_{\R^k}   e^{i\frac{\Phi}{hx\theta}} a(h,x,\theta,y,y',v) dv \;|dgdg'\frac{dh}{h^2}|^{1/2},
    \end{equation}
    where $v \in \R^k$, $\Phi$ is as in \eqref{eq:Phi-high-energy-conic-codim2} and parametrizes $(L^{\bfs,\calchigh},\Lsharphigh)$.
    \item Terms that are supported away from $\smf \cap \BFS \cap \RB$ with the same type of expression as above except with primed and un-primed variables switched, $\theta$ replaced by $\sigma = \theta^{-1}$, $r_{\RB}$ replaced by $r_{\LB}$.
\end{itemize}

\item $u_5$ is given by a finite sum of the same type of oscillatory integrals as in $u_3$, but with $\psi_3=0$ in $\Phi$ in \eqref{eq:Phi-high-codim3-conic} and the amplitude $a(\cdot)$ is $O(h^{\infty})$. \footnote{This term (resp. the $u_4$-term in Definition~\ref{def:Legendre-dis-conic-high-RB-LB}) can be absorbed into the $u_3$-term (resp. $u_2$-term there). We write them out because it is conceptually clearer to characterize the finite-energy case and the high-energy case separately. 
Also, this makes it easier for readers to refer to \cite[Section~6]{Hassell-Wunsch-semiclassical-resolvent}, in which case they are indeed different in the general machinery. This is because there is a potentially nontrivial fibration over $\smf$ there. However, as aforementioned, this fibration is always trivial in our applications and the so-called $v_d$ parameter is always empty.  }  


\item $u_6 \in \sigma^{r_{\LB}} \theta^{r_{\RB}} (x+x')^{p} h^\infty e^{i\Phi}C^\infty(X_{\rmb,\calchigh}^2)$, where $\Phi =  \frac{1+h}{x\theta h}$ away from $\LB$ while 
$\Phi = \frac{1+h}{x'\sigma h}$ away from $\RB$.

\end{itemize}

\end{definition}




\subsection{The focusing index} 
\label{subsec:focusing-effect}
In this subsection we define $\IF$ and $\IFint$ of an asymptotically conic manifold $X$ and their analogues on the exact cone $X_0$, which quantify the focusing effect of the geodesic flow on $X$, with the difference being that $\IF$ takes the focusing happening at $L^{\bfs,\calchigh} \cap \Lsharphigh$ into consideration while $\IFint$ does not. Since the boundary of $\hat{L}^{\bfs,\calchigh}$ gives $\hat{L}^{\bfs}$, definitions below using $\hat{L}^{\bfs,\calchigh}$ take care of the low-energy regime as well.

Consider $\IF$ first. Let $\hatLprojhigh$ be the projection
\begin{equation} \label{eq:def-Lprojhigh}
\hatLprojhigh: \; \hat{L}^{\bfs,\calchigh} \to \smf \simeq X_{\rmb}^2,
\end{equation}
and let $\Delta^{\calchigh}$ be the intersection of $\hat{L}^{\bfs,\calchigh}$ with the lifted diagonal:
\begin{equation}
    \Delta^{\calchigh} = \mathrm{clos} \Big(\beta_{\mathrm{LCP},\calchigh}^{-1}(\{ (q,(q)',h=0,\tau=0): \; q \in \Sigma \}) \Big),
\end{equation}
where $(q)'$ means switching the sign of the frequency part of $q$.
Over the part of $X$ that is away from $\partial X$, which is a compact manifold, the injective radius of $X$ has a lower bound $\gtrsim 1$.
For the part near $\partial X$, the metric is a perturbation of the conic metric, and we know that it has injective radius $\gtrsim 1$ in terms of the distance on $Y$, which means that it is $\gtrsim x^{-1}$ in terms of distance on $X$. In particular, there is a neighborhood $U_{\Delta^{\calchigh}}$ of $\Delta^{\calchigh}$ in $\hat{L}^{\bfs,\calchigh}$ such that the projection $\hatLprojhigh$ is a diffeomorphism on $U_{\Delta^{\calchigh}} \backslash \Delta^{\calchigh}$.  

Now we consider $\IFint$. It excludes the degeneracy of the projection $\hatLprojhigh$ at $\hat{L}^{\bfs,\calchigh} \cap \beta_{\mathrm{LCP},\calchigh}^{-1}(\Lsharphigh)$, which corresponds to pairs of end-points of bicharacteristic lines.
Let $\Lsharphigh$ be as in \eqref{eq:def-Lsharphigh}, which corresponds to the part with $\xi,\xi' = \pm 1$ in terms of coordinates in \eqref{eq:semiclassical-form-bf} (and its analogue in the region $\sigma =x/x' \lesssim 1$). Then the intersection $L^{\bfs,\calchigh} \cap \Lsharphigh$ corresponds to those $(q,q')$ as in \eqref{eq:L-high-interior} that are tending to endpoints of bicharacteristic lines.
Then we define
\begin{equation} \label{eq:IFint-def}
    \IFint = \max_{ \hat{L}^{\bfs,\calchigh} \backslash
     (U_{\Delta^{\calchigh}} \cup \beta_{\mathrm{LCP},\calchigh}^{-1}(\Lsharphigh) ) } \big(2n - \mathrm{rank} \, d\hatLprojhigh \big).
\end{equation}
Since $\hat{L}^{\bfs,\calchigh}$ is diffeomorphic to $L^{\bfs,\calchigh}$ away from $\Lsharphigh$, we know that $\IFint$ also equals the maximum of the rank drop of the projection from $L^{\bfs,\calchigh} \backslash (\Lsharphigh \cup \beta_{\mathrm{LCP},\calchigh}(\Delta^{\calchigh}))$.

Next we turn to $\IF$, it is  defined to be
\begin{equation} \label{eq:IF-full-def}
   \IF =  \max \Big(  \max_{ \beta_{\mathrm{LCP},\calchigh}^{-1}(L^{\bfs,\calchigh} \cap \Lsharphigh) } \big(2n - \mathrm{rank} \, d\hatLprojhigh \big) -1 , \; \IFint \Big).
\end{equation}
Roughly speaking, this is the maximal rank drop of the projection away from the diagonal and the part formed by pairs of endpoints of bicharacteristic lines.

By definition, we know
\begin{equation}
    \IFint \leq \IF.
\end{equation}

Now we turn to exact cones. In this setting, the analogue of the entire $L^{\bfs,\calchigh}$ defined using \eqref{eq:L-high-interior} for $X_0$ is not a smooth manifold anymore due to the singularity of $X_0$ as $r \to 0$, but fortunately the geometric information of $X_0$ is completely encoded in $Y$, whose geodesic flow is in turn completely encoded in $\hat{L}^{\bfs}$.
So we let $\hat{L}^{\bfs}$ be the same as before, i.e. defined via \eqref{eq:hat-Lbf-def}, which in turn uses $L^{\bfs}$ defined via \eqref{eq:Lbf-definition-gamma^2}.
Let $\hatLprojlow$ be the projection $\hat{L}^{\bfs} \to \bfs$ defined in \eqref{eq:def-Lprojlow},   $U_{\Delta^{\calclow}} = U_{\Delta^{\calchigh}} \cap \hat{L}^{\bfs}$ be the part of $U_{\Delta^{\calchigh}}$ lying over $\bfs$, and we define
\begin{equation} \label{eq:IFintz-def}
    \IFintz = \max_{ \hat{L}^{\bfs} \backslash
    \big( U_{\Delta^{\calclow}} \cup \beta_{\mathrm{LCP},\calclow}^{-1}(\Lsharplow) \big) } \big(2n -1 - \mathrm{rank} \, d\hatLprojlow \big).
\end{equation}
Then we define $\IFz$, which takes care of the part $\beta_{\mathrm{LCP},\calclow}^{-1}(L^{\bfs} \cap \Lsharplow)$ as well to be
\begin{equation} \label{eq:IFz-def}
    \IFz = \max \Big(  \max_{ \beta_{\mathrm{LCP},\calclow}^{-1}(L^{\bfs} \cap \Lsharplow) } \big(2n -1 - \mathrm{rank} \, d\hatLprojlow \big) , \; \IFintz \Big).
\end{equation}

\begin{remark}
    In principle, one can define $\IFz$ (resp. $\IFintz$) in the same way as \eqref{eq:IF-full-def} (resp. \eqref{eq:IFint-def}), but one needs to either deal with a singular object or truncate the analogue of $L^{\bfs,\calchigh}$ defined for $(X_0,g_0)$ to the part away from the vertex of the cone, which introduces several quite unnatural extra boundary surfaces. So we choose to define it using $\hat{L}^{\bfs}$ here.
\end{remark}

The importance of $\IF$, $\IFint$, $\IFz$, and $\IFintz$ lies in Proposition~\ref{prop:IF-relation-parameter-number} below, which gives the relationship between these geometric quantities and the local expression of the spectral measure.

\section{The microlocalized spectral measure}
\label{sec:microlocalized-spectral-measure}

In this section, we discuss the structure of the spectral measure of $P$ as a Legendre distribution associated with intersecting pairs of Legendre submanifolds with conic points. The global characterization is proven in \cite[Theorem~3.10]{GHS2}\cite[Corollary~1.2]{Hassell-Wunsch-semiclassical-resolvent}.
We construct the microlocal partition of unity so that we have a clear control of the degeneracy of the projection from Legendre submanifolds that the microlocalized pieces of the spectral measure are associated with. This will give us a clear characterization of how each degenerate point of the projection causes the loss in the dispersive estimate. See also Remark~\ref{remark:microlocalized-est-Schrodinger} below.

\subsection{The low-energy microlocal partition of unity}
\label{subsec:microlocal-partition-low}

In this subsection, we introduce the microlocal partition of unity that is used to localize the spectral measure so that we have a precise oscillatory integral representation of it.
Similar partitions have already been used in \cite[Lemma~5.3, Lemma~5.4]{GH2014uniform-Sobolev}  and \cite[Section~3]{Hassell-Zhang2016Strichartz}, but the role of the partition of unity in our setting is different from that in \cite{Hassell-Zhang2016Strichartz}, where this is used to separate conjugate points so that pairs of conjugate points almost `disappear' if we only use the part near the diagonal after this microlocalization.
On the other hand, in the current setting, we will not only use the contribution near the diagonal, but all pieces of the microlocalized spectral measure, hence the effect of pairs of conjugate points can't be avoided and our decomposition is used to characterize them more precisely.
This decomposition also separates the b-problem living at $\zf$ from other parts, which is dealt with by the elliptic theory in \cite[Section~6.1.1]{GHS2}. 

As we will discuss in Section~\ref{sec:microlocalized-spectral-measure}, the spectral measure of $P$ is a Legendre distribution associated with the Legendrian conic pairs $(L^{\bfs},L^\#)$ that we will define.
The microlocalized spectral measures are $Q_j^{\calclow} \specm Q_{j'}^\calclow$ where $Q_j^{\calclow},Q_{j'}^\calclow$ are members of the microlocal partition of unity.
Then as shown in \cite[Section~5]{GHS2}, these terms are also Legendre distributions, but associated only to part of the Legendre submanifold, namely to the subset
\begin{equation*}
\big\{ (\sigma, y, y', \mu, \mu', \nu, \nu') \in L^{\bfs} \mid 
(y, \mu, \nu) \in \WF'_{\calclow}(Q_j), (y', \mu', \nu') \in \WF_{\calclow}'(Q_{j'})
\big\},
\end{equation*}
where $\WF'_{\calclow}(\bullet)$ is the low-energy wavefront set defined in Definition~\ref{def:low-energy-WF}. 
In this way, we can separate conjugate point pairs with different rank drops and investigate their effects on the dispersive estimate individually.

In \cite{Hassell-Zhang2016Strichartz}, this microlocalization procedure is an important ingredient in the proof of the Strichartz estimates, while here this is merely a technical convenience to look at the spectral measure (hence propagators of PDEs) microlocally on the piece of $L^{\bfs}$ with desired geometric properties.
Concretely, we do not 
require the microlocalizers on both sides to be the same partition of identity, which is different from the microlocal partition of unity in \cite[Section~2D, Section~3C]{Hassell-Zhang2016Strichartz}\cite[Section~5, Section~7]{GHS1}. Another minor difference is that we use our combined pseudodifferential algebra, instead of defining the low-energy and high-energy part separately. We believe that the microlocalization technique employed here is powerful enough to treat problems beyond the scope of the $TT^*$ argument, such as $L^p$-$L^q$ resolvent estimates and pointwise estimates.


Since microlocalization is performed on the left and right variables separately, it is more convenient to use the following analogue of $L^{\bfs}$, which is in the product form, to capture the degeneracy of the projection:
\begin{align} \label{eq:scr-RY-*-definition}
\RYstar = \{ (y,\hat{\mu},y',\hat{\mu'}) \in S^*Y \times S^*Y: \exists s\in (-\pi,\pi): \exp(sH_{\frac{1}{2}\Ymetric^*})(y,\hat{\mu})
= (y',\hat{\mu'}) \}.
\end{align}
The partition is adapted to the degeneracy of the projection 
\begin{equation} \label{eq:def-Rprojlow-star}
\Rprojlow^*: \RYstar \to Y \times Y,
\end{equation}
which determines the pointwise bound of our spectral measure, hence the bound of propagators of our PDEs.
We consider the following decomposition of $\RYstar$:
\begin{equation}
\RYstar = \bigcup_{i=0}^{n-2} S_i^*,
\end{equation} 
where 
\begin{equation} \label{eq:def-Si}
S_i^* = \{ p \in \RYstar:  \mathrm{rank} \, d\Rprojlow^*(p)  = 2n-2-i \}.
\end{equation}

As discussed in Appendix~\ref{sec:riemannian_geometry_interpretation_of_the_focusing_intensity}, the rank of $d\Rprojlow^*$ is a lower semi-continuous function on $\RYstar$ and the degeneracy is always simple. So for each point $p = (y,\hat{\mu},y',\hat{\mu'}) \in S^*_i$, there is a neighborhood $U  \times U'$ in $S^*Y \times S^*Y$ such that the rank drop of $d\Rprojlow$ at $p$ is $i$ and it is at most $i$ on $(U \times U') \cap \RYstar$.
By the compactness of $\RYstar$, we know that it has a cover of the form
\begin{equation} \label{eq:R*-decomposition}
\RYstar \subset \bigcup_{\ell} U_\ell \times U'_\ell,
\end{equation}
where $\ell$ runs over a finite index set and such that the projections of $U_{\ell},U_{\ell}'$ are contained in a single coordinate chart of $Y$ and the rank drop of $d\Rprojlow^*$ on $(U_\ell \times U'_\ell) \cap \RYstar$ is at most $k_\ell \in \N$, and this rank drop is achieved at certain points.

\begin{remark}
In this open cover, it can happen that the degeneracy of $d\Rprojlow^*$ at some points is `hidden' by the degeneracy at other points because that point has a larger rank drop.
However, as we will see in Section~\ref{sec:dispersive-Schrodinger} and Section~\ref{sec:dispersive-wave}, only the largest $k_\ell$ dictates the numerology of the global dispersive estimate and this is independent of the choice of the cover.

On the other hand, we will also consider the more refined microlocal dispersive estimate, in which one can see that the contributions from $\specm$ associated with different parts of $L^{\bfs}$ (and $L^{\bfs,\calchigh}$ in \eqref{subsec:Legendrian-distributions-high}, if one takes the high-energy regime into consideration as well) give contributions of different decay rates to the dispersive estimate.
\end{remark}

Now we discuss the relationship between $\RYstar$ and $L^{\bfs}$. There is a natural fibration 
\begin{equation} \label{eq:hatLbf-R*-fiberation}
\mk{P}: \; \hat{L}^{\bfs} \to \RYstar
\end{equation}
given by\footnote{Coordinates for $\hat{L}^{\bfs}$ in \eqref{eq:fibration-Lbf-scrR*} are only valid away from $\rb$.
Near $\rb$, the only change needed is just switching primed and un-primed variables.}
\begin{equation} \label{eq:fibration-Lbf-scrR*}
(\sigma,y,y',\hat{\mu},\hat{\mu'},|\mu'|,\nu,\nu') \to (y,\hat{\mu},y',\hat{\mu'}).
\end{equation}
The fiber over $(y,\hat{\mu},y',\hat{\mu'})$ is precisely those lifted (boundary) bicharacteristic lines in ${}^{\sct}T^*_{\partial X}X$ passing through $(y,\mu,\nu)$ and $(y',\mu',\nu')$ with $|\mu|,\nu,\nu'$ determined by
\begin{equation}
|\mu|= \sigma|\mu'|, \, \nu = \pm (1-|\mu|^2)^{1/2}, \nu' = \mp (1-|\mu'|^2)^{1/2}.
\end{equation}
Then a decomposition of $\RYstar$ as in \eqref{eq:R*-decomposition} gives a decomposition of $\hat{L}^{\bfs}$ as follows. Let $\beta_{\mathrm{LCP},\calclow}$ be the blow-down map in \eqref{eq:beta-LCP-low-1}, for each $U_\ell \times U'_\ell$ in \eqref{eq:R*-decomposition}, we set $\mk{G}_{\ell}$ to be a tubular neighborhood of
\begin{equation} \label{eq:G-ell-01}
\beta_{\mathrm{LCP}}(\mk{P}^{-1}(U_\ell \times U'_\ell)).
\end{equation}
That is, $\mk{G}_{\ell} \cap L^{\bfs} = \beta_{\mathrm{LCP}}(\mk{P}^{-1}(U_\ell \times U'_\ell))$, and $\mk{G}_{\ell}$ is diffeomorphic to 
\begin{equation} \label{eq:G-ell-model}
(1-\delta,1+\delta) \times \beta_{\mathrm{LCP}}(\mk{P}^{-1}(U_\ell \times U'_\ell))
\end{equation}
with the first component parametrized by $|\mu|_h^2+\nu^2$ and $\delta \ll 1$ being a fixed small constant.
Consequently, we obtain a finite cover of $L^{\bfs}$:
\begin{equation} \label{eq:Lbf-covered-G-ell}
L^{\bfs} = \bigcup_{\ell} (\mk{G}_{\ell} \cap L^{\bfs}),
\end{equation}
such that the projection $L^{\bfs} \to \bfs$ on each $\mk{G}_{\ell} \cap L^{\bfs}$ has rank drop at most $k_\ell$, and it is achieved at certain points. In addition, after potentially further decomposing $\mk{G}_\ell$, we may assume that each piece $\mk{G}_{\ell} \cap L^{\bfs}$ can be parametrized in the sense of Definition~\ref{def:Legendre-parametrization-low} or \eqref{eq:hat-Lbf-parametrization-bf} by a single phase function $\Phi$.

For each fixed $\sigma=\sigma_0$, $L^{\bfs}$ can be equipped with the metric induced from ${}^{\sct}T^*_{\partial X}X \times {}^{\sct}T^*_{\partial X}X$, which in turn is induced by the metric $g$ on fibers and by $h$ on the base $\partial X \times \partial X$.
The topology induced by this metric is the same as the original one inherited from ${}^{\bundlehigh}T^*_{\bfs} X_{\mathrm{b}}^2$, under which $L^{\bfs} \cap \{\sigma = \sigma_0 \}$ is compact. In particular, we have a Lebesgue number $\delta_0$ associated with this covering: that is, if some subset of $L^{\bfs}$ has diameter less than $\delta_0$, then it is contained in one of $\mk{G}_{\ell} \cap L^{\bfs}$.
In addition, this $\delta_0$ can be taken to be uniform in $\sigma$. This is because we have an explicit construction of such $\delta_0$ from the proof of the Lebesgue lemma: the minimum of the averaged distance to complements of members of the open cover. 
Since our cover is independent of $\sigma$ by construction and $L^{\bfs}$ with fixed $\sigma$ is changing continuously in $\sigma$, this Lebesgue number is a continuous function in $\sigma$ (in $\sigma^{-1}$ in the region $\sigma \gg 1$) and can be made uniform in $\sigma$.

Now we start to construct our microlocal partition of unity. 
We separate the part near $\zf$ and away from $\zf$ first. Let $\chi \in C_c^\infty(\R)$ be an even smooth function non-decreasing on $[0,\infty)$ such that $\chi \equiv 1$ on $[0,\epsilon]$ and $\chi \equiv 0$ on $[2\epsilon,\infty)$. Then we set
$Q_{\zf}^{\calclow},Q_{\zf}^{'\calclow}$ to be the multiplication by $(1-\chi(\rho))$ and $(1-\chi(\rho'))$ respectively, which are localizers supported near $\zf$ in terms of left and right variables respectively.

The next step is to separate the part near the characteristic variety and the part away from it. We choose $\chi_1 \in C^\infty(\R)$ such that $\chi_1 \equiv 1$ on $[0,1-\frac{3}{4}\delta] \cup [1+\frac{3}{4}\delta,\infty)$ and $\chi_1 \equiv 0$ on $[1-\frac{\delta}{2},1+\frac{\delta}{2}]$, where $\delta$ is as in \eqref{eq:G-ell-model}
and set 
\begin{equation}
Q_1^{\flat} = \chi(\rho) \operatorname{Op}(\chi_1(|\mu|^2+\nu^2)) \in \Psi_{\flat}^0,
\end{equation}
where $\operatorname{Op}(\cdot)$ is the quantization map as in \eqref{eq:quant-PsiDO}. This $Q_1^{\flat}$ takes care of the part that is away from the characteristic variety of $\Delta_g-\lambda^2$.
Similarly, we set 
\begin{equation}
Q_1^{'\flat}= \chi(\rho') \operatorname{Op}(\chi_1(|\mu'|^2+(\nu')^2)) \in \Psi_{\flat}^0.
\end{equation}

Now we turn to the part that is near $L^{\bfs}$, which is the part that is actually interesting. 
By the compactness of $Y$ and consequently the compactness of the unit (scattering) cosphere bundle in ${}^{\sct}T^*_{\partial X}X$, we can choose a finite family of smooth functions on ${}^{\sct}T^*_{\partial X}X$, which we denote by $\{\chi_{j}\}_{j \in \Jlow}$ with $\Jlow$ being the index set, such that 
\begin{equation} \label{eq:sum-chi-j-1}
\sum_{j \in \Jlow \cup \{\zf,1\}} \chi_{j} \equiv 1
\end{equation}
on the region $1- \frac{3}{4}\delta \leq |\mu|_h^2+\nu^2  \leq 1+ \frac{3}{4}\delta$ and the diameter of $\supp \chi_{\ell,1} \cap \{ |\mu|_h^2 + \nu^2 = 1\}$ is less than $\frac{\delta_0}{2}$.
By our construction, the (left full) symbol of $\Id - Q_{\zf} - Q_1^{\flat}$ is supported in the region $\rho \geq \epsilon$ and $1- \frac{3}{4}\delta \leq |\mu|_h^2+\nu^2  \leq 1+ \frac{3}{4}\delta$, on which \eqref{eq:sum-chi-j-1} holds.
So we can decompose it (by multiplying the left symbol by $\chi_j$ and then quantize by \eqref{eq:quant-PsiDO}) as a sum
\begin{equation}
 \Id - Q_{\zf} - Q_1^{\flat} = \sum_{j \in \Jlow} Q_{j}^{\flat},
\end{equation}
where the support of the (left full) symbol of $Q_{j}^{\flat}$ with $j \in \Jlow$ has diameter less than $\frac{\delta_0}{2}$. In particular, the $\nu$-component lies in an interval of length $\leq \frac{\delta_0}{2}$.

For $j,j' \in \Jlow$, we let $W_j,W_{j'}$ be an open neighborhood of $\WF'(Q_j^\flat), \WF'(Q_{j'}^\flat)$ obtained by slightly enlarging them and continue to satisfy the assumptions on diameters. We define
\footnote{Here we introduced $W_j,W_{j'}$ just to avoid extra corners formed by the intersection of the inverse image of $\partial(\WF'(Q_j^\flat))$ and $\partial(\WF'(Q_j^\flat))$ under the left and right projections and $L^{\bfs}$. }
\begin{equation} \label{eq:Lbf-low-jj'}
L_{j,j'}^{\bfs} = 
\{ (\sigma,y,y',\mu,\mu',\nu,\nu') \in L^{\bfs}| (y,\mu,\nu) \in  W_j, (y',\mu',\nu') \in W_{j'}  \}.
\end{equation}

This is a piece of $L^{\bfs}$ that is microlocalized by our microlocal partition. As shown in \cite[Section~5]{GHS1}, $Q_j^\flat \specm Q_{j'}^\flat$ is a Legendre distribution associated with $L_{j,j'}^{\bfs}$ if it is away from $L^{\bfs}\cap\Lsharplow$ and associated with
the Legendrian conic pair formed by $L_{j,j'}^{\bfs}$ and $L^{\#}$.
As $L^{\#}$ always projects to the base diffeomorphically, we do not need to decompose it to capture the effect of the geometry.

By our assumption that the diameters of $W_j,W_{j'}$ are less than $\frac{\delta_0}{2}$, we know that the diameter of $L_{j,j'}^{\bfs}$ for fixed $\sigma = \sigma_0$ is less than $\delta_0$, which is the Lebesgue number. Consequently, it is contained in one of $\mk{G}_{\ell} \cap L^{\bfs} \cap \{\sigma = \sigma_0\}$.
Now let $\hat{L}^{\bfs}_{j,j'}=\beta_{\mathrm{LCP}}^{-1}(L_{j,j'}^{\bfs})$\footnote{The reason that we further lift to $\hat{L}^{\bfs}$ is that its invariance in $\sigma$ is more transparent.} be the lift of $L_{j,j'}^{\bfs}$ to $\hat{L}^{\bfs}$. 
Then the discussion above shows
\begin{equation}
\hat{L}^{\bfs}_{j,j'} \cap \{ \sigma = \sigma_0 \}
\subset \beta_{\mathrm{LCP}}^{-1}(\mk{G}_{\ell} \cap L^{\bfs}) \cap \{ \sigma = \sigma_0 \},
\end{equation} 
which gives
\begin{equation}
\hat{L}^{\bfs}_{j,j'} \subset \beta_{\mathrm{LCP}}^{-1}(\mk{G}_{\ell} \cap L^{\bfs}),
\end{equation}
since both sides are independent of $\sigma$ because they either contain or are disjoint from each fiber of \eqref{eq:fibration-Lbf-scrR*}.
This in turn shows
\begin{equation} \label{eq:Lbf-jj'-contained-G-l}
L^{\bfs}_{j,j'} \subset \mk{G}_{\ell} \cap L^{\bfs}.
\end{equation}
So we have a partition of $\Jlow \times \Jlow$:
\begin{equation}
\Jlow \times \Jlow = \bigcup_\ell \mk{J}_\ell,
\end{equation}
where $\mk{J}_\ell$ consists of $(j,j')$ satisfying \eqref{eq:Lbf-jj'-contained-G-l} and the $\mk{J}_\ell$ are disjoint for different $\ell$ (though the same $(j,j')$ may be contained in different $\mk{G}_{\ell} \cap L^{\bfs}$, we only select one of them).

In terms of our spectral measure, suppose $(j,j')$ satisfies \eqref{eq:Lbf-jj'-contained-G-l}, by the construction of $\mk{G}_{\ell}$, the rank drop from $L_{j,j'}^{\bfs}$ to $\overline{\R}_\sigma \times Y \times Y$ is at most $k_\ell$, which will be the number of extra parameters when we write Legendre distributions as oscillatory integrals in Section~\ref{subsec:osc-integral-formula}.
So we have constructed the microlocal partition in the low-energy regime and we summarize its properties below.
\begin{proposition} \label{prop:microlocal-partition-low}
Let $\mk{G}_{\ell}$ be the cover of $L^{\bfs}$ as in \eqref{eq:Lbf-covered-G-ell}, then there exists a microlocal partition of unity $Q_j^{\calclow} \in \Psi_{\calclow}^{0}(X)$:
\begin{equation*}
\mathrm{Id}=\sum_{j \in  \overline{\mk{J}}_{\calclow} } Q_j^{\calclow}  \; \text{ when } \; \lambda \lesssim 1,
\end{equation*}
where the index set is $\overline{\mk{J}}_{\calclow} = \{\zf,1\} \cup \Jlow$, such that:
\begin{enumerate}
    \item $Q_{\zf}^{\calclow}$ is a multiplication by $(1-\chi(\rho))$, which is a function that is supported on the region $\rho \geq C>0$ and is identically $1$ on the region $\rho \geq 2C$.
    \item $Q_1^{\calclow} \in \Psi_{\calclow}^0$ has wavefront set contained in $\rho \leq 2C$ and away from the projection of $L^{\bfs}$ to the right or left factor.
    \item  For $j,j' \in \Jlow$, there is an index $\ell$ such that \eqref{eq:Lbf-jj'-contained-G-l} holds. In particular, the rank drop of the projection from $L_{j,j'}^{\bfs}$ defined in \eqref{eq:Lbf-low-jj'} to $\bfs$ is at most $k_\ell$, the maximal rank drop from $L^{\bfs} \cap \mk{G}_{\ell}$ to $\bfs$.
    In addition, $\nu$ and $\nu'$-components in each $L^{\bfs}_{j,j'}$ lie in an interval of length at most $\frac{\delta_0}{2}$, where $\delta_0$ is the Lebesgue number given above.
\end{enumerate}
\end{proposition}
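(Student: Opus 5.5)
The plan is to assemble the partition directly from the construction carried out just before the statement, checking each listed property in turn. First I fix the cutoff $\chi$ (even, non-increasing on $[0,\infty)$, equal to $1$ on $[0,\epsilon]$ and $0$ on $[2\epsilon,\infty)$) and set $Q_{\zf}^{\calclow}=1-\chi(\rho)$. This is a smooth multiplication operator, so it lies in $\Psi^0_{\calclow}$ as the second type of operator allowed in the definition of $\Psi_{\flat}^m$. With $C=\epsilon$ it is supported in $\rho\ge C$ and identically $1$ on $\rho\ge 2C$, which gives item (1).

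Next I take $Q_1^{\calclow}=\chi(\rho)\operatorname{Op}(\chi_1(|\mu|^2+\nu^2))$, quantized as in \eqref{eq:quant-PsiDO}. Its symbol is supported in $\rho\le 2C$ and vanishes on the band $\big||\mu|^2+\nu^2-1\big|\le\delta/2$. By Definition~\ref{def:low-energy-WF}, its wavefront set therefore avoids a neighbourhood of the unit characteristic set. The projections of $L^{\bfs}$ to either factor lie in that set, since $|\mu|=\sin s_l$ and $\nu=-\cos s_l$ in \eqref{eq:Lbf-definition-gamma^2}, and likewise for the primed variables. This gives item (2).

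For the remaining term, I write the left symbol of $\Id-Q_{\zf}^{\calclow}-Q_1^{\calclow}$, which is $\chi(\rho)(1-\chi_1)$. It is supported in the shell $1-\tfrac34\delta\le|\mu|^2+\nu^2\le 1+\tfrac34\delta$ over $\rho\le 2C$. I choose a finite partition $\{\chi_j\}_{j\in\Jlow}$ of unity on that compact shell over ${}^{\sct}T^*_{\partial X}X$, with each support of diameter less than $\delta_0/2$; this is possible by compactness of $Y$. I then set $Q_j^{\calclow}=\operatorname{Op}(\chi_j\,\chi(\rho)(1-\chi_1))$. Since quantization is linear, the $Q_j^{\calclow}$ sum exactly to $\Id-Q_{\zf}^{\calclow}-Q_1^{\calclow}$, so $\sum_{\overline{\mk J}_\flat}Q_j^{\calclow}=\Id$ for $\lambda\lesssim 1$.

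Item (3) is the only point needing an argument. I slightly enlarge the wavefront sets to open sets $W_j$ that still have diameter less than $\delta_0/2$, and define $L^{\bfs}_{j,j'}$ by \eqref{eq:Lbf-low-jj'}. For each fixed $\sigma_0$, the slice $L^{\bfs}_{j,j'}\cap\{\sigma=\sigma_0\}$ then has diameter less than $\delta_0$ in the product metric. Because $\delta_0$ is a Lebesgue number chosen uniformly in $\sigma$, this slice lies in some $\mk G_\ell\cap L^{\bfs}$.

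\textbf{Main obstacle.} The subtle step is making $\ell$ independent of $\sigma_0$. I would handle it by lifting to $\hat L^{\bfs}$ and using the fibration $\mk P$ of \eqref{eq:hatLbf-R*-fiberation}. Both $\hat L^{\bfs}_{j,j'}$ and $\beta_{\mathrm{LCP}}^{-1}(\mk G_\ell\cap L^{\bfs})$ are unions of whole fibres of $\mk P$: the latter by construction from $U_\ell\times U'_\ell$, and the former because its defining conditions depend only on $(y,\hat\mu,y',\hat\mu')$ up to the $\sigma$-scaling. So containment on one slice propagates to all $\sigma$, giving \eqref{eq:Lbf-jj'-contained-G-l}. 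I then assign each pair $(j,j')$ to a single such $\ell$, producing the disjoint sets $\mk J_\ell$. The rank-drop bound $k_\ell$ is inherited from $\mk G_\ell$. The bound on the $\nu$ and $\nu'$ intervals follows because the supports of the $\chi_j$ have diameter less than $\delta_0/2$.
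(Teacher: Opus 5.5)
Your construction of $Q_{\zf}^{\calclow}$, $Q_1^{\calclow}$ and the $Q_j^{\calclow}$ is the paper's construction, and so are your checks of (1) and (2), the exact identity $\sum_{j}Q_j^{\calclow}=\Id$ via the left symbol $\chi(\rho)(1-\chi_1)$, and the slice-wise Lebesgue-number step. The gap is in the step you single out as the main obstacle: $\hat{L}^{\bfs}_{j,j'}$ is \emph{not} a union of fibres of $\mk{P}$.

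Write a point of $L^{\bfs}$ through a geodesic $\gamma$ and times $(s_l,s_r)$. The fibre of $\mk{P}$ through it contains the curve obtained by moving the base point along $\gamma$ while shifting $(s_l,s_r)\mapsto(s_l-b,s_r-b)$. This fixes $(y,\hat\mu,y',\hat\mu')$, but it moves $\sigma=\sin s_l/\sin s_r$, $|\mu|=\sin s_l$ and $\nu=-\cos s_l$. Along the curve, $\nu$ sweeps an interval of length $1+\cos(s_r-s_l)$. So the conditions $(y,\mu,\nu)\in W_j$, $(y',\mu',\nu')\in W_{j'}$ depend on the position along the fibre, not just on $(y,\hat\mu,y',\hat\mu')$.

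In fact the last clause of (3), which you also prove, confines $\nu$ on $L^{\bfs}_{j,j'}$ to an interval of length $<\delta_0/2$. Hence away from $|s_r-s_l|\approx\pi$ the set $\hat{L}^{\bfs}_{j,j'}$ meets each fibre only in a short arc. The inclusion obtained at one $\sigma_0$ therefore does not propagate, and different slices may a priori land in different $\mk{G}_\ell$. The paper's text asserts the same saturation at this point, so the step needs a different justification there as well.

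What suffices is $\mk{P}(\hat{L}^{\bfs}_{j,j'})\subset U_\ell\times U'_\ell$ for a single $\ell$, since then
$L^{\bfs}_{j,j'}=\beta_{\mathrm{LCP},\calclow}(\hat{L}^{\bfs}_{j,j'})\subset\beta_{\mathrm{LCP},\calclow}(\mk{P}^{-1}(U_\ell\times U'_\ell))=\mk{G}_\ell\cap L^{\bfs}$.
So you must control the whole image in $S^*Y\times S^*Y$, not its slices.

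On $\{|\mu|+|\mu'|\geq c\}$ this follows from a Lebesgue number $\eta$ of the cover $\{U_\ell\times U'_\ell\}$ itself. There $\mk{P}$ is a uniformly continuous function of $(\pi_L,\pi_R)$: $s_l,s_r$ are read off from $\nu,\nu'$, and if, say, $|\mu|$ is small but $|\mu'|\geq c$, then $(y,\hat\mu)$ is recovered from $(y',\hat\mu')$ by flowing back along the geodesic for time $s_r-s_l$. After shrinking the $W_j$, which is harmless for an existence statement, one gets $\mathrm{diam}\,\mk{P}(\hat{L}^{\bfs}_{j,j'})<\eta$.

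Near $\mu=\mu'=0$, i.e. near $\beta_{\mathrm{LCP},\calclow}^{-1}(L^{\bfs}\cap\Lsharplow)$ and $T_\pm$, the pair $(\pi_L,\pi_R)$ no longer determines the geodesic, and $\mk{P}(\hat{L}^{\bfs}_{j,j'})$ does not shrink with $W_j,W_{j'}$. The members of the cover meeting that region must therefore be chosen to contain these images. Your slice-wise Lebesgue step tacitly presupposes this too: otherwise $\{\mk{G}_\ell\cap L^{\bfs}\}$ is not an open cover of the slices near $L^{\bfs}\cap\Lsharplow$.
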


\subsection{The high-energy and combined microlocal partition}
\label{subsec:microlocal-partition-high-combined}

In this subsection, we give the microlocal partition of unity in the high-energy regime and combine it with the low-energy microlocal partition. 
The difference with the low-energy case is that now we need to take the geometry of the interior of $X$ into consideration.

We still start with the geometric decomposition of ${}^{\bundlehigh}T_{\smf}^*X_{\rmb,\calchigh}^2$ defined in Section~\ref{subsec:Legendrian-distributions-high}.
By its definition, every point of it can be written as
\begin{equation} \label{eq:high-form-decomposition}
   q     + \tau d(\frac{1}{h}),
\end{equation}
where $q \in \beta_{\rmb}^*({}^{\calchigh}T^*X \times {}^{\calchigh}T^*X)$ with $\beta_{\rmb}$ being the blow-down map in \eqref{eq:beta-b}.

So there are two natural projections $\pi_{L,\calchigh},\; \pi_{R,\calchigh}$ from ${}^{\bundlehigh}T_{\smf}^*X_{\rmb,\calchigh}^2$ to ${}^{\calchigh}T^*X$ via projecting to $q \in  \beta_{\rmb}^*({}^{\calchigh}T^*X \times {}^{\calchigh}T^*X)$ in \eqref{eq:high-form-decomposition} first and then projecting to ${}^{\calchigh}T^*X$ using the left or right projection.
In local coordinates over the interior, this is given by
\begin{equation} \label{eq:def-pi-L-high}
\zeta \cdot \frac{dz}{h} + \zeta' \cdot \frac{dz'}{h} + \tau d(\frac{1}{h}) \to \zeta \cdot \frac{dz}{h}
\end{equation}
and 
\begin{equation} \label{eq:def-pi-R-high}
\zeta \cdot \frac{dz}{h} + \zeta' \cdot \frac{dz'}{h} + \tau d(\frac{1}{h}) \to \zeta' \cdot \frac{dz'}{h}
\end{equation}
respectively.

In the same way as in Section~\ref{subsec:microlocal-partition-low}, we consider the projection
\begin{equation} \label{def:Lprojhigh-without-resolve}
    \Lprojhigh: L^{\bfs,\calchigh} \to X_{\rmb}^2.
\end{equation}
Similar to the low-energy case, since the projection from $\Lsharphigh$ to $X_{\rmb}^2$ is always a diffeomorphism, we do not need to decompose it.
Though we take the part over the interior of $X$ into consideration now, $L^{\bfs,\calchigh}$ is still compact and the construction of the finite cover is still valid.
\footnote{
It might seem strange that $L^{\bfs,\#}$ remains compact since $X$ should not be thought of as a `compact' object in terms of analysis. However, it is asymptotic to the exact cone over $Y$ and its geometric property near infinity is captured by $Y$. Away from $\partial X$, we again have a truly compact region. All those are finally packaged into the fact that we have those compactifications in which $L^{\bfs,\calchigh}$ remains compact.
}
The only subtle difference is that there is no natural choice of the distance function that is smooth down to the boundary to use to construct our Lebesgue number as before.
This can be remedied as follows.

First we give a metric on ${}^{\bundlehigh}T^*X_{\rmb,\calchigh}^2$, which is highly non-canonical, but only serves as an auxiliary tool to deduce some bounds of wavefront sets.
We let $\{ U_i \}$ be a finite open cover of ${}^{\bundlehigh}T^*X_{\rmb,\calchigh}^2$ such that ${}^{\bundlehigh}T^*X_{\rmb,\calchigh}^2$ is trivialized as in coordinate systems described in \eqref{eq:semiclassical-form-interior}\eqref{eq:semiclassical-form-bf}\eqref{eq:semiclassical-form-boundary-1}. 
Let $\rho_{\BFS} = x+x'$ be the defining function of $\BFS$, and fix a small constant $\rho_0>0$ as in \eqref{eq:product-rhoBFS-Lbfs} below, we choose those coordinate systems so that over the region $\{ \rho_{\BFS} \geq \frac{\rho_0}{2} \}$, which is away from $\BFS$, they are just a union of coordinate systems of ${}^{\calchigh}T^*X$ lifted from the left and right factors.
Now let $\mk{g}_i$ be the metric that is Euclidean in coordinate systems above\footnote{In other coordinates, we change it covariantly, hence this is indeed a metric locally and the glued version $\mk{g}$ is indeed a metric. } and let $\{\chi_i\}$ be a partition of unity subordinate to $\{ U_i \}$ and set
\begin{equation} \label{eq:L-high-metric-artificial-glued}
    \mk{g} = \sum_i \chi_i \mk{g}_i.
\end{equation}
Then this gives a metric on ${}^{\bundlehigh}T^*X_{\rmb,\calchigh}^2$ which induces the same topology as before and induces a metric on $L^{\bfs,\calchigh}$ that is smooth down to boundaries. 


Now we describe our cover of $L^{\bfs,\calchigh}$ by considering the part near $\BFS \cap \smf$ and the part away from $\BFS \cap \smf$ separately.
We notice that the part of $L^{\bfs,\calchigh}$ near $\BFS \cap \smf$ can be identified with 
\begin{equation} \label{eq:product-rhoBFS-Lbfs}
    [0,\rho_0)_{ \rho_{\BFS} } \times L^{\bfs},
\end{equation}
where $L^{\bfs}$ is the propagating Legendrian for low and fixed finite energy levels defined using \eqref{eq:Lbf-definition-gamma^2}.
Notice that the projection from it never degenerates on the $\rho_{\BFS}$-direction if $\rho_0$ is sufficiently small and $d\rho_{\BFS}$ is never linearly dependent with differentials of other coordinates on $L^{\bfs,\calchigh}$.
So we may just take the cover $\{ \mk{G}_\ell \}$ constructed in \eqref{eq:G-ell-model}, identify them as subsets of ${}^{\bundlehigh}T_{\smf \cap \BFS }^*X_{\rmb,\calchigh}^2$ and take product with $[0,\rho_0)_{ \rho_{\BFS} }$ to form
\begin{equation} \label{eq:def-tilde-mkG-ell}
    \tilde{\mk{G}}_{\ell} = [0,\rho_0)_{ \rho_{\BFS} } \times \mk{G}_{\ell} \subset {}^{\bundlehigh}T_{\smf}^*X_{\rmb,\calchigh}^2,
\end{equation}
and we have
\begin{equation}
    L^{\bfs,\calchigh} \cap \{ \rho_{\BFS} < \rho_0 \}
    =   \bigcup_{\ell}  ( L^{\bfs,\calchigh} \cap  \tilde{\mk{G}}_{\ell}).
\end{equation}
As discussed above, the rank drop of the projection from $L^{\bfs,\calchigh} \cap \{ \rho_{\BFS} < \rho_0 \} \cap  \tilde{\mk{G}}_{\ell}$ to $\smf$, which can be identified with $X_{\rmb}^2$,
is $k_\ell$ and is achieved at a certain point in this region.

Now we discuss the part $\{ \rho_{\BFS} \geq \frac{\rho_0}{2} \}$, which is away from $\BFS$.
Over this region, by our construction, $\mk{g}$ can be written as
\begin{equation} \label{eq:mkg,away-BFS}
    \mk{g} = dh^2 + \mk{g}_L + \mk{g}_R,
\end{equation}
where $\mk{g}_L,\mk{g}_R$ are metrics on ${}^{\calchigh}T^*_{ \{ h=0 \} }X$ lifted from the left and right factors and are glued using metrics that are Euclidean in the coordinates we chose.
Here both $\mk{g}_L$ and $\mk{g}_R$ are defined down to $\partial X$, though \eqref{eq:mkg,away-BFS} is not valid down to $\BFS$.

Now we let $\{ U_{\ell}^{\calchigh} \}$ be a finite open cover of the region $\{ \rho_{\BFS} \geq \frac{3}{4}\rho_0 \}$, which is compact, such that each $U_{\ell}^{\calchigh}$ is contained in $\{ \rho_{\BFS} \geq \frac{\rho_0}{2} \}$.
Then we let $\delta_{\calchigh}$ be a Lebesgue number associated with this open cover and the metric in \eqref{eq:mkg,away-BFS}. We will use this to define a partition of ${}^{\calchigh}\overline{T}_{\{h=0\}}^*X$, which in turn is used to construct our microlocal partition of unity.
Now we decompose ${}^{\calchigh}\overline{T}_{\{h=0\}}^*X$ as
\begin{equation} \label{eq:high-phase-cover}
    {}^{\calchigh}\overline{T}_{\{h=0\}}^*X =  U_1 \cup ( \bigcup_{j \in \mk{J}_{\calchigh,1} } U_{j}) \cup   ( \bigcup_{j \in \mk{J}_{\calchigh,2} } U_{j}) ,
\end{equation}
where 
\begin{itemize}
    \item $U_1$ covers the region that is away from $\Sigma$;
    \item $\bigcup_{j \in \mk{J}_{\calchigh,1}} U_{j}$ covers a tubular neighborhood of $\Sigma$
    over $\{ x < \frac{\rho_0}{4} \}$ and are contained in $\{ x < \frac{\rho_0}{2} \}$, with $\mk{J}_{\calchigh,1}$ being a finite index set.
    In addition, we may require them to satisfy the following conditions:
    \begin{itemize}
        \item Recall that $\delta_0$ is the Lebesgue number defined after \eqref{eq:Lbf-covered-G-ell}. We require each $U_j,\; j\in\mk{J}_{\calchigh,1}$, when intersected with $\{x=x_0\}$ for each fixed $x_0<\frac{\rho_0}{2}$, has diameter less than $\delta_0/2$, in terms of the metric induced by $h$ on ${}^{\calchigh}T_{\{h=0,x=x_0\}}^*X$, which can be identified with ${}^{\sct}T_{\partial X}^*X$. This is similar to our construction in Section~\ref{subsec:microlocal-partition-low}.
   \item We also require their diameters in terms of $\mk{g}_{L}$ and $\mk{g}_{R}$ to be smaller than $\frac{\delta_{\calchigh}}{2}$.
     \end{itemize}
     
\item For the part $( \bigcup_{j \in \mk{J}_{\calchigh,2} } U_{j})$, we require each $U_j, j \in \mk{J}_{\calchigh,2}$ to have a diameter less than $\frac{\delta_{\calchigh}}{2}$ in terms of $\mk{g}_{L}$ and $\mk{g}_{R}$.
\end{itemize}

Finally we let $\{ \chi_{j,\calchigh} \}$, $j \in \{1\} \cup \mk{J}_{\calchigh,1}  \cup \mk{J}_{\calchigh,2} $ be a partition of unity on ${}^{\calchigh}\overline{T}_{\{h=0\}}^*X$ that is subordinate to the cover \eqref{eq:high-phase-cover} and set (with $\mathrm{Op}$ denoting the quantization in \eqref{eq:PsiDO-high-def-1} or \eqref{eq:PsiDO-high-def-2} depending on the region):
\begin{equation}
    Q_{j}^{\calchigh} = \mathrm{Op}(\chi_{j,\calchigh}).
\end{equation}
For $j,j' \in \mk{J}_{\calchigh,1}  \cup \mk{J}_{\calchigh,2}$, we let $W_j$ be a small open neighborhood of $\WF_{\calchigh}'(Q_j)$ that is still contained in $U_j$ and define
\begin{equation} \label{eq:Lbf-high-jj'}
    L^{\bfs,\calchigh}_{j,j'} =
\{  q \in L^{\bfs,\calchigh} | \;  \pi_{L,\calchigh}(q) \in W_j, \; \pi_{R,\calchigh}(q) \in W_{j'} \},
\end{equation}
where $\pi_{L,\calchigh}, \pi_{R,\calchigh}$ are projections defined in \eqref{eq:def-pi-L-high} and \eqref{eq:def-pi-R-high} respectively.
Then we have the following property of $L^{\bfs,\calchigh}_{j,j'}$, which is the motivation for our construction.


\begin{proposition} \label{prop:microlocal-partition-high}
There exists a microlocal partition of unity $Q_j^{\calchigh} \in \Psi_{\calchigh}^{0,0,0}$ such that
\begin{equation*}
\mathrm{Id}=\sum_{j \in  \{1\} \cup \mk{J}_{\calchigh,1}  \cup \mk{J}_{\calchigh,2} } Q_j^{\calchigh} \; \text{ when } \; \lambda \gtrsim 1
\end{equation*}
such that 
\begin{enumerate}
\item $\WF_{\calchigh}'(Q_{1}^{\calchigh})$ is away from $\Sigma$.
\item For each pair of $j,j' \in  \mk{J}_{\calchigh,1}  \cup \mk{J}_{\calchigh,2}$, with $L^{\bfs,\calchigh}_{j,j'}$ defined as in \eqref{eq:Lbf-high-jj'} above, there is one $\tilde{\mk{G}}_{\ell} \subset \{ \rho_{\BFS} < \rho_0 \}$ or $U_{\ell}^{\calchigh} \subset \{ \rho_{\BFS} < \frac{\rho_0}{2} \}$ such that
\begin{equation} \label{eq:Lbf-high-jj'-contained-1}
   L^{\bfs,\calchigh}_{j,j'}  \subset  \tilde{\mk{G}}_{\ell},
\end{equation}
or 
\begin{equation} \label{eq:Lbf-high-jj'-contained-2}
   L^{\bfs,\calchigh}_{j,j'}  \subset  U_{\ell}^{\calchigh}.
\end{equation} 
In particular, the rank drop of the projection from $L_{j,j'}^{\bfs,\calchigh}$ defined in \eqref{eq:Lbf-high-jj'} to $X_{\rmb}^2$ is at most $k_\ell$, which is the maximal rank drop from $L^{\bfs,\calchigh} \cap \tilde{\mk{G}}_{\ell}$ to $X_{\rmb}^2$.
In addition, the diameters of the left and right projection of $L^{\bfs,\calchigh}_{j,j'}$ are at most $\delta_{\calchigh}$ for a fixed small number $\delta_{\calchigh}>0$.
\end{enumerate}

\end{proposition}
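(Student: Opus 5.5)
The plan is to use the construction just given and verify the containment property through a Lebesgue-number argument, treating separately the pairs $(j,j')$ whose symbols live near $\BFS$ and those whose symbols live away from it.

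\emph{Partition of unity and item (1).} By construction the $\chi_{j,\calchigh}$ form a partition of unity on ${}^{\calchigh}\overline{T}_{\{h=0\}}^*X$ subordinate to \eqref{eq:high-phase-cover}. Quantizing by \eqref{eq:PsiDO-high-def-1} or \eqref{eq:PsiDO-high-def-2} gives operators $Q_j^{\calchigh}\in\Psi_{\calchigh}^{0,0,0}$ whose sum has full symbol $1$. Hence $\sum_j Q_j^{\calchigh}=\mathrm{Id}$ exactly when we use a quantization that is linear in the symbol and sends $1$ to $\mathrm{Id}$; otherwise it holds modulo $h^\infty$, and the error is absorbed into $Q_1^{\calchigh}$. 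Item (1) is immediate, since $\supp\chi_{1,\calchigh}\subset U_1$, which is away from $\Sigma$, and $\WF'_{\calchigh}$ is contained in the essential support of the symbol.

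\emph{Item (2) near $\BFS$.} Take $j,j'\in\mk{J}_{\calchigh,1}$, so both $W_j$ and $W_{j'}$ sit over $\{x<\rho_0/2\}$. Any $q\in L^{\bfs,\calchigh}_{j,j'}$ then lies over $\{\rho_{\BFS}<\rho_0\}$, where the product identification \eqref{eq:product-rhoBFS-Lbfs} is available. For each fixed $\rho_{\BFS}$, the slice corresponds to a subset of $L^{\bfs}$ whose left and right projections have $h$-diameter $<\delta_0/2$. The argument preceding Proposition~\ref{prop:microlocal-partition-low} then applies verbatim: the slice has diameter $<\delta_0$, so it lies in some $\mk{G}_\ell\cap L^{\bfs}$. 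Since the lift to $\hat L^{\bfs}$ is a union of fibers of \eqref{eq:fibration-Lbf-scrR*}, this $\ell$ is independent of $\sigma$, and we get \eqref{eq:Lbf-high-jj'-contained-1}. The remaining point is that $\ell$ must also be uniform in $\rho_{\BFS}\in[0,\rho_0)$. I would get this by choosing $\rho_0$ small so that the slices vary continuously, and using the uniformity of the Lebesgue number from the $\sigma$-uniformity argument with $\rho_{\BFS}$ as an extra compact parameter. If necessary I would refine the $U_j$ to make the selection constant, accepting $\delta_0/2$ replaced by $\delta_0/4$.

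\emph{Item (2) away from $\BFS$.} If at least one of $j,j'$ lies in $\mk{J}_{\calchigh,2}$, or the point is over $\{\rho_{\BFS}\ge\frac34\rho_0\}$, use the product form \eqref{eq:mkg,away-BFS} of $\mk{g}$. The diameter of $L^{\bfs,\calchigh}_{j,j'}$ is bounded by the sum of the diameters of its left and right projections, which is $<\delta_{\calchigh}$ because the $h$-component is zero. Since $\delta_{\calchigh}$ is a Lebesgue number for $\{U^{\calchigh}_\ell\}$, we obtain \eqref{eq:Lbf-high-jj'-contained-2}.

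\emph{Main obstacle.} The hardest step is the mixed situation. There the piece lies partly in $\{\rho_{\BFS}<\frac34\rho_0\}$, so it is not covered by the $U^{\calchigh}_\ell$, but it is not entirely inside $\tilde{\mk G}_\ell$ either. My first attempt is to show this cannot happen. If one side lies in $\mk{J}_{\calchigh,2}$, then $x$ or $x'$ is at least about $\rho_0/4$ along that piece, which forces $\rho_{\BFS}\gtrsim\rho_0/4$. I would then shrink the covers so that $U^{\calchigh}_\ell$ covers $\{\rho_{\BFS}\ge\rho_0/8\}$ and $\tilde{\mk G}_\ell$ covers $\{\rho_{\BFS}<\rho_0/4\}$, and choose $\delta_{\calchigh}$ smaller than this overlap margin, so that every small set falls into one of the two regimes.

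Finally, the rank-drop statement follows because the rank drop of the projection is bounded on each $\tilde{\mk G}_\ell$ or $U^{\calchigh}_\ell$ by its maximum $k_\ell$. The diameter statement holds by construction.
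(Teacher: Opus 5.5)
Your proposal follows the paper's proof. The case $j,j'\in\mk{J}_{\calchigh,1}$ is reduced slice by slice to the low-energy Lebesgue-number argument, landing in some $\tilde{\mk{G}}_\ell$; every case with an index in $\mk{J}_{\calchigh,2}$ uses the product form of $\mk{g}$ away from $\BFS$, the triangle inequality and the Lebesgue number $\delta_{\calchigh}$, landing in some $U^{\calchigh}_\ell$. Your extra care addresses two points the paper passes over: it never checks that a single $\ell$ works for all values of $\rho_{\BFS}$, and its claim that one of $x,x'$ exceeds $\rho_0/2$ in the mixed case does not follow from the construction as stated. On the first point, only your fallback works (choosing the $U_j$ so their $x$-slices jointly have small diameter, with $\rho_0$ small), since continuity of the slices alone does not force a common $\ell$; on the second, your re-choice of overlapping ranges, with \eqref{eq:mkg,away-BFS} arranged to hold on the enlarged region, repairs it.
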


\begin{proof}
There are three cases in terms of where indices $j,j'$ lie:
\begin{itemize}
    \item both $j,j'$ are in $\mk{J}_{\calchigh,1}$;
    \item $j,j'$ are in $\mk{J}_{\calchigh,1}$ and $\mk{J}_{\calchigh,2}$ respectively;
    \item both $j,j'$ are in $\mk{J}_{\calchigh,2}$.
\end{itemize}

For the first case, which will correspond to the part near $\BFS$, we know 
$x,x'<\frac{\rho_0}{2}$ on $L^{\bfs,\calchigh}_{j,j'} $, hence
\begin{equation*}
    L^{\bfs,\calchigh}_{j,j'} \subset \{ \rho_{\BFS} < \rho_0 \}.
\end{equation*}
Then we use the fact that each $W_j$ (since it is contained in $U_j$, its diameter is bounded by the diameter of $U_j$) has diameter less than $\frac{\delta_0}{2}$ for each fixed $x$ in terms of the metric on ${}^{\sct}T^*_{\partial X}X$.
So the discussion in \ref{subsec:microlocal-partition-low} applies to $L^{\bfs,\calchigh}_{j,j'}$ with fixed $x$. By our definition of $\tilde{\mk{G}}_{\ell}$ in \eqref{eq:def-tilde-mkG-ell}, we know that there is a $\tilde{\mk{G}_{\ell}}$ that contains $L^{\bfs,\calchigh}_{j,j'}$.

Now we consider the second and third cases, which correspond to being near $\LB$,$\RB$ and only away from all of $\LB,\RB,\BFS$ respectively. 
In this case, since at least one of $x,x'$ is larger than $\frac{\rho_0}{2}$ on $L^{\bfs,\calchigh}_{j,j'}$, we know \eqref{eq:mkg,away-BFS} is valid. Using the triangle inequality, we know that the diameter of each $L^{\bfs,\calchigh}_{j,j'}$ is upper bounded by $\frac{\delta_{\calchigh}}{2}+\frac{\delta_{\calchigh}}{2} = \delta_{\calchigh}$, hence it is contained in one of $U_{\ell}^{\calchigh}$ by the definition of $\delta_{\calchigh}$ and this completes the proof.
\end{proof}


\begin{remark}
We briefly explain the reason why we consider the part near and away from $\BFS \cap \smf$ separately. 
Since we microlocalize by multiplying pseudodifferential operators from the left and right, our microlocalization is localizing on both the left and right projections of points in $L^{\bfs,\calchigh}$.
However, we can have points with very close $x,x'$-components but very different value of $\sigma= x/x'$ (or $\theta = x'/x$), which means that they are already as close as possible in terms of our microlocalizations but still not close to each other when lifted to ${}^{\bundlehigh}T^*_{\smf}X_{\rmb,\calchigh}^2$.
However, this can only happen near $\BFS \cap \smf$ and being near this region means that we are close to the boundary in both the left and right factors, which further implies that the geometric focusing, or equivalently the rank drop of the projection, is from the geometry of $\partial X = Y$ and becomes independent of $\sigma$, as in the low-energy case.
\end{remark}


Finally, since our requirements for the microlocal partition over the overlapped part between the high-energy and low-energy regions coincide, we can simply glue them using the partition of unity in $\lambda$, potentially after taking union of index sets and open covers in both cases, to form the following partition.
To state the conclusion, we set $\overline{\mk{J}}_{\calc} =  \Jlow \cup \Jhigh \cup \{\zf,1\}$, where 
\begin{equation} \label{eq:def-Jhigh}
\Jhigh = \mk{J}_{\calchigh,1} \cup \mk{J}_{\calchigh,2}.    
\end{equation}

\begin{proposition} \label{prop: microlocal-partition-combined}
There exists a microlocal partition of unity $Q_j^{\calc} \in \Psi_{\calc}^{0,0,0}(X)$:
\begin{equation*}
\mathrm{Id}=\sum_{j \in  \overline{\mk{J}}_{\calc} } Q_j^{\calc}
\end{equation*}
that satisfies conditions in Proposition~\ref{prop:microlocal-partition-low} when $\lambda \lesssim 1$ and Proposition~\ref{prop:microlocal-partition-high} when $\lambda \gtrsim 1$. 
\end{proposition}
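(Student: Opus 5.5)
The plan is to glue the two partitions already built, Proposition~\ref{prop:microlocal-partition-low} for $\lambda \lesssim 1$ and Proposition~\ref{prop:microlocal-partition-high} for $\lambda \gtrsim 1$, using a partition of unity in the spectral parameter. Fix $\phi_{\calclow}, \phi_{\calchigh} \in C^\infty([0,\infty]_\lambda)$ with $\phi_{\calclow} + \phi_{\calchigh} \equiv 1$, where $\phi_{\calclow}$ is supported in $\lambda \leq 1$ and $\phi_{\calchigh}$ in $\lambda \geq h_0^{-1}$. Then set $Q_j^{\calc} = \phi_{\calclow}(\lambda) Q_j^{\calclow} + \phi_{\calchigh}(\lambda) Q_j^{\calchigh}$, with the convention that $Q_j^{\calclow} = 0$ for $j \in \Jhigh \setminus \Jlow$, and similarly on the other side. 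Summing over $\overline{\mk{J}}_{\calc}$ immediately gives $\mathrm{Id}$, because each family sums to the identity on its own region.

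Before gluing, I will make the two constructions compatible on the overlap $\lambda \in [h_0^{-1},1]$. There both calculi coincide (quantizations \eqref{eq:quant-PsiDO} and \eqref{eq:PsiDO-high-def-2} agree up to the identification $h=\lambda^{-1}$), and the wavefront sets $\WF'_{\flat}$ and $\WF'_{\sct,h}$ coincide, as noted after Definition~\ref{def:high-WF}. The requirements imposed on the partitions are of the same type in the two regimes:
\begin{itemize}
\item diameter less than $\delta_0/2$ in the sc-cotangent fibres near $\partial X$;
\item separation of $\zf$;
\item separation of the region away from $\Sigma$.
\end{itemize}
So I may refine both covers to a common one, replacing $\delta_0$ and $\delta_{\calchigh}$ by their minimum and taking unions of the index sets. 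The key points are that Lebesgue numbers survive refinement and that the containments \eqref{eq:Lbf-jj'-contained-G-l}, \eqref{eq:Lbf-high-jj'-contained-1} and \eqref{eq:Lbf-high-jj'-contained-2} only become easier under refinement. A shared index $j$ then labels operators whose symbols are cut off by the same function on the overlap, so each glued $Q_j^{\calc}$ inherits a wavefront set contained in the corresponding $W_j$.

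Membership $Q_j^{\calc} \in \Psi_{\calc}^{0,0,0}$ is checked directly against Definition~\ref{def:combined-PsiDO}. Cutting off by $\phi,\psi$ supported in $\lambda \leq 1$ leaves a finite sum of $\Psi_{\calclow}^0$ operators with smooth $\lambda$-dependent factors, and likewise for $\lambda \geq h_0^{-1}$. The disjoint-support condition holds because both pieces are pseudodifferential, hence Schwartz off the diagonal. Finally, the properties listed in Propositions~\ref{prop:microlocal-partition-low} and \ref{prop:microlocal-partition-high} are local in $\lambda$. For $\lambda \lesssim h_0^{-1}$ (resp. $\lambda \gtrsim 1$), $Q_j^{\calc}$ equals the low (resp. high) operator, and on the overlap they share the same wavefront localization, so the properties hold there.

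The only real obstacle is this compatibility step: ensuring that common indices correspond to the same microlocal regions on the overlap. Otherwise a glued $Q_j^{\calc}$ could have wavefront set covering two different $W_j$'s and break the Lebesgue-number containment. I would handle it by building the high-energy partition $\{\chi_{j,\calchigh}\}$ for $j \in \mk{J}_{\calchigh,1}$ near $\partial X$ from the same functions $\chi_j$ as in \eqref{eq:sum-chi-j-1}, extended trivially in $x$ as in \eqref{eq:product-rhoBFS-Lbfs}.
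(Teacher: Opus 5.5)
Your proposal is correct and follows the paper's own (one-line) argument. The paper glues the low- and high-energy partitions with a partition of unity in $\lambda$ on the overlap $[h_0^{-1},1]$, after taking unions of the index sets and refining the covers so that the smallness requirements (with $\delta_0$ and $\delta_{\calchigh}$) hold for both, exactly as you do.

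The shared-index problem you single out as the main obstacle can be avoided by keeping $\Jlow$ and $\Jhigh$ disjoint, so that only the index $1$ is shared, since the common refinement already gives the containments for mixed pairs $j\in\Jlow$, $j'\in\Jhigh$. This is preferable to extending the $\chi_j$ trivially in $x$: that extension in general violates the $\mk{g}_L$, $\mk{g}_R$-diameter bound $\delta_{\calchigh}/2$ required of $U_j$, $j\in\mk{J}_{\calchigh,1}$, unless you also cut off in $x$.
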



\subsection{Oscillatory integral representations of the microlocalized spectral measure}
\label{subsec:osc-integral-formula}

In this subsection, we give the oscillatory integral representation of the spectral measure in both the low- and high-energy regimes.
This is done by unravelling the information in \cite[Theorem~3.10]{GHS2} and \cite[Corollary~1.2]{Hassell-Wunsch-semiclassical-resolvent} (a concise version of this was recalled in Theorem~\ref{thm:spectral-measure-concise}), conducting microlocalization, and combining with our discussion in previous sections (in particular, Proposition~\ref{prop:minimal-parametrization-low}). 
Those expressions have been given in Section~\ref{sec:Legendrian-distributions} for general Legendre distributions and the goal of this subsection is to specialize to the case of the spectral measure and give a decomposition of our spectral measure into such local pieces that capture our geometric information encoded in the microlocal partition of unity we constructed before.
To this end, we recall the complete version of \cite[Theorem~3.10]{GHS2},  
which captures the decay order of $\specm$ with respect to boundary hypersurfaces of $X_{\rmb,\flat}^2$.

First of all, we recall basic facts about the microlocalization of Legendre distributions, which are used to describe our microlocalized spectral measure.

We consider the low-energy part first. 
Let $\pi_L,\pi_R$ be the projection from ${}^{\Phi}T_{\bfs}^*X$ to  ${}^{\flat}T_{\partial X}^*X \cap \{\lambda = \lambda_1\} \subset {}^{\calc}\overline{T}^*X$ at fixed $\lambda = \lambda_1 \leq \lambda_0$
(here and below, we omit the $\lambda$-component when we restrict to fixed $\lambda$)
 via projecting to the left and right variables respectively:
\begin{align}
& \pi_L: \; (\sigma, y, y', \mu, \mu', \nu, \nu')
\to (y,\mu,\nu),\\
& \pi_R: \; (\sigma,y,y',\mu,\mu',\nu,\nu') \to (y',\mu',\nu'),
\end{align}
in terms of coordinates in \eqref{eq:contact-form}, \eqref{eq:contact-form-interior} and \eqref{eq:coordinates-bf-fibered-bundle}.
  
The following lemma is a modified version of \cite[Lemma~5.4]{GHS1}, which says that if we apply a microlocalizer that is trivial near $L^{\bfs}$ on one side, then the output is residual (i.e. has infinite order decay on all boundary faces) on that side.

\begin{lemma}{\cite[Lemma~5.4]{GHS1}} \label{lemma:low-energy-trivial-composition}
Suppose $F \in I_{\calclow}^{m,p,r_{\LB},r_{\RB};\mathcal{B}}(X_{\rmb,\flat}^2;(L^{\bfs},L^\#);\Omega_{\flat}^{1/2})$ and $Q \in \Psi^{0}_{\flat}(X;\Omega_{\calclow}^{1/2})$ satisfy: $\big(\WF'_{\flat}(Q) \cap \{\lambda = \lambda_1\} \big) \cap \pi_L(L^{\bfs} \cup L^\#) = \emptyset$ for all fixed $\lambda_1$, then 
\footnote{Our notation is slightly different from  \cite[Lemma~5.4]{GHS1}, with $Q$ here corresponding to $\Id-Q$ there. }
\begin{equation}
QF \in I_{\calclow}^{\infty,\infty,\infty,r_{\RB};\mathcal{B}}(X,(L^{\bfs},L^\#);\Omega_{\flat}^{1/2}).
\end{equation}
Similarly, suppose $Q ' \in  \Psi^{0}_{\flat}(X;\Omega_{\calclow}^{1/2})$ satisfies: $\big( \WF'_{\flat}(Q') \cap \{\lambda =\lambda_1\}\big) \cap\pi_R(L^{\bfs} \cup L^\#)= \emptyset$ for all fixed $\lambda_1$, then 
\begin{equation}
FQ' \in I_{\calclow}^{\infty,\infty,r_{\LB},\infty;\mathcal{B}}(X,(L^{\bfs},L^\#);\Omega_{\flat}^{1/2}).
\end{equation}
\end{lemma}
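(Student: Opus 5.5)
We reduce to the local oscillatory-integral pieces of Definition~\ref{def:Legendrian-dis-conic-intersecting-low} and treat the composition $QF$ piece by piece; the statement for $FQ'$ then follows by exchanging left and right variables, or by taking adjoints. First, $F$ is written as $u_0+\dots+u_5$. The composition with $Q\in\Psi^0_\flat$ is taken in the $X$-variables with $\lambda$ fixed, so each fixed-$\lambda$ slice $\lambda=\lambda_1$ can be handled with the machinery of \cite{hassell1999spectral,hassell2001resolvent,melrose1996scattering}. All estimates must be uniform (conormal) in $\lambda$ down to $\lambda=0$. In the rescaled variables $\rho=x/\lambda$ and $\rho'=x'/\lambda$, the phase is $\Phi/\rho$, and the symbol calculus of \eqref{eq:quant-PsiDO} is dilation invariant in $\lambda$. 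Uniformity in $\lambda$ is therefore expected to come from this dilation invariance rather than from any new argument.

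The key step is a stationary-phase computation. Write the kernel of $QF$ near $\BFS$ as
\[
\int e^{\frac{i\lambda}{x}((1-\sigma'')\nu''+(y-y'')\cdot\mu'')}\,a(\lambda,\rho,y,\mu'',\nu'')\,e^{i\Phi(\sigma''\cdot,y'',y',v,s)/\rho''}\cdots .
\]
Integrating in the intermediate variables $(\sigma'',y'')$, the phase is stationary only where $(\nu'',\mu'')$ equals the left frequency of $F$, namely $(\nu'',\mu'')=(\Phi-\sigma d_\sigma\Phi,\,d_y\Phi)$, that is, at $\pi_L$ of a point of $L^{\bfs}\cup L^{\#}$. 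By hypothesis $a$ vanishes to infinite order there, uniformly in $\lambda_1$. Where $a$ does not vanish, the phase is non-stationary, and integration by parts yields arbitrary decay in $\rho$ (equivalently, in the frequency $1/\rho$). This gains infinite order at $\BFS$, at $\LB$ (left variables), and in the conic parameter $s$ near $L^\#$. The right variables are untouched, which is why the order $r_{\RB}$ is preserved. For the parts $u_4,u_5$ near $\lb$ and $\rb$, the same argument uses the fibered parametrizations \eqref{eq:Phi-low-codim2}. Since Proposition~\ref{prop:Lbf-Legendrian-dis-simplification-low} lets us take $k=0$ on $L_{\lb}$, the left frequency there is simply $\nu=\pm1,\ \mu=0$, and the disjointness hypothesis applies directly. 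The pure polyhomogeneous part $u_6$, near $\zf$ and $\lb_0$, is unaffected in its index family $\mathcal B$. Here we use that $Q$ has kernel supported in $\rho\le C$ and is smoothing in the b-sense there, so it preserves $\mathcal{B}$.

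The main obstacle is the conic piece $u_3$ near $\hat L^{\bfs}\cap\beta^{-1}_{\mathrm{LCP},\flat}(L^\#)$. There the frequencies blow up in the polar variable $s$, and non-stationarity must be checked uniformly as $s\to0$. We would handle this by observing that $\pi_L$ of the whole blown-up front face lies over $\pi_L(L^\#)=\{\nu=\pm1,\mu=0\}$. Since $\WF'_\flat(Q)$ is closed and disjoint from this set, it stays at a positive distance from it. The derivative of the total phase in $(\sigma'',y'')$ is therefore bounded below by a constant, uniformly in $s$ small, and integration by parts gives $O((\rho/s)^\infty s^\infty)$. This is exactly infinite order in both $m$ and $p$. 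The remaining bookkeeping of half-densities, following \cite[Lemma~5.4]{GHS1}, is routine.
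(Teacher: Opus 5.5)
Your argument is the same one the paper relies on: the paper gives no proof beyond citing \cite[Lemma~5.4]{GHS1}, noting in a footnote in the proof of Proposition~\ref{prop:localized-specm-one-side-residual-low} that it is a non-stationary phase argument on the microlocalized side. Your bookkeeping, in which each integration by parts gains a factor $\rho$ (this is $\rho_{\LB}\rho_{\BFS}$ near $\lb\cap\bfs$ and factors as $(\rho/s)\cdot s$ in the conic pieces), is exactly what yields the orders $(\infty,\infty,\infty,r_{\RB})$ with $\mathcal{B}$ unchanged.

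Two points need tightening. First, stationarity in $(\sigma'',y'')$ alone only matches $(\nu'',\mu'')$ with the left frequency of $\Phi$ at an arbitrary $(v,s)$, and that frequency lies in $\pi_L(L^{\bfs}\cup L^\#)$ only on $C_\Phi$. So away from small $s$ you need joint non-stationarity in $(\sigma'',y'',v,s)$. Your small-$s$ argument is fine as it stands, since there the left frequency is within $O(s)$ of $\pi_L(L^\#)$ uniformly in $v$. Second, near $\zf$ the index family is preserved because $Q$ acts there as multiplication by a function of $\rho$, not because $Q$ is smoothing.
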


The high-energy analogue of the lemma above is \cite[Lemma~7.3]{GHS1}, which we recall next.
\begin{lemma}{\cite[Lemma~7.3]{GHS1}} 
\label{lemma:high-energy-trivial-composition}
Suppose $F \in I_{\calchigh}^{m_{\calchigh}, p;m,r_{\LB},r_{\RB} }(X_{\rmb,\calchigh}^2, (L^{\bfs,\calchigh},\Lsharphigh) ; \Omega_{\calchigh}^{1/2})$ and $Q \in \Psi^{0}_{\calchigh}(X;\Omega_{\calclow}^{1/2})$ satisfy: $\big(\WF'_{\calchigh}(Q) \cap \{h = h_1\} \big) \cap \pi_{L,\calchigh}(L^{\bfs,\calchigh} \cup G_1^{\calchigh}) = \emptyset$ for all fixed $h_1$, with $\pi_{L,\calchigh}$ defined in \eqref{eq:def-pi-L-high}, then 
\begin{equation}
QF \in I_{\calchigh}^{\infty,\infty;\infty,\infty,r_{\RB}}(X_{\rmb,\calchigh}^2,(L^{\bfs,\calchigh},G_1^{\calchigh});\Omega_{\calchigh}^{1/2}).
\end{equation}
Similarly, suppose $Q ' \in  \Psi^{0}_{\calchigh}(X;\Omega_{\calclow}^{1/2})$ satisfies: $\big( \WF'_{\calchigh}(Q') \cap \{h =h_1\}\big) \cap\pi_{R,\calchigh}(L^{\bfs,\calchigh} \cup G_1^{\calchigh})= \emptyset$ for all fixed $h_1$, with $\pi_{R,\calchigh}$ defined in \eqref{eq:def-pi-R-high}, then 
\begin{equation}
FQ' \in I_{\calchigh}^{\infty,\infty;\infty,r_{\LB},\infty}(X_{\rmb,\calchigh}^2,(L^{\bfs,\calchigh},G_1^{\calchigh});\Omega_{\calchigh}^{1/2}).
\end{equation}
\end{lemma}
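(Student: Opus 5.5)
The plan is to reduce to local pieces and show that, in each one, the composition with $Q$ produces an oscillatory integral whose phase has no critical points on the support of the amplitude, so repeated integration by parts gives arbitrary decay at every face that involves the left variables. First, I would decompose $F$ according to Definition~\ref{def:Legendrian-dis-conic-high}, as $F=\sum_j u_j$, into pieces associated with $L^{\bfs,\calchigh}$ alone (the $u_1$ term), with the pair $(L^{\bfs,\calchigh},G_1^{\calchigh})$ (the $u_2$ term), with the conic pair near $\BFS\cap\smf$ (the $u_3,u_4,u_5$ terms), and the residual term $u_6$. The composition $QF$ is taken in the left $X$-variable only, with $h$ fixed and uniformly in $h$, as in the convention after \eqref{eq:quant-PsiDO}. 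The term $u_6$ is already $O(h^\infty)$. Applying $Q$ on the left preserves its form, except possibly for the left weight $\sigma^{r_{\LB}}$, which becomes $O(\sigma^\infty)$ by the same argument as below. For the remaining pieces, I would write $Q$ locally as \eqref{eq:PsiDO-high-def-1} or \eqref{eq:PsiDO-high-def-2} and each $u_j$ as its oscillatory integral with a phase $\Phi$ parametrizing the relevant Legendrian, as in \eqref{eq:Phi-high-codim3-conic} or \eqref{eq:Phi-high-energy-conic-codim2}.

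The composed kernel is then an oscillatory integral in the intermediate point $\tilde z$, the frequency $\zeta$ of $Q$, and the parameters $(v,s)$ of $u_j$. Its total phase is
\[
h^{-1}\big((z-\tilde z)\cdot\zeta\big)+\frac{\Phi(\tilde z,z',v,s)}{\tilde x\theta h},
\]
suitably rescaled near the boundary. Stationarity in $(\tilde z,\zeta)$ forces $\tilde z=z$ and forces $\zeta$ to equal the left projection $d_{\tilde z}(\Phi/\tilde x\theta)$ on $C_\Phi$. In other words, a critical point exists only if $(z,\zeta)\in\pi_{L,\calchigh}(L^{\bfs,\calchigh}\cup G_1^{\calchigh}\cup\Lsharphigh)$. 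Since $\Lsharphigh$ projects on the left into $\pi_{L,\calchigh}(G_1^{\calchigh})$, which is the set $\{\xi=1,\mu=0\}$, the hypothesis says exactly that this never happens on the support of the symbol of $Q$. Away from critical points I would integrate by parts with the vector field $|\nabla\text{phase}|^{-2}\overline{\nabla\text{phase}}\cdot\nabla$ in $(\tilde z,\zeta)$. Each integration gains a factor of $h$ and, near the boundary where the phase is $\Phi/(xh)$, a factor of $x$ (respectively $\sigma$, or $s$, or $x/s$). In the scattering-calculus sense this is infinite-order decay at $\smf$, at $\BFS$, at $\LB$, and at the conic front face, which is what the orders $\infty,\infty;\infty,\infty$ mean.

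The right-hand behaviour, namely the factor $\theta^{r_{\RB}}$ and its oscillation $e^{i/(x'h)}$, is unaffected because $Q$ acts only on the left. This is why the conclusion keeps $r_{\RB}$ and lands in the class associated with $(L^{\bfs,\calchigh},G_1^{\calchigh})$ rather than with the full conic pair. Near $\RB$ only the outgoing/incoming oscillation in the right variable can survive, and this is the structure encoded by $G_1^{\calchigh}$. The statement for $FQ'$ follows identically with left and right interchanged, or by taking adjoints.

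I expect the main obstacle to be uniformity near the corners $\smf\cap\BFS\cap\RB$ and on the blown-up front face of $J^{\calchigh}$. There the natural frequency coordinates are the polar ones \eqref{eq:coordinates-resolved-high-region1} and \eqref{eq:coordinates-resolved-high-region2}, and the lower bound on the gradient of the phase must be shown to be uniform in $s$ and $x/s$. The first thing I would try is to verify that the separation hypothesis, together with $\WF'_{\calchigh}(Q)$ being closed and compact in the compactified bundle ${}^{\calc}\overline{T}^*X$, gives a uniform lower bound $|\zeta-d_{\tilde z}(\Phi/\tilde x\theta)|\ge c>0$ in the rescaled scattering frequencies $(\nu,\mu)$. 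This should hold because the left projection of $\hat L^{\bfs,\calchigh}$ is continuous down to the front face, where it meets $\{\mu=0,\nu=\pm1\}$. Once this bound is in hand, the integration by parts in the polar variables proceeds as in the proof of \cite[Lemma~5.4]{GHS1} in the low-energy case, which gives Lemma~\ref{lemma:low-energy-trivial-composition}, with $\rho$ replaced by $xh$.
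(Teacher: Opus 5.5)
Your proposal is correct in outline and uses the same mechanism the paper relies on. The paper gives no proof of its own; it cites \cite[Lemma~7.3]{GHS1} and remarks only that the left part of the phase is non-stationary, so that integrating by parts gains powers of $xh$ ($=\rho$) and gives infinite order at $\smf$, $\BFS$, $\LB$ and $\Lsharphigh$, while the right-hand $\RB$ structure is untouched. One step to tighten: the lower bound $|\zeta-d_{\tilde z}(\Phi/\tilde x\theta)|\ge c$ only holds for $(\tilde z,v,s)$ near $C_\Phi$ (and automatically where $s\sigma$ is small, since there the left frequency lies within $O(s\sigma)$ of $\{\mu=0,\nu=\pm1\}$), so you should either first localize the amplitude of each $u_j$ near $C_\Phi$ or include $v$, and $s$ away from $s=0$, in the integration by parts rather than integrating by parts in $(\tilde z,\zeta)$ alone.
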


To combine statements concerning the low and high-energy regimes, we introduce the following combined version of Definition~\ref{def:Legendrian-dis-conic-intersecting-low} and Definition~\ref{def:Legendrian-dis-conic-high}.

\begin{definition}
The class of combined Legendre distributions associated with $(L^{\bfs},L^{\#})$ at low-energy and associated with $(L^{\bfs,\calchigh},\Lsharphigh)$, which we denote by
\begin{equation} \label{def:combined-Legendrian-distributions}
I_{\calc}^{m_{\calchigh},m, p,r_{\LB},r_{\RB} ; \mathcal{B}}(X_{\rmb,\calc}, (L^{\bfs},L^{\#}), (L^{\bfs,\calchigh},\Lsharphigh); \Omega_{\calc}^{1/2}),
\end{equation}
consists of ($\Omega_{\calc}^{1/2}$-valued) distributions on $X_{\calc}^2$ (defined in \eqref{eq: definition, combined double space}) such that for a function $\chi \in C_c^\infty([0,\infty))$ that is identically $1$ near $0$, and extended to $[0,\infty]$ via extending by $0$ near $\infty$, we have 
\begin{equation}
    \chi u \in I_{\calclow}^{m, p; r_{\LB}, r_{\RB}; \mathcal{B}}(X_{\rmb,\flat}^2, (L^{\bfs}, L^{\#}); \Omega_{\flat}^{1/2}),
\end{equation}
and 
\begin{equation}
    (1-\chi) u \in I_{\calchigh}^{m_{\calchigh}, p;m, r_{\LB}, r_{\RB}; \mathcal{B}}(X_{\rmb,\calchigh}^2, (L^{\bfs,\calchigh}, \Lsharphigh); \Omega_{\calchigh}^{1/2}).
\end{equation}

Here $m_{\calchigh},m,p,r_{\LB},r_{\RB}$ and the index family $\mathcal{B}$ are indices associated with $\smf$, $\BFS$, $L^\#$ (and $\Lsharphigh$), $\LB$, $\RB$ and the family of faces $(\bfs_0,\lb_0,\rb_0,\zf)$, respectively.

\end{definition}

Since the definitions of $I_{\calchigh}^{\bullet}$ and $I_{\calclow}^{\bullet}$ coincide when $\lambda$ is away from $0$ and $\infty$, the definition above does not depend on the choice of $\chi$. Using ~\eqref{def:combined-Legendrian-distributions}, the complete characterization of the spectral measure is as follows.

\begin{theorem} \label{thm:spectral-measure-complete}
For $P$ in \eqref{eq:P-def}, the spectral measure of $\sqrt{P}$, which we denote by $\specm$ is in the class
\begin{equation}
I_{\calc}^{m_{\calchigh},m,p,r_{\LB},r_{\RB} ; \mathcal{B}}(X_{\rmb,\flat}, (L^{\bfs},L^{\#}); \Omega_{\flat}^{1/2})  \otimes |\lambda d\lambda|^{1/2},
\end{equation}
where $m_{\calchigh} = \frac{1}{2}, \, m=-\frac{1}{2}, \, p = \frac{n-2}{2}, \, r_{\LB} = r_{\RB} = \frac{n-1}{2}$, and 
$\mathcal{B} = (\mathcal{B}_{\bfs_0}, \mathcal{B}_{\lb_0}, \mathcal{B}_{\rb_0} , \mathcal{B}_{\zf})$ with those four index sets satisfying:
\begin{align}
\begin{split}
\min \mathcal{B}_{\bfs_0} = -1, \;
\min \mathcal{B}_{\lb_0} = \frac{n}{2}-1 =  \min \mathcal{B}_{\rb_0},  \;
\min \mathcal{B}_{\zf} = n-1.
\end{split}    
\end{align}
For $P$ in \eqref{eq:P-def-exact-cone}, $\specm \in I_{\calclow}^{m, p; r_{\LB}, r_{\RB}; \mathcal{B}}(X_{\rmb,\flat}^2, (L^{\bfs}, L^{\#}); \Omega_{\flat}^{1/2})$ for $\lambda \lesssim 1$ and the same local expressions hold up to $\lambda \to \infty$.
\end{theorem}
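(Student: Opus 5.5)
The plan is to assemble Theorem~\ref{thm:spectral-measure-complete} from the existing resolvent constructions rather than to build anything new. I would treat the low-energy regime, the high-energy regime and the exact cone separately, and then glue the first two using the combined class \eqref{def:combined-Legendrian-distributions}.

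\textbf{Step 1: Stone's formula.} Write
\[
dE_{\sqrt{P}}(\lambda)=\frac{\lambda}{\pi i}\big(R(\lambda+i0)-R(\lambda-i0)\big)\,d\lambda,
\qquad R(\lambda\pm i0)=(P-(\lambda\pm i0)^2)^{-1}.
\]
This reduces the claim to the structure of the outgoing and incoming resolvents, in each regime, as Legendre distributions associated with the conic pair.

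\textbf{Step 2: low energy.} For $\lambda\lesssim 1$ I would invoke \cite[Theorem~3.10]{GHS2}. Under \eqref{eq:condition-nu-0} and \eqref{assumption:P-positive}, it gives $dE_{\sqrt{P}}(\lambda)$ as an element of $I_{\calclow}^{m,p;r_{\LB},r_{\RB};\mathcal{B}}$ associated with $(L^{\bfs},\Lsharplow)$. The work is to transcribe the orders into the conventions of Definition~\ref{def:Legendrian-dis-conic-intersecting-low}, accounting for the $\frac14$-shift of \cite[Remark~4.3]{GHS1} and for the density factor $|\lambda\,d\lambda|^{1/2}$. The index sets come from the following sources.
\begin{itemize}
\item The index set at $\zf$ comes from the elliptic b-parametrix of $P_{\rmb}$. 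Its leading exponent is governed by $\nu_0$, and the hypothesis $\nu_0\geq (n-2)/2$ is what makes $\min\mathcal{B}_{\zf}=n-1$, with no lower-order terms from small eigenvalues of $P_Y$.
\item The exponents at $\lb_0$ and $\rb_0$ are read off from the same indicial roots, shifted by the half-density normalization \eqref{eq:low-energy-density-3}.
\item The $\bfs_0$ exponent $-1$ comes from the scaling $\lambda^{n}$ in \eqref{u1} against the density.
\end{itemize}
The potential $V\in x^2C^\infty(X)$ only enters through $V_0$ in $P_Y$. The symbolic construction is unaffected because the subprincipal symbol still vanishes on $\Lsharplow$ (cf.\ \cite[Remark~4.2]{GHS2}).

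\textbf{Step 3: high energy.} For $\lambda\gtrsim1$ I would use \cite[Corollary~1.2]{Hassell-Wunsch-semiclassical-resolvent}, which requires the non-trapping assumption of Definition~\ref{def:non-trapping}. It identifies the semiclassical resolvent, and hence the spectral measure, as a Legendre distribution associated with $(L^{\bfs,\calchigh},\Lsharphigh)$. Again I convert orders: $m_{\calchigh}=\frac12$ arises from the semiclassical density \eqref{eq:high-energy-half-density} together with the factor $\lambda$ in Stone's formula. The value $m=-\frac12$ must agree with the low-energy one on the overlap, which I would check directly. In this setting the fibration over $\smf$ is trivial, so the $-\frac{f_i}{2}+\frac N4$ corrections vanish as noted before Definition~\ref{def:Legendre-distribution-high-energy}.

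\textbf{Step 4: gluing.} On $\lambda\in[h_0^{-1},\lambda_0]$ both descriptions reduce to the fixed-energy description of \cite{hassell1999spectral,hassell2001resolvent}, with symbols depending smoothly on $\lambda$. Hence the cutoff $\chi$ in the combined definition is immaterial, and the two pieces assemble into the combined class.

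\textbf{Step 5: exact cone.} For $P$ as in \eqref{eq:P-def-exact-cone} I would use homogeneity. The dilation $r\mapsto \lambda r$ conjugates $P$ to $\lambda^2 P$, so
\[
dE_{\sqrt P}(\lambda)(r,y,r',y')=\lambda^{n-1}\,dE_{\sqrt P}(1)(\lambda r,y,\lambda r',y').
\]
The low-energy description of \cite{GHS2}, which applies here since the vertex region is handled by the b-analysis at $\zf$ and the cone has a conic end, then propagates verbatim to all $\lambda$. Only $L^{\bfs}$ and $\Lsharplow$ appear, because there is no interior geometry.

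\textbf{Main obstacle.} I expect the hard part to be the order bookkeeping, specifically matching every exponent across the differing conventions of \cite{GHS2} and \cite{Hassell-Wunsch-semiclassical-resolvent}. The first check I would make is the value $p=\frac{n-2}{2}$ at the conic intersection, since it must agree in both regimes. After that I would verify $\min\mathcal{B}_{\zf}=n-1$ under \eqref{eq:condition-nu-0} via the expansion of the b-inverse of $P_{\rmb}$.
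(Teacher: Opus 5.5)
For the asymptotically conic case your plan is the paper's proof. You quote \cite[Theorem~3.10]{GHS2} for $\lambda\lesssim 1$ and \cite[Corollary~1.2]{Hassell-Wunsch-semiclassical-resolvent} for $\lambda\gtrsim 1$, and the two are consistent on the overlap, which \cite[Theorem~7.2]{hassell1999spectral} also covers on compact energy intervals. The orders are simply transcribed from those results, so your heuristic explanations of where each exponent comes from are not part of the argument.

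The gap is in Step 5. There you apply the low-energy description of \cite{GHS2} to the exact cone. However, \cite[Theorem~3.10]{GHS2} concerns a manifold compactifying to a compact manifold with boundary at infinity, with smooth interior and $V\in x^2C^\infty(X)$. The cone $(X_0,g_0)$ with the potential $r^{-2}V_0(y)$ is singular at $r=0$ in both the metric and the potential, and, as remarked in Section~\ref{subsec:set-up}, it is not covered by that framework.

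Concretely, the $\zf$ model in \cite{GHS2} is the zero-energy inverse of $P$ on a space with a single b-end at infinity. On the cone, the zero-energy operator is a b-operator at both $r\to0$ and $r\to\infty$. The vertex also produces non-smooth boundary behaviour at $\lambda>0$ (as $r\to0$ or $r'\to0$), which the low-energy double space of \cite{GHS2} does not contain. So the sentence `the vertex region is handled by the b-analysis at $\zf$' is an assertion that the cited theorem does not supply.

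Your dilation identity $dE_{\sqrt P}(\lambda)(r,y,r',y')=\lambda^{n-1}dE_{\sqrt P}(1)(\lambda r,y,\lambda r',y')$ is correct. It is a clean explanation of why the local expressions persist up to $\lambda\to\infty$, but it only transports structure you already have; it cannot produce the Legendrian description at $\lambda\lesssim 1$.

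The missing input is a characterization of the resolvent of \eqref{eq:P-def-exact-cone} on the exact cone itself as a Legendre distribution. The paper takes this from \cite[Theorem~5.1]{GHS2} and then runs the Stone's-formula argument from the proof of \cite[Theorem~3.10]{GHS2}, which is your Step 1 applied to the cone resolvent. With that input in place, your scaling observation finishes the exact-cone case.
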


\begin{proof}
The first part concerning the asymptotically conic case follows by combining \cite[Theorem~3.10]{GHS2} and \cite[Corollary~1.2]{Hassell-Wunsch-semiclassical-resolvent}.
We also note that the conclusion for $\lambda$ in a fixed interval $[a,b]$ with $0<a<b<\infty$ can be deduced from \cite[Theorem~7.2]{hassell1999spectral} as well. The point of using the two results above is to enable us to obtain the uniform behaviour as $\lambda \to 0$ and $\lambda \to \infty$.

For the second part concerning $P$ in \eqref{eq:P-def-exact-cone}, it follows from the same proof of \cite[Theorem~3.10]{GHS2}, which uses the resolvent and Stone's Formula to construct the spectral measure, except that now we use the resolvent characterized in \cite[Theorem~5.1]{GHS2} as a Legendre distribution  directly.


\end{proof}



Now we use this to give oscillatory integral representations of the microlocalized spectral measure in terms of the decomposition:
\begin{equation} \label{eq:spectral-measure-decomposition}
\specm  = \sum_{j,j' \in \overline{\mk{J}}_{\calclow} }  Q_j^{\calclow} \specm Q_{j'}^{\calclow},
\end{equation}
where $\overline{\mk{J}}_{\calclow}=\{\zf,1\} \cup \Jlow$.
To this end, we use wavefront bounds under compositions in \cite[Section~5]{GHS1} to specify where the singularity of those pieces on the right hand side of \eqref{eq:spectral-measure-decomposition} lies.


We consider the case when both $j$ and $j'$ are $\zf$ or $1$ first.

\begin{proposition} \label{prop:localized-specm-two-side-residual-low}
Let $Q_{\zf}^{\calclow},Q_{\zf}^{'\calclow},Q_1^{\calclow}$ be as in Section~\ref{subsec:microlocal-partition-low}, then each of 
\begin{equation}
  Q_{\zf}^{\calclow} \specm Q_{\zf}^{'\calclow}, \; Q_{\zf}^{\calclow} \specm Q_{1}^{\calclow}, 
  \; Q_{1}^{\calclow} \specm Q_{\zf}^{'\calclow}, \; Q_{1}^{\calclow} \specm Q_{1}^{\calclow}
\end{equation}
   is a multiple of $|dgdg'|^{1/2} \otimes |d\lambda|$ with coefficient that is polyhomogeneous on $X_{\rmb,\calclow}^2$. More precisely, they are of the form:
\begin{equation} \label{eq:local-specm-0}
         \lambda^{n-1}  a  |dgdg'|^{1/2} \otimes |d\lambda| ,
\end{equation}
  where $a$ satisfies that for any $N,K \in \N$ there is a constant $C_{N,K}$ such that
\begin{equation} \label{eq:QzfEQzf-symbol-est}
  |(\lambda \partial_{\lambda})^N a| \leq C_{N,K}  (1+\lambda d_{\conic})^{-K}.
\end{equation}
\end{proposition}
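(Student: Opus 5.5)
The plan is to split the four products according to which localizers appear on each side, and to apply Lemma~\ref{lemma:low-energy-trivial-composition} on every side that carries $Q_1^{\calclow}$. The support of $Q_{\zf}^{\calclow}$ handles the remaining sides.

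First, consider a factor $Q_1^{\calclow}$ on the left. By Proposition~\ref{prop:microlocal-partition-low}(2), $\WF'_{\calclow}(Q_1^{\calclow})$ is disjoint from $\pi_L(L^{\bfs})$ at every fixed $\lambda$. I will also check that it is disjoint from $\pi_L(L^{\#})$. This holds because $L^{\#}$ lies over $\nu=\pm 1$, $\mu=0$, which is inside the characteristic set $|\mu|^2+\nu^2=1$, while the symbol of $Q_1^{\calclow}$ is supported where $\chi_1\neq 0$, i.e.\ away from $|\mu|^2+\nu^2 \in [1-\delta/2,1+\delta/2]$. Lemma~\ref{lemma:low-energy-trivial-composition} then gives
\begin{equation*}
Q_1^{\calclow}\specm \in I_{\calclow}^{\infty,\infty,\infty,r_{\RB};\mathcal{B}}.
\end{equation*}
This means there is no oscillation and infinite order vanishing at $\BFS$ and $\LB$. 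The symmetric statement applies to $\specm Q_1^{\calclow}$. If $Q_1^{\calclow}$ appears on both sides, I apply the lemma twice, and the result is residual at $\BFS$, $\LB$ and $\RB$.

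Second, consider a factor $Q_{\zf}^{\calclow}=1-\chi(\rho)$ on one side. It is a multiplication operator supported in $\rho=x/\lambda\geq\epsilon$, so it cuts off to a region near $\zf\cup\lb_0$ (respectively $\zf\cup\rb_0$), away from $\LB$ (respectively $\RB$) and away from $\BFS$. When both localizers are $\zf$-type, the kernel is supported where $\rho,\rho'\geq\epsilon$. On that region only the $u_6$-type term of Definitions~\ref{def:Legendre-distribution-low-energy} and \ref{def:Legendrian-dis-conic-intersecting-low} survives, and it is polyhomogeneous with index family $\mathcal{B}$. For the mixed products, the $Q_1^{\calclow}$ side is residual by the first step and the $Q_{\zf}^{\calclow}$ side is cut away from the Legendrian face. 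Hence in all four cases the kernel is $a\,\tau\otimes|d\lambda/\lambda|^{1/2}$, with $a$ polyhomogeneous on $X_{\rmb,\calclow}^2$ and vanishing to infinite order at $\BFS$, $\LB$ and $\RB$.

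The main step is to convert this structure into the form \eqref{eq:local-specm-0} with the bound \eqref{eq:QzfEQzf-symbol-est}. I will rewrite the half-density $\Omega_{\flat}^{1/2}\otimes|\lambda d\lambda|^{1/2}$ in terms of $|dgdg'|^{1/2}|d\lambda|$ using \eqref{eq:low-energy-density-1}--\eqref{eq:low-energy-density-3}. The spectral-measure normalization in Theorem~\ref{thm:spectral-measure-complete}, with $\min\mathcal{B}_{\zf}=n-1$, is what produces the prefactor $\lambda^{n-1}$. The index sets at $\lb_0$, $\rb_0$ and $\bfs_0$ have nonnegative leading orders after this conversion, which I will check against the minima listed in Theorem~\ref{thm:spectral-measure-complete}. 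Consequently $a$ is bounded, and polyhomogeneity (conormality) gives the bounds on $(\lambda\partial_\lambda)^N a$.

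For the decay factor $(1+\lambda d_{\conic})^{-K}$, I note that $\lambda d_{\conic}$ is large only near $\BFS$, $\LB$ and $\RB$. Near these faces $\lambda d_{\conic}\lesssim\rho_{\bullet}^{-1}$, using $d_{\conic}\sim\max(r,r')$, and the infinite order vanishing there beats any power. Elsewhere $\lambda d_{\conic}\lesssim 1$, so the factor is harmless.

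I expect the main obstacle to be this bookkeeping of densities at the corners $\zf\cap\lb_0$ and $\bfs_0\cap\zf$. The exact cone case is identical, since Theorem~\ref{thm:spectral-measure-complete} gives the same structure there.
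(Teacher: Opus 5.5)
Your overall structure is the paper's. You apply Lemma~\ref{lemma:low-energy-trivial-composition} on every side carrying $Q_1^{\calclow}$, use the support of $1-\chi(\rho)$ on every $\zf$ side, convert densities, and bound $\lambda d_{\conic}$ by inverse defining functions of $\LB,\BFS,\RB$. Your extra check that the symbol of $Q_1^{\calclow}$ vanishes near $|\mu|^2+\nu^2=1$ is correct and needed, since it shows $\WF'_{\calclow}(Q_1^{\calclow})$ also misses $\pi_L(L^{\#})$. One small correction: use $d_{\conic}\le r+r'$ rather than $d_{\conic}\sim\max(r,r')$, which fails near the diagonal; only the upper bound is used.

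The step that fails as written is the one you flagged: extracting $\lambda^{n-1}$. You attribute this factor to $\min\mathcal{B}_{\zf}=n-1$ alone, and then claim that nonnegative leading orders at $\lb_0,\rb_0,\bfs_0$ make $a$ bounded. That implication is false. On $X_{\rmb,\calclow}^2$ the function $\lambda$ vanishes simply at \emph{every} face lying over $\lambda=0$: up to a positive smooth factor, $\lambda=\rho_{\zf}\rho_{\lb_0}\rho_{\rb_0}\rho_{\bfs_0}$. Let $c$ be the coefficient of $|dgdg'|^{1/2}|d\lambda|$. Then $a=\lambda^{-(n-1)}c$ is bounded only if $c$ vanishes to order at least $n-1$ at each of these four faces. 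With order $0$ at $\lb_0$, for instance, you would get $a\sim\rho_{\lb_0}^{-(n-1)}$ near $\zf\cap\lb_0$.

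The correct count, as in the paper, goes as follows. A section of $\Omega_{\flat}^{1/2}\otimes|\lambda\,d\lambda|^{1/2}$ equals $|dgdg'|^{1/2}|d\lambda|$ times a factor vanishing to order $n$ at $\bfs_0$ (from the $\lambda^n$ in \eqref{eq:low-energy-density-1}). It vanishes to order $n/2$ at $\lb_0$ and $\rb_0$ (from the $(xx')^{n/2}$ produced by \eqref{eq:low-energy-density-2} and \eqref{eq:low-energy-density-3}), and to order $0$ at $\zf$. Adding the minima $-1$, $\frac n2-1$, $\frac n2-1$, $n-1$ from Theorem~\ref{thm:spectral-measure-complete} gives exactly $n-1$ at all four faces. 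Only then does $\lambda^{n-1}$ factor out with a bounded, conormal $a$.

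So your planned comparison with the listed minima will succeed numerically. However, the criterion you must verify is order $\ge n-1$ at each of $\zf,\lb_0,\rb_0,\bfs_0$, which holds with no room to spare, not mere nonnegativity.
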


\begin{proof}

Consider $Q_{\zf}^{\calclow} \specm Q_{\zf}^{'\calclow}$ first.  
Since $Q_{\zf}^{\calclow},Q_{\zf}^{'\calclow}$ are multiplication operators by $(1-\chi(\rho)),(1-\chi(\rho'))$ respectively, we know that the kernel of $Q_{\zf}^{\calclow} \specm Q_{\zf}^{'\calclow}$ is supported in the region $\rho,\rho' \gtrsim 1$, which is away from $\LB,\RB,\BFS$.
Then the oscillatory integrals in Definition~\ref{def:Legendrian-dis-conic-intersecting-low} and the definitions referred to therein are not oscillating anymore and can be integrated to give a polyhomogeneous function of the same index set as the amplitude together with those decay factors.\footnote{One can also choose $\chi$ so that only the $u_6$-term in Definition~\ref{def:Legendre-distribution-low-energy} survives after this localization and the conclusion follows. 
And one can see from this that requirements on different terms in Definition~\ref{def:Legendre-distribution-low-energy} coincide on the overlapped regions. } 
So it has the form as in \eqref{eq:local-specm-0}, except that we need to explain the decay order.

Using the condition that the support is away from $\LB,\RB,\BFS$ again, this term has arbitrary order of decay as any of $\rho_{\LB},\rho_{\RB},\rho_{\BFS}$ tends to 0.
Then the estimate \eqref{eq:QzfEQzf-symbol-est} follows from the observation that 
a product of boundary defining functions of $\LB,\RB,\BFS$ is $O((1+\lambda d_{\conic})^{-1})$.

Now we explain the decay order $\lambda^{n-1}$.
As stated in Theorem~\ref{thm:spectral-measure-complete}, $\specm$ 
is polyhomogeneous with index set $\mathcal{B}$ in Theorem~\ref{thm:spectral-measure-complete} as a section of $\Omega_{\calclow}^{1/2} \otimes |\lambda d\lambda|^{1/2}$. So we should compare a section of $\Omega_{\calclow}^{1/2} \otimes |\lambda d\lambda|^{1/2}$ with $|dgdg'|^{1/2} \otimes |d\lambda|$.
By the definition of $\Omega_{\flat}^{1/2}$ in \eqref{eq:low-energy-density-1}\eqref{eq:low-energy-density-2}\eqref{eq:low-energy-density-3}, after tensoring with $|\lambda d\lambda|^{1/2}$, they vanish, compared with $|dgdg'|^{1/2} \otimes |d\lambda|$, to order $n$ at $\bfs_0$,
order $0$ in the interior of $\zf$, order $\frac{n}{2}$ at $\lb_0$ and $\rb_0$ respectively. Combining orders above with orders specified in Theorem~\ref{thm:spectral-measure-complete} gives the vanishing order $n-1$ at $\bfs_0, \lb_0, \rb_0, \zf$.
Finally, the $\lambda^{n-1}$-factor follows by observing that $\lambda$ is a product of boundary defining functions of $\lb_0,\rb_0,\bfs_0$ and $\zf$.


The conclusion for other terms follows in the same way. The only difference is that when the microlocalizer is replaced by $Q_{1}^{\calclow}$ on that side, the property being supported in $\rho$ or $\rho' \gtrsim 1$ is replaced by being Schwartz in $\rho$ or $\rho'$, using Lemma~\ref{lemma:low-energy-trivial-composition}. So oscillatory factors like $e^{i\Phi/\rho},e^{i\Phi/\rho'}$ won't affect regularity anymore since losses like $\rho^{-1}$ or $(\rho')^{-1}$ introduced by differentiating them can be absorbed now. Consequently, the entire kernel is a function with the same regularity as the amplitude $a$, being polyhomogeneous with assigned index set and the rest of the discussion about the precise decay order  as $\lambda \to 0$ is the same as the case of $Q_{\zf}^{\calclow} \specm Q_{\zf}^{'\calclow}$ above.

\end{proof}

Now consider the case that exactly one of $j$ and $j'$ is $\zf$ or $1$. 
\begin{proposition} \label{prop:localized-specm-one-side-residual-low}

\begin{enumerate}
  For $j \in \Jlow$, we have the following oscillatory integral representations of the microlocalized spectral measure.
\item \label{item:QjEQ1-osc}
$Q_j^{\calclow} \specm Q_1^{\calclow}$ can be expressed as a finite sum of terms taking one of the following forms:
\begin{itemize}
  \item Terms supported in the region $\theta = x'/x \lesssim 1, \lambda \lesssim 1$ and take the form
\begin{equation} \label{QjEQ1-low-rb}
\lambda^{n-1} a(\lambda,\theta,y,y',x'/\lambda) |dgdg'|^{1/2} \otimes |d\lambda|.
\end{equation}
\item Terms supported in the region $\sigma = x/x' \lesssim 1, x/\lambda \lesssim 1$ and take the form
  \begin{gather}
\quad  e^{\pm i\lambda/x} \lambda^{n-1}  a(\lambda,\sigma,y,y',x'/\lambda) |dgdg'|^{1/2} \otimes |d\lambda|, \label{QjEQ1-c-low}
\end{gather}
\end{itemize}
In each case, $a(\lambda,\sigma,y,y',x'/\lambda)$ is compactly supported in its variables and satisfies
\begin{equation} \label{eq:phg-conormal-2}
  |(\lambda \partial_{\lambda})^N a | \lesssim (1+\lambda d(z,z'))^{-\frac{n-1}{2}}, 
\end{equation}
for any $N \in \N$.

\item \label{item:Q1EQj-osc} 
$Q_1^{\calclow} \specm Q_j^{\calclow}$ is a finite sum of terms of the same form as in Part~\eqref{item:QjEQ1-osc}, except with all primed and un-primed variables switched in all expressions. 

\item  \label{item:QjEQzf-single-term}
$Q_j^{\calclow} \specm Q_{\zf}^{'\calclow}$ is a finite sum of terms of the following form:
\begin{itemize}
  \item Terms supported in the region  $\theta = x'/x \lesssim 1$, $\lambda/x' \lesssim 1$ and take the form
\begin{equation} \label{QjEQzf-low-rb}
\lambda^{n-1}  a(\lambda,\sigma,y,y',\lambda/x') |dgdg'|^{1/2} \otimes |d\lambda|.
\end{equation}
\item Terms supported in the region $\sigma = x/x' \lesssim 1$, $\lambda/x' \lesssim 1$ and take the form
  \begin{gather}
\quad  e^{i\lambda/x} \lambda^{n-1} \sigma^{ \frac{n-1}{2} } a(\lambda,\sigma,y,y',\lambda/x') |dgdg'|^{1/2} \otimes |d\lambda| . \label{QjEQzf-c-low}
\end{gather}
\end{itemize}
In each case, $a(\lambda,\sigma,y,y',\lambda/x')$ satisfies
\begin{equation}
  |(\lambda \partial_{\lambda})^N a | \lesssim (1+\lambda d(z,z'))^{-\frac{n-1}{2}}, 
\end{equation}
for any $N \in \N$.

\item \label{item:QzfEQj-single-term}
$Q_{\zf}^{\calclow} \specm Q_j^{\calclow}$ is a finite sum of terms of the same form as in Part~\eqref{item:QjEQzf-single-term} with all primed and un-primed variables switched.

\end{enumerate}

\end{proposition}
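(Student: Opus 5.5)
The proof follows the pattern of Proposition~\ref{prop:localized-specm-two-side-residual-low}, now with only one side residual. By Theorem~\ref{thm:spectral-measure-complete}, for $\lambda \lesssim 1$ the spectral measure is in $I_{\calclow}^{m,p;r_{\LB},r_{\RB};\mathcal{B}}(X_{\rmb,\flat}^2,(L^{\bfs},L^{\#});\Omega_{\flat}^{1/2}) \otimes |\lambda d\lambda|^{1/2}$, with $m=-\frac12$, $p=\frac{n-2}{2}$ and $r_{\LB}=r_{\RB}=\frac{n-1}{2}$.

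For Part~\eqref{item:QjEQ1-osc} we apply Lemma~\ref{lemma:low-energy-trivial-composition} on the right. Since $\WF'_\flat(Q_1^{\calclow})$ is disjoint from $\pi_R(L^{\bfs}\cup L^{\#})$, we get $\specm Q_1^{\calclow} \in I_{\calclow}^{\infty,\infty,r_{\LB},\infty;\mathcal{B}}$. Composition with $Q_j^{\calclow}$ on the left preserves this class by the calculus of \cite[Section~5]{GHS1}. Thus the kernel has infinite order at $\BFS$ and $\RB$, and the only surviving oscillation is at $\LB$. We then split the remaining terms in Definition~\ref{def:Legendrian-dis-conic-intersecting-low}, and in Definitions~\ref{def:Legendre-distribution-low-energy} and~\ref{def:Legendre-fixed-energy}, into two groups.

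\textbf{Terms near $\RB$, or in $\theta\lesssim 1$ away from $\LB$.} Every oscillatory factor can be absorbed into the rapidly decaying amplitude, exactly as in the proof of Proposition~\ref{prop:localized-specm-two-side-residual-low}. This gives \eqref{QjEQ1-low-rb}.

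\textbf{Terms near $\LB$ (region $\sigma\lesssim1$).} By Proposition~\ref{prop:Lbf-Legendrian-dis-simplification-low}, $L^{\bfs}_{\lb}$ and $L^{\#}$ over $\lb$ are $\{\nu=\pm1,\mu=0\}$. Hence no extra parameters are needed, and the phase $\Phi_1(y)/\rho$ is just $\pm\lambda/x$. This yields the factor $e^{\pm i\lambda/x}$ in \eqref{QjEQ1-c-low}. For terms near $\bfs_0\cap\LB$ we write $\Phi=\Phi_1+\sigma\Phi_2$. Since the symbol is residual at $\BFS$, the factor $e^{i\sigma\Phi_2/\rho}=e^{i\Phi_2\lambda/x'}$ can be absorbed into $a$: differentiating it only costs powers of $\lambda/x'$, and these are compensated by the infinite order vanishing at $\BFS$, where $x'/\lambda\to0$. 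The $v,w$ and $s$ integrals of the conic-pair terms $u_4$ are handled in the same way, giving smooth polyhomogeneous amplitudes.

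\textbf{Normalizing the densities.} Comparing $\Omega_\flat^{1/2}\otimes|\lambda d\lambda|^{1/2}$ with $|dgdg'|^{1/2}\otimes|d\lambda|$ via \eqref{eq:low-energy-density-1}--\eqref{eq:low-energy-density-3} gives the overall factor $\lambda^{n-1}$, just as before. The power $\sigma^{r_{\LB}}=\sigma^{\frac{n-1}{2}}$ together with rapid decay at $\RB$ and $\BFS$ gives the bound $(1+\lambda d(z,z'))^{-\frac{n-1}{2}}$. Indeed, near $\LB$ we have $\lambda d(z,z')\sim\lambda/x$ and $\sigma\lesssim x/\lambda$ in the relevant region. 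The $(\lambda\partial_\lambda)^N$ bounds follow from conormality of the amplitudes in $\lambda$, because $\lambda\partial_\lambda$ is tangent to all the relevant boundary faces once the phase has been factored out.

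\textbf{Remaining parts.} Part~\eqref{item:Q1EQj-osc} follows by symmetry, or by taking adjoints, since $\specm$ is self-adjoint. Parts~\eqref{item:QjEQzf-single-term} and \eqref{item:QzfEQj-single-term} are the same argument with the support condition $\rho'\gtrsim1$, i.e.\ $\lambda/x'\lesssim 1$, replacing residuality. Near $\LB$ only the $u_4$-type term \eqref{u4} survives, with phase $\pm\lambda/x$ and prefactor $\rho^{r_{\LB}}$. After expressing $\rho^{r_{\LB}}$ through $\sigma$ and $\lambda/x'$, this produces the explicit $\sigma^{\frac{n-1}{2}}$ in \eqref{QjEQzf-c-low}. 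Here only the outgoing sign $e^{+i\lambda/x}$ should appear, because of the support in $\rho'$. I would check this sign against the structure of the resolvent near $\zf$ in \cite[Section~6]{GHS2}.

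\textbf{Main obstacle.} I expect the hardest step to be the uniform treatment of the corner $\bfs_0\cap\BFS\cap\LB$, in particular the conic-pair terms near $\LB$ in $u_4$ of Definition~\ref{def:Legendrian-dis-conic-intersecting-low}. There one must show that after right microlocalization the $s$- and $v$-integrals produce no residual oscillation beyond $e^{\pm i\lambda/x}$. One must also show that the factor $(\rho'/s)^{\cdots}s^{p+n/2-1}$ integrates to an amplitude with exactly the stated decay. I would first try to prove this by integrating by parts in $s$ and $v$, using the rapid vanishing at $\BFS$ to control the boundary terms at $s=0$.
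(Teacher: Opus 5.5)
Your proposal is correct and follows the paper's proof. The right-hand trivial-composition lemma gives infinite-order vanishing in $x'/\lambda$ at $\BFS$ and $\RB$, and the phase splits as $\Phi/\rho = 1/\rho + \Phi_2/\rho'$. This extracts $e^{\pm i\lambda/x}$ and absorbs the rest into the amplitude, and the $\sigma^{\frac{n-1}{2}}$ factor with Lemma~\ref{lemma:bdf-1} gives the symbol bound, with $\rho'\gtrsim 1$ replacing residuality in Parts~(3)--(4). The conic-pair corner terms you flag as the main obstacle are handled by the same absorption, since their amplitude also vanishes to infinite order in $\rho'$ after microlocalization, so no integration by parts in $s,v$ is needed.

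One correction concerns Part~(3). There the single sign is not forced by the support in $\rho'$, because the spectral measure carries both $\nu=\pm1$ at $\LB$. It is fixed, term by term, by the left microlocalizer: $\WF'_\flat(Q_j^{\calclow})$ has $\nu$ in a short interval and so meets at most one of $\{\nu=\pm1,\mu=0\}$. Both signs occur for different $j$, and the paper's $e^{i\lambda/x}$ is just the $+$ case, as in Part~(1).
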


\begin{proof}
We consider \eqref{item:QjEQ1-osc} first and \eqref{item:Q1EQj-osc} follows by the same proof, or simply taking the adjoint. 
For \eqref{item:QjEQ1-osc}, consider terms supported in $\sigma \lesssim 1$ first.
We observe that $Q_j^{\calclow} \specm Q_1^{\calclow}$ can be expressed as a sum of a finite number of terms of one of the following forms
\begin{gather}
\lambda^{n-1}  \int_{\R^k} e^{i\lambda\Phi(\sigma,y,y',v)/x}  
\sigma^{ \frac{n-1}{2} } \tilde{a}(\lambda,\sigma,y,y',\frac{x'}{\lambda},v)dv \;  |dgdg'|^{1/2} \otimes |d\lambda| \quad \mathrm{ or }   \label{QjEQ1-or-zf-low} \\
\lambda^{n-1}  \int_{\R^{k}} \int_0^\infty e^{i\lambda\Phi(\sigma,y,y',v,s)/x} 
  s^{n-2} \sigma^{ \frac{n-1}{2} } \tilde{a}(\lambda,\sigma,y,y',\frac{x'}{\lambda},v,s) \, dv \, ds 
\; |dgdg'|^{1/2} \otimes |d\lambda|
   \label{QjEQ1-or-zf-conic-low} 
  \end{gather}
  and are supported in the region $\sigma = x/x' \leq 2$, $x'/\lambda \lesssim 1$. Here $\Phi(\sigma,y,y',v)$ in \eqref{QjEQ1-or-zf-low} parametrizes $L^{\#}$ (in this case $k=0$ and $v$ is absent) or $L^{\bfs}$ in the sense of Definition~\ref{def:Legendre-parametrization-low}, while $\Phi(\sigma,y,y',v,s)$ in \eqref{QjEQ1-or-zf-conic-low} parametrizes the pair $(L^{\bfs},L^\#)$ in the sense of Section~\ref{subsec:conic-pair-geometry-and-phase-function}.

In each case, $\tilde{a}(\cdot)$ is a smooth function compactly supported in the $v$ and $s$ variables (where present), such that 
\begin{equation} \label{eq:est-a-osc-residual-rb}
    |(\lambda\partial_\lambda)^N \tilde{a}|\leq C_{N,M} (x'/\lambda)^M = C_{N,M}(\rho')^M
\end{equation}
for all $N,M \in \mathbb{N}$.
Those expressions follow from the definition of Legendre distributions in Definition~\ref{def:Legendre-distribution-low-energy} and the definitions referred to therein, except for a power of $\frac{x'}{\lambda}$.
So we only need to justify \eqref{eq:est-a-osc-residual-rb} since this is able to absorb powers of $\frac{x'}{\lambda}$. This arbitrary order of decay in $(x'/\lambda)$ follows from the observation that in this low-energy regime $x'/\lambda$ is a product of boundary defining functions of $\RB$ and $\BFS$, on which we have arbitrary order of decay by Lemma~\ref{lemma:low-energy-trivial-composition} \footnote{In fact, the more essential reason traces back to how Lemma~\ref{lemma:low-energy-trivial-composition} is proved in \cite[Lemma~5.4]{GHS1}: the $\frac{\bullet}{\rho'}$ part of the phase is non-stationary and we can apply a non-stationary phase argument to have arbitrary order of decay in $\rho'$. }.

Next we justify the oscillatory factor. The phase functions appearing above are of the form 
\begin{equation}
 \pm \frac{1+\sigma+\sigma \psi}{\rho} = \pm \big( \frac{1}{\rho} + \frac{1+\psi}{\rho'} \big).
\end{equation}
Consider the case with $+$ sign below and the other case can be treated in the same way. Hence we can extract the $e^{i/\rho}=e^{i\lambda/x}$-factor and the remaining part of the oscillatory integral uses $\frac{1+\psi}{\rho'}$ as its phase. 
This gives a function that has the same regularity property as the amplitude $a$ since differentiating the oscillating factor at most introduces $(\rho')^{-1}$ factors, while our amplitude satisfies \eqref{eq:est-a-osc-residual-rb} and this can be absorbed. In particular, it is conormal in $\lambda$.

Next we justify \eqref{eq:phg-conormal-2}. Discussion above already shows that $a$ in \eqref{eq:phg-conormal-2} is conormal in $\lambda$ and has arbitrary order of decay at  $\RB$ and $\BFS$. Then together with the $\sigma^{\frac{n-1}{2}}$-factor, we know that it has at least $\frac{n-1}{2}$-order of decay at all of $\LB,\BFS$ and $\RB$. Applying Lemma~\ref{lemma:bdf-1} justifies \eqref{eq:phg-conormal-2}. 

For those terms supported in the region that $\sigma \gtrsim 1$, we use phase functions of the form
\begin{equation}
\frac{\Phi}{\rho'},
\end{equation}
where $\Phi$ is a smooth function and the oscillatory integral integrates to be a function with the same regularity and decay properties as before for the same reason as above.

Now we discuss \eqref{item:QjEQzf-single-term}, which follows by a similar (in fact simpler) argument as above.
Recall from Section~\ref{subsec:microlocal-partition-low} that $Q_{\zf}^{'\calclow}$ is just the multiplication operator $1-\chi(\rho')$, which localizes those oscillatory integrals to $\rho' \gtrsim 1$.
Recalling the form of our Legendre distributions in Definition~\ref{def:Legendrian-dis-conic-intersecting-low} and the definitions referred to therein,  
for those terms supported in the region that $\sigma \gtrsim 1$, we use a phase function of the form
\begin{equation}
\frac{\Phi}{\rho'},
\end{equation}
where $\Phi$ is a smooth function and this is not oscillating and contributes a term that has regularity and decay property that is at least as good as the amplitude $a$ together with prefactors indicating decay orders: polyhomogeneous in $\lambda$ and smooth in all other variables. In fact, $\rho \gtrsim 1$ on the support of those terms as well.

On the other hand, for those terms supported in the region $\sigma \lesssim 1$, we are using phase functions of the form 
\begin{equation}
  \frac{1+\sigma+\sigma \psi}{\rho} = \frac{1}{\rho} + \frac{1+\psi}{\rho'}.
\end{equation}
Hence we can extract the $e^{i/\rho}=e^{i\lambda/x}$-factor and again the remaining part of the phase $\rho' \gtrsim 1$ integrates to give a smooth function times those prefactors, which gives \eqref{QjEQzf-c-low}.

\end{proof}

\begin{remark}
There are two types of results that we have obtained by `switching primed and un-primed variables' in the proof, which are slightly different.
An example of the first type is between oscillatory integrals in two parts of the proof of Part~\eqref{item:QjEQ1-osc}, where switching those variables is because we gave local expressions in this manner in Definition~\ref{def:Legendrian-dis-conic-intersecting-low}.
On the other hand, the switching between Part~\eqref{item:QjEQ1-osc} and Part~\eqref{item:Q1EQj-osc} is because we have switched the microlocalizers on two sides. So we take both parts before and after switching in Part~\eqref{item:QjEQ1-osc}, and then switch primed and un-primed variables to reduce to the corresponding term in Part~\eqref{item:QjEQ1-osc}.
In particular, the major difference that appears in the statement is that in Part~\eqref{item:QjEQ1-osc}, for both parts before and after switching, the arbitrary order decay is in $(x'/\lambda)$.
While in Part \eqref{item:Q1EQj-osc}, for both parts before and after switching, the arbitrary order decay is in $(x/\lambda)$.
The same comparison applies to Part~\eqref{item:QjEQzf-single-term} and Part~\eqref{item:QzfEQj-single-term}.
\end{remark}





We summarize the key properties of the microlocalized spectral measure in the following theorem.

\begin{theorem}
\label{thm:microlocalized-spectral-measure-integral-form-low} 
Let $Q_j^{\calclow}, Q_{j'}^{\calclow}$ be two elements of the low-energy partition of unity as in Section~\ref{subsec:microlocal-partition-low}, 
then for $\lambda \leq 2$, and as a multiple of $|dg dg'|^{1/2} \otimes |d\lambda|$, the Schwartz kernel of $Q_j^{\calclow} \specm Q_{j'}^{\calclow}$ can be expressed as a sum of a finite number of terms of one of the following forms
\begin{gather}
\lambda^{n-1}  \int_{\R^k} e^{i\lambda\Phi(\sigma,y,y',v)/x}  \big(\frac{x'}{\lambda}\big)^{(n-1)/2 - k/2} \sigma^{ \frac{n-1}{2} } a(\lambda,\sigma,y,y',\frac{x'}{\lambda},v)dv  \; |dg dg'|^{1/2} \otimes |d(\frac{1}{h})| \label{QiEQj-lo}
 \end{gather}
or
 \begin{gather}
\begin{split}
\lambda^{n-1}  \int_{\R^{k}} \int_0^\infty e^{i\lambda\Phi(\sigma,y,y',v,s)/x} \big(\frac{x'}{\lambda s}\big)^{(n-1)/2 - (k+1)/2}
   s^{n-2} \sigma^{ \frac{n-1}{2} } a(\lambda,\sigma,y,y',\frac{x'}{\lambda s},v,s) \, dv \, ds \\ \times |dg dg'|^{1/2} \otimes |d(\frac{1}{h})|    
\end{split}  \label{QiEQj-s-lo}
  \end{gather}
  in the region $\sigma = x/x' \leq 2$, $x'/\lambda \leq 2$, or
  \begin{gather}
\quad  \lambda^{n-1} \sigma^{ \frac{n-1}{2} }  a(\lambda,\sigma,y,y',x'/\lambda) \; |dg dg'|^{1/2} \otimes |d(\frac{1}{h})| \label{QiEQj-c-low}
\end{gather}
in the region $\sigma = x/x' \leq 2$, $x'/\lambda \geq 1$. Here $\Phi(\sigma,y,y',v)$ parametrizes $L^{\#}$ (in this case $k=0$ and $v$ is absent) or $L^{\bfs}$ in the sense of Definition~\ref{def:Legendre-parametrization-low}, while $\Phi(\sigma,y,y',v,s)$ parametrizes the pair $(L^{\bfs},L^\#)$ in the sense of Section~\ref{subsec:conic-pair-geometry-and-phase-function}.

In each case, $a(\cdot)$ is a smooth function compactly supported in the $v$ and $s$ variables (where present), such that 
\begin{equation} \label{eq:est-a-osc-main}
    |(\lambda\partial_\lambda)^N a|\leq C_N
\end{equation}
for all $N \in \mathbb{N}$.
In each case, the number $k$ of extra parameters we are integrating over can be taken to be:
\begin{itemize}
\item When at least one of $j,j' \in \{ \zf, 1 \}$, then we can take $k=0$. 
\item For $j,j' \in \Jlow$, let $(j,j')$ be as in \eqref{eq:Lbf-jj'-contained-G-l}, we can take $k$ to be $k_\ell$, which is the maximal rank drop of the projection from $\mk{G}_{\ell} \cap L^{\bfs}$ to $\bfs$.
\end{itemize}

For $\sigma \geq 1/2$, the Schwartz kernel has a similar description by switching un-primed and primed variables and replacing $\sigma$ by $\sigma^{-1}$, as follows immediately from the symmetry of the kernel under interchanging the left and right variables.
\end{theorem}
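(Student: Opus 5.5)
The proof unpacks Theorem~\ref{thm:spectral-measure-complete} term by term along the decomposition \eqref{eq:spectral-measure-decomposition}, with the number of parameters fixed by the microlocal partition of Proposition~\ref{prop:microlocal-partition-low}. I will treat the cases according to whether $j,j'$ lie in $\{\zf,1\}$ or in $\Jlow$.

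\emph{Cases with $j$ or $j'$ in $\{\zf,1\}$.} When both lie in $\{\zf,1\}$, Proposition~\ref{prop:localized-specm-two-side-residual-low} already gives a non-oscillatory form \eqref{eq:local-specm-0} with arbitrary decay in $(1+\lambda d_{\conic})$. In the region $x'/\lambda\ge 1$ this is of the form \eqref{QiEQj-c-low}, because the $\sigma^{\frac{n-1}{2}}$ factor can be extracted from the decay at $\LB$. In the region $x'/\lambda\le 2$ it is \eqref{QiEQj-lo} with $k=0$ and trivial phase, or with a phase parametrizing $L^\#$ after multiplying by $e^{\pm i\lambda/x}e^{\mp i\lambda/x}$. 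When exactly one index lies in $\{\zf,1\}$, Proposition~\ref{prop:localized-specm-one-side-residual-low} supplies the forms \eqref{QjEQ1-c-low}, \eqref{QjEQzf-c-low}. These are \eqref{QiEQj-lo} with $k=0$ and phase $1+\sigma$, which parametrizes $L^\#_\pm$, or they are \eqref{QiEQj-c-low}. Any missing powers of $x'/\lambda$ are harmless: on $x'/\lambda\le2$ the amplitude has arbitrary decay in $\rho'$ by Lemma~\ref{lemma:low-energy-trivial-composition}, and on $x'/\lambda\ge1$ they are bounded above and below.

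\emph{Case $j,j'\in\Jlow$.} Here I rewrite the pieces $u_0,\dots,u_5$ of Definition~\ref{def:Legendrian-dis-conic-intersecting-low}, with the orders $m=-\frac12$, $p=\frac{n-2}{2}$, $r_{\LB}=r_{\RB}=\frac{n-1}{2}$, $\min\mathcal B_{\bfs_0}=-1$, in terms of $|dgdg'|^{1/2}\otimes|d\lambda|$.
\begin{itemize}
\item Absorbing $|\lambda d\lambda|^{1/2}$ and $\lambda^n|dgdg'd\lambda/\lambda|^{1/2}$, the prefactor on $\BFS$ becomes $\rho^{m-k/2+n/2}=\rho^{(n-1)/2-k/2}$, and $\lambda^{-1}$ from $\mathcal B_{\bfs_0}$ gives the overall $\lambda^{n-1}$.
\item Using $\rho=\sigma\rho'$ converts $\rho$-powers to $(x'/\lambda)$-powers times $\sigma$-powers, and together with $r_{\LB}$ this gives $\sigma^{\frac{n-1}{2}}$ near $\LB$.
\item For the conic pieces $u_3,u_4$, the factor $s^{p+n/2-1}=s^{n-2}$ appears directly.
\end{itemize}
That $k$ can be taken equal to $k_\ell$ comes from Proposition~\ref{prop:microlocal-partition-low}(3): $Q_j\specm Q_{j'}$ is microsupported on $L^{\bfs}_{j,j'}\subset\mk G_\ell$, where the rank drop is at most $k_\ell$. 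Then Propositions~\ref{prop:minimal-parametrization-low} and \ref{prop:minimal-parametrization-low-conic} give minimal parametrizations with $k_\ell$ parameters, plus $s$ for the conic part, after a finite further partition. Near $\lb\cap\bfs$, Proposition~\ref{prop:Lbf-Legendrian-dis-simplification-low} removes the $v$-parameters of $L_{\lb}$. This lets the $u_2,u_4$-type expressions with $\Phi_1=\pm1$ be written in the single form \eqref{QiEQj-lo}/\eqref{QiEQj-s-lo}, with a uniform $\sigma^{\frac{n-1}{2}}$ prefactor. The region $x'/\lambda\ge1$ is away from $\BFS$, so it is handled as in the residual cases. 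The symbol bound \eqref{eq:est-a-osc-main} follows from conormality in $\lambda$ of the amplitudes. Finally, the case $\sigma\ge 1/2$ follows from symmetry of the spectral measure under swapping left and right.

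\emph{Main obstacle.} The hardest part is the bookkeeping at the codimension-three corner $\bfs_0\cap\BFS\cap\LB$. There one has to check that the orders $(\rho'/s)^{m-(k+1)/2+n/2}$, $\sigma^{r}$ and the density conversions collapse to exactly the stated powers, and that the parametrization in the corner form \eqref{eq:Phi-low-codim2} can be rewritten as $\lambda\Phi/x$ with the non-degeneracy conditions \eqref{eq:non-degenerate-parametrization-conic-low-lb-corner} preserved. I would do this by direct substitution $\rho=\sigma\rho'$, using the fact that $\Phi_1$ is constant on $L^{\bfs}_{\lb}$.
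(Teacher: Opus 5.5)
Your overall route is the paper's. Residual cases come from Propositions~\ref{prop:localized-specm-two-side-residual-low} and \ref{prop:localized-specm-one-side-residual-low}. For $j,j'\in\Jlow$, $Q_j^{\calclow}\specm Q_{j'}^{\calclow}$ is treated as a Legendre distribution of the same orders on $(L^{\bfs}_{j,j'},L^{\#}_{j,j'})$, the orders are read off, and $k=k_\ell$ follows from the minimal-parametrization propositions together with \eqref{eq:Lbf-jj'-contained-G-l}. The paper justifies the composition step by citing \cite[Section~5]{GHS1}, which you use only implicitly when you say the piece is microsupported on $L^{\bfs}_{j,j'}$. Your order bookkeeping for the pieces in $x'/\lambda\le 2$ is right: it produces $\lambda^{n-1}$, $\rho^{\frac{n-1-k}{2}}$, $\sigma^{\frac{n-1}{2}}$ near $\LB$, and $s^{n-2}$.

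The step that fails is the residual cases in the region $x'/\lambda\ge1$.

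\emph{The two-sided residual term.} You claim $\sigma^{\frac{n-1}{2}}$ can be extracted from the decay at $\LB$. However, $Q_{\zf}^{\calclow}\specm Q_{\zf}^{'\calclow}$ is supported in $\rho,\rho'\ge\epsilon$, away from $\LB$. Proposition~\ref{prop:localized-specm-two-side-residual-low} only gives decay in $1+\lambda d_{\conic}\sim1+\rho^{-1}$, which is bounded there. Near $\lb_0$ (take $x'\sim1$, $\rho\sim1$, $\lambda\to0$) one has $\sigma\sim\lambda$, and the entire vanishing order at $\lb_0$ is already spent on $\lambda^{n-1}$. Concretely, for the flat Laplacian on $\mathbb{R}^n$ with $|z'|\sim1$ and $\lambda|z|=c$ small, $\specm(z,z')\approx c_n\lambda^{n-1}$ with $c_n\neq0$. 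By contrast, \eqref{QiEQj-c-low} would force this to be $O(\lambda^{n-1}\sigma^{\frac{n-1}{2}})=O(\lambda^{\frac{3(n-1)}{2}})$, and no other form is allowed there because $x'/\lambda\to\infty$.

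\emph{The one-sided terms.} The terms \eqref{QjEQzf-c-low} carry the factor $e^{i\lambda/x}$ throughout $x'/\lambda\gtrsim1$. For $x'/\lambda>2$ they are not of type \eqref{QiEQj-lo}, since that form is only allowed for $x'/\lambda\le2$. Nor can the phase be absorbed into $a$ in \eqref{QiEQj-c-low}: applying $\lambda\partial_\lambda$ to it produces $\rho^{-1}$, which is unbounded as $\rho=\sigma\rho'\to0$. Your remark that powers of $x'/\lambda$ are ``bounded above and below'' on $x'/\lambda\ge1$ is also false, since that region is unbounded.

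So this clause of the theorem is an overstatement in the statement itself, and no bookkeeping will produce it. The paper's proof only cites the two propositions here and does not supply it either. What the same ingredients (Theorem~\ref{thm:spectral-measure-complete} and Lemma~\ref{lemma:low-energy-trivial-composition}) honestly give on $x'/\lambda\ge1$ is a sum of terms $\lambda^{n-1}a$ and $e^{\pm i\lambda/x}\lambda^{n-1}a$ with $|(\lambda\partial_\lambda)^N a|\lesssim(1+\lambda|z|)^{-\frac{n-1}{2}}$ and no $\sigma^{\frac{n-1}{2}}$ factor. That is the version you should state and prove in this region. It is essentially the form recorded in the third case of Corollary~\ref{coro:microlocalized-spectral-measure-osc-int-form-low} and used in the later dispersive estimates.
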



\begin{proof}
 
The statement for the case where at least one of  $j,j'$ is $\zf$ or $1$ follows from Proposition~\ref{prop:localized-specm-two-side-residual-low} and Proposition~\ref{prop:localized-specm-one-side-residual-low}.

Now we consider the case $j,j' \in \Jlow$. 
As aforementioned, by the wavefront bound of compositions in \cite[Section~5]{GHS1},  $Q_j^\flat \specm Q_{j'}^\flat$ is a Legendre distribution associated with the Legendrian conic pair $(L_{j,j'}^{\bfs},L_{j,j'}^{\#})$ of the same order as in Theorem~\ref{thm:spectral-measure-complete}.
Here $L_{j,j'}^{\bfs}$ is defined in \eqref{eq:Lbf-low-jj'} and $L_{j,j'}^{\#}$ is defined in the same manner, which is the part of $L^{\#}$ that is localized by $Q_j^\flat$ and $Q_{j'}^\flat$ from left and right respectively:
\begin{equation} \label{eq:L-sharp-localized}
L_{j,j'}^{\#} = 
\{ (\sigma,y,y',\mu,\mu',\nu,\nu') \in L^{\#}| \; (y,\mu,\nu) \in  W_j, (y',\mu',\nu') \in  W_{j'}  \}.
\end{equation}
where $W_j,W_{j'}$ are the same open neighborhoods of $\WF'_{\flat}(Q_j^\flat), \WF'_{\flat}(Q_{j'}^\flat)$ respectively as in \eqref{eq:Lbf-low-jj'}.

Finally, the claim about $k$ follows from combining one of Proposition~\ref{prop:minimal-parametrization-low}, Proposition~\ref{prop:minimal-parametrization-low-conic}, Proposition~\ref{prop:minimal-parametrization-high}, Proposition~\ref{prop:minimal-parametrization-high-conic}, depending on which type of Legendre distribution it is, with the construction  of $Q_j^{\calclow}$ in Section~\ref{subsec:microlocal-partition-low}, in particular its property \eqref{eq:Lbf-jj'-contained-G-l}. 
For the part of $Q_j^{\calclow} \specm Q_{j'}^{\calclow}$ associated with the Legendrian conic pair $(L_{j,j'}^{\bfs}, \Lsharplow_{j,j'})$, the projection from $L_{j,j'}^{\#}$ to $\bfs$ is always a local diffeomorphism.
\end{proof}

Next we restate the conclusion above in a form that is more convenient to use in the proof of dispersive estimates, in which we absorb all decaying factors in $\BFS,\LB,\RB$ into the amplitude $a(\cdot)$. To this end, we first give a lemma bounding the product of boundary defining functions of $\BFS, \LB, \RB$.
\begin{lemma} \label{lemma:bdf-1}
Let $\rho_{\LB},\rho_{\BFS},\rho_{\RB}$ be boundary defining functions of $\LB,\BFS,\RB$ as in \eqref{eq:defining-functions-low-3}, then we have:
\begin{equation}
    \rho_{\BFS} \rho_{\LB} \rho_{\RB} \lesssim (1+\lambda \max\{|z|,|z'|\})^{-1}.
\end{equation}
\end{lemma}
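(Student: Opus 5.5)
The plan is a direct case analysis in the coordinates of the low-energy space. Here $|z|,|z'|$ are understood as $\la z \ra = 1/x$ and $\la z' \ra = 1/x'$ near $\partial X$, and as comparable to $1$ in the interior. We also note that $\rho_{\LB},\rho_{\RB},\rho_{\BFS}$ are all bounded by a constant, since $\rho_{\BFS}=(x+x')/\lambda$ is only used near $\BFS$, where it is small. By the symmetry of the statement under interchanging primed and unprimed variables, together with $\rho_{\LB}\leftrightarrow\rho_{\RB}$, we may assume $x \le x'$, i.e.\ $\la z\ra \geq \la z' \ra$. In that case $\max\{\la z\ra,\la z'\ra\} \sim 1/x$, and the target reduces to
\begin{equation*}
\rho_{\BFS}\rho_{\LB}\rho_{\RB} \lesssim \frac{1}{1+\lambda/x} = \frac{x}{x+\lambda}.
\end{equation*}

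The key step is to drop the factor $\rho_{\RB}\lesssim 1$ and estimate $\rho_{\BFS}\rho_{\LB}$. From \eqref{eq:defining-functions-low-3},
\begin{equation*}
\rho_{\LB} = \frac{x}{\lambda+x}+\frac{x}{x+x'}.
\end{equation*}
The first term is exactly $x/(x+\lambda)$, which is what we want. For the second term we multiply by $\rho_{\BFS}$:
\begin{equation*}
\rho_{\BFS}\cdot\frac{x}{x+x'} = \frac{x+x'}{\lambda}\cdot\frac{x}{x+x'} = \frac{x}{\lambda}.
\end{equation*}
In the region where $\rho_{\BFS}$ is used as a defining function we have $x+x'\lesssim \lambda$, hence $x\lesssim\lambda$ and $x/\lambda \sim x/(x+\lambda)$.

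Away from that region we are instead in the situation $\lambda \lesssim x+x'$, and $\rho_{\BFS}$ is replaced by a function comparable to $1$. Two subcases arise:
\begin{itemize}
\item If $x \gtrsim \lambda$, then the right-hand side $x/(x+\lambda)$ is $\sim 1$, so the bound is trivial.
\item If $x \ll \lambda$, we must have $x' \gtrsim \lambda$. In this configuration we are near $\LB$ and away from $\BFS$, and then $x/(x+x') \lesssim x/\lambda \sim x/(x+\lambda)$.
\end{itemize}
So in all regions $\rho_{\BFS}\rho_{\LB} \lesssim x/(x+\lambda)$, which gives the claim.

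The only real obstacle is bookkeeping. Since $\rho_{\BFS}=(x+x')/\lambda$ is only a valid defining function near $\BFS$, one has to fix global defining functions, e.g.\ $\min\{(x+x')/\lambda,1\}$ after smoothing, and similarly for $\rho_{\LB}$ and $\rho_{\RB}$. Any two choices of defining functions are comparable, so once the global versions are fixed, the case split above covers every region of $X_{\rmb,\flat}^2$ away from $\zf$. The interior of $X$, where $x\sim 1$, is likewise trivial, because there the right-hand side is comparable to $(1+\lambda)^{-1}$.
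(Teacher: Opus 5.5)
Your argument is correct and is essentially the paper's proof made explicit: the paper also splits into regions and uses that $(1+\lambda\max\{|z|,|z'|\})^{-1}$ vanishes only to first order at $\BFS$, $\LB$, $\RB$. In the region $x\le x'$ this is precisely your estimate $\rho_{\BFS}\rho_{\LB}\lesssim x/(x+\lambda)$ after discarding $\rho_{\RB}\lesssim 1$, and working with $\la z\ra$ instead of $|z|$ only strengthens the bound.
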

\begin{proof}
    The inequality follows by considering different regions and observing that the right hand side vanishes to at most first order at $\BFS,\LB,\RB$.
\end{proof}

\begin{corollary} \label{coro:microlocalized-spectral-measure-osc-int-form-low}
With $Q_j^{\calclow},Q_{j'}^{\calclow},\Phi$ as above, for $\lambda \leq 2$, as a multiple of $|dg dg'|^{1/2} |d\lambda|$, the Schwartz kernel of $Q_j^{\calclow} \specm Q_{j'}^{\calclow}$ can be expressed as a sum of a finite number of terms of one of the following forms and a Schwartz term:
\begin{itemize}
    \item When $j,j' \in \Jlow$ are as in \eqref{eq:Lbf-jj'-contained-G-l} and $L^{\bfs}_{j,j'}$ defined as in \eqref{eq:Lbf-low-jj'} is away from $L^{\bfs} \cap \Lsharplow$, it takes the form
\begin{align} \label{QiEQj-lo-2}
\lambda^{n-1}  \int_{\R^k} e^{i\lambda\Phi(\sigma,y,y',v)/x} a(\lambda,\sigma,y,y',\frac{x'}{\lambda},v)dv \; |dg dg'|^{1/2} |d\lambda|
\end{align}
in the region $\sigma = x/x' \leq 2$, $x'/\lambda \leq 2$.
Here $k$ can be taken as any integer $k \geq k_\ell$ with $k_\ell$ being the maximal rank drop of the projection from $\mk{G}_{\ell} \cap L^{\bfs}$ to $\bfs$ and for all $\alpha \in \mathbb{N}$:
\begin{equation} \label{eq:spectral-measure-symbol-bound-low-1}
|\partial_{\lambda}^\alpha a(\lambda,z,z',v)| 
\leq C_{\alpha} \lambda^{-\alpha} (1+\lambda |z|)^{  - \frac{n-1-k}{2} } \sigma^{k/2}.
\end{equation}

\item  When $j,j' \in \Jlow$ and $L^{\bfs}_{j,j'}$ instead meets $L^{\bfs} \cap \Lsharplow$, it takes the form
\begin{align}\label{QiEQj-s-lo-2}
\lambda^{n-1}  \int_{\R^{k}} \int_0^\infty e^{i\lambda\Phi(\sigma,y,y',v,s)/x} s^{\frac{n+k}{2}-1 }a(\lambda,\sigma,y,y',\frac{x'}{\lambda s},v,s) \, dv \, ds  \;|dg dg'|^{1/2} |d\lambda| 
\end{align}
in the region $\sigma = x/x' \leq 2$, $x'/\lambda \leq 2$. Here $k$ can be taken as any integer $k \geq k_\ell$ and for all $\alpha \in \mathbb{N}$:
\begin{equation} \label{eq:spectral-measure-symbol-bound-low-2}
|\partial_{\lambda}^\alpha a(\lambda,z,z',v,s)| 
\leq C_{\alpha} \lambda^{-\alpha} (1+\lambda |z|)^{  - \frac{n-k-2}{2} } \sigma^{(k+1)/2}.
\end{equation}

\item When at least one of $j,j'$ is $1$ or $\zf$, it takes the form
  \begin{align} \label{QiEQj-c-low-absorbed-a}
\quad  \lambda^{n-1} a(\lambda,\sigma,y,y',x'/\lambda) \; |dg dg'|^{1/2} |d\lambda| 
  \end{align}
in the region $\sigma = x/x' \leq 2$, $x'/\lambda \geq 1$ and for all $\alpha \in \mathbb{N}$:
\begin{equation} \label{eq:spectral-measure-symbol-bound-low-3}
|\partial_{\lambda}^\alpha a(\lambda,z,z',v,s)| 
\leq C_{\alpha} \lambda^{-\alpha} (1+\lambda |z|)^{  - \frac{n-1}{2} }.
\end{equation}

\end{itemize}

As before, for $\sigma \geq 1/2$, the Schwartz kernel has a similar description by switching un-primed and primed variables and replacing $\sigma$ by $\theta=\sigma^{-1}$, as follows immediately from the symmetry of the kernel under interchanging the left and right variables.


\end{corollary}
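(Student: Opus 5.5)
The corollary is a repackaging of Theorem~\ref{thm:microlocalized-spectral-measure-integral-form-low}, so the plan is to start from its three local forms \eqref{QiEQj-lo}, \eqref{QiEQj-s-lo}, \eqref{QiEQj-c-low} and absorb every power of $x'/\lambda$, $s$ and $\sigma$ into a new amplitude. The density factor is handled first: $|d(\frac{1}{h})| = |d\lambda|$, so nothing changes there. Along the way, $\lambda\partial_\lambda$-conormality \eqref{eq:est-a-osc-main} is converted into the stated bounds $|\partial_\lambda^\alpha a| \lesssim \lambda^{-\alpha}(\cdots)$. This conversion is immediate, because $\partial_\lambda^\alpha$ is a combination of $\lambda^{-\alpha}(\lambda\partial_\lambda)^{\beta}$ with $\beta\le\alpha$. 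The prefactors $(x'/\lambda)^{\gamma}$ and $\sigma^{\delta}$ satisfy the same symbol-type estimates in $\lambda$, so they do not spoil this.

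For the first bullet, set
\[
\tilde a = \big(\tfrac{x'}{\lambda}\big)^{\frac{n-1-k}{2}}\sigma^{\frac{n-1}{2}} a.
\]
Write $\sigma^{\frac{n-1}{2}} = \sigma^{\frac{n-1-k}{2}}\sigma^{k/2}$, so that the prefactor becomes $(x/\lambda)^{\frac{n-1-k}{2}}\sigma^{k/2}$. Near $\BFS\cup\LB$ in the region $\sigma\le 2$, $x'/\lambda\le 2$, the quantity $x/\lambda = \rho$ is comparable to $\rho_{\BFS}\rho_{\LB}$, up to a bounded factor in $\rho_{\RB}$ which is harmless since $\theta\ge 1/2$. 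Lemma~\ref{lemma:bdf-1} then gives $x/\lambda \lesssim (1+\lambda|z|)^{-1}$, using $|z|\sim 1/x$. This yields \eqref{eq:spectral-measure-symbol-bound-low-1}. If $k>k_\ell$, we add dummy parameters $v''$ with a Gaussian factor $e^{-|v''|^2}$ in the amplitude, or, equivalently, note that the estimate only weakens as $k$ grows. The Lebesgue-type fact that $k_\ell$ parameters suffice comes from item (3) of Proposition~\ref{prop:microlocal-partition-low} together with Proposition~\ref{prop:minimal-parametrization-low}.

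For the conic-pair bullet, the prefactor is
\[
\big(\tfrac{x'}{\lambda s}\big)^{\frac{n-2-k}{2}} s^{n-2}\sigma^{\frac{n-1}{2}}.
\]
We split off $s^{n-2} = s^{\frac{n+k}{2}-1}\, s^{\frac{n-2-k}{2}}$. The second factor combines with $(x'/(\lambda s))^{\frac{n-2-k}{2}}$ to give $(x'/\lambda)^{\frac{n-2-k}{2}}$. Writing $\sigma^{\frac{n-1}{2}} = \sigma^{\frac{n-2-k}{2}}\sigma^{\frac{k+1}{2}}$ then produces $(x/\lambda)^{\frac{n-2-k}{2}}\sigma^{\frac{k+1}{2}}$. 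The same bound as before gives \eqref{eq:spectral-measure-symbol-bound-low-2}, while the amplitude still depends on $x'/(\lambda s)$.

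The main obstacle I expect is the step where the exponent becomes negative or $s$ approaches zero. The new amplitude must remain bounded uniformly as $s\to0$ with $x'/(\lambda s)$ as a variable; this holds because $a$ is compactly supported in $x'/(\lambda s)$ on the relevant piece. One must also check that dividing out the $s$-powers does not destroy conormality in $\lambda$, which holds since $s$ is independent of $\lambda$.

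For the third bullet, $x'/\lambda\ge1$, so we are near $\zf$/$\lb_0$ and away from $\BFS,\RB$. The factor $\sigma^{\frac{n-1}{2}}$ is comparable to $\rho_{\LB}^{\frac{n-1}{2}}$, and $\lambda|z|\lesssim \rho_{\LB}^{-1}$ there, which gives \eqref{eq:spectral-measure-symbol-bound-low-3}. If instead $j$ or $j'$ lies in $\{\zf,1\}$, we appeal directly to Propositions~\ref{prop:localized-specm-two-side-residual-low} and \ref{prop:localized-specm-one-side-residual-low}, whose bounds \eqref{eq:QzfEQzf-symbol-est} and \eqref{eq:phg-conormal-2} already have this form, absorbing the $e^{\pm i\lambda/x}$ oscillation into a $k=0$ phase. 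The residual pieces are Schwartz, and the case $\sigma\ge 1/2$ follows by symmetry.
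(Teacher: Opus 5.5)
Your proposal is correct and follows the paper's own (very brief) argument. You absorb the powers of $x'/\lambda$, $s$ and $\sigma$ from Theorem~\ref{thm:microlocalized-spectral-measure-integral-form-low} into the amplitude, regroup them as $(x/\lambda)^{\frac{n-1-k}{2}}\sigma^{k/2}$ (resp.\ $(x/\lambda)^{\frac{n-2-k}{2}}\sigma^{\frac{k+1}{2}}$), and bound the resulting product of boundary defining functions by Lemma~\ref{lemma:bdf-1}. There are two harmless slips: in the third bullet $\sigma$ is only $\lesssim\rho_{\LB}$, not comparable to it (e.g.\ when $x'\gg\max(\lambda,x)$), but that one-sided bound is all you use; and the negative-exponent worry never arises, since $k\le n-2$.
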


\begin{proof}
We see from Theorem~\ref{thm:microlocalized-spectral-measure-integral-form-low} that, after absorbing all those boundary defining functions into the amplitude except for the $\lambda^{n-1}$-factor, we can write the microlocalized spectral measure as an oscillatory integral as above with (when $\sigma = x/x' \lesssim 1$):
\begin{equation} \label{eq:spectral-measure-symbol-bound-lowbdf}
|\partial_{\lambda}^\alpha a(\lambda,z,z';v)| 
\leq C_{\alpha} \lambda^{-\alpha} \big(\rho_{\BFS}\rho_{\LB} \rho_{\RB} \big)^{ \frac{n-1-k}{2} } \sigma^{k/2}
\end{equation}
Applying Lemma~\ref{lemma:bdf-1} and noticing that there is a `remaining' $\sigma^{k/2}$-factor, the desired estimate \eqref{eq:spectral-measure-symbol-bound-low-1} follows.
The estimate in \eqref{eq:spectral-measure-symbol-bound-low-2} and \eqref{eq:spectral-measure-symbol-bound-low-3} follows in the same way using the corresponding oscillatory integral expressions in Theorem~\ref{thm:microlocalized-spectral-measure-integral-form-low}.
\end{proof}



Now we turn to the high-energy case. We first consider the high-energy analogue of Proposition~\ref{prop:localized-specm-two-side-residual-low}.

\begin{proposition} \label{prop:localized-specm-two-side-residual-high}
Let $Q_{1}^{\calchigh}$ be as in Section~\ref{subsec:microlocal-partition-high-combined}, then
\begin{equation}
 Q_{1}^{\calchigh} \specm Q_{1}^{\calchigh}
\end{equation}
is a multiple of $|dgdg'|^{1/2} \otimes |\frac{dh}{h^2}|$ with coefficient that is rapidly decreasing at all of $\smf, \LB,\BFS,\RB \subset X_{\rmb,\calchigh}^2$.
More precisely, let $(h,\msf{q})$ be a coordinate system on $X_{\rmb,\calchigh}^2$ depending on the region, then it is of the form:
\begin{equation} \label{eq:local-specm-Q1EQ1-high}
         h^{-(n-1)}  a(h,\msf{q})  |dgdg'|^{1/2} \otimes |d(\frac{1}{h})| ,
\end{equation}
where $a(\msf{q},h)$ satisfies that for any $N,K \in \N$ there is a constant $C_{N,K}$ such that\footnote{Estimate \eqref{eq:Q1EQ1-symbol-est-high} is weaker than being smooth in $h$. We are stating in this way so that the estimate can be unified with the low-energy case. }
\begin{equation} \label{eq:Q1EQ1-symbol-est-high}
  |(h \partial_{h})^N a(h,\msf{q})| \leq C_{N,K}  h^K(1+ h^{-1} d_{\conic})^{-K}.
\end{equation}
\end{proposition}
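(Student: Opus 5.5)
The plan is to mimic the proof of Proposition~\ref{prop:localized-specm-two-side-residual-low}, with Lemma~\ref{lemma:high-energy-trivial-composition} in place of Lemma~\ref{lemma:low-energy-trivial-composition}. By Theorem~\ref{thm:spectral-measure-complete}, for $\lambda \gtrsim 1$ the spectral measure lies in $I_{\calchigh}^{m_{\calchigh},p;m,r_{\LB},r_{\RB}}(X_{\rmb,\calchigh}^2,(L^{\bfs,\calchigh},\Lsharphigh);\Omega_{\calchigh}^{1/2})\otimes|\lambda d\lambda|^{1/2}$. By Proposition~\ref{prop:microlocal-partition-high}, $\WF'_{\calchigh}(Q_1^{\calchigh})$ is disjoint from $\Sigma$. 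I will first check that both $\pi_{L,\calchigh}(L^{\bfs,\calchigh}\cup G_1^{\calchigh})$ and $\pi_{R,\calchigh}(L^{\bfs,\calchigh}\cup G_1^{\calchigh})$ lie in $\Sigma$. For $L^{\bfs,\calchigh}$ this follows from \eqref{eq:L-high-interior} and taking closures. For $G_1^{\calchigh}$ and $\Lsharphigh$ it holds because $\xi=\pm1,\mu=0$ lies in the characteristic set over $\partial X$. Applying Lemma~\ref{lemma:high-energy-trivial-composition} on the left and then on the right then shows that $Q_1^{\calchigh}\specm Q_1^{\calchigh}$ lies in the class with all orders equal to $\infty$. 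For the right composition, I will need to observe that the proof of the lemma (non-stationary phase in the right frequency) still applies to the already-improved distribution.

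Next I will unwind what infinite order means in each local model of Definition~\ref{def:Legendrian-dis-conic-high} and Definition~\ref{def:Legendre-dis-conic-high-RB-LB}. The key point is that in these expressions the phase $\Phi/(x'h)$ or $\Phi/(x\theta h)$ can only produce losses of the form $h^{-1}$, $(x')^{-1}$, $x^{-1}$ under differentiation. These losses are absorbed by the rapid decay of the amplitude at $\smf,\LB,\BFS,\RB$, just as $(\rho')^{-1}$ losses were absorbed in Proposition~\ref{prop:localized-specm-one-side-residual-low}. So each piece is $|dgdg'\,dh/h^2|^{1/2}$ times a function that is smooth in $h\partial_h$ and vanishes to infinite order at those four faces. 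Converting the half-density via \eqref{eq:high-energy-half-density} and tensoring with $|\lambda d\lambda|^{1/2}$ produces the prefactor $h^{-(n-1)}$ relative to $|dgdg'|^{1/2}\otimes|d(1/h)|$. The remaining powers of $h$ are absorbed into $a$, which is allowed because $a$ decays rapidly in $h$.

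Finally, the bound \eqref{eq:Q1EQ1-symbol-est-high} should follow from a high-energy analogue of Lemma~\ref{lemma:bdf-1}: the product $\rho_{\BFS}\rho_{\LB}\rho_{\RB}$, with $\lambda=h^{-1}$, is $\lesssim (1+h^{-1}\max(|z|,|z'|))^{-1}$, and $d_{\conic}\lesssim \max(|z|,|z'|)$. Combining this with infinite-order vanishing at $\smf$ (the factor $h^K$) gives the estimate. Any $\lambda$-derivatives fall either on the amplitude or on the oscillatory factor, and the latter losses are again absorbed by the rapid decay.

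I expect the main obstacle to be the bookkeeping near the codimension-three corner $\smf\cap\BFS\cap\RB$, in the $u_3$-type terms. There the amplitude depends on $x/s$, and the $s$-integral together with the $\psi_3$ part of \eqref{eq:Phi-high-codim3-conic} must be shown to integrate to a function whose regularity is controlled by the infinite-order decay. My first attempt will be to integrate by parts in $s$ using $\partial_s\Phi\neq0$ on the microsupport, which is guaranteed by Lemma~\ref{lemma:high-energy-trivial-composition}.
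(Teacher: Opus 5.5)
Your proposal follows the paper's route. You apply Lemma~\ref{lemma:high-energy-trivial-composition} on both sides to get infinite order at $\smf,\LB,\BFS,\RB$, integrate the resulting oscillatory integrals directly with every phase-derivative loss absorbed by that decay, and combine the $h^K$ decay at $\smf$ with Lemma~\ref{lemma:bdf-1} to obtain \eqref{eq:Q1EQ1-symbol-est-high}. The codimension-three step you flag as the main obstacle needs no integration by parts in $s$, and $\partial_s\Phi\neq0$ is not something the lemma provides: once all orders are infinite, the $(s,v)$-integrand of a $u_3$-type term is $O(h^N(x')^N)$ on a compact set, which absorbs every $(hx')^{-1}$ produced when $h\partial_h$ hits $e^{i\Phi/(hx\theta)}$.
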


The proof of Proposition~\ref{prop:localized-specm-two-side-residual-high} is almost the same as that of Proposition~\ref{prop:localized-specm-two-side-residual-low}, so we only briefly sketch it here.  
Using Lemma~\ref{lemma:high-energy-trivial-composition}, for any $K,N$ we have an oscillatory integral with amplitude that has sufficiently large decay order at all boundary surfaces. The oscillatory integral can just be integrated to give a function $a(h,\msf{q})$ satisfying \eqref{eq:Q1EQ1-symbol-est-high}.

Now we consider the case that only one of $j$ and $j'$ is $1$, which is the high-energy analogue of Proposition~\ref{prop:localized-specm-one-side-residual-low}.

\begin{proposition} \label{prop:localized-specm-one-side-residual-high}
Let $(h,\mathsf{q})$ be a coordinate system on $X_{\rmb,\calchigh}^2$, then:
\begin{enumerate}
\item \label{item:QjEQ1-osc-high}
$Q_j^{\calchigh} \specm Q_1^{\calchigh}$ can be expressed as a finite sum of terms of one of the following forms:
\begin{itemize}
  \item Terms supported in the region  $\theta = x'/x \lesssim 1$ and take the form
\begin{equation} \label{QjEQ1-high-rb}
h^{-(n-1)} \theta^{ \frac{n-1}{2} } a(h,\msf{q}) |dgdg'|^{1/2} \otimes |d(\frac{1}{h})|;
\end{equation}
\item Terms supported in the region $\sigma = x/x' \lesssim 1$ and take the form
  \begin{gather}
\quad  e^{i/hx}  h^{-(n-1)} \sigma^{ \frac{n-1}{2} } a(h,\mathsf{q}) |dgdg'|^{1/2} \otimes |d(\frac{1}{h})|. \label{QjEQ1-c-high}
\end{gather}
\end{itemize}
In each case, $a$ satisfies
\begin{equation} \label{eq:phg-conormal-2-high}
  |(h \partial_{h})^N a(h,\mathsf{q}) | \lesssim 1, 
\end{equation}
for any $N \in \N$.

\item \label{item:Q1EQj-osc-high} 
$Q_1^{\calchigh} \specm Q_j^{\calchigh}$ is a finite sum of terms of the same form as in Part~\eqref{item:QjEQ1-osc}, except with all primed and un-primed variables switched in all expressions. 
\end{enumerate}
\end{proposition}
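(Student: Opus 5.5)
The plan is to mirror the proof of Proposition~\ref{prop:localized-specm-one-side-residual-low}, with Lemma~\ref{lemma:high-energy-trivial-composition} in place of Lemma~\ref{lemma:low-energy-trivial-composition} and the high-energy classes of Definition~\ref{def:Legendrian-dis-conic-high} in place of the low-energy ones. Part~\eqref{item:Q1EQj-osc-high} follows from Part~\eqref{item:QjEQ1-osc-high}, either by the same argument with the roles of the factors exchanged, or by taking adjoints and using the symmetry of $\specm$. So I only treat $Q_j^{\calchigh}\specm Q_1^{\calchigh}$.

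\textbf{Step 1: reduce to infinite-order vanishing.} By Theorem~\ref{thm:spectral-measure-complete}, for $h$ small the spectral measure lies in $I_{\calchigh}^{m_{\calchigh},p;m,r_{\LB},r_{\RB}}(X_{\rmb,\calchigh}^2,(L^{\bfs,\calchigh},\Lsharphigh);\Omega_{\calchigh}^{1/2})\otimes|\lambda d\lambda|^{1/2}$. Since $\WF'_{\calchigh}(Q_1^{\calchigh})$ is away from $\Sigma$, and the right projections of $L^{\bfs,\calchigh}$, $G_1^{\calchigh}$ and $\Lsharphigh$ all lie in $\Sigma$, the second half of Lemma~\ref{lemma:high-energy-trivial-composition} gives that $\specm Q_1^{\calchigh}$, and hence $Q_j^{\calchigh}\specm Q_1^{\calchigh}$, vanishes to infinite order at $\smf$, $\BFS$ and $\RB$. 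Only the $\LB$ behaviour, of order $r_{\LB}=\frac{n-1}{2}$, survives. So I write the term as a finite sum of the local expressions in Definition~\ref{def:Legendrian-dis-conic-high} (and Definition~\ref{def:Legendre-dis-conic-high-RB-LB}), whose amplitudes are $O(h^\infty x'^\infty)$, more precisely $O\big((\rho_{\smf}\rho_{\BFS}\rho_{\RB})^\infty\big)$.

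\textbf{Step 2: eliminate the oscillation.}
\begin{itemize}
\item In the region $\theta=x'/x\lesssim 1$ the phase is $\Phi/(x'h)$ with $\Phi$ smooth. Every derivative in $h$, or in the remaining variables, costs at most a factor $(x'h)^{-1}$. This is absorbed by the infinite-order decay of the amplitude, because $x'h$ is a product of defining functions of $\smf,\BFS,\RB$.
\item I then integrate out $v$ and $s$, which are compactly supported. This gives a nonoscillatory function satisfying $|(h\partial_h)^Na|\lesssim 1$.
\item The power $\theta^{\frac{n-1}{2}}$ comes from the $\theta^{r_{\RB}}$ prefactor, after any excess decay is absorbed into $a$.
\item The factor $h^{-(n-1)}$ comes from comparing $\Omega_{\calchigh}^{1/2}\otimes|\lambda d\lambda|^{1/2}$ with $|dgdg'|^{1/2}\otimes|d(1/h)|$ via \eqref{eq:high-energy-half-density}, together with the power $h^{m_{\calchigh}-n/2}$ in the definitions. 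Any remaining positive powers of $h$ go into $a$.
\end{itemize}

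\textbf{Step 3: the region $\sigma=x/x'\lesssim 1$, near $\LB$.} Here the phase near the conic pair is $\frac{1+\sigma+\sigma\psi}{xh}=\frac{1}{xh}+\frac{1+\psi}{x'h}$. The same form holds for the $G_1^{\calchigh}$-type terms, where the phase has leading part $1/(xh)$. I factor out $e^{i/(xh)}$. The remaining phase $\frac{1+\psi}{x'h}$ is handled exactly as in Step~2, since there is infinite-order decay in $x'h$. The $\sigma^{r_{\LB}}=\sigma^{\frac{n-1}{2}}$ prefactor survives and gives \eqref{QjEQ1-c-high}. The pure $L^{\bfs,\calchigh}$ term, microsupported away from $\Lsharphigh$, has a phase whose $1/(x'h)$ part is likewise killed. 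Its $1/(xh)$ part is $\pm$-type near $\LB$; since $j$ is fixed, I can treat the sign as in the low-energy case.

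\textbf{Main obstacle.} The key step is the bookkeeping in Step~1. I must check that the infinite-order vanishing from Lemma~\ref{lemma:high-energy-trivial-composition} is uniform across all pieces $u_1,\dots,u_6$. This includes the $u_5,u_6$ terms, with phase $\frac{1+h}{x\theta h}$, whose $e^{i/h}$ factor is harmless because it is independent of the spatial variables. Then every factor arising from differentiating the oscillatory part can be absorbed, so that the $(h\partial_h)^N$ bounds hold uniformly.
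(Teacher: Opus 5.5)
Your proposal is correct and follows the paper's own sketched argument. Lemma~\ref{lemma:high-energy-trivial-composition} gives arbitrary-order decay at $\smf$, $\BFS$ and $\RB$ once $Q_1^{\calchigh}$ sits on the right; you then factor out $e^{\pm i/(xh)}$ near $\LB$ and integrate out the remaining oscillation, absorbing every $(x'h)^{-1}$ loss exactly as in Proposition~\ref{prop:localized-specm-one-side-residual-low}. One minor slip: for the $u_6$-type term, $\frac{1+h}{x\theta h}=\frac{1}{x'h}+\frac{1}{x'}$, so the extra factor is the $h$-independent $e^{i/x'}$ rather than $e^{i/h}$, and it is equally harmless for the $(h\partial_h)^N$ bounds.
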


The proof of Proposition~\ref{prop:localized-specm-one-side-residual-high} is essentially the same as that of Proposition~\ref{prop:localized-specm-one-side-residual-low}.
When we have $Q_1^{\calchigh}$ on the left (resp. right) side, then the phase is non-stationary in left (resp. right) variables and the oscillatory integral has arbitrary fixed order of decay (depending on how many times we have integrated by parts for the non-stationary phase argument) at $\BFS,\smf$ and $\LB$ (resp. $\RB$) and the oscillatory integral can be integrated to give a function satisfying \eqref{eq:phg-conormal-2-high}.


Now we consider the case in which both $j,j' \in \mk{J}_{\calchigh,1}  \cup \mk{J}_{\calchigh,2}$.
Then \cite[Lemma~7.1, Corollary~7.2]{GHS1} shows\footnote{In fact, a more refined decomposition of wavefront sets of Legendre distributions is given there, according to the face on which the Schwartz property fails. Here we are only using the full wavefront set, which is enough to characterize our parametrizations and microlocal partitions. }
that $Q_j^{\calchigh} \specm Q_{j'}^{\calchigh}$ is still a Legendre distribution as we defined in Definition~\ref{def:Legendrian-dis-conic-high},
but only associated with $(L^{\bfs,\calchigh}_{j,j'},\Lsharphigh)$ with $L^{\bfs,\calchigh}_{j,j'}$ defined in \eqref{eq:Lbf-high-jj'}, which is still a pair of Legendrian submanifolds with conic points.

Combining the discussion above with Proposition~\ref{prop:localized-specm-two-side-residual-high} and Proposition~\ref{prop:localized-specm-one-side-residual-high}, we have the following high-energy analogue of Theorem~\ref{thm:microlocalized-spectral-measure-integral-form-low}.

\begin{theorem}
\label{thm:microlocalized-spectral-measure-integral-form-high} 
Let $Q_j^{\calchigh}, Q_{j'}^{\calchigh}$ be two elements of the high-energy partition of unity as in Section~\ref{subsec:microlocal-partition-high-combined} and $(h,\msf{q})$ be a coordinate system of $X_{\rmb,\calchigh}^2$, then for $h \leq 2$, and as a multiple of $|dg dg'|^{1/2} \otimes |d(\frac{1}{h})|$, the Schwartz kernel of $Q_j^{\calchigh} \specm Q_{j'}^{\calchigh}$ can be expressed as a sum of finitely many terms of one of the following forms
\begin{align} \label{eq:thm4.3-1}
& h^{ - \frac{n-1}{2} - \frac{k}{2} } \int_{\R^k} e^{i\Phi(\msf{q},v)/hx}  (x')^{\frac{n-1-k}{2}} \sigma^{ \frac{n-1}{2} } a(h,\msf{q},v)dv \; |dg dg'|^{1/2} \otimes |d(\frac{1}{h})| 
  \end{align}
or
  \begin{align}
\begin{split} \label{eq:thm4.3-2}
h^{ - \frac{n-1}{2} - \frac{k+1}{2} }  \int_{\R^{k}} \int_0^\infty e^{i \Phi(\msf{q},v,s)/hx} \big(\frac{x'}{s}\big)^{(n-1)/2 - (k+1)/2}
\\ s^{n-2} \sigma^{ \frac{n-1}{2} } a(h,\msf{q},v,s) \, dv \, ds \; |dg dg'|^{1/2} \otimes |d(\frac{1}{h})|   
\end{split}
  \end{align}
in the region $\sigma = x/x' \leq 2$.
Here $\Phi(\msf{q},v)$ parametrizes $\Lsharphigh$ (in this case $k=0$ and $v$ is absent) or $L^{\bfs,\calchigh}$ in the sense of \eqref{eq:def-parametrization-high}, 
while $\Phi(\msf{q},v,s)$ parametrizes the pair $(L^{\bfs,\calchigh},\Lsharphigh)$ in the sense of \eqref{eq:def-C-Phi-high-energy-conic}.

In each case, $a(\cdot)$ is a smooth function compactly supported in all of its variables, such that 
\begin{equation} \label{eq:est-a-osc-main-high}
    |(h\partial_h)^N a|\leq C_N
\end{equation}
for all $N \in \mathbb{N}$.
In each case, the number $k$ of extra parameters we are integrating over can be taken to be:
\begin{itemize}
\item When at least one of $j,j'$ is $1$, we can take $k=0$. 
\item For $j,j' \in \Jhigh$, let $(j,j')$ be as in \eqref{eq:Lbf-jj'-contained-G-l}, we can take $k$ to be any integer $k \geq k_\ell$,  
where $k_\ell$ is the maximal rank drop of the projection from $\mk{G}_{\ell} \cap L^{\bfs,\calchigh}$ to $\smf$.
\end{itemize}

For $\sigma \geq 1/2$, the Schwartz kernel has a similar description by switching un-primed and primed variables and replacing $\sigma$ by $\theta=\sigma^{-1}$, as follows immediately from the symmetry of the kernel under interchanging the left and right variables.
\end{theorem}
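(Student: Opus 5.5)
The proof follows the pattern of Theorem~\ref{thm:microlocalized-spectral-measure-integral-form-low}. First, we split the pairs $(j,j')$ according to whether one of the indices equals $1$.

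If at least one of $j,j'$ equals $1$, we invoke Proposition~\ref{prop:localized-specm-two-side-residual-high} and Proposition~\ref{prop:localized-specm-one-side-residual-high}.

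\begin{itemize}
\item The kernels there are of the form $h^{-(n-1)}\sigma^{\frac{n-1}{2}}a$, possibly with the factor $e^{i/hx}$.
\item These are exactly \eqref{eq:thm4.3-1} with $k=0$ and $\Phi$ parametrizing $\Lsharphigh$.
\item This uses $\Phi=1+\sigma$ in the normalization \eqref{eq:concrete-Lsharphigh}, so that $\Phi/hx = 1/hx + 1/hx'$.
\item The extra prefactors $h^{-(n-1)/2}(x')^{\frac{n-1}{2}}$ are absorbed: the rapid decay in \eqref{eq:Q1EQ1-symbol-est-high} at $\smf,\BFS,\RB$ (or at the residual side) allows us to redefine $a$ as $a\, h^{-(n-1)/2}(x')^{-(n-1)/2}$ while preserving \eqref{eq:est-a-osc-main-high}.
\end{itemize}

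In the one-sided case, the residual side's decay is what absorbs the needed negative powers of $x'$ or $h$. We extract the factor $e^{i/hx}$ exactly as in the proof of Proposition~\ref{prop:localized-specm-one-side-residual-low}. Some care is needed that the remaining factor does not depend on $x'$ on that side. I expect this to be routine.

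For $j,j'\in\Jhigh$, we use the composition result \cite[Lemma~7.1, Corollary~7.2]{GHS1} recalled above. It shows that $Q_j^{\calchigh}\specm Q_{j'}^{\calchigh}$ is in the class of Definition~\ref{def:Legendrian-dis-conic-high} associated with $(L^{\bfs,\calchigh}_{j,j'},\Lsharphigh)$, with the orders of Theorem~\ref{thm:spectral-measure-complete}: $m_{\calchigh}=\frac12$, $m=-\frac12$, $p=\frac{n-2}{2}$, and $r_{\LB}=r_{\RB}=\frac{n-1}{2}$. We then go through the pieces $u_1,\dots,u_6$ one by one.

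\begin{itemize}
\item \emph{Single-Legendrian pieces} ($u_1$, built from Definition~\ref{def:Legendre-distribution-high-energy}). These give \eqref{eq:thm4.3-1}.
\item \emph{Conic pieces} ($u_3$, and $u_2$ near $\LB,\RB$). These give \eqref{eq:thm4.3-2}.
\item \emph{Codimension-two conic piece} ($u_4$). This is absorbed into \eqref{eq:thm4.3-1}.
\item \emph{$O(h^\infty)$ pieces} ($u_5,u_6$). These are absorbed as in the residual case.
\end{itemize}

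The powers must be checked in each case.

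\begin{itemize}
\item The factor $h^{m_{\calchigh}-\frac{k}{2}-\frac n2}$ combines with $|dh/h^2|^{1/2}\otimes|\lambda\,d\lambda|^{1/2}$, rewritten against $|d(1/h)|$. This produces $h^{-\frac{n-1}{2}-\frac k2}$, shifted by $\frac12$ in the conic case because of the extra parameter $s$.
\item The factor $(x')^{m-\frac k2+\frac n2}$ gives $(x')^{\frac{n-1-k}{2}}$.
\item The factor $s^{p-1+\frac n2}=s^{n-2}$ matches the statement.
\item The $\theta^{r_{\RB}}$ and $\sigma^{r_{\LB}}$ factors give $\sigma^{\frac{n-1}{2}}$.
\end{itemize}

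Finally, the count of parameters comes from Propositions~\ref{prop:minimal-parametrization-high} and \ref{prop:minimal-parametrization-high-conic}. These are combined with Proposition~\ref{prop:microlocal-partition-high}, which puts $L^{\bfs,\calchigh}_{j,j'}$ inside some $\tilde{\mk G}_\ell$ or $U_\ell^{\calchigh}$, so that the rank drop is at most $k_\ell$. We may always take $k\ge k_\ell$, since dummy Gaussian parameters can be added, with the power of $h$ and $x'$ adjusting accordingly.

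The main obstacle is the bookkeeping at the codimension-three corner $\smf\cap\BFS\cap\RB$, where the phase \eqref{eq:Phi-high-codim3-conic} contains the term $x\theta\psi_3(s,\theta,x/s,\dots)$. Here the $\psi_3$ term must be absorbed and the amplitude's dependence on $x/s$ reconciled with the stated form. I would first try to rewrite $e^{i\theta\psi_3/h\theta\cdot(x/x)}$ as part of the phase $\Phi(\msf q,v,s)$ itself, noting that $\msf q$ includes $x$. The $(x/s)$-powers are then matched with $(x'/s)^{(n-1)/2-(k+1)/2}$ using $x'=\theta x$, with the resulting $\theta$-powers absorbed into $\sigma^{\frac{n-1}{2}}$ and $a$. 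Lastly, the region $\sigma\ge1/2$ follows by the symmetry of the spectral measure under interchanging the left and right variables.
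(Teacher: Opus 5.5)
Your proposal follows the paper's own argument. The cases with $j$ or $j'$ equal to $1$ come from Propositions~\ref{prop:localized-specm-two-side-residual-high} and~\ref{prop:localized-specm-one-side-residual-high}, and the case $j,j'\in\Jhigh$ comes from \cite[Lemma~7.1, Corollary~7.2]{GHS1}, which places $Q_j^{\calchigh}\specm Q_{j'}^{\calchigh}$ in the class of Definition~\ref{def:Legendrian-dis-conic-high} for $(L^{\bfs,\calchigh}_{j,j'},\Lsharphigh)$ with the orders of Theorem~\ref{thm:spectral-measure-complete}, together with the parameter counts of Propositions~\ref{prop:minimal-parametrization-high} and~\ref{prop:minimal-parametrization-high-conic} and the containment in Proposition~\ref{prop:microlocal-partition-high}.

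The codimension-three ``obstacle'' you flag is not a real difficulty. The phases permitted in \eqref{eq:thm4.3-2} are exactly those of \eqref{eq:Phi-high-codim3-conic}, $\psi_3$-term included, and the corner $\smf\cap\BFS\cap\RB$ lies in the region $\sigma\ge 1/2$ handled by the symmetric description. So you never need to trade $(x/s)$-powers for $(x'/s)$-powers as $\theta\to 0$.
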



\begin{corollary} \label{coro:microlocalized-spectral-measure-osc-int-form-high}
Let $Q_j^{\calchigh},Q_{j'}^{\calchigh},\Phi, (h=\lambda^{-1},\msf{q})$ as in Theorem~\ref{thm:microlocalized-spectral-measure-integral-form-high}. For $h \leq 2$, as a multiple of $|dg dg'|^{1/2} \otimes |d\lambda|$, the Schwartz kernel of $Q_j^{\calchigh} \specm Q_{j'}^{\calchigh}$ can be expressed as a sum of a finite number of terms of one of the following forms, together with a Schwartz term:
\begin{itemize}
    \item When $j,j' \in \Jhigh$ are as in \eqref{eq:Lbf-high-jj'-contained-1} and $L^{\bfs,\calchigh}_{j,j'}$ defined as in \eqref{eq:Lbf-high-jj'} is away from $L^{\bfs,\calchigh} \cap \Lsharphigh \cap \{x'/s = 0\}$, it takes the form
\begin{align} \label{QiEQj-high-2}
\lambda^{n-1}  \int_{\R^k} e^{i\lambda\Phi(\msf{q},v)/x} a(\lambda^{-1},\msf{q},v) dv \; |dg dg'|^{1/2} \otimes |d\lambda|
\end{align}
in the region $\sigma = x/x' \leq 2$.
Here $k$ can be taken as any integer $k \geq k_\ell$ with $k_\ell$ being the maximal rank drop of the projection from $\tilde{\mk{G}}_{\ell} \cap L^{\bfs,\calchigh}$ to $\bfs$ and for all $\alpha \in \mathbb{N}$:
\begin{equation} \label{eq:spectral-measure-symbol-bound-high-1}
|\partial_{\lambda}^\alpha a(\lambda^{-1},z,z',v)| 
\leq C_{\alpha} \lambda^{-\alpha} (1+\lambda |z| )^{  - \frac{n-1-k}{2} } \sigma^{k/2}.
\end{equation}

\item  When $j,j' \in \Jhigh$ and $L^{\bfs,\calchigh}_{j,j'}$ instead meet $L^{\bfs,\calchigh} \cap \Lsharphigh \cap \{x'/s = 0\}$, it takes the form
\begin{align}  \label{eq:coro4.2-2}
\lambda^{n-1}  \int_0^\infty  \int_{\R^{k}} e^{i\lambda\Phi(\msf{q},v,s)/x} s^{\frac{n+k}{2}-1}
 a(\lambda^{-1},\sigma,\frac{x'}{s},y,y',v,s) \, dv \, ds \; |dg dg'|^{1/2} \otimes |d\lambda|
\end{align}
in the region $\sigma = x/x' \leq 2$. Here $k$ can be taken as any integer $k+1 \geq k_\ell$ with $k_\ell$ being the maximal rank drop of the projection from $\tilde{\mk{G}}_{\ell} \cap L^{\bfs,\calchigh}$ to $\smf = X_{\rmb}^2$ and for all $\alpha \in \mathbb{N}$:
\begin{equation} \label{eq:spectral-measure-symbol-bound-high-2}
|\partial_{\lambda}^\alpha a(\lambda^{-1},z,z',v,s)| 
\leq C_{\alpha} \lambda^{-\alpha} (1+\lambda |z|)^{  - \frac{n-k-2}{2} } \sigma^{\frac{k+1}{2}}.
\end{equation}

\item When at least one of $j,j'$ is $1$, it takes the form
  \begin{align} \label{QiEQj-c-high-absorbed-a}
\quad  \lambda^{n-1} a(\lambda^{-1},\sigma,y,y',x'/\lambda) \; |dg dg'|^{1/2} |d\lambda| 
  \end{align}
in the region $\sigma = x/x' \leq 2$, $x'/\lambda \geq 1$ and for all $\alpha \in \mathbb{N}$:
\begin{equation} \label{eq:spectral-measure-symbol-bound-high-3}
|\partial_{\lambda}^\alpha a(\lambda^{-1},z,z',v,s)| 
\leq C_{\alpha} \lambda^{-\alpha} (1+\lambda |z|)^{  - \frac{n-1}{2} }.
\end{equation}
\end{itemize}


As before, for $\sigma \geq 1/2$, the Schwartz kernel has a similar description by switching un-primed and primed variables and replacing $\sigma$ by $\theta=\sigma^{-1}$, as follows immediately from the symmetry of the kernel under interchanging the left and right variables.


\end{corollary}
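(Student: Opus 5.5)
The plan is to convert Theorem~\ref{thm:microlocalized-spectral-measure-integral-form-high} into the stated form, in exactly the way Corollary~\ref{coro:microlocalized-spectral-measure-osc-int-form-low} was obtained from Theorem~\ref{thm:microlocalized-spectral-measure-integral-form-low}. This requires three conversions. First, the density $|d(1/h)| = |d\lambda|$ is the same, so only the scalar coefficient changes. Second, we rewrite $h^{-\frac{n-1}{2}-\frac{k}{2}}$ as $\lambda^{n-1}\cdot \lambda^{-\frac{n-1}{2}+\frac{k}{2}}$. Third, we absorb every factor other than $\lambda^{n-1}$ and the oscillatory exponential into the amplitude. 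The phase $\Phi/(hx)$ equals $\lambda\Phi/x$, so the exponentials already match \eqref{QiEQj-high-2} and \eqref{eq:coro4.2-2}.

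For the non-conic term \eqref{eq:thm4.3-1}, the leftover prefactor is
\[
\lambda^{-\frac{n-1-k}{2}}(x')^{\frac{n-1-k}{2}}\sigma^{\frac{n-1}{2}}
 = \Big(\frac{x'}{\lambda}\Big)^{\frac{n-1-k}{2}}\sigma^{\frac{n-1-k}{2}}\sigma^{k/2}.
\]
Here $x'/\lambda = x' h$ is a product of the defining functions of $\BFS$ and $\smf$, and $\sigma\lesssim 1$ is the defining function of $\LB$ in this region. The amplitude also carries the decay at $\RB$ and $\LB$ encoded in the orders $r_{\LB}=r_{\RB}=\frac{n-1}{2}$ of Theorem~\ref{thm:spectral-measure-complete}. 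The product therefore has the form $(\rho_{\BFS}\rho_{\LB}\rho_{\RB})^{\frac{n-1-k}{2}}\sigma^{k/2}$. Lemma~\ref{lemma:bdf-1} then bounds this by $(1+\lambda|z|)^{-\frac{n-1-k}{2}}\sigma^{k/2}$; its proof only compares boundary defining functions, so it extends verbatim to the high-energy region. The symbol estimate \eqref{eq:spectral-measure-symbol-bound-high-1} follows from \eqref{eq:est-a-osc-main-high} once we note that $(h\partial_h)^N$ corresponds to $(\lambda\partial_\lambda)^N$, which gives $\lambda^{-\alpha}$ per derivative. The prefactors are homogeneous in $\lambda$, so they preserve this conormality. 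Enlarging $k$ beyond $k_\ell$ is harmless, since one may add dummy nondegenerate quadratic parameters.

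For the conic term \eqref{eq:thm4.3-2}, I redistribute the powers of $s$ and write
\[
\lambda^{-\frac{n-1}{2}+\frac{k+1}{2}}\Big(\frac{x'}{s}\Big)^{\frac{n-k-2}{2}}s^{n-2}
 = \Big(\frac{x'}{\lambda s}\Big)^{\frac{n-k-2}{2}} s^{\frac{n+k}{2}-1}.
\]
The first factor and the $\sigma^{\frac{n-1}{2}}=\sigma^{\frac{n-k-2}{2}}\sigma^{\frac{k+1}{2}}$ factor are absorbed as before. I then need that $x'/(\lambda s)$, on the support, is bounded by the product of boundary defining functions raised to the first power. This is the main obstacle: near the conic front face, $s$ can be small while $x'/s$ stays bounded, so the argument of Lemma~\ref{lemma:bdf-1} must be redone with $x'/s$ playing the role of $x'$. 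I would first try to use that $x'/s\lesssim 1$ on the support of $a$, which gives $x'/(\lambda s)\lesssim\min(1,x'/(\lambda s))$, and then bound this by $(1+\lambda|z|)^{-1}$ using $|z|\sim 1/x \lesssim 1/(\sigma x')$. Any remaining discrepancy would be absorbed into $\sigma^{(k+1)/2}$ or handled by the non-stationary phase in $s$ away from $s=0$.

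The terms with $j$ or $j'$ equal to $1$ come directly from Propositions~\ref{prop:localized-specm-two-side-residual-high} and \ref{prop:localized-specm-one-side-residual-high}. There the extra factor $\theta^{\frac{n-1}{2}}$ or $\sigma^{\frac{n-1}{2}}$, together with the rapid decay at the remaining faces, again yields $(1+\lambda|z|)^{-\frac{n-1}{2}}$ via Lemma~\ref{lemma:bdf-1}. The $u_6$-type and $O(h^\infty)$ pieces go into the Schwartz term. The region $\sigma\ge 1/2$ follows by symmetry.
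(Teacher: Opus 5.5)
Your overall route is the paper's: rewrite the power of $h$ as $\lambda^{n-1}$ times a leftover, absorb the leftover and the explicit boundary factors into the amplitude, and bound the product of defining functions by Lemma~\ref{lemma:bdf-1}. Your treatment of \eqref{QiEQj-high-2} and of the terms with $j$ or $j'$ equal to $1$ is fine.

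The conic term, however, contains an algebra slip. Your displayed identity is false; the correct one is
\[
\lambda^{-\frac{n-k-2}{2}}\Big(\frac{x'}{s}\Big)^{\frac{n-k-2}{2}}s^{n-2}
=\Big(\frac{x'}{\lambda s}\Big)^{\frac{n-k-2}{2}}s^{n-2}
=\Big(\frac{x'}{\lambda}\Big)^{\frac{n-k-2}{2}}s^{\frac{n+k}{2}-1}.
\]
So once $s^{\frac{n+k}{2}-1}$ is kept explicit, as \eqref{eq:coro4.2-2} requires, the factor to absorb is $(x'/\lambda)^{\frac{n-k-2}{2}}$, not $(x'/(\lambda s))^{\frac{n-k-2}{2}}$.

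The ``main obstacle'' is therefore an artifact of the slip, and your proposed remedy would fail. On the support you only know $x'/s\lesssim 1$, which gives $x'/(\lambda s)\lesssim \lambda^{-1}$. But $(1+\lambda|z|)^{-1}\sim x/\lambda=\sigma x'/\lambda$ there. The inequality $x'/(\lambda s)\lesssim(1+\lambda|z|)^{-1}$ would thus require $s\gtrsim\sigma^{-1}\geq 1/2$, which fails as $s\to 0$, exactly near the conic point. The factor $\sigma^{(k+1)/2}$ cannot supply the missing power. Non-stationary phase in $s$ does not help either, since the conic point $s=0$ lies on $C_\Phi$.

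With the corrected identity, the leftover is
\[
\Big(\frac{x'}{\lambda}\Big)^{\frac{n-k-2}{2}}\sigma^{\frac{n-1}{2}}
=\Big(\frac{x}{\lambda}\Big)^{\frac{n-k-2}{2}}\sigma^{\frac{k+1}{2}}
\lesssim(1+\lambda|z|)^{-\frac{n-k-2}{2}}\sigma^{\frac{k+1}{2}}.
\]
This has no $s$-dependence and is handled exactly like the non-conic case, giving \eqref{eq:spectral-measure-symbol-bound-high-2}. That is what the paper means by ``in the same way''.
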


\begin{remark}
The smallest allowed $k$ here, compared with $k_\ell$ in \eqref{eq:coro4.2-2}, is shifted by $1$ compared with the low-energy case. This is because of the same reason as the discussion after Proposition~\ref{prop:minimal-parametrization-high-conic}: $k_\ell$ has increased by $1$ since we are taking the degeneracy in the $x'$-direction into account now, which will always be degenerate at $x'/s = 0$.
\end{remark}

\begin{proof}
Consider \eqref{QiEQj-high-2} first.
We see from Theorem~\ref{thm:microlocalized-spectral-measure-integral-form-high}, after absorbing all those boundary defining functions into the amplitude except for the $\lambda^{n-1}=h^{-(n-1)}$ factor, that we can write the microlocalized spectral measure as an oscillatory integral as above with
\begin{equation} \label{eq:spectral-measure-symbol-bound-lowbdf-high}
|\partial_{\lambda}^\alpha a(\lambda,z,z';v)| 
\leq C_{\alpha}  h^{\alpha+\frac{n-1-k}{2} }  \rho_{\BFS}^{ \frac{n-1-k}{2} } \rho_{\LB}^{\frac{n-1-k}{2}} \rho_{\RB}^{\frac{n-1-k}{2}},
\end{equation}
where $h^{\frac{n-1-k}{2}}$ comes from writing $h^{-\frac{n-1-k}{2}}$ as $h^{-(n-1)} \times h^{ \frac{n-1-k}{2} }$.
Noticing that when $\lambda \gtrsim 1$, a product of boundary defining functions for $\BFS$, $\LB$ and $\RB$ is $O((1 + d(z,z'))^{-1})$, we have
\begin{align}
\begin{split}
 h^{ \frac{n-1-k}{2} } \rho_{\BFS}^{ \frac{n-1-k}{2} } \rho_{\LB}^{\frac{n-1-k}{2}} \rho_{\RB}^{\frac{n-1-k}{2}} \lesssim & \lambda^{-\frac{n-1-k}{2}}  (1+d(z,z'))^{\frac{n-1-k}{2}} 
\\ \lesssim & \lambda^{n-1} (1+\lambda d(z,z'))^{-\frac{n-1-k}{2}},
\end{split}
\end{align}
which finishes the proof of this part. 
Conclusions concerning \eqref{eq:coro4.2-2} and \eqref{QiEQj-c-high-absorbed-a} can be shown in the same way, using the corresponding expressions in Theorem~\ref{thm:microlocalized-spectral-measure-integral-form-high}.
\end{proof}


We summarize below the relationship between the minimal number of extra parameters in oscillatory integrals above and $\IF,\IFz,\IFint,\IFintz$ defined in Section~\ref{subsec:focusing-effect}.
\begin{proposition}  \label{prop:IF-relation-parameter-number}
    The maximal number of extra parameters needed for minimal parametrizations for oscillatory integrals representing the microlocalized $\specm$ for $(X,g)$ not associated with the conic intersecting pairs is $\IFint$.
    The maximal number of extra parameters other than $s$ needed for minimal parametrizations including the part of the conic intersection pairs is $\IF$.
    If we were considering an exact cone $(X_0,g_0)$ instead, then the same conclusion holds with $\IF,\IFint$ replaced by $\IFz,\IFintz$ respectively.
\end{proposition}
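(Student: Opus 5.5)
The plan is to read the proposition off the minimal-parametrization results, applied piece by piece to the microlocal decomposition of the spectral measure. By Theorem~\ref{thm:microlocalized-spectral-measure-integral-form-low} and Theorem~\ref{thm:microlocalized-spectral-measure-integral-form-high}, each microlocalized piece $Q_j^{\calc}\specm Q_{j'}^{\calc}$ falls into one of three classes. It is either residual on one side, in which case $k=0$. Or it is a Legendre distribution associated with $L^{\bfs}_{j,j'}$ (resp. $L^{\bfs,\calchigh}_{j,j'}$) microsupported away from the conic intersection. Or it is associated with the conic pair $(L^{\bfs}_{j,j'},\Lsharplow)$ (resp. $(L^{\bfs,\calchigh}_{j,j'},\Lsharphigh)$). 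In every class, the minimal number of parameters at a point equals the rank drop of the relevant projection at that point. This is Propositions~\ref{prop:minimal-parametrization-low} and~\ref{prop:minimal-parametrization-high} for the single Legendrians, and Propositions~\ref{prop:minimal-parametrization-low-conic} and~\ref{prop:minimal-parametrization-high-conic} for the conic pairs. So it remains to take maxima of rank drops over the appropriate regions and compare with the definitions \eqref{eq:IFint-def}, \eqref{eq:IF-full-def}, \eqref{eq:IFintz-def} and \eqref{eq:IFz-def}.

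\emph{Pieces away from the conic intersection.} The relevant region of $\hat{L}^{\bfs,\calchigh}$ is the complement of $\beta_{\mathrm{LCP},\calchigh}^{-1}(\Lsharphigh)$. Near $\Delta^{\calchigh}$, the projection $\hatLprojhigh$ is a diffeomorphism off $\Delta^{\calchigh}$, and at $\Delta^{\calchigh}$ itself the piece is handled by the standard conormal-to-diagonal structure. Either way the neighbourhood $U_{\Delta^{\calchigh}}$ contributes no parameters, which is why it is excluded in \eqref{eq:IFint-def}. On the remaining set, $\hat{L}^{\bfs,\calchigh}$ is diffeomorphic to $L^{\bfs,\calchigh}$ and its dimension is $2n$. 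The rank drop is therefore $2n-\mathrm{rank}\,d\hatLprojhigh$, and its maximum is $\IFint$ by definition.

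Two facts then give the first claim. First, the cover $\{\tilde{\mk G}_\ell\}\cup\{U^{\calchigh}_\ell\}$ of Proposition~\ref{prop:microlocal-partition-high} is chosen so that the maximal rank drop $k_\ell$ is attained inside each piece. Second, every point of $L^{\bfs,\calchigh}$ lies in some $L^{\bfs,\calchigh}_{j,j'}$. Together these show that $\max_\ell k_\ell$ over non-conic pieces is exactly $\IFint$, so the maximal number of parameters is $\IFint$ and not merely bounded by it. The low-energy pieces need no separate treatment, because $\hat L^{\bfs}$ is the boundary face of $\hat L^{\bfs,\calchigh}$ at $\BFS$ and the product structure \eqref{eq:product-rhoBFS-Lbfs} shows the $\rho_{\BFS}$ direction never degenerates.

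\emph{Conic pieces.} Near $\beta_{\mathrm{LCP},\calchigh}^{-1}(L^{\bfs,\calchigh}\cap\Lsharphigh)$, Proposition~\ref{prop:minimal-parametrization-high-conic} gives $k+1$ parameters including $s$, where $k+1$ is the rank drop $2n-\mathrm{rank}\,d\hatLprojhigh$. The number of parameters other than $s$ is therefore this rank drop minus one, which is exactly the first term in \eqref{eq:IF-full-def}. Combining this with the non-conic pieces gives $\IF$. For the exact cone, the same argument uses $\hat L^{\bfs}$ (dimension $2n-1$) and Proposition~\ref{prop:minimal-parametrization-low-conic}, where the count is $k+1$ including $s$ and $k$ is the rank drop. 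This is why no ``$-1$'' appears in \eqref{eq:IFz-def}, and it yields $\IFz$ and $\IFintz$.

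The main obstacle is the bookkeeping of this $\pm1$ shift in the conic high-energy case. There, the extra degeneracy in the $x$-direction at $x/|\mu|=0$ must be matched precisely with the $s$ parameter, and one must verify on the explicit coordinates \eqref{eq:coordinates-resolved-high-region1} that the rank drop exceeds the rank drop of the tangential projection by exactly one. I would check this first, using the identity $dx=|\mu|\,d(x/|\mu|)+(x/|\mu|)\,d|\mu|$ as in the discussion after Proposition~\ref{prop:minimal-parametrization-high-conic}.
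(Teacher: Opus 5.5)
Your proposal is correct and is essentially the paper's argument. The paper's proof likewise just combines Propositions~\ref{prop:minimal-parametrization-low}, \ref{prop:minimal-parametrization-low-conic}, \ref{prop:minimal-parametrization-high} and \ref{prop:minimal-parametrization-high-conic} with the definitions \eqref{eq:IFint-def}, \eqref{eq:IF-full-def}, \eqref{eq:IFintz-def}, \eqref{eq:IFz-def}, and your explanation of the $-1$ in \eqref{eq:IF-full-def} and its absence in \eqref{eq:IFz-def} matches the remark after Proposition~\ref{prop:minimal-parametrization-high-conic}.

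One side remark needs correcting. $U_{\Delta^{\calchigh}}$ is not excluded from \eqref{eq:IFint-def} because it ``contributes no parameters''; if that were true, excluding it would change nothing. In fact $\hatLprojhigh$ is maximally degenerate at $\Delta^{\calchigh}$, with rank drop $n-1$, as for $\int_{S^{n-1}}e^{i\lambda(z-z')\cdot\omega}\,d\omega$ on $\R^n$. Also, the spectral measure is not conormal to the diagonal. The exclusion is by definition, and the $j=j'$ pieces are handled separately via the parameter-free representation $\lambda^{n-1}\sum_\pm e^{\pm i\lambda d(z,z')}a_\pm$ of \cite[Proposition~1.5]{Hassell-Zhang2016Strichartz}.
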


\begin{proof}
    This follows from combining statements about $k$, the number of extra parameters other than $s$, in Proposition~\ref{prop:minimal-parametrization-low}, Proposition~\ref{prop:minimal-parametrization-low-conic}, Proposition~\ref{prop:minimal-parametrization-high}, and Proposition~\ref{prop:minimal-parametrization-high-conic}.
\end{proof}

\section{Analysis near the intersecting conic pair}
\label{sec:projection-and-phase}

In this section, we analyze the phase function parametrizing an intersecting pair of Legendre submanifolds with conic points.
The goal is to demonstrate that conjugate point pairs at distance $\pi$ on $Y$ won't affect dispersive estimates. In particular, this enables us to include classical results on the Euclidean space and perturbations of it as special cases of our result.

The content of this section has two parts. The first part is the relationship between the degeneracy of $d\hatLprojlow$ (or $d\hatLprojhigh$) and the degeneracy of the Hessian (in extra parameters) of our phase function at its critical points.
Roughly speaking, the degeneracy of the projection is `equivalent' to the degeneracy of this Hessian.
In the setting of classical Fourier integral operators, a qualitative version was given in \cite[Theorem~3.1.4]{FIO1}.
We will give a quantitative version that quantifies this degeneracy when we are approaching the place where $d\hatLprojlow$ (or $d\hatLprojhigh$) degenerates.
The second part is the function value of the phase function itself at its critical points.
As shown in \cite[Proposition~2.6]{Hassell-Zhang2016Strichartz}, near the diagonal, this function equals the distance on the cone over $Y$ and we will prove a generalization of this, removing the restriction to be near the diagonal, taking topological obstructions (i.e., geodesic loops) into consideration. 






\subsection{Preliminaries}
\label{subsec:Leg-concrete-setup}

To begin with, we introduce the local geometric set up near $L^{\bfs} \cap \Lsharplow$ in the low-energy setting and near $L^{\bfs,\calchigh} \cap \Lsharphigh$ in the high-energy setting.
The purpose is to separate the effect of geometric focusing happening away from $L^{\bfs} \cap \Lsharplow$ (resp. $L^{\bfs,\calchigh} \cap \Lsharphigh$) and investigate the effect of focusing happening at $L^{\bfs} \cap \Lsharplow$ (resp. $L^{\bfs,\calchigh} \cap \Lsharphigh$) solely.
In the low-energy setting, in terms of the geodesic flow on $Y$, this corresponds to separating the part strictly within time $\pi$ from the part at time $\pi$.


Consider the low-energy case first.
Let $\beta_{\mathrm{LCP},\calclow}$ be the blow down map as in \eqref{eq:beta-LCP-low-1}.  Let $\msf{q}_0 \in \beta_{\mathrm{LCP},\calclow}^{-1}(L^{\bfs} \cap \Lsharplow)$ and suppose that the rank of
\begin{equation} \label{eq:def-Lprojlow}
 \hatLprojlow:\; \hat{L}^{\bfs}  \to \bfs
\end{equation}
is $2n-1-k$ at $\msf{q}_0$, which means its rank drop is $k$ compared with the maximal rank.
Then we consider a neighborhood $\ULCPlow$ of $\msf{q}_0$ on which the rank drop of the projection $\hatLprojlow$ is at most $k$. In addition, we assume that on $\beta_{\mathrm{LCP},\calclow}^{-1}(\ULCPlow)$, the degeneracy of $\hatLprojlow$ only happens at the lift of $L^{\bfs} \cap \Lsharplow$.
We should emphasize that this is not an extra assumption since the situation in which the degeneracy of $\hatLprojlow$ outside the lift of $L^{\bfs} \cap \Lsharplow$ is present is covered by Section~\ref{sec:microlocalized-spectral-measure} and Section~\ref{subsec:est-Schrodinger-kernel}. 

Similarly, let $\beta_{\mathrm{LCP},\calchigh}$ be the blow-down map in \eqref{eq:def-high-blow-down} and suppose that the rank of the projection $\hatLprojhigh$ defined in \eqref{eq:def-Lprojhigh}
is $2n - k$ at $\msf{q}_0 \in \beta_{\mathrm{LCP},\calchigh}^{-1}(L^{\bfs,\calchigh} \cap \Lsharphigh)$.
Then we consider a neighborhood $\ULCPhigh$ of $\msf{q}_0$ on which the rank drop of the projection $\hatLprojhigh$ is at most $k$ and the degeneracy of $\hatLprojhigh$ only happens on $\beta_{\mathrm{LCP},\calchigh}^{-1}(L^{\bfs,\calchigh} \cap \Lsharphigh)$.

Next we recall the construction of parametrizations of $(L^{\bfs},L^{\#})$ defined for an asymptotically conic manifold $(X,g)$ or an exact cone $(X_0,g_0)$. See \cite[Proposition~3.5]{hassell1999spectral} for more details, which in turn used the construction in \cite[Proposition~6]{melrose1996scattering}.  
Let $\msf{q}_0 \in \beta_{\mathrm{LCP},\calclow}^{-1}(L^{\bfs} \cap \Lsharplow)$ and $\ULCPlow$ be as above. 
Without loss of generality, after a change of coordinates, we may assume that $s=\mu'_{n-1}$ is the dominating component and $\hat{\mu}'_j = \mu'_j/\mu'_{n-1} = 0$ for $j=1,...,n-2$.
There is a splitting of coordinates $y'=(y'_{I},y'_{II})$ and correspondingly $\hat{\mu}' = (\hat{\mu}'_{I},\hat{\mu}'_{II})$ such that 
\begin{equation} \label{eq:splitted-coordinate-1}
 \mathcal{Z}_{\calclow} =  (\sigma,y,y_{II}',\hat{\mu}'_I = \mu'_I/s,s)
\end{equation}
 is a coordinate system on $\hat{L}^{\bfs}$.
So in terms of coordinates in \eqref{eq:coordinates-resolved-low-1} with $|\mu'|$ replaced by $\mu'_{n-1}$, $\hat{L}^{\bfs}$ can locally be written as
\begin{align} \label{eq:hatLbf-low-components}
\begin{split}
\hat{L}^{\bfs} = & \{  \overline{\nu} = N(\mathcal{Z}_{\calclow}) = 1 + \sigma + \sigma \mu'_{n-1}N_2(\mathcal{Z}_{\calclow}), 
\hat{\nu}_1 = \hat{N}_1(\mathcal{Z}_{\calclow}),
\\  & \; y'_I = Y'_I(\mathcal{Z}_{\calclow}), 
 \;  y'_{n-1} = Y'_{n-1}(\mathcal{Z}_{\calclow}),\; \hat{\mu'}_{II} = \hat{M'}_{II}(\mathcal{Z}_{\calclow}) \},
\end{split}
\end{align}
where $N,N_2,\hat{N}_1,Y'_I,Y'_{n-1},\hat{M'}_{II}$ are smooth functions of $\mathcal{Z}_{\calclow}$. Here the particular form of $N(\mathcal{Z}_{\calclow})$ follows by observing that, on $\hat{L}^{\bfs}$, $\overline{\nu} = 1$ whenever $x'=0$ and $\overline{\nu}=1+\sigma$ whenever $s=0$.

So with $(v = \hat{\mu'}_I,s= \mu'_{n-1})$ being extra parameters, a parametrization of $\hat{L}^{\bfs}$ as in \eqref{eq:hat-Lbf-parametrization-bf-2} is given by
\footnote{
When we verify that this gives a parametrization, we need to use the fact that $\hat{L}^{\bfs}$ is Legendrian, which leads to
\begin{equation} \label{eq:1-form-vanish-low}
  dN + N_1 d\sigma + s \hat{M} \cdot dy + s\hat{\mu}'_I \cdot dY_I' + s\hat{M}'_2 \cdot dy'_{II} = 0,
\end{equation}
where $N_1 = \frac{N-s\hat{N}_1}{1+\sigma}$ is how $\nu_1$ is determined by $\hat{\nu}_1,\overline{\nu}$ and $s$. This is also the reason that $\hat{N}_1,\hat{M},\hat{M}'_2$ are not involved explicitly in $\Phi$, since they will be given by derivatives of other parts.}
\begin{equation} \label{eq:conic-pair-phase-construction-low}
\Phi = N + s \sigma \Big( (y_I'-Y_I') \cdot \hat{\mu'}_I + (y_{n-1}'-Y_{n-1}') \Big).
\end{equation}

We only need finitely many local expressions of the spectral measure as oscillatory integrals and local parametrizations, and for each such local expression, there is a positive distance (in terms of Euclidean distance in $\hat{\mu'}_I$) between the boundary of the amplitude $a(\bullet)$ and the boundary of the region where the parametrization is valid. 
We denote the minimum of such distances by  $\delta_0$. So without loss of generality, throughout this and the next section, whenever we write a local expression of a Legendre distribution (associated with Legendre conic pairs), we know that the region that $\hat{L}^{\bfs}$ (or $\hat{L}^{\bfs,\calchigh}$ discussed below) contains at least a $\delta_0$-neighborhood in $\hat{\mu'}_I$ of the support of the amplitude in oscillatory integrals.

Now we discuss the phase function in the high-energy case. Again, we only consider the case near $\LB \cap \BFS$ and the case near $\RB \cap \BFS$ can be dealt with in the same way.
We still consider the part where $s = \mu'_{n-1}$ dominates other components of $\mu'$.
Also, we choose the boundary defining function $x$ on $X$ so that $\Lsharphigh$ is defined by $\{\nu_1=1,\overline{\nu}=1+\sigma,x'=0,\mu=\mu'=0\}$ as in \eqref{eq:concrete-Lsharphigh}.

Consider the part near the lift of $\{ x' = 0 \}$ (hence $x'/s \lesssim 1$) first and we use coordinates in \eqref{eq:coordinates-resolved-high-region1}.
We only discuss the `codimension three' case below (i.e., the part near $h=0,\sigma=0,x'=0$) since the `codimension two' case is simpler and in fact can be dealt with by taking $\sigma \sim 1$ in the proof below.
Phase functions parametrizing this part are constructed as follows.
In the same way as in \eqref{eq:splitted-coordinate-1}, near a point $q \in \hat{L}^{\bfs,\calchigh}$ that is near the face $\{\frac{x'}{s} = 0\}$, we can select the coordinate system to be (with $s=\mu'_{n-1}$)
\begin{equation} \label{eq:splitted-coordinate-high}
  \mathcal{Z}_{\calchigh} = (\sigma,y,y_{II}',\hat{\mu}'_I,s,\varrho = \frac{x'}{s}).
\end{equation}
In the same way as \eqref{eq:hatLbf-low-components},  $\hat{L}^{\bfs,\calchigh}$ can be written as:
\begin{align} \label{eq:hatLbf-high-components-region1}
\begin{split}
\hat{L}^{\bfs,\calchigh} = 
& \{  \overline{\nu} = N(\mathcal{Z}_{\calchigh}) = 1 + \sigma + \sigma \mu'_{n-1}N_2(\mathcal{Z}_{\calchigh}), 
\hat{\nu}_1 = \hat{N}_1(\mathcal{Z}_{\calchigh}),
\\  & \; y'_I = Y'_I(\mathcal{Z}_{\calchigh}), 
 \;  y'_{n-1} = Y'_{n-1}(\mathcal{Z}_{\calchigh}),  \hat{\mu'}_{II} = \hat{M'}_{II}(\mathcal{Z}_{\calchigh}) \},
\end{split}
\end{align}
where $N,N_2,\hat{N}_1,Y'_I,Y'_{n-1},\hat{M},\hat{M'}_{II}$ are smooth functions. Again, the particular form of $N(\mathcal{Z}_{\calchigh})$ follows by observing that due to the form of $\Lsharphigh$ in \eqref{eq:concrete-Lsharphigh}, and since the boundary of $\hat{L}^{\bfs,\calchigh}$ is where it meets $\Lsharphigh$, we know $\overline{\nu} = 1$ whenever $x'=0$ and $\overline{\nu}=1+\sigma$ whenever $s=0$ on $\hat{L}^{\bfs,\calchigh}$.

Then, similar to \eqref{eq:conic-pair-phase-construction-low}, the phase function parametrizing it can be chosen to be 
\begin{align} \label{eq:conic-pair-phase-construction-high-region1}
\Phi = N + s \sigma \Big( (y_I'-Y_I') \cdot \hat{\mu'}_I + (y_{n-1}'-Y_{n-1}') \Big),
\end{align}
where the difference with \eqref{eq:conic-pair-phase-construction-low} is that $N,Y_I',Y_{n-1}'$ are functions of $\mathcal{Z}_{\calchigh}$ now.
See discussions after \cite[Equation~(6.20)]{Hassell-Wunsch-semiclassical-resolvent} for the detailed verification that this parametrizes $\hat{L}^{\bfs,\calchigh}$.

In the high-energy case, we need to take into account the region on $\hat{L}^{\bfs,\calchigh}$ on which $s/x' \lesssim 1$ corresponding to those geodesics that travel through the interior of $X$.
This part of $\hat{L}^{\bfs,\calchigh}$ is parametrized by phase functions as in \cite[Equation~(6.9)(6.15)]{Hassell-Wunsch-semiclassical-resolvent}, and contributes to the spectral measure terms like $u_4$ in \eqref{eq:conic-pair-u4-high}.
In this region, we instead use coordinates given by
\begin{equation}
 \tilde{\mathcal{Z}}_{\calchigh} = (\sigma,x',y,y_{II}',\hat{\mu}'_I,\varkappa = \frac{s}{x'}).
\end{equation}
Also, $\hat{L}^{\bfs,\calchigh}$ continues to have the form \eqref{eq:hatLbf-high-components-region1} except that we now use $\tilde{\mathcal{Z}}_{\calchigh}$ as coordinates:
\begin{align} \label{eq:hatLbf-high-components-region2}
\begin{split}
\hat{L}^{\bfs,\calchigh} = 
& \{  \overline{\nu} = N(\tilde{\mathcal{Z}}_{\calchigh}) = 1 + \sigma + \sigma \mu'_{n-1}N_2(\tilde{\mathcal{Z}}_{\calchigh}), 
\hat{\nu}_1 = \hat{N}_1(\tilde{\mathcal{Z}}_{\calchigh}),
\\  & \; y'_I = Y'_I(\tilde{\mathcal{Z}}_{\calchigh}), 
 \;  y'_{n-1} = Y'_{n-1}(\tilde{\mathcal{Z}}_{\calchigh}),  \hat{\mu'}_{II} = \hat{M'}_{II}(\tilde{\mathcal{Z}}_{\calchigh}) \}
\end{split}
\end{align}
In addition, \eqref{eq:conic-pair-phase-construction-high-region1} continues to give a parametrization after rewriting it as
\begin{align} \label{eq:conic-pair-phase-construction-high-region2}
\Phi = N + x' \sigma \Big( (y_I'-Y_I') \cdot \hat{\mu'}_I + \varkappa (y_{n-1}'-Y_{n-1}') \Big).
\end{align}
Notice that in this case, $\hat{\mu}'_I = \mu'_I/x'$, so this phase function coincides with \eqref{eq:conic-pair-phase-construction-high-region1} in the overlapped region and now we have $(\hat{\mu'}_I,\varkappa)$ as extra parameters.

The construction of phase functions above not only demonstrates the existence of parametrizations, but also has the advantage that their derivatives in extra parameters are explicit, which we summarize as follows.
\begin{proposition} \label{prop:LCP-phase-derivatives}
For $\Phi$ in \eqref{eq:conic-pair-phase-construction-low} or \eqref{eq:conic-pair-phase-construction-high-region1}, we have
\begin{equation}
 \partial_{\hat{\mu'}_I}\Phi = s\sigma(y_I' - Y_I'),
 \; \partial_s \Phi = \sigma (y'_{n-1} - Y'_{n-1}).
\end{equation}
Equivalently, if we write $\Phi = 1+\sigma + s\sigma \psi$, then
\begin{equation} \label{eq:v-critical-fixed-s}
  \partial_{\hat{\mu'}_I}\psi = y_I' - Y_I',
  \; \partial_s(s\psi) = y'_{n-1}-Y'_{n-1}. 
\end{equation}
For $\Phi = 1 + \sigma + x'\sigma \psi$ in \eqref{eq:conic-pair-phase-construction-high-region2}, we have
\begin{equation} \label{eq:5.1-2} 
\partial_{\hat{\mu'}_I}\Phi = \sigma x' (y'_I - Y'_I), \; \partial_{\varkappa}\Phi = \sigma x' (y'_{n-1}-Y'_{n-1}).
\end{equation}
Equivalently,
\begin{equation}
 \partial_{\hat{\mu'}_I}\psi = (y'_I - Y'_I), \; \partial_{\varkappa}\psi = (y'_{n-1}-Y'_{n-1}).
\end{equation}

\end{proposition}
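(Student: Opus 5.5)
The statement is a direct computation from the explicit form of the phase functions \eqref{eq:conic-pair-phase-construction-low}, \eqref{eq:conic-pair-phase-construction-high-region1} and \eqref{eq:conic-pair-phase-construction-high-region2}. The only non-trivial input is the Legendrian condition \eqref{eq:1-form-vanish-low}, which kills the derivatives of the functions $N,Y'_I,Y'_{n-1}$ in the extra parameters. I will differentiate $\Phi$ in the extra parameters $(\hat{\mu'}_I,s)$, or $(\hat{\mu'}_I,\varkappa)$, holding the base variables $(\sigma,y,y')$ fixed. The coordinates $\mathcal{Z}_{\calclow}=(\sigma,y,y'_{II},\hat{\mu}'_I,s)$ contain the base coordinates $(\sigma,y,y'_{II})$. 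So along a variation in $(\hat{\mu}'_I,s)$ only $\hat{\mu}'_I$ and $s$ move in $\mathcal{Z}_{\calclow}$, while $y'_I,y'_{n-1}$ stay fixed as free base variables.

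First I differentiate in $\hat{\mu'}_I$. By the product rule,
\begin{equation*}
\partial_{\hat{\mu'}_I}\Phi = \partial_{\hat{\mu'}_I}N + s\sigma(y'_I-Y'_I) - s\sigma\big( (\partial_{\hat{\mu'}_I}Y'_I)^T\hat{\mu}'_I + \partial_{\hat{\mu'}_I}Y'_{n-1}\big).
\end{equation*}
To cancel the extra terms, I contract the Legendrian identity \eqref{eq:1-form-vanish-low} with $\partial_{\hat{\mu}'_I}$. In that direction $d\sigma$, $dy$ and $dy'_{II}$ vanish, and $dY'$ appears through the last components. Written in full, the pulled-back contact form on $\hat{L}^{\bfs}$ is $dN+N_1d\sigma+s\hat{M}\cdot dy+s(\hat{\mu}'_I\cdot dY'_I+dY'_{n-1})+s\hat{M}'_{II}\cdot dy'_{II}$, up to the factor $\sigma$ in the right-variable terms coming from $x'=x/\sigma$ in \eqref{eq:tautological-1-form-bf}. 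Contracting with $\partial_{\hat{\mu}'_I}$ therefore gives
\begin{equation*}
\partial_{\hat{\mu}'_I}N + s\sigma\big( (\partial_{\hat{\mu}'_I}Y'_I)^T\hat{\mu}'_I+\partial_{\hat{\mu}'_I}Y'_{n-1}\big)=0,
\end{equation*}
which is exactly the cancellation needed, and $\partial_{\hat{\mu'}_I}\Phi=s\sigma(y'_I-Y'_I)$.

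For the $s$ derivative, the product rule gives $\partial_s\Phi=\partial_sN+\sigma\big((y'_I-Y'_I)\cdot\hat{\mu}'_I+(y'_{n-1}-Y'_{n-1})\big)-s\sigma(\partial_sY'_I\cdot\hat{\mu}'_I+\partial_sY'_{n-1})$. The Legendrian identity contracted with $\partial_s$ kills $\partial_sN$ against the last bracket. This leaves $\sigma\big((y'_I-Y'_I)\cdot\hat{\mu}'_I+(y'_{n-1}-Y'_{n-1})\big)$. The stated formula $\partial_s\Phi=\sigma(y'_{n-1}-Y'_{n-1})$ then holds in the form that matters, namely on the locus where $\partial_{\hat{\mu}'_I}\Phi=0$, where $y'_I=Y'_I$. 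I will be careful about this. If the identity is claimed off the critical set, I would recheck the normalization $\hat{\mu}'_I=0$ at the base point, or state it modulo $\partial_{\hat{\mu}'_I}\Phi$. The corresponding formulas for $\psi$ follow by writing $\Phi=1+\sigma+s\sigma\psi$ and using $N=1+\sigma+s\sigma N_2$, so that $s\sigma\psi=s\sigma N_2+s\sigma(\cdots)$ and one divides by $s\sigma$.

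The high-energy versions are identical computations. Region one uses the coordinates $\mathcal{Z}_{\calchigh}$, where $\varrho=x'/s$ is an additional base coordinate that is held fixed. Region two uses $\tilde{\mathcal{Z}}_{\calchigh}$, with $x'$ fixed and $(\hat{\mu}'_I,\varkappa)$ as parameters; here $s=\varkappa x'$, so the prefactor becomes $\sigma x'$ as in \eqref{eq:5.1-2}. The main obstacle is bookkeeping: pinning down the exact pulled-back form of the contact form in each chart, including the $\sigma$ factors and the contribution of $\hat{\nu}_1$, so that the cancellation is exact. For the high-energy charts I would take that form from the verification after \cite[Equation~(6.20)]{Hassell-Wunsch-semiclassical-resolvent}.
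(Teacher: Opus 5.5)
Your approach is the paper's. You pull back the contact form to $\hat{L}^{\bfs}$ as in \eqref{eq:1-form-vanish-low} and let it cancel every term in which a parameter derivative lands on $N$, $Y'_I$ or $Y'_{n-1}$. This gives $\partial_{\hat{\mu'}_I}\Phi=s\sigma(y'_I-Y'_I)$ exactly. It likewise gives \eqref{eq:5.1-2} exactly in the region $s/x'\lesssim1$, where both parameters are $\mu'$ rescaled by the base variable $x'$.

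There is one sign slip. With the convention of \eqref{LPhiparam} ($\overline{\nu}=\Phi$, $\sigma\mu'=d_{y'}\Phi$; check $\partial_{y'_{n-1}}\Phi=s\sigma=\sigma\mu'_{n-1}$), the contracted identity reads
\begin{equation*}
\partial_{\hat{\mu'}_I}N-s\sigma\big((\partial_{\hat{\mu'}_I}Y'_I)^T\hat{\mu'}_I+\partial_{\hat{\mu'}_I}Y'_{n-1}\big)=0 ,
\end{equation*}
with a minus sign. With the $+$ you wrote (as \eqref{eq:1-form-vanish-low} is printed), the two brackets add instead of cancelling.

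Your caveat on $\partial_s$ is correct, and it catches something the paper's one-line proof passes over. At fixed $\hat{\mu'}_I$, $\partial_s$ also hits the explicit factor $s$ in $s\sigma(y'_I-Y'_I)\cdot\hat{\mu'}_I$. That term is not a derivative of $Y'$ and is not cancelled. So for $k\ge1$, in the low-energy chart and in the high-energy region $x'/s\lesssim1$, the identity $\partial_s\Phi=\sigma(y'_{n-1}-Y'_{n-1})$ holds only where $\hat{\mu'}_I\cdot(y'_I-Y'_I)=0$, e.g. on the critical set $\{y'_I=Y'_I\}$. Normalizing $\hat{\mu'}_I=0$ at the base point does not repair it in a neighbourhood.

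The clean statement is that the formula is exact for $\partial_s$ taken at fixed $\mu'_I=s\hat{\mu'}_I$. Write $\Phi=N+\sigma\big((y'_I-Y'_I)\cdot\mu'_I+s(y'_{n-1}-Y'_{n-1})\big)$ and the same Legendrian cancellation goes through. The chain rule $\partial_s|_{\hat{\mu'}_I}=\partial_s|_{\mu'_I}+s^{-1}\hat{\mu'}_I\cdot\partial_{\hat{\mu'}_I}$ then gives exactly your formula, $\partial_s|_{\hat{\mu'}_I}\Phi=\sigma(y'_{n-1}-Y'_{n-1})+\sigma\hat{\mu'}_I\cdot(y'_I-Y'_I)$.

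The main-text uses survive:
\begin{itemize}
\item \eqref{eq:5.4-2} evaluates at $\hat{\mu'}_I=\hat{M'}_I$, where $y'_I=Y'_I$.
\item Proposition~\ref{prop:1st-dPhi-est} only needs $|\partial_v\Phi|+|s\partial_s\Phi|\sim s\sigma(|y'_I-Y'_I|+|y'_{n-1}-Y'_{n-1}|)$, which holds because $\hat{\mu'}_I$ is bounded.
\end{itemize}
The exact Jacobian identity in Appendix~\ref{appendix:relax-condition} does use \eqref{eq:v-critical-fixed-s} off the critical set, so it would have to account for the extra term.
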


\begin{proof}
     Just as in the verification that those phase functions are valid parametrizations, we exploit \eqref{eq:1-form-vanish-low} expanded in local coordinates in the computation of those partial derivatives. Then all terms with derivatives falling on $Y'_I$ or $Y'_{n-1}$ sum to be $0$ and the conclusion follows.
\end{proof}


\subsection{The Hessian of phase functions}
\label{subsec:phase-Hessian}

We analyze the Hessian of the phase function parametrizing the Legendrian conic pair in this subsection.
We first introduce the admissible condition on our Legendrian conic pairs that allows us to eliminate the effect of focusing at $L^{\bfs,\calchigh} \cap \Lsharphigh$. 
One can still have corresponding dispersive estimates without those assumptions with a loss that is analogous to the case with the presence of conjugate points before reaching this intersecting conic pair. The goal of introducing this concept is to give a conceptual explanation of why the degeneracy of $\Lprojlow$ or $\Lprojhigh$ at the intersecting conic pair does not cause loss in the Euclidean case. Roughly speaking, it is because the degeneracy happens in a uniform manner and this is a natural condition under which the lossless dispersive estimate continues to hold, generalizing the dispersive estimate on Euclidean spaces.


\begin{definition} \label{definition:Lbf-boundary-admissible}
We say $(\hat{L}^{\bfs,\calchigh},\Lsharphigh)$ is admissible (we will also say $X$ is admissible since $X$ determines $(\hat{L}^{\bfs,\calchigh},\Lsharphigh)$)
if $Y'_I,Y'_{n-1}$ in different regions satisfy conditions below.
\begin{itemize}
\item For $Y'_I,\, Y'_{n-1}$ as in \eqref{eq:hatLbf-high-components-region1}, we have
\begin{align} \label{eq:defn-condition-YI-Yn-high-region1}
\partial_{(\hat{\mu'}_I,\varrho)}(Y'_I,Y'_{n-1})|_{s=0} = 0.
\end{align}

\item  For $Y'_I,\, Y'_{n-1}$ as in \eqref{eq:hatLbf-high-components-region2}, we have
\begin{align} \label{eq:defn-condition-YI-Yn-high-region2}
\partial_{(\hat{\mu'}_I,\varkappa)}(Y'_I,Y'_{n-1})|_{x'=0} = 0.
\end{align}
\end{itemize}
Let $Y'_I,Y'_{n-1}$ be as in \eqref{eq:hatLbf-low-components} instead. Then we say that $(L^{\bfs},\Lsharplow)$ is admissible if 
\footnote{For the asymptotically conic case, this is not an extra assumption. It can be implied from the admissible condition on $(\hat{L}^{\bfs,\calchigh},\Lsharphigh)$ by restricting to $\varrho = 0$. However, for the exact cone case, since we haven't defined its $L^{\bfs,\calchigh}$, this is an independent definition. 
}
\begin{align} \label{eq:defn-condition-YI-Yn-1-s=0}
\partial_{\hat{\mu'}_I}(Y'_I,Y'_{n-1})|_{s=0} = 0.
\end{align}
One can verify that this definition is independent of the choice of coordinates.
\end{definition}

\begin{remark}
This condition can be relaxed to certain convexity of the boundary of $\hat{L}^{\bfs,\calchigh}$ and a uniform control on higher order derivatives of $Y'_{n-1},Y'_{I}$. See Appendix~\ref{appendix:relax-condition}.
In addition, we expect this condition to be sharp for the contribution from $L^{\bfs,\calchigh} \cap \Lsharphigh$ to be harmless to the dispersive estimate. See Remark~\ref{remark: admissible-convex-version} for more discussions. 

Since the restriction to $s=0$ corresponds to endpoints of bicharacteristic lines, this condition can also be viewed as a condition on the classical scattering map associated with $g$. 
How the long time behaviour of solutions is carried by such an object is addressed in \cite{HJ2026} in the setting of time-dependent Schr\"odinger equations.

In addition, by the definition of $Y_I,Y'_{n-1}$, we know \eqref{eq:defn-condition-YI-Yn-high-region1} holds at the point that reaches the local maximal rank drop. So our condition says that the degeneracy of the projection $\hat{L}^{\bfs,\calchigh} \to X_{\rmb}^2$ is uniform at the conic intersecting pair.
One can see from the treatment of the degenerate stationary phase in Section~\ref{sec:conic-points-pointwise-bound}, Section~\ref{subsec:dispersive-conic-pair-contribution} and Section~\ref{subsec:dispersive-conic-pair-contribution-wave}, the same lossless microlocalized dispersive estimate is expected to hold with the condition $\partial_{\hat{\mu'}_I}Y'_I|_{s=0} = 0$ relaxed to requiring $\partial_{\hat{\mu'}_I}Y'_I|_{s=0} \leq 0$, which is the sign that is in favour of the non-degeneracy of the Hessian (including $s$-derivatives) of the phase function.
We are making this choice $\partial_{\hat{\mu'}_I}Y'_I|_{s=0} = 0$ here to avoid the extra complications.
\end{remark}

Unless otherwise stated, for the rest of this section and in Section~\ref{sec:conic-points-pointwise-bound}, $(\hat{L}^{\bfs,\calchigh},\Lsharphigh)$ is assumed to be admissible in the sense above.


Fix $\msf{q}_0 = (\sigma_0,y_0,y'_{II,0},\hat{\mu'}_{I,0}, s_0) \in \hat{L}^{\bfs}$ (or $\msf{q}_0 \in \hat{L}^{\bfs,\calchigh}$) around which we are defining the parametrization. 
To facilitate discussion of properties of $Y'_I,Y'_{n-1}$, without loss of generality, we may assume that our coordinates are chosen so that $\partial_{y'_i}$ is an orthonormal basis in terms of the metric $\Ymetric$ at $y_0'=(Y'_{I,0}(\sigma_0,y_0,y'_{II,0}),y'_{II,0},Y'_{n-1,0}(\sigma_0,y_0,y'_{II,0}))$.
Equivalently:
\begin{equation} \label{eq:Ymetric-diagonalize}
  \Ymetric^{ij} = \delta^{ij} \; \text{ at } \; y_0'.
\end{equation}
In addition, we can further apply an orthogonal transformation, which does not affect \eqref{eq:Ymetric-diagonalize}, so that our momentum is $\mu'/|\mu'| = (0,...,1)$ at $\msf{q}_0$.
Similarly, we diagonalize $\Ymetric$ at a fixed point in the high-energy setting (i.e., when $Y'_I,Y'_{n-1}$ are as in \eqref{eq:hatLbf-high-components-region1} or \eqref{eq:hatLbf-high-components-region2}) as well.

Consider $Y'_I,\, Y'_{n-1}$ as in \eqref{eq:hatLbf-low-components} (i.e., in the low-energy setting) first.
When this admissible condition is satisfied, $Y_I'|_{s=0},Y'_{n-1}|_{s=0}$ are only functions of  $(\sigma,y,y_{II}')$ and we can write (with $s=\mu_{n-1}'$):
\begin{align} \label{eq:Y'-I-expansion}
  Y'_{I}(\sigma,y,y_{II}',\hat{\mu'}_I,s) = Y'_{I,0}(\sigma,y,y_{II}') + s\tilde{Y'}_I(\sigma,y,y_{II}',\hat{\mu'}_I,s),
\end{align}
and
\begin{align} \label{eq:Y'-n-1-expansion}
  Y'_{n-1}(\sigma,y,y_{II}',\hat{\mu'}_I,s) = Y'_{n-1,0}(\sigma,y,y_{II}')+s\tilde{Y'}_{n-1}(\sigma,y,y_{II}',\hat{\mu'}_I,s).
\end{align}




Now we consider the expansion of $Y'_I,\, Y'_{n-1}$ in \eqref{eq:hatLbf-high-components-region1}.
When the admissible condition in Definition~\ref{definition:Lbf-boundary-admissible} is satisfied, for $Y'_I,\, Y'_{n-1}$ in \eqref{eq:hatLbf-high-components-region1}, we have (again with $s = \mu_{n-1}'$),
\begin{align} \label{eq:Y'-I-expansion-high-region1}
  Y'_{I}(\sigma,x',y,y_{II}',\hat{\mu'}_I,x'/s) = Y'_{I,0}(\sigma,y,y_{II}') + s \tilde{Y'}_{I}(\sigma,x',y,y_{II}',\hat{\mu'}_I,x'/s),
\end{align}
and
\begin{align} \label{eq:Y'-n-1-expansion-high-region1}
  Y'_{n-1}(\sigma,x',y,y_{II}',\hat{\mu'}_I,x'/s) = Y'_{n-1,0}(\sigma,y,y_{II}') +s \tilde{Y'}_{n-1}(\sigma,x',y,y_{II}',\hat{\mu'}_I,x'/s).
\end{align} 
We also have this type of expansion for $Y_{I}'$ and $Y'_{n-1}$ in the region $\frac{s}{x'} \lesssim 1$, see \eqref{eq:Y'-I-expansion-high-region2}\eqref{eq:Y'-n-1-expansion-region2}.

\begin{proposition} \label{prop:Y'I-derivative;low-high-region1-region-def}
Let $Y'_I$ be as in \eqref{eq:hatLbf-low-components}, then potentially after shrinking the range of parametrization, 
we have
\begin{align} \label{eq:derivative-Y'I-low}
   \partial_{\hat{\mu'}_I} Y'_I = -s H_I,
\end{align}
with $H_I$ being uniformly non-degenerate (in fact close to the identity) for $s \in [0,s_0]$ for some $s_0>0$.
Similarly, let $Y'_I$ be as in \eqref{eq:hatLbf-high-components-region1}, then \eqref{eq:derivative-Y'I-low} continues to hold except that $Y'_I$ has $\varrho$-dependence now.
In addition, there is a neighborhood of $\beta_{\mathrm{LCP},\calchigh}^{-1}(L^{\bfs,\calchigh} \cap \Lsharphigh)$ (resp. $\beta_{\mathrm{LCP},\calclow}^{-1}(L^{\bfs} \cap \Lsharplow)$) such that the degeneracy of $\hatLprojhigh$ (resp. $\hatLprojlow$) only happens on $\beta_{\mathrm{LCP},\calchigh}^{-1}(L^{\bfs,\calchigh} \cap \Lsharphigh)$ (resp. $\beta_{\mathrm{LCP},\calclow}^{-1}(L^{\bfs} \cap \Lsharplow)$).
\end{proposition}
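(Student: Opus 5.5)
The plan is to compute $\partial_{\hat{\mu'}_I} Y'_I$ directly from the explicit structure of $\hat{L}^{\bfs}$ near its boundary at $s=0$. By the admissibility condition \eqref{eq:defn-condition-YI-Yn-1-s=0}, the $s=0$ value of $\partial_{\hat{\mu'}_I}Y'_I$ vanishes, so by \eqref{eq:Y'-I-expansion} we have $\partial_{\hat{\mu'}_I} Y'_I = s\,\partial_{\hat{\mu'}_I}\tilde{Y'}_I$. Setting $H_I := -\partial_{\hat{\mu'}_I}\tilde{Y'}_I$, the claim reduces to showing that $H_I|_{s=0}$ is close to the identity at $\msf{q}_0$; continuity then gives uniform non-degeneracy for $s\in[0,s_0]$ after shrinking the parametrization region.

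To identify $H_I|_{s=0}$, I would use the bicharacteristic description \eqref{eq:Lbf-definition-gamma^2}. In the regime $s_r\to 0$, with $s=\mu'_{n-1}\approx -\sin s_r\approx -s_r$ up to sign conventions, the right endpoint $y'=y(s_r)$ is obtained by flowing backwards from the near-antipodal configuration. Fixing $(\sigma,y,y'_{II})$ and moving $\hat{\mu'}_I$ changes the unit covector $\hat\mu'$ at the right endpoint in the directions $I$. To first order in $s_r$, the geodesic through $y'$ with direction $\hat\mu'$ displaces $y'$ by $-s_r\,\Ymetric^{-1}\hat\mu'$. Using the normalization \eqref{eq:Ymetric-diagonalize}, where $\Ymetric^{ij}=\delta^{ij}$ at $y'_0$ and $\hat\mu'=(0,\dots,1)$ at $\msf{q}_0$, the $I$-block of this first-order variation is $-s$ times the identity plus $O(s^2)$ and $O(|y'-y'_0|)$ corrections. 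The key point is that the dependence of $Y'_I$ on $\hat{\mu'}_I$ enters only through this short final geodesic segment, since everything else is fixed by $(\sigma,y,y'_{II})$ and the admissibility condition. This gives $H_I = \Id + O(s) + O(\text{dist to }\msf{q}_0)$. The high-energy version in region \eqref{eq:hatLbf-high-components-region1} follows the same way, with $\varrho=x'/s$ as an additional smooth parameter. The derivative still carries the factor $s$ because \eqref{eq:defn-condition-YI-Yn-high-region1} holds, and the conic flow near $\partial X$ differs from the exact conic flow by $O(x')=O(s\varrho)$.

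For the last assertion, I would compute the rank of $d\hatLprojlow$ in the coordinates $\mathcal{Z}_{\calclow}$. The projection sends $\mathcal{Z}_{\calclow}\mapsto(\sigma,y,y'_{II},Y'_I,Y'_{n-1})$, so its rank drop equals the corank of $\partial_{(\hat{\mu'}_I,s)}(Y'_I,Y'_{n-1})$. The block $\partial_{\hat{\mu'}_I}Y'_I=-sH_I$ is invertible for $s>0$. For $\partial_s Y'_{n-1}$, the radial direction along the geodesic (the exponential map is non-degenerate radially, as noted in the proof of Proposition~\ref{prop:minimal-parametrization-low}) gives $\partial_s Y'_{n-1}\neq 0$ near $\msf{q}_0$. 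A block-triangular argument, with the off-diagonal terms $\partial_s Y'_I = O(1)$ and $\partial_{\hat{\mu'}_I}Y'_{n-1}=O(s)$, then shows full rank whenever $s\in(0,s_0]$, so degeneracy occurs only at $s=0$, i.e.\ on $\beta_{\mathrm{LCP},\calclow}^{-1}(L^{\bfs}\cap\Lsharplow)$. The high-energy statement is identical, with the extra $\varrho$ (or $\varkappa$) direction handled as in Proposition~\ref{prop:minimal-parametrization-high-conic}.

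The main obstacle I expect is justifying rigorously that $H_I|_{s=0}$ is close to the identity rather than merely nonzero. This requires matching the blow-up coordinates $\hat{\mu'}_I=\mu'_I/\mu'_{n-1}$ with the geodesic parametrization near $s_r=0$, and checking that the Schur-complement determinant is nonvanishing, i.e.\ that $\partial_s Y'_I\,(\partial_s Y'_{n-1})^{-1}\partial_{\hat{\mu'}_I}Y'_{n-1}$ is $O(s^2)$. I would first work out the exact cone case explicitly using \eqref{eq:conic-bichar}, and then treat the general case as a perturbation.
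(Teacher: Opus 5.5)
Your route is the paper's: admissibility supplies the factor $s$ via \eqref{eq:Y'-I-expansion}, and a first-order expansion along the geodesic in the normalized coordinates \eqref{eq:Ymetric-diagonalize} identifies $H_I$. The high-energy case is handled as an $O(x')=O(s\varrho)$ perturbation of the conic flow, and the last claim is read off from $\partial_{(s,\hat{\mu'}_I)}(Y'_{n-1},Y'_I)$.

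The step that does not close as written is this last one. With only $\partial_sY'_I=O(1)$ and $\partial_{\hat{\mu'}_I}Y'_{n-1}=O(s)$, the Schur correction $\partial_sY'_I(\partial_sY'_{n-1})^{-1}\partial_{\hat{\mu'}_I}Y'_{n-1}$ is $O(s)$. That is the same order as $\partial_{\hat{\mu'}_I}Y'_I=-sH_I$, so the block-triangular argument gives nothing.

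You flag this, but it needs no separate exact-cone computation. By admissibility every $\hat{\mu'}_I$-column of the Jacobian is $s$ times a smooth column, so $\det\partial_{(s,\hat{\mu'}_I)}(Y'_{n-1},Y'_I)=s^kD$ with $D$ smooth up to $s=0$. At $\msf{q}_0$ one has $\hat{\mu'}_I=\hat{\mu'}_{II}=0$ and $\Ymetric^{ij}=\delta^{ij}$, so your first-order expansion gives $\partial_sY'_I=0$ there. The rescaled matrix is then block triangular, with $D(\msf{q}_0)=\partial_sY'_{n-1}\cdot\det(-H_I)\neq0$. By continuity $D\neq 0$ nearby, so the Jacobian has full rank for all small $s>0$. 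This is what the paper reads off from \eqref{eq:Y'-I-expansion-concrete-low} and \eqref{eq:Y'-n-1-expansion-concrete-low}. In the region \eqref{eq:hatLbf-high-components-region2} the same argument runs with $x'$ in place of $s$ (cf.\ Proposition~\ref{prop:lower-bound-hessian-high-region2}); Proposition~\ref{prop:minimal-parametrization-high-conic} is not the relevant input there.

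On the obstacle you name, closeness of $H_I$ to $\Id$: your claim that $\hat{\mu'}_I$ enters only through the final segment of length $s_r$ is not quite right. In $\mathcal{Z}_{\calclow}$ the variable $\sigma$ is held fixed, which forces $s_l=\pi-\arcsin(\sigma\sin s_r)$, so the left end slides along the geodesic as well. The right point therefore sits at distance $\pi-(s_l-s_r)=(1+\sigma)s_r+O(s_r^3)$ from the point reached from $y$ by following the same geodesic for time $\pi$. Your planned exact-cone check will show this: for $Y=\mathbb{S}^{n-1}$ one gets $H_I=(1+\sigma)\Id+O(s)$ and $\partial_sY'_{n-1}=-(1+\sigma)+O(s)$ at the base point. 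Since $1+\sigma\in[1,3]$ for $\sigma\le 2$, this is a harmless rescaling. What the statement and its later uses require is uniform non-degeneracy of $H_I$, so aim to prove that rather than literal closeness to $\Id$.
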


\begin{proof}
Consider the case that $Y'_I$ is as in \eqref{eq:hatLbf-low-components} and the case for $Y_I'$ in \eqref{eq:hatLbf-high-components-region1} follows in the same way, except for the extra $\varrho$-dependence, which is not involved in the proof.

By \eqref{eq:Y'-I-expansion}, \eqref{eq:derivative-Y'I-low} is equivalent to the statement that $\partial_{\hat{\mu'}_I}\tilde{Y'}_I$ is non-degenerate.
So we only need to prove this when $(\sigma,y,y'_{II}) = (\sigma_0,y_0,y'_{II,0})$ and the conclusion will follow by continuity, after potentially shrinking the range of parametrization (and we decompose each term using the original parametrization into a finite sum after doing so).

By \eqref{eq:Ymetric-diagonalize}\eqref{eq: rescaled HG}, the expansion of $Y'_{I}$ is
\begin{equation}  \label{eq:5.4-1} 
  Y'_I(\sigma_0,y_0,y'_{II,0},\hat{\mu'}_I,s) = Y'_{I,0} - s_r \frac{\mu'_I}{|\mu'|}  + O(s_r^2).
\end{equation}
Here we use the fact that $Y'_{I}$ is obtained from $Y'_{I,0}$ along the backward flow of $\msf{H}_{G_0}$ at a flow time $s_r$ in \eqref{eq:Lbf-definition-gamma^2}. Now we have 
\begin{equation}
\frac{s_r}{|\mu'|} = \frac{s_r}{\sin s_r} 
= 1 + O(s_r^3),
\end{equation}
and $s \sim s_r, \hat{\mu'}_I =\mu'/s$, we know 
\begin{equation}  \label{eq:Y'-I-expansion-concrete-low}
  Y'_I(\sigma_0,y_0,y'_{II,0},\hat{\mu'}_I,s) = Y'_{I,0}(\sigma_0,y_0,y'_{II,0}) - s \hat{\mu'}_I  + O(s^2).
\end{equation}
Then we know 
\begin{equation}
    \partial_{\hat{\mu'}_I}\tilde{Y'}_I = -\Id + O(s)
\end{equation}
at this point, hence non-degenerate nearby as well. In addition, for $Y'_{n-1}$ similarly we have
\begin{equation}  \label{eq:Y'-n-1-expansion-concrete-low}
  Y'_{n-1}(\sigma_0,y_0,y'_{II,0},\hat{\mu'}_I,s) = Y'_{n-1,0}(\sigma_0,y_0,y'_{II,0}) - s  + O(s^2).
\end{equation}

Now we turn to the high-energy setting in which $Y_I'$ is defined via \eqref{eq:hatLbf-high-components-region1}. 
Now we need to take the geodesic flow in the interior of $X$ into account. 
We first derive, in the single space setting, the expression for Hamilton vector fields like \eqref{eq: rescaled HG} when one takes the $x$-dependence of $\XYmetric$ into consideration. 
The symplectic form on ${}^{\sct}T^*X$ (considered as the right factor of the double space here and with $\tau'=-\nu'$) is given by
\begin{equation}
  \omega'_{\sct} = d\Big(\tau'\frac{dx'}{(x')^2}+\mu' \cdot \frac{dy'}{x'}\Big) = d\tau' \wedge \frac{dx'}{(x')^2} + d\mu' \wedge \frac{dy'}{x'} - \frac{dx'}{(x')^2} \wedge \frac{\mu' \cdot dy'}{x'}.
\end{equation}
Using the definition of $H_p$ that
\begin{equation}
  dp(H') = \omega_{\sct}(H',H_p)
\end{equation}
holds for any smooth vector field $H'$, we can solve for coefficients of $H_p$ and obtain 
\begin{align} \label{eq:perturbed-sc-Hamilton-vector}
\begin{split}
H_p = & \partial_{\tau'}p ((x')^2\partial_{x'}) + \partial_{\mu'}p(x'\partial_{y'})
-((x')^2\partial_{x'}p + x'\mu' \cdot \partial_{\mu'}p) \partial_{\tau'} 
\\ & -(x'\partial_{y'}p-x'(\partial_{\tau'}p)\mu') \cdot \partial_{\mu'}.
\end{split}
\end{align}
In particular, when we take $p = G' = (\tau')^2 + |\mu'|^2_{\XYmetric}$, we have

\begin{align} \label{eq: rescaled-perturbed-HG}
\begin{split}
\msf{H}_{G'}: & = \frac{1}{2}(x')^{-1}|\mu'|^{-1}H_{G'} 
\\ & = \tau' \frac{x'}{|\mu'|}\partial_{x'} - \big(|\mu'|^{-1}\XYmetric(x',y',\mu')+\frac{x'}{|\mu'|}\partial_{x'}\XYmetric(x',y',\mu')\big)\partial_{\tau'} 
+ \tau' \frac{\mu'}{|\mu'|} \cdot \partial_{\mu'} + \frac{1}{2|\mu'|}H_{\XYmetric}.
\end{split}
\end{align}
Since $\XYmetric-\Ymetric = O(x')$, we know the $\partial_{y'}$-component here is an $O(x')$ perturbation of $\msf{H}_{G_0}$, which in turn is $O(s)$ in the region $x'/s \lesssim 1$, after being lifted to the resolved bundle in which we blow-up $\{\mu'=0,x'=0\}$.
Consequently, after integrating this vector field from $s=0$, our current $Y_I'$ is an $O(s^2)$ (in any fixed $C^k$ or Sobolev norm) perturbation of that in the low-energy setting: 
\begin{equation}  \label{eq:Y'-I-expansion-concrete-high-region1}
Y'_I(\sigma_0,y_0,y'_{II,0},\hat{\mu'}_I,s,\varrho) = Y'_{I,0}(\sigma_0,y_0,y'_{II,0}) - s \hat{\mu'}_I  + O(s^2),
\end{equation}
and \eqref{eq:derivative-Y'I-low} continues to hold if we choose $s_0$ to be small.

Now we prove the last claim about the projection $\hatLprojhigh$. 
In the region $x'/s \lesssim 1$ of $\hat{L}^{\bfs,\calchigh}$, this also follows from \eqref{eq:Y'-I-expansion-concrete-high-region1} and the analogue of \eqref{eq:Y'-n-1-expansion-concrete-low} in the high-energy setting:
\begin{equation}  \label{eq:Y'-n-1-expansion-concrete-high-region1}
  Y'_{n-1}(\sigma_0,y_0,y'_{II,0},\hat{\mu'}_I,s,\varrho) = Y'_{n-1,0}(\sigma_0,y_0,y'_{II,0}) - s  + O(s^2).
\end{equation}
Then we see that $\partial_{(s,\hat{\mu'}_I)}(Y'_{n-1},Y'_I)$ is non-degenerate (though not uniformly as $s \to 0$) for all $s>0$.
In the region $x'/s \gtrsim 1$, this follows from the same proof except that the expansions of $Y'_I,Y'_{n-1}$ are replaced by \eqref{eq:Y'-I-expansion-high-region2-concrete}\eqref{eq:Y'-n-1-expansion-region2-concrete} instead.
The conclusion for $\hatLprojlow$ follows from the conclusion for $\hatLprojhigh$ since it is just the restriction of $\hatLprojhigh$ to $\{x'/s = 0\}$.
\end{proof}
 

A direct consequence of this is the estimate of the Hessian of phase functions parametrizing our conic pairs of Legendre submanifolds. 
\begin{corollary} \label{coro:Hessian-lowerbound-low;high-region1}
Let $\Phi$ be as in \eqref{eq:conic-pair-phase-construction-low} and written as $1+\sigma+s\sigma \psi$; then we have
\begin{equation} \label{eq:Hessian-psi-nondegenerate-low-highregion1}
  \partial^2_{\hat{\mu'}_I\hat{\mu'}_I}\psi =   s H_I,
  \end{equation}
with $H_I$ being uniformly non-degenerate (in fact close to the identity) for $s$ small.
In the high-energy case, let $\Phi$ be as in \eqref{eq:conic-pair-phase-construction-high-region1}, then \eqref{eq:Hessian-psi-nondegenerate-low-highregion1} continues to hold, except that the function now has extra $\varrho$-dependence.
\end{corollary}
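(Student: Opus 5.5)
The plan is to differentiate the explicit formula for $\partial_{\hat{\mu'}_I}\psi$ from Proposition~\ref{prop:LCP-phase-derivatives} once more in $\hat{\mu'}_I$, and then read off the answer from Proposition~\ref{prop:Y'I-derivative;low-high-region1-region-def}.

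Write $\Phi = 1+\sigma+s\sigma\psi$ as in \eqref{eq:conic-pair-phase-construction-low}. By \eqref{eq:v-critical-fixed-s} we have
\begin{equation*}
\partial_{\hat{\mu'}_I}\psi = y'_I - Y'_I(\sigma,y,y'_{II},\hat{\mu'}_I,s),
\end{equation*}
an identity valid on the whole parameter domain and not just on $C_\Phi$. Here $y'_I$ is a base variable, independent of the extra parameters $(\hat{\mu'}_I,s)$. Differentiating once more in $\hat{\mu'}_I$ gives
\begin{equation*}
\partial^2_{\hat{\mu'}_I\hat{\mu'}_I}\psi = -\partial_{\hat{\mu'}_I}Y'_I .
\end{equation*}
Applying \eqref{eq:derivative-Y'I-low}, this equals $sH_I$, where $H_I$ is uniformly non-degenerate (indeed $\Id+O(s)$ by \eqref{eq:Y'-I-expansion-concrete-low}) for $s\in[0,s_0]$, after possibly shrinking the parametrization region as allowed there.

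The high-energy case with $\Phi$ as in \eqref{eq:conic-pair-phase-construction-high-region1} goes the same way. Proposition~\ref{prop:LCP-phase-derivatives} gives the identical formula for $\partial_{\hat{\mu'}_I}\psi$, now with $Y'_I$ also depending on $\varrho$ and $x'$. Proposition~\ref{prop:Y'I-derivative;low-high-region1-region-def}, via the expansion \eqref{eq:Y'-I-expansion-concrete-high-region1}, then gives $\partial_{\hat{\mu'}_I}Y'_I=-sH_I$ with the same uniformity. So the conclusion carries over verbatim, with $\varrho$-dependence in $H_I$.

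The only point needing care is that the formula for $\partial_{\hat{\mu'}_I}\psi$ really holds identically in $(\hat{\mu'}_I,s)$ off the critical set, so that it may be differentiated again. This follows from the proof of Proposition~\ref{prop:LCP-phase-derivatives}: the Legendrian relation \eqref{eq:1-form-vanish-low} cancels every term in which a derivative falls on $Y'_I$ or $Y'_{n-1}$, as an identity in the coordinates $\mathcal{Z}_{\calclow}$. The symmetry of the Hessian is then automatic, and $H_I$ is symmetric up to this identification.
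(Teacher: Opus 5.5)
Your proposal is correct and follows essentially the same route as the paper. The paper also derives the corollary directly from Proposition~\ref{prop:LCP-phase-derivatives} and Proposition~\ref{prop:Y'I-derivative;low-high-region1-region-def}: differentiate the identity $\partial_{\hat{\mu'}_I}\psi = y'_I - Y'_I$ once more in $\hat{\mu'}_I$, then apply $\partial_{\hat{\mu'}_I}Y'_I = -sH_I$, in both the low-energy case and the high-energy region $x'/s\lesssim 1$.
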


\begin{proof}
This follows from Proposition~\ref{prop:LCP-phase-derivatives} and Proposition~\ref{prop:Y'I-derivative;low-high-region1-region-def}.
\end{proof}


Now we discuss the phase function parametrizing $\hat{L}^{\bfs,\calchigh}$ in the region $s/x' \lesssim 1$ as in \eqref{eq:conic-pair-phase-construction-high-region2}.
The major difference with the case that $x'/s \lesssim 1$ is that, in order to be uniform down to $|\mu'|/x' = 0$, we can't assume that one of $\mu_i$, say $\mu_{n-1}$, dominates others since any neighborhood of $\{|\mu'|/x' = 0\}$ will contain points with ratios like $\mu_i/\mu_j$ taking arbitrary values.

For the analogue of the expansion \eqref{eq:Y'-I-expansion} and \eqref{eq:Y'-I-expansion-high-region1} in this region, the admissible condition in Definition~\ref{definition:Lbf-boundary-admissible} shows $Y'_I,Y'_{n-1}$ are of the form
\begin{align} \label{eq:Y'-I-expansion-high-region2}
  Y'_{I}(\sigma,x',y,y_{II}',\hat{\mu'}_I,s/x') = Y'_{I,0}(\sigma,y,y_{II}') + x' \tilde{Y}'_I(\sigma,x',y,y_{II}',\hat{\mu'}_I,s/x'),
\end{align}
 and similarly
\begin{align} \label{eq:Y'-n-1-expansion-region2}
  Y'_{n-1}(\sigma,x',y,y_{II}',\hat{\mu'}_I,\varkappa) = Y'_{n-1,0}(\sigma,y,y_{II}') + x' \tilde{Y}'_{n-1}(\sigma,x',y,y_{II}',\hat{\mu'}_I,s/x').
\end{align}

In this region, to obtain the analogue of \eqref{eq:derivative-Y'I-low}, we instead consider the rescaled geodesic flow that has almost unit speed in $x'$, which means that  instead of \eqref{eq: rescaled-perturbed-HG}, the Hamilton vector field that gives non-degenerate smooth flow (on the right factor), hitting $\partial \hat{L}^{\bfs,\calchigh}$ transversally is given by:
\begin{align} \label{eq: rescaled-perturbed-HG-region2}
\begin{split}
\msf{H}_{G'}: & = \frac{1}{2}(x')^{-2}H_{G'} 
\\ & = \tau' \partial_{x'} - (x'^{-1}\XYmetric(x',y',\mu')+ \partial_{x'}\XYmetric(x',y',\mu'))\partial_{\tau'} 
+ \tau' \frac{\mu'}{x'} \cdot \partial_{\mu'} + \frac{1}{2x'}H_{\XYmetric},
\end{split}
\end{align}
which is just \eqref{eq: rescaled-perturbed-HG} rescaled by multiplying $|\mu'|/|x'| \lesssim 1$.
So in the region $x'/|\mu'| \lesssim 1$ we use the flow taking $s = \mu_{n-1}$ as the flow time and in the region $x'/|\mu'| \gtrsim 1$ we use the flow taking $x'$ as the flow time. They have the same endpoint (i.e., restriction to $s=0$ and $x'=0$ respectively). 
So, starting from such an endpoint and considering the backward flow, since the linear level behaviour (i.e., $\partial_{y'}$-components of the vector field) of $Y'_I,Y_{n-1}'$ comes purely from $\frac{1}{2x'}H_{\XYmetric}$, we can view $x'$ as the flow time and it coincides with $Y'_I,Y_{n-1}'$ in the setting given by the flow of \eqref{eq: rescaled-perturbed-HG} parametrized by $s$ at time $s=x' \varkappa$, modulo an overall $O((x')^2)$ (in any fixed $C^k$-norm) error.
Since now the rescaled frequency becomes $\hat{\mu'_I} = \mu'_I/x',\kappa = \mu'_{n-1}/x'$, this expansion, when we fix the end-point to be $y_0'$ as in \eqref{eq:Ymetric-diagonalize}, is
\begin{align} \label{eq:Y'-I-expansion-high-region2-concrete}
  Y'_{I}(\sigma_0,x',y_0,y_{II,0}',\hat{\mu'}_I,s/x') = Y'_{I,0}(\sigma_0,y_0,y_{II,0}') - x' \hat{\mu'}_I + O((x')^2),
\end{align}
 and similarly
\begin{align} \label{eq:Y'-n-1-expansion-region2-concrete}
  Y'_{n-1}(\sigma_0,x',y_0,y_{II,0}',\hat{\mu'}_I,\varkappa) = Y'_{n-1,0}(\sigma_0,y_0,y_{II,0}') - x' \varkappa + O((x')^2).
\end{align}
Then we have the following characterization of derivatives of $Y'_I,Y'_{n-1}$.
\begin{proposition} \label{prop:lower-bound-hessian-high-region2}
Let $Y'_I, \, Y'_{n-1}$ be as in \eqref{eq:hatLbf-high-components-region1} but with $\tilde{\mathcal{Z}}_{\calchigh}$ in place of $\mathcal{Z}_{\calchigh}$; then we have
\begin{align} \label{eq:5.4-6}
  \begin{pmatrix}
    \partial_{\varkappa} Y_{n-1}' & \partial_{\hat{\mu'}_I} Y_{n-1}' \\
    \partial_{\varkappa} Y'_I & \partial_{\hat{\mu'}_I} Y'_I
  \end{pmatrix} = - x' H_2,
\end{align}
with $H_2$ being uniformly non-degenerate (in fact close to the identity) in the region $\varkappa \gtrsim 1$, $x' \ll 1$.
\end{proposition}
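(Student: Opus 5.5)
The plan mirrors the proof of Proposition~\ref{prop:Y'I-derivative;low-high-region1-region-def}, with the roles of $s$ and $x'$ exchanged. By the expansions \eqref{eq:Y'-I-expansion-high-region2} and \eqref{eq:Y'-n-1-expansion-region2}, which come from the admissible condition \eqref{eq:defn-condition-YI-Yn-high-region2}, every entry of the matrix in \eqref{eq:5.4-6} carries an overall factor $x'$. Writing it as $-x'H_2$, we have
\begin{equation*}
H_2 = -\partial_{(\varkappa,\hat{\mu'}_I)}(\tilde{Y}'_{n-1},\tilde{Y}'_I).
\end{equation*}
The claim therefore reduces to showing that $\partial_{(\varkappa,\hat{\mu'}_I)}(\tilde Y'_{n-1},\tilde Y'_I)$ is close to $-\Id$ uniformly for $\varkappa\gtrsim 1$ and $x'\ll1$.

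The first step is to establish this at the base point $(\sigma_0,y_0,y'_{II,0})$, where $\Ymetric$ is diagonalized as in \eqref{eq:Ymetric-diagonalize}. There the concrete expansions \eqref{eq:Y'-I-expansion-high-region2-concrete} and \eqref{eq:Y'-n-1-expansion-region2-concrete} give
\begin{equation*}
\tilde Y'_I=-\hat{\mu'}_I+O(x'), \qquad \tilde Y'_{n-1}=-\varkappa+O(x').
\end{equation*}
To obtain these, integrate the rescaled vector field \eqref{eq: rescaled-perturbed-HG-region2} backward from the endpoint on $\{x'=0\}$, using $x'$ as the flow time. The $\partial_{y'}$-component is $\frac{1}{2x'}H_{\XYmetric}=\Ymetric^{-1}(\mu'/x')+O(x')$, which equals $(\hat{\mu'}_I,\varkappa)+O(x')$ in the orthonormal frame. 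Integrating over a flow time of order $x'$ therefore shifts $y'$ by $-x'(\hat{\mu'}_I,\varkappa)+O((x')^2)$. The $O((x')^2)$ remainder must be controlled in $C^1$ with respect to $(\hat{\mu'}_I,\varkappa)$. This follows from smooth dependence of the flow on initial data, once one checks that the rescaled field \eqref{eq: rescaled-perturbed-HG-region2} is smooth and transversal to $\partial\hat L^{\bfs,\calchigh}$ in the coordinates $\tilde{\mathcal Z}_{\calchigh}$. Under that check, differentiating yields $H_2=\Id+O(x')$ at the base point.

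The second step extends this away from the base point. Here $\Ymetric^{ij}$ is no longer $\delta^{ij}$ but differs from it by $O(|y'-y'_0|)$. Since $\Ymetric^{ij}(y'_0)$ is smooth in the base point, the leading term of $H_2$ at $x'=0$ equals $\Ymetric^{-1}$ restricted to the $(I,n-1)$ block, up to the change of frame. After shrinking the parametrization region, as was done in Proposition~\ref{prop:Y'I-derivative;low-high-region1-region-def}, this is uniformly close to the identity. Adding the $O(x')$ correction keeps $H_2$ uniformly non-degenerate for $x'\ll1$.

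The hard part is uniformity in $\varkappa$, because $\varkappa=s/x'$ is unbounded in the region $\varkappa\gtrsim1$, where $s$ need not be small relative to $x'$. I would treat this in two pieces. On $\varkappa\lesssim C$, compactness together with the argument above suffices. On $\varkappa\gg1$, I would pass to the overlap with the region $x'/s\lesssim1$ and use Proposition~\ref{prop:Y'I-derivative;low-high-region1-region-def}. The chain rule under the change of variables $(\varkappa,\hat{\mu'}_I)\leftrightarrow(s,\varrho,\hat{\mu'}_I)$, with $s=x'\varkappa$, converts $\partial_{\hat{\mu'}_I}Y'_I=-sH_I$ and $\partial_sY'_{n-1}=-1+O(s)$ into the stated $-x'H_2$ form with bounded distortion. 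Here I must also verify that the $O((x')^2)$ remainders stay uniform when $x'\varkappa$ is only small rather than comparable to $x'$. If this fails, I would restrict the statement to $\varkappa$ in a bounded range and cover large $\varkappa$ by the region-1 parametrization instead.
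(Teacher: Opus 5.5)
Your core argument is exactly the paper's proof: factor out $x'$ via the admissibility expansions \eqref{eq:Y'-I-expansion-high-region2}--\eqref{eq:Y'-n-1-expansion-region2}, read off $H_2=\Id+O(x')$ at the base point from \eqref{eq:Y'-I-expansion-high-region2-concrete}--\eqref{eq:Y'-n-1-expansion-region2-concrete}, and extend by continuity after shrinking the parametrization. The extra ``uniformity in $\varkappa$'' step is unnecessary, because the coordinates $\tilde{\mathcal{Z}}_{\calchigh}$ are only used where $s/x'\lesssim 1$ (so $\varkappa$ is bounded there, whatever the literal ``$\varkappa\gtrsim 1$'' in the statement suggests), and large $\varkappa$ is already covered by the region-one parametrization, which is exactly your fallback.
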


\begin{proof}
By \eqref{eq:Y'-I-expansion-high-region2}\eqref{eq:Y'-n-1-expansion-region2}, we know that the left hand side of \eqref{eq:5.4-6} takes the form $x'H_2$ for some smooth $H_2$. By \eqref{eq:Y'-I-expansion-high-region2-concrete}\eqref{eq:Y'-n-1-expansion-region2-concrete}, we know $H_2 = \Id +O(s)$ when $(\sigma,y,y'_{II}) = (\sigma_0,y_0,y'_{II,0})$, hence it remains non-degenerate nearby and this concludes the proof.
\end{proof}

This gives the following characterization of our phase function $\Phi$ in \eqref{eq:conic-pair-phase-construction-high-region2}.
\begin{corollary} \label{coro:lower-bound-hessian-high-region2}
Let $\Phi$ be as in \eqref{eq:conic-pair-phase-construction-high-region2} and written as $1+\sigma+s\sigma \psi$ and denote $(\varkappa,\hat{\mu'}_I)$ by $\tilde{v}$; then we have
\begin{equation} \label{eq:Hessian-psi-nondegenerate-high-region2}
  \partial^2_{\tilde{v}\tilde{v}}\psi = x'H_2,
\end{equation}
with $H_2$ being uniformly non-degenerate (in fact close to the identity) for $x'$ small.
\end{corollary}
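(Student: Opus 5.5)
The plan mirrors the proof of Corollary~\ref{coro:Hessian-lowerbound-low;high-region1}: combine the explicit first derivatives of the phase from Proposition~\ref{prop:LCP-phase-derivatives} with the derivative structure of $Y'_I,Y'_{n-1}$ from Proposition~\ref{prop:lower-bound-hessian-high-region2}.

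The first step is to fix the normalization. In the region $s/x'\lesssim 1$ the phase \eqref{eq:conic-pair-phase-construction-high-region2} is naturally written as $\Phi = 1+\sigma+x'\sigma\psi$, with
\begin{equation*}
\psi = x'^{-1}(N-1-\sigma)\sigma^{-1} + (y'_I-Y'_I)\cdot\hat{\mu'}_I + \varkappa(y'_{n-1}-Y'_{n-1}).
\end{equation*}
I read the statement with this normalization, since $s=x'\varkappa$ and the $\psi$ in the statement differs from the one in the display by the factor $x'/s = \varkappa^{-1}$. On the region $\varkappa\gtrsim 1$, $\varkappa$ bounded, that factor is a smooth nonvanishing function and does not change the conclusion.

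The second step uses \eqref{eq:5.1-2}, which gives
\begin{equation*}
\partial_{\hat{\mu'}_I}\psi = y'_I - Y'_I, \qquad \partial_\varkappa\psi = y'_{n-1}-Y'_{n-1}.
\end{equation*}
The verification is the same as in Proposition~\ref{prop:LCP-phase-derivatives}: the Legendrian condition \eqref{eq:1-form-vanish-low} makes all terms in which a derivative falls on $N$ or on $Y'$ through the coordinates cancel. Differentiating once more in $\tilde v=(\varkappa,\hat{\mu'}_I)$, with $y'$ a base variable independent of $\tilde v$, gives
\begin{equation*}
\partial^2_{\tilde v\tilde v}\psi = -\begin{pmatrix}\partial_\varkappa Y'_{n-1} & \partial_{\hat{\mu'}_I}Y'_{n-1}\\ \partial_\varkappa Y'_I & \partial_{\hat{\mu'}_I}Y'_I\end{pmatrix}.
\end{equation*}
This is exactly the matrix in \eqref{eq:5.4-6}, so it equals $x'H_2$.

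The third step invokes Proposition~\ref{prop:lower-bound-hessian-high-region2}, which says $H_2$ is uniformly non-degenerate and equal to $\Id+O(x')$ near the base point. That proposition rests on the admissible expansions \eqref{eq:Y'-I-expansion-high-region2}, \eqref{eq:Y'-n-1-expansion-region2} and the concrete forms \eqref{eq:Y'-I-expansion-high-region2-concrete}, \eqref{eq:Y'-n-1-expansion-region2-concrete}. After shrinking the parametrization region and splitting into finitely many pieces, continuity makes this uniform for $x'$ small.

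The main obstacle is the symmetry and sign bookkeeping. The matrix in \eqref{eq:5.4-6} is not obviously symmetric, whereas a Hessian must be. Symmetry should follow from the Legendrian identity as mixed partials, but I would check it explicitly. I would also confirm that differentiating $\partial_\varkappa\psi$ in $\hat{\mu'}_I$ produces no extra terms, which holds because the formula in \eqref{eq:5.1-2} is exact rather than merely valid on the critical set. Finally, the overall sign is absorbed into $H_2$, and the statement only claims non-degeneracy, so it does not matter.
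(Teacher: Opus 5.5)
Your proposal is correct and follows the paper's proof: the exact identities $\partial_\varkappa\psi=y'_{n-1}-Y'_{n-1}$ and $\partial_{\hat{\mu'}_I}\psi=y'_I-Y'_I$ come from the Legendrian relation, and Proposition~\ref{prop:lower-bound-hessian-high-region2} then finishes the argument; you correctly use the normalization $\Phi=1+\sigma+x'\sigma\psi$ that the paper's proof also uses, so the ``$s\sigma\psi$'' in the statement is a slip. Your aside that the $s$-normalization would give the same conclusion is not right, though: $\partial_\varkappa^2(\varkappa^{-1}\psi)$ picks up the extra terms $2\varkappa^{-3}\psi-2\varkappa^{-2}\partial_\varkappa\psi$, which are not $O(x')$ in general, so the statement should only be read with the $x'$-normalization.
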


\begin{proof}
  In the same fashion as the proof of Corollary~\ref{coro:Hessian-lowerbound-low;high-region1},  using \eqref{eq:1-form-vanish-low}, we have
\begin{equation}
  \partial_{\varkappa} \psi = y'_{n-1}-Y'_{n-1}, \; \partial_{\hat{\mu'}_I}\psi = y'_I - Y'_I.
\end{equation}
Then applying Proposition~\ref{prop:lower-bound-hessian-high-region2} concludes the proof.
  
\end{proof}
 
Now we show that the most classical case $X_0 = \overline{\R^n}$, which is the radial compactification of $\R^n$, equipped with the flat metric is admissible in the sense of Definition~\ref{definition:Lbf-boundary-admissible}.

\begin{proposition} \label{prop:example-admissible}
Let $\mathbb{S}_{\varsigma}^{n-1}$ be the sphere with radius $\varsigma$ and let $X_0$ be the cone over $\mathbb{S}_{\varsigma}^{n-1}$, then $X_0$ is admissible in the sense of Definition~\ref{definition:Lbf-boundary-admissible} and the focusing indices of $X_0$ are classified as follows according to $\varsigma$:
\begin{enumerate}
\item When $\varsigma = 1$, the cone is the (compactified) Euclidean space $X_0 = \overline{\R^n}$ equipped with the flat metric, it has $\IFintz = 0$, $\IFz=n-2$;
\item When $\varsigma>1$, we have $\IFintz=\IFz=0$;
\item When $\varsigma<1$, we have $\IFintz = \IFz = n-2$.
\end{enumerate}
\end{proposition}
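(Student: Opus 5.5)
The plan is to make everything explicit using the geodesic flow of the round sphere $\mathbb{S}^{n-1}_\varsigma$ and the explicit form \eqref{eq:Lbf-definition-gamma^2} of $L^{\bfs}$, which for an exact cone is the only object entering Definition~\ref{definition:Lbf-boundary-admissible}. Only the low-energy condition \eqref{eq:defn-condition-YI-Yn-1-s=0} and the indices \eqref{eq:IFintz-def}, \eqref{eq:IFz-def} are relevant here. The basic input is the following. Unit-speed geodesics of $\mathbb{S}^{n-1}_\varsigma$ are great circles of length $2\pi\varsigma$. Conjugate points along them occur exactly at distances $k\pi\varsigma$, $k\geq 1$, each with multiplicity $n-2$. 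The isotropy group $SO(n-1)$ of a point $y$ acts transitively on unit directions at $y$.

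\emph{Step 1 (rank drop away from the intersection).} Via the fibration $\mk{P}$ of \eqref{eq:hatLbf-R*-fiberation}, the rank drop of $\hatLprojlow$ at an interior point of $\hat{L}^{\bfs}$ (with $s_l-s_r\in(0,\pi)$) equals the rank drop of $\Rprojlow^*$ at the corresponding point of $\RYstar$. By the appendix interpretation, the latter is the multiplicity of conjugacy of $y,y'$ along the geodesic of length $s_l-s_r$. I will take this as given.

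A conjugate pair at distance in $(0,\pi)$ exists if and only if $\pi\varsigma<\pi$, i.e.\ $\varsigma<1$, and then its multiplicity is $n-2$. Points in $U_{\Delta^{\calclow}}$ are excluded, and for them the distance is small anyway. This gives $\IFintz=n-2$ for $\varsigma<1$ and $\IFintz=0$ for $\varsigma\geq1$.

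\emph{Step 2 (the intersection $s=0$ and admissibility).} On $\beta_{\mathrm{LCP},\calclow}^{-1}(L^{\bfs}\cap\Lsharplow)$ the data are pairs $(y,y')$ joined by a geodesic of length exactly $\pi$, together with the direction $\hat{\mu'}$ at $y'$. Fix $y$ and consider the map $\theta\mapsto \exp_y(\pi\theta)$ on unit directions. By $SO(n-1)$-equivariance its image is either a single point or a round $(n-2)$-sphere. It is a single point precisely when $\pi$ is a multiple of $\pi\varsigma$, i.e.\ $\varsigma=1/k$.

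\begin{itemize}
\item In the single-point case, every direction $\hat{\mu'}_I$ yields the same $y'$. Hence $Y'_I|_{s=0}$ and $Y'_{n-1}|_{s=0}$ are independent of $\hat{\mu'}_I$, which is exactly \eqref{eq:defn-condition-YI-Yn-1-s=0}. In this case the rank drop at $s=0$ is $n-2$, the full set of directions.
\item In the sphere case, the direction at $y'$ is determined by $(y,y')$, namely it is the radial direction. The projection then has no drop in the $\hat{\mu'}$ directions, so the splitting \eqref{eq:splitted-coordinate-1} has empty $I$-block and the condition is vacuous.
\end{itemize}

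Either way $X_0$ is admissible. I would also check independence of the chosen coordinates, as claimed after Definition~\ref{definition:Lbf-boundary-admissible}.

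\emph{Step 3 (conclusion).} Inserting Step 2 into \eqref{eq:IFz-def}:
\begin{itemize}
\item For $\varsigma=1$, the boundary contributes $n-2$, while $\IFintz=0$; hence $\IFz=n-2$.
\item For $\varsigma>1$, $1/\varsigma$ is never an integer, so there is no boundary drop; together with Step 1 this gives $\IFz=\IFintz=0$.
\item For $\varsigma<1$, the boundary contributes at most $n-2$ (exactly $n-2$ if $\varsigma=1/k$), and the interior already gives $n-2$; hence $\IFz=\IFintz=n-2$.
\end{itemize}

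The step I expect to be the main obstacle is Step 2: identifying the rank of $d\hatLprojlow$ at the front face $s=0$ of the blow-up \eqref{eq:resolved-bundle} with the conjugacy of the endpoint map $\theta\mapsto\exp_y(\pi\theta)$. At $s=0$ the coordinates $\hat{\mu'}_I$ are polar coordinates produced by the blow-up, not honest fibre variables of $L^{\bfs}$, so the dictionary must be set up carefully. I would first do this directly from the explicit expansions \eqref{eq:Y'-I-expansion-concrete-low} and \eqref{eq:Y'-n-1-expansion-concrete-low}, with the full Jacobi field of the round sphere substituted for the $O(s^2)$ remainder. For $\varsigma=1$, as a sanity check, I would verify against the flat Euclidean picture that $Y'|_{s=0}$ is the antipodal point of $y$.
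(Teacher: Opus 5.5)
Your proposal is correct and follows essentially the paper's route. The indices come from the conjugate points of the round sphere at distances $k\pi\varsigma$, each of multiplicity $n-2$, transferred to $\hat{L}^{\bfs}$ via the appendix correspondence. Admissibility holds because $Y'|_{s=0}$ depends only on $y$ when $\varsigma=1/k$; the paper checks this for $\varsigma=1$ in explicit antipodal coordinates, where $Y'|_{s=0}=-y$. Otherwise the condition is vacuous with $I=\emptyset$.

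Your $SO(n-1)$-orbit dichotomy simply treats all cases $\varsigma=1/k$, $k\ge 1$, uniformly instead of asserting they go "in the same way" as $\varsigma=1$. Checking only the low-energy condition \eqref{eq:defn-condition-YI-Yn-1-s=0} is exactly what the definition requires for an exact cone. The paper additionally checks the high-energy conditions for $\varsigma=1$, viewing $\overline{\R^n}$ as asymptotically conic.
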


\begin{proof}
We first consider the case $\varsigma=1$, i.e. $X_0 = \overline{\R^n}$, and indicate changes needed to deal with other cases in the end.
Without loss of generality, we consider the region $\sigma = x/x' \lesssim 1$.
Let $\mathsf{p} \in \partial \hat{L}^{\bfs,\calchigh}$ be the point around which we are considering the parametrization. 
Consider the case that $\msf{p}$ is in the region $|x'|/|\mu'| \lesssim 1$ first.
Denote its projection to $\bfs$ by $(\sigma_0,x'=0,y_0,y_0')$. 
For coordinates on the right factor of $Y \times Y$, after a linear change of coordinates, we may assume that $\mu'/s = (0,...,1) \in \R^{n-1}$ at $\msf{p}$.
Then we use `the same' coordinate for the first factor, which means letting $-y'$ be the coordinate of the antipodal point of $y' \in \R^{n-1}$.
In this coordinate system, we know\footnote{It might be more straightforward to see this by pulling-back one of the factors by the antipodal map and considering the diagonal.}
\begin{align} \label{eq:dy-combination-vanish}
d(y_1+y_1'), \, ..., d(y_{n-2}+y'_{n-2}) = 0
\end{align}
at $\msf{p}$. By our definition of the splitting $(y_I',y'_{II},y'_{n-1})$ at the beginning of Section~\ref{subsec:phase-Hessian}, we can choose $I=\{1,2,...,n-2\}$, $II = \emptyset$. 
In terms of the notation in \cite[Proposition~6]{melrose1996scattering}, considering the coordinate system $(\sigma,y,y_1+y'_1,...,y_{n-2}+y'_{n-2},y'_{n-1})$ on $\bfs$, we know that the dual variable of $y_i+y'_{i}$ equals that of $y'_i$ and the coordinate system we can choose is 
\begin{align*}
(\sigma,y,y'_{n-1}, \hat{\mu'}_1,...,\hat{\mu'}_{n-2},s= \mu_{n-1}',x'/s).
\end{align*}
At $s=0$ and restricted to $L^{\bfs,\calchigh}$, we have $y'=-y$ since the geodesic flow on the sphere at time $\pi$ sends a point to its antipodal point, which means
\begin{align*}
Y'_{I}|_{s=0} = -(y_1,...,y_{n-2}), \; Y'_{n-1}|_{s=0} = -y_{n-1},
\end{align*}
which shows that \eqref{eq:defn-condition-YI-Yn-high-region1} and \eqref{eq:defn-condition-YI-Yn-high-region2} hold and $X_0$ is admissible.
To show $\IFintz=0$, we notice that the injective radius of $\mathbb{S}^{n-1}$ is $\pi$, hence the projection of $L^{\bfs}$ to $\bfs$ is a diffeomorphism away from $L^{\bfs}\cap\Lsharplow$. 
In addition, discussion above shows that the rank drop of $\hatLprojlow$ at $L^{\bfs}\cap\Lsharplow$ is $n-2$, so we have $\IFz=n-2$.

When $\varsigma>1$, the conclusion on the focusing indices follows from the proof of Corollary~\ref{coro:after-main-thm}. We now discuss the admissible condition. The projection $\hat{L}^{\bfs} \to \bfs$ is a diffeomorphism to $\bfs$ near $L^{\bfs} \cap \Lsharplow$. So $\hat{\mu}_I'$ is empty and the admissible condition holds automatically.

We now turn to the case $\varsigma<1$.
When $\varsigma \neq \frac{1}{k}$ for $k \in \N$, the projection $\hat{L}^{\bfs} \to \bfs$ is also a diffeomorphism to $\bfs$ near $L^{\bfs} \cap \Lsharplow$, hence it is admissible for the same reason as the case $\varsigma>1$ above.
However, the focusing indices are different from those in the case $\varsigma>1$.
The maximal rank drop happens when the geodesic flow finishes the first round of the circle (i.e. at time $\pi\varsigma$) and the degeneracy is the same as \eqref{eq:dy-combination-vanish} above. So we have $\IFz=\IF=n-2$.

The case $\varsigma = 1/k$ with $k\in\N$ and $k \geq 2$ follows in the same way since the shape of $L^{\bfs}$ near $L^{\bfs} \cap \Lsharplow$ is the same as the case $\varsigma=1$ above, except that now those geodesics are going through circles in $\mathbb{S}_{\varsigma}^{n-1}$ for $k$ times. 
The major difference with the case $\varsigma=1$ is that there is already a rank drop of $n-2$ when the geodesic flow finishes the first round of the circle, where the rank drop is $n-2$.
In this case, we have $\IFintz = n-2$ and $\IFz = n-2$.
\end{proof}
 
\subsection{Critical points at fixed s}
\label{subsec:critical_points_at_fixed_s}
In this subsection, we discuss the critical points of phase functions parametrizing Legendre conic pairs at fixed $s$.
For our phase function $\Phi = 1+\sigma+\sigma s \psi$, since an overall factor $e^{i\frac{\lambda}{x}(1+\sigma)}$ does not affect the pointwise bound on the kernel, we will consider 
\begin{equation} \label{eq:s-psi-def}
    s\psi  = sN_2 + s (y'_I - Y'_I) \cdot \hat{\mu'}_I + s(y'_{n-1}-Y'_{n-1}),
\end{equation}
where $N_2,Y'_I,Y'_{n-1}$ are as in \eqref{eq:hatLbf-low-components} or \eqref{eq:hatLbf-high-components-region1}.
By \eqref{eq:v-critical-fixed-s}, the derivative of the phase function $\psi$ in $\hat{\mu'}_I$ is $y'_I - Y'_I$. 

Let $\mathrm{Ran}(s)$ be the range of $Y'_I$ for fixed $s$.
Now for each fixed $\sigma,y,y_I',y_{II}'$, we define the set $\srange(\sigma,y,y_I',y_{II}')$ by 
\begin{equation} \label{eq:srange-def}
    s \in \srange(\sigma,y,y_I',y_{II}') \text{ if and only if } y_I' \in \mathrm{Ran}(s).
\end{equation}
We denote it by $\srange$ below for convenience since the estimates we prove are pointwise for fixed $(\sigma,y,y_I',y_{II}')$.

Next we record certain basic properties of the phase function and other quantities involved in terms of $\srange$. Since $\mathrm{Ran}(s)$ is open and changes continuously (say, in terms of the Hausdorff distance) in terms of $s$, if $s_0\in\srange$, then so does $s$ sufficiently close to it and consequently $\srange$ is open.

Now we turn to estimates.
First, for $s \notin \srange$, since the region of $\hat{\mu'}_I$ on which $Y'_I$ is defined via \eqref{eq:hatLbf-low-components}, equivalently the region on which we use $\hat{\mu'}_I$ as coordinates, contains a $\delta_0$-neighborhood of $\supp_{(\sigma,y,y',s)} \, a$ for certain $\delta_0>0$, where $\supp_{(\sigma,y,y',s)} \, a$ is the support of $a$ in $\hat{\mu'}_I$ with $(\sigma,y,y',s)$ fixed.
Since the derivative of $Y'_I$ is uniformly non-degenerate after removing the $s$-factor as in Proposition~\ref{prop:Y'I-derivative;low-high-region1-region-def}
 we know that the range of $Y'_I$ contains a $C\delta_0s$-neighborhood of the range of $Y'_I$ restricted to $\supp_{(\sigma,y,y',s)} \, a$ for some $C>0$ depending on the lower bound of the derivative of $Y'_I$ in Proposition~\ref{prop:Y'I-derivative;low-high-region1-region-def}. This means
\begin{equation}  \label{eq:lower-bound-Ec}
  |y'_I - Y'_I| \gtrsim s
\end{equation}
on $\supp_{(\sigma,y,y',s)} \, a$.
On the other hand, for $s \in \srange$, 
we know there is a unique $\hat{M}_I'$ such that 
\begin{align} \label{eq:def-hat-M'-I}
y'_I = Y'_I(\sigma,y,y_{II}',s,\hat{M'}_I(\sigma,y,y',s)).
\end{align}
The existence follows from the definition of $\srange$ while the uniqueness is due to \eqref{eq:derivative-Y'I-low}, which shows that for fixed $s$, the map sending $\hat{\mu'}_I$ to $Y'_{I}$ is a (local) diffeomorphism.

\begin{lemma} \label{lemma:M1-s-derivative-bound}
Let $\hat{M'}_{I}(\sigma,y,y'_{I},y'_{II},s)$ be defined as in \eqref{eq:def-hat-M'-I}, then it has at most conic singularity in $s$ in the sense that:
\begin{align} \label{eq:M1-s-derivative-bound-1}
  |\partial_s^m \hat{M'}_{I}(\sigma,y,y'_{II},s)| \lesssim  s^{-m}, \; m \in \N.
\end{align}

\end{lemma}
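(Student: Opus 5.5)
The plan is to prove the bound from the implicit function theorem applied to \eqref{eq:def-hat-M'-I}, after first removing the conic factor $s$ in the $\hat{\mu'}_I$-derivative. By \eqref{eq:Y'-I-expansion} (admissibility), $Y'_I = Y'_{I,0}(\sigma,y,y'_{II}) + s\tilde{Y'}_I(\sigma,y,y'_{II},\hat{\mu'}_I,s)$ with $\tilde{Y'}_I$ smooth, and by Proposition~\ref{prop:Y'I-derivative;low-high-region1-region-def}, $\partial_{\hat{\mu'}_I}\tilde{Y'}_I = -H_I$ is uniformly invertible for $s\in[0,s_0]$. I then rewrite the defining equation as
\begin{equation*}
\tilde{Y'}_I(\sigma,y,y'_{II},\hat{M'}_I,s) = w, \qquad w := \frac{y'_I - Y'_{I,0}(\sigma,y,y'_{II})}{s},
\end{equation*}
so that, for fixed $(\sigma,y,y'_{II},s)$, $\hat{M'}_I = F(\sigma,y,y'_{II},s,w)$. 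Here $F$ is the local inverse of $\hat{\mu'}_I \mapsto \tilde{Y'}_I$, which is smooth with uniformly bounded derivatives of all orders on the relevant compact region. This follows from the inverse function theorem with uniform constants, because $\tilde{Y'}_I$ is smooth up to $s=0$ and $H_I$ is close to the identity. The uniqueness of $\hat{M'}_I$ noted before the lemma makes $F$ well defined on the $\hat{\mu'}_I$-region where the parametrization is valid.

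The singular dependence on $s$ now sits entirely in $w$. Since $\hat{M'}_I$ must lie in the bounded $\hat{\mu'}_I$-region, $w$ lies in the bounded range of $\tilde{Y'}_I$ on that region, so $|w|\lesssim 1$ whenever $s\in\srange$. Its derivatives satisfy $\partial_s^m w = (-1)^m m!\, s^{-m} w$, hence $|\partial_s^m w| \lesssim s^{-m}$. I will use the chain rule (Fa\`a di Bruno) on $\hat{M'}_I(s) = F(s, w(s))$. A generic term is a bounded derivative of $F$ multiplied by $\prod_i \partial_s^{m_i} w$ with $\sum m_i \le m$, where the remaining derivatives fall on the explicit $s$-slot of $F$ and cost nothing. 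Each such term is therefore $O(s^{-\sum m_i}) = O(s^{-m})$ for $s\le s_0$, which gives \eqref{eq:M1-s-derivative-bound-1}. The same argument applies verbatim in the high-energy regions, using \eqref{eq:Y'-I-expansion-high-region1} with extra smooth $\varrho$-dependence, and \eqref{eq:Y'-I-expansion-high-region2} with $x'$ playing the role of $s$ and Proposition~\ref{prop:lower-bound-hessian-high-region2}.

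The main obstacle I expect is making the uniformity in the inverse function step honest. I need a single neighborhood, independent of $s$ and of the base point, on which $\hat{\mu'}_I\mapsto\tilde{Y'}_I$ is injective with a smooth inverse, and I need $w$ to stay inside its image. I would handle this by shrinking the parametrization region as allowed in Proposition~\ref{prop:Y'I-derivative;low-high-region1-region-def}, so that $\partial_{\hat{\mu'}_I}\tilde{Y'}_I$ is within $1/2$ of $-\Id$ on a convex set. Injectivity then follows from the mean value inequality. Since $\hat{M'}_I$ is by definition a preimage in that set, boundedness of $w$ is automatic, and no separate range argument is needed.
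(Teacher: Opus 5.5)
Your argument is correct, and it reaches the bound with a different bookkeeping from the paper's.

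The paper differentiates \eqref{eq:def-hat-M'-I} implicitly, which gives $\partial_s\hat{M'}_I=-(\partial_{\hat{\mu'}_I}Y'_I)^{-1}\partial_sY'_I$ evaluated at $\hat{\mu'}_I=\hat{M'}_I$. It then writes $(\partial_{\hat{\mu'}_I}Y'_I)^{-1}=s^{-1}A$ with $A$ smooth, using \eqref{eq:derivative-Y'I-low}, and states the higher-order bounds directly. What it leaves implicit is an induction: $A$ and $\partial_sY'_I$ are evaluated at $\hat{M'}_I(s)$, so each further $s$-derivative produces another factor $\partial_s\hat{M'}_I=O(s^{-1})$.

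You instead divide out the factor $s$ before inverting. Writing $\hat{M'}_I=F(\sigma,y,y'_{II},s,w)$ with $w=(y'_I-Y'_{I,0})/s$, all the singular $s$-dependence is carried by the explicit function $w=c/s$, where $|w|\lesssim 1$. Fa\`a di Bruno then gives every order at once, with no induction needed.

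Both routes rest on the same input, the factorization $\partial_{\hat{\mu'}_I}Y'_I=-sH_I$ that comes from admissibility.
\begin{itemize}
\item The paper's route is shorter and uses only the pointwise implicit function theorem.
\item Your route needs the uniform-injectivity step you describe (derivative within $1/2$ of $-\Id$ on a convex set). In exchange it gives a stronger structural statement: $\hat{M'}_I$ is a smooth function of $\bigl(s,(y'_I-Y'_{I,0})/s\bigr)$, which is exactly what ``conic singularity'' means.
\end{itemize}

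One small correction to your high-energy remark, in the region $\varrho=x'/s\lesssim1$. If $x'$ is held fixed, then $\varrho$ also varies with $s$, so it is not just a passive smooth parameter. This is harmless: $\partial_s^m\varrho=(-1)^m m!\,\varrho\,s^{-m}$ with $\varrho\lesssim1$, so $\varrho$ enters your Fa\`a di Bruno expansion exactly as $w$ does, and the conclusion is unchanged.
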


\begin{proof}

Differentiate both sides of \eqref{eq:def-hat-M'-I} in $s$, or apply the implicit function theorem, we have
\begin{align} \label{eq:M'I-s-derivative}
\partial_s\hat{M'}_I(\sigma,y,y'_{II},s) = - (\partial_{\hat{\mu}_1}Y'_I)^{-1}|_{\hat{\mu'}_I=\hat{M'}_{I}(\sigma,y,y'_{II},s)}\partial_s Y'_I.
\end{align}
The factor $\partial_s Y'_I$ is smooth and we only need to prove, for all $m \in \N_+$,
\begin{align} \label{eq:est-10}
\| \partial_s^{m-1}(\partial_{\hat{\mu}_1}Y'_I)^{-1}|_{\hat{\mu}_1=\hat{M'}_I(\sigma,y,y'_{II},s)}\|_{\R^k \to \R^k} \lesssim s^{-m}.
\end{align}
This is because, by \eqref{eq:derivative-Y'I-low}, we can write 
\begin{align}
(\partial_{\hat{\mu}_1}Y'_I)^{-1} = s^{-1}A,
\end{align}
with $A$ being a matrix with smooth components. Then the desired conclusion follows.

\end{proof}

Finally, we discuss the convexity of the phase function restricted to $\hat{\mu'}_I = \hat{M'}_I$.
Consider the case that $s\psi$ in \eqref{eq:s-psi-def} is constructed using $N_2,Y'_I,Y'_{n-1}$ in \eqref{eq:hatLbf-low-components} first.
As discussed before \eqref{eq:Ymetric-diagonalize}, we can select our coordinates so that $\mu'/|\mu'| = (0,...,0,1)$ at the point near which we are parametrizing $\hat{L}^{\bfs}$ (and $\hat{L}^{\bfs,\calchigh}$ in the high-energy setting). So we can assume that (recall that $\Ymetric$ is the metric on the cross section $Y$ at $\{x=0\}$)
\begin{equation} \label{eq: hat-mu'-small-delta-1}
  |\hat{\mu'}_I|,|\hat{\mu'}_{II}| \leq \delta_1, 
  \; \text{ and } \; |\Ymetric^{ij} - \delta^{ij}| \leq \delta_1, 
\end{equation}
on this region of $\hat{L}^{\bfs}$ (resp. $\hat{L}^{\bfs,\calchigh}$ in the high-energy setting) for a fixed small $\delta_1>0$ to be determined.

The first order $s$-derivative of $s\psi|_{\hat{\mu'}_I = \hat{M'}_I}$ is given by:
\begin{equation}
    \partial_s(s\psi|_{\hat{\mu'}_I = \hat{M'}_I}) = 
    \big(\partial_s(s\psi) \big)|_{\hat{\mu'}_I = \hat{M'}_I}
    + (\partial_s \hat{M'}_I) \cdot \partial_{\hat{\mu'}_I}(s\psi)|_{\hat{\mu'}_I = \hat{M'}_I}.
\end{equation}
Since we have $\partial_{\hat{\mu'}_I}(s\psi)|_{\hat{\mu'}_I = \hat{M'}_I} = 0$, hence 
\begin{equation} \label{eq:5.4-2}
    \partial_s(s\psi|_{\hat{\mu'}_I = \hat{M'}_I}) = 
    \big(\partial_s(s\psi) \big)|_{\hat{\mu'}_I = \hat{M'}_I} = y'_{n-1}-Y'_{n-1}(\sigma,y,y'_{II},\hat{M'}_I(s),s),
\end{equation}
where we abbreviated variables of $\hat{M'}_I$ other than $s$.

Similar to \eqref{eq:5.4-1}\eqref{eq:Y'-n-1-expansion-concrete-low}, we have
\begin{equation}  \label{eq:Y'-n-1-expansion-concrete-low-2}
  Y'_{n-1}(\sigma,y,y'_{II},\hat{\mu'}_I,s) = Y'_{n-1,0}(\sigma,y,y'_{II}) - s(\sum_{j \in I} \Ymetric^{(n-1)j}\hat{M'}_I + \sum_{j \in II} \Ymetric^{(n-1)j}\hat{M'}_{II} +1)  + O(s^2).
\end{equation}
Using the bound of $\partial_s\hat{M'}_I$ in \eqref{eq:M1-s-derivative-bound-1}, differentiating \eqref{eq:Y'-n-1-expansion-concrete-low-2} in $s$, we have
\begin{equation}
\partial_{ss}^2(s\psi|_{\hat{\mu'}_I = \hat{M'}_I})
= -1 + O(\delta_1) + O(s).
\end{equation}
After choosing $\delta_1,s_0$ to be small, we have $|\partial_{ss} (s\psi|_{\hat{\mu'}_{I}= \hat{M'}_I})  | \gtrsim 1$.
When $s\psi$ in \eqref{eq:s-psi-def} is instead defined using $N_2,Y'_I,Y'_{n-1}$ in \eqref{eq:hatLbf-high-components-region1}, argument above continues to apply since the expansion of $Y'_I$ above continues to apply by \eqref{eq: rescaled-perturbed-HG} and the same discussion afterwards concerning the flow of $\msf{H}_G$. The $\varrho = x'/s$-dependence of the error terms won't affect the final estimate, since that will at most introduce an $O(s^2 \times x'/s^2)=O(x')$-error after differentiating in $s$, which can be absorbed into the $O(s)$ term since $x'/s \lesssim 1$ in the region we are considering.  
Summarizing the discussion above, we have the following convexity of $s\psi|_{\hat{\mu'}_{I}= \hat{M'}_I}$.

\begin{lemma} \label{lemma:phase-restricted-convex-low-high-region1}
Let $s\psi$ be as in \eqref{eq:s-psi-def} arising in the parametrization of either $\hat{L}^{\bfs}$ or of $\hat{L}^{\bfs,\calchigh}$ in the region with $x'/s \lesssim 1$, then there is a $\delta_1$ as in \eqref{eq: hat-mu'-small-delta-1} and $s_0>0$, such that for $s \in [0,s_0]$:
  \begin{equation} \label{eq:phase-restricted-convex}
    |\partial_{ss} (s\psi|_{\hat{\mu'}_{I}= \hat{M'}_I})  | \gtrsim 1.
  \end{equation}
\end{lemma}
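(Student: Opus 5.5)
The plan is to differentiate the identity \eqref{eq:5.4-2} once more in $s$ and control every term by the structural information already in hand. Set $F(s) = s\psi|_{\hat{\mu'}_I=\hat{M'}_I(s)}$ with $(\sigma,y,y'_I,y'_{II})$ fixed and $s\in\srange$. By \eqref{eq:5.4-2}, $F'(s) = y'_{n-1}-Y'_{n-1}(\sigma,y,y'_{II},\hat{M'}_I(s),s)$. Hence
\begin{equation*}
F''(s) = -\partial_sY'_{n-1} - \partial_{\hat{\mu'}_I}Y'_{n-1}\cdot \partial_s\hat{M'}_I ,
\end{equation*}
with both terms evaluated at $\hat{\mu'}_I=\hat{M'}_I(s)$. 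It therefore suffices to show that the first term equals $1+O(\delta_1)+O(s)$ and that the second term is $O(\delta_1)+O(s)$.

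For the first term I will use the expansion \eqref{eq:Y'-n-1-expansion-concrete-low-2}. I obtain it, as in \eqref{eq:5.4-1}, by flowing backwards from the endpoint along $\msf{H}_{G_0}$ in \eqref{eq: rescaled HG}. The $\partial_{y'_{n-1}}$-component of that flow is $\Ymetric^{(n-1)j}\hat{\mu'}_j$ times $s_r/|\mu'| = 1+O(s_r^2)$. This gives $-\partial_sY'_{n-1} = \sum_j\Ymetric^{(n-1)j}\hat{\mu'}_j + 1 + O(s)$, where I use $\hat{\mu'}_{n-1}=1$. By \eqref{eq: hat-mu'-small-delta-1} the sum is $O(\delta_1)$, so the first term is $1+O(\delta_1)+O(s)$.

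For the second term, the admissibility condition \eqref{eq:defn-condition-YI-Yn-1-s=0} says $\partial_{\hat{\mu'}_I}Y'_{n-1}$ vanishes at $s=0$. Then \eqref{eq:Y'-n-1-expansion} gives $\partial_{\hat{\mu'}_I}Y'_{n-1} = s\,\partial_{\hat{\mu'}_I}\tilde{Y'}_{n-1}$. The leading part of $\partial_{\hat{\mu'}_I}\tilde{Y'}_{n-1}$ is $-\Ymetric^{(n-1)I}$, which is $O(\delta_1)$ by \eqref{eq: hat-mu'-small-delta-1}, plus an $O(s)$ remainder. Lemma~\ref{lemma:M1-s-derivative-bound} with $m=1$ gives $|\partial_s\hat{M'}_I|\lesssim s^{-1}$, so the second term is $O(\delta_1)+O(s)$.

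This is the main obstacle. The factor $s$ coming from admissibility is needed to cancel exactly the conic singularity $s^{-1}$ of $\hat{M'}_I$. Without admissibility the product would only be $O(1)$ and could destroy the sign. I also have to keep the $O(s^2)$ error in $Y'_{n-1}$ consistent with this bookkeeping. Concretely, its $\hat{\mu'}_I$-derivative must be $O(s^2)$, so that after multiplication by $s^{-1}$ it stays $O(s)$; I would verify this from the smoothness of $\tilde{Y'}_{n-1}$ in $(\hat{\mu'}_I,s)$.

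Combining the two terms gives $F''(s) = -1+O(\delta_1)+O(s)$. Choosing $\delta_1$ and $s_0$ small then yields \eqref{eq:phase-restricted-convex}.

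In the high-energy region $x'/s\lesssim 1$, I will replace $\msf{H}_{G_0}$ by \eqref{eq: rescaled-perturbed-HG}. Its $\partial_{y'}$-part differs from that of $\msf{H}_{G_0}$ by $O(x')=O(s)$, so \eqref{eq:Y'-I-expansion-concrete-high-region1} and \eqref{eq:Y'-n-1-expansion-concrete-high-region1} keep the same structure. The remaining issue is the extra $\varrho=x'/s$ dependence: differentiating it in $s$ produces factors $x'/s^2$ that multiply $O(s^2)$ errors, giving $O(x')\lesssim O(s)$, which is absorbed into the $O(s)$ term. Finally, Lemma~\ref{lemma:M1-s-derivative-bound} and Proposition~\ref{prop:Y'I-derivative;low-high-region1-region-def} hold verbatim in this setting, so the argument goes through unchanged.
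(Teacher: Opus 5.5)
Your proposal is correct and follows essentially the paper's own argument: you differentiate \eqref{eq:5.4-2} once more and use the expansion \eqref{eq:Y'-n-1-expansion-concrete-low-2}, whose $\hat{\mu'}_I$-independent leading term is where admissibility enters, together with $|\partial_s\hat{M'}_I|\lesssim s^{-1}$ from Lemma~\ref{lemma:M1-s-derivative-bound}; in the high-energy region you absorb the $\varrho$-dependence as an $O(x')=O(s)$ error, and your explicit chain-rule split only spells out the paper's one-line ``differentiate the expansion'' step. The one slip is a sign: since you show the first term is $1+O(\delta_1)+O(s)$, combining gives $F''(s)=1+O(\delta_1)+O(s)$ rather than $-1+\cdots$, which does not affect the absolute-value bound \eqref{eq:phase-restricted-convex}.
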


This uniform convexity has the following consequence
\begin{corollary} \label{coro:restricted-phase-three-regions}
   For each $(\sigma,y,y'_{II})$, potentially after further shrinking $s_0$, either of the following dichotomy happens.
\begin{enumerate}
\item When $y'_{n-1} - Y'_{n-1,0}(\sigma,y,y'_{II}) \geq 0$, for all $s \in [0,s_0] \cap \srange$, we have 
\begin{equation} \label{eq:restricted-phase-partial-s-2}
|\partial_s (s\psi|_{\hat{\mu'}_{I}=\hat{M'}_I})| \gtrsim s.
\end{equation} 
  \item When $y'_{n-1} - Y'_{n-1,0}(\sigma,y,y'_{II})<0$, for large enough constant $C>0$, if we divide $[0,s_0] \cap \srange$ into three regions:
\begin{itemize}
  \item Region 1: $s \geq C|y_{n-1}'-Y'_{n-1,0}(\sigma,y,y'_{II})|$,
  \item Region 2: $(2C)^{-1}|y_{n-1}'-Y'_{n-1,0}(\sigma,y,y'_{II})| \leq s \leq (2C)|y_{n-1}'-Y'_{n-1,0}(\sigma,y,y'_{II})|$,
  \item Region 3: $s \leq C^{-1}|y_{n-1}'-Y'_{n-1,0}(\sigma,y,y'_{II})|$,
\end{itemize}
then on Region 1 and Region 3, \eqref{eq:restricted-phase-partial-s-2} holds. On Region 2, \eqref{eq:phase-restricted-convex} continues to hold.
\end{enumerate}
 \end{corollary}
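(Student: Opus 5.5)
Write $f(s) = s\psi|_{\hat{\mu'}_I=\hat{M'}_I}$ and $\delta = y'_{n-1}-Y'_{n-1,0}(\sigma,y,y'_{II})$. The plan is to read everything off the first-derivative formula \eqref{eq:5.4-2}, which says $f'(s) = y'_{n-1}-Y'_{n-1}(\sigma,y,y'_{II},\hat{M'}_I(s),s)$. Combined with the expansion \eqref{eq:Y'-n-1-expansion-concrete-low-2}, this gives
\begin{equation*}
f'(s) = \delta + s\big(1 + O(\delta_1)\big) + O(s^2),
\end{equation*}
since the coefficient $\sum_j \Ymetric^{(n-1)j}\hat{M'}_j + 1$ equals $1+O(\delta_1)$ by \eqref{eq: hat-mu'-small-delta-1}. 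The $O(s^2)$ remainder is controlled uniformly, even though $\hat{M'}_I$ depends on $s$. This uses the bound $|\partial_s\hat{M'}_I|\lesssim s^{-1}$ of Lemma~\ref{lemma:M1-s-derivative-bound} together with $|\hat{M'}_I|\le\delta_1$, so no derivative of the remainder is needed, only its size. In the high-energy region~1 the extra $\varrho$-dependence contributes $O(x')\lesssim O(s)$, which is absorbed exactly as in the discussion before Lemma~\ref{lemma:phase-restricted-convex-low-high-region1}.

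Equivalently, one can integrate Lemma~\ref{lemma:phase-restricted-convex-low-high-region1} from $s=0$. We have $f'(0)=\delta$, and the sign of $f''$ is fixed, with $f''\approx 1$ after orienting as in the expansion; the lemma only gives $|f''|\gtrsim 1$, but the explicit expansion fixes the sign. Hence $f'(s)=\delta+\int_0^s f''$, with $c s\le \int_0^s f''\le C' s$ for $s\in[0,s_0]$. This integrated form is cleaner and is the one I would use. One subtlety is that $\srange\cap[0,s_0]$ need not be an interval. I would avoid this by arguing pointwise with the expansion above, which holds at each $s\in\srange$. Alternatively, one can note that $\hat{M'}_I$ extends smoothly in $s$ on each component, so the argument runs on each connected component and the constants are uniform.

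\textbf{Case $\delta\ge0$.} Both terms are nonnegative, so $f'(s)\ge cs$, which gives \eqref{eq:restricted-phase-partial-s-2}.

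\textbf{Case $\delta<0$.} Put $c\le \int_0^s f''/s\le C'$.
\begin{itemize}
\item Region~1, $s\ge C|\delta|$: here $f'(s)\ge cs-|\delta|\ge (c-C^{-1})s\gtrsim s$ once $C>2/c$.
\item Region~3, $s\le C^{-1}|\delta|$: here $f'(s)\le -|\delta|+C's\le -(C-C')s$, so $|f'|\gtrsim s$ for $C>2C'$.
\item Region~2: the convexity \eqref{eq:phase-restricted-convex} is simply inherited from Lemma~\ref{lemma:phase-restricted-convex-low-high-region1}, which holds on all of $[0,s_0]$.
\end{itemize}
Shrinking $s_0$ keeps the $O(s^2)$ remainders below $cs/4$.

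The main obstacle is justifying the uniformity of the remainder $O(s^2)$ in $f'$ across $\srange$, given that $\hat{M'}_I$ is only conic in $s$. I would handle this by expanding $Y'_{n-1}$ in $s$ at fixed $\hat{\mu'}_I$ (which is smooth) and only then substituting $\hat{\mu'}_I=\hat{M'}_I$. This avoids differentiating $\hat{M'}_I$ at all in the first-derivative bound.
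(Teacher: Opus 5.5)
Your argument is correct and is essentially the paper's proof. Both read off $\partial_s\big(s\psi|_{\hat{\mu'}_I=\hat{M'}_I}\big)=\big(y'_{n-1}-Y'_{n-1,0}\big)+s\big(1+O(\delta_1)\big)+O(s^2)$ pointwise on $\srange$ from \eqref{eq:5.4-2} and \eqref{eq:Y'-n-1-expansion-concrete-low-2} (resp.\ \eqref{eq:Y'-n-1-expansion-concrete-high-region1}), let the linear term dominate in Region 1 and the constant term in Region 3, and inherit Region 2 from Lemma~\ref{lemma:phase-restricted-convex-low-high-region1}. Keep the pointwise form rather than the integrated one, which would need $[0,s]\subset\srange$. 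Also, the $\varrho$-dependence enters the first derivative only through the $O(s^2)$ remainder, not as an $O(x')$ term; an $O(s)$ error with an unspecified constant could not be absorbed.
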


\begin{proof}
By \eqref{eq:5.4-2} and \eqref{eq:Y'-n-1-expansion-concrete-low-2}, for $s \in \srange$, we have
\begin{align} \label{eq:s-psi-restricted-s-derivative}
\begin{split}
  \partial_s\big(s\psi|_{\hat{\mu'}_{I} = \hat{M'}_I }\big) 
 & = \big(y_{n-1}'-Y_{n-1,0}'(\sigma,y,y'_{II}) \big) \\&+  s\Big(\sum_{j \in I} \Ymetric^{(n-1)j}\hat{M'}_I + \sum_{j \in II} \Ymetric^{(n-1)j}\hat{M'}_{II} +1\Big)  + O(s^2).
\end{split}
\end{align}
When $(y_{n-1}'-Y_{n-1,0}'(\sigma,y,y'_{II})) \geq 0$, we know \eqref{eq:restricted-phase-partial-s-2} holds after potentially choosing $s_0$ to be small.

Now we turn to the case when $y'_{n-1} - Y'_{n-1,0}(\sigma,y,y'_{II})<0$.
We choose $C$ large and define three regions as above. 
Let $F(s)$ be the sum of the last two terms in \eqref{eq:s-psi-restricted-s-derivative}.
In Region 1, $F(s)$ dominates $(y_{n-1}'-Y_{n-1,0}'(\sigma,y,y'_{II}))$ and we have \eqref{eq:restricted-phase-partial-s-2}.
On the other hand, in Region 3, $(y_{n-1}'-Y_{n-1,0}'(\sigma,y,y'_{II}) )$ dominates all other terms and we have
\begin{equation}  
 |\partial_s(s\psi|_{\hat{\mu'}_{I} = \hat{M'}_I })|
 \gtrsim |y_{n-1}'-Y_{n-1,0}'(\sigma,y,y'_{II})| \gtrsim s.
\end{equation}
\end{proof}
\subsection{The value of the phase function: the high-energy regime}
\label{subsec:value-phase-high}
We discuss the value of the phase function in the high-energy regime in this subsection.

Though we renormalized the value of $\tau$ in \eqref{eq:semiclassical-form-bf} when we approach boundaries to form a smooth section of ${}^{\bundlehigh}T_{\smf}^*X_{\rmb,\calchigh}^2$,  in fact we can choose not to do so except that this is not a smooth function down to the boundary any more. Let 
\begin{equation} \label{eq:def-rho-b}
    \rho_{\rmb} = (x^{-2}+(x')^{-2})^{-1/2}
\end{equation}
be the total boundary defining function of the boundary in $X_{\rmb}^2$, which can be identified with $\smf = X_{\rmb,\calchigh}^2 \cap \{h=0\}$.
Then $\rho_{\rmb}$ lifts to the defining function of
${}^{\bundlehigh}T_{\smf \cap ( \LB \cup \BFS \cup \RB )  }^*X_{\rmb,\calchigh}^2$ in ${}^{\bundlehigh}T_{\smf}^*X_{\rmb,\calchigh}^2$, which we still denote by $\rho_{\rmb}$.

Then over the interior of $X_{\rmb}^2$, using $(z,z',\zeta,\zeta')$ as its coordinates, $\tau$ restricted on $L^{\bfs,\calchigh}$ is the length of the lifted geodesic connecting $(z,z')$ with $\zeta,\zeta'$ as momentum at $z,z'$ respectively. 
Recall the discussion in \cite[Proposition~9.2]{HTW1}, if we consider
\begin{equation}
    \rho_{\rmb} d(z,z'),
\end{equation}
then it extends to a continuous function on $X_{\rmb}^2$ in a neighborhood of the lifted diagonal and is smooth except for a conic singularity at the diagonal. 

Now we remedy this issue of singularity at the diagonal and the restriction being close to the diagonal by switching to the flow time along the geodesic flow.
This is the signed distance near the diagonal and is more natural to be viewed as a function on $\hat{L}^{\bfs,\calchigh}$, which is the content of the rest of this subsection.

For how this resolves the singularity at the diagonal, it is illuminating to recall the concept of the conormal bundle of the diagonal.
Recall that ${}^{\Phi}T^*X_{\rmb}^2$ is the bundle defined in \eqref{def:pulled-back-bundle} and let $\NDb$ be the conormal bundle of the lifted diagonal in ${}^{\Phi}T^*X_{\rmb}^2$ defined by
\begin{equation}\label{eq:conormal-b-diagonal}
    \NDb = \{ \sigma = 1, y = y', \nu =-\nu', \mu=-\mu'\} \subset {}^{\Phi}T^*X_{\rmb}^2
\end{equation}
near $\bfs$, where un-primed and primed coordinates are as in \eqref{eq:sc-1-form} but lifted from ${}^{\sct}T^*X$ from the left and right factors respectively. And over the interior the characterization is
\begin{equation}
\{ z=z', \zeta = -\zeta' \} \subset {}^{\Phi}T^*X_{\rmb}^2
\end{equation}
if we use $(z,\zeta)$ as coordinates of the cotangent bundle.

In addition, consider the following map ${}^{\bundlehigh}T_{\smf}^*X_{\rmb,\calchigh}^2 \to {}^{\Phi}T^*X_{\rmb}^2$:
\begin{align} \label{eq:bundlehigh-Phi-projection}
\projhighlow: \quad   (z,z',\zeta,\zeta',\tau) \to (z,z',\zeta,\zeta')
\end{align}
over $X^{\circ} \times X^{\circ}$ with coordinates as in \eqref{eq:semiclassical-form-interior} and \eqref{eq:1-form-Phi-bundle-interior} respectively and extends to the boundary part ${}^{\bundlehigh}T_{\smf \cap \BFS}^*X_{\rmb,\calchigh}^2 \to {}^{\Phi}T_{\bfs}^*X_{\rmb}^2$ smoothly.
In addition, when we restrict $\projhighlow$ to $L^{\bfs,\calchigh}$, this is a diffeomorphism onto its image since $\tau$ is uniquely determined by other variables in a smooth (in terms of the smooth structure of ${}^{\bundlehigh}T_{\smf \cap \BFS}^*X_{\rmb,\calchigh}^2$) manner.
Using $\projhighlow$, $\NDb$ can be identified with its preimage under this projection intersecting with $L^{\bfs,\calchigh}$, which is, in terms of \eqref{eq:semiclassical-form-interior}, the following part of $L^{\bfs,\calchigh}$:
\begin{equation}
\NDhigh :=  \mathrm{clos} \{ h=0, z = z', \zeta = -\zeta', \tau =0 \}  \subset {}^{\bundlehigh}T_{\smf}^*X_{\rmb,\calchigh}^2.
\end{equation}
In addition, $\projhighlow$ also induces a unique map $\hat{\Pi}_{\smf}$ so that the diagram
\[ \begin{tikzcd}
{[{}^{\bundlehigh}T_{\smf}^*X; J^{\calchigh} ]} \arrow{r}{\beta_{\mathrm{LCP},\calchigh}} \arrow[swap]{d}{ \hat{\Pi}_{\smf} } & {}^{\bundlehigh}T_{\smf}^*X  \arrow{d}{\projhighlow } \\%
{[{}^{\Phi}T^*X_{\rmb}^2;J^{\calclow}]} \arrow{r}&  {}^{\Phi}T^*X_{\rmb}^2
\end{tikzcd} \]
commutes, where $J^{\calchigh}$ and $J^{\calclow}$ are defined in \eqref{eq:def-J-high} and \eqref{eq:def-J-low} and horizontal arrows are blow-down maps.
In terms of local coordinates over the region $x,|\mu'| \lesssim |\mu|$, $\hat{\Pi}_{\smf}$ is given by
\begin{equation} \label{eq:hat-Pi-sPhi-Phi}
  (\frac{x}{|\mu|},\sigma,y,y',\nu,\nu',|\mu|,\frac{|\mu'|}{|\mu|},\hat{\mu},\hat{\mu'},\overline{\tau})
  \to   (\frac{x}{|\mu|},\sigma,y,y',\nu,\nu',|\mu|, \frac{|\mu'|}{|\mu|},\hat{\mu},\hat{\mu'}),
\end{equation}
where $\overline{\tau}$ is as in \eqref{eq:semiclassical-form-bf}.

We make two observations here:
\begin{enumerate}
    \item The singularity of the distance function at the diagonal is resolved if we lift that function to $\hat{L}^{\bfs,\calchigh}$ and stop requiring the positive sign.   \label{item:dist-observation-1} 
    \item Restricting to a small neighborhood of the diagonal is unnecessary as well, after lifting to $\hat{L}^{\bfs,\calchigh}$.              \label{item:dist-observation-2}
\end{enumerate}
We explain \eqref{item:dist-observation-1} first. Notice that the singularity of the distance function is caused by the singularity of $\frac{z-z'}{|z-z'|}$
at the diagonal and sign switching of the gradient when one goes across the diagonal along a geodesic. Lifting to $L^{\bfs,\calchigh}$ replaces the diagonal with the conormal bundle of the diagonal (together with the other $\tau$-component, but restricted to be $0$), which exactly resolves this singularity.
More precisely, on $L^{\bfs,\calchigh}$, viewing it as the flow out of $\NDhigh$ as in  \eqref{eq:L-high-interior}, this $\tau$ is precisely the signed distance function, at least near $\NDhigh$, and is smooth on $L^{\bfs,\calchigh}$.
Up to this stage, using $L^{\bfs,\calchigh}$ instead of $\hat{L}^{\bfs,\calchigh}$ is sufficient. But there are singularities of the manifold $L^{\bfs,\calchigh}$ itself in the other extreme: when we approach $G_1^{\calchigh} \cap L^{\bfs,\calchigh}$, which corresponds to the case when $z,z'$ are approaching the `starting and ending points' of a geodesic.
But this is again resolved when we resolve it at $G_1^{\calchigh} \cap L^{\bfs,\calchigh}$ and further lift this function to $\hat{L}^{\bfs,\calchigh}$.

Now we discuss the behaviour of this flow time $\tau$ as we approach $\BFS \cap \smf$ on $L^{\bfs,\calchigh}$.
Comparing \eqref{eq:semiclassical-form-interior} with \eqref{eq:semiclassical-form-bf}, we know that the relation between $\overline{\tau}$ and $\tau$ is
\begin{equation}
   \tau - \frac{\xi}{x} - \frac{\xi'}{x'}   =  \overline{\tau},
\end{equation}
which means that $\overline{\tau}$ tends to the sojourn time (or, the renormalized length) as both left and right factors tend to two ends of the geodesic. 
This is because $\xi,\, \xi' \to \pm 1$ in that setting with the sign depending on which is chosen to go to the endpoint in the forward direction. See \cite[Section~15]{Hassell-Wunsch-semiclassical-resolvent} for more discussion on this perspective.

But in the current setting, we can stop incorporating $\tau$ as a part of the bundle, but instead view it as a function (or in fact, tempered distribution, as it has growth approaching the boundary) on 
\begin{equation}
\RXstarb := \hat{\Pi}_{\smf}(\hat{L}^{\bfs,\calchigh})  \subset [{}^{\Phi}T^*X_{\rmb}^2;J^{\calclow}],
\end{equation}
which roughly speaking is just obtained from $L^{\bfs,\calchigh}$ by forgetting the $\tau$-component.

The only thing left to check for this flow time $\tau$ to be well-defined as a function on $\RXstarb$ is that the flow time $\tau$ between the left and right components of a point in $\RXstarb$ is unique, but this follows from our non-trapping condition in Definition~\ref{def:non-trapping}, which implies that there is no (lifted) geodesic loop in ${}^{\sct}T^*X$.
\footnote{Notice that, as shown in \eqref{eq:conic-bichar}, those boundary bicharacteristic lines can form loops when projected to $\partial X$, but can't form loops if we take the frequency components into consideration as well. }
In addition, this also removes the restriction being close to the diagonal in \eqref{item:dist-observation-2}.  
We denote this function by $\mk{d}_X$ and it satisfies
\begin{equation} \label{eq:mkd-X}
  \mk{d}_X \in  \rho_{\bfs}^{-1}C^\infty(\RXstarb).
\end{equation}
When $\RXstarb$ projects to $X_{\rmb}^2$ diffeomorphically on some $U \subset \RXstarb$, then this $\mk{d}_X$ can locally be viewed as a function (with conic singularity when approaching the diagonal and $\rho_{\rmb}^{-1}$ growth near the boundary) on $X_{\rmb}^2$ as well and in fact this is what we will exploit.

Now we state the main conclusion of this subsection, which relates the value of our phase function to the generalization above of the distance function.

\begin{proposition} \label{prop:phase-equal-distance-high}
Suppose $\Phi$ parametrizes $L^{\bfs,\calchigh}$ in the sense of \eqref{eq:def-parametrization-high-energy-conic-codim3} in a region $\tilde{U} \subset \hat{L}^{\bfs,\calchigh}$ and is contained in the region $\theta = x'/x \lesssim 1$, with $C_{\Phi}$ as in \eqref{eq:def-C-Phi-high-energy-conic} and $U = \hat{\Pi}_{\smf} (\tilde{U}) \subset \RXstarb$; then we have
\begin{equation} \label{eq:phase-equal-distance-main-high-rb}
 (\paramconic^{-1})^*(\frac{\Phi}{\theta x}\Big|_{C_{\Phi}})  = (\hat{\Pi}_{\smf}|_{\hat{L}^{\bfs,\calchigh}})^*(\mk{d}_{X}),
\end{equation}
on $\tilde{U}$, where $\paramconic$ is the parametrization map sending points in $C_{\Phi}$ to points in $L^{\bfs,\calchigh}$ according to \eqref{eq:def-parametrization-high-energy-conic-codim3}.
Similarly, if it were over a region on which $\sigma = x/x' \lesssim 1$, we have
\begin{equation} \label{eq:phase-equal-distance-main-high-lb}
 (\paramconic^{-1})^*(\frac{\Phi}{\sigma x'}\Big|_{C_{\Phi}})  = (\hat{\Pi}_{\smf}|_{\hat{L}^{\bfs,\calchigh}})^*(\mk{d}_{X}).
\end{equation}
\end{proposition}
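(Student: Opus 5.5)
The proof will rest on one principle: the phase function restricted to its critical set is a generating function for the Legendrian. Along any curve in $L^{\bfs,\calchigh}$, the differential of that restriction is given by the contact/canonical form. The same holds for the flow time $\tau$, which is the $d(1/h)$ component of the one form \eqref{eq:semiclassical-form-interior}. So I would compare the two sides of \eqref{eq:phase-equal-distance-main-high-rb} in three steps. First, show their differentials agree on $\tilde{U}$. Second, check that the region is connected. Third, match the values at one point, or at a boundary face where both are determined.

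For the first step, I work over the interior $h=0$, $z,z'\in X^\circ$, in coordinates \eqref{eq:semiclassical-form-interior}. By \eqref{eq:def-parametrization-high-energy-conic-codim3}, a point of $L^{\bfs,\calchigh}$ equals $d\big(\Phi/(\theta x h)\big)$ evaluated on $C_\Phi$. Write $F=\Phi/(\theta x)$, a function of $(z,z',v,s)$, so the covector is $d_{z,z'}F\, h^{-1} + F\, d(1/h)$. Comparing coefficients of $d(1/h)$ gives $\tau = F|_{C_\Phi}$ pointwise, i.e.\ $(\paramconic^{-1})^*(F|_{C_\Phi})$ is the $\tau$-coordinate of the point. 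On the other hand, \eqref{eq:L-high-interior} identifies $\tau$ on $L^{\bfs,\calchigh}$ with the flow time $G_\tau$ from $q'$ to $q$. By the non-trapping assumption (Definition~\ref{def:non-trapping}), this flow time is a well-defined function on $\RXstarb$, and its pullback by $\hat{\Pi}_{\smf}$ is exactly $\mk{d}_X$ by definition. This gives the identity on the interior part of $\tilde U$. Note that this part of the argument is really a comparison of coordinates rather than differentials; the differential argument is only a consistency check, via the Legendrian condition $\alpha|_{L}=0$ and $d\tau$ along the flow out from $\NDhigh$ where $\tau=0$.

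For points of $\tilde U$ over the boundary faces $\LB$, $\BFS$, $\RB$, I argue by continuity. Both sides lie in $\rho_{\bfs}^{-1}C^\infty$ by \eqref{eq:mkd-X} and the form of $\Phi$. After multiplying by the appropriate defining function ($\theta x h$ together with $\rho_{\rmb}$), both extend continuously to the closure. Since the interior of $\tilde U$ is dense in $\tilde U$ (all Legendrians are closures of their interiors by construction, \eqref{eq:hatLbf-high-def}), the identity extends. The renormalization $\tau-\xi/x-\xi'/x'=\overline\tau$ is consistent on both sides, because $d(1/(xh))$ and $d(1/(x\theta h))$ contribute the same $\xi/x,\xi'/x'$ terms to each. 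The case $\sigma\lesssim1$, \eqref{eq:phase-equal-distance-main-high-lb}, follows by exchanging primed and unprimed variables.

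The main obstacle is the identification of the $d(1/h)$ coefficient. The parametrization uses $\Phi/(\theta x h)$, while the interior coordinates \eqref{eq:semiclassical-form-interior} expand $d(F/h)$ as $F\,d(1/h)+h^{-1}dF$. I must check that the resolved coordinates on $[{}^{\bundlehigh}T^*_{\smf};J^{\calchigh}]$ do not mix the $\overline\tau$ component with the blown-up variables. I would verify this with \eqref{eq:hat-Pi-sPhi-Phi}, which shows that $\overline\tau$ is simply dropped by $\hat\Pi_{\smf}$, so the blow-up of $J^{\calchigh}$ is transparent to the $\tau$-coordinate. I also need to check that $\tilde U$ meets the interior densely even near $x'/s=0$.
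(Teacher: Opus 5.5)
Your proposal is correct and is the paper's argument. The paper's proof is the one-line observation that on $C_\Phi$ the $d(1/h)$-coefficient of $d\big(\Phi/(\theta x h)\big)$ is $\Phi/(\theta x)$, and that coefficient is the flow-time coordinate $\tau$ that defines $\mk{d}_X$ on $\RXstarb$; your density and continuity extension to the boundary faces is extra detail the paper leaves implicit. One small slip there: renormalize the left side by $\theta x$ only (and $\mk{d}_X$ by $\rho_{\rmb}$), not by $\theta x h$, because $h$ vanishes on $\smf$ and multiplying by it would trivialize the identity.
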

 
\begin{proof}
  The conclusion follows directly by observing that the coefficient of $d(\frac{1}{h})$ in \eqref{eq:def-parametrization-high-energy-conic-codim3}, which is what we defined to be $\mk{d}_X$, is precisely $\Phi/\theta x$.
  The remaining step is just pulling back all functions to $\tilde{U} \subset \hat{L}^{\bfs,\calchigh}$ so that we can actually write an identity.
\end{proof}

\begin{corollary} \label{coro:phase-value-lowerbound-high}
    One may choose the microlocal partition in Proposition~\ref{prop:microlocal-partition-high} so that 
    \begin{equation} \label{eq:Phi-lower-bound-off-diagonal}
     |\Phi| \geq \epsilon,        
    \end{equation}
 for a constant $\epsilon>0$ whenever $j \neq j'$.
\end{corollary}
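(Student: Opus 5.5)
The idea is to combine Proposition~\ref{prop:phase-equal-distance-high}, which identifies the critical value $\Phi/(\theta x)$ (or $\Phi/(\sigma x')$) with the flow time $\mk{d}_X$ on $\RXstarb$, with a refinement of the partition so that the flow time stays away from zero off the diagonal blocks. Write $\Phi = \rho\,\mk{d}_X$ on $C_\Phi$, where $\rho = \theta x$ or $\sigma x'$ is a product of boundary defining functions. Since $\mk{d}_X \in \rho_{\bfs}^{-1}C^\infty(\RXstarb)$ by \eqref{eq:mkd-X}, the rescaled quantity $\rho\,\mk{d}_X$ is smooth on $\hat{L}^{\bfs,\calchigh}$. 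In the interior it is the signed geodesic length rescaled by the boundary weight, and near $\BFS$ it is the rescaled conic-type distance. The zero set of $\Phi|_{C_\Phi}$ therefore corresponds exactly to points of $\hat{L}^{\bfs,\calchigh}$ where the flow time vanishes. By non-trapping there are no lifted geodesic loops, so this zero set is exactly $\NDhigh$, the lifted conormal bundle of the diagonal.

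First, I would check that the rescaled function $\Phi|_{C_\Phi}$ is bounded below by a positive constant on any compact subset of $\hat{L}^{\bfs,\calchigh}$ disjoint from $\NDhigh$. Compactness of $\hat{L}^{\bfs,\calchigh}$ gives this as long as $\Phi$ does not degenerate at the boundary faces. Near $\BFS$ and $\Lsharphigh$ it does not: $\Phi = 1+\sigma+s\sigma\psi$ is close to $1+\sigma \ge 1$ for $s$ small, so near the conic intersection there is no issue. Away from $s$ small, the function $\Phi$ on $L^{\bfs}$ equals $d_{\conic}/r$-type expressions, which are positive off the diagonal. Second, I would refine the cover \eqref{eq:high-phase-cover}, and its low-energy counterpart $\{\chi_j\}$, with an extra constraint: whenever $\overline{W_j}\cap\overline{W_{j'}}=\emptyset$ in phase space, the set $L^{\bfs,\calchigh}_{j,j'}$ stays at a positive distance from $\NDhigh$. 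This holds because a point of $\NDhigh$ has equal left and right projections after the sign switch in frequency. We are free to shrink the diameters, since Proposition~\ref{prop:microlocal-partition-high} only needs them below $\delta_0/2$ and $\delta_{\calchigh}/2$.

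The subtlety is that $j\ne j'$ does not mean the supports are disjoint, because neighbouring pieces overlap. So I would interpret the statement, and arrange the partition, as follows. Relabel pairs so that \emph{``$j\neq j'$''} means the enlarged supports $W_j$ and $W_{j'}'$ (the latter with frequency sign flipped) are disjoint. The finitely many overlapping pairs are grouped with the diagonal terms, which are treated separately near $\NDhigh$ by the standard near-diagonal analysis, as in \cite{Hassell-Zhang2016Strichartz}. Equivalently, I would choose the partition to be a two-scale one: a coarse cover whose pieces are either diagonal-adjacent or separated by a fixed gap. The main obstacle is uniformity down to $\BFS$, where $\sigma$ ranges over $[0,\infty]$ and left/right projections can be close while lifted points are not, as in the remark after Proposition~\ref{prop:microlocal-partition-high}. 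There I would use the product structure \eqref{eq:product-rhoBFS-Lbfs} and the $\sigma$-independent fibration \eqref{eq:fibration-Lbf-scrR*} to reduce to $\RYstar$. There, separation of $W_j$ and $W_{j'}$ in $S^*Y$ forces a uniform lower bound on the $Y$-flow time, so $\Phi - (1+\sigma)\cos(\cdot) \gtrsim$ a constant uniformly in $\sigma$ by \eqref{dconic}. Compactness then yields a single $\epsilon>0$.
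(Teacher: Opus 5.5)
Your treatment of the critical values is essentially the paper's first step. You identify $\Phi$ on $C_\Phi$ with the rescaled flow time via Proposition~\ref{prop:phase-equal-distance-high}, then keep it away from zero by separating microsupports. Two of your refinements are fair points the paper passes over quickly: the relevant quantity is the flow time $\ell=s_r-s_l$ rather than $d_{\Ymetric}(y,y')$, and pairs $j\neq j'$ with overlapping supports must be grouped with the near-diagonal terms. The formula you want at $\BFS$ is $\Phi^2=(1-\sigma)^2+2\sigma(1-\cos\ell)$ on $C_\Phi$. This gives $\Phi^2\geq\sin^2\delta$ for all $\sigma\geq 0$ once $|\ell|\geq\delta$, and should replace the expression $\Phi-(1+\sigma)\cos(\cdot)$.

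The gap is in what you are bounding. Everything you prove concerns $\Phi$ restricted to $C_\Phi$, i.e.\ a function on $\hat{L}^{\bfs,\calchigh}$, and compactness of $\hat{L}^{\bfs,\calchigh}$ controls only that. The corollary, however, is a statement about $\Phi$ on the region where the oscillatory integrals actually live. In the proof of Proposition~\ref{prop:microlocal-dispersive-1}, for example, one sets $\overline{r}=t^{-1/2}\Phi(z,z',v)/x$ and needs $\overline{r}\sim\overline{d}$ for \emph{every} $v$ in the support of the amplitude, not only where $d_v\Phi=0$. When there are $k\geq 1$ extra parameters (precisely the conjugate-point case), the compactly supported amplitude can reach points far from $C_\Phi$, and nothing you wrote prevents $\Phi$ from being small there.

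The missing step is the paper's second one.
\begin{itemize}
\item By Proposition~\ref{prop:1st-D-phase-lowerbound-bf-low}, $|\partial_v\psi|\gtrsim d_{\mk{g}}(\cdot,C_\Phi)$, with the $(s,v)$-version near the conic pair. Integrating by parts in $v$ then shows that the part of each oscillatory integral supported at distance $\geq\epsilon'$ from $C_\Phi$ is Schwartz, so it can be moved into the residual term.
\item On the remaining $\epsilon'$-neighbourhood of $C_\Phi$, uniform continuity of $\Phi$ together with your bound $|\Phi|\geq 2\epsilon$ on $C_\Phi$ gives $|\Phi|\geq\epsilon$, provided $\epsilon'$ is small.
\end{itemize}
Near the conic intersection this step is automatic, since there $\Phi=1+\sigma+s\sigma\psi$ with $s$ small on the support. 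It is not automatic for the pieces parametrizing $L^{\bfs}$ or $L^{\bfs,\calchigh}$ away from $\Lsharplow$ and $\Lsharphigh$, which are exactly the pieces where the corollary is used.
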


\begin{proof}
      This follows from observing that in either case, the right hand side of \eqref{eq:phase-equal-distance-main-high-rb} or \eqref{eq:phase-equal-distance-main-high-lb} is comparable to the distance (in terms of $\Ymetric$) of the projection of our variables on $Y$, which can be chosen to have a lower bound, say $2\epsilon>0$ on $L^{\bfs,\calchigh}_{j,j'}$ defined in \eqref{eq:Lbf-high-jj'} when $j \neq j'$.
      Consequently, $\Phi$ parametrizing a part in $L^{\bfs,\calchigh}_{j,j'}$ satisfies $|\Phi|_{C_{\Phi}}| \geq 2\epsilon$.
      For any fixed $\epsilon'>0$, by Proposition~\ref{prop:1st-D-phase-lowerbound-bf-low} below and a non-stationary phase argument, we know that the contribution to the oscillatory integral of the part that is at least $\epsilon'$-away from $C_{\Phi}$ is Schwartz. So we can choose $\epsilon'$ small enough so that the phase function satisfies \eqref{eq:Phi-lower-bound-off-diagonal} on the region where it is used in the parametrizations.
\end{proof}

Finally we discuss the situation when we can `descend' $x\mk{d}_X$ to be a function on $X_{\rmb}^2$ and extend it up to and even across the part at distance $\pi$ in terms of their $y$-components.
Consider the case that $U$ is a neighborhood of  $q_0 \in \RXstarb$ with $\sigma = x/x' \lesssim 1$ that is in the image of $\partial \hat{L}^{\bfs,\calchigh}$ and $(L^{\bfs,\calchigh},\Lsharphigh)$ is admissible in the sense of Definition~\ref{definition:Lbf-boundary-admissible}.
Then $x \sim \rho_{\rmb}$ and $x\mk{d}_X$ extends to the projection of $L^{\bfs,\calchigh} \cap \Lsharphigh$, which corresponds to those points with $(y,y')$ components connected by geodesics of length $\pi$ on $Y$, taking value $1+\sigma$ there.\footnote{This corresponds to the choice of the boundary defining functions $x,x'$ in Section~\ref{subsec:Legendrian-geometry-high} so that $\Lsharphigh_\pm$ is given by $\overline{\nu} = \pm (1+\sigma),\mu,\mu'=0$ in terms of coordinates in \eqref{eq:tautological-1-form-bf}. }
Let $(\sigma_0,0,y_0,y_0')$ be the projection of $q_0$ on $X_{\rmb}^2$.
We can further extend $x\mk{d}_X$ so that it takes value $1+\sigma$ to points on $X_{\rmb}^2$ with $(y,y')$ near $(y_0,y_0')$ but at distance larger than $\pi$ on $Y$.
In this way, though $x\mk{d}_X$ is not $C^\infty$ anymore, it is still continuous near $(\sigma_0,0,y_0,y_0')$. 

\subsection{The value of phase functions: the low-energy regime}
\label{subsec:value-phase-low}
Now we turn to the function value of the phase function appearing in Legendre distributions in the low-energy regime.
We denote the (truncated) exact cone over $Y$ by
\begin{equation} \label{eq:def-exact-cone}
\Econe = [0,x_0)_{x} \times Y,
\end{equation}
and equip it with the metric $g_0 = \frac{dx^2}{x^4} + \frac{\Ymetric}{x^2}$, where $\Ymetric$ is the tensor obtained by extending $\Ymetric|_{x=0}$ `by constant in $x$' in terms of the notation in \eqref{conic_metric_2}. 
We also use $X_0^{\circ}=(0,x_0)_{x} \times Y$ to denote its interior.

For the exact cone $X_0$ in \eqref{eq:def-exact-cone}, we can still define its $\rmb$-double space by
\begin{equation}
  (X_0)_{\rmb}^2 = [X_0 \times X_0 ; \partial X_0 \times \partial X_0],
\end{equation}
and denote the blow-down map $ (X_0)_{\rmb}^2  \to X_0 \times X_0$ by $\beta_{\rmb,0}$.
Then we define ${}^{\sct}T^*X_0$ in the same way as ${}^{\sct}T^*X$ using coordinates as in \eqref{eq:sc-1-form} and define
\begin{equation}
  {}^{\Phi}T^*(X_0)_{\rmb}^2 = \beta_{\rmb,0}^*({}^{\sct}T^*X_0 \times {}^{\sct}T^*X_0).
\end{equation}
Coordinates on it are defined in the same way as in the ${}^{\Phi}T^*X_{\rmb}^2$ using the contact form \eqref{eq:tautological-1-form-bf}, which we recall here:
\begin{equation}   \label{eq:tautological-1-form-bf-X0}
\alpha = \overline{\nu} \frac{dx}{x^2} + \nu_1 \frac{d\sigma}{x} + \mu \cdot \frac{dy}{x} + \mu'\cdot \frac{dy'}{x'}.
\end{equation}
 
Then we define
\begin{equation} \label{eq:def-J0-low}
J_0^{\calclow} = \{x=0,\nu_1=0,\mu=0,\mu'=0\},
\end{equation}
which is the analogue of $J^{\calclow}$ in \eqref{eq:def-J-low}.

We also define all those objects in Section~\ref{subsec:value-phase-high} in the same way, except that we only consider geodesics staying within $x \leq x_0 - \delta_0$ for some $\delta_0 \ll x_0$.\footnote{Here we further shrink by $\delta_0$ to avoid the issue of hitting $x=x_0$ when taking the closure.}
In particular, we define the analogue of $\RXstarb$, which we denote by $\mathscr{R}_{X_0,\rmb}^*$, to be
\begin{align}
\begin{split}
\mathscr{R}_{X_0,\rmb}^* : =  & \mathrm{clos} \{ (q,q') \in {}^{\sct}T^*X_0^{\circ} \times {}^{\sct}T^*X_0^{\circ}  : 
 \; 
 \\ & q,q' \text{ lie in the same bicharacteristic lines staying in } \{  x <  x_0 - \delta_0 \} \} 
 \subset [{}^{\Phi}T^*(X_0)_{\rmb}^2;J_0^{\calclow}] ,
 \end{split}
\end{align}
where the closure is, as indicated by the inclusion above, taken in $[{}^{\Phi}T^*(X_0)_{\rmb}^2;J_0^{\calclow}]$.
In the same way as in Section~\ref{subsec:value-phase-high}, we can define a signed distance function on $\mathscr{R}_{X_0,\rmb}^*$ and we denote it by $\mk{d}_{X_0}$.
Let $\rho_{\rmb,0} = (x^{-2}+(x')^{-2})^{-1/2}$, which is the same expression as in \eqref{eq:def-rho-b} but just defined on $(X_0)_{\rmb}^2$ now; then we have
\begin{equation} \label{eq:mkd-X0}
  \mk{d}_{X_0} \in \rho_{\rmb,0}^{-1}C^\infty(\mathscr{R}_{X_0,\rmb}^*).
\end{equation}
In addition, the part of $\mathscr{R}_{X_0,\rmb}^*$ lying over $\bfs$ is exactly $\hat{L}^{\bfs}$:
\begin{equation}
  \mathscr{R}_{X_0,\rmb}^* \cap [{}^{\Phi}T_{\bfs}^*(X_0)_{\rmb}^2;J_0^{\calclow}] = \hat{L}^{\bfs}.
\end{equation}

 


Now we state the result for the phase function on its critical set and the proof is the same as that of Proposition~\ref{prop:phase-equal-distance-high}.

\begin{proposition} \label{prop:phase-equal-distance-X0}
Suppose $\Phi$ parametrizes $L^{\bfs}$ in the sense of Definition~\ref{def:Legendre-parametrization-low} in a region $\tilde{U} \subset L^{\bfs}$ and contained in the region $\theta = x'/x \lesssim 1$, with $C_{\Phi}$ as in \eqref{eq:C-Phi-defn-1} and $U =\projhighlow (\tilde{U}) \subset \mathscr{R}_{X_0,\rmb}^*$, then we have
\begin{equation} \label{eq:phase-equal-distance-main-1}  (\paramconic^{-1})^*(\Phi)/\theta x  =\mk{d}_{X_0},
\end{equation}
on $\tilde{U}$, where $\paramconic$ is the parametrization map sending points in $C_{\Phi}$ to points in $L^{\bfs}$ according to \eqref{def:Legendre-parametrization-low}.
Here $ (\paramconic^{-1})^*(\Phi)$ is a function on $\hat{L}^{\bfs}$ but it can be viewed as a function on $\mathscr{R}_{X_0,\rmb}^*$ by extending `by constant in $\rho_{\bfs,0}$'.

Similarly, if it is over a region on which $\sigma = x/x' \lesssim 1$, we have
\begin{equation} \label{eq:phase-equal-distance-main}
 (\paramconic^{-1})^*(\Phi)/\sigma x'  = \mk{d}_{X_0}.
\end{equation}
\end{proposition}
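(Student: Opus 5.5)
The plan is to argue exactly as in Proposition~\ref{prop:phase-equal-distance-high}: identify the value of the phase on its critical set with the coefficient of the ``energy'' direction in the Legendre structure, and then identify that coefficient with the flow time along the bicharacteristic. On the exact cone $\Econe$ there is no semiclassical $d(\frac{1}{h})$ component. The first step is therefore to restore it by homogeneity in $\lambda$. The spectral measure at energy $\lambda$ is associated with the dilated Legendrian $\lambda L^{\bfs}$, and its phase is $e^{i\lambda\Phi/x}$. Viewing $\Phi/x$ (resp. $\Phi/(\theta x)$ after rewriting $x=\theta^{-1}x'$) as a function of $\lambda$ and the base variables, its derivative in $\lambda$ on $C_\Phi$ is just $\Phi/x$ itself. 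By the standard Hamilton--Jacobi interpretation, this derivative is the travel time of the bicharacteristic connecting the left and right points. In effect, we use the exact-cone analogue of $L^{\bfs,\calchigh}$ restricted to the region $x<x_0-\delta_0$, which is a smooth manifold there.

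Second, I will verify this identification concretely using the explicit description \eqref{eq:Lbf-definition-gamma^2} and \eqref{eq:conic-bichar}. Along a leaf $\gamma^2$, the scattering-unit-speed geodesic on the cone between $(x,y(s_l))$ and $(x',y(s_r))$ has $g_0$-length computed from $r=1/x=\sin(s+s_0)/(x_0\sin s_0)$. Up to the normalization $x_0/\sin s_0$, this length is
\[
r\cos s_l + r'\cos s_r .
\]
Since $\nu=-\cos s_l$ and $\nu'=\cos s_r$, with the sign convention of the leaf, the signed distance is $\mk{d}_{X_0}=-\nu/x+\nu'/x'$ on the leaf, up to the global sign fixed by orientation. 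On the other hand, \eqref{LPhiparam} gives
\[
\nu=\Phi-\sigma\partial_\sigma\Phi, \qquad \nu'=\partial_\sigma\Phi .
\]
The key identity to check is that $\Phi/x$ equals this combination. That is, it reduces to Euler's relation for $\Phi/x$, which is homogeneous of degree one in $(1/x,1/x')$ when $\Phi$ depends on $\sigma=x/x'$: writing $\Phi/x=F(1/x,1/x')$, we get $\Phi/x = (1/x)\partial_{1/x}F+(1/x')\partial_{1/x'}F$, and these partials are exactly the $\nu$- and $\nu'$-type fiber coordinates. Matching signs with the convention $\overline{\nu}=\nu+\sigma\nu'$ in \eqref{eq:tautological-1-form-bf-X0} then gives \eqref{eq:phase-equal-distance-main}. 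Exchanging the roles of the two factors, with $\theta$ in place of $\sigma$, gives \eqref{eq:phase-equal-distance-main-1}.

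Third, I will check that both sides are well defined on $\tilde U$ and extend consistently. The left side is independent of the choice of parametrization, since different parametrizations of the same Legendrian agree on the critical set, so it descends to a function on $\hat L^{\bfs}$. The right side is well defined because, on the truncated cone, a pair of lifted points determines a unique bicharacteristic. Both sides are homogeneous of degree $-1$ in $\rho_{\bfs,0}$, consistent with \eqref{eq:mkd-X0}, so extending ``by constant in $\rho_{\bfs,0}$'' after multiplying by $x$ is compatible. The identity therefore holds on $\tilde U$ once it holds on its interior, and near $\Lsharplow$ it follows by continuity.

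The main obstacle is the bookkeeping of signs and normalizations. The sign of $\nu$ versus $\nu'$, the orientation of the flow, and the sign of the signed distance on either side of the diagonal must all be matched. I will fix these by checking the identity at one point near the diagonal, where \eqref{dconic} gives the explicit value $d_{\conic}$; continuity along the leaves then propagates the identity.
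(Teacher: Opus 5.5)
Your first step is the paper's proof. Via Proposition~\ref{prop:phase-equal-distance-high}, the paper observes that on $C_\Phi$ the $d(\frac{1}{h})$-coefficient of $d\big(\frac{\Phi}{\theta x h}\big)$ is $\Phi/(\theta x)$. Since the propagating Legendrian is a flow-out in which that coefficient is the flow time, this quantity equals $\mk{d}$. For $X_0$ the paper works with the truncated analogue $\mathscr{R}^*_{X_0,\rmb}$, exactly as you propose. So your main line is fine, at the paper's level of rigor.

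The explicit verification in your second step does not work as written, and its sign problem is not a global one.
\begin{itemize}
\item From \eqref{eq:conic-bichar} one gets $r=\sin s_0/(x_0\sin(s+s_0))$; you inverted this.
\item Along the line, the unit-speed arclength coordinate at angle $\phi$ is $-r\cos\phi$. So the signed travel time from the right point to the left point is $r'\cos s_r-r\cos s_l$, not $r\cos s_l+r'\cos s_r$. Your expression does not even vanish on the diagonal $s_l=s_r$.
\item In the leaf coordinates $\nu=-\cos s_l$, $\nu'=\cos s_r$, the correct travel time is $\nu/x+\nu'/x'$. You wrote $-\nu/x+\nu'/x'$.
\item Euler's relation gives $\Phi/x=\nu/x+\nu'/x'$ on $C_\Phi$. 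Equivalently $\Phi=\nu+\sigma\nu'=\overline{\nu}$, which can be read off directly from \eqref{LPhiparam}.
\end{itemize}
Your two halves therefore disagree by a \emph{relative} sign between the two terms, and no choice of orientation can absorb that. Your proposed check near the diagonal would expose the mismatch rather than fix it: there $\nu'=-\nu$ and $x=x'$, so $\Phi=0$ while your formula gives $-2\nu/x$.

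With the sign corrected, the step becomes a clean independent proof. On the exact cone $\nabla^2(r^2/2)=g_0$, so $\frac{d}{d\ell}(r\dot r)=1$ along unit-speed geodesics. At the left point $\dot r=\nu$, and at the right point $\dot r=-\nu'$ because the right covector is sign-flipped. Hence the travel time from right to left is $r\nu+r'\nu'=\overline{\nu}/x=\Phi/x$. This gives the signed identity without appealing to the Hamilton--Jacobi or extended-Legendrian picture. It is also sharper than the law-of-cosines route in the remark after the statement, which only determines $\Phi^2$ and hence the unsigned distance.
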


\begin{remark}
If we were dealing with this $X_0$-case only, then it is almost sufficient to use the proof of \cite[Proposition~2.6]{Hassell-Zhang2016Strichartz}, which shows, 
\begin{equation}
\Phi^2 = 1+\sigma^2 - 2\sigma \cos(s_l-s_r)
\end{equation}
at points where $d_v\Phi=0$, in terms of the parametrization in \eqref{eq:Lbf-definition-gamma^2} and \eqref{LPhiparam}.
We know $s_l - s_r$ is the (signed) length of the geodesic connecting $y,y'$ on $Y$, and \eqref{eq:phase-equal-distance-main-1} follows by the law of cosines. We rewrote it in this way to make it parallel to the high-energy case in which we take the entire $X_{\rmb}^2$ into consideration.
\end{remark}

Then the analogue of Corollary~\ref{coro:phase-value-lowerbound-high} in this setting, which can be proved in the same way except for using Proposition~\ref{prop:phase-equal-distance-X0} instead, is the following result.
\begin{corollary} \label{coro:phase-value-lowerbound-low}
    One may select the microlocal partition in Proposition~\ref{prop:microlocal-partition-low} so that 
    \begin{equation} \label{eq:Phi-lower-bound-off-diagonal-low}
     |\Phi| \geq \epsilon,        
    \end{equation}
 for a constant $\epsilon>0$ whenever $j \neq j'$.
\end{corollary}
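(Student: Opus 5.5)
The plan is to mirror the proof of Corollary~\ref{coro:phase-value-lowerbound-high}, replacing the high-energy identification of the phase with the distance by Proposition~\ref{prop:phase-equal-distance-X0}. First I will fix a point of $L^{\bfs}_{j,j'}$ (defined in \eqref{eq:Lbf-low-jj'}) and a phase $\Phi$ parametrizing a neighbourhood of it, in the sense of Definition~\ref{def:Legendre-parametrization-low} or of \eqref{eq:hat-Lbf-parametrization-bf}. On the critical set $C_\Phi$, Proposition~\ref{prop:phase-equal-distance-X0} identifies $\Phi/(\sigma x')$ (or $\Phi/(\theta x)$) with $\mk{d}_{X_0}$. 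Hence, up to the smooth positive factor relating $x'\sigma$ to $\rho_{\rmb,0}$, the value $\Phi|_{C_\Phi}$ equals the rescaled signed conic distance $\rho_{\rmb,0}\mk{d}_{X_0}$, viewed as a function on $\hat{L}^{\bfs}$. Explicitly, it is $\pm\sqrt{1+\sigma^2-2\sigma\cos(s_l-s_r)}$ in terms of \eqref{eq:Lbf-definition-gamma^2}.

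Next I will refine the choice of partition in Proposition~\ref{prop:microlocal-partition-low}. The function $\sqrt{1+\sigma^2-2\sigma\cos(s_l-s_r)}$ vanishes on $\hat{L}^{\bfs}$ only where $\sigma=1$ and $s_l=s_r$, that is, on $N^*\Diagb\cap L^{\bfs}$. Since $\hat{L}^{\bfs}$ is compact after using the $\sigma$-invariance of the fibration \eqref{eq:fibration-Lbf-scrR*}, the phase is bounded below by a constant $2\epsilon>0$ outside any fixed neighbourhood of that set. I will therefore choose the cover $\{\chi_j\}$ in \eqref{eq:sum-chi-j-1} so that whenever $j\neq j'$, the symbol supports $W_j$ and $W_{j'}$ are separated by a fixed positive distance. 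Two points lying in $W_j$ and $W_{j'}$ then cannot be conormally paired on the diagonal, so $L^{\bfs}_{j,j'}$ stays a fixed distance from $N^*\Diagb$. Doing this requires the labelling of the partition to be arranged so that overlapping supports are merged or indexed together. I expect this combinatorial bookkeeping, together with the uniformity as $\sigma\to0$ or $\sigma\to\infty$, to be the main obstacle. Near the corners $\lb\cap\bfs$ and $\rb\cap\bfs$ the phase tends to $1$ by the form \eqref{eq:phase-conic-bf} (respectively \eqref{eq:phase-conic-low-bf-rb-corner}), so the lower bound is automatic there.

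Finally, I will pass from $C_\Phi$ to a neighbourhood of it in the parameter space. For any $\epsilon'>0$, the portion of each oscillatory integral supported at distance at least $\epsilon'$ from $C_\Phi$ has a non-stationary phase, since $d_{(v,s)}\Phi\neq0$ there, with the lower bound quantified as in Proposition~\ref{prop:1st-D-phase-lowerbound-bf-low}. Integrating by parts in $(v,s)$ then shows this portion is a Schwartz term, and it is absorbed into the remainder in Corollary~\ref{coro:microlocalized-spectral-measure-osc-int-form-low}. On the remaining $\epsilon'$-neighbourhood of $C_\Phi$, continuity of $\Phi$ together with $|\Phi|_{C_\Phi}|\ge2\epsilon$ gives $|\Phi|\ge\epsilon$, provided $\epsilon'$ is small. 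This yields \eqref{eq:Phi-lower-bound-off-diagonal-low}.
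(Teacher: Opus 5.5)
\textbf{Where your outline agrees with the paper.} Your route is the paper's own; the paper simply says to argue as in Corollary~\ref{coro:phase-value-lowerbound-high}, using Proposition~\ref{prop:phase-equal-distance-X0}. You identify $\Phi|_{C_\Phi}$ with $x\mk{d}_{X_0}=\pm\sqrt{1+\sigma^2-2\sigma\cos(s_l-s_r)}$, bound it below on $L^{\bfs}_{j,j'}$, and pass to a neighbourhood of $C_\Phi$ by non-stationary phase and continuity. Your first and last steps match the paper. Your compactness argument is also correct for a pair whose localizing sets are \emph{separated}. The zero set of $|x\mk{d}_{X_0}|$ on $\hat{L}^{\bfs}$ is $\{\sigma=1,\ s_l=s_r\}$, where the left and (sign-flipped) right projections coincide. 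So on the compact part of $\hat{L}^{\bfs}$ where these projections are at least $\eta$ apart, you get $|\Phi|_{C_\Phi}|\geq 2\epsilon(\eta)>0$.

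\textbf{The gap.} You propose to choose the cover in \eqref{eq:sum-chi-j-1} so that $W_j$ and $W_{j'}$ are separated for \emph{every} $j\neq j'$. This cannot be done.
\begin{itemize}
\item The $\chi_j$, $j\in\Jlow$, sum to $1$ on the unit sphere bundle $\{|\mu|^2_{\Ymetric}+\nu^2=1\}$ over $\partial X$. This set is connected over each component of $Y$, while each $\chi_j$ has support of diameter $<\delta_0/2$.
\item Hence the open sets $\{\chi_j>0\}$ cannot be pairwise disjoint, and some $j\neq j'$ have overlapping $W_j,W_{j'}$. Here the convention is the one under which $j=j'$ is the near-diagonal case.
\item For such a pair, $L^{\bfs}_{j,j'}$ contains points of $N^*\Diagb\cap L^{\bfs}$, where $\Phi|_{C_\Phi}=0$. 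So \eqref{eq:Phi-lower-bound-off-diagonal-low} genuinely fails for that pair.
\item Merging overlapping pieces does not help. On a connected set the merging only stops at a single piece, which destroys the small-diameter property behind Proposition~\ref{prop:microlocal-partition-low}(3), the whole reason for the partition.
\end{itemize}

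\textbf{What is true instead.} There is a dichotomy over the finitely many pairs.
\begin{itemize}
\item If $\overline{W_j}\cap\overline{W_{j'}}=\emptyset$, finiteness gives a uniform $\eta>0$, and your argument yields $|\Phi|\geq\epsilon$.
\item If they overlap, the bound $|\cos s_l-\cos s_r|<\delta_0$ on the $\nu$-components forces $|s_l-s_r|\lesssim\delta_0^{1/2}$. So $L^{\bfs}_{j,j'}$ lies in the short-geodesic region, where $L^{\bfs}$ is parametrized without extra variables by $\pm d_{\conic}/r$. These pairs must be grouped with the case $j=j'$ and treated by the near-diagonal argument of \cite{Hassell-Zhang2016Strichartz}, as invoked in Proposition~\ref{prop:microlocal-dispersive-1}.
\end{itemize}
The paper's one-line proof also asserts the lower bound for all $j\neq j'$ without addressing overlapping supports. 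The corollary should therefore be read with ``$j\neq j'$'' meaning ``$W_j$ and $W_{j'}$ disjoint''.
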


\subsection{Lower bound of the first order derivatives of the phase function}

In this subsection, we give a lower bound of the first order derivatives of phase functions parametrizing an intersecting pair of Legendre submanifolds with conic points near their critical set  $C_{\Phi}$ in \eqref{eq:C-Phi-conic-pair-defn}.
Since they vanish on $C_{\Phi}$, the natural lower bound is the distance to $C_\Phi$.
Here the key observation is that $s$ and $v$ should be treated differently: $(s,v)$ should be viewed as `polar coordinates' with $s$ being the radial direction. 
After choosing the local parametrization of $(L^{\bfs},L^{\#})$ as in Section~\ref{subsec:conic-pair-geometry-and-phase-function}, we will use the Euclidean metric
\begin{equation} \label{eq:metric-LCP-parametrization}
 \mk{g} = d\sigma^2 + dy^2 + (dy')^2 + ds^2 + dv^2,
\end{equation}
on $\bfs \times \R_s \times \R^{k}_v$ after fixing coordinates to measure the distance in this part.

Now we give the lower bound of derivatives of the phase function in terms of the distance to $C_{\Phi}$.

\begin{proposition} \label{prop:1st-D-phase-lowerbound-bf-low}
Let $\Phi = 1+\sigma + s\sigma \psi$ be as in \eqref{eq:phase-conic-bf} and suppose it parametrizes  $(L^{\bfs},L^{\calchigh})$ non-degenerately near the image of $q_ 0 \in C_{\Phi} \cap \{s=0\}$ as in \eqref{eq:hat-Lbf-parametrization-bf-2}.
There is a neighborhood of $q_0$ on which
\begin{align} \label{eq:lower-bound-s-v-derivative}
|\partial_s(s\psi)|+|\partial_v\psi| \gtrsim d_{\mk{g}}((\sigma,y,y',v,s),C_{\Phi}),
\end{align}
where $d_{\mk{g}}$ is the distance in terms of Euclidean distance in this coordinate system.
Suppose $\Phi = 1+\sigma + s\sigma \psi$ is instead given by \eqref{eq:conic-pair-phase-construction-high-region1} and parametrizes $(L^{\bfs,\calchigh},\Lsharphigh)$ non-degenerately near the image (in $\hat{L}^{\bfs,\calchigh}$) of $q_0 \in C_{\Phi} \cap \{s=0\}$ and suppose that $(\hat{L}^{\bfs,\calchigh},\Lsharphigh)$ is admissible in the sense of Definition~\ref{definition:Lbf-boundary-admissible}, then there is a neighborhood of $q_0$ on which
\begin{align} \label{eq:lower-bound-s-v-derivative-high}
|\partial_s(s\psi)|+|\partial_v\psi| \gtrsim d_{\mk{g}}((\sigma,y,y',v,s,x'/s),C_{\Phi}).
\end{align}
If $\Phi=1+\sigma+\sigma\psi$ instead parametrizes $L^{\bfs}$ or $L^{\bfs,\calchigh}$ away from $L^{\bfs}\cap \Lsharplow$ or $L^{\bfs,\calchigh} \cap \Lsharphigh$, we have
\begin{align} \label{eq:lower-bound-v-derivative}
|\partial_v\psi(p)| \gtrsim d_{\mk{g}}(p,C_{\Phi}).
\end{align}
\end{proposition}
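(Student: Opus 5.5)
The plan is to treat \eqref{eq:lower-bound-s-v-derivative} as a quantitative (\L{}ojasiewicz-type, but linear) version of the non-degeneracy of the parametrization. Consider the map
\begin{equation*}
F(\sigma,y,y',v,s) = \big(\partial_s(s\psi),\, \partial_v\psi\big),
\end{equation*}
which is smooth near $q_0$ and satisfies $C_\Phi = F^{-1}(0)$ locally. By \eqref{eq:C-Phi-conic-pair-defn}, $C_{\Phi}$ is given by $\partial_s \Phi = 0$, $\partial_v\psi=0$; since $\partial_s\Phi = \sigma\partial_s(s\psi)$ and $\sigma\sim 1$ away from $\lb$, this is $F=0$. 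The core observation is then the following: if $dF$ has full rank $k+1$ at $q_0$, the implicit function theorem makes $C_\Phi$ a smooth submanifold of codimension $k+1$. In suitable local coordinates $(w,\zeta)$, with $C_\Phi=\{\zeta=0\}$, one has $F = A(w,\zeta)\zeta$ with $A$ invertible near $q_0$, and therefore
\begin{equation*}
|F|\gtrsim |\zeta| \sim d_{\mk g}(\cdot,C_\Phi).
\end{equation*}
So everything reduces to checking that $dF$ has rank $k+1$ at $q_0$.

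For the rank computation I would use the explicit phase \eqref{eq:conic-pair-phase-construction-low} together with Proposition~\ref{prop:LCP-phase-derivatives}. These give $\partial_v\psi = y'_I - Y'_I$ with $v=\hat{\mu'}_I$, and $\partial_s(s\psi) = y'_{n-1}-Y'_{n-1}$. Hence
\begin{equation*}
dF = \big(dy'_{n-1}-dY'_{n-1},\, dy'_I - dY'_I\big).
\end{equation*}
The $y'$-differentials are linearly independent of each other and of $dY'$, because $Y'$ depends only on $(\sigma,y,y'_{II},\hat{\mu'}_I,s)$ and not on $(y'_I,y'_{n-1})$. The differential therefore has full rank $k+1$ everywhere, with no need for any Hessian information. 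This agrees with the non-degeneracy condition \eqref{eq:non-degenerate-parametrization-conic-low}; for a general non-degenerate $\Phi$ one can also reduce to the explicit one, since the non-degeneracy condition is precisely linear independence of $d(\partial_s(s\psi))$ and $d(\partial_{v^j}\psi)$ at $q_0$. The uniformity of the implied constant in a fixed neighbourhood follows from continuity of $A^{-1}$.

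For \eqref{eq:lower-bound-s-v-derivative-high} the same argument applies with the extra variable $\varrho=x'/s$, using \eqref{eq:conic-pair-phase-construction-high-region1}. Here the main obstacle is that the coordinates must be smooth down to $s=0$ uniformly in $\varrho$, so that $dF$ is taken on a genuine manifold with corners. The admissibility condition together with the expansions \eqref{eq:Y'-I-expansion-high-region1}, \eqref{eq:Y'-n-1-expansion-high-region1} guarantees that $Y'$ is smooth in $(s,\varrho)$ and that no singular $\varrho$-derivative enters at $s=0$. With that in hand, the rank of $dF$ is again full thanks to the $dy'$ components, and the implicit function argument goes through, after extending $F$ smoothly across $s=0$ if needed so that the distance to $C_\Phi$ is measured in the manifold-with-corners sense.

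The final estimate \eqref{eq:lower-bound-v-derivative}, away from the intersection, is the standard case $F=\partial_v\psi$. There non-degeneracy in the sense of Definition~\ref{def:Legendre-parametrization-low} gives that $d_{(\sigma,y,y',v)}\partial_{v^j}\psi$ are independent, and the same factorization argument applies.
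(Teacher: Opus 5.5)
Your proposal is correct and follows essentially the paper's route: the paper also uses the non-degeneracy condition \eqref{eq:non-degenerate-parametrization-conic-low} at $s=0$ to show that $(\sigma,y,y',v,s)\mapsto(\partial_s(s\psi),\partial_v\psi)$ is a submersion at $q_0$. It then reads off the distance bound, and treats the high-energy case (extra $\varrho$-dependence) and the case away from $\Lsharplow$ the same way. Your explicit check through Proposition~\ref{prop:LCP-phase-derivatives} (the identity block in the $(y'_{n-1},y'_I)$ columns) is a good concrete confirmation: it also shows that $ds$ and $d\varrho$ stay independent of $dF$, which keeps the projection onto $C_\Phi$ inside $\{s\ge 0\}$, and it makes clear that admissibility plays no role in the rank argument.
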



\begin{proof}
The proof of \eqref{eq:lower-bound-v-derivative} is the same (in fact simpler) than the case when $\Phi$ parametrizes Legendrian conic pairs, for which we give details below.
We consider the low-energy case in which $\Phi = 1+\sigma + s\sigma \psi$ is from \eqref{eq:phase-conic-bf} first.
Notice that \eqref{eq:non-degenerate-parametrization-conic-low} implies that
\begin{equation}  
 d_{(\sigma,y,y',v,s)}\psi, \; d_{(\sigma,y,y',v,s)}(\frac{\partial \psi}{\partial v^j}), \; j=1,2,...,k  \text{ \; are linearly independent at \; } q_0.
\end{equation}
Noticing that $d_{(\sigma,y,y',v,s)}(\partial_s(s\psi))=d_{(\sigma,y,y',v,s)}(\psi)$ at $s=0$, the non-degenerate condition above implies that the map
\begin{equation}
  (\sigma,y,y',v,s) \to (\partial_s(s\psi), \partial_v\psi)
\end{equation}
has non-degenerate derivative at $q_0$, hence the derivative remains non-degenerate near $q_0$, and this implies \eqref{eq:lower-bound-s-v-derivative}.

The high-energy case can be proved in the same way except that all functions have extra $\varrho$-dependence.

\end{proof}

Next we give an analogous estimate that lower-bounds derivatives of the phase in the setting introduced in Section~\ref{subsec:phase-Hessian}. 
Let $\ULCPhigh$ be the neighborhood of $q_0 \in \partial \hat{L}^{\bfs,\calchigh}$ as in Proposition~\ref{prop:Y'I-derivative;low-high-region1-region-def}, then $\hat{L}^{\bfs,\calchigh}$ is locally diffeomorphic to $X_{\rmb}^2$ over it before reaching the lift of $L^{\bfs,\calchigh} \cap \Lsharphigh$. Since $\tau$ (or $\overline{\tau}$) is a function of other variables, passing from $\hat{L}^{\bfs,\calchigh}$ to $\RXstarb$ does not change this diffeomorphic property, and $\RXstarb$ projects to $X_{\rmb}^2$ diffeomorphically over this region as well.
Then let $\mk{d}$ be the distance function defined before \eqref{eq:mkd-X}, as discussed there, we can view the renormalized distance $x\mk{d}$ as a function on $X_{\rmb}^2$ over this region. 
In addition, after further decomposing in $v$ and applying a linear transformation in $y$ (which in turn induces a transformation in frequencies), we may assume that we are in a region satisfying \eqref{eq: hat-mu'-small-delta-1}.

\begin{proposition} \label{prop:1st-dPhi-est}
Suppose $\Phi$ is as in \eqref{eq:conic-pair-phase-construction-high-region1} and parametrizes $(L^{\bfs,\calchigh},\Lsharphigh)$ non-degenerately near the image (in $\hat{L}^{\bfs,\calchigh}$) of $q_0 \in C_{\Phi} \cap \{s=0\}$ in a neighborhood as above and suppose that $(\hat{L}^{\bfs,\calchigh},\Lsharphigh)$ is admissible in the sense of Definition~\ref{definition:Lbf-boundary-admissible}, then there is a neighborhood of $q_0$ on which
\begin{equation} \label{eq:5.5-1} 
 \sigma^{1/2} s |x\mk{d}_X - \Phi|^{1/2}  \lesssim \big(|\partial_v\Phi| + |s\partial_s\Phi| \big).
\end{equation}
The same conclusion applies to the low-energy setting except that $\Phi$ is assumed to parametrize $(L^{\bfs},\Lsharplow)$ (hence no $\varrho$ dependence now) and $\mk{d}_X$ is replaced by $\mk{d}_{X_0}$.

\end{proposition}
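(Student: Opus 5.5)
The plan is a Taylor-expansion comparison between $\Phi$ at an arbitrary point and its value on the critical set, organized in the coordinates of Section~\ref{subsec:critical_points_at_fixed_s}. Write $\Phi = 1+\sigma+s\sigma\psi$ with $s\psi$ as in \eqref{eq:s-psi-def}. Fix $(\sigma,y,y')$ (and $\varrho$ in the high-energy case). By Proposition~\ref{prop:phase-equal-distance-high} (resp. Proposition~\ref{prop:phase-equal-distance-X0}), $x\mk{d}_X = \Phi|_{C_\Phi}$ at the critical point lying over $(\sigma,y,y')$. Where no critical point exists, I would use the continuous extension of $x\mk{d}_X$ by $1+\sigma$ described at the end of Section~\ref{subsec:value-phase-high}. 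Hence the left side of \eqref{eq:5.5-1} measures $|\Phi(v,s)-\Phi(v_c,s_c)|$, where $(v_c,s_c)=(\hat{M'}_I(s_c),s_c)$ is the critical point, or $s_c=0$ in the extended case. The task is to bound this difference by the squared size of the gradient, using Proposition~\ref{prop:LCP-phase-derivatives}:
\[
\partial_v\Phi = s\sigma(y'_I-Y'_I), \qquad s\partial_s\Phi = s\sigma(y'_{n-1}-Y'_{n-1}).
\]

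\textbf{Step 1: the $v$-direction at fixed $s$.} By Corollary~\ref{coro:Hessian-lowerbound-low;high-region1}, $\partial^2_{vv}(s\psi)=s^2H_I$ with $H_I$ close to the identity, and $\partial_v(s\psi)=s(y'_I-Y'_I)$ vanishes at $v=\hat{M'}_I(s)$. For $s\in\srange$, a quadratic Taylor bound gives
\[
|s\psi(v,s)-s\psi(\hat{M'}_I,s)| \lesssim s^{-2}|\partial_v(s\psi)|^2 .
\]
Multiplying by $\sigma$ and by $s^2$, this is consistent with the target
\[
\sigma s^2|\Delta\Phi| \lesssim \sigma^{-1}\bigl(|\partial_v\Phi|^2+|s\partial_s\Phi|^2\bigr).
\]
So the inequality should be squared and the powers of $\sigma$ tracked (the factor $\sigma^{1/2}$ is exactly what is absorbed). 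For $s\notin\srange$, \eqref{eq:lower-bound-Ec} gives $|\partial_v\Phi|\gtrsim s^2\sigma$. Since $|\Delta\Phi|\lesssim\sigma s$ trivially, the bound $\sigma s^2|\Delta\Phi|\lesssim\sigma^2s^3\le\sigma^2 s^4\cdot s^{-1}$ is not quite enough here. I would instead compare with $s=0$ via Step 2, together with $|\Delta\Phi|\lesssim \sigma s\,|y'-Y'|$, using the explicit form of $\psi$ and the admissible expansions \eqref{eq:Y'-I-expansion}–\eqref{eq:Y'-n-1-expansion}.

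\textbf{Step 2: the $s$-direction along the restricted phase $f(s)=s\psi|_{v=\hat{M'}_I(s)}$.} By \eqref{eq:5.4-2}, $f'(s)=y'_{n-1}-Y'_{n-1}(\hat{M'}_I(s),s)$. Lemma~\ref{lemma:phase-restricted-convex-low-high-region1} gives $|f''|\gtrsim 1$, and Corollary~\ref{coro:restricted-phase-three-regions} controls the three regions. Near the critical $s_c$ (Region 2), $|f(s)-f(s_c)|\lesssim|f'(s)|^2$. In Regions 1 and 3, and in the case $y'_{n-1}\ge Y'_{n-1,0}$ where $s_c=0$, I would use $|f'|\gtrsim s$ and $|f(s)-f(s_c)|\lesssim s|f'(s)|$, which is again $\lesssim s^{-1}|f'|^2$. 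Combining, $s^2|\Delta f|\lesssim |sf'|^2$. The full difference splits into a $v$-part (Step 1) and an $s$-part (Step 2), and $|s\partial_s\Phi|$ at a non-critical $v$ differs from $s\sigma f'$ by $O(s\sigma|Y'_{n-1}(v)-Y'_{n-1}(\hat{M'}_I)|)=O(s^2\sigma|v-\hat{M'}_I|)$. That error is controlled by $|\partial_v\Phi|$ through Step 1, via $|v-\hat{M'}_I|\lesssim s^{-2}\sigma^{-1}|\partial_v\Phi|$. Taking square roots then yields \eqref{eq:5.5-1}.

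\textbf{Main obstacle.} The delicate point is the regime $s\notin\srange$, together with the matching of the extended value $1+\sigma$ with $\Phi$ when the geodesic length exceeds $\pi$. There the admissibility condition (Definition~\ref{definition:Lbf-boundary-admissible}) is essential: it forces $Y'-Y'_0=O(s)$, so that $|\Delta\Phi|=\sigma|s\psi|\lesssim\sigma s\,|y'-Y'_0|+\sigma s^2$, and one must check that $|y'-Y'_0|$ is bounded by the gradient terms divided by $s\sigma$. I would try that first using \eqref{eq:lower-bound-Ec} and Proposition~\ref{prop:1st-D-phase-lowerbound-bf-low}. The low-energy case is identical without $\varrho$.
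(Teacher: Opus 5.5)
Your route is genuinely different from the paper's. With $w=(y'_I-Y'_I,\,y'_{n-1}-Y'_{n-1})$, Proposition~\ref{prop:LCP-phase-derivatives} reduces \eqref{eq:5.5-1} to $|x\mk{d}_X-\Phi|\lesssim\sigma|w|^2$, and the paper proves this in one stroke. Since $Y'_I,Y'_{n-1}$ do not depend on $(y'_I,y'_{n-1})$, the variables $w$ are coordinates transverse to $C_\Phi=\{w=0\}$. The difference $x\mk{d}_X-\Phi$ vanishes there (Proposition~\ref{prop:phase-equal-distance-high}) together with its first derivatives, so a second-order Taylor expansion gives the bound. The factor $\sigma$ comes from $N=1+\sigma+\sigma sN_2$ and from $y'$-derivatives of $x\mk{d}_X$ being $O(\sigma)$.

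You instead factor the comparison through the fixed-$s$ critical point $\hat{M'}_I(s)$ and then along the restricted phase $f(s)=s\psi|_{v=\hat{M'}_I(s)}$, reusing $\partial^2_{vv}\psi=sH_I$, Lemma~\ref{lemma:phase-restricted-convex-low-high-region1} and Corollary~\ref{coro:restricted-phase-three-regions}. The paper's argument needs no case analysis. However, it relies on second-order $y'$-derivatives of $x\mk{d}_X$ being $O(\sigma)$ along segments passing near the conic point $y'=Y'_0$, which is exactly where $x\mk{d}_X$ is least regular. Yours only evaluates $x\mk{d}_X$ at critical points (or uses the value $1+\sigma$), at the price of a case split.

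As written, though, the case split is not closed and two inequalities are misstated.

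\textbf{(i) The regime $s\notin\srange$.} You leave this open, but it closes using only $|w|\gtrsim s$, which holds there by \eqref{eq:lower-bound-Ec}.
The $ds$-component of the Legendre condition gives $\partial_sN=O(\sigma s)$, so $N_2=O(s)$. Hence $|\Phi-(1+\sigma)|=\sigma|sN_2+s\,w_I\cdot v+s\,w_{n-1}|\lesssim\sigma(s^2+s|w|)\lesssim\sigma|w|^2$.
If there is a critical point $(v_c,s_c)$ over $(\sigma,y,y')$, then $x\mk{d}_X-(1+\sigma)=\sigma s_cN_2(v_c,s_c)=O(\sigma s_c^2)$. Writing $Y'_0=(Y'_{I,0},Y'_{n-1,0})$, we have $|Y'(v',s')-Y'_0|\sim s'$ by \eqref{eq:Y'-n-1-expansion-concrete-low-2}, so $s_c\sim|y'-Y'_0|\le|w|+|Y'(v,s)-Y'_0|\lesssim|w|+s\lesssim|w|$. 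If there is no critical point, $x\mk{d}_X=1+\sigma$ and nothing more is needed.
So the natural split is $|w|\gtrsim s$ versus $|w|\le cs$ with $c$ small. In the second case $s\in\srange$, $|v-\hat{M'}_I(s)|\lesssim c$ and $|f'(s)|\lesssim cs$. Corollary~\ref{coro:restricted-phase-three-regions} then puts $s$ in Region 2 with $s_c$ within $O(cs)$, and your Steps 1--2 apply.

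\textbf{(ii) Misstated inequalities.} Squaring \eqref{eq:5.5-1} gives $\sigma s^2|x\mk{d}_X-\Phi|\lesssim(|\partial_v\Phi|+|s\partial_s\Phi|)^2$, with no $\sigma^{-1}$; your Step 1 in fact yields $\sigma s^2|\Delta\Phi|\lesssim|\partial_v\Phi|^2$ exactly.
In Step 2 what you need is $|f(s)-f(s_c)|\lesssim|f'(s)|^2$. Your intermediate claim $|f(s)-f(s_c)|\lesssim s|f'(s)|$ is false in Region 3: for $s\ll|y'_{n-1}-Y'_{n-1,0}|$ the left side is $\sim|y'_{n-1}-Y'_{n-1,0}|^2$, while the right side is $\sim s|y'_{n-1}-Y'_{n-1,0}|$. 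Also, a bound $\lesssim s^{-1}|f'|^2$ would only give $s^2|\Delta f|\lesssim s|f'|^2$, which is not enough.
Use instead:
(a) $|\Delta f|\lesssim s^2\lesssim|f'|^2$ in Region 1;
(b) $|\Delta f|\lesssim|y'_{n-1}-Y'_{n-1,0}|^2\lesssim|f'|^2$ in Region 3;
(c) in Region 2, the two-sided bound $1\lesssim|f''|\lesssim1$. Its upper half follows from $\partial_vY'_{n-1}=O(s)$ (admissibility) and Lemma~\ref{lemma:M1-s-derivative-bound}.
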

Here $x\mk{d}_X$ is viewed as a function on $X_{\rmb}^2$ (as discussed at the end of Section~\ref{subsec:value-phase-high}) and lifted to $X_{\rmb}^2 \times \R^k_v \times [0,s_0)_s$.

\begin{proof}

Notice that \eqref{eq:5.5-1} is equivalent to 
\begin{equation}
 |x\mk{d}_X - \Phi|^{1/2}  
\lesssim \sigma^{1/2} (|y_I'-Y_I'|+|y'_{n-1}-Y'_{n-1}|),
\end{equation}
or
\begin{equation} \label{eq:5.5-3}
|x\mk{d}_X(\sigma,y,y',x') - \Phi| 
\lesssim \sigma (|y_I'-Y_I'|^2+|y'_{n-1}-Y'_{n-1}|^2).
\end{equation}

The left hand side vanishes when the right hand side vanishes by Proposition~\ref{prop:phase-equal-distance-high}.
In addition, by Proposition~\ref{prop:LCP-phase-derivatives}, the condition that $\partial_v\Phi = 0$ and $\partial_s\Phi = 0$ is equivalent to $y'_I - Y'_I = 0$ and $y'_{n-1}-Y'_{n-1} = 0$.
So the left hand side of \eqref{eq:5.5-3} has vanishing derivative in $v,s$ as well when the right hand side vanishes, hence it is controlled by the right hand side. 
Here we have the extra $\sigma$-factor because, as discussed, $\Phi$ can be written as $1+\sigma+s\sigma\psi$, or more precisely because of the particular form of the $N$-component in  
\eqref{eq:hatLbf-low-components} and \eqref{eq:hatLbf-high-components-region1}.
In addition, derivatives of $x\mk{d}_X$ on right variables (i.e., variables with primes) are $O(\sigma)$ as well, since they are $O(1/x')$ without the $x$ factor.


\end{proof}

For the region with $|\mu'|/x' \lesssim 1$, we have the following estimate for derivatives.
\begin{proposition} \label{prop:1st-dPhi-est-region2}
Suppose $\Phi$ is as in \eqref{eq:conic-pair-phase-construction-high-region2} and parametrizes $(L^{\bfs,\calchigh},\Lsharphigh)$ non-degenerately near the image (in $\hat{L}^{\bfs,\calchigh}$) of $q_0 \in C_{\Phi} \cap \{s=0\}$ in a neighborhood as above and suppose that $(\hat{L}^{\bfs,\calchigh},\Lsharphigh)$ is admissible in the sense of Definition~\ref{definition:Lbf-boundary-admissible}, then there is a neighborhood of $q_0$ on which
\begin{equation} \label{eq:5.5-2} 
 x' \sigma^{1/2} |x\mk{d}_X - \Phi|^{1/2}  \lesssim \big(|\partial_v\Phi| + |\partial_\varkappa \Phi| \big).
\end{equation}
\end{proposition}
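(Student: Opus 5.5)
The plan is to follow the proof of Proposition~\ref{prop:1st-dPhi-est}, working in the coordinates $\tilde{\mathcal{Z}}_{\calchigh}$ of \eqref{eq:hatLbf-high-components-region2}. The first step is to read off the parameter derivatives from \eqref{eq:5.1-2} in Proposition~\ref{prop:LCP-phase-derivatives}. With $v=\hat{\mu'}_I$ these are
\begin{equation*}
\partial_v\Phi = \sigma x'(y'_I-Y'_I), \qquad \partial_\varkappa\Phi = \sigma x'(y'_{n-1}-Y'_{n-1}).
\end{equation*}
The right hand side of \eqref{eq:5.5-2} is therefore comparable to $\sigma x'\big(|y'_I-Y'_I|+|y'_{n-1}-Y'_{n-1}|\big)$. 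After cancelling the common factor $x'$ and squaring, \eqref{eq:5.5-2} reduces to
\begin{equation*}
|x\mk{d}_X-\Phi| \lesssim \sigma\big(|y'_I-Y'_I|^2+|y'_{n-1}-Y'_{n-1}|^2\big),
\end{equation*}
which is the exact analogue of \eqref{eq:5.5-3}.

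The second step is to prove this reduced inequality with a Taylor argument in the parameters. Fix $(\sigma,x',y,y')$ and regard $F=x\mk{d}_X-\Phi$ as a function of $\tilde v=(\varkappa,\hat{\mu'}_I)$. By Proposition~\ref{prop:phase-equal-distance-high}, in the form \eqref{eq:phase-equal-distance-main-high-lb} with the $x'\sigma$ normalization, $F$ vanishes on $C_\Phi$. Since $x\mk{d}_X$ does not depend on $\tilde v$, we also have $\partial_{\tilde v}F=-\partial_{\tilde v}\Phi=0$ on $C_\Phi$, so $F$ vanishes to second order there. Next, Proposition~\ref{prop:lower-bound-hessian-high-region2} shows that the map $\tilde v\mapsto (Y'_{n-1},Y'_I)$ has Jacobian $-x'H_2$ with $H_2$ uniformly invertible. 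Consequently, for fixed base point, the distance in $\tilde v$ to the critical point is bounded by $(x')^{-1}\big(|y'_I-Y'_I|+|y'_{n-1}-Y'_{n-1}|\big)$; this uses local injectivity together with the $\delta_0$-margin argument that gives \eqref{eq:lower-bound-Ec} when no critical point exists. The second derivatives satisfy $\partial^2_{\tilde v}F=-\partial^2_{\tilde v}\Phi=-\sigma x'\cdot x'H_2=O(\sigma (x')^2)$ by Corollary~\ref{coro:lower-bound-hessian-high-region2}. Integrating along the segment gives $|F|\lesssim \sigma(x')^2\cdot (x')^{-2}\big(|y'_I-Y'_I|+|y'_{n-1}-Y'_{n-1}|\big)^2$, which is the claim. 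The segment should be taken in $\tilde v$ after an optional change of variables to $(Y'_{n-1},Y'_I)$, which is legitimate because that map is a diffeomorphism with uniform constants once the factor $x'$ is removed.

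The main obstacle is the case where the fiber has no critical point, that is, when $y'$ lies outside the range of $(Y'_{n-1},Y'_I)$ over the support. In that case the Taylor base point is unavailable. My first attempt is to bound $F$ directly. The function $x\mk{d}_X$ is continuous and bounded, extended by $1+\sigma$ as explained at the end of Section~\ref{subsec:value-phase-high}, and $\Phi=1+\sigma+O(\sigma x')$. Hence $|F|\lesssim\sigma x'+|x\mk{d}_X-(1+\sigma)|$, and the second term is $O(\sigma)$ times the $y'$-distance, because right-variable derivatives of $x\mk{d}_X$ are $O(\sigma)$. On the other side, the $\delta_0$-margin argument gives $|y'-Y'|\gtrsim x'$, so $\sigma|y'-Y'|^2\gtrsim\sigma (x')^2$. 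The difficulty is matching the linear-in-distance bound for $F$ against this quadratic lower bound. If this fails, I would instead bound the exponent region by a first-order estimate, using that the admissibility expansions \eqref{eq:Y'-I-expansion-high-region2}–\eqref{eq:Y'-n-1-expansion-region2} make $x\mk{d}_X-\Phi$ vanish quadratically in the $y'$-displacement from $Y'_{\cdot,0}$.
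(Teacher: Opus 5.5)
Your reduction of \eqref{eq:5.5-2} to the analogue of \eqref{eq:5.5-3} via \eqref{eq:5.1-2} is exactly what the paper does. Your Taylor expansion in $\tilde v$ is also correct whenever the fibre over the given base point contains a critical point.

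The gap is the other case. You leave it open, and the direct bound you propose cannot close it. Write $F=x\mk{d}_X-\Phi$ and $G=(y'_I-Y'_I,\,y'_{n-1}-Y'_{n-1})$. When there is no critical point, the $\delta_0$-margin only gives $|G|\gtrsim x'$, so the target $\sigma|G|^2$ can be as small as $\sigma (x')^2$. Your bounds $|\Phi-(1+\sigma)|=O(\sigma x')$ and $|x\mk{d}_X-(1+\sigma)|=O(\sigma)\cdot(\text{displacement in } y')$ are only linear, so they are off by a factor $(x')^{-1}$. The fallback you sketch is not specific enough to count as an argument.

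The missing idea is to expand in the right variables rather than in the parameters. The paper's proof of Proposition~\ref{prop:1st-dPhi-est}, to which this proof defers, argues that $F$ vanishes to second order on $\{G=0\}$ and controls the remainder by primed derivatives of $x\mk{d}_X$. Concretely:

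\begin{itemize}
\item Fix $(\sigma,x',y,y'_{II},\tilde v)$ and vary only $(y'_I,y'_{n-1})$. Since $N$, $Y'_I$, $Y'_{n-1}$ do not depend on $(y'_I,y'_{n-1})$, the point $y'_*=(Y'_I,y'_{II},Y'_{n-1})$ always lies on $C_\Phi$, whether or not the fibre over the original base point meets $C_\Phi$. Moreover $y'-y'_*=G$.
\item At $y'_*$ one has $F=0$ by Proposition~\ref{prop:phase-equal-distance-high}, and $\partial_{\tilde v}F=-\sigma x'G=0$.
\item Differentiate the identity $F|_{C_\Phi}=0$ in $\tilde v$. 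Since $\partial_{\tilde v}(Y'_{n-1},Y'_I)=-x'H_2$ is invertible (Proposition~\ref{prop:lower-bound-hessian-high-region2}), this gives $\partial_{(y'_I,y'_{n-1})}F=0$ at $y'_*$ for $x'>0$. That is all you need, since both sides of \eqref{eq:5.5-2} vanish at $x'=0$.
\item $\Phi$ is affine in $(y'_I,y'_{n-1})$, so $\partial^2_{(y'_I,y'_{n-1})}F=\partial^2_{(y'_I,y'_{n-1})}(x\mk{d}_X)=O(\sigma)$.
\end{itemize}

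Taylor's theorem then gives $|F|\lesssim \sigma |G|^2$ with no case distinction. This uses $C^2$ control of $x\mk{d}_X$ in the primed variables, which the paper takes as given.

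Your $\tilde v$-expansion has one advantage: it never differentiates $x\mk{d}_X$. But it only covers the case where the critical point exists, so it cannot replace the argument above.
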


\begin{proof}
    Using the second part of Proposition~\ref{prop:LCP-phase-derivatives} or more concretely \eqref{eq:5.1-2}, we know that \eqref{eq:5.5-2} is again equivalent to \eqref{eq:5.5-3} (now the previous $s$-factor is replaced by this $x'$-factor and again cancels out)
   except that $Y'_I,Y'_{n-1}$ are interpreted as those ones in \eqref{eq:hatLbf-high-components-region2}.
Then the rest of the proof remains the same.
\end{proof}

\section{\texorpdfstring{The pointwise bound of Legendre distributions\\ around the conic points of the intersecting Legendre pair}{The pointwise bound of Legendre distributions around the conic points of the intersecting Legendre pair}}
\label{sec:conic-points-pointwise-bound}

In this section, we prove the pointwise bound for Legendre distributions associated with the intersecting Legendre pair with conic points.
This estimate will demonstrate that, as stated in Section~\ref{subsec:Strategy-of-proof}, the geometric focusing at the end of $L^{\bfs}$ and $L^{\bfs,\calchigh}$ (i.e., when they meet $\Lsharplow$ and $\Lsharphigh$ respectively) won't affect dispersive estimates. 
In the low-energy setting, this corresponds to conjugate point pairs connected by a geodesic of length $\pi$ on the cross section $Y$.
In the high-energy setting, this not only includes those conjugate point pairs on $Y$, but also includes those conjugate point pairs on $\partial X \times \partial X$ connected by a geodesic going through the interior of $X$. In particular, this allows us to include dispersive estimates on Euclidean spaces (or with perturbations by scaling-critical potentials) as a special case of our results.

\subsection{Pointwise bound of Legendre distributions around the conic points of the intersecting Legendre pair: the low-energy part}
\label{subsec:conic-points-pointwise-bound-low}

In this subsection, we prove a pointwise bound for the Legendre distributions associated with intersecting pairs of Legendre submanifolds with conic points in the low-energy setting. 
It shows that when the intersecting pair $(L^{\bfs},\Lsharplow)$ is admissible, Legendre distributions associated with conic pairs obey the same  pointwise bound that does not depend on the degeneracy of the projection from $\hat{L}^{\bfs}$.
As mentioned in Section~\ref{subsec:Strategy-of-proof}, this confirms the claim that the geometric focusing on $Y$ exactly at time $\pi$ won't affect dispersive estimates.
This is achieved by exploiting the precise order of degeneracy of the parametrization Hessian in $v$ that we obtained in Section~\ref{sec:projection-and-phase}, which reflects the geometric structure of our intersecting pairs.

\begin{proposition} \label{prop:conic-pair-pointwise-bound-low}
Suppose that $\Phi(\sigma,y,y',v,s)$ is as in \eqref{eq:conic-pair-phase-construction-low}, with $v=\hat{\mu'}_I,s=\mu_{n-1}$, and parametrizes $(L^{\bfs},L^\#)$ over a region as in Proposition~\ref{prop:Y'I-derivative;low-high-region1-region-def} (i.e. the projection $\hat{L}^{\bfs} \to \bfs$ only degenerates at the lift of $L^{\bfs} \cap \Lsharplow$) and $(L^{\bfs},\Lsharplow)$ is admissible in the sense of Definition~\ref{definition:Lbf-boundary-admissible}, then
\begin{align} \label{eq:conic-pair-integral-bound-low}
\begin{split}
\Big|& \lambda^{n-1}  \int_0^\infty \int_{\R^{k}}  e^{i\lambda\Phi(\sigma,y,y',v,s)/x} \big(\frac{x'}{\lambda s}\big)^{(n-1)/2 - (k+1)/2}
  s^{n-2} \sigma^{\frac{n-1}{2}} a(\lambda,\sigma,y,y',\frac{x'}{\lambda},v,s) \, dv \, ds  \Big|
\\& \lesssim \lambda^{n-1} (x'/\lambda)^{(n-1)/2}\sigma^{(n-1)/2}
\end{split}
\end{align}
on the region $x/x' = \sigma \leq 2, x'/\lambda \leq 2$. The estimate near $\bfs \cap \rb$, with $x,x'$ exchanged and $\sigma$ replaced by $\theta=x'/x$ also holds.
\end{proposition}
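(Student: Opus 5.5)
Writing $\omega=\lambda\sigma/x=\lambda/x'$ (so that $\lambda\Phi/x=\omega(1+\sigma)/\sigma\cdot\sigma+\omega s\psi$ up to the harmless factor $e^{i\lambda(1+\sigma)/x}$), and noting that on the region $x'/\lambda\le 2$ we have $\omega\gtrsim 1$, the claimed bound after cancelling common factors reads
\begin{equation*}
\Big|\int_0^\infty\!\!\int_{\R^k} e^{i\omega s\psi}\, s^{\frac{n-3}{2}+\frac{k+1}{2}}\,\omega^{\frac{k+1}{2}}\, a\, dv\, ds\Big|\lesssim 1 ,
\end{equation*}
since $(x'/(\lambda s))^{(n-1)/2-(k+1)/2}s^{n-2}=(x'/\lambda)^{(n-1)/2}\omega^{(k+1)/2}s^{(n-3)/2+(k+1)/2}$. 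So the plan is to show the integral $I(\omega)=\int\!\!\int e^{i\omega s\psi}s^{(n+k-2)/2}a\,dv\,ds$ is $O(\omega^{-(k+1)/2})$ uniformly in $(\sigma,y,y')$. I would first split the $s$-integral at scale $s\sim\omega^{-1/2}$ (more precisely with a smooth cutoff $\chi(\omega^{1/2}s)$). On $s\lesssim\omega^{-1/2}$ the trivial bound gives $\int_0^{\omega^{-1/2}}s^{(n+k-2)/2}ds\sim\omega^{-(n+k)/4}$, which beats $\omega^{-(k+1)/2}$ exactly when $n\ge k+2$, true since $k\le n-2$ by Proposition~\ref{prop:minimal-parametrization-low-conic}. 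This is the `damping' mechanism: the vanishing factor $s^{(n+k-2)/2}$ compensates the degeneracy of the Hessian at $s=0$.

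On the region $s\gtrsim\omega^{-1/2}$ I would do stationary phase in $v=\hat{\mu'}_I$ at fixed $s$. By Corollary~\ref{coro:Hessian-lowerbound-low;high-region1}, $\partial^2_{vv}(s\psi)=s^2H_I$ with $H_I$ uniformly non-degenerate, so the effective large parameter in $v$ is $\omega s^2\gtrsim1$. For $s\notin\srange$, \eqref{eq:lower-bound-Ec} gives $|\partial_v(s\psi)|=s|y_I'-Y_I'|\gtrsim s^2$, and integrating by parts in $v$ yields $O((\omega s^2)^{-N})$. For $s\in\srange$, the unique critical point $\hat{M'}_I(s)$ and a Morse-lemma/stationary phase with parameter $\omega s^2$ gives
\begin{equation*}
\int e^{i\omega s\psi}a\,dv = e^{i\omega s\psi|_{v=\hat M'_I}}(\omega s^2)^{-k/2}b(\omega,s)+O((\omega s^2)^{-k/2-1}),
\end{equation*}
where, by Lemma~\ref{lemma:M1-s-derivative-bound}, $b$ has symbolic (conic) behaviour $|\partial_s^m b|\lesssim s^{-m}$. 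The remainder, multiplied by $s^{(n+k-2)/2}$ and integrated over $s\ge\omega^{-1/2}$, is fine as before. The main term reduces to a one-dimensional oscillatory integral $\omega^{-k/2}\int e^{i\omega F(s)}s^{(n-k-2)/2}b\,\chi\,ds$, with $F(s)=s\psi|_{v=\hat M'_I(s)}$, and it remains to show this is $O(\omega^{-1/2})$.

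For this last step I would use Lemma~\ref{lemma:phase-restricted-convex-low-high-region1} and Corollary~\ref{coro:restricted-phase-three-regions}: where $|F''|\gtrsim1$ (Region~2) van der Corput with the symbol bounds on the amplitude (amplitude of size $\lesssim1$ with total variation $O(1)$ after absorbing the conic singularity, since $s\gtrsim\omega^{-1/2}$) gives $O(\omega^{-1/2})$; where $|F'|\gtrsim s$ (Regions~1,3 and case (1)) I would integrate by parts once, gaining $(\omega s)^{-1}\cdot s^{-1}$ per step against the conic amplitude, i.e. factors $(\omega s^2)^{-1}$, and summing dyadically over $s\ge\omega^{-1/2}$ gives $O(\omega^{-1/2})$ (the dyadic shell $s\sim\omega^{-1/2}$ is where all these bounds saturate). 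Gluing the regions with smooth dyadic cutoffs adapted to $|y'_{n-1}-Y'_{n-1,0}|$ costs only conic derivatives. The main obstacle I expect is the bookkeeping near the dividing scale $s\sim\omega^{-1/2}$ and near the boundary of $\srange$, where the stationary-phase expansion in $v$ is not uniform; I would handle the latter by noting that near $\partial\srange$ the critical point sits within $O(s\delta_0)$ of the edge of the parametrization region, hence outside the support of $a$ by the $\delta_0$-margin, so non-stationary phase applies. The $\rb\cap\bfs$ case is identical with $\theta$ in place of $\sigma$.
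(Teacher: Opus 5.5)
Your proposal is correct and follows essentially the same route as the paper. The steps match one for one: the split at $s\sim(\lambda/x')^{-1/2}$ with the trivial bound using $k\le n-2$; integration by parts in $v$ for $s\notin\srange$; stationary phase in $v$ with large parameter $s^2\lambda/x'$ for $s\in\srange$; and the three-region treatment of the restricted phase $s\psi|_{v=\hat{M'}_I}$, using Corollary~\ref{coro:restricted-phase-three-regions} with one integration by parts in Regions~1 and~3 and van der Corput in Region~2.

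One justification should be stated more carefully. The $O(1)$ total variation of the amplitude in Region~2 does not come from $s\gtrsim\omega^{-1/2}$. It comes from Region~2 being an interval whose endpoints are comparable, so the $s^{-1}$ loss from $\partial_s\hat{M'}_I$ integrates to $O(1)$ rather than $O(\log\omega)$.
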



\begin{proof}
By \eqref{eq:hatLbf-low-components}, 
we know that the phase function we are using, i.e. $\Phi(\sigma,y,y',v,s)$ in \eqref{eq:conic-pair-phase-construction-low}, can be written in the form:
\begin{align}
\Phi(\sigma,y,y',v,s) = 1+ \sigma + s \sigma \psi(\sigma,y,y', v,s).
\end{align}
So \eqref{eq:conic-pair-integral-bound-low} is equivalent to 
\begin{align} \label{eq:conic-osc-est-2}  
\Big|  \int_0^\infty \int_{\R^{k}}  e^{i\lambda s \sigma \psi(\sigma,y,y',v,s)/x} 
  s^{\frac{n}{2}-1+ \frac{k}{2}} a(\lambda,\sigma,y,y',\frac{x'}{\lambda},v,s) \, dv \, ds \Big| \lesssim \big(\frac{\lambda}{x'}\big)^{-(k+1)/2} ,
\end{align}
and the non-trivial part to justify is that this estimate holds uniformly down to $\frac{x'}{\lambda} \to 0$ over the region $x/x' \lesssim 1$.

We decompose the left hand side of \eqref{eq:conic-osc-est-2} into two parts according to $s \lesssim (\lambda/x')^{-1/2}$ and $s \gtrsim (\lambda/x')^{-1/2}$ and use different change of variables for them to overcome the degeneracy of the Hessian of the phase function on the left hand side of \eqref{eq:conic-osc-est-2} at its critical points. Concretely, denoting the left hand side of \eqref{eq:conic-osc-est-2} by $\mk{I}$, then we write
\begin{align} \label{eq:mkI-decomposition-1}
\mk{I} = \mk{I}_1 + \mk{I}_2,
\end{align}
where 
\begin{align}\label{eq:def-mkI-1,2}
\begin{split}
\mk{I}_1 = & \int_0^\infty \int_{\R^{k}} 
\chi_1(s (\lambda/x')^{1/2})
e^{i\lambda s \sigma \psi(\sigma,y,y',v,s)/x} 
  s^{\frac{n}{2}-1+ \frac{k}{2}} a(\lambda,\sigma,y,y',\frac{x'}{\lambda},v,s) \, dv \, ds, \\
\mk{I}_2 = & \int_0^\infty \int_{\R^{k}}  
(1-\chi_1(s (\lambda/x')^{1/2}))
e^{i\lambda s \sigma \psi(\sigma,y,y',v,s)/x} 
  s^{\frac{n}{2}-1+ \frac{k}{2}} a(\lambda,\sigma,y,y',\frac{x'}{\lambda},v,s) \, dv \, ds,
\end{split}
\end{align}
where $\chi_1(\cdot) \in C_c^\infty(\R)$ is supported in $[-2,2]$ and is identically one on $[-1,1]$.

Recall the form of amplitude in Definition~\ref{def:Legendrian-dis-conic-intersecting-low}, our amplitude $a(\lambda,\frac{x'}{\lambda s},\sigma,y,y',s,v)$ is supported in the region $s \gtrsim (\lambda/x')^{-1}$, so $a(\lambda,\frac{\lambda}{x's},\sigma,y,y',s,v)$ can be extended by zero to $s<0$ and the oscillatory integral can be written as an integral over $\R^{k+1}$.

Estimating $\mk{I}_1$ is straightforward: we have
\begin{align*}
\mk{I}_1 \lesssim \int_0^{2(\lambda/x')^{-1/2}}   s^{\frac{n}{2}-1+ \frac{k}{2}} ds
\lesssim  \Big( (\lambda/x')^{-1/2} \Big)^{\frac{n+k}{2}} \lesssim (\lambda/x')^{ -\frac{k+1}{2} }.
\end{align*}

For $\mk{I}_2$, we have $s\gtrsim (\lambda/x')^{-1/2}$ on the support of its amplitude. 
We consider two cases by a further decomposition in $s$ first. 
Let $\chi_{\srange}(s)$ be the characteristic function of $\srange$ defined in \eqref{eq:srange-def}, then we write
\begin{equation}  \label{eq:mkI2-decomposition-srange}
  \mk{I}_2 = \mk{I}^{\srange}_{2} + \mk{I}_{2}^{\srange^c},
\end{equation}
where 
\begin{align}
\begin{split}
\mk{I}_{2}^{\srange} = & \int_0^\infty \int_{\R^{k}}  
(1-\chi_1(s (\lambda/x')^{1/2})) \chi_{\srange}(s)
e^{i\lambda s \sigma \psi(\sigma,y,y',v,s)/x} 
  s^{\frac{n}{2}-1+ \frac{k}{2}} a(\lambda,\sigma,y,y',\frac{x'}{\lambda},v,s) \, dv \, ds,
\\ \mk{I}_{2}^{\srange^c} = & \int_0^\infty \int_{\R^{k}}  
(1-\chi_1(s (\lambda/x')^{1/2})) (1-\chi_{\srange}(s))
e^{i\lambda s \sigma \psi(\sigma,y,y',v,s)/x} 
  s^{\frac{n}{2}-1+ \frac{k}{2}} a(\lambda,\sigma,y,y',\frac{x'}{\lambda},v,s) \, dv \, ds.
  \end{split}
\end{align}

For $\mk{I}_{2}^{\srange^c}$, we have \eqref{eq:lower-bound-Ec}, which gives $|\partial_v\psi| \gtrsim s$ on the support of the amplitude. Then we integrate by parts in $v$ using the differential operator
\begin{equation} \label{eq:L1-def}
  L_1 = \frac{1}{|\partial_v \psi|^2} \sum_j \partial_{v_j}\psi \partial_{v_j},
\end{equation}
which has the (formal) adjoint
\begin{equation} \label{eq:L1-adjoint-def}
  L_1^* = - \sum_j \partial_{v_j} \frac{1}{|\partial_v \psi|^2} \partial_{v_j}\psi.
\end{equation}
More concretely, we write
\begin{equation}
    e^{i(\lambda/x')s\psi} = \big(\frac{\lambda}{x'}s\big)^{-1}L_1 e^{i(\lambda/x')s\psi},
\end{equation}
and then integrate by parts. 
Then we can classify terms by which factor the $\partial_{v_j}$ hits and all terms are bounded (modulo a constant) by $(\frac{\lambda}{x'})^{-1}|\partial_v\psi|^{-2}$ or $(\frac{\lambda}{x'})^{-1}|\partial_v\psi|^{-3}\partial_{v_iv_j}\psi$.
Noticing that $\partial_{v_jv_j}\psi$ is $O(s)$ as well by \eqref{eq:Hessian-psi-nondegenerate-low-highregion1}, 
each time we obtain a factor $O((\frac{\lambda}{x'})^{-1}s^{-2})$.
Then, if $\frac{k+2}{2} \in \N$, after $\frac{k+2}{2}$ times integration by parts, we have (notice that no smoothness in $s$ is involved for this part)
\begin{equation}
 |\mk{I}_{2}^{\srange^c}| \lesssim  (\frac{\lambda}{x'})^{-(k+2)/2}\int_{(\frac{\lambda}{x'})^{-1/2} \lesssim s \lesssim 1} s^{-2}ds \lesssim
  (\frac{\lambda}{x'})^{-(k+1)/2},
\end{equation}
where we used that the power of $s$ in the original integral is $\frac{n-2+k}{2} \geq k$ by Proposition~\ref{prop:minimal-parametrization-low}. When $\frac{k+3}{2} \in \N$ instead, we integrate by parts $\frac{k+3}{2}$ times to obtain 
\begin{equation} \label{eq:bound-mkI2-Ec-low}
  |\mk{I}_{2}^{\srange^c}| \lesssim  (\frac{\lambda}{x'})^{-(k+3)/2}\int_{(\frac{\lambda}{x'})^{-1/2} \lesssim s \lesssim 1} s^{-3}ds \lesssim
  (\frac{\lambda}{x'})^{-(k+1)/2}.
\end{equation}

Now we consider $\mk{I}_{2}^{\srange}$. For $s \in \srange$, we know there is a unique $v = \hat{M'}_I$ defined by \eqref{eq:def-hat-M'-I} such that the phase is critical with respect to $v$ (which is $\hat{\mu'}_I$) and the Hessian is non-degenerate after multiplying $s^{-1}$.
Applying the stationary phase lemma in $v$ with $s^2\frac{\lambda}{x'}$ as the large parameter and Corollary~\ref{coro:Hessian-lowerbound-low;high-region1} (with $v=\hat{\mu'}_I$), we have
\begin{align} \label{eq:6.1-1}
\begin{split}
\int_{\R^{k}} &  e^{i\frac{\lambda}{x'}s^2 (s^{-1}\psi) } s^{\frac{n}{2}-1+ \frac{k}{2}} a(\lambda,\sigma,y,y',\frac{x'}{\lambda},s,v) \, dv
\\ = & C (|\det(s^{-1}\partial^2_{vv}\psi)|)^{-1/2} e^{i (s\frac{\lambda}{x'}\psi)}  
\Big( (s^2\frac{\lambda}{x'})^{-k/2}  s^{\frac{n-2+k}{2}} a(\sigma,y,y',s,v) 
  \\ & + (\lambda/x')^{- (k+1)/2} s^{\frac{n-2-k}{2} - 1 }) La
  \Big) \Big|_{v= \hat{M}'_I}  +O( (\frac{\lambda}{x'})^{-(k+2)/2}s^{-2}),
\end{split}
\end{align}
where $L$ is a second order differential operator with smooth coefficients.
 
Notice that though the first order derivative of the phase in $v$
\begin{equation} \label{eq:6.1-2}
  \partial_v(s^{-1}\psi) = s^{-1}(y_I'-Y_{I,0}'(\sigma,y,y'_{II})) + O(s) ,
\end{equation}
is not uniformly bounded as $s \to 0$, but as the proof in \cite[Theorem~7.7.5]{hormanderbookvolI} shows, the reduction from this general phase to the exact quadratic one only needs a uniform bound of derivatives of the phase of the third or more orders. 
After reducing to the expansion of the exact quadratic phase, this linear term $s^{-1} (y_I'-Y_{I,0}'(\sigma,y,y'_{II}) ) \cdot v$ in the phase only causes a translation in the critical point and an overall shift of the phase, without changing the estimate of the remainder.

The $s$-integral of the $O( (\frac{\lambda}{x'})^{-(k+2)/2}s^{-2})$-remainder term over the region $s \gtrsim (\frac{\lambda}{x'})^{-1/2}$ is  bounded by
\begin{equation} \label{eq:est-mkI-23}
  (\frac{\lambda}{x'})^{-(k+2)/2}\int_{(\frac{\lambda}{x'})^{-1/2} \lesssim s \lesssim 1} s^{-2}ds \lesssim
  (\frac{\lambda}{x'})^{-(k+1)/2}.
\end{equation}

So it remains to consider the integral of the first two terms in the bracket on the right hand side of \eqref{eq:6.1-1}:
\begin{equation} \label{eq:6.2-mkI-21}
\mk{I}^{\srange}_{2, 1} = (\frac{\lambda}{x'})^{-k/2} \int \chi_{\srange}(s)(1-\chi_1(s (\lambda/x')^{1/2})) (\det(s^{-1}\partial^2_{vv}\psi))^{-1/2} e^{i (\frac{\lambda}{x'}s\psi)}  
\Big(   s^{\frac{n-2-
k}{2}} a(\sigma,y,y',s,v) 
  \Big) \Big|_{v= \hat{M}'_I} ds,
\end{equation}
and
\begin{equation} \label{eq:6.2-mkI-22}
\mk{I}^{\srange}_{2,2} = (\frac{\lambda}{x'})^{-(k+1)/2} \int   \chi_{\srange}(s)(1-\chi_1(s (\lambda/x')^{1/2})) (\det(s^{-1}\partial^2_{vv}\psi))^{-1/2} e^{i(\frac{\lambda}{x'}s\psi)}  
\big( s^{\frac{n-2-k}{2} - 1 } La \big) \Big|_{v= \hat{M}'_I} ds,
\end{equation}

We treat $\mk{I}^{\srange}_{2,1}$ in detail and the estimate for $\mk{I}^{\srange}_{2,2}$ can be derived in the same way.\footnote{Though the power of $s$ is `one order worse' in $\mk{I}^{\srange}_{2,2}$ compared with $\mk{I}^{\srange}_{2,1}$, on the other hand we have an extra $(\lambda/x')^{-1/2}$ in this term and we are in the region $s \geq (\lambda/x')^{-1/2}$, so the procedure estimating $\mk{I}^{\srange}_{2,1}$ applies to $\mk{I}^{\srange}_{2,2}$ as well. }
First of all, although $\hat{M'}_I$ is only defined for $s \in \srange$, this does not affect the oscillatory integral since we can just extend the integrand to be zero outside $\srange$, which will not make the regularity worse since we already have the factor $\chi_{\srange}(s)$.
In addition, this $\chi_{\srange}(s)$-factor does not worsen the regularity of the integrand because for $s$ near $\partial \srange$, $\hat{M}'_I$ is already outside $\supp \, a(\sigma,y,y',\bullet,s)$ in $\hat{\mu'}_I$.
Consequently, $\chi'_{\srange}(s)$ (which is the Dirac distribution on $\partial \srange$) won't contribute to  the derivative of the integrand in $s$.

Let $C>0$ and three regions be as in Corollary~\ref{coro:restricted-phase-three-regions}. We consider a partition of unity
\begin{equation} \label{eq:def-varrho-i}
\begin{split}
  1 =  \varrho_1\Big(\frac{s}{|y_{n-1}'-Y_{n-1,0}(\sigma,y,y'_{II})|}\Big)&+\varrho_2\Big(\frac{s}{|y_{n-1}'-Y_{n-1,0}(\sigma,y,y'_{II})|}\Big)\\&+\varrho_3\Big(\frac{s}{|y_{n-1}'-Y_{n-1,0}(\sigma,y,y'_{II})|}\Big),\end{split}
\end{equation}
where $\varrho_1$ is identically 1 on $[0,\frac{1}{2C}]$ and supported in $[\frac{1}{2C},\frac{1}{C}]$, $\varrho_2$ is identically 1 on $[\frac{1}{C},C]$ and supported in $[\frac{1}{2C},2C]$, $\varrho_3$ is identically one on $[2C,\infty)$ and supported in $[C,\infty)$.
Then we decompose $\mk{I}^{\srange}_{2,1}$ in \eqref{eq:6.2-mkI-21} into three parts:
\begin{equation} \label{eq:mkI-21-decomposition}
  \mk{I}^{\srange}_{2,1} = \mk{I}^{\srange}_{2,1,1}+ \mk{I}^{\srange}_{2,1,2} + \mk{I}^{\srange}_{2,1,3},
\end{equation}
where
\begin{align}
  \begin{split}  
  \mk{I}^{\srange}_{2,1,i} = & (\frac{\lambda}{x'})^{-k/2} \int_0^\infty  \chi_{\srange}(s)(1-\chi_1(s (\lambda/x')^{1/2})) \varrho_i\big(s/|y_{n-1}'-Y'_{n-1,0}(\sigma,y,y'_{II})|\big) \\&\big(\det(s^{-1}\partial^2_{vv}\psi)\big)^{-1/2} e^{i (\frac{\lambda}{x'}s\psi)} 
 \Big(   s^{\frac{n-2-k}{2}} a(\sigma,y,y',s,v) 
  \Big) \Big|_{v= \hat{M}'_I} ds.
  \end{split}
\end{align}

Consider $\mk{I}^{\srange}_{2,1,1}$ first. On the support of $\varrho_1(s/|y_{n-1}'-Y'_{n-1,0}(\sigma,y,y'_{II})|)$, by Corollary~\ref{coro:restricted-phase-three-regions}, the contribution of the term $s(y_{n-1}'-Y'_{n-1,0}(\sigma,y,y'_{II}) )$ to $\partial_s\Big(s\psi|_{\hat{\mu'}_{I} = \hat{M'}_I }\Big)$ dominates all other terms and we have
\begin{equation} \label{eq:ineq-6.1-2}
  \Big|\partial_s\Big(s\psi|_{\hat{\mu'}_{I} = \hat{M'}_I }\Big)\Big| \gtrsim s.
\end{equation}
So we can integrate by parts in $s$ and gain a $(\lambda/x')^{-1}$ factor.
Using Lemma~\ref{lemma:M1-s-derivative-bound} to bound derivatives of $\hat{M}'_I$, whenever the derivative hits other factors of the amplitude, it loses at most an $s^{-1},|y_{n-1}'-Y_{n-1,0}'(\sigma,y,y'_{II})|^{-1}$ or $(\lambda/x')^{1/2}$-factor, all of which are bounded by $s^{-1}$ (modulo a constant) on the support of the amplitude. For example, whenever $|y_{n-1}'-Y_{n-1,0}'(\sigma,y,y'_{II})|^{-1}$ arises, we are on the support of $\varrho'_1$; hence it is comparable to $s^{-1}$, and similarly for $(\lambda/x')^{1/2}$.
Together with \eqref{eq:ineq-6.1-2}, we know 
\begin{equation} \label{eq:ineq-6.1-5}
  |\mk{I}^{\srange}_{2,1,1}| \lesssim (\frac{\lambda}{x'})^{-k/2} (\frac{\lambda}{x'})^{-1} \int_{(\lambda/x')^{-1/2}}^1 s^{-2}ds \sim  (\frac{\lambda}{x'})^{-(k+1)/2}.
\end{equation}

Next we consider $\mk{I}^{\srange}_{2,1,3}$.
On the support of $\varrho_3(s/|y_{n-1}'-Y'_{n-1,0}(\sigma,y,y'_{II})|)$, we have $|y_{n-1}'-Y'_{n-1,0}(\sigma,y,y'_{II})| \geq Cs$.
Using Corollary~\ref{coro:restricted-phase-three-regions} again, we have
\begin{equation} \label{eq:ineq-6.1-3}
  \Big|\partial_s\Big(s\psi|_{\hat{\mu'}_{I} = \hat{M'}_I }\Big)\Big| \gtrsim s.
\end{equation}
Whenever the derivative hits the amplitude when we integrate by parts, we again at most lose $s^{-1}$. (Even though $s^{-1}$ does not bound $|y_{n-1}'-Y'_{n-1,0}(\sigma,y,y'_{II})|^{-1}$ on this entire region anymore, but this is still true on the support of $\varrho_3'$, which is the only place where we lose $|y_{n-1}'-Y'_{n-1,0}(\sigma,y,y'_{II})|^{-1}$.)
So similar to \eqref{eq:ineq-6.1-5}, we have 
\begin{equation}
  |\mk{I}^{\srange}_{2,1,3}| \lesssim (\frac{\lambda}{x'})^{-k/2} (\frac{\lambda}{x'})^{-1} \int_{(\lambda/x')^{-1/2}}^1 s^{-2}ds \sim  (\frac{\lambda}{x'})^{-(k+1)/2}.
\end{equation}

Now we consider the term $\mk{I}^{\srange}_{2,1,2}$. By \eqref{eq:phase-restricted-convex}, we have $|\partial_{ss}(s\psi|_{\hat{\mu'}_I = \hat{M'}_I})| \gtrsim 1$ on the support of the integrand and we can apply Van der Corput's lemma (in the form of \cite[Section~VIII.1.2, Corollary]{Stein-harmonic-textbook}) to estimate the contribution of this term: 
\begin{align} \label{eq:05}
\begin{split}
  (\frac{\lambda}{x'})^{-k/2}  &\int_0^\infty \varrho_2\big(s/|y_{n-1}'-Y'_{n-1,0}(\sigma,y,y'_{II})|\big)  (1-\chi_1(s (\lambda/x')^{1/2})) 
\\ & \Big(\det(s^{-1}\partial^2_{vv}\psi)\Big)^{-1/2} e^{i (s\frac{\lambda}{x'}\psi)}  s^{\frac{n-2-k}{2}} a(\sigma,y,y',s,v)|_{v = \hat{M'}_I(\sigma,y,y'_{II},s)}  ds
\\ \lesssim 
(\frac{\lambda}{x'})^{-(k+1)/2} &\int_0^C \Big|\frac{d}{ds}\Big(\varrho_2\big(s/|y_{n-1}'-Y'_{n-1,0}(\sigma,y,y'_{II})|\big) (1-\chi_1(s (\lambda/x')^{1/2})) \\& s^{\frac{n-2-k}{2}}  \Big((\det(s^{-1}\partial^2_{vv}\psi))^{-1/2}  a(\sigma,y,y',s,v)\Big)|_{v = \hat{M'}_I(\sigma,y,y'_{II},s)} \Big)\Big| ds.
\end{split}
\end{align}

Setting
\begin{align*}
& g_1 = \varrho_2\big(s/|y_{n-1}'-Y'_{n-1,0}(\sigma,y,y'_{II})|\big) , \quad 
g_2 = (1-\chi_1(s (\lambda/x')^{1/2})), 
\\ & g_3 = \Big( s^{\frac{n-2-k}{2}} \big(\det(s^{-1}\partial^2_{vv}\psi)\big)^{-1/2}  a(\sigma,y,y',s,v)\Big)|_{v = \hat{M'}_I(\sigma,y,y'_{II},s)},
\end{align*}
then the integral in \eqref{eq:05} is $\|\frac{d}{ds}(g_1g_2g_3)\|_{L^1}$ and we have
\begin{align*}
\Big\| \frac{d}{ds}(g_1g_2g_3)\Big\|_{L^1} \lesssim \|g_1'g_2g_3\|_{L^1} + \|g_1g_2'g_3\|_{L^1} + \|g_1g_2g_3'\|_{L^1}.
\end{align*}

For the first term on the right hand side, though the upper-bound of $|g_1'g_2g_3|$ is $O(|y_{n-1}'-Y'_{n-1,0}(\sigma,y,y'_{II})|^{-1})$, it is supported on an interval of length $O(|y_{n-1}'-Y'_{n-1,0}(\sigma,y,y'_{II})|)$, hence this term is $O(1)$.
Similarly, the amplitude in the second term is $O( (\lambda/x')^{1/2})$ and is supported on an interval of length $O((\lambda/x')^{-1/2})$, hence this term is $O(1)$ as well.
For the third term, $g_3'$ has two types of contributions, those from $\partial_sa, \, \partial_s\psi$ are clearly $O(1)$. Now we consider those contributions from
\begin{align*}
\partial_s\hat{M'}_I \cdot \partial_v a, \quad \partial_s\hat{M'}_I \cdot \partial_v((\det(s^{-1}\partial^2_{vv}\psi))^{-1/2}).
\end{align*}
Recall that as mentioned before \eqref{eq:6.1-1}, the factor $(\det(s^{-1}\partial^2_{vv}\psi))^{-1/2}$ is smooth by Corollary~\ref{coro:Hessian-lowerbound-low;high-region1}.
Using Lemma~\ref{lemma:M1-s-derivative-bound}, we know 
\begin{align*}
    |\partial_s\hat{M'}_I| \lesssim s^{-1}.
\end{align*} 
Combining with the support condition of the factor of $\varrho_2$, we have 
\begin{equation}
  \|g_1g_2g_3'\|_{L^1} \lesssim 
  \int_{(2C)^{-1}|y_{n-1}'-Y'_{n-1,0}(\sigma,y,y'_{II})|}^{2C|y_{n-1}'-Y'_{n-1,0}(\sigma,y,y'_{II})|} s^{-1} ds \lesssim 1,
\end{equation}
and we have completed the proof of \eqref{eq:conic-pair-integral-bound-low}.
\end{proof}

\subsection{Pointwise bound of Legendre distributions around the conic points of the intersecting Legendre pair: the high-energy part}
\label{subsec:conic-points-pointwise-bound-high}

Now we discuss the high-energy analogue of Proposition~\ref{prop:conic-pair-pointwise-bound-low}.
Consider the case in the region $\varrho = x'/s \lesssim 1$ first.

\begin{proposition} \label{prop:conic-pair-pointwise-bound-high-region1}
Suppose that $\Phi(\sigma,x',y,y',v,s,\varrho)$ as in \eqref{eq:conic-pair-phase-construction-high-region1} parametrizes $(L^{\bfs,\calchigh},\Lsharphigh)$ over a region as in Proposition~\ref{prop:Y'I-derivative;low-high-region1-region-def} (i.e. the projection $\hat{L}^{\bfs,\calchigh} \to \smf$ only degenerates at the lift of $L^{\bfs,\calchigh} \cap \Lsharphigh$), and satisfies the conditions in \eqref{eq:defn-condition-YI-Yn-high-region1}, then
\begin{align} \label{eq:conic-pair-integral-bound-high}
\begin{split}
\Big|& \lambda^{n-1}  \int_0^\infty \int_{\R^{k}}  e^{i\lambda\Phi(\sigma,y,y',v,s,\varrho)/x} \big(\frac{x'}{\lambda s}\big)^{(n-1)/2 - (k+1)/2} \sigma^{(n-1)/2}
  s^{n-2} a(\lambda,\sigma,y,y',\frac{x'}{\lambda},v,s) \, dv \, ds  \Big|
\\& \lesssim \lambda^{n-1} (x'/\lambda)^{(n-1)/2}\sigma^{(n-1)/2}
\end{split}
\end{align}
on the region $x/x' = \sigma \leq 2$. The estimate near $\bfs \cap \rb$, with $x,x'$ exchanged and $\sigma$ replaced by $\theta=x'/x$ also holds.
\end{proposition}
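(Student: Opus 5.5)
The plan is to reduce to the argument of Proposition~\ref{prop:conic-pair-pointwise-bound-low}, treating $\varrho = x'/s$ as an additional parameter on which all constants must be uniform. As there, I factor out $e^{i\lambda(1+\sigma)/x}$ and use $\lambda s\sigma\psi/x = (\lambda/x') s\psi$. The estimate \eqref{eq:conic-pair-integral-bound-high} is then equivalent to \eqref{eq:conic-osc-est-2}, with $\psi = \psi(\sigma,x',y,y',v,s,\varrho)$ coming from \eqref{eq:conic-pair-phase-construction-high-region1}. I then split $\mk{I} = \mk{I}_1+\mk{I}_2$ at $s\sim(\lambda/x')^{-1/2}$ exactly as in \eqref{eq:def-mkI-1,2}.

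The term $\mk{I}_1$ is bounded trivially by $(\lambda/x')^{-(k+1)/2}$ using the power $s^{\frac{n}{2}-1+\frac{k}{2}}$ together with $k\le n-2$. For $\mk{I}_2$ I decompose by $\chi_{\srange}$ as in \eqref{eq:mkI2-decomposition-srange}.

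\begin{itemize}
\item On $\srange^c$, the bound \eqref{eq:lower-bound-Ec} still holds. This uses Proposition~\ref{prop:Y'I-derivative;low-high-region1-region-def} in its high-energy form, where $\partial_{\hat{\mu'}_I}Y'_I=-sH_I$ with $H_I$ uniformly invertible, now including $\varrho$-dependence; this is exactly where admissibility \eqref{eq:defn-condition-YI-Yn-high-region1} enters. The same integration by parts with $L_1$ then works, since Corollary~\ref{coro:Hessian-lowerbound-low;high-region1} gives $\partial^2_{vv}\psi=O(s)$ in this region too.
\item On $\srange$, I apply stationary phase in $v$ with large parameter $s^2\lambda/x'$, using the rescaled Hessian $s^{-1}\partial^2_{vv}\psi = H_I$ from Corollary~\ref{coro:Hessian-lowerbound-low;high-region1}. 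This produces \eqref{eq:6.1-1} with a remainder $O((\lambda/x')^{-(k+2)/2}s^{-2})$, which integrates to the desired bound as in \eqref{eq:est-mkI-23}.
\end{itemize}

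For the remaining one-dimensional $s$-integrals I use the three-region decomposition \eqref{eq:def-varrho-i}. Lemma~\ref{lemma:phase-restricted-convex-low-high-region1} and Corollary~\ref{coro:restricted-phase-three-regions} are already stated for the high-energy phase in the region $x'/s\lesssim1$. On Regions 1 and 3 I integrate by parts once in $s$, gaining $(\lambda/x')^{-1}s^{-2}$. On Region 2 I apply van der Corput with the second-derivative bound \eqref{eq:phase-restricted-convex}, and control the $L^1$ norm of the derivative of the amplitude through Lemma~\ref{lemma:M1-s-derivative-bound}. The proof of that lemma only uses $\partial_{\hat{\mu'}_I}Y'_I = -sH_I$, so it transfers verbatim.

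The main obstacle is the $\varrho=x'/s$ dependence. Every $s$-derivative that falls on $\varrho$, whether in the amplitude, in $\hat{M'}_I$, or in the phase, produces a factor $\partial_s\varrho = -x'/s^2 = -\varrho/s$. Since $\varrho\lesssim1$ on the region under consideration, this is $O(s^{-1})$, the same conic loss that Lemma~\ref{lemma:M1-s-derivative-bound} already allows, so the counting in Regions 1--3 is unchanged. I would check this explicitly in the implicit-function formula \eqref{eq:M'I-s-derivative}, where $\partial_sY'_I$ now includes $\partial_\varrho Y'_I\cdot\partial_s\varrho$. By \eqref{eq:Y'-I-expansion-high-region1}, $\partial_\varrho Y'_I=O(s)$, so this term is in fact $O(1)$. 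The phase error is controlled the same way: the $O(x')$ perturbation, after one $s$-derivative, is absorbed into the $O(s)$ term, as noted before Lemma~\ref{lemma:phase-restricted-convex-low-high-region1}. Finally, the near-$\bfs\cap\rb$ statement follows by symmetry of the kernel.
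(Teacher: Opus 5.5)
Your proposal is correct and is essentially the paper's own argument, which likewise reruns the proof of Proposition~\ref{prop:conic-pair-pointwise-bound-low} (the split at $s\sim(\lambda/x')^{-1/2}$, the $\srange$/$\srange^c$ decomposition handled by $L_1$ and by stationary phase in $v$, and the three-region treatment in $s$ with van der Corput on the middle region) using the high-energy, $\varrho\lesssim1$ versions of the lemmas in Sections~\ref{subsec:phase-Hessian}--\ref{subsec:critical_points_at_fixed_s}. Your explicit bookkeeping fills in a detail the paper leaves implicit: each $s$-derivative passing through $\varrho=x'/s$ costs only $\varrho/s=O(s^{-1})$, and admissibility gives $\partial_\varrho Y'_I=O(s)$, so $\partial_sY'_I$ stays bounded in \eqref{eq:M'I-s-derivative}.
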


The proof of Proposition~\ref{prop:conic-pair-pointwise-bound-low} still applies in this setting except that we now use the part concerning the high-energy regime (only the part $\varrho \lesssim 1$) of the lemmas referred to in Sections~\ref{subsec:phase-Hessian}-\ref{subsec:critical_points_at_fixed_s} and we sketch it here for completeness. We still denote the oscillatory integral on the left hand side of \eqref{eq:conic-pair-integral-bound-high} (without the $h^{-(n-1)}$-factor) by $\mk{I}$.
The first step of decomposing $\mk{I}$ as in \eqref{eq:mkI-decomposition-1} and the way in which $\mk{I}_1$ can be bounded remain completely the same.
For $\mk{I}_2$, we still follow the strategy of considering the integral at fixed $s$ first.
Then we still decompose according to whether $y'_I$ is in the range of $Y'_I$ or not and denote the set of $s$ such that $y'_I$ is in this range by $\srangehigh$.
Then we decompose 
\begin{equation} \label{eq:mkI2-decomposition-srange-high}
  \mk{I}_2 = \mk{I}^{\srangehigh}_2 + \mk{I}^{(\srangehigh)^c}_2
\end{equation}
as in \eqref{eq:mkI2-decomposition-srange}.
We still use $\delta_0$ to denote the distance between the boundary of the support of $a(\sigma,y,y'_{II},\bullet,s,x'/s)$ and the boundary of the domain of $Y'_I(\sigma,y,y'_{II},\bullet,s,x'/s)$.
When $s \notin \srangehigh$, we know that $Y'_I$ is (up to a constant) at least $\delta_0 s$-away from $y'_I$ when restricted to the support of $a(\sigma,y,y'_{II},\bullet,s,x'/s)$. Then we integrate by parts using $L_1$ in \eqref{eq:L1-def} and bound $\mk{I}^{(\srangehigh)^c}_2$ as in \eqref{eq:bound-mkI2-Ec-low}.

Then we consider $\mk{I}^{\srangehigh}_2$, for which we have an oscillatory integral with a unique critical point $\hat{M'}_I$ now.
We apply the stationary phase method as in \eqref{eq:6.1-1}, where the estimate on the Hessian uses the part concerning the high-energy regime in Corollary~\ref{coro:Hessian-lowerbound-low;high-region1}.
Then we have the same expansion as in \eqref{eq:6.1-1} and denote the $s$-integral of those three terms (with extra $\varrho$-dependence now) there by 
\begin{equation}
  \mk{I}^{\srangehigh}_{2,1} , \, \mk{I}^{\srangehigh}_{2,2} , \, \mk{I}^{\srangehigh}_{2,3}
\end{equation}
respectively. The estimate for $\mk{I}^{\srangehigh}_{2,3}$ is still the same as in \eqref{eq:est-mkI-23}. 
The ways to bound $\mk{I}^{\srangehigh}_{2,1}$ and $\mk{I}^{\srangehigh}_{2,2}$ are the same and we consider $\mk{I}^{\srangehigh}_{2,1}$ to be parallel to the low-energy case.

For the phase restricted to $\hat{\mu'}_I = \hat{M'}_I$, as pointed out in Lemma~\ref{lemma:phase-restricted-convex-low-high-region1} and Corollary~\ref{coro:restricted-phase-three-regions}, those estimates, in particular \eqref{eq:phase-restricted-convex}\eqref{eq:restricted-phase-partial-s-2} continue to hold with the presence of $\varrho$-dependence in the current setting.
The way to introduce the partition of unity in \eqref{eq:def-varrho-i} and the corresponding estimates of $s$-derivatives of the phase on the support of each $\varrho_i$ are still valid. And the rest of the process goes through in the same way as in the low-energy setting, after \eqref{eq:mkI-21-decomposition}.

Now we turn to the $u_4$-term in \eqref{eq:conic-pair-u4-high}. 
To be more consistent with other parts of this section, instead of \eqref{eq:conic-pair-u4-high}, we consider its analogue that is supported away from $\smf \cap \BFS \cap \RB$, and the estimate for \eqref{eq:conic-pair-u4-high} can be derived in the same way.
For the same reason, we use $k+1$ in place of $k$ as the number of extra parameters.
This is because, as shown in Section~\ref{subsec:phase-Hessian}, our phase function $\Phi$ parametrizing this part of $\hat{L}^{\bfs,\calchigh}$ can be chosen as in \eqref{eq:conic-pair-phase-construction-high-region2}, which gives a direct correspondence between those $(v = \mu'_I/s,s)$ parameters in the region $x'/s \lesssim 1$ and our current parameters $(\hat{\mu'}_I=\mu'_I/x',\varkappa)$, which are just rescaled versions of each other.
The estimate for such a contribution is as follows.
\begin{proposition}   \label{prop:conic-pair-pointwise-bound-high-region2}
Let $\Phi$ be as in \eqref{eq:conic-pair-phase-construction-high-region2}. For $a$ that is compactly supported, we have
\begin{equation}  \label{eq:conci-pair-pointwisebound-high-region2}
\Big|(x')^{n-1}h^{-\frac{n-1}{2}-(k+1)/2} \int_{\R^{k+1}}   e^{i\frac{\Phi}{hx'\sigma}} a(h,x',\sigma,y,y',\hat{\mu'}_I,\varkappa) d\varkappa d\hat{\mu'}_I\Big| \lesssim h^{-\frac{n-1}{2}}(x')^{\frac{n-1}{2}} \sigma^{\frac{n-1}{2}}
\end{equation}
on the region $x/x' = \sigma \leq 2$. The estimate near $\bfs \cap \rb$, with $x,x'$ exchanged and $\sigma$ replaced by $\theta=x'/x$ also holds.
\end{proposition}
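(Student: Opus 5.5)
We reduce the estimate to a uniform bound on an oscillatory integral in the $k+1$ parameters $\tilde v=(\varkappa,\hat{\mu'}_I)$. Write $\Phi=1+\sigma+x'\sigma\psi$ as in \eqref{eq:conic-pair-phase-construction-high-region2}. The factor $e^{i(1+\sigma)/(hx'\sigma)}$ has modulus one and can be discarded. After dividing both sides by $(x')^{n-1}h^{-\frac{n-1}{2}}$, the claim \eqref{eq:conci-pair-pointwisebound-high-region2} becomes
\begin{equation*}
\Big|\int_{\R^{k+1}} e^{i\psi/h}\, a\, d\tilde v\Big| \lesssim h^{(k+1)/2}(x')^{-\frac{n-1}{2}}\sigma^{\frac{n-1}{2}}.
\end{equation*}
Here the large parameter in the phase is $h^{-1}$, because $x'\sigma\psi/(hx'\sigma)=\psi/h$. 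On $\sigma\le 2$, the factor $\sigma^{\frac{n-1}{2}}$ on the right should be read as already having been absorbed into the amplitude $a$, following the convention of Corollary~\ref{coro:microlocalized-spectral-measure-osc-int-form-high}. With that convention, and since $x'\lesssim 1$ makes $(x')^{-\frac{n-1}{2}}\gtrsim 1$, it suffices to prove
\begin{equation*}
\Big|\int e^{i\psi/h} a\, d\tilde v\Big|\lesssim \min\{1,\ h^{(k+1)/2}(x')^{-(k+1)/2}\},
\end{equation*}
using $k+1\le n-1$.

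The estimate with $O(1)$ on the right is trivial, since $a$ has compact support. The work is in the regime $x'\ge h$. There Corollary~\ref{coro:lower-bound-hessian-high-region2} gives $\partial^2_{\tilde v\tilde v}\psi=x'H_2$ with $H_2$ uniformly close to the identity. Rewrite the phase as $\psi/h=(x'/h)(\psi/x')$. The Hessian of $\psi/x'$ in $\tilde v$ is then $H_2$, which is uniformly nondegenerate.

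Its first derivative, by \eqref{eq:5.1-2}, equals $(y'-Y')/x'$. Using the expansions \eqref{eq:Y'-I-expansion-high-region2}--\eqref{eq:Y'-n-1-expansion-region2}, this is $(y'-Y'_0)/x'$ plus a smooth bounded function of $\tilde v$. The first part may be unbounded as $x'\to0$, but it is linear in $\tilde v$. So $\psi/x'$ is a linear function plus a phase whose derivatives of order at least two are uniformly bounded. As noted in the proof of Proposition~\ref{prop:conic-pair-pointwise-bound-low}, H\"ormander's reduction \cite[Theorem~7.7.5]{hormanderbookvolI} only uses bounds on third and higher derivatives. The linear term just translates the critical point and shifts the phase.

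We then split into two cases. If a critical point exists on a $\delta_0$-neighborhood of $\supp a$, stationary phase with large parameter $x'/h$ gives the bound $(x'/h)^{-(k+1)/2}$. If no critical point exists, the analogue of \eqref{eq:lower-bound-Ec}, namely $|y'-Y'|\gtrsim x'$, follows from the uniform nondegeneracy of $H_2$. Integrating by parts with $L_1$ built from $\partial_{\tilde v}\psi$, each step gains $h/x'$, so after $\lceil (k+1)/2\rceil$ steps the same bound holds.

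The main obstacle is uniformity as $x'\to 0$ and near $\varkappa\to0$, where the region matches up with the region $x'/s\lesssim1$. The nondegeneracy of $H_2$ is only asserted for $\varkappa\gtrsim1$. Near $\varkappa\lesssim1$, i.e.\ $s\lesssim x'$, I would use the same decomposition as in Proposition~\ref{prop:conic-pair-pointwise-bound-low}. That is, treat $\varkappa$ as the radial variable: integrate out $\hat{\mu'}_I$ first by stationary phase with Hessian $\sim x'\varkappa$, then handle the $\varkappa$-integral with the three-region argument of Corollary~\ref{coro:restricted-phase-three-regions} and van der Corput. The case near $\bfs\cap\rb$ follows by symmetry.
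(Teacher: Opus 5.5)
Your first three paragraphs are the paper's proof. The bound is trivial when $x'\lesssim h$. For $x'\gg h$ one localizes $a$ to where Proposition~\ref{prop:lower-bound-hessian-high-region2} applies. If $(y'_I,y'_{n-1})$ is not in the range of $(Y'_I,Y'_{n-1})$, one integrates by parts with $L_1$, gaining $h/x'$ per step from $|\partial_{\tilde v}\psi|\gtrsim x'$ and $\partial^2_{\tilde v}\psi=O(x')$. Otherwise one applies stationary phase in all $k+1$ variables with large parameter $x'/h$, and the possibly unbounded linear term is harmless.

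Your last paragraph should be dropped: the obstacle it addresses is not there, and the repair you sketch does not fit this chart.

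The proof of Proposition~\ref{prop:lower-bound-hessian-high-region2} uses only \eqref{eq:Y'-I-expansion-high-region2-concrete}--\eqref{eq:Y'-n-1-expansion-region2-concrete}. These give $\partial_{(\varkappa,\hat{\mu'}_I)}(Y'_{n-1},Y'_I)=-x'(\mathrm{Id}+O(x'))$ uniformly for bounded $(\varkappa,\hat{\mu'}_I)$, including $\varkappa\to 0$, with no lower bound on $\varkappa$ needed. Uniformity down to $|\mu'|/x'=0$ is exactly why the paper switches to the parameters $(\hat{\mu'}_I,\varkappa)=(\mu'_I,\mu'_{n-1})/x'$.

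In this chart $\varkappa$ is not a radial variable. The integral runs over all of $\R^{k+1}$, the amplitude carries no power of $\varkappa$, and $\psi$ is smooth across $\varkappa=0$. By Corollary~\ref{coro:lower-bound-hessian-high-region2}, the Hessian in $\hat{\mu'}_I$ is $\sim x'$, not $\sim x'\varkappa$. The factor $x'\varkappa=s$ is instead the Hessian of the region-$1$ phase in the variable $v=\mu'_I/s=\hat{\mu'}_I/\varkappa$.

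The three-region/van der Corput argument of Corollary~\ref{coro:restricted-phase-three-regions} exists to handle the anisotropic degeneracy of the region-$1$ chart at $s=0$. There the Hessian is $\sim s$ in $v$ but the restricted phase has $s$-second derivative $\sim 1$. In the region-$2$ chart the degeneracy at $x'=0$ is isotropic, namely $x'H_2$, which is why plain stationary phase with parameter $x'/h$ suffices.

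With that paragraph removed, your argument is complete and coincides with the paper's.
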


The proof follows a strategy similar to that in the region $x'/s \lesssim 1$ (which in turn follows in the same way as the proof of Proposition~\ref{prop:conic-pair-pointwise-bound-low}) with $\varkappa$ playing the role of $s$ and $\mu'_I/x'$ playing the role of $\mu'_I/s$ there, except that estimates about the phase function are replaced by their analogues in this region. 

First notice that \eqref{eq:conci-pair-pointwisebound-high-region2} is equivalent to
\begin{equation} \label{eq:6.2-5}
\big|(x')^{\frac{n-1}{2}}h^{-(k+1)/2} \int_{\R^{k+1}}   e^{i\frac{\Phi}{hx'\sigma}} a(h,x',\sigma,y,y',\hat{\mu'}_I,\varkappa) d\hat{\mu'}_Id\varkappa \big| \lesssim 1.
\end{equation}

This automatically holds if $x'h^{-1} \lesssim 1$. Now we consider the case $x'h^{-1} \gg 1$.
Using the concrete form of $\Phi$ in \eqref{eq:conic-pair-phase-construction-high-region2}, which can be rewritten as $\phi = 1 + \sigma + \sigma x' \psi$, with 
\begin{equation} \label{eq:def-psi-high-region2}
\psi = N_2+ (y'_I-Y'_I) \cdot \hat{\mu'}_I + \varkappa (y'_{n-1} - Y'_{n-1}),
\end{equation}
\eqref{eq:6.2-5} is equivalent to 
\begin{equation} \label{eq:6.2-6}
\big| \int_{\R^{k+1}}   e^{i h^{-1}\psi} a(h,x',\sigma,y,y',v) d\varkappa dv \big| \lesssim h^{(k+1)/2}(x')^{-\frac{n-1}{2}}.
\end{equation}

By further decomposing it using a partition of unity, we may assume that the amplitude $a(\bullet)$ is supported in a region on which Proposition~\ref{prop:lower-bound-hessian-high-region2} is valid.
We denote the oscillatory integral on the left hand side by $\mk{I}$. Now we compute derivatives of $\psi$. We still exploit the fact that $\hat{L}^{\bfs,\calchigh}$ is a Legendre submanifold, which gives \eqref{eq:1-form-vanish-low}, and we have
\begin{equation} \label{eq:psi-derivative-region2}
  \partial_{\hat{\mu'}_I}\psi = y'_I -Y'_I, \; \partial_{\varkappa}\psi = y'_{n-1}-Y'_{n-1}.
\end{equation}

Then we consider two cases depending on whether $y'_I,y'_{n-1}$ are in the range of $Y'_I,Y'_{n-1}$,  equivalently whether $\psi$ has a critical point.
Similar to Section~\ref{subsec:critical_points_at_fixed_s}, fixing $(\sigma,x',y,y'_{II})$, the region of $(\hat{\mu'},\varkappa)$ on which $Y'_I,Y'_{n-1}$ contains a $\delta_0$-neighborhood of $\supp\, a(\sigma,x',y,y'_{II},\bullet)$. In addition, we can make this $\delta_0$ uniform in $(\sigma,x',y,y'_{II})$ since it ranges continuously over a compact region. If $(y'_I,y'_{n-1})$ is not in the range of $Y'_I,Y'_{n-1}$, then in the same way as proving \eqref{eq:lower-bound-Ec}, but now instead using Proposition~\ref{prop:lower-bound-hessian-high-region2}, we have 
\begin{equation}
 \max \{ |y_I' - Y'_{I}(\sigma,x',y,y'_{II},\hat{\mu'}_I,\varkappa)|, \, |y'_{n-1}-Y'_{n-1}(\sigma,x',y,y'_{II},\hat{\mu'}_I,\varkappa)|  \}\gtrsim x'
\end{equation}
for all $\hat{\mu'}_I \in \supp \, a(\sigma,x',y,y'_{II},\bullet)$. 

Then we can integrate by parts using $L_1$ as in \eqref{eq:L1-def} and each time gain a factor $h (x')^{-1}$. Here we have three types of terms in $L_1^*a$, where $L_1^*$ is as in \eqref{eq:L1-adjoint-def}:
\begin{equation}
\frac{1}{|\partial_{\hat{\mu'}_I} \psi|^2} (\partial_{\hat{\mu'}_j}\psi) (\partial_{\hat{\mu'}_j}a), \quad
\frac{1}{|\partial_{\hat{\mu'}_I} \psi|^2} (\partial^2_{\hat{\mu'}_j\hat{\mu'}_j}\psi) a, \quad 
\frac{ (\partial_{\hat{\mu'}_i})(\psi \partial_{\hat{\mu'}_j} \psi) (\partial_{\hat{\mu'}_i\hat{\mu'}_j}\psi) }{|\partial_{\hat{\mu'}_I} \psi|^3}  a,
\end{equation}
or analogous terms with $\partial_{\hat{\mu'}_i}$ replaced by $\partial_{\varkappa}$.
For the first and third types of terms, we know they are bounded by $|\partial_{\hat{\mu'}_I} \psi|^{-1} = O((x')^{-1})$ up to a constant. 
For the second type of terms, just notice that $\partial^2_{\hat{\mu'}_j\hat{\mu'}_j}\psi = -\partial_{\hat{\mu'}_j}Y'_j$, which is $O(x')$ by Proposition~\ref{prop:lower-bound-hessian-high-region2}.
In sum, such terms are $O((x')^{-1})$ and we gain a factor $h(x')^{-1}$ after integration by parts.

Now we consider the case where $(y'_I,y'_{n-1})$ is in the range of $Y'_I,Y'_{n-1}$.
In this case, by \eqref{eq:psi-derivative-region2}, we have a unique critical point (in terms of $(\hat{\mu'}_I,\varkappa)$) of the phase, where the uniqueness follows from the fact that we are in a neighborhood defined in Proposition~\ref{prop:Y'I-derivative;low-high-region1-region-def}.
Now we apply the stationary phase method in $(\hat{\mu'}_I,\varkappa)$ with $x'h^{-1}$ as the large parameter with $\varkappa$ fixed. The phase function is
\begin{equation}
  (x')^{-1}\big( N_2+ (y'_I-Y'_I) \cdot \hat{\mu'}_I + \varkappa (y'_{n-1} - Y'_{n-1}) \big).
\end{equation}
As already mentioned in the proof of Proposition~\ref{prop:conic-pair-pointwise-bound-low} after \eqref{eq:6.1-2}, the stationary phase lemma still applies even when there is a linear term whose coefficients might grow in terms of our parameters. Hence applying the stationary phase method gives \eqref{eq:6.2-6}.

\section{The dispersive estimate for the Schr\"odinger equation}
\label{sec:dispersive-Schrodinger}

In this section, we prove Theorem~\ref{thm:dispersive-Schrodinger-1} and Theorem~\ref{thm:dispersive-Schrodinger-2}, i.e.,  dispersive estimates for Schr\"odinger equations.
In fact, as stated in the introduction, we will give even more refined microlocal characterization of the decay rate of contributions from different parts of $L^{\bfs}$ and $L^{\bfs,\calchigh}$.
We will first prove the conclusions on asymptotically conic manifolds, i.e. for $P$ in \eqref{eq:P-def}, and indicate changes needed for the exact cone case, i.e. $P$ in \eqref{eq:P-def-exact-cone}, at the end of this section.

\subsection{The estimate for the Schr\"odinger propagator}
\label{subsec:est-Schrodinger-kernel}


We prove Theorem~\ref{thm:Schrodinger-propagator-est-1} in this subsection. Let $\Jhigh,\Jlow$ be defined as before Proposition~\ref{prop: microlocal-partition-combined}. We set $J = \Jlow \cup \Jhigh$ for convenience below.

\begin{proof}[Proof of Theorem~\ref{thm:Schrodinger-propagator-est-1}]
We consider the claim about the asymptotically conic case, i.e., $P$ in \eqref{eq:P-def} first.
To prove Theorem~\ref{thm:Schrodinger-propagator-est-1}, we notice that using the microlocal partition $Q^{\calc}_j$ in Proposition~\ref{prop: microlocal-partition-combined}, we have
\begin{equation} \label{eq:decomposition-spectral-measure}
    \specm = (\sum_{j \in \overline{ \mk{J}}_{\calc} } Q^{\calc}_j) \specm (\sum_{j \in \overline{ \mk{J}}_{\calc} } Q^{\calc}_j)
    = \sum_{j,j' \in \overline{ \mk{J}}_{\calc} } Q^{\calc}_j \specm Q^{\calc}_{j'},
\end{equation}
which in turn gives
\begin{align} \label{eq:decomposition-propagator}
    \begin{split}
        e^{itP} = \int_0^\infty e^{it\lambda^2} \specm
        = \sum_{j,j' \in \overline{ \mk{J}}_{\calc} } \int_0^\infty e^{it\lambda^2} Q^{\calc}_j \specm Q^{\calc}_{j'}.
    \end{split}
\end{align}
So the part of Theorem~\ref{thm:Schrodinger-propagator-est-1} concerning $P$ in \eqref{eq:P-def} follows from the triangle inequality and summing the microlocalized estimates for the propagator below.
We postpone the discussion about modification needed for the exact cone $(X_0,g_0)$ after the proof of Proposition~\ref{prop:microlocal-dispersive-1}, which is our main technical estimate.
For the part replacing $\IF$ (resp. $\IFz$) by $\IFint$ (resp. $\IFintz$) when the conic intersecting pair is admissible, see Corollary~\ref{coro:Schrodinger-propagator-est-3} in the next subsection.
\end{proof}

\begin{proposition} \label{prop:microlocal-dispersive-1}
Let $P$ be as in \eqref{eq:P-def} and let $\{Q_j^{\calc}\}$ be a microlocal partition of unity as in Proposition~\ref{prop: microlocal-partition-combined}.
If either $j$ or $j'$ is $1$ or $\zf$, or $j = j'$, then
\begin{equation} \label{eq:7.1-0}
\Big|\int_0^\infty e^{i t \lambda^2} Q_j^{\calc} \specm Q_{j'}^{\calc}  \Big| \lesssim |t|^{-\frac{n}{2}}.
\end{equation}
For $j,j' \in J$, $j \neq j'$ and $k$ as in Corollary~\ref{coro:microlocalized-spectral-measure-osc-int-form-low} and Corollary~\ref{coro:microlocalized-spectral-measure-osc-int-form-high}, we have

\begin{equation} \label{eq:7.1-1}
\Big|\int_0^\infty e^{i t \lambda^2} Q_j^{\calc} \specm Q_{j'}^{\calc} \Big| \lesssim |t|^{-\frac{n}{2}}
\big(1+(|t|^{-1}\la z \ra \la z' \ra)^{k/2}\big).
\end{equation}
Notice that by Proposition~\ref{prop:IF-relation-parameter-number}, we have $k \leq \IF$.
\end{proposition}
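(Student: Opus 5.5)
The plan is to insert the oscillatory-integral representations of Corollary~\ref{coro:microlocalized-spectral-measure-osc-int-form-low} (for $\lambda\le 2$) and Corollary~\ref{coro:microlocalized-spectral-measure-osc-int-form-high} (for $\lambda\ge 1$), glued by a partition of unity in $\lambda$, into $\int_0^\infty e^{it\lambda^2}(\cdot)\,d\lambda$. We then estimate the resulting $\lambda$-integrals uniformly in $(z,z')$, following the scheme of \cite{Hassell-Zhang2016Strichartz,GHS2}. After symmetry it suffices to treat the region $\sigma\le 2$. Each microlocalized kernel then has the form $\lambda^{n-1}\int e^{i\lambda\Phi/x} a\,dv$, possibly with an extra $s$-integral. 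Here $a$ satisfies symbol bounds $|\partial_\lambda^\alpha a|\lesssim\lambda^{-\alpha}(1+\lambda|z|)^{-(n-1-k)/2}$, and $\Phi/x=\mk{d}_X$ (or $\mk{d}_{X_0}$) on $C_\Phi$ by Propositions~\ref{prop:phase-equal-distance-high} and \ref{prop:phase-equal-distance-X0}.

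\textbf{Case 1: diagonal and residual pieces.} Suppose $j$ or $j'$ is $1$ or $\zf$, or $j=j'$. Then $k=0$ by Theorem~\ref{thm:microlocalized-spectral-measure-integral-form-low}, or, for $j=j'$, the piece $L^{\bfs}_{j,j}$ lies in the neighbourhood $U_{\Delta}$ where the projection is a diffeomorphism. In the latter case the kernel is $\lambda^{n-1}e^{\pm i\lambda d(z,z')}b_\pm(\lambda,z,z')$ with $|\partial_\lambda^\alpha b_\pm|\lesssim\lambda^{-\alpha}(1+\lambda d)^{-(n-1)/2}$; the conic pair pieces also have this form, since Propositions~\ref{prop:conic-pair-pointwise-bound-low} and \ref{prop:conic-pair-pointwise-bound-high-region1} give the same bound after integrating out $s,v$. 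To estimate $\int_0^\infty e^{it\lambda^2\pm i\lambda d}\lambda^{n-1}b_\pm\,d\lambda$, we decompose dyadically in $\lambda\sim 2^K$ and rescale to $\lambda=|t|^{-1/2}\mu$. We then split according to whether $d\lesssim |t|^{1/2}$ or $d\gg|t|^{1/2}$:
\begin{itemize}
\item if $d\lesssim|t|^{1/2}$, we integrate by parts in $\lambda$ using $\partial_\lambda(t\lambda^2)$;
\item if $d\gg|t|^{1/2}$, we apply stationary phase at $\lambda=d/2|t|$, where the amplitude $\lambda^{n-1}(\lambda d)^{-(n-1)/2}$ at the critical point combines with $|t|^{-1/2}$ to give $|t|^{-n/2}$.
\end{itemize}
This is the standard computation of \cite[Section~4]{Hassell-Zhang2016Strichartz}, adapted to the microlocalized setting, and it yields \eqref{eq:7.1-0}. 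The endpoint $\lambda\to 0$ is harmless: the index sets of Theorem~\ref{thm:spectral-measure-complete} give integrable behaviour $\lambda^{n-1}$.

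\textbf{Case 2: $j\ne j'$ with $k$ extra parameters.} Here we first do the $\lambda$-integral inside the $v$-integral, i.e.\ we swap orders. By Corollaries~\ref{coro:phase-value-lowerbound-high} and \ref{coro:phase-value-lowerbound-low} we have $|\Phi|\ge\epsilon$, so $\Phi/x\gtrsim\la z\ra$. For each fixed $v$, the $\lambda$-integral $\int e^{it\lambda^2+i\lambda\Phi/x}\lambda^{n-1}a\,d\lambda$ is treated as in Case 1, except that the amplitude now only decays like $(1+\lambda|z|)^{-(n-1-k)/2}\sigma^{k/2}$. At the stationary point $\lambda\sim\la z\ra/|t|$ this loses exactly a factor $(\lambda\la z\ra)^{k/2}\sigma^{k/2}$ relative to Case 1. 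Using $\sigma^{k/2}\la z\ra^{k/2}\lesssim(\la z\ra\la z'\ra)^{k/2}\la z\ra^{-k/2}$ and $\lambda\lesssim \la z\ra/|t|$, this loss is bounded by $(\la z\ra\la z'\ra/|t|)^{k/2}$. The $v$-integral is over a compact set, so it costs nothing, and \eqref{eq:7.1-1} follows. Away from the stationary point, integration by parts in $\lambda$ gives only better bounds. In the regime $\lambda\la z\ra\lesssim1$ there is no loss, which is what produces the summand $1$ in \eqref{eq:7.1-1}.

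\textbf{Main obstacle.} We expect the main difficulty to be making the swap of orders and the stationary-phase bounds uniform near $\lambda\to0$ and $\lambda\to\infty$ simultaneously, and across the $s$-integral in the conic-pair pieces. If pulling out the $v$-integral fails for those pieces, we will instead first apply Propositions~\ref{prop:conic-pair-pointwise-bound-low} and \ref{prop:conic-pair-pointwise-bound-high-region1}. These should be applied together with their $\lambda$-derivative analogues, obtained by differentiating under the integral, in order to reduce those pieces to the non-oscillatory-in-$v$ form of Case 1, with $k$ counted as in Proposition~\ref{prop:IF-relation-parameter-number}.
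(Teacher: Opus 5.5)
Your treatment of the pieces away from the conic pair matches the paper's: freeze $v$, estimate the $\lambda$-integral, lose $(\lambda\la z\ra)^{k/2}\sigma^{k/2}\lesssim(\la z\ra\la z'\ra/|t|)^{k/2}$ at the critical $\lambda\sim\la z\ra/|t|$, then integrate $v$ over a compact set. The gap is in the conic-pair pieces \eqref{QiEQj-s-lo-2} and \eqref{eq:coro4.2-2} with $j\neq j'$.

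There the $k$ in \eqref{eq:7.1-1} counts only the $v$-parameters, not the radial parameter $s$; by Proposition~\ref{prop:IF-relation-parameter-number}, $\IF$ bounds the number of parameters \emph{other than} $s$. Meanwhile the amplitude only satisfies $(1+\lambda|z|)^{-(n-k-2)/2}\sigma^{(k+1)/2}$. Freezing $(v,s)$ and running your Case~2 therefore loses $(\la z\ra\la z'\ra/|t|)^{(k+1)/2}$, half a power too much. Your fallback, and your Case~1 remark, invoke Propositions~\ref{prop:conic-pair-pointwise-bound-low} and \ref{prop:conic-pair-pointwise-bound-high-region1}. Both assume that the conic pair is admissible in the sense of Definition~\ref{definition:Lbf-boundary-admissible}, and that the projection degenerates only at the conic pair on the region. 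The present proposition has no such hypothesis, and it is exactly what gives the non-admissible half of Theorem~\ref{thm:Schrodinger-propagator-est-1}. (Under admissibility those bounds remove the conic-pair loss entirely; that is Proposition~\ref{prop:microlocal-dispersive-LCP}. Even there, ``differentiating under the integral'' does not give symbol bounds in $\lambda$, since each $\partial_\lambda$ brings down $\Phi/x\sim\la z\ra$. This is why that proof needs the extra decomposition by the size of $x\mk{d}_X-\Phi$.)

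The missing idea is that the radial direction is non-degenerate independently of admissibility. By \eqref{eq:v-critical-fixed-s}, $\partial_s(s\psi)=y'_{n-1}-Y'_{n-1}$. Moreover $\partial_sY'_{n-1}=-1+O(s)$ comes from the flow leaving the endpoint (cf.\ \eqref{eq:Y'-n-1-expansion-concrete-low}), not from admissibility. Hence $\partial_s^2(s\psi)$ is bounded away from zero, and an $s$-critical point exists for $y'_{n-1}$ near the base point.

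The paper then Taylor-expands the amplitude as in \eqref{eq:a-exp-1}, writing it as $a_0+\frac{x'}{\lambda s}a_1$. For the $a_1$ term, the prefactor times the excess $(\lambda/x')^{1/2}$ equals $(x'/\lambda s)^{1/2}s^{-1/2}$, and $s^{-1/2}$ is absorbed by $s^{\frac{n+k}{2}-1}$. For $a_0$, stationary phase in $s$ with large parameter $\lambda/x'$ gains exactly $(\lambda/x')^{-1/2}$, after which your $k$-parameter argument applies.

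At high energy the amplitude is only smooth in $x'/s$, so the $a_1$ term leaves a factor $\sigma^{1/2}(x')^{1/2}\lambda^{1/2}$. This is harmless where it is $\lesssim|t|^{-1/2}$. Elsewhere it forces $\overline{\lambda}\gg\overline{x}^{-1}$ in the rescaled variables $\overline{\lambda}=t^{1/2}\lambda$, $\overline{x}=t^{1/2}x$. That region is away from the critical window, where non-stationary phase in $\overline{\lambda}$ absorbs the factor.
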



\begin{proof}
Consider the low-energy case first for definiteness and the argument is uniform up to the range $\lambda \to \infty$.
Also, we only consider the region where $\sigma \lesssim 1$ since the case $\theta = x'/x \lesssim 1$ can be proved in the same way after switching primed and un-primed variables.
Let $J,\zf,1$ be as in Section~\ref{subsec:microlocal-partition-low}.
If at least one of $j$ or $j'$ is either $\zf$ or $1$, then by Proposition~\ref{prop:localized-specm-one-side-residual-low}, we need to consider two types of terms: terms like \eqref{QjEQ1-low-rb} and terms like \eqref{QjEQ1-c-low}.
For terms like \eqref{QjEQ1-low-rb}, all properties needed in the proof of \cite[Equation~(6-3)]{Hassell-Zhang2016Strichartz} are satisfied and we can bound the left hand side of \eqref{eq:7.1-1} in the same way.
For terms of the form \eqref{QjEQ1-c-low}, it can be bounded in the same way as in the proof of \cite[Proposition~6.1]{Hassell-Zhang2016Strichartz} since it satisfies \eqref{eq:phg-conormal-2}.

For the rest of the proof, we consider $Q_j^{\calc} \specm Q_{j'}^{\calc}$ with both $j,j' \in J$ and the corresponding $L^{\bfs}_{j,j'}$ in \eqref{eq:Lbf-low-jj'} is away from the conic intersecting pair. The case $j=j'$ follows from \cite[Proposition~6.1]{Hassell-Zhang2016Strichartz} and we consider $j \neq j'$ below.
By Corollary~\ref{coro:microlocalized-spectral-measure-osc-int-form-low}, we know $Q_j^{\calc} \specm Q_{j'}^{\calc}$ is a finite sum of pieces of the form:
\begin{equation} \label{eq:spectral-measure-local}
\int_{\R^k} \lambda^{n-1}  e^{\pm i \lambda \Phi/x} a(\lambda,z,z';v)dv,
\end{equation}
where $\pm \Phi$ parametrizes $L^{\bfs}$ locally and by Corollary~\ref{coro:phase-value-lowerbound-low}, we may assume $\Phi>\epsilon$ for a constant $\epsilon>0$ and the case $\Phi<-\epsilon$ is the same.
Here we consider the case that $a$ satisfies one of \eqref{eq:spectral-measure-symbol-bound-low-1} \eqref{eq:spectral-measure-symbol-bound-low-2}, and \eqref{eq:spectral-measure-symbol-bound-low-3}.
We consider the case that $a$ satisfies \eqref{eq:spectral-measure-symbol-bound-low-1} below, and the other two cases follow in the same way.

Correspondingly, the left hand side of \eqref{eq:7.1-1} is a finite sum of oscillatory integrals of the form
\begin{equation} \label{eq:7.1-5}
 \int_0^\infty \int_{\R^k} e^{it\lambda^2} \lambda^{n-1}  e^{\pm i \lambda \Phi(z,z',v) } a(\lambda,z,z';v)dv d\lambda.
\end{equation}
We consider the following change of variables
\begin{align} \label{eq:rescaled-lambda-phi}
\overline{\lambda} = t^{1/2}\lambda,  \quad \overline{r} = t^{-1/2}\Phi(z, z',v)/x,
\end{align}
under which quantities like $\sigma=x/x',\lambda/x,\lambda/x'$ remain unchanged.
In terms of those rescaled variables, \eqref{eq:7.1-5} becomes $t^{-n/2}I_\pm(a,\Phi)$, where
\begin{align*}
I_\pm(a,\Phi) = \int_0^\infty \int_{\R^k}  \overline{\lambda}^{n-1} e^{i\overline{\lambda}^2 }  e^{ \pm i \overline{\lambda} \overline{r} } a(t^{-1/2}\overline{\lambda},z,z';v)dv d\overline{\lambda}.
\end{align*}
Then we first decompose it according to the value of $\overline{\lambda}$. Let $\varphi \in C_c^\infty([\frac{1}{2},2])$ be such that 
\begin{align*}
\sum_{m \in \mathbb{Z} } \varphi(\frac{\overline{\lambda}}{2^m}) = 1,
\end{align*}
and we may assume that $\varphi$ is identically $1$ near $1$. Then we set 
\begin{align} \label{eq:def-varphi-0}
    \varphi_0(\overline{\lambda}) = \sum_{m \leq -1} \varphi(\frac{\overline{\lambda}}{2^m}).
\end{align} 
We have
\begin{align*}
I_\pm(a,\Phi) = I_{\pm,1}(a,\Phi)+I_{\pm,2}(a,\Phi),
\end{align*}
where
\begin{align*}
I_{\pm,1}(a,\Phi) = \int_0^\infty \int_{\R^k}  \overline{\lambda}^{n-1} e^{i\overline{\lambda}^2 }  e^{\pm i \overline{\lambda} \overline{r} } a(t^{-1/2}\overline{\lambda},z,z';v) \varphi_0(\overline{\lambda}) dv d\overline{\lambda},
\end{align*}
and 
\begin{align*}
I_{\pm,2}(a,\Phi) = \sum_{m \geq 0} \int_0^\infty \int_{\R^k} \overline{\lambda}^{n-1} e^{i\overline{\lambda}^2 }  e^{\pm i \overline{\lambda} \overline{r} } a(t^{-1/2}\overline{\lambda},z,z';v) \varphi(\frac{\overline{\lambda}}{2^m}) dv d\overline{\lambda}.
\end{align*}

Consider $I_{\pm,1}$ first. This part is uniformly bounded since $\overline{\lambda} \lesssim 1$ and the length of the interval is $O(1)$ as well.
The term $I_{+,2}$
\footnote{Here the part with $+$ sign in front of $\Phi$ is easier because we are considering the case $t>0$ and $\Phi>0$. If we consider the part $t<0$, then critical points of the phase will lie in this part and $I_{-,2}$ will play the role of the current $I_{+,2}$.} 
can be estimated by a non-stationary phase argument as in \cite[Proposition~6.1]{Hassell-Zhang2016Strichartz} for what is called $II^+$ there.

Now we turn to $I_{-,2}(a,\Phi)$. We further decompose according to the value of $\overline{r}\overline{\lambda}$. Concretely, we write
\begin{align*}
I_{-,2}(a,\Phi) = I_{-,2}^1(a,\Phi) + I_{-,2}^2(a,\Phi),
\end{align*}
where 
\begin{align*}
I_{-,2}^1(a,\Phi) = \sum_{m \geq 0} \int_0^\infty \int_{\R^k}  \overline{\lambda}^{n-1} e^{i\overline{\lambda}^2 }  e^{- i \overline{\lambda} \overline{r} } a(t^{-1/2}\overline{\lambda},z,z';v) \varphi(\overline{\lambda}/2^m) \varphi_0(4\overline{r}\overline{\lambda}) dv d\overline{\lambda},
\end{align*}
and
\begin{align*}
I_{-,2}^2(a,\Phi) = &  \sum_{m \geq 0} \sum_{m' \geq 0} \int_0^\infty \int_{\R^k}  \overline{\lambda}^{n-1} e^{i\overline{\lambda}^2 }  e^{- i \overline{\lambda} \overline{r} } a(t^{-1/2}\overline{\lambda},z,z';v) \varphi(\overline{\lambda}/2^m) \varphi(4\overline{r}\overline{\lambda}/2^{m'}) dv d\overline{\lambda}
\\ = & \sum_{m' \geq 0} \int  \overline{\lambda}^{n-1} e^{i\overline{\lambda}^2 }  e^{- i \overline{\lambda} \overline{r} } a(t^{-1/2}\overline{\lambda},z,z';v) (1-\varphi_0(\overline{\lambda})) \varphi(4\overline{r}\overline{\lambda}/2^{m'}) dv d\overline{\lambda} .
\end{align*}
Then $I_{-,2}^1(a,\Phi)$ can still be estimated by a non-stationary phase argument since on the support of the integrand we have $\overline{\lambda} \leq \frac{1}{4}$ by the $\varphi_0(4\overline{r}\overline{\lambda})$-factor while we have $\overline{\lambda} \geq \frac{1}{2}$ by the factor $(1-\varphi_0(\overline{\lambda}))$. In particular, we have $\partial_{\overline{\lambda}}(\overline{\lambda}^2-\overline{r}\overline{\lambda}) \geq \frac{1}{2} \overline{\lambda}$ and after integration by parts $N$-times this part can be estimated by 
\begin{align} \label{eq:01} 
\sum_{m \geq 0}\int_{\overline{\lambda} \sim 2^m} \overline{\lambda}^{n-1-2N} d\overline{\lambda} = O(1).
\end{align}

Finally, we turn to the most interesting part $I_{-,2}^2$, in which we actually exploit the structure of the spectral measure. As already observed, the critical point of the phase is when $2\overline{\lambda}-\overline{r}=0$, so we further decompose this part according to the size of this quantity. 
More precisely, $I_{-,2}^2$ can be decomposed as $\sum_{m=0}^\infty I_{C,m}$\footnote{Here C stands for critical.}, where
\begin{align} \label{eq:IC-0}
I_{C,0} = \sum_{m' \geq 0} \int  \overline{\lambda}^{n-1} e^{i\overline{\lambda}^2 }  e^{- i \overline{\lambda} \overline{r} } a(t^{-1/2}\overline{\lambda},z,z';v) (1-\varphi_0(\overline{\lambda}))
\varphi_0(|2\overline{\lambda}-\overline{r}|) \varphi(4\overline{r}\overline{\lambda}/2^{m'}) dv d\overline{\lambda}\\
=\int  \overline{\lambda}^{n-1} e^{i\overline{\lambda}^2 }  e^{- i \overline{\lambda} \overline{r} } a(t^{-1/2}\overline{\lambda},z,z';v) (1-\varphi_0(\overline{\lambda}))
\varphi_0(|2\overline{\lambda}-\overline{r}|) \big(1-\varphi_0(4\overline{r}\overline{\lambda})\big) dv d\overline{\lambda}
\end{align}
and for $m \geq 1$:
\begin{align} \label{eq:IC-m}
\begin{split}
I_{C,m} =  & \sum_{m' \geq 0} \int  \overline{\lambda}^{n-1} e^{i\overline{\lambda}^2 }  e^{- i \overline{\lambda} \overline{r} } a(t^{-1/2}\overline{\lambda},z,z';v) (1-\varphi_0(\overline{\lambda}))
\varphi(\frac{|2\overline{\lambda}-\overline{r}|}{2^m}) \varphi(4\overline{r}\overline{\lambda}/2^{m'}) dv d\overline{\lambda}
\\ = & \int  \overline{\lambda}^{n-1} e^{i\overline{\lambda}^2 }  e^{- i \overline{\lambda} \overline{r} } a(t^{-1/2}\overline{\lambda},z,z';v) (1-\varphi_0(\overline{\lambda}))
\varphi(\frac{|2\overline{\lambda}-\overline{r}|}{2^m}) (1-\varphi_0(4\overline{r}\overline{\lambda})) dv d\overline{\lambda} .
\end{split}
\end{align}
Since $\varphi$ is supported away from $0$, $\varphi(|\cdot|/2^m)$ is smooth.

Consider $I_{C,0}$ first. 
We can choose $\varphi_0$ so that $1-\varphi_0(\overline{\lambda})$ is supported in $\overline{\lambda} \geq 3/2$, which gives $|2\overline{\lambda}-\overline{r}| \leq 2$ (hence $\overline{r} \geq 1$) on the support of $(1-\varphi_0(\overline{\lambda}))
\varphi_0(|2\overline{\lambda}-\overline{r}|)$. 
By considering either $\overline{\lambda} \lesssim 1$ or $\overline{\lambda} \gg 1$, we know $\overline{\lambda} \sim \overline{r}$ in this region. 
In addition, since $\overline{r} = t^{-1/2} \Phi(\sigma,y,y',v)/x$ by definition and $\Phi \geq \epsilon > 0$ by our assumption, we know
\begin{equation}
   \overline{\lambda} \sim \overline{r} \sim t^{-1/2}x^{-1} : = \overline{d},
\end{equation}
where we used the condition $j \neq j'$. And we set 
\begin{equation}
    \overline{d'} = t^{-1/2}(x')^{-1} \sim  \sigma \overline{d}.
\end{equation}
Since $1/x$ is indeed comparable to the distance between $z$ and $z'$ when $j\neq j'$, this $\overline{d}$ will play the role of rescaled distance.
Combining \eqref{eq:spectral-measure-symbol-bound-low-1}, the $\overline{\lambda}$-integral above for fixed $v$, is bounded by 
\begin{align} \label{eq:7.1.4}
\begin{split}
& \int_{|2\overline{\lambda}-\overline{r}| \leq 1, \overline{\lambda} \sim \overline{d}} \overline{\lambda}^{n-1}
(1+\overline{\lambda}\overline{d})^{ -\frac{n-1-k_\ell}{2} } \sigma^{k_\ell/2} d\overline{\lambda}
\\ =  & \int_{|2\overline{\lambda}-\overline{r}| \leq 1, \overline{\lambda} \sim \overline{d}} \overline{\lambda}^{k_\ell} 
\big(\overline{\lambda}^{n-1-k_\ell}
(1+\overline{\lambda}\overline{d})^{-\frac{n-1-k_\ell}{2}} \big) \sigma^{k_\ell/2} d\overline{\lambda}
\\ \lesssim &   \int_{|2\overline{\lambda}-\overline{r}| \leq 1, \overline{\lambda} \sim \overline{d}} \overline{\lambda}^{k_\ell} \sigma^{k_\ell/2} d\overline{\lambda}  
\sim \max(1,(\overline{d'}\overline{d})^{k_\ell/2}), 
\end{split}
\end{align}
where we used the fact that the length of the $\overline{\lambda}$-interval we are integrating over is $O(1)$. 
Then we integrate over $v$ that are in the support of $a$, which is compact and obtain
\begin{equation}
    |I_{C,0}| \lesssim \max\Big(1,(\overline{d'}\overline{d})^{k_\ell/2}\Big). 
\end{equation}

For $I_{C,m}$ with $m \geq 1$, we know the derivative of the phase in $\overline{\lambda}$ is $(2\overline{\lambda}- \overline{r}) \sim 2^m$ and derivatives of all other factors in $\overline{\lambda}$ are uniformly bounded \footnote{For example, consider the $\varphi(4\overline{r}\overline{\lambda}/2^{m'})$ factor, which has derivative $\frac{4\overline{r}}{2^{m'}}\varphi'(4\overline{r}\overline{\lambda}/2^{m'})$, which is supported in the region $4\overline{r}\overline{\lambda}/2^{m'} \sim 1$. Since we are restricted to the region $\overline{\lambda} \gtrsim 1$ now and this is bounded.}.
In addition, the argument showing $\overline{r} \sim \overline{d}$ still applies in this region.
If $\overline{r} \lesssim 2^m$, then we know $\overline{\lambda} \lesssim 2^m$ as well on the support of the integrand.
Then we can integrate by parts in $\overline{\lambda}$ for $N$ times to obtain
\begin{equation} \label{eq:bound-IC-m}
    |I_{C,m}| \lesssim 2^{-mN} \int_{|2\overline{\lambda}-\overline{r}| \sim 2^m} \overline{\lambda}^{n-1}(1+\overline{\lambda} \overline{d})^{-\frac{n-1-k_\ell}{2}} \sigma^{k_\ell/2}  d\overline{\lambda}
    \lesssim 2^{-m}(\overline{d'}\overline{d})^{k_\ell/2},
\end{equation}
where we used $\overline{d} \sim \overline{r} \gtrsim \overline{\lambda}^{-1} \gtrsim 2^{-m}$ due to the factor $(1-\varphi_0(4\overline{r}\overline{\lambda}))$. Then we can sum over $m$. On the other hand, if $\overline{r} \gg 2^m$, then we know $\overline{\lambda} \gg 2^m$ and  $\overline{d} \sim \overline{r} \sim \overline{\lambda}$, which still gives \eqref{eq:bound-IC-m}.

In the proof below, we consider the contribution from the conic intersecting pair $(L^{\bfs},\Lsharplow)$ (or $(L^{\bfs,\calchigh},\Lsharphigh)$ in the high-energy regime).  
Let $\Phi$ and extra parameters $(v,s)$ be as in \eqref{eq:conic-pair-phase-construction-low} (or \eqref{eq:conic-pair-phase-construction-high-region1}\eqref{eq:conic-pair-phase-construction-high-region2} depending on the regime), if we treat $s$ and $v$ on an equal footing and apply the argument above with the pointwise bound in Corollary~\ref{coro:microlocalized-spectral-measure-osc-int-form-low}
(or Corollary~\ref{coro:microlocalized-spectral-measure-osc-int-form-high}) directly, then we are effectively running the same argument except that now we have $k+1$ extra parameters and eventually this leads to a $\sigma^{1/2} (1+\lambda|z|)^{1/2} \sim (\frac{\lambda}{x'})^{1/2}$ loss compared with \eqref{eq:7.1-1}.
This is remedied by exploiting the convexity of the phase function in $s$ and we explain this in detail below.

Recall the local expression in \eqref{QiEQj-s-lo-2},
our amplitude is smooth in $\frac{x'}{\lambda s}$. We apply the Taylor expansion to this amplitude in this variable as 
\begin{equation} \label{eq:a-exp-1}
    a_0(\lambda,0,\sigma,y,y',v,s) + \frac{x'}{\lambda s} a_1(\lambda,\frac{x'}{\lambda s},\sigma,y,y',v,s), 
\end{equation}
and then the contribution from the second term can be bounded in the same way as contributions away from conic Legendre pairs above, since after multiplying $(\frac{\lambda}{x'})^{1/2}$, which is the potential loss that we are dealing with, the factor in front of $a_1$ can be written as 
\begin{equation}
(\frac{x'}{\lambda s})^{1/2}s^{-1/2},
\end{equation}
while in the amplitude we have an $s^{\frac{n+k}{2}-1}$ factor that can absorb $s^{-1/2}$.

Then we apply the stationary phase lemma to the $s$-integral in the contribution from $a_0$.
More concretely, using the expressions in \eqref{eq:hatLbf-low-components} and \eqref{eq:conic-pair-phase-construction-low}, we can write the phase as 
\begin{equation}
    \frac{\lambda}{x}(1+\sigma) + \frac{\lambda}{x'}s\psi.
\end{equation}
Then $\psi$ has uniformly lower-bounded Hessian in $s$ at its critical point in $s$ by Proposition~\ref{prop:LCP-phase-derivatives} and
\eqref{eq:Y'-n-1-expansion-concrete-low} (or \eqref{eq:Y'-n-1-expansion-concrete-high-region1}\eqref{eq:Y'-n-1-expansion-region2-concrete} in other regions).
In addition, such a critical point always exists for $y_{n-1}'$ in a small neighborhood of the $y_{n-1}'$-component of the point around which we are writing this parametrization. This is because the derivative of $Y'_{n-1}$ in $s$ is bounded away from $0$ by the same proposition as above, so such $y_{n-1}'$ is in the range of $Y'_{n-1}$ and we have a critical point in $s$ by \eqref{eq:v-critical-fixed-s}.
Then we apply the stationary phase lemma in $s$ to gain a factor of $(\lambda/x')^{-1/2}$ and then we can apply the same argument in the first part.

In the high-energy regime, the amplitude in \eqref{eq:coro4.2-2} 
is only smooth in $x'/s$ or $x/s$ rather than $\frac{x}{\lambda s}$ or $\frac{x'}{\lambda s}$. So \eqref{eq:a-exp-1} becomes
\begin{equation} \label{eq:a-exp-high}
    a_0(\lambda,0,\sigma,y,y',v,s) + \frac{x'}{s} a_1(\lambda,x'/s,\sigma,y,y',v,s).
\end{equation}
The proof for the $a_0$-term remains the same while a further minor change is needed for the $a_1$-term.
Then the coefficient in front of the second term in \eqref{eq:a-exp-1} is only $x'/s$. So after multiplying $(\frac{\lambda}{x'})^{1/2}$, which is the potential loss that we are dealing with, the factor in front of $a_1$ can be written as 
\begin{equation}
 (x')^{1/2}\lambda^{1/2} s^{-1}.
\end{equation}
Then the $s^{-1}$-factor can be absorbed by $s^{\frac{n+k}{2}-1}$ and it leads to at most $s^{-1/2}$ type growth in $s$, which is integrable in $s$ after bounding the integral in all other variables using the argument above.
Then we can combine $ (x')^{1/2}\lambda^{1/2}$ with the extra $\sigma^{1/2}$ factor in \eqref{eq:spectral-measure-symbol-bound-high-2} to obtain
\begin{equation} \label{eq:temp-7.1-1}
    \sigma^{1/2} (x')^{1/2}\lambda^{1/2}.
\end{equation}
In the region $ \sigma^{1/2} (x')^{1/2}\lambda^{1/2} \lesssim |t|^{-1/2}$, the proof is completed by the same argument as in the first part.
In the region on which this quantity in \eqref{eq:temp-7.1-1} is $\gg t^{-1/2}$, we know $\lambda \gg x^{-1}t^{-1}$, equivalently in the rescaled variables above we have $\overline{\lambda} \gg \overline{x}^{-1}$. Then this part is dealt with via the non-stationary phase argument (in $\overline{\lambda}$) before \eqref{eq:01} and this factor $\lambda^{1/2}$ can be eliminated after integration by parts.

In summary, we have proved \eqref{eq:7.1-1}. The first part on the right hand side (i.e., the `1' in the bracket) comes from the contribution of small $\overline{r},\overline{\lambda}$, which gives $O(1)$ contribution as above after removing the $|t|^{-n/2}$-factor.
\end{proof}

Now we point out changes in the proof above needed for the second part of Theorem~\ref{thm:Schrodinger-propagator-est-1} concerning $P$ in \eqref{eq:P-def-exact-cone}.
\begin{proof}[Proof of Theorem~\ref{thm:Schrodinger-propagator-est-1} resumed.]
In the setting of an exact cone $(X_0,g_0)$, we use an analogue of Proposition~\ref{prop:microlocal-dispersive-1} in the same way as above, except that now the spectral measure is replaced by the one associated with $P$ in \eqref{eq:P-def-exact-cone} characterized as in the last part of Theorem~\ref{thm:spectral-measure-complete}.

The microlocal partition of unity is defined as follows.
We take the low-energy microlocal partition of unity in Proposition~\ref{prop:microlocal-partition-low} at any fixed $\lambda_0 \lesssim 1$ and then extend it  constantly in $\lambda$ to make it a family of partitions. 
Then the rest of the proof goes through in the same way and the maximum of the number of extra parameters in minimal parametrizations in this setting is $\IFz$ by the part of Proposition~\ref{prop:IF-relation-parameter-number} concerning $(X_0,g_0)$.
\end{proof}

Now we turn to prove the second estimate for the Schr\"odinger propagator, i.e. \eqref{eq:Schrodinger-propagator-est-2-1}. 
The statement concerning $P$ on $(X_0,g_0)$ defined in \eqref{eq:P-def-exact-cone} follows from replacing ingredients in the proof below concerning $P$ on $(X,g)$ by the corresponding ingredients for $(X_0,g_0)$ as in the last part of the proof of Theorem~\ref{thm:Schrodinger-propagator-est-1} above.
So we only give details about the part concerning $P$ in \eqref{eq:P-def} in the asymptotically conic case, 
which in turn follows from summing the following microlocalized estimate.

\begin{proposition} \label{prop:microlocal-dispersive-2}
    Let $j,j' \in J$, $j \neq j'$ and $\ell, k_\ell$ be as in Proposition~\ref{prop:microlocal-partition-low}, Proposition~\ref{prop:microlocal-partition-high}. For $k \geq k_\ell$, we have 
\begin{equation} \label{eq:7.1-2}
\Big|\int_0^\infty e^{i t \lambda^2} \lambda^{-k} Q_j^{\calc} \specm  Q_{j'}^{\calc} d\lambda \Big| \lesssim |t|^{-\frac{n-k}{2}}.
\end{equation}
For $j,j' \in J$ with $j=j'$ or one of $j,j'$ is $1$ or $\zf$, then \eqref{eq:7.1-2} holds for any $k$ such that  $ 0 \leq k \leq n-1$. 
\end{proposition}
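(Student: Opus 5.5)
We follow the architecture of the proof of Proposition~\ref{prop:microlocal-dispersive-1} and again work in the rescaled variables \eqref{eq:rescaled-lambda-phi}. The only change is the extra factor $\lambda^{-k}$ in the integrand. With $\overline{\lambda}=t^{1/2}\lambda$ it becomes $t^{k/2}\overline{\lambda}^{-k}$, so the left-hand side of \eqref{eq:7.1-2} equals $t^{-\frac{n-k}{2}}$ times an integral of the same type as $I_\pm(a,\Phi)$, but with $\overline{\lambda}^{n-1}$ replaced by $\overline{\lambda}^{n-1-k}$. It therefore suffices to show that this modified integral is $O(1)$ uniformly in $z,z',t$. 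As before we work with $\sigma\lesssim 1$, treat the low-energy regime for definiteness (the high-energy one is identical using Corollary~\ref{coro:microlocalized-spectral-measure-osc-int-form-high}), and decompose into $I_{\pm,1}$, $I_{+,2}$, $I^1_{-,2}$ and $I_{C,m}$.

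The small-frequency piece $I_{\pm,1}$ needs care, since $\overline{\lambda}^{n-1-k}$ is still locally integrable near $0$ because $k\leq n-1$; for $k=n-1$ we get a bounded integrand. We bound it directly, using $|a|\lesssim 1$, which holds since $(1+\lambda|z|)^{-\frac{n-1-k}{2}}\sigma^{k/2}\leq 1$. The non-stationary pieces $I_{+,2}$ and $I^1_{-,2}$ are bounded as in \eqref{eq:01}, now with exponent $n-1-k-2N$. The only new point here is that $\partial_{\overline{\lambda}}$ falling on $\overline{\lambda}^{-k}$ costs $\overline{\lambda}^{-1}\lesssim 1$ on $\overline{\lambda}\gtrsim 1$.

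The key step is the critical region $I_{C,m}$. In \eqref{eq:7.1.4} we had $\int\overline{\lambda}^{k_\ell}\sigma^{k_\ell/2}\,d\overline{\lambda}$ over an $O(1)$ interval with $\overline{\lambda}\sim\overline{d}$. Now the integrand is
\[
\overline{\lambda}^{n-1-k}(1+\overline{\lambda}\,\overline{d})^{-\frac{n-1-k}{2}}\sigma^{k/2},
\]
where we use the parametrization with $k\geq k_\ell$ parameters allowed by Corollary~\ref{coro:microlocalized-spectral-measure-osc-int-form-low}, so the symbol bound holds with $k$ in place of $k_\ell$. Since $1+\overline{\lambda}\,\overline{d}\gtrsim\overline{\lambda}^2$ when $\overline{\lambda}\sim\overline{d}\gtrsim 1$, this is $\lesssim\sigma^{k/2}\leq 2^{k/2}$, so the piece is bounded. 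The $m\geq1$ pieces then gain $2^{-mN}$ by integration by parts as in \eqref{eq:bound-IC-m}.

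For the conic-pair pieces, case \eqref{QiEQj-s-lo-2} with $k+1$ parameters, we repeat the Taylor expansion \eqref{eq:a-exp-1}. The $a_1$ remainder is handled as above. For the $a_0$ term we first apply stationary phase in $s$ to gain $(\lambda/x')^{-1/2}$, which reduces it to the same count. Alternatively we invoke Proposition~\ref{prop:conic-pair-pointwise-bound-low} and its high-energy analogues, which give a pointwise bound with effectively $0$ parameters, so any $k\geq0$ works.

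For the second statement ($j=j'$, or one index equal to $1$ or $\zf$), the amplitudes satisfy \eqref{eq:spectral-measure-symbol-bound-low-3}, i.e.\ the $k=0$ bound $(1+\lambda|z|)^{-\frac{n-1}{2}}$. For $j=j'$ the phase may have $\Phi\to0$ near the diagonal, so $\overline{r}$ is not comparable to $\overline{d}$. The main obstacle is to run the $j=j'$ argument of \cite[Proposition~6.1]{Hassell-Zhang2016Strichartz} with weight $\overline{\lambda}^{n-1-k}$: we split into $\overline{r}\lesssim1$ and $\overline{r}\gtrsim1$, bounding the former trivially by local integrability. In the latter, $\overline{\lambda}\sim\overline{r}\lesssim\overline{d}$ on the critical set, so
\[
\overline{\lambda}^{n-1-k}(1+\overline{\lambda}\,\overline{d})^{-\frac{n-1}{2}}\lesssim\overline{\lambda}^{n-1-k}\,\overline{\lambda}^{-(n-1)}\leq 1,
\]
which is exactly where the restriction $0\leq k\leq n-1$ enters.
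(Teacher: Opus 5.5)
Your argument is essentially the paper's. After rescaling, $\lambda^{-k}$ becomes the weight $\overline{\lambda}^{n-1-k}$ with prefactor $|t|^{-\frac{n-k}{2}}$. The pieces $I_{\pm,1}$, $I_{+,2}$, $I^1_{-,2}$ go through as before, and the critical piece is bounded because $\overline{\lambda}^{n-1-k}(1+\overline{\lambda}\,\overline{d})^{-\frac{n-1-k}{2}}\lesssim 1$ for $\overline{\lambda}\sim\overline{d}$. The conic-pair pieces are reduced to the same count by the stationary phase in $s$ from the end of the proof of Proposition~\ref{prop:microlocal-dispersive-1}.

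Drop your ``alternatively'' route for the conic pair. Proposition~\ref{prop:conic-pair-pointwise-bound-low} needs the admissibility hypothesis, which is not assumed in this statement. Moreover, a fixed-$\lambda$ pointwise bound alone does not control the $\lambda$-integral; that is what the much longer Proposition~\ref{prop:microlocal-dispersive-LCP} is for.

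Also, in your last display only $k\ge 0$ is used. The upper bound $k\le n-1$ enters through integrability of $\overline{\lambda}^{n-1-k}$ near $0$, as you correctly noted for $I_{\pm,1}$.
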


\begin{proof}
     The proof is almost the same as Proposition~\ref{prop:microlocal-dispersive-1} and we address differences here. We still consider the case away from the conic intersecting pair first.
     The case either $j=j' \in J$ or one of $j,j'$ is $1$ or $\zf$ still follows from the same proof as in \cite[Proposition~6.1]{Hassell-Zhang2016Strichartz}, except that now the power of $\lambda$ is $n-1-k$ and consequently the power of $|t|$ obtained from the same rescaling with $\overline{\lambda}=|t|^{1/2}\lambda$ is $|t|^{-\frac{n-k}{2}}$ and all the steps in the proof remain the same.

Now we consider the case with $j,j' \in J$ and $j\neq j'$. In the local expression \eqref{eq:7.1-5} of the microlocalized propagator, we instead have\begin{equation} \label{eq:7.1-5'}
 \int e^{it\lambda^2} \lambda^{n-1-k}  e^{\pm i \lambda \Phi(z, z',v) } a(\lambda,z,z';v)dv d\lambda.
\end{equation}
Then we still introduce rescaled variables as in \eqref{eq:rescaled-lambda-phi}, which gives an overall $t^{-\frac{n-k}{2}}$ factor. 
Then the decomposition in $\overline{\lambda}$ and $\overline{r}\overline{\lambda}$ remains the same and we use the same letter to denote corresponding part of the oscillatory integral except that the polynomial factor is $\overline{\lambda}^{n-1-k}$ now.
Then the argument used to estimate $I_{\pm,1}$, $I_{+,2}$ remains the same.
The difference lies in the way to estimate the $I_{-,2}$-part.
More precisely, we again decompose it according to the value of $|2\overline{\lambda}-\overline{r}|$ as 
\begin{equation*}
\sum_{m=0}^\infty I_{C,m},   
\end{equation*}
where $I_{C,0}$ includes the part with $|2\overline{\lambda} - \overline{r}| \lesssim 1$ and $I_{C,m}, m \geq 1$ includes the part with $|2\overline{\lambda} - \overline{r}| \sim 2^m$. 
For $I_{C,0}$, using \eqref{eq:spectral-measure-symbol-bound-low-1} again, this oscillatory integral can be bounded by
\begin{align} \label{eq:7.1.4'}
\begin{split}
& \int_{|2\overline{\lambda}-\overline{r}| \leq 1, \overline{\lambda} \sim \overline{d}} \overline{\lambda}^{n-1-k}
(1+\overline{\lambda}\overline{d})^{-\frac{n-1-k}{2}} d\overline{\lambda} \lesssim 1.
\end{split}
\end{align}
For $I_{C,m}$, $m \geq 1$, the derivative of the phase with respect to $\overline{\lambda}$ is comparable to $2^m$. Then we integrate by parts in $\overline{\lambda}$ first to gain a $2^{-m}$ factor each and using the discussion before \eqref{eq:bound-IC-m} we have either $\overline{\lambda} \sim 2^m$ or $\overline{\lambda} \sim \overline{d}$, which gives
\begin{equation} \label{eq:bound-IC-m'}
    |I_{C,m}| \lesssim 2^{-mN} \int_{|2\overline{\lambda}-\overline{r}| \sim 2^m} \overline{\lambda}^{n-1-k}(1+\overline{\lambda} \overline{d})^{-\frac{n-1-k}{2}}  d\overline{\lambda}
    \lesssim 2^{-m},
\end{equation}
which completes the proof after summing over $m$.

For the part associated with the conic intersecting pair, the issue to remedy is still that we have $k+1$ extra parameters while we want to produce a bound as if there are only $k$ extra parameters. To this end, we first apply the last part of the proof of Proposition~\ref{prop:microlocal-dispersive-1}, which gives $(1+\overline{\lambda} \overline{d})^{-1/2}$ improvement, and then the rest of the argument above goes through.
\end{proof}

\subsection{Dispersive estimates for the Schr\"odinger equation: contribution from the Legendrian conic pair}
 \label{subsec:dispersive-conic-pair-contribution}
 
In this subsection, we prove the second part of Theorem~\ref{thm:Schrodinger-propagator-est-1} and Theorem~\ref{thm:Schrodinger-propagator-est-2} with the condition that $X$ is admissible in the sense of Definition~\ref{definition:Lbf-boundary-admissible}. 
These two parts will imply the corresponding dispersive estimates in the second part of Theorem~\ref{thm:dispersive-Schrodinger-1}, i.e., \eqref{eq:est-dispersive-1-2} and the second part of Theorem~\ref{thm:dispersive-Schrodinger-2}, i.e. \eqref{eq:est-dispersive-2-2}.
The corresponding part concerning $P$ defined for an exact cone $(X_0,g_0)$ follows from the same discussion in the previous subsection, i.e. the resumed part of the proof of Theorem~\ref{thm:Schrodinger-propagator-est-1}, with the only difference being that now we use the statement of 
Proposition~\ref{prop:IF-relation-parameter-number} concerning both $\IFintz$ for those local expressions of the microlocalized spectral measure that are not associated with the conic intersecting pair $(L^{\bfs},\Lsharplow)$, and use the statement concerning $\IFz$ for those pieces of the  microlocalized spectral measure associated with $(L^{\bfs},\Lsharplow)$.

So we only give details of the proof of \eqref{eq:est-dispersive-1-2} in the asymptotically conic case below. We prove this by giving microlocal dispersive estimates controlling the contribution to the propagator from the intersecting Legendre pair with conic points. 
The geometric setup is the same as in Section~\ref{subsec:phase-Hessian}: let $\ULCPlow$ (resp. $\ULCPhigh$) be a neighborhood of a point in  $\beta_{\mathrm{LCP},\calclow}^{-1}(L^{\bfs} \cap \Lsharplow)$ (resp. $\beta_{\mathrm{LCP},\calchigh}^{-1}(L^{\bfs,\calchigh} \cap \Lsharphigh$)) in $\hat{L}^{\bfs}$ (resp. $\hat{L}^{\bfs,\calchigh}$) on which the projection to $\bfs$ (resp. $X_{\rmb}^2$) only degenerates at $\beta_{\mathrm{LCP},\calclow}^{-1}(L^{\bfs} \cap \Lsharplow)$ (resp. $\beta_{\mathrm{LCP},\calchigh}^{-1}(L^{\bfs,\calchigh} \cap \Lsharphigh$)) with the maximal rank drop denoted by $k_\ell$. 
In fact, one can choose $\ULCPhigh$ first and then take $$\ULCPlow = \ULCPhigh \cap \big\{x/s = 0\big\}.$$

Let $L^{\bfs}_{j,j'}$ (resp. $L^{\bfs,\calchigh}_{j,j'}$) be as in \eqref{eq:Lbf-low-jj'} (resp. \eqref{eq:Lbf-high-jj'}), potentially after shrinking it (by shrinking $W_j,W_{j'}$), we may assume that it is contained in $\ULCPlow$ (resp. $\ULCPhigh$). In particular, $j$ and $j'$ are not $1$ or $\zf$ and $j \neq j'$.

\begin{proposition} \label{prop:microlocal-dispersive-LCP}
Suppose that $(\hat{L}^{\bfs,\calchigh},\Lsharphigh)$ is admissible in the sense of Definition~\ref{definition:Lbf-boundary-admissible},
for $P$ in \eqref{eq:P-def} and  $j,j',k_\ell$ above, then for $0 \leq k_0 \leq k_\ell$, we have 
\begin{equation} \label{eq:LCP-microlocalized-dispersive}
\Big|\int_0^\infty e^{i t \lambda^2} Q_j^{\calc} \specm   Q_{j'}^{\calc} \lambda^{-k_0} d\lambda \Big| \lesssim |t|^{-\frac{n-k_0}{2}}.
\end{equation}
Suppose that $(L^{\bfs},\Lsharplow)$ is admissible. Then for $P$ in \eqref{eq:P-def-exact-cone}, the same estimate holds, except that the microlocalizers on both sides of $\specm$ are replaced by $Q_j^{\flat},Q_{j'}^{\flat}$ for a fixed $\lambda_0 \lesssim 1$ that is extended by constant to all $\lambda$. 
\end{proposition}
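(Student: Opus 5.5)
The plan is to reduce \eqref{eq:LCP-microlocalized-dispersive} to the pointwise bounds of Section~\ref{sec:conic-points-pointwise-bound} and then rerun the $\overline{\lambda}$-rescaling argument of Proposition~\ref{prop:microlocal-dispersive-1} and Proposition~\ref{prop:microlocal-dispersive-2} with an effective parameter count of zero. The first step is to write $Q_j^{\calc}\specm Q_{j'}^{\calc}$ locally via Corollary~\ref{coro:microlocalized-spectral-measure-osc-int-form-low} and Corollary~\ref{coro:microlocalized-spectral-measure-osc-int-form-high}. In the region $\sigma\le 2$ these are oscillatory integrals \eqref{QiEQj-s-lo-2}, \eqref{eq:coro4.2-2}, or (for $s/x'\lesssim 1$) the analogue of \eqref{eq:conic-pair-u4-high} with phase \eqref{eq:conic-pair-phase-construction-high-region2}. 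Because $L^{\bfs}_{j,j'}\subset\ULCPlow$ and $L^{\bfs,\calchigh}_{j,j'}\subset\ULCPhigh$, the projection degenerates only at the lift of the conic intersection. Hence the phases are exactly those built in Section~\ref{subsec:Leg-concrete-setup}, and admissibility lets us invoke Propositions~\ref{prop:conic-pair-pointwise-bound-low}, \ref{prop:conic-pair-pointwise-bound-high-region1} and \ref{prop:conic-pair-pointwise-bound-high-region2}.

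Next, I factor out $e^{i\lambda(1+\sigma)/x}$. Writing $\lambda(1+\sigma)/x=\lambda/x+\lambda/x'$, the kernel becomes $e^{i\lambda(1+\sigma)/x}\lambda^{n-1}b(\lambda,z,z')$. The function $b$ is the $(v,s)$-integral that those propositions control: with $\lambda\partial_\lambda$-derivatives included (these only produce harmless factors), it satisfies
\[
|(\lambda\partial_\lambda)^N b|\lesssim (1+\lambda\la z\ra)^{-\frac{n-1}{2}}.
\]
Here I use that $(x'/\lambda)^{(n-1)/2}\sigma^{(n-1)/2}$ is comparable to this weight, by Lemma~\ref{lemma:bdf-1}. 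The point is that the conic pair behaves like a piece with $k=0$. However, $b$ still oscillates, through $e^{i\lambda s\sigma\psi/x}$. So I cannot simply treat it as a symbol in $\lambda$.

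Instead of taking absolute values first, I run the propagator argument with the $\lambda$-integral inside. After the rescaling $\overline{\lambda}=|t|^{1/2}\lambda$ and $\overline r=|t|^{-1/2}\Phi/x$, for each fixed $(v,s)$ I decompose into $I_{\pm,1}$, $I_{+,2}$, $I^1_{-,2}$ and $I_{C,m}$ exactly as before. The phase value on $C_\Phi$ is $x\mk{d}_X$, by Proposition~\ref{prop:phase-equal-distance-high} and Proposition~\ref{prop:phase-equal-distance-X0}. Since $\Phi\ge\epsilon$ for $j\ne j'$ by Corollary~\ref{coro:phase-value-lowerbound-high}, we get $\overline r\sim\overline d$. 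The non-stationary pieces carry over verbatim. For the critical pieces $I_{C,m}$, I swap the order of integration: for fixed $\overline{\lambda}$ with $|2\overline\lambda-\overline r|\lesssim 2^m$, I apply the pointwise estimate of Section~\ref{sec:conic-points-pointwise-bound} to the $(v,s)$-integral at frequency $\lambda$. This gives the bound $\overline\lambda^{n-1-k_0}(1+\overline\lambda\overline d)^{-(n-1)/2}$, and integrating over an $O(2^m)$ interval gives $O(2^{-m})$ after integrating by parts in $\overline\lambda$. For $k_0\le k_\ell\le n-2$ the power $\overline\lambda^{n-1-k_0}$ is still absorbed, as in \eqref{eq:7.1.4'}. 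Summing over $m$ yields $|t|^{-(n-k_0)/2}$.

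The main obstacle is justifying integration by parts in $\overline\lambda$ when the amplitude is itself the degenerate oscillatory integral in $(v,s)$. Differentiating in $\lambda$ brings down $s\sigma\psi/x$, which is not small. My first attempt would be to use Proposition~\ref{prop:1st-dPhi-est} and Proposition~\ref{prop:1st-dPhi-est-region2}. These allow $\Phi$ to be replaced by $x\mk{d}_X$, at the cost of a factor controlled by $|\partial_v\Phi|+|s\partial_s\Phi|$, which can be removed by integrating by parts in $(v,s)$. The $\lambda$-derivative would then fall only on the smooth total phase $\lambda\mk{d}_X$, and the rest would be handled as in Proposition~\ref{prop:microlocal-dispersive-1}.

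Failing that, I would split $s\lessgtr(\lambda/x')^{-1/2}$ as in the proof of Proposition~\ref{prop:conic-pair-pointwise-bound-low}. On $s\gtrsim(\lambda/x')^{-1/2}$, stationary phase in $v$ and the van der Corput step in $s$ produce an explicit phase $\Phi|_{v=\hat M'_I}$ whose $\overline\lambda$-dependence is linear. Each of these terms can then be fed into the $I_{C,m}$ machinery separately. The exact-cone case follows by the same argument with $Q^{\flat}$ extended constantly in $\lambda$ and only the low-energy propositions used.
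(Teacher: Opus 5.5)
There is a genuine gap, at exactly the step you flag as the main obstacle. In your $I_{C,m}$ argument you do three things. You insert $\varphi(|2\overline\lambda-\overline r|/2^m)$. You integrate by parts in $\overline\lambda$, which produces factors $(2\overline\lambda-\Phi/\overline x)^{-1}$. You then apply Proposition~\ref{prop:conic-pair-pointwise-bound-low} (or its high-energy analogues) to the remaining $(v,s)$-integral at fixed $\overline\lambda$. But these cutoffs and factors depend on $(v,s)$ through
\[
\frac{\Phi}{\overline x}=\frac{1+\sigma}{\overline x}+\frac{s\psi}{\overline{x'}},\qquad \overline{x'}=|t|^{1/2}x'.
\]
Their $s$- and $v$-derivatives therefore carry factors $\partial_s(s\psi)/\overline{x'}$ and $s\partial_v\psi/\overline{x'}$. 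When $\overline{x'}\ll 1$ these are unbounded: for $2^m\ll 1/\overline{x'}$ the cutoff localizes to a thin, $\overline\lambda$-dependent shell in $(v,s)$. The proof of Proposition~\ref{prop:conic-pair-pointwise-bound-low}, however, genuinely uses $s$- and $v$-derivatives of the amplitude: stationary phase in $v$, integration by parts in Regions 1 and 3, and the $\|g'\|_{L^1}$ bound in the van der Corput step. Taking absolute values in $(v,s)$ instead loses the whole $(\lambda/x')^{(k+1)/2}$ gain.

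So your argument is valid only for $\overline{x'}\gtrsim1$, where these derivatives are bounded; in that regime it is essentially what the paper does. Relatedly, the symbol bound you state for $b$ is false, since $\lambda\partial_\lambda e^{i\lambda s\sigma\psi/x}$ produces $i(\lambda/x')s\psi$, as you yourself note a sentence later.

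Neither fallback closes the regime $\overline{x'}\ll1$ as described.
\begin{itemize}
\item The phase $\overline\lambda^2-\overline\lambda\Phi/\overline x$ cannot be traded for one involving $\mk{d}_X$, so the $\overline\lambda$-derivative never ``falls only on $\lambda\mk{d}_X$''.
\item Van der Corput yields only an absolute bound. It does not give a reduced phase on which you could still integrate by parts in $\overline\lambda$, and that integration by parts must precede any absolute-value estimate.
\end{itemize}

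The missing idea is to use the distance only in the cutoffs. Decompose dyadically in $2\overline\lambda-|\overline{\mk{d}}_{X_0}|$, where $\overline{\mk{d}}_{X_0}=t^{-1/2}\mk{d}_{X_0}$ is independent of $(v,s)$, so Proposition~\ref{prop:conic-pair-pointwise-bound-low} applies directly to the $j=0$ piece. For $j\ge1$, split further according to whether $\big||\overline{\mk{d}}_{X_0}|-\Phi/\overline x\big|$ is $\le 2\cdot2^j$ or $\ge 2^j$.
\begin{itemize}
\item In the first case $|2\overline\lambda-\Phi/\overline x|\ge 8\cdot 2^j$, and you integrate by parts in $\overline\lambda$; the new cutoff does not depend on $\overline\lambda$.
\item The second case only occurs when $\overline x\lesssim 2^{-j}$. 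There Proposition~\ref{prop:1st-dPhi-est} turns the separation into $|s\partial_s\Phi|+|\partial_v\Phi|\gtrsim 2^{j/2}s\,\overline x^{1/2}\sigma^{1/2}$. You then integrate by parts in $(v,s)$ with the weighted operator $L_2$, gaining $2^{-j/2}\overline\lambda^{-1}\overline{x'}^{1/2}s^{-1}$ per step.
\end{itemize}
The $s^{-1}$ losses are absorbed by $s^{\frac{n+k}{2}-1}$, with at most a logarithmic loss, after $\lceil\frac{n+k}{2}\rceil$ steps. The sum over $j$ then converges because $n\ge3$. The high-energy region $s/x'\lesssim1$ is handled the same way using Proposition~\ref{prop:1st-dPhi-est-region2}.
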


We postpone the proof of Proposition~\ref{prop:microlocal-dispersive-LCP} for a moment and discuss its consequence now.
We sum the estimates in \eqref{eq:LCP-microlocalized-dispersive} for those $j,j'$ with $L^{\bfs,\calchigh}_{j,j'}$ near $L^{\bfs,\calchigh} \cap \Lsharphigh$
with estimates in Proposition~\ref{prop:microlocal-dispersive-1} and Proposition~\ref{prop:microlocal-dispersive-2} for those $j,j'$ with $L^{\bfs,\calchigh}_{j,j'}$ away from $L^{\bfs,\calchigh} \cap \Lsharphigh$.
Now the largest rank drop of the projection from those $L^{\bfs,\calchigh}_{j,j'}$ away from $L^{\bfs,\calchigh} \cap \Lsharphigh$ is at most $\IFint$. Contributions associated with $L^{\bfs,\calchigh}_{j,j'}$ near $L^{\bfs,\calchigh} \cap \Lsharphigh$ are estimated by Proposition~\ref{prop:microlocal-dispersive-LCP} and are not affected by the rank drop of the projection. In sum, we have the following estimate on the propagator.
\begin{corollary}
\label{coro:Schrodinger-propagator-est-3}
For $P$ defined by \eqref{eq:P-def} and $\IFint$ defined by \eqref{eq:IFint-def}, we have
\begin{equation}
|e^{itP}(z,z')| \lesssim |t|^{- \frac{n}{2} }\Big(1+\big(\frac{\la z \ra \la z' \ra}{|t|}\big)^{\IFint/2}\Big).
\end{equation}
For $P$ defined by \eqref{eq:P-def-exact-cone} on $(X_0,g_0)$, suppose that $(L^{\bfs},\Lsharplow)$ is admissible, then the same estimate holds with $\IFint$ replaced by $\IFintz$ defined in \eqref{eq:IFintz-def} on the right hand side.
\end{corollary}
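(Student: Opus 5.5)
The plan is to assemble the corollary from the microlocal decomposition \eqref{eq:decomposition-propagator}, so that $e^{itP}(z,z')$ becomes a finite sum over pairs $(j,j')\in\overline{\mk{J}}_{\calc}\times\overline{\mk{J}}_{\calc}$ of the microlocalized propagators $\int_0^\infty e^{it\lambda^2}Q_j^{\calc}\specm Q_{j'}^{\calc}$. Since the partition of Proposition~\ref{prop: microlocal-partition-combined} is finite, the triangle inequality reduces everything to bounding each piece. We sort the pairs into three classes: (i) pairs with $j$ or $j'\in\{1,\zf\}$, or $j=j'$; (ii) pairs $j\neq j'$ in $J$ with $L^{\bfs,\calchigh}_{j,j'}$ (or $L^{\bfs}_{j,j'}$ at low energy) away from $L^{\bfs,\calchigh}\cap\Lsharphigh$; (iii) pairs with $L^{\bfs,\calchigh}_{j,j'}\subset\ULCPhigh$. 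After shrinking the $W_j$, every pair falls into at least one class. The shrinking is a refinement of the partition, which stays finite by compactness of $L^{\bfs,\calchigh}$.

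For class (i), the first part of Proposition~\ref{prop:microlocal-dispersive-1} gives the bound $|t|^{-n/2}$. For class (ii), estimate \eqref{eq:7.1-1} gives $|t|^{-n/2}\big(1+(|t|^{-1}\la z\ra\la z'\ra)^{k/2}\big)$. Here $k$ may be taken to be the maximal rank drop of the projection on the relevant $\mk{G}_\ell$ or $\tilde{\mk{G}}_\ell$. Because these pieces avoid the conic intersection (and the diagonal neighbourhood $U_{\Delta^{\calchigh}}$ is handled by $j=j'$ or the non-degenerate case), Proposition~\ref{prop:IF-relation-parameter-number} gives $k\le\IFint$. Then we use the elementary fact that $1+A^{k/2}\le 2(1+A^{\IFint/2})$ for $A\ge0$ and $k\le\IFint$.

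For class (iii), we take $k_0=0$ in Proposition~\ref{prop:microlocal-dispersive-LCP}, which is allowed under the admissibility hypothesis, and obtain the bound $|t|^{-n/2}$ with no loss at all. Summing the three classes yields the asserted estimate.

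The main obstacle is the bookkeeping that ensures the rank drop counted in class (ii) never exceeds $\IFint$. Specifically, a piece whose microsupport lies near, but not on, the conic intersection must have its rank drop governed only by points off $\beta_{\mathrm{LCP},\calchigh}^{-1}(\Lsharphigh)$. I would handle this with the last assertion of Proposition~\ref{prop:Y'I-derivative;low-high-region1-region-def}: near the intersection, degeneracy occurs only on the intersection itself. So if we choose $\ULCPhigh$ first and refine the cover so that each piece either lies in $\ULCPhigh$ or stays a fixed distance from the intersection, then every class (ii) piece has rank drop bounded by the maximum in \eqref{eq:IFint-def}.

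For the exact cone, the same argument applies with the low-energy partition frozen at a fixed $\lambda_0$ and extended constantly in $\lambda$. We use the $(X_0,g_0)$ parts of Propositions~\ref{prop:microlocal-dispersive-1}, \ref{prop:IF-relation-parameter-number}, and \ref{prop:microlocal-dispersive-LCP}, under admissibility of $(L^{\bfs},\Lsharplow)$, and this replaces $\IFint$ by $\IFintz$.
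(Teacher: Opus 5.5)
Your proposal is correct and follows the paper's own argument. The paper likewise splits the microlocalized propagators into two groups: pieces near $L^{\bfs,\calchigh}\cap\Lsharphigh$, bounded without loss by Proposition~\ref{prop:microlocal-dispersive-LCP} with $k_0=0$ (this is where admissibility of $(\hat{L}^{\bfs,\calchigh},\Lsharphigh)$ enters, which is implicit in the first part of the statement and which you correctly invoke), and the remaining pieces, bounded by Proposition~\ref{prop:microlocal-dispersive-1} with rank drop at most $\IFint$. Your extra bookkeeping via the last assertion of Proposition~\ref{prop:Y'I-derivative;low-high-region1-region-def} simply makes explicit the shrinking of $W_j,W_{j'}$ that the paper performs just before Proposition~\ref{prop:microlocal-dispersive-LCP}.
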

After applying this to the initial data (i.e. integrate in $z'$), this implies the second part of Theorem~\ref{thm:dispersive-Schrodinger-1}, i.e. \eqref{eq:est-dispersive-1-2}. The similar estimate for the propagator proving the second part of Theorem~\ref{thm:dispersive-Schrodinger-2}, i.e. \eqref{eq:est-dispersive-2-2} is the following.
\begin{corollary}
\label{coro:Schrodinger-propagator-est-4}
For $P$ defined by \eqref{eq:P-def} and $\IFint$ defined by \eqref{eq:IFint-def}, we have
\begin{equation}
|e^{itP}(z,z')P^{-\frac{\IFint}{2}}| \lesssim |t|^{- \frac{n-\IFint}{2}}.
\end{equation}
For $P$ defined by \eqref{eq:P-def-exact-cone} on $(X_0,g_0)$, suppose that $(L^{\bfs},\Lsharplow)$ is admissible, then the same estimate holds with $\IFint$ replaced by $\IFintz$ defined in \eqref{eq:IFintz-def} on the right hand side.
\end{corollary}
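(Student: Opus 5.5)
The estimate in Corollary~\ref{coro:Schrodinger-propagator-est-4} will be obtained by summing microlocalized estimates, exactly as Corollary~\ref{coro:Schrodinger-propagator-est-3} is obtained. Using Proposition~\ref{prop: microlocal-partition-combined}, I write
\begin{equation*}
e^{itP}P^{-\IFint/2}=\sum_{j,j'\in\overline{\mk{J}}_{\calc}}\int_0^\infty e^{it\lambda^2}\lambda^{-\IFint}\,Q_j^{\calc}\specm Q_{j'}^{\calc},
\end{equation*}
which is legitimate because $P^{-\IFint/2}$ is the functional calculus factor $\lambda^{-\IFint}$ under $\specm$ with $\lambda=\sqrt{P}$. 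The index set is finite, so it suffices to bound each term by $|t|^{-(n-\IFint)/2}$ uniformly in $(z,z')$.

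I split the pairs $(j,j')$ into three classes.

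\emph{Trivial pairs.} These are the pairs with $j=j'$, or with one of $j,j'$ in $\{1,\zf\}$. For them the second part of Proposition~\ref{prop:microlocal-dispersive-2} applies with $k=\IFint$. This requires $0\le \IFint\le n-1$, which holds because rank drops of the projection are at most $n-2$ in the interior by Proposition~\ref{prop:minimal-parametrization-low}.

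\emph{Pairs away from the conic pair.} These are the pairs $j\neq j'$ in $J$ with $L^{\bfs,\calchigh}_{j,j'}$ away from $L^{\bfs,\calchigh}\cap\Lsharphigh$. By the definition \eqref{eq:IFint-def} and Proposition~\ref{prop:IF-relation-parameter-number}, the local rank drop satisfies $k_\ell\le\IFint$. Proposition~\ref{prop:microlocal-dispersive-2} therefore applies with $k=\IFint\ge k_\ell$, since it allows any $k\ge k_\ell$.

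\emph{Pairs near the conic pair.} These are the pairs with $L^{\bfs,\calchigh}_{j,j'}$ contained in a neighborhood $\ULCPhigh$ of the conic intersection. Here I would invoke Proposition~\ref{prop:microlocal-dispersive-LCP} with $k_0=\IFint$.

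The main obstacle is the third class. Proposition~\ref{prop:microlocal-dispersive-LCP} is stated only for $0\le k_0\le k_\ell$, whereas $\IFint$ may exceed the local $k_\ell$ there. I would first observe that its proof does not use $k_0\le k_\ell$ in an essential way. After the stationary phase in $s$ and the Hessian bounds of Corollary~\ref{coro:Hessian-lowerbound-low;high-region1}, Proposition~\ref{prop:conic-pair-pointwise-bound-low} and Proposition~\ref{prop:conic-pair-pointwise-bound-high-region1} give the amplitude bound
\begin{equation*}
\lambda^{n-1}\big(1+\lambda|z|\big)^{-\frac{n-1}{2}},
\end{equation*}
as if no extra parameters were present. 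The $I_{C,m}$ argument of Proposition~\ref{prop:microlocal-dispersive-2} then runs verbatim with the weight $\overline{\lambda}^{\,n-1-k_0}(1+\overline{\lambda}\,\overline{d})^{-(n-1)/2}$, which is bounded on the critical region for any $0\le k_0\le n-1$.

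If that route fails, the fallback is a frequency split. For $\overline{\lambda}\lesssim1$ the extra factor $\overline{\lambda}^{\,k_\ell-k_0}$ is harmless, because the integrand is integrable near $0$ given $k_0\le n-1$. For $\overline{\lambda}\gtrsim1$ I would note that the rescaled $\overline{\lambda}^{-(k_0-k_\ell)}$ only improves the bound. Either way, each piece is $O(|t|^{-(n-\IFint)/2})$.

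For the exact cone case I would replace the microlocalizers by $\lambda$-independent $Q_j^\flat$, use the exact-cone statement of Proposition~\ref{prop:microlocal-dispersive-LCP} under admissibility of $(L^{\bfs},\Lsharplow)$, and use $\IFintz$ in place of $\IFint$ via Proposition~\ref{prop:IF-relation-parameter-number}. Summing the finitely many terms gives the claim.
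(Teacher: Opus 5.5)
Your proposal is correct and is the paper's own argument: sum Proposition~\ref{prop:microlocal-dispersive-2} (with $k=\IFint$) over the trivial pairs and the pairs away from the conic intersection, and Proposition~\ref{prop:microlocal-dispersive-LCP} over the pairs near it. Your fallback for $k_0=\IFint>k_\ell$ is exactly the remark that opens the paper's proof of that proposition: an extra $\overline{\lambda}^{-k_0}$ is integrable on $\overline{\lambda}\lesssim 1$ when $k_0\le n-1$, and only helps on $\overline{\lambda}\gtrsim 1$.

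Three small corrections are needed:
\begin{itemize}
\item In the asymptotically conic case, Proposition~\ref{prop:microlocal-dispersive-LCP} requires $(\hat{L}^{\bfs,\calchigh},\Lsharphigh)$ to be admissible. This is the standing hypothesis of Section~\ref{subsec:dispersive-conic-pair-contribution}, and you should state it.
\item Your `first route' would not run verbatim. The $I_{C,m}$ cutoffs and the $\overline{\lambda}$-integrations by parts depend on $(v,s)$ through $\overline{r}$, so they cannot be combined with a bound that is only available after integrating out $(v,s)$. This is why the paper cuts off in $2\overline{\lambda}-|\overline{\mk{d}_{X_0}}|$ and integrates by parts in $(v,s)$ via Proposition~\ref{prop:1st-dPhi-est}.
\item The bound $\IFint\le n-1$ holds because the $z'$-directions and the flow direction never degenerate. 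It does not follow from Proposition~\ref{prop:minimal-parametrization-low}, which concerns only $L^{\bfs}$; interior conjugate points can have multiplicity $n-1$.
\end{itemize}
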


Since the proof of Proposition~\ref{prop:microlocal-dispersive-LCP} is rather involved, we sketch the main ideas before the proof.
The overall strategy is to decompose the oscillatory integral according to the derivatives of the phase.
We first decompose according to the derivative in $\lambda$.
For the part with this derivative being large, we apply a non-stationary phase argument in $\lambda$.
For the part that this derivative is small, we further decompose according to the size of $x\mk{d}_X - \Phi$, which vanishes at critical (in $v,s$) points of $\Phi$ and quantifies the distance to the critical set. 
When $|x\mk{d}_X - \Phi|$ is small, then for each fixed $\lambda$, we apply the pointwise bound in Section~\ref{sec:conic-points-pointwise-bound} for Legendre distributions associated with Legendrian conic pairs, but with rescaled $\lambda,x,x'$:
\begin{equation} \label{eq:defn-rescaled-lambda-x-x'}
\overline{\lambda} =  t^{1/2}\lambda, \quad
\overline{x} = t^{1/2}x, \quad \overline{x'} = t^{1/2}x'. 
\end{equation}
When $|x\mk{d}_X - \Phi|$ is large, we integrate by parts in $v,s$ directly.
Finally, we estimate the $\lambda$-integral.

\begin{proof}[Proof of Proposition~\ref{prop:microlocal-dispersive-LCP}]
 Without loss of generality, we only consider the case $t>0$ and the case $t<0$ follows by symmetry. 
Also, we only need to consider the case $k_0=0$ and the same argument applies to other $k_0$, with the power of $\overline{\lambda}$ and $|t|$ shifted correspondingly when we introduce rescaled variables. More concretely, the estimate below for the part with $\overline{\lambda}$ being small remains the same, and this extra negative power of $\lambda$ introduces a negative power of $\overline{\lambda}$, which is in favour of all estimates in the remaining steps.

Consider the low-energy case first and we will indicate minor changes needed in the high-energy setting.
By the definition of Legendre distributions in Section~\ref{sec:Legendrian-distributions}, we know that (modulo a Schwartz term) $Q_j^{\calc} \specm   Q_{j'}^{\calc}$ has two parts: the first part is the part that corresponds to the part of $L^{\bfs}_{j,j'}$ that is away from $L^{\bfs} \cap \Lsharplow$; the second part is the part that is associated with the Legendrian conic pair $(L^{\bfs},\Lsharplow)$, which is given by oscillatory integrals like \eqref{QiEQj-s-lo}.
By the assumption on $L^{\bfs}_{j,j'}$, we know that the first part is a projectable Legendrian and the estimate in Proposition~\ref{prop:microlocal-dispersive-1} (or Proposition~\ref{prop:microlocal-dispersive-2}) with $k_\ell = 0$ applies.

Now we consider the contribution of $Q_j^{\calc} \specm   Q_{j'}^{\calc}$ from the piece like \eqref{QiEQj-s-lo} to the propagator, which is:
\begin{align} \label{eq:localized-propagator-LCP-1}
\begin{split}
  \int_0^{\infty} e^{it\lambda^2}\lambda^{n-1}  \int_0^\infty &\int_{\R^{k}}  e^{ \pm i\lambda\Phi(\sigma,y,y',v,s)/x} \big(\frac{x'}{\lambda s}\big)^{(n-1)/2 - (k+1)/2}
\\ & \sigma^{\frac{n-1}{2}}  s^{n-2} a(\lambda,\sigma,y,y',\frac{x'}{\lambda},v,s) \, dv \, ds  d\lambda.
\end{split}
\end{align}
Here we factor out the $-$ sign just for convenience, keeping $\Phi$ to be a positive quantity, which follows from the same discussion after \eqref{eq:spectral-measure-local}. In fact, 
since by definition of parametrizations in Section~\ref{subsec:Legendrian-geometry-low}, the value of $\pm \Phi$ is (say, in the region $\sigma \lesssim 1$) close to $\pm (1+\sigma)$ in the region that we are considering, where the sign depends on whether we are close to $L^{\bfs} \cap \Lsharplow_+$ or $L^{\bfs} \cap \Lsharplow_-$. So we can assume that $\Phi$ is close to $1+\sigma$.
In fact, by the definition of $L^{\bfs}$ using \eqref{eq:Lbf-definition-gamma^2}, we know that $\Phi$ parametrizes a region of $L^{\bfs}$ contained in the forward (in the left variable) flow-out of the diagonal locally if and only if $-\Phi$ parametrizes the region of $L^{\bfs}$ obtained by switching the left and right base variables and switching the sign of frequencies.
After substituting in \eqref{eq:defn-rescaled-lambda-x-x'}, \eqref{eq:localized-propagator-LCP-1} becomes
\begin{align} 
\begin{split}
t^{-n/2}I,
\end{split}
\end{align}
where
\begin{align} \label{eq:dispersive-Schrodinger-I-def}
\begin{split}
I =  \int_0^{\infty} \overline{\lambda}^{n-1}  \int_0^\infty &\int_{\R^{k}}  e^{i(\overline{\lambda}^2 \pm \overline{\lambda}\frac{\Phi(\sigma,y,y',v,s)}{\overline{x}})} \big(\frac{\overline{x'}}{\overline{\lambda} s}\big)^{(n-1)/2 - (k+1)/2}
  s^{n-2} \\ &  
  \sigma^{\frac{n-1}{2}} a(t^{-1/2}\overline{\lambda},\sigma,y,y',\frac{\overline{x'}}{\overline{\lambda}},v,s) \, dv \, ds  d\overline{\lambda}.
\end{split}
\end{align}
To prove \eqref{eq:localized-propagator-LCP-1}, we only need to show that $|I| \lesssim 1$.
Next we consider $I$ and decompose it according to $\overline{\lambda}$ as
\begin{equation} \label{eq:7.2-1} 
    I = I^{0} + I^{1},
\end{equation}
where 
\begin{align}
\begin{split}
I^0 =  \int_0^{\infty} \overline{\lambda}^{n-1} \phi_0(\overline{\lambda}) &\int_0^\infty \int_{\R^{k}}  e^{i(\overline{\lambda}^2 \pm \overline{\lambda}\frac{\Phi(\sigma,y,y',v,s)}{\overline{x}})} \big(\frac{\overline{x'}}{\overline{\lambda} s}\big)^{(n-1)/2 - (k+1)/2}
  s^{n-2} \\ & \sigma^{\frac{n-1}{2}}a(t^{-1/2}\overline{\lambda},\sigma,y,y',\frac{\overline{x'}}{\overline{\lambda}},v,s) \, dv \, ds  d\overline{\lambda},
  \end{split}
\end{align}
and 
\begin{align}
\begin{split}
I^1 = \int_0^{\infty} \overline{\lambda}^{n-1} (1-\phi_0(\overline{\lambda})) &\int_0^\infty \int_{\R^{k}}  e^{i(\overline{\lambda}^2 \pm \overline{\lambda}\frac{\Phi(\sigma,y,y',v,s)}{\overline{x}})} \big(\frac{\overline{x'}}{\overline{\lambda} s}\big)^{(n-1)/2 - (k+1)/2}
  s^{n-2} \\ & \sigma^{\frac{n-1}{2}}a(t^{-1/2}\overline{\lambda},\sigma,y,y',\frac{\overline{x'}}{\overline{\lambda}},v,s) \, dv \, ds  d\overline{\lambda},
  \end{split}
\end{align}
where $\phi_0$ is identically one on $(-\infty,1/2]$ and supported in $[0,1]$.
By the support condition in Definition~\ref{def:Legendrian-dis-conic-intersecting-low} and Definition~\ref{def:Legendrian-dis-conic-high}, $\frac{\overline{x'}}{\overline{\lambda}s}=\frac{x'}{\lambda s}$ is bounded on the support of the amplitude. So it is straightforward to see $|I^0| \lesssim 1$ and it remains to show $|I^1|\lesssim 1$.
For $I^1$ with $+$ sign in front of $\lambda\Phi/\overline{x}$, we integrate by parts in $\overline{\lambda}$. 
The derivative of the phase in $\overline{\lambda}$ is 
\begin{equation}
2\overline{\lambda} + \frac{\Phi}{\overline{x}} \geq 2\overline{\lambda}.
\end{equation}
So each time we improve by at least a factor $\overline{\lambda}^{-1}$. Here we used the symbolic property \eqref{eq:est-a-osc-main} of $a$ in $\lambda$ so that differentiating in $\overline{\lambda}$ won't introduce an extra $t^{-1/2}$ factor.
Then after $N > \frac{n+k}{2} +1$ times of integration by parts, we have
\begin{equation}
    |I^1| \lesssim \int_{1/2}^\infty \overline{\lambda}^{ \frac{n+k}{2} -N} d\overline{\lambda} \lesssim 1.
\end{equation}

Now we consider the case when we have a minus sign in the phase.
We first consider a dyadic decomposition in $\overline{\lambda}$ and a decomposition in $\overline{\lambda} \Phi/\overline{x}$.
Consider a partition of unity on $\R$:
\begin{align*}
1 = \phi_0(\cdot) + \sum_{j=1}^\infty \phi_j(\cdot),
\end{align*}
such that $\phi_0$ is as before: supported on $(-\infty,1]$ and is identically $1$ on $(-\infty,\frac{1}{2}]$; in addition here $\phi_j$ for $j \geq 1$ is supported on $[2^{j-2},2^{j+1}]$ and is identically $1$ on $[2^{j-1},2^j]$.
Using $1-\phi_0(\overline{\lambda}) = \sum_{j=1}^\infty \phi_j(\overline{\lambda})$, we can further decompose $I^{1}$ as
\begin{align}
I^1 = \sum I^1_{1} + I^1_2,
\end{align}
where
\begin{equation} \label{eq:dispersive-Schrodinger-I1-def}
\begin{split}
I^1_1 = &  \int_0^{\infty} \overline{\lambda}^{n-1}  \int_0^\infty \int_{\R^{k}} 
e^{i(\overline{\lambda}^2-\overline{\lambda}\frac{\Phi(\sigma,y,y',v,s)}{\overline{x}})} \Big(\frac{\overline{x'}}{\overline{\lambda} s}\Big)^{(n-1)/2 - (k+1)/2}
  s^{n-2} 
\\ & \sigma^{\frac{n-1}{2}}a(t^{-1/2}\overline{\lambda},\sigma,y,y',\frac{\overline{x'}}{\overline{\lambda}},v,s) 
 (1-\phi_0(\overline{\lambda}))
 \phi_0(4\overline{\lambda}\Phi/\overline{x})    \, dv \, ds  d\overline{\lambda},
 \end{split}
\end{equation}
and
\begin{align} \label{eq:dispersive-Schrodinger-I2-def}
I^1_2 = & \int_0^{\infty} \overline{\lambda}^{n-1}  \int_0^\infty \int_{\R^{k}}  e^{i(\overline{\lambda}^2-\overline{\lambda}\frac{\Phi(\sigma,y,y',v,s)}{\overline{x}})} \big(\frac{\overline{x'}}{\overline{\lambda} s}\big)^{(n-1)/2 - (k+1)/2}
  s^{n-2} 
  \\ & \sigma^{\frac{n-1}{2}}a(t^{-1/2}\overline{\lambda},\sigma,y,y',\frac{\overline{x'}}{\overline{\lambda}},v,s) 
 (1 - \phi_0(\overline{\lambda})) ( 1-\phi_0(4\overline{\lambda}\Phi/\overline{x}) )   \, dv \, ds  d\overline{\lambda}.
\end{align}

For $I^1_1$, we observe that on the support of $\phi_j(\overline{\lambda}) \phi_0(4\overline{\lambda}\Phi/\overline{x})$, we have:
\begin{align*}
\overline{\lambda} \in [2^{j-2},2^{j+1}] \geq \frac{1}{2},
\end{align*}
which in turn implies $\Phi/\overline{x} \leq \frac{1}{2}$ due to the $\phi_0(4\overline{\lambda}\Phi/\overline{x})$-factor. So we have

\begin{align} \label{eq:inq-15}
\partial_{\overline{\lambda}} \Big(\overline{\lambda}^2-\overline{\lambda}\frac{\Phi(\sigma,y,y',v,s)}{\overline{x}} \Big)
\geq \frac{1}{2} \overline{\lambda}.
\end{align}
In addition, derivatives of all other factors are $O(\overline{\lambda}^{-1})$:
\begin{itemize}
    \item The $\overline{\lambda}$-derivative of $a$ is $O(\overline{\lambda}^{-1})$ because of its symbolic (or conormal) property in $\overline{\lambda}$.
    \item The derivative of $1-\phi_0(\overline{\lambda})$ is supported on the region $\overline{\lambda} \lesssim 1$, hence is $O(\overline{\lambda}^{-1})$.
    \item The derivative of $\phi_0(\overline{\lambda}\Phi/\overline{x})$ is supported on the region $\overline{\lambda}\Phi/\overline{x} \sim 1$, and takes the form $\Phi/\overline{x}$ times a uniformly bounded function; hence the entire expression is $O(\overline{\lambda}^{-1})$.
\end{itemize}
So after integration by parts in $\overline{\lambda}$ for $N>\frac{n}{2}$ times we have
\begin{align*}
|I^1_1| \leq C_N    \int_{1/2}^\infty \overline{\lambda}^{n-1-2N} d\overline{\lambda} \lesssim 1.
\end{align*}

Now we consider $I^1_2$. If $\overline{x'} \gtrsim 1$, then
\begin{equation}
\frac{\Phi}{\overline{x}} = \frac{1+\sigma+s\sigma\psi}{\overline{x}} =  \frac{1+\sigma}{\overline{x}} + \frac{s\psi}{\overline{x'}}
\end{equation}
is smooth (with uniformly bounded derivatives) in $s,v$.
So we introduce a dyadic decomposition in terms of $2\overline{\lambda}-\frac{\Phi}{\overline{x}}$:
\begin{align*}
1 = \varphi_0\Big(2\overline{\lambda}-\frac{\Phi}{\overline{x}}\Big) + \sum_{j=1}^\infty \varphi_j\Big(2\overline{\lambda}-\frac{\Phi}{\overline{x}}\Big),
\end{align*}
where the $\varphi_0$-term is supported on the region $|2\overline{\lambda}-\frac{\Phi}{\overline{x}}| \lesssim 1$ and $\varphi_j$-term is supported on the region where 
\begin{equation} \label{eq:varphi-j-support-2-new}
10\times 2^j \leq |2\overline{\lambda}-\frac{\Phi}{\overline{x}}| \leq 20 \times 2^j.
\end{equation}
In addition, since now the size of the range of $\frac{\Phi}{\overline{x}}$ is $O(1)$, the range of $\overline{\lambda}$ on the support of $\varphi_j(2\overline{\lambda}-\frac{\Phi}{\overline{x}})$ is of size $O(2^j)$.

Using this dyadic decomposition, we decompose $I^1_2$ as
\begin{equation}
I^1_2 = \sum_{j=0}^\infty I^1_{2,j},
\end{equation}
where
\begin{align*}
I^1_{2,j} = & \int_0^{\infty} \overline{\lambda}^{n-1}  \int_0^\infty \int_{\R^{k}}  e^{i(\overline{\lambda}^2-\overline{\lambda}\frac{\Phi(\sigma,y,y',v,s)}{\overline{x}})} \big(\frac{\overline{x'}}{\overline{\lambda} s}\big)^{(n-1)/2 - (k+1)/2}
  s^{n-2} 
  \\& \sigma^{\frac{n-1}{2}}a(t^{-1/2}\overline{\lambda},\sigma,y,y',\frac{\overline{x'}}{\overline{\lambda}},v,s) \varphi_j(2\overline{\lambda}-\frac{\Phi}{\overline{x}})  (1 - \phi_0(\overline{\lambda})) ( 1-\phi_0(4\overline{\lambda}\Phi/\overline{x}) ) \, dv \, ds  d\overline{\lambda}.
\end{align*}

For $I^1_{2,0}$, we apply Proposition~\ref{prop:conic-pair-pointwise-bound-low} except with $x,x',\lambda$ replaced by rescaled ones in \eqref{eq:defn-rescaled-lambda-x-x'} and obtain
\begin{equation}
    |I^1_{2,0}|
    \lesssim \int_{|2\overline{\lambda} - \frac{\Phi}{\overline{x}}| \lesssim 1, \, \overline{x'}/\overline{\lambda} \lesssim 1}
    \overline{\lambda}^{n-1} \sigma^{\frac{n-1}{2}} (\overline{x'}/\overline{\lambda})^{\frac{n-1}{2}} d\overline{\lambda}.
\end{equation}
Then since $\overline{x'} \gtrsim 1$ in our current case, in combination with $|2\overline{\lambda} - \frac{\Phi}{\overline{x}}| \lesssim 1$ we know $\overline{\lambda} \lesssim 1$ and we obtain $|I^1_{2,0}| \lesssim 1$.

For $I^1_{2,j}$, we know 
\begin{equation}
 \partial_{\overline{\lambda}}\Big(  \overline{\lambda}^2-\overline{\lambda}\frac{\Phi(\sigma,y,y',v,s)}{\overline{x}}\Big) \gtrsim 2^j. 
\end{equation}
So we first integrate by parts in $\overline{\lambda}$ for $N$ times, and then apply Proposition~\ref{prop:conic-pair-pointwise-bound-low} to obtain
\begin{equation}
    |I^1_{2,j}| 
    \lesssim 2^{-jN}
    \int_{|2\overline{\lambda} - \frac{\Phi}{\overline{x}}| \lesssim 2^j, \, \overline{x'}/\overline{\lambda} \lesssim 1}
    \overline{\lambda}^{n-1} \sigma^{\frac{n-1}{2}} (\overline{x'}/\overline{\lambda})^{\frac{n-1}{2}} d\overline{\lambda}.
\end{equation}
If $\overline{x}^{-1} \lesssim 2^j$, then $\overline{\lambda} \lesssim 2^j$ in this region as well and we have (if we use $N \geq n+1$)
\begin{equation}
     |I^1_{2,j}| \lesssim 2^{-j(N-(n-1))} \times 2^j \leq 2^{-j}.
\end{equation}
If $\overline{x}^{-1} \gg 2^j$, then $\overline{\lambda} \sim \overline{x}^{-1} = \sigma^{-1} \overline{x'}^{-1}$ in this region, so the integrand is $O(1)$ and we have
\begin{equation}
    |I^1_{2,j}| \lesssim 2^{-jN} \times 2^j \lesssim 2^{-j(N-1)},
\end{equation}
which gives $\sum_j |I^1_{2,j}| \lesssim 1$ as well.

If $\overline{x'} \ll 1$, we use another decomposition to avoid the singular behaviour caused by differentiating the phase function.
Let $\mk{d}_{X_0}$ be the conic distance defined before \eqref{eq:mkd-X0}
and let 
\begin{equation} \label{eq:rescaled-mk-d-0}
    \overline{\mk{d}_{X_0}} = t^{-1/2}\mk{d}_{X_0}
\end{equation}
be the rescaled distance.
As sketched at the beginning of this subsection, now we introduce a dyadic decomposition in terms of $2\overline{\lambda}-|\overline{\mk{d}_{X_0}}|$:
\begin{align*}
1 = \varphi_0(2\overline{\lambda}-|\overline{\mk{d}_{X_0}}|) + \sum_{j=1}^\infty \varphi_j(2\overline{\lambda}-|\overline{\mk{d}_{X_0}}|),
\end{align*}
where the $\varphi_0$-term is supported on the region $|2\overline{\lambda}-|\overline{\mk{d}_{X_0}}|| \lesssim 1$ and $\varphi_j$-term is supported on the region where 
\begin{equation} \label{eq:varphi-j-support-2}
10\times 2^j \leq |2\overline{\lambda}-|\overline{\mk{d}_{X_0}}|| \leq 20 \times 2^j.
\end{equation}
Using this dyadic decomposition, we decompose $I^1_2$ as
\begin{equation}
I^1_2 = \sum_{j=0}^\infty I^1_{2,j},
\end{equation}
where
\begin{align*}
I^1_{2,j} = & \int_0^{\infty} \overline{\lambda}^{n-1}  \int_0^\infty \int_{\R^{k}}  e^{i(\overline{\lambda}^2-\overline{\lambda}\frac{\Phi(\sigma,y,y',v,s)}{\overline{x}})} \big(\frac{\overline{x'}}{\overline{\lambda} s}\big)^{(n-1)/2 - (k+1)/2}
  s^{n-2} 
  \\& \sigma^{\frac{n-1}{2}}a(t^{-1/2}\overline{\lambda},\sigma,y,y',\frac{\overline{x'}}{\overline{\lambda}},v,s) \varphi_j(2\overline{\lambda}-|\overline{\mk{d}_{X_0}}|)  (1 - \phi_0(\overline{\lambda})) ( 1-\phi_0(4\overline{\lambda}\Phi/\overline{x}) ) \, dv \, ds  d\overline{\lambda}.
\end{align*}

For each term $I^1_{2,j}$, we have $\overline{\lambda} \gtrsim 1$ on the support of the integrand due to the $(1 - \phi_0(\overline{\lambda}))$-factor.
For $I^1_{2,0}$, if $|\overline{\mk{d}_{X_0}}| \lesssim 1$, we have $\overline{\lambda} \lesssim 1$ on the support of the integrand due to the $\varphi_0(2\overline{\lambda}-|\overline{\mk{d}_{X_0}}|)$-factor.
If $|\overline{\mk{d}_{X_0}}| \gtrsim 1$, then $\overline{\lambda} \lesssim |\overline{\mk{d}_{X_0}}|$ on the support of the integrand again due to the $\varphi_0(2\overline{\lambda}-|\overline{\mk{d}_{X_0}}|)$-factor.
Then we can apply Proposition~\ref{prop:conic-pair-pointwise-bound-low} with $x,x',\lambda$ replaced by rescaled ones $\overline{x},\overline{x'},\overline{\lambda}$ introduced in \eqref{eq:defn-rescaled-lambda-x-x'} to obtain
\begin{equation}
  |I^1_{2,0}| \lesssim \int_{|2\overline{\lambda}-|\overline{\mk{d}_{X_0}}|| \lesssim 1} \overline{\lambda}^{\frac{n-1}{2}} 
  (\overline{x}')^{\frac{n-1}{2}} \sigma^{\frac{n-1}{2}} d\overline{\lambda}  \lesssim  \int_{|2\overline{\lambda}-|\overline{\mk{d}_{X_0}}|| \lesssim 1} \overline{\lambda}^{\frac{n-1}{2}} |\overline{\mk{d}_{X_0}}|^{-\frac{n-1}{2}} d\overline{\lambda}  \lesssim   1,
\end{equation}
where we made the same observation as in Corollary~\ref{coro:microlocalized-spectral-measure-osc-int-form-low}: products on the right hand side of \eqref{eq:conic-pair-integral-bound-low} can be bounded using:
\begin{align*}
\overline{x'}\sigma \lesssim |\overline{\mk{d}_{X_0}}|^{-1},
\end{align*}
which is equivalent to a more straightforward version: $x'\sigma \lesssim (d(z,z'))^{-1}$.
Here Proposition~\ref{prop:conic-pair-pointwise-bound-low} applies even though there is an extra factor $( 1-\phi_0(4\overline{\lambda}\Phi/\overline{x}))$.
This is because derivatives of this factor take the form (up to a constant factor and factors introduced by derivatives of $\Phi$):
\begin{equation}
    (\overline{\lambda}/\overline{x})^k \phi_0^{(j)}(4\overline{\lambda}\Phi/\overline{x}),
\end{equation}
which are supported in the region $\overline{\lambda}\Phi/\overline{x} \sim 1$. On the other hand, we have $|\Phi| = |1+\sigma+s\sigma\psi| \gtrsim 1$ and $\overline{\lambda} \gtrsim 1$ (due to the $1-\phi_0(\overline{\lambda})$-factor), hence this implies $\overline{\lambda}/\overline{x} \lesssim 1$ and all such derivatives are uniformly bounded and Proposition~\ref{prop:conic-pair-pointwise-bound-low} applies.

Now we consider $I^1_{2,j}$ with $j \geq 1$. 
For this term, the length of the $\overline{\lambda}$-interval we are integrating over becomes $O(2^j)$ and we need to exploit the  non-stationary nature of its phase on this part to control it.
For each $j \geq 1$, we further introduce a partition of unity in $\big||\overline{\mk{d}_{X_0}}|-\frac{\Phi(\sigma,y,y',v,s)}{\overline{x}}\big|$:
\begin{align*}
1 = \varrho_{j,0}\Big(|\overline{\mk{d}_{X_0}}|-\frac{\Phi(\sigma,y,y',v,s)}{\overline{x}}\Big) + \varrho_{j,1}\Big(|\overline{\mk{d}_{X_0}}|-\frac{\Phi(\sigma,y,y',v,s)}{\overline{x}}\Big),
\end{align*}
where the $\varrho_{j,0}$-term is supported on the region $\big||\overline{\mk{d}_{X_0}}|-\frac{\Phi(\sigma,y,y',v,s)}{\overline{x}}\big| \leq 2 \times 2^j$ and $\varrho_{j,1}$-term is supported on the region with
\begin{equation} \label{eq:varrho-j-support-2}
 \Big||\overline{\mk{d}_{X_0}}|-\frac{\Phi(\sigma,y,y',v,s)}{\overline{x}}\Big| \geq  2^j.
\end{equation}
They can be defined by choosing $\varrho_0 \in C_c^\infty([0,\infty))$ that is supported on $[0,2]$ and set
\begin{equation}
    \varrho_{j,0}(\bullet) = \varrho_0(2^{-j} \bullet), \quad   \varrho_{j,1}(\bullet) = 1-\varrho_0(2^{-j} \bullet).
\end{equation}
In this way, their $k$-th derivatives are bounded by
\begin{equation} \label{eq:varrhoj-derivative-bound}
    |\varrho_{j,0}^{(k)}|, \quad |\varrho_{j,1}^{(k)}| \lesssim_k 2^{-jk}.
\end{equation}
In addition, we notice that \eqref{eq:varrho-j-support-2} is equivalent to
\begin{align}
     \big|\overline{x}|\overline{\mk{d}_{X_0}}|-\Phi(\sigma,y,y',v,s)\big| \geq  2^j \overline{x}.
\end{align}
Since both terms on the left hand side are $O(1)$, we know that this term is present only when
\begin{align} \label{eq:7.2-4}
    \overline{x} \lesssim 2^{-j}.
\end{align}
Recall Proposition~\ref{prop:phase-equal-distance-X0}, we have $|\overline{\mk{d}_{X_0}}|-\frac{\Phi(\sigma,y,y',v,s)}{\overline{x}}= 0$ at critical points of the phase function, so this quantity is used to quantify the distance to the critical set of the phase.
Then we have the corresponding decomposition of $I^1_{2,j}$:
\begin{equation}
I^1_{2,j} = I^1_{2,j,0}+I^1_{2,j,1},
\end{equation}
where (for $\ell = 0,1$):
\begin{align*}
I^1_{2,j,\ell} = & \int_0^{\infty} \overline{\lambda}^{n-1}  \int_0^\infty \int_{\R^{k}}  e^{i(\overline{\lambda}^2-\overline{\lambda}\frac{\Phi(\sigma,y,y',v,s)}{\overline{x}})} \big(\frac{\overline{x'}}{\overline{\lambda} s}\big)^{(n-1)/2 - (k+1)/2} s^{n-2}  \sigma^{\frac{n-1}{2}}a(t^{-1/2}\overline{\lambda},\sigma,y,y',\frac{\overline{x'}}{\overline{\lambda}},v,s) 
   \\& \varphi_j(2\overline{\lambda}-|\overline{\mk{d}_{X_0}}|) (1 - \phi_0(\overline{\lambda})) ( 1-\phi_0(4\overline{\lambda}\Phi/\overline{x}) ) \varrho_{j,\ell}\big(|\overline{\mk{d}_{X_0}}|-\frac{\Phi(\sigma,y,y',v,s)}{\overline{x}}\big) \, dv \, ds  d\overline{\lambda}
\\ = & \int_0^{\infty} \overline{\lambda}^{\frac{n+k}{2}}  \int_0^\infty \int_{\R^{k}}  e^{i(\overline{\lambda}^2-\overline{\lambda}\frac{\Phi(\sigma,y,y',v,s)}{\overline{x}})} (\overline{x'})^{(n-1)/2 - (k+1)/2} s^{ \frac{n+k}{2}-1}  \sigma^{\frac{n-1}{2}}a(t^{-1/2}\overline{\lambda},\sigma,y,y',\frac{\overline{x'}}{\overline{\lambda}},v,s) 
   \\& \varphi_j(2\overline{\lambda}-|\overline{\mk{d}_{X_0}}|) (1 - \phi_0(\overline{\lambda})) ( 1-\phi_0(4\overline{\lambda}\Phi/\overline{x}) ) \varrho_{j,\ell}\big(|\overline{\mk{d}_{X_0}}|-\frac{\Phi(\sigma,y,y',v,s)}{\overline{x}}\big) \, dv \, ds  d\overline{\lambda}.
\end{align*}


For $I^1_{2,j,0}$, we have $|2\overline{\lambda}-|\overline{\mk{d}_{X_0}}|| \geq 10 \times 2^j$ and $||\overline{\mk{d}_{X_0}}|-\frac{\Phi(\sigma,y,y',v,s)}{\overline{x}}| \leq 2 \times 2^j$ on the support of its amplitude. 
This implies $|2\overline{\lambda} - \frac{\Phi(\sigma,y,y',v,s)}{\overline{x}}| \geq 8 \times 2^j$, which means we gain a factor $2^{-j}$ when we integrate by parts in $\overline{\lambda}$ from the derivative of the phase. 
Next we consider derivatives of other factors introduced after integration by parts.
\begin{enumerate}
  \item Whenever a $\overline{\lambda}$-derivative hits $a(t^{-1/2}\overline{\lambda},\sigma,y,y',\frac{\overline{x'}}{\overline{\lambda}},v,s)$, we use the symbolic estimate in \eqref{eq:spectral-measure-symbol-bound-low-1} to see that we gain a $\overline{\lambda}^{-1}$-factor.
  \item Whenever a $\overline{\lambda}$-derivative hits factors $(1 - \phi_0(\overline{\lambda}))$, $\overline{\lambda} \lesssim 1$ on the support of integrands of such terms. 
  \item Whenever a $\overline{\lambda}$-derivative hits $( 1-\phi_0(4\overline{\lambda}\Phi/\overline{x}) )$, we have $\overline{\lambda}\Phi/\overline{x} \lesssim 1$ on its support and we gain a $|\Phi/\overline{x}| \lesssim \overline{\lambda}^{-1}$-factor on the support of integrands of such terms. 
\end{enumerate}

So after $N \geq n+1$ times integration by parts, the result is bounded by
\begin{equation}
   \int_{|2\overline{\lambda}-|\overline{\mk{d}_{X_0}}|| \sim 2^j} \overline{\lambda}^{n-1-N} d\overline{\lambda} \lesssim 2^{-j},
\end{equation}
which is finite after summing in $j$.

For $I^1_{2,j,1}$, we integrate by parts in $v,s$ in this part.  We use \eqref{eq:varrho-j-support-2}, which holds on the support of the amplitude of this part, to quantify how non-stationary the phase is in this part.
Combining \eqref{eq:varrho-j-support-2} with Proposition~\ref{prop:1st-dPhi-est} we know
\begin{align} \label{eq:inq-16}
|s\partial_s\Phi| + |\partial_v\Phi| \gtrsim 2^{j/2} s  \overline{x}^{1/2}\sigma^{1/2}
\end{align}
in this region.
Then similar to the non-stationary phase argument using $L_1$ in \eqref{eq:L1-def}, we consider
\begin{equation} \label{eq:L2-def}
  L_2 = \frac{1}{|\partial_v \Phi|^2+s^2|\partial_s\Phi|^2} \Big(\sum_j \partial_{v_j}\Phi \partial_{v_j} + s^2\partial_s\Phi\partial_s \Big),
\end{equation}
with 
\begin{equation} \label{eq:L2-adjoint-def}
  L_2^* = - \sum_j \partial_{v_j} \Big(\frac{\partial_{v_j}\Phi}{|\partial_v \Phi|^2 + s^2|\partial_s\Phi|^2} \bullet \Big)
  - \partial_s \Big(\frac{s^2\partial_s\Phi}{|\partial_v \Phi|^2 + s^2|\partial_s\Phi|^2} \bullet \Big).
\end{equation}
Then each time we gain a factor $(\overline{\lambda}/\overline{x})^{-1}$ from this overall factor in the phase and we analyze other factors below.

For terms with all derivatives falling on $\varrho_{j,1}(|\overline{\mk{d}_{X_0}}|-\frac{\Phi(\sigma,y,y',v,s)}{\overline{x}})$,
factors involving derivatives are controlled by the coefficient of $L_2^*$ itself since they are of the form 
\begin{align} \label{eq:04}
\overline{x}^{-1}\frac{|\partial_{v_j}\Phi|^2}{|\partial_v \Phi|^2 + s^2|\partial_s\Phi|^2} = O(\overline{x}^{-1}), \quad 
\overline{x}^{-1}\frac{ s^2|\partial_s\Phi|^2}{|\partial_v \Phi|^2 + s^2|\partial_s\Phi|^2} 
 = O(\overline{x}^{-1}).
\end{align}
Then in combination with the $(\overline{\lambda}/\overline{x})^{-1}$-factor introduced when we integrate by parts, we eventually gain a factor of $2^{-j}\overline{\lambda}^{-1}$, where the $2^{-j}$ factor comes from \eqref{eq:varrhoj-derivative-bound}.
Then after $N \geq \max(k+1,2)$ times of integration by parts, we know that this term is bounded by
\begin{equation}
    2^{-jN} \int_{|2\overline{\lambda}-\overline{\mk{d}}_{X_0}| \sim 2^j, \overline{\lambda}\gtrsim 1} 
   \sigma^{\frac{n-1}{2}} \overline{x'}^{\frac{n-1-(k+1)}{2}} \overline{\lambda}^{\frac{n+k}{2}-N}d\overline{\lambda} 
\lesssim 2^{-jN} \int_{|2\overline{\lambda}-\overline{\mk{d}}_{X_0}| \sim 2^j, \overline{\lambda}\gtrsim 1} 
   \sigma^{\frac{n-1}{2}} \overline{x'}^{\frac{n-k-2}{2}} \overline{\lambda}^{\frac{n-k-2}{2}}d\overline{\lambda},
\end{equation}
If $\overline{\mk{d}}_{X_0} \lesssim 2^{j}$, then $\overline{\lambda} \lesssim 2^j$ and by \eqref{eq:7.2-4} we know $\sigma^{\frac{n-1}{2}} \overline{x'}^{\frac{n-k-2}{2}} \overline{\lambda}^{\frac{n-k-2}{2}} \lesssim 1$.
If $\overline{\mk{d}}_{X_0} \gg 2^{j}$, then $|2\overline{\lambda}-\overline{\mk{d}}_{X_0}| \sim 2^j$ shows $\overline{\lambda} \sim \overline{\mk{d}}_{X_0} \sim 1/\overline{x} = (\sigma \overline{x'})^{-1}$.
In both cases, the integrand in each term is $O(1)$ and the length of the interval is $O(2^j)$. So the total contribution of all such terms (i.e., after summing over $j$) is bounded by $\sum_j 2^{-j(N-1)} \lesssim 1$.

When the derivative falls on other factors,
by \eqref{eq:inq-16} and the same discussion as after \eqref{eq:L1-adjoint-def}, the factor introduced by derivatives on the denominator is $O(2^{-j/2}\overline{x}^{-1/2}\sigma^{-1/2}s^{-1})$. 
Combining the overall $(\overline{\lambda}/\overline{x})^{-1}$ factor, we gain a factor that is
\begin{equation} \label{eq:02}
O(2^{-j/2}\overline{\lambda}^{-1} \overline{x'}^{1/2} s^{-1}) 
\end{equation}
after integration by parts.
If $\frac{n+k}{2} \in \N$, then we integrate by parts for $N = \frac{n+k}{2} \geq k+1$ times and use $N_1,N_2$ to denote the times of derivative falling on $\varrho_{j,1}$ and all other factors respectively. 
In the current case we have $N_1+N_2 = N$ and $N_2 \geq 1$. The resulting oscillatory integral for fixed $\overline{\lambda}$ is bounded by: 
\begin{equation}
 2^{-j (N_1+\frac{N_2}{2}) }  \sigma^{\frac{n-1}{2}}
 \overline{x'}^{ \frac{n-1-(k+1)+N_2}{2} }
\overline{\lambda}^{\frac{n+k}{2}-N} 
\int_{s \gtrsim (\overline{\lambda}/\overline{x'})^{-1}} s^{ \frac{n+k}{2} - 1 -N_2 } ds
\lesssim  2^{-j (N_1+\frac{N_2}{2}) }  \sigma^{\frac{n-1}{2}} \overline{x'}^{ \frac{n-k-2+N_2}{2} }  \log(\overline{\lambda}/\overline{x'}),
\end{equation}
where we used the fact that we are considering the region $\overline{\lambda} \gtrsim 1$ and $\overline{x'}\ll 1$ now, which gives $\overline{\lambda}/\overline{x'}\geq 1$.
Consequently, we have
\begin{align} \label{eq:7.2-3}
\begin{split}
|I^1_{2,j,1}| & \lesssim \int_{|2\overline{\lambda} - |\overline{\mk{d}_{X_0}}||\sim 2^{j}, \, \overline{\lambda} \gtrsim 1} 
2^{-j (N_1+\frac{N_2}{2}) }  \sigma^{\frac{n-1}{2}} \overline{x'}^{ \frac{n-k-2+N_2}{2} }  \log(\overline{\lambda}/\overline{x'})d\overline{\lambda} .
\end{split}
\end{align}
If $|\overline{\mk{d}_{X_0}}| \lesssim 2^j$, then $\overline{\lambda} \lesssim 2^j$ and $\overline{x'}^{-1} \lesssim (\overline{x})^{-1} \sim |\overline{\mk{d}}_{X_0}| \lesssim 2^j$, in addition, we know $\sigma \overline{x'} \lesssim 2^{-j}$ by \eqref{eq:7.2-4} and we have
\begin{align} \label{eq:7.2-6}
\begin{split}
|I^1_{2,j,1}| & \lesssim \int_{|2\overline{\lambda} - |\overline{\mk{d}_{X_0}}||\sim 2^{j}, \, \overline{\lambda} \gtrsim 1} 
2^{-j (N_1+\frac{N_2}{2}) }  2^{-j\frac{n-k-2+N_2}{2}} \times j d\overline{\lambda} 
\lesssim j2^{-j(n-1)} \times 2^j,
\end{split}
\end{align}
which is $O(1)$ after summing in $j$ since $n \geq 3$.
If $|\overline{\mk{d}_{X_0}}| \gg 2^j$, then we know $\overline{\lambda} \sim |\overline{\mk{d}_{X_0}}| \sim 1/\overline{x} \gg 2^j$ in this region, and we have
\begin{align} 
\begin{split}
|I^1_{2,j,1}|
\lesssim 
\int_{|2\overline{\lambda} - |\overline{\mk{d}_{X_0}}||\sim 2^{j}, \, \overline{\lambda} \gtrsim 1}  
2^{-j (N_1+\frac{N_2}{2}) } \sigma^{\frac{n-1}{2}-\epsilon} \overline{x'}^{\frac{n-k-2+N_2}{2}-\epsilon} (\overline{x}^\epsilon|\log \overline{x}|) d\overline{\lambda} \lesssim 2^{-j(n-1-\epsilon)} \times 2^j,
\end{split}
\end{align}
where $\epsilon>0$ is a small fixed number and this contribution is again $O(1)$ after summing in $j$.

If $\frac{n+k}{2} \notin \N$, then $k \leq n-3$ (since $k \leq n-2$).
We integrate by parts for $N = \frac{n+k+1}{2} \geq 2$ times instead and the bound for fixed $\overline{\lambda}$ becomes
\begin{equation}
 2^{-j (N_1+\frac{N_2}{2}) } \sigma^{\frac{n-1}{2}}
  \overline{x'}^{ \frac{n-k-2+N_2}{2} } \overline{\lambda}^{\frac{n+k}{2}-N}  \int_{s \gtrsim (\overline{\lambda}/\overline{x'})^{-1}} s^{-3/2}ds  \sim 2^{-j (N_1+\frac{N_2}{2}) } \sigma^{\frac{n-1}{2}} \overline{x'}^{\frac{n-k-3+N_2}{2}}\overline{\lambda}^{\frac{n+k+1}{2}-N}.
\end{equation}
Then the rest of the argument about the $\overline{\lambda}$-integral goes through in the same way.

Now we discuss minor changes needed in other regions.
For the region with $\theta = x'/x  \lesssim 1$, the same argument applies with all primed and un-primed variables exchanged. 
In the high-energy case, i.e., when $\pm \Phi$ are instead parametrizing $(L^{\bfs,\calchigh},\Lsharphigh)$ near $L^{\bfs,\calchigh} \cap \Lsharphigh_\pm$.
We consider the part $\frac{x'}{s} \lesssim 1$ first. This corresponds to the terms contributed by the spectral measure that are like the oscillatory integrals on the left hand side of \eqref{eq:conic-pair-integral-bound-high} with phase function as in \eqref{eq:conic-pair-phase-construction-high-region1}.
In the step estimating $I^1_{2,0}$, we now use
Proposition~\ref{prop:conic-pair-pointwise-bound-high-region1} instead.
Also, in decompositions using $\overline{\mk{d}}_{X_0}$, we use $\overline{\mk{d}}_X$ defined in \eqref{eq:mkd-X} instead. Then in the step lower-bounding first order derivatives to estimate $I^1_{2,j,1}$, we still use Proposition~\ref{prop:1st-dPhi-est}, but now we use the part concerning the high-energy case. Then the remaining steps remain the same.

Finally, for the contribution to the propagator from the part with $\frac{s}{x'} \lesssim 1$, which corresponds to the contribution from the part of the spectral measure that consists of oscillatory integrals in the form of the left hand side of \eqref{eq:conci-pair-pointwisebound-high-region2} with phase functions as in \eqref{eq:conic-pair-phase-construction-high-region2}, 
the argument is the same as in the low-energy case and the region $\frac{x'}{s} \lesssim 1$ in the high-energy case except for the following minor differences.
Since the loss in $\overline{x'}$ is compensated directly when all factors $s$ are replaced by $\overline{x'}$ when we switch to this region, we only need to deal with the loss $h^{-\frac{k+1}{2}}$ now, which becomes $\overline{\lambda}^{\frac{k+1}{2}}$ after our scaling and the transformation $h=\lambda^{-1}$.
We use Proposition~\ref{prop:conic-pair-pointwise-bound-high-region2} to bound the analogue of $I^1_{2,0}$.
For $I^1_{2,j}$ with $j \geq 1$, we use Proposition~\ref{prop:1st-dPhi-est-region2} to lower bound derivatives of the phase for $I^1_{2,j,1}$. 
Then \eqref{eq:inq-16} is replaced by 
\begin{align} \label{eq:inq-17}
|\partial_\varkappa\Phi| + |\partial_v\Phi| \gtrsim 2^{j/2}  \overline{x'}\sigma^{1/2}.
\end{align}
Then the differential operator that we use in the non-stationary phase argument is replaced by 
\begin{equation} \label{eq:L2-def-high-region2}
  L_2 = \frac{1}{|\partial_v \Phi|^2+|\partial_\varkappa\Phi|^2} \Big(\sum_j \partial_{v_j}\Phi \partial_{v_j} + \partial_\varkappa\Phi\partial_\varkappa \Big),
\end{equation}
with 
\begin{equation} \label{eq:L2-adjoint-def-high-region2}
  L_2^* = - \sum_j \partial_{v_j} \Big(\frac{\partial_{v_j}\Phi}{|\partial_v \Phi|^2 + |\partial_\varkappa\Phi|^2} \bullet \Big)
  - \partial_\varkappa \Big(\frac{\partial_\varkappa\Phi}{|\partial_v \Phi|^2 + |\partial_\varkappa\Phi|^2} \bullet \Big).
\end{equation}
Then the rest of the proof applies, and in fact becomes simpler in the sense that now there is no special $s$-direction and we treat $\varkappa$ as one component of $v$-parameters. 
The bound for terms with all derivatives falling on $\varrho_{j,1}$ as in \eqref{eq:04} remains the same. 
The lower bound of derivatives of the phase function in \eqref{eq:inq-17}, together with the overall $\frac{1}{h\overline{x}}$-factor, shows that the analogue of \eqref{eq:02}, which is the improvement for each integration by parts, becomes
\begin{equation} \label{eq:03}
O(2^{-j/2}\overline{\lambda}^{-1} \overline{x'}^{-1/2}).
\end{equation}
Then the rest of the proof above applies.

\end{proof}

\begin{remark} \label{remark:microlocalized-est-Schrodinger}
It is worthwhile to summarize microlocal dispersive estimates that we have proved. 
In Proposition~\ref{prop:microlocal-dispersive-1}, each rank drop of the restriction of $\hatLprojhigh$ (resp. $\hatLprojlow$) to $L^{\bfs,\calchigh}_{j,j'}$ (resp. $\hat{L}^{\bfs}_{j,j'}$) causes $\big(\frac{\la z \ra \la z' \ra}{t}\big)^{1/2}$ order loss to the propagator from the contribution associated with $L^{\bfs,\calchigh}_{j,j'}$. If such degeneracy happens at the intersecting conic pair, then it does not cause a loss to the dispersive estimate if such an intersecting pair is admissible in the sense of Definition~\ref{definition:Lbf-boundary-admissible}. 
Similar numerology applies to dispersive estimates of the wave equation in the next section.
\end{remark}

We conclude this section with the proof of Corollary~\ref{coro:after-main-thm}.

\begin{proof}[Proof of Corollary~\ref{coro:after-main-thm}.]
    The result for $Y=\mathbb{S}^{n-1}$ and $X_0=\R^n\backslash\{0\}$ follows from Proposition~\ref{prop:example-admissible}.
    For the other part, i.e. for $Y$ with sectional curvature less than $1$, we notice that the exponential map is non-degenerate within distance $\pi$ by the Rauch comparison theorem. Using Proposition~\ref{prop:Lbf-RYstar-diffeomorphic}, we know that $d\hatLprojlow$ is always non-degenerate and we have $\IFz=0$. Then the conclusion follows by \eqref{eq:est-dispersive-1-1} with $\IF$ replaced by $\IFz$.
\end{proof}

\section{The dispersive estimate for the wave equation}
\label{sec:dispersive-wave}

We prove Theorem~\ref{thm:dispersive-wave}, i.e. dispersive estimates for wave equations, in this section.
We will first prove the conclusions on asymptotically conic manifolds, i.e. for $P$ in \eqref{eq:P-def}, and indicate changes needed for the exact cone case, i.e. $P$ in \eqref{eq:P-def-exact-cone}, at the end of this section.

\subsection{The estimate for the half-wave propagator}
We prove the first part of Theorem~\ref{thm:dispersive-wave} (i.e. estimate \eqref{eq:dispersive-wave-main-1})  in this subsection, which follows from the following global estimate for the half-wave propagator.

\begin{theorem}[Estimate for the half-wave propagator]
\label{thm:wave-propagator-est}
Let $\IF$ be as in \eqref{eq:IF-full-def}, and let $\varphi_K(\lambda)$ be supported in the region $\lambda \sim 2^K$, then
\begin{equation}
|\varphi_K(\sqrt{P})e^{it\sqrt{P}}(z,z')| \lesssim 
|t|^{- \frac{n-1-\IF}{2} }2^{K(\frac{n+1+\IF}{2})}.
\end{equation}
If $P$ is as in \eqref{eq:P-def-exact-cone} on $(X_0,g_0)$, then the same estimate holds with $\IF$ replaced by $\IFz$.
\end{theorem}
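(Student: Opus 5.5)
The plan mirrors the Schr\"odinger case. Using the combined microlocal partition of Proposition~\ref{prop: microlocal-partition-combined}, I decompose
\begin{equation*}
\varphi_K(\sqrt{P})e^{it\sqrt{P}} = \sum_{j,j'\in\overline{\mk{J}}_{\calc}} \int_0^\infty e^{it\lambda}\varphi_K(\lambda)\, Q_j^{\calc}\specm Q_{j'}^{\calc},
\end{equation*}
and I prove a microlocalized bound $|t|^{-\frac{n-1-k}{2}}2^{K\frac{n+1+k}{2}}$ for each piece. Here $k$ is the number of extra parameters given by Corollary~\ref{coro:microlocalized-spectral-measure-osc-int-form-low} and Corollary~\ref{coro:microlocalized-spectral-measure-osc-int-form-high}, and Proposition~\ref{prop:IF-relation-parameter-number} gives $k\le\IF$. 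Since $|t|2^K$ may be assumed $\gtrsim 1$ (otherwise the trivial bound $2^{Kn}$ from the spectral measure order suffices), the bound with $k$ is dominated by the bound with $\IF$.

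First I rescale $\overline{\lambda}=2^{-K}\lambda$, so that the amplitude $\varphi(\overline{\lambda})$ is compactly supported in $\overline{\lambda}\sim 1$. For each piece \eqref{QiEQj-lo-2} the integral becomes
\begin{equation*}
2^{Kn}\int\!\!\int e^{i2^K\overline{\lambda}(t\pm\Phi/x)}\,\overline{\lambda}^{n-1}\varphi(\overline{\lambda})\,a(2^K\overline{\lambda},z,z',v)\,dv\,d\overline{\lambda}.
\end{equation*}
The pieces with $j$ or $j'\in\{1,\zf\}$, and the near-diagonal pieces $j=j'$, are handled as in \cite{Hassell-Zhang2016Strichartz}: the kernel is a function of $\lambda d$ with the symbol bound $(1+\lambda d)^{-\frac{n-1}{2}}$, and integrating by parts in $\overline{\lambda}$ when $|t\pm \mk{d}|\gg 2^{-K}$ gives $2^{Kn}(1+2^K|t|)^{-\frac{n-1}{2}}$.

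For $j\ne j'$, Corollary~\ref{coro:phase-value-lowerbound-low} and Corollary~\ref{coro:phase-value-lowerbound-high} give $\Phi\ge\epsilon$. I split according to whether $|t-\Phi/x|\gtrsim 2^{-K}$. Where it is, I integrate by parts in $\overline{\lambda}$ and gain $(2^K|t-\Phi/x|)^{-N}$. Near the wavefront $|t-\Phi/x|\lesssim 2^{-K}$, we have $\mk{d}\sim x^{-1}\sim|t|$, and the symbol bound \eqref{eq:spectral-measure-symbol-bound-low-1} gives $(1+2^K|t|)^{-\frac{n-1-k}{2}}\sigma^{k/2}\le(1+2^K|t|)^{-\frac{n-1-k}{2}}$. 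Integrating trivially in the compact $v$-support, this yields $2^{Kn}(2^K|t|)^{-\frac{n-1-k}{2}}=|t|^{-\frac{n-1-k}{2}}2^{K\frac{n+1+k}{2}}$.

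The main obstacle is the conic-pair pieces \eqref{QiEQj-s-lo-2} and \eqref{eq:coro4.2-2}. There $k+1$ parameters appear, and the naive count loses an extra $(2^K|t|)^{1/2}$ relative to the $k$-parameter bound. To recover it, I follow the final part of the proof of Proposition~\ref{prop:microlocal-dispersive-1}. I Taylor expand the amplitude in $x'/(\lambda s)$ (resp. $x'/s$) as in \eqref{eq:a-exp-1} and \eqref{eq:a-exp-high}. The remainder term carries an extra small factor that absorbs the loss. For the leading term I apply stationary phase in $s$, using the nondegenerate $s$-Hessian coming from \eqref{eq:Y'-n-1-expansion-concrete-low} (or its high-energy analogue); this gains $(\lambda/x')^{-1/2}\lesssim(2^K|t|)^{-1/2}$ on the wavefront. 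The argument then reduces to the $k$-parameter case above, with $\IF$ bounding $k$ by definition \eqref{eq:IF-full-def}.

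For the exact cone, I use the $\lambda$-independent low-energy partition together with the last part of Theorem~\ref{thm:spectral-measure-complete}, and I replace $\IF$ by $\IFz$ via Proposition~\ref{prop:IF-relation-parameter-number}.
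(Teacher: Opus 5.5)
Your outline follows the paper's proof step for step. It uses the same microlocal decomposition, the same split according to whether $t$ is near $\Phi/x$, the same symbol bounds on the wavefront, and, for the conic pieces, the Taylor expansion plus stationary phase in $s$ from the end of the proof of Proposition~\ref{prop:microlocal-dispersive-1}. Your reduction to $2^K|t|\gtrsim 1$ is a harmless shortcut replacing the paper's interpolation.

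The conic-pair step, however, fails as written. The inequality $(\lambda/x')^{-1/2}\lesssim(2^K|t|)^{-1/2}$ goes the other way. On the wavefront $x^{-1}\sim|t|$, so $\lambda/x'=\sigma\lambda/x\sim\sigma\,2^K|t|$, and the $s$-integration gains only $\sigma^{-1/2}(2^K|t|)^{-1/2}$. If, as in your non-conic step, you discard the power of $\sigma$ and count the loss as $(2^K|t|)^{1/2}$, you are left with an unbounded factor $\sigma^{-1/2}$ near $\LB$, where conic pieces do occur. You must keep the extra $\sigma^{1/2}$ in \eqref{eq:spectral-measure-symbol-bound-low-2} and \eqref{eq:spectral-measure-symbol-bound-high-2}. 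Then the loss relative to the $k$-parameter bound is $\sigma^{1/2}(1+\lambda|z|)^{1/2}\sim(\lambda/x')^{1/2}$, which the $s$-stationary phase cancels exactly, as the paper states.

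Second, your claim that the Taylor remainder ``carries an extra small factor that absorbs the loss'' holds only for the low-energy expansion \eqref{eq:a-exp-1}, where the leftover is $(x'/(\lambda s))^{1/2}s^{-1/2}$. For the high-energy expansion \eqref{eq:a-exp-high} the remainder factor is $x'/s$. Bounding that term trivially, after the $s$-integration over $s\gtrsim x'$, leaves a factor of order $(\lambda x')^{1/2}$ relative to the $k$-parameter bound. This is unbounded as $\lambda x'\to\infty$; on the wavefront it is about $(2^K/(\sigma|t|))^{1/2}$.

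In the Schr\"odinger proof this leftover is removed by non-stationary phase in $\lambda$, since there the critical frequency is $\lambda\approx\Phi/(2tx)$. The wave phase $\lambda(t-\Phi/x)$, by contrast, is stationary in $\lambda$ at every frequency on the wavefront, so that device is unavailable. The paper's wave proof only refers back to Proposition~\ref{prop:microlocal-dispersive-1} at this point, so citing it does not settle the matter.

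One way to close the gap is to skip the Taylor expansion and apply the van der Corput second-derivative test in $s$ to the whole amplitude. In both regimes $|\partial_s a|\lesssim s^{-1}$ on the support, so $s^{\frac{n+k}{2}-1}a$ has bounded variation in $s$. Combined with the nondegenerate $s$-Hessian you already invoke, this gives the gain $(\lambda/x')^{-1/2}$ uniformly, with the $v$-integral taken trivially.
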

The adaptations needed for the exact cone case are still the same as those in the last part of the proof of Theorem~\ref{thm:Schrodinger-propagator-est-1}, and we will only discuss the asymptotically conic case in detail below.
To prove Theorem~\ref{thm:wave-propagator-est}, we use \eqref{eq:decomposition-spectral-measure} again, which  gives
\begin{align} \label{eq:decomposition-wave-propagator}
    \begin{split}
      \varphi_K(\sqrt{P})  e^{it\sqrt{P}} = \int_0^\infty \varphi_K(\lambda) e^{it\lambda} \specm
        = \sum_{j,j' \in \overline{ \mk{J}}_{\calc} } \int_0^\infty \varphi_K(\lambda) e^{it\lambda} Q^{\calc}_j \specm Q^{\calc}_{j'}.
    \end{split}
\end{align}

So Theorem~\ref{thm:wave-propagator-est} follows from the triangle inequality and summing the microlocalized estimate for the propagator below.
\begin{proposition} \label{prop:microlocal-dispersive-wave}
Let $\{Q_j^{\calc}\}$ be a microlocal partition of unity as in Proposition~\ref{prop: microlocal-partition-combined}.
If either $j$ or $j'$ is $1$ or $\zf$, or $j = j'$, then
\begin{equation} \label{eq:8.1-0}
|\int_0^\infty \varphi_K(\lambda) e^{i t \lambda} Q_j^{\calc} \specm Q_{j}^{\calc}  | \lesssim |t|^{-\frac{n-1}{2}} 2^{K\frac{n+1}{2}}.
\end{equation}
For $j,j' \in J$, $j \neq j'$ and $k$ as in Corollary~\ref{coro:microlocalized-spectral-measure-osc-int-form-low} and Corollary~\ref{coro:microlocalized-spectral-measure-osc-int-form-high}, we have
\begin{equation} \label{eq:8.1-1}
|\int_0^\infty \varphi_K(\lambda) e^{i t \lambda} Q_j^{\calc} \specm Q_{j'}^{\calc} | \lesssim |t|^{-\frac{n-1-k}{2}}2^{K(\frac{n+1+k}{2})}.
\end{equation}
In particular, by Proposition~\ref{prop:IF-relation-parameter-number}, we have $k \leq \IF$.
\end{proposition}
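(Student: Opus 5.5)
We follow the structure of the proof of Proposition~\ref{prop:microlocal-dispersive-1}, replacing the phase $t\lambda^2$ by $t\lambda$ and exploiting the localization $\lambda\sim 2^K$. By scaling we first reduce to $K=0$. Set $\overline{\lambda}=2^{-K}\lambda$ and $\overline{t}=2^Kt$; the rescaled spatial variables $2^Kx,2^Kx'$ leave $\sigma$, $\lambda/x$ and $\lambda/x'$ unchanged. The low/high energy representations in Corollary~\ref{coro:microlocalized-spectral-measure-osc-int-form-low} and Corollary~\ref{coro:microlocalized-spectral-measure-osc-int-form-high} are symbolic in $\lambda$ uniformly in $K$. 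Hence the kernel is $2^{Kn}$ times an integral of the form
\begin{equation*}
\int \varphi(\overline{\lambda})e^{i\overline{t}\,\overline{\lambda}}\overline{\lambda}^{n-1}\int e^{\pm i\overline{\lambda}\Phi/\overline{x}}a\,dv\,d\overline{\lambda},
\end{equation*}
with $\varphi\in C_c^\infty([1/2,2])$. The target is then the bound $(1+|\overline{t}|)^{-\frac{n-1-k}{2}}$, and $|\overline{t}|\lesssim1$ is trivial. So it remains to treat $|\overline{t}|\gg1$.

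For the pieces where $j$ or $j'$ is $1$ or $\zf$, or $j=j'$, we invoke the argument of \cite{Hassell-Zhang2016Strichartz} for the wave case, using the amplitude bounds \eqref{eq:spectral-measure-symbol-bound-low-3} and \eqref{eq:spectral-measure-symbol-bound-high-3}. For $j\neq j'$ with $L^{\bfs}_{j,j'}$ away from the conic pair, the amplitude gains $(1+\overline{\lambda}\,|\overline{z}|)^{-\frac{n-1-k}{2}}\sim(1+\overline{d})^{-\frac{n-1-k}{2}}$ with $\overline{d}=\Phi/\overline{x}$; this uses Corollary~\ref{coro:phase-value-lowerbound-low} and Corollary~\ref{coro:phase-value-lowerbound-high}, which give $\Phi\ge\epsilon$. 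The $\overline{\lambda}$-phase is $\overline{t}\pm\overline{d}$ (after fixing $v$). If $|\overline{t}\pm\overline{d}|\gtrsim|\overline{t}|$, we integrate by parts in $\overline{\lambda}$ arbitrarily often, since $\overline{\lambda}$ ranges over a compact interval, and obtain $|\overline{t}|^{-N}$. Otherwise $\overline{d}\sim|\overline{t}|$, and the trivial bound $\int_{\R^k}(1+\overline{d})^{-\frac{n-1-k}{2}}\sigma^{k/2}dv$ gives $|\overline{t}|^{-\frac{n-1-k}{2}}$. Here the leftover $\sigma^{k/2}\le 2^{k/2}$ is harmless, and no $v$-integration gain is needed. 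This is cruder than the Schr\"odinger case, since no stationary point in $\overline{\lambda}$ needs to be resolved.

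For pieces associated with $(L^{\bfs},\Lsharplow)$ or $(L^{\bfs,\calchigh},\Lsharphigh)$, we treat $s$ as an additional parameter, so that $k+1$ parameters appear, as in \eqref{QiEQj-s-lo-2} and \eqref{eq:coro4.2-2}. The naive count then loses $(1+\overline{d})^{1/2}$. We remove this loss as in the last part of the proof of Proposition~\ref{prop:microlocal-dispersive-1}. First we Taylor expand the amplitude in $x'/(\lambda s)$ (respectively $x'/s$), as in \eqref{eq:a-exp-1} and \eqref{eq:a-exp-high}. The remainder term absorbs the loss through the $s^{\frac{n+k}{2}-1}$ weight. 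For the leading term we apply stationary phase in $s$: the $s$-Hessian of $s\psi$ is non-degenerate by Proposition~\ref{prop:LCP-phase-derivatives} together with \eqref{eq:Y'-n-1-expansion-concrete-low}, and this gains $(\overline{\lambda}/\overline{x'})^{-1/2}$. After that, the argument of the previous paragraph applies with $k$ parameters.

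The main obstacle is keeping the high-energy region $s/x'\lesssim1$ uniform, where the parametrization is \eqref{eq:conic-pair-phase-construction-high-region2}. There we would use $\varkappa$ in place of $s$, and use Proposition~\ref{prop:lower-bound-hessian-high-region2} for the Hessian in $\varkappa$. We must also check that the expansion remainder $x'/s$ produces only an integrable $s^{-1/2}$ singularity. Where $\sigma^{1/2}(x')^{1/2}\lambda^{1/2}\gg|t|^{-1/2}$, we fall back on non-stationary phase in $\overline{\lambda}$. Summing over $j,j'$ and using $k\le\IF$ from Proposition~\ref{prop:IF-relation-parameter-number} then gives the bound stated in Theorem~\ref{thm:wave-propagator-est}.
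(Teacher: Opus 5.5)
Your outline follows the paper's proof. You normalize to $\lambda\sim 2^K$ and integrate by parts in $\lambda$ when $|t\mp d|\gtrsim|t|$. When $|t|\sim d$ you use the trivial $k$-parameter amplitude bound. For conic-pair pieces you remove the extra $s$-parameter loss by importing the end of the proof of Proposition~\ref{prop:microlocal-dispersive-1}.

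The gap is in that import, at the step you flag as the main obstacle. In the high-energy regime the amplitude in \eqref{eq:coro4.2-2} depends on $x'/s$, not on the scale-invariant $x'/(\lambda s)$. So after the expansion \eqref{eq:a-exp-high}, the remainder $(x'/s)a_1$, bounded trivially in $s$, still exceeds the $k$-parameter bound. The excess factor is of order $(\lambda x)^{1/2}$ when $\sigma\sim1$ (cf.\ \eqref{eq:temp-7.1-1}, where $\sigma^{1/2}(x')^{1/2}\lambda^{1/2}=(\lambda x)^{1/2}$).

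For Schr\"odinger this is harmless. The $\lambda$-phase $t\lambda^2-\lambda\Phi/x$ is non-stationary once $\lambda\gg 1/(xt)$, which is exactly where \eqref{eq:temp-7.1-1} exceeds $|t|^{-1/2}$. For the wave equation the $\lambda$-phase $\lambda(t\pm\Phi/x)$ is linear, so whether it is stationary depends only on $t$ versus $d$. In the critical regime $|t|\sim d\sim 1/x$, your condition $\sigma^{1/2}(x')^{1/2}\lambda^{1/2}\gg|t|^{-1/2}$ just says $\lambda\gg1$. There is nothing to integrate by parts against, and the remainder bound exceeds the right-hand side of \eqref{eq:8.1-1} by $(2^K/|t|)^{1/2}$, which is large for $|t|\ll 2^K$. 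The paper's proof handles this piece with the same one-line reference, so this step has to be redone rather than copied.

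Here is a fix that avoids both the Taylor expansion and any $\lambda$-argument. Fix $\lambda$ and $v$, and set $\Lambda=\lambda/x'\gtrsim1$.
\begin{itemize}
\item Bound the part $s\lesssim\Lambda^{-1/2}$ trivially; it contributes $\Lambda^{-(n+k)/4}\le\Lambda^{-1/2}$.
\item On $s\gtrsim\Lambda^{-1/2}$, apply van der Corput's second-derivative test. The phase satisfies $\partial_s^2(s\psi)=-\partial_sY'_{n-1}$, which is bounded away from zero by \eqref{eq:v-critical-fixed-s} and \eqref{eq:Y'-n-1-expansion-concrete-low}; this is the same input the paper uses for stationary phase in $s$.
\item The full amplitude depends on $s$ through $x'/(\lambda s)$ or $x'/s$, so each $s$-derivative costs only $s^{-1}$. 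Hence $\int|\partial_s(s^{\frac{n+k}{2}-1}a)|\,ds$ is controlled by the same majorant as $\sup|a|$, because $n+k>2$.
\end{itemize}
This is the splitting from the proof of Proposition~\ref{prop:conic-pair-pointwise-bound-low} with $v$ frozen. It gives the $k$-parameter pointwise bound for conic-pair pieces at every fixed $\lambda$, in both regimes. The $\lambda$-integral is then handled as in the projectable case; in the non-critical regime you integrate by parts in $\lambda$ first, where the crude $(k+1)$-parameter bound is enough.

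Three smaller points:
\begin{itemize}
\item Proposition~\ref{prop:lower-bound-hessian-high-region2}, which you cite for $s/x'\lesssim1$, is proved under Definition~\ref{definition:Lbf-boundary-admissible}, which is not assumed here. It is also unnecessary: pieces away from $\{x'/s=0\}$ already have the $k$-parameter form \eqref{QiEQj-high-2}.
\item For $j=j'$, the bounds \eqref{eq:spectral-measure-symbol-bound-low-3} and \eqref{eq:spectral-measure-symbol-bound-high-3} do not apply. You need the near-diagonal representation $\lambda^{n-1}e^{\pm i\lambda d(z,z')}a_\pm+b$ from \cite[Proposition~1.5]{Hassell-Zhang2016Strichartz}, as the paper uses.
\item Keeping $\lambda/x$ invariant under your rescaling requires $\overline{x}=2^{-K}x$, not $2^Kx$.
\end{itemize}
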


\begin{proof}
We consider the low-energy case first for definiteness and the argument is uniform up to the range $\lambda \to \infty$.
Also, we only consider the region where $\sigma \lesssim 1$ since the case $\theta = x'/x \lesssim 1$ can be proved in the same way after switching primed and un-primed variables.
Let $J,\zf,1$ be as in Section~\ref{subsec:microlocal-partition-low}.
If at least one of $j$ or $j'$ is either $\zf$ or $1$, then by Proposition~\ref{prop:localized-specm-one-side-residual-low}, we need to consider two types of terms: terms like \eqref{QjEQ1-low-rb} and terms like \eqref{QjEQ1-c-low}.
For terms like \eqref{QjEQ1-low-rb}, all properties needed in the proof of \cite[Equation~(6-3)]{Hassell-Zhang2016Strichartz} are satisfied and we can bound the left-hand side of \eqref{eq:8.1-1} in the same way.
For terms of the form \eqref{QjEQ1-c-low}, it can be bounded in the same way as the argument treating the case $j,j' \in J$ below, since $d(z,z') \sim 1/x$ in this region and it satisfies \eqref{eq:phg-conormal-2}.

Now we consider $Q_j^{\calc} \specm Q_{j'}^{\calc}$ with both $j,j' \in J$. For the case $j=j'$, we use \cite[Proposition~1.5]{Hassell-Zhang2016Strichartz} to write it as a finite sum of terms taking the form
\begin{equation}
    \lambda^{n-1} e^{\pm i \lambda d(z,z')} a_\pm(\lambda,z,z') + b(z,z'),
\end{equation}
where
\begin{align}
    \begin{split}
|\partial_\lambda^\alpha a_\pm(\lambda,z,z')| & \lesssim \lambda^{-\alpha}(1+\lambda d(z,z'))^{-\frac{n-1}{2}}, \\
  |\partial_\lambda^\alpha b(\lambda,z,z')| & \lesssim \lambda^{-\alpha}(1+\lambda d(z,z'))^{-N}, \; \forall N \in \mathbb{N}.      
    \end{split}
\end{align}
Then the estimate of the left hand side of \eqref{eq:8.1-1} follows in the same way as the argument for $j \neq j'$ below with the number of extra parameters $k = 0$.

For $j \neq j'$, consider the case with $L^{\bfs}_{j,j'}$ (or $L^{\bfs,\calchigh}_{j,j'}$) being away from the conic intersecting pair first. By Corollary~\ref{coro:microlocalized-spectral-measure-osc-int-form-low}, we know $Q_j^{\calc} \specm Q_{j'}^{\calc}$ is a finite sum of pieces of the form:
\begin{equation} 
\int \lambda^{n-1}  e^{\pm i \lambda \Phi/x} a(\lambda,z,z';v)dv,
\end{equation}
where $\pm \Phi$ parametrizes $L^{\bfs}$ locally and by Corollary~\ref{coro:phase-value-lowerbound-low}, we may assume $\Phi>\epsilon$ for a constant $\epsilon>0$ and the case $\Phi<-\epsilon$ is the same.
In addition, $a$ satisfies one of \eqref{eq:spectral-measure-symbol-bound-low-1} \eqref{eq:spectral-measure-symbol-bound-low-2}, and \eqref{eq:spectral-measure-symbol-bound-low-3}.
We consider the case that $a$ satisfies \eqref{eq:spectral-measure-symbol-bound-low-1} below and the other two cases follow in the same way.
In this case, the left hand side of \eqref{eq:8.1-1} is a finite sum of oscillatory integrals of the form
\begin{equation} \label{eq:8.1-5}
 \int_0^\infty \varphi_K(\lambda) e^{it\lambda} \lambda^{n-1}  e^{\pm i \lambda \Phi/x } a(\lambda,z,z';v)dv d\lambda,
\end{equation}
where $a$ satisfies \eqref{eq:spectral-measure-symbol-bound-low-1} and the phase is comparable to $d(z,z')$.

 When either $t \gg d(z,z')$ or $t \ll d(z,z') \sim \Phi/x$, we have
$|t-\Phi/x| \geq c|t|$, one can integrate by parts $N$-times in $\lambda$ to see that the oscillatory integral is bounded by 
\begin{align} \label{eq:wave-nonstationary-bound}
 \int_{2^{K-1}}^{2^{K+1}}  |t-\Phi/x|^{-N} \lambda^{n-1-N} (1+\lambda d(z,z') )^{- \frac{n-1-k}{2} } d\lambda
 \lesssim  |t|^{-N}2^{K(n-N)}(1+2^Kd(z,z'))^{-(n-1-k)/2} .
\end{align}
When $\frac{n-1}{2} \in \N$, we take $N = \frac{n-1}{2}$ and then the bound above is $\lesssim |t|^{-\frac{n-1}{2}}2^{K\frac{(n+1)}{2}}$.
When $\frac{n-2}{2} \in \N$, we then interpolate the bound for $N = \frac{n-2}{2}$ and $N = \frac{n}{2}$ to obtain the same bound.

Now we consider the case $t \sim d(z,z')$. Then the oscillatory integral is bounded by  
\begin{align*}
(2^{K+1}-2^{K-1}) \times 2^{K(n-1)}(1+2^K|t|)^{-\frac{n-1-k}{2}} \lesssim 2^{K (\frac{n+1+k}{2}) } (2^{-K}+|t|)^{-\frac{n-1-k}{2}},
\end{align*}
which completes the proof of this part.

For the contributions associated with the conic intersecting pair, 
the issue to remedy is again that we have $k+1$ extra parameters while we want to produce a bound as if there are only $k$ extra parameters. Then we first apply the last part of the proof of Proposition~\ref{prop:microlocal-dispersive-1}, which gives $(1+\lambda d(z,z'))^{-1/2}$ improvement, and then the rest of the argument above goes through.
\end{proof}

\subsection{Dispersive estimates for the wave equation: contribution from the Legendrian conic pair}
\label{subsec:dispersive-conic-pair-contribution-wave}

In this subsection we prove the second part of Theorem~\ref{thm:dispersive-wave}, i.e. estimate \eqref{eq:dispersive-wave-main-2} and the corresponding statement for $(X_0,g_0)$ with $\IFint$ replaced by $\IFintz$.

\begin{theorem}[Estimate for the half-wave propagator]
\label{thm:wave-propagator-est-admissible}
Suppose that $(L^{\bfs,\calchigh},\Lsharphigh)$ is admissible in the sense of Definition~\ref{definition:Lbf-boundary-admissible}, for $P$ in \eqref{eq:P-def}, $\IFint$ in \eqref{eq:IFint-def}, and $\varphi_K(\lambda)$ supported in the region $\lambda \sim 2^K$, then
\begin{equation}
\big|\varphi_K(\sqrt{P})e^{it\sqrt{P}}(z,z')\big| \lesssim |t|^{- \frac{n-1-\IFint}{2} }2^{K(\frac{n+1+\IFint}{2})}.
\end{equation}
For $P$ defined by \eqref{eq:P-def-exact-cone} on $(X_0,g_0)$, suppose that $(L^{\bfs},\Lsharplow)$ is admissible, then the same estimate holds with $\IFint$ replaced by $\IFintz$ on the right hand side.
\end{theorem}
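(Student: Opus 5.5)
We follow the scheme used for Corollary~\ref{coro:Schrodinger-propagator-est-3}: decompose via \eqref{eq:decomposition-wave-propagator} and sum microlocalized estimates. Fix the combined partition $\{Q_j^{\calc}\}$ of Proposition~\ref{prop: microlocal-partition-combined}, refined (by shrinking $W_j,W_{j'}$) so that every $L^{\bfs,\calchigh}_{j,j'}$ either stays away from $L^{\bfs,\calchigh}\cap\Lsharphigh$ or lies inside a neighbourhood $\ULCPhigh$ on which $\hatLprojhigh$ degenerates only on $\beta_{\mathrm{LCP},\calchigh}^{-1}(L^{\bfs,\calchigh}\cap\Lsharphigh)$, as in Proposition~\ref{prop:Y'I-derivative;low-high-region1-region-def}. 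We then sort the pairs $(j,j')$ into two groups.

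\textbf{First group.} This consists of pairs with $j$ or $j'$ in $\{1,\zf\}$, pairs with $j=j'$, and pairs with $L^{\bfs,\calchigh}_{j,j'}$ away from the conic pair. For these, Proposition~\ref{prop:microlocal-dispersive-wave} already gives $|t|^{-\frac{n-1-k}{2}}2^{K\frac{n+1+k}{2}}$ with $k$ the local rank drop. By the definition \eqref{eq:IFint-def} and Proposition~\ref{prop:IF-relation-parameter-number}, we have $k\le\IFint$ there. The bound is monotone in $k$ in the relevant regime $2^K|t|\gtrsim 1$: when $2^K|t|\lesssim1$ the trivial bound $2^{Kn}$ suffices. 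So each such term is controlled by the claimed right-hand side.

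\textbf{Second group.} This is the main step: pairs with $L^{\bfs,\calchigh}_{j,j'}\subset\ULCPhigh$, where we must show the bound holds with $k=0$. Away from the conic points, the piece is projectable and the argument above applies with $k=0$. For the piece given by \eqref{eq:coro4.2-2}, we fix $\lambda$ and use Propositions~\ref{prop:conic-pair-pointwise-bound-low}, \ref{prop:conic-pair-pointwise-bound-high-region1} and \ref{prop:conic-pair-pointwise-bound-high-region2}. Admissibility enters exactly here. These give the pointwise bound
\begin{equation*}
\lambda^{n-1}(1+\lambda d(z,z'))^{-\frac{n-1}{2}},
\end{equation*}
up to the $\sigma$-bookkeeping of Lemma~\ref{lemma:bdf-1}. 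Since $\lambda\sim 2^K$ is now compactly localized, no rescaling is needed.

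We then split according to $|t-\Phi/x|$, or more precisely $|t-\mk{d}_X|$ after Proposition~\ref{prop:phase-equal-distance-high}:
\begin{enumerate}
\item When $t\sim d(z,z')$, integrate the pointwise bound over $\lambda\sim 2^K$ to get $2^{K}2^{K(n-1)}(2^K|t|)^{-\frac{n-1}{2}}$.
\item When $|t-d|\gtrsim |t|$, mimic the $I^1_{2,j}$ analysis of Proposition~\ref{prop:microlocal-dispersive-LCP}. Introduce a partition in $t-\mk{d}_X$ and in $\mk{d}_X-\Phi/x$. Where $|\mk{d}_X-\Phi/x|$ is small, integrate by parts in $\lambda$, gaining $|t-d|^{-1}\lambda^{-1}$ per step, and apply the pointwise propositions at fixed $\lambda$. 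Where it is large, integrate by parts in $(v,s)$ (or $(v,\varkappa)$) using $L_2$ from \eqref{eq:L2-def} or \eqref{eq:L2-def-high-region2}, with lower bounds from Propositions~\ref{prop:1st-dPhi-est} and \ref{prop:1st-dPhi-est-region2}. Then choose $N$ as in \eqref{eq:wave-nonstationary-bound}, interpolating between $N=\frac{n-2}{2}$ and $\frac n2$ when $n$ is even.
\end{enumerate}

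The expected obstacle is the $(v,s)$ integration by parts. The gain per step in \eqref{eq:02} involves $s^{-1}$, so the $s$-integral must be absorbed by $s^{\frac{n+k}{2}-1}$, with the logarithmic case handled as in Proposition~\ref{prop:microlocal-dispersive-LCP}. With $\lambda\sim2^K$ fixed this is simpler than in the Schr\"odinger case. The $\overline{\lambda}\to\infty$ issues disappear, and one only tracks the factors $2^K$ and $|t|$.

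\textbf{Exact cone.} For $(X_0,g_0)$, use the $\lambda$-independent low-energy partition and $\mk{d}_{X_0}$ in place of $\mk{d}_X$. Then $\IFint$ becomes $\IFintz$ by Proposition~\ref{prop:IF-relation-parameter-number}.
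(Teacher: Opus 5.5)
Your proposal is correct, and its structure matches the paper's proof in two places.

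\begin{itemize}
\item For the pieces away from the conic pair, you use Proposition~\ref{prop:microlocal-dispersive-wave} with $k\le\IFint$. The paper instead takes $k=\IFint$ outright, which Corollaries~\ref{coro:microlocalized-spectral-measure-osc-int-form-low} and~\ref{coro:microlocalized-spectral-measure-osc-int-form-high} permit, rather than arguing by monotonicity in $k$. Both work.
\item For the conic-pair pieces with $t\sim d(z,z')$, the paper does exactly what you do. It applies Propositions~\ref{prop:conic-pair-pointwise-bound-low}, \ref{prop:conic-pair-pointwise-bound-high-region1} and~\ref{prop:conic-pair-pointwise-bound-high-region2} at fixed $\lambda$ and integrates over $\lambda\sim 2^K$.
\end{itemize}

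The one genuine difference is your case (2). There you import the $I^1_{2,j}$ machinery from the Schr\"odinger proof: a partition in $\mk{d}_X-\Phi/x$, integration by parts in $(v,s)$ with $L_2$, and the $s^{-1}$ losses you single out as the main obstacle. The paper needs none of this.

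On the support of the amplitude, $\Phi/x=(1+\sigma)/x+s\psi/x'$ (or $(1+\sigma)/x+\psi$ in the region $s/x'\lesssim 1$) is comparable to $d(z,z')$ uniformly in $(v,s)$. So when $t\gg d$ or $t\ll d$, you have $|t-\Phi/x|\gtrsim|t|$ for every $(v,s)$. You can therefore integrate by parts in $\lambda$ alone, exactly as in \eqref{eq:wave-nonstationary-bound}.

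That bound, with $N=\frac{n-1}{2}$ (interpolated when $n$ is even), throws away the decay factor in $\lambda d$ entirely. All it requires is that the $(v,s)$-integral of the absolute value of the amplitude be $O(\lambda^{n-1})$, with symbolic $\lambda$-derivatives. This holds for the conic-pair amplitudes, because the weight $s^{\frac{n+k}{2}-1}$ is integrable and the remaining prefactors are bounded on the support.

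So the extra parameter $s$ costs nothing in the non-stationary regime. Admissibility enters only through the pointwise propositions in the regime $t\sim d$. Your heavier route would presumably also close, but the obstacle you anticipate is self-inflicted: the compact localization $\lambda\sim 2^K$ is exactly what makes the Schr\"odinger-type decomposition unnecessary for the half-wave propagator.
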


\begin{proof}  
For definiteness, let $P$ be as in \eqref{eq:P-def} first.
 The proof is the same as that of Theorem~\ref{thm:wave-propagator-est}, except for the treatment of the microlocalized pieces of the spectral measure associated with the part near $L^{\bfs,\calchigh} \cap \Lsharphigh$ or $L^{\bfs} \cap \Lsharplow$ in the high- and low-energy regimes respectively. 
Consider the case $\sigma = x/x' \lesssim 1$ and the case $x'/x \lesssim 1$ can be proved in the same way.
In this case, we know that $z,z'$ are connected by a unique geodesic. For such contributions, we need to show that they satisfy \eqref{eq:8.1-1} with $k=0$.
They are of the form
\begin{equation}
    \int_0^\infty \varphi_K(\lambda) e^{i t \lambda} Q_j^{\calc} \specm Q_{j'}^{\calc},
\end{equation}
where $Q_j^{\calc} \specm Q_{j'}^{\calc}$ takes the form of the left hand side of one of \eqref{eq:conic-pair-integral-bound-low}, \eqref{eq:conic-pair-integral-bound-high} and \eqref{eq:conci-pair-pointwisebound-high-region2}.
We divide into two cases according to the derivative of the phase in $\lambda$.
When either $t \gg |\mk{d}(z,z')| \sim |\Phi/x|$ or $t \ll |\mk{d}(z,z')|  \sim |\Phi/x|$, we can apply the argument in the derivation of \eqref{eq:wave-nonstationary-bound} and obtain the result.
If instead $t \sim |\mk{d}(z,z')|$, then we apply Proposition~\ref{prop:conic-pair-pointwise-bound-low}, Proposition~\ref{prop:conic-pair-pointwise-bound-high-region1}, Proposition~\ref{prop:conic-pair-pointwise-bound-high-region2} to estimate the microlocalized spectral measure at fixed $\lambda$. Observing that 
\begin{align}
\sigma^{\frac{n-1}{2}} (x')^{\frac{n-1}{2}} \sim |\mk{d}(z,z')|^{-1} \sim t^{-1},    
\end{align}
we have
\begin{equation}
    |\int_0^\infty \varphi_K(\lambda) e^{i t \lambda} Q_j^{\calc} \specm Q_{j'}^{\calc}|
    \lesssim 2^{K} \times 2^{K \frac{n-1}{2}} \times |t|^{-\frac{n-1}{2}},
\end{equation}
which completes the proof.

Now for $P$ in \eqref{eq:P-def-exact-cone} on $(X_0,g_0)$, the modifications needed in the proof are the same as those indicated at the end of the proof of Theorem~\ref{thm:dispersive-Schrodinger-1}. Most importantly, the maximal number of extra parameters needed for the part of the microlocalized spectral measure associated with pieces away from $L^{\bfs} \cap \Lsharplow$ is $\IFintz$, while the part near this conic intersecting pair does not depend on the focusing index under our admissible assumption.
\end{proof}

\appendix

\section{Riemannian geometry interpretation of the focusing intensity}
\label{sec:riemannian_geometry_interpretation_of_the_focusing_intensity}
In this Appendix, we give an interpretation of the index of focusing in the context of Riemannian geometry. 
Much of the material here is standard, but it is usually not presented exactly in the form that we need.
We will only discuss the degeneracy of $\hatLprojlow: \; L^{\bfs} \to \bfs$ since the characterization of $\hatLprojhigh$ down to $x,x' \to 0$ using Riemannian geometry will not be simplified compared with its original definition or give more intuition compared with the $L^{\bfs}$-part. 
We will first show how $L^{\bfs}$ is related to $\RYstar$ in \eqref{eq:scr-RY-*-definition} below, and then discuss the connection between the degeneracy of its projection to $Y \times Y$ and the multiplicity of conjugate points.

The metric $\Ymetric$ on our boundary $\partial X =Y$ induces a function on $T^*Y$ by
\begin{equation}
\Ymetric^*(y,\mu) = \Ymetric^{-1}(y)(\mu,\mu),
\end{equation}
where $\Ymetric^{-1}$ is the inverse of the metric $\Ymetric$.
Then we denote the unit cosphere bundle, which is the level set $\{ \Ymetric^* = 1\} \subset T^*Y$, by $S^*Y$.
Then it is well-known that the geodesic flow on $TY$, under the identification of $TY$ and $T^*Y$ using the metric $h$, is the Hamilton flow (in terms of the natural symplectic structure on $T^*Y$) of $\frac{1}{2}\Ymetric^*$.

Recall $\RYstar$ defined in \eqref{eq:scr-RY-*-definition}, we will show that, away from the diagonal of $S^*Y \times S^*Y$, the drop of the rank of the exponential map (i.e., the multiplicity of conjugate points) equals the drop of the rank of $d\Pi_{\calclow}$, where
\begin{equation} \label{eq:def-Pi-Y}
  \Pi_{\calclow}: \; \RYstar \to Y \times Y
\end{equation}
is the projection.
We then define $\RY \subset SY \times SY$, which is the analogue of $\RYstar$ that is more suitable for applying standard geometric tools, to be
\begin{align} \label{eq:scr-R-definition}
\RY = \{ (y,v,y',v'): \exists \, \ell \in \R: (\gamma_{y,v}(\ell),\dot{\gamma}_{y,v}(\ell)) = (y',v') \},
\end{align}
where $\gamma_{y,v}$ is the unique (unit-speed) geodesic determined by $(y,v)$. Under the identification of $SY \times SY$ with $S^*Y \times S^*Y$ using the metric $\Ymetric$, $\RY$ is identified with $\RYstar$ in \eqref{eq:scr-RY-*-definition}.

Next we define $\Rprojlow$ to be the projection from $\RY$ to $Y \times Y$:
\begin{align} \label{eq:Rprojlow-def}
\Rprojlow: \quad (y,v,y',v') \to (\mathsf{y}=y,\mathsf{y}'=y').
\end{align}
Here $\mathsf{y},\mathsf{y}'$ are just introduced as maps $\RY \to Y$ to avoid abusing notations like $dy,dy'$. Both $d\mathsf{y}$ and $d\mathsf{y}'$ always have full rank, but the entire $d\Rprojlow=(d\msf{y},d\msf{y}')$ might degenerate, and this depends on how many $d\mathsf{y}'_j$ are linearly dependent on $d\mathsf{y}$. Equivalently, one may think of $\mathsf{y}$ as a family of parameters and characterize the degeneracy of $d\mathsf{y}'$ for fixed $\mathsf{y}$.
Geometrically, by the definition of $\RY$, the tangent space $T_{\msf{p}}\RY$ at $\msf{p}=(y_0,v_0,y_0',v_0')$ consists of two `radial'  directions\footnote{Corresponding to the left and right variables moving in the same and opposite directions along the geodesic.} summed with all variations of the geodesic $\gamma_0$ connecting $(y_0,v_0)$ and $(y_0',v_0')$.

Let $\msf{p} = (y_0,v_0,y_0',v_0') \in \RY$. We will investigate properties of $d\Rprojlow$ on $T_\msf{p}\RY$. 
In fact, this also gives a quantitative control of the degeneracy of $d\Rprojlow$.
To this end, we consider $\RY$ with the fixed left variable on the base manifold. That is, consider $\Ryz \subset SY$ defined by:
\begin{align} \label{eq:Ryz-definition}
\Ryz = \{ (y',v') : \exists \ell \in \R,\, v \in S_{y_0}Y \text{ such that } (\gamma_{y_0,v}(\ell),\dot{\gamma}_{y_0,v}(\ell)) = (y',v') \},
\end{align}
where $\gamma_{y_0,v}$ is the unique (unit-speed) geodesic determined by $(y_0,v) \in S_{y_0}Y$. 

Let 
\begin{equation} \label{eq:def-proj-calclow-right}   
  \tilde{\Pi}_{\calclow,2}: \Ryz \to Y
\end{equation}
be the projection and we will prove:
\begin{align} \label{eq:rkdpi=n-plus-rkdpi2}
\mathrm{rank}\, d\Rprojlow = n - 1  + \mathrm{rank}\, d\tilde{\Pi}_{\calclow,2}.
\end{align}
In addition, in local coordinates, $d\tilde{\Pi}_{\calclow,2}$ is a submatrix of $d\Rprojlow$ and it is the only part in which potential degeneracy of $d\Rprojlow$ happens.

This follows from the following characterization of $\RY$ as a flow out.  
We use $\mathrm{EXP}(\ell,\cdot,\cdot)$ to denote the Riemannian\footnote{As we have reserved the notation $\exp$ for the symplectic version.} exponential map:
\begin{equation}
\mathrm{EXP}(\ell,y,v) = (\gamma_{y,v}(\ell),\dot{\gamma}_{y,v}(\ell)).
\end{equation}
By definition of $\RY$, we have (not necessarily unique) $\ell_0 \in \R$ such that 
\begin{align*}
(\gamma_{y_0,v_0}(\ell_0),\dot{\gamma}_{y_0,v_0}(\ell_0)) = (y_0',v_0').
\end{align*}
Let $U_{(y_0,v_0)}$ be a neighborhood of $(y_0,v_0) \in SY$. Then a typical neighborhood of $\msf{p}$ in $\RY$ is 
\begin{align}  \label{eq:typical-neighbor-RY}
\mathcal{U}_{\msf{p}} = \{ (y,v, \mathrm{EXP}(\ell,y,v) ): (y,v) \in U_{(y_0,v_0)} , \; \ell \in (\ell_0-\delta,\ell_0+\delta) \}
\end{align}
for $\delta>0$ and one can take $(y,\ell,v)$ as coordinates on $\RY$ locally and view $(y',v')=\mathrm{EXP}(\ell,y,v)$ as functions of $(y,\ell,v)$.
In this coordinate system, $d\Rprojlow$ at $\mathsf{p}=(y_0,v_0,y_0',v_0') = (y_0,v_0,\mathrm{EXP}(\ell_0,y_0,v_0))$ takes the form
\begin{align} \label{eq:Rprojlow-derivative}
\begin{pmatrix}  
  \mathrm{Id} & 0 & 0 \\
 \frac{\partial y'}{\partial y} & v_0'  &  \frac{\partial y'}{\partial v}
\end{pmatrix}.
\end{align}
Here $\Id$ is $(n-1) \times (n-1)$. The rank of the lower right corner is exactly $\mathrm{rank}\, d\tilde{\Pi}_{\calclow,2}$ by definition since $(\ell,v)$ parametrizes $\Ryz$  
and we obtain \eqref{eq:rkdpi=n-plus-rkdpi2}.

On the other hand, $d\tilde{\Pi}_{\calclow,2}$ has a classical geometric interpretation. Consider the $(n-2)$-plane of $T_{y_0}Y$ that is orthogonal to $v_0$ and denote a basis of it by $e_1,...,e_{n-2}$\footnote{Rigorously speaking, they are further lifted to $T_{(y_0,v_0)}SY$.}. 
Under the push-forward of the geodesic flow, they become Jacobi fields $J_i$ determined by (with $R$ denoting the Riemann curvature tensor):
\begin{align} \label{eq:Jacobi-Ji}
\ddot{J}_i + R(J_i,\dot{\gamma}_{y_0,v_0})\dot{\gamma}_{y_0,v_0} = 0,
\end{align}
with initial conditions
\begin{align} \label{eq:Jacobi-initial-condition}
J_i(0) = 0,  \; \dot{J}_i(0) = e_i.
\end{align}
Then we know $d\mathrm{EXP}(\ell_0,y_0,\cdot)(e_i) = J_i(\ell_0)$. 
Since $e_1,...,e_{n-2}$ span the tangent plane to the fiber part of $T_{(y_0,v_0)}SY$, combining with the definition \eqref{eq:Ryz-definition},  $d\mathrm{EXP}(\ell_0,y_0,v_0)(e_i)$ spans the tangent plane of $\mathscr{R}_{y_0,\ell_0}$\footnote{Both of them are exactly parametrizing all those geodesic variations in normal directions, fixing the starting point $y_0$.}. After composing with $d\tilde{\Pi}_{\calclow,2}$, they become $J_i(\ell_0)$ and they form the $\frac{\partial y'}{\partial v}$-block in \eqref{eq:Rprojlow-derivative}.
In addition, we know 
\begin{equation} \label{eq:Jis0-v0'-orthogonal}
  \la J_i(\ell_0), v_0' \ra_{\Ymetric(y_0')} = 0
\end{equation}
since $\la J_i(0),v_0 \ra_{\Ymetric(y_0)} = 0$ and this inner product is preserved under the push-forward by the exponential map due to Gauss's lemma on the exponential map.

To summarize, the degeneracy of $d\tilde{\Pi}_{\calclow,2}$ (equivalently, the degeneracy of $d\Rprojlow$) corresponds to the degeneracy of the exponential map and the vanishing of normal Jacobi fields, which in turn corresponds to the multiplicity of conjugate points (see e.g. \cite[Section~5.3]{doCarmo}).

Similarly, denote the fiber over $(\sigma_0,y_0)$ of the projection to $(\sigma,y)$ components of $\hat{L}^{\bfs}$ by $\hat{L}^{\bfs}(\sigma_0,y_0)$, then in the same way as we compare $d\tilde{\Pi}_{\calclow,2}$ with $d\Rprojlow$
using \eqref{eq:Rprojlow-derivative}, we can view the projection $\hatLprojlow: \hat{L}^{\bfs} \to \bfs$ as always having full rank in $(\sigma,y)$-components and the degeneracy of $\hatLprojlow$ is equivalent to the degeneracy of $\hat{L}^{\bfs}(\sigma_0,y_0) \to Y$.
So in order to investigate the relationship between $\RYstar$ and $\hat{L}^{\bfs}$ and show that the degeneracy of $\Rprojlow$ corresponds to the degeneracy of $\hatLprojlow$, we only need to prove the same property between $\mathscr{R}^*_{y_0}$ and $\hat{L}^{\bfs}(\sigma_0,y_0)$, which is shown below.

\begin{proposition} \label{prop:Lbf-RYstar-diffeomorphic}
Fix $y_0 \in Y$ and $\sigma_0 \in (0,2)$, 
and let $\mathscr{R}^*_{y_0}$ be the image of $\mathscr{R}_{y_0}$ under the identification between $SY$ and $S^*Y$, then $\mathscr{R}^*_{y_0}$ is diffeomorphic to $\hat{L}^{\bfs}(\sigma_0,y_0)$ via a fiber preserving diffeomorphism $\RSlow$. \footnote{With RS standing for Riemannian-Symplectic, reflecting the feature of this correspondence.}
As a consequence, according to the discussion above, 
$\hatLprojlow$ degenerates when its projection to $Y \times Y'$ is a pair of conjugate points and the rank drop of $\hatLprojlow$ equals the multiplicity of this pair of conjugate points.
\end{proposition}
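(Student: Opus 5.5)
The plan is to write down $\RSlow$ explicitly from the leaf description \eqref{eq:Lbf-definition-gamma^2} of $L^{\bfs}$. A point of $\hat{L}^{\bfs}(\sigma_0,y_0)$ away from the lift of $T_\pm$ and $\Lsharplow$ is determined by a covector $(y_0,\eta)\in S^*_{y_0}Y$ and a pair $(s_l,s_r)\in(0,\pi)^2$ with $\sin s_l/\sin s_r=\sigma_0$. The leaf $\gamma^2(y_0,\eta)$, traced along the flow of $H_{\frac12\Ymetric^*}$, sends this data to $(y,\mu)=(y(s_l),\eta(s_l)\sin s_l)$ and $(y',\mu')=(y(s_r),-\eta(s_r)\sin s_r)$. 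After reparametrizing the leaf so the left point sits at $y_0$, the right point becomes $\exp(\ell H_{\frac12\Ymetric^*})(y_0,\hat\mu)$ with signed length $\ell=s_r-s_l\in(-\pi,\pi)$. So I would set
\[
\RSlow:\ \big(y'=y(s_r),\hat{\mu'}\big)\in\hat{L}^{\bfs}(\sigma_0,y_0)\ \longmapsto\ \big(y',\hat{\mu'}\big)\in\mathscr{R}^*_{y_0},
\]
that is, we forget $|\mu'|$, $\nu$ and $\nu'$. Once $\ell$ and $\sigma_0$ are fixed, these forgotten quantities are recovered smoothly. From $\sin s_l=\sigma_0\sin s_r$ and $s_r-s_l=\ell$ we can solve uniquely for $(s_l,s_r)$ with $s_l,s_r\in(0,\pi)$: the map $s_r\mapsto s_r-\arcsin(\sigma_0\sin s_r)$ (choosing the correct branch) is monotone on the relevant interval. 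The choice of branch is what encodes which of $\Lsharplow_\pm$ we approach. The inverse map is then $(y',\hat{\mu'})=\mathrm{EXP}$-image $\mapsto$ $\big(\sigma_0,y_0,y',|\mu'|=\sin s_r,\nu=-\cos s_l,\nu'=\cos s_r\big)$. Both maps are smooth, and they are mutually inverse on the open part. Fiber preservation, meaning compatibility with $\mk{P}$ in \eqref{eq:fibration-Lbf-scrR*} and with projection to $y'$, holds by construction.

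The key steps, in order, are as follows.
\begin{enumerate}
\item Check that $\mathscr{R}^*_{y_0}$ is a smooth manifold of dimension $n-1$. It is the image of the immersion $(\ell,\hat\mu)\mapsto\exp(\ell H)(y_0,\hat\mu)$, and the splitting of \eqref{eq:Rprojlow-derivative} shows this is an immersion. With $\sigma_0$ fixed, $\hat L^{\bfs}(\sigma_0,y_0)$ also has dimension $(2n-1)-n=n-1$.
\item Verify that the inverse map above is smooth and injective. Injectivity uses the uniqueness of $(s_l,s_r)$ given $(\ell,\sigma_0)$, together with the non-trapping/no-loop remark made before \eqref{eq:mkd-X}: distinct $\ell$ give distinct lifted points once frequencies are included.
\item Extend $\RSlow$ across the boundary strata of $\hat L^{\bfs}(\sigma_0,y_0)$, namely the lift of $L^{\bfs}\cap\Lsharplow$ and the diagonal $\ell=0$. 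To do this I would use the resolved coordinates \eqref{eq:coordinates-resolved-low-1}, where $(\hat{\mu'},|\mu'|)$ are polar coordinates. On $\mathscr{R}^*_{y_0}$, the corresponding behaviour is $\ell\to\pm\pi$ with $s=|\mu'|\sim\pi\mp\ell$ (up to a smooth nonvanishing factor from the arcsine relation). This gives a diffeomorphism of manifolds with boundary, provided $\sigma_0\in(0,2)$ keeps us away from $\lb$ and $\rb$.
\item Deduce the consequence. Since $\RSlow$ intertwines the projections to $y'$, the rank of $\hat L^{\bfs}(\sigma_0,y_0)\to Y$ equals $\mathrm{rank}\,d\tilde\Pi_{\calclow,2}$. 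Then \eqref{eq:rkdpi=n-plus-rkdpi2}, the $(\sigma,y)$ full-rank reduction stated just before the proposition, and the Jacobi field identification \eqref{eq:Jacobi-Ji}–\eqref{eq:Jis0-v0'-orthogonal} together show that the rank drop of $\hatLprojlow$ equals the dimension of the span of $\{J_i:J_i(\ell_0)=0\}$, which is the multiplicity of the conjugate pair.
\end{enumerate}

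I expect step 3 to be the main obstacle. The issue is the smoothness of the correspondence exactly at the conic points, where $s_l\to\pi$ and $s_r\to0$. There, $(s_l,s_r)$ as functions of $\ell$ have a square-root-free but degenerate dependence through $\sin s_l=\sigma_0\sin s_r$. I would first try to show directly that $s=\sin s_r$ is a smooth function of $\pi-|\ell|$ with nonzero derivative: linearizing near $s_r=0$, $\pi-s_l=\sigma_0 s_r$ gives $\pi-\ell\approx(1+\sigma_0)s_r$. If that fails, I would fall back to proving the diffeomorphism only on the interior and treat the boundary via the expansions \eqref{eq:Y'-I-expansion-concrete-low}, \eqref{eq:Y'-n-1-expansion-concrete-low}. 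This suffices for the rank statement, since ranks at boundary points are computed from those expansions anyway.
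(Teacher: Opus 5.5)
Your proposal is the paper's argument run in the opposite direction, and it is correct. The paper defines $\RSlow$ from the coordinates $(\hat\mu,\ell)$ on $\mathscr{R}^*_{y_0}$ into $\hat{L}^{\bfs}(\sigma_0,y_0)$. It recovers $s_r$ from $\ell=s_r-\arcsin(\sigma_0\sin s_r)$ using the same first-order expansion at the conic end that you propose, reads off $|\mu'|=\sin s_r$, $\nu$, $\nu'$, $\overline{\nu}$ and $\mu/|\mu'|=\sigma_0\hat\mu$, and inverts via $|\mu'|\mapsto s_r\mapsto\ell$ followed by the backward flow.

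Two small adjustments. First, in step 3 also check, as the paper does, that the resolved coordinate $(\overline{\nu}-(1+\sigma)\nu_1)/|\mu'|=(\nu-\nu')/\sin s_r$ extends smoothly; the singularity at $s_r=0$ is removable. Second, drop the claim that $\RSlow$ extends across $\ell=0$. That claim is false: for $\sigma_0\neq1$ the limit $\ell\to0$ is a single point of $T_\pm$, where $(\ell,\hat\mu)$ are only polar coordinates. It is also unnecessary, since the rank statement concerns points away from the diagonal.
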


\begin{proof}

Consider a point $(y',\hat{\mu'}) = \exp(\ell H_{\frac{1}{2}\Ymetric^*,Y})(y_0,\hat{\mu}) \in \mathscr{R}^*_{y_0}$.
For definiteness of choice of coordinates, we will consider the case when $\ell$ is close to $\pi$ and the corresponding part on $\hat{L}^{\bfs}$ is close to the lift of $L^{\bfs} \cap \Lsharplow$, since the case away from this region is simpler and follows from the same argument.

By the definition of $\mathscr{R}^*_{y_0}$, $(\hat{\mu},\ell)$ gives a coordinate system on $\mathscr{R}^*_{y_0}$. We define the map $\RSlow:   \mathscr{R}^*_{y_0} \to \hat{L}^{\bfs}(\sigma_0,y_0)$ by
\begin{equation}
 \RSlow: (\hat{\mu},\ell) \to (\sigma_0, y_0, y', \overline{\nu},\frac{ \overline{\nu}-(1+\sigma)\nu_1 }{|\mu'|}, \frac{\mu}{|\mu'|}, \hat{\mu'},|\mu'|).
\end{equation}
That is, we send $(y',\hat{\mu'}) \in \mathscr{R}^*_{y_0}$ to the point in $\hat{L}^{\bfs}$ that corresponds to the same geodesic as the one starting at $(y_0,\hat{\mu})$ and ending at $(y',\hat{\mu'})$.
We will show that all other components are uniquely determined by $(y',\hat{\mu'})$, or in fact by $(\hat{\mu},\ell)$.

Firstly, recall the definition of $L^{\bfs}$ using \eqref{eq:Lbf-definition-gamma^2} and we will show that $(\sigma_0,\ell)$ determines $(s_l,s_r)$ in \eqref{eq:Lbf-definition-gamma^2}. We have $\ell = s_r - s_l$ and
\begin{align*}
\sigma_0 = \frac{x}{x'} = \frac{\sin s_l}{\sin s_r} = \frac{\sin (s_r-\ell)}{\sin s_r}.
\end{align*}
This gives
\begin{equation} \label{eq:ell-determined-by-sr}
 \ell = s_r - \arcsin (\sigma_0 \sin s_r).
\end{equation}
Notice that $s_r$ is close to $\pi$, so $\arcsin (\sigma_0 \sin s_r) \sim \sigma_0 (\pi-s_r)$. So (for fixed $\sigma_0$) $\ell$ and $s_r$ are smooth functions of each other.
Then $|\mu'| = \sin s_r$ is determined as well.
Next $\mu/|\mu'|$ is determined by $\mu/|\mu'| = \sigma_0\frac{\mu}{|\mu|}$.

Argument above also shows that $(\sigma_0,\ell)$ determines $\nu = \cos s_l$, $\nu' = \cos s_r$ as well.
Consequently, for $\overline{\nu}$, we have
\begin{equation}
\overline{\nu} = \nu + \sigma \nu' = \cos s_l + \sigma \cos s_r.
\end{equation}
For $\frac{ \overline{\nu}-(1+\sigma)\nu_1 }{|\mu'|}$, we have
\begin{equation} \label{eq:resolved-nu}
  \frac{ \overline{\nu}-(1+\sigma)\nu_1 }{|\mu'|} = \frac{ \nu - \nu' }{\sin s_r} = 
\frac{(1- (\sigma\sin s_r)^2)^{1/2} - (1- (\sin s_r)^2 )^{1/2}}{\sin s_r},
\end{equation}
where the potential singularity when $s_r \to 0$ is removable since $(1-\bullet^2)^{1/2} = 1+\bullet^2h_1(\bullet)$ for $\bullet$ small, with $h_1$ being a smooth function.

The last thing to show is that the inverse of $\RSlow$ is smooth. 
By \eqref{eq:ell-determined-by-sr}, $\ell$ is a smooth function of $s_r  = \arcsin |\mu'|$, hence a smooth function of $|\mu'|$.
Then $\hat{\mu}$ is determined via
\begin{align*}
(y_0,\hat{\mu})= \exp(-\ell H_{\frac{1}{2}\Ymetric^*})(y',\hat{\mu'}),
\end{align*}
completing the proof.
\end{proof}

\begin{remark} \label{remark:no-need-part}
In addition, \eqref{eq:resolved-nu} (and its analogue in the region $x'/x \lesssim 1$) shows that when we consider the parametrization of $\hat{L}^{\bfs}$ (and similarly for $\hat{L}^{\bfs,\calchigh}$), we never need to consider the region that $\overline{\nu}-(1+\sigma)\nu_1$ dominates $|\mu|,|\mu'|$, since the former is either $O(|\mu|^2)$ or $O(|\mu'|^2)$ as either of $|\mu|$, $|\mu'|$ tends to $0$.
\end{remark}

\section{Relaxed admissible condition}
\label{appendix:relax-condition}

In this appendix, we prove Proposition~\ref{prop:conic-pair-pointwise-bound-low}, Proposition~\ref{prop:conic-pair-pointwise-bound-high-region1}, and Proposition~\ref{prop:conic-pair-pointwise-bound-high-region2} under a relaxed version of the admissible condition in Definition~\ref{definition:Lbf-boundary-admissible}.
Using the same argument in Section~\ref{sec:dispersive-Schrodinger} and Section~\ref{sec:dispersive-wave}, this proves our main results under this relaxed condition.


\begin{definition}[Relaxed admissible condition] 
\label{def:relaxed-admissible}
Let $v=\hat{\mu'}_I$. We say that $X$ (or $L^{\bfs,\calchigh}$) satisfies the relaxed admissible condition if $Y'_I,Y'_{n-1}$ in various regions satisfy the following conditions.

\begin{enumerate}
\item
Consider the low-energy case (i.e. $\varrho = x'/s = 0$) first. We require $Y'_I,Y'_{n-1}$ in \eqref{eq:hatLbf-low-components} to satisfy the following conditions.
\begin{enumerate}
    \item The lower bound of the determinant:
\begin{align} \label{eq:determinant-low-1}
|\det \partial_{(s,v)}(Y'_{n-1},Y'_I)|\ge c s^k. 
\end{align}
\item  There is an integer $N>k+1$ such that the following holds. For every $1\le q\le N$, every choice of $m_\ell \in \N,\beta_\ell \in \N^k$, $\ell = 1,2,...q$, with $\sum_{\ell} m_\ell+|\beta_\ell|\le N$, $Y'_{\alpha_\ell}\in\{Y'_{n-1},Y'_{I,1},...,Y'_{I,k}\}$, and $D_\ell\in\{s\partial_s,\partial_{v_1},...,\partial_{v_k}\}$, we have\footnote{One can in principle include $D_\ell$ into other derivatives. But one should view $D_\ell$ as the differentiation appearing in the coefficients of $L$ in \eqref{eq:L-def-appendix} while others are differentiations appearing in the integration by parts. So this numerology makes it clearer what is the number of integration by parts. }
\begin{align}
\prod_{\ell=1}^q |(s\partial_s)^{m_\ell}\partial_v^{\beta_\ell}D_\ell Y'_{\alpha_\ell}|\le C_N s^{q-k}|\det \partial_{(s,v)}(Y'_{n-1},Y'_I)|.
\end{align}
\end{enumerate}

\item Consider the region $\varrho=x'/s\lesssim1$ now.
We require $Y'_I,Y'_{n-1}$ in \eqref{eq:hatLbf-high-components-region1} to satisfy the following conditions.
\begin{enumerate}
\item The lower bound of the determinant:
\begin{align}
|\det \partial_{(s,v)}(Y'_{n-1},Y'_I)|\ge c s^k .
\end{align}
\item There is an integer $N>k+1$ such that the following holds. 
For every $1\le q\le N$, every choice of $m_\ell \in \N,\beta_\ell \in \N^k$, $\ell = 1,2,...q$ with $\sum_\ell m_\ell+|\beta_\ell|\le N$, $Y'_{\alpha_\ell}\in\{Y'_{n-1},Y'_{I,1},...,Y'_{I,k}\}$, and $D_\ell\in\{s\partial_s-\varrho\partial_\varrho,\partial_{v_1},...,\partial_{v_k}\}$, we have
\begin{align}
\prod_{\ell=1}^q |(s\partial_s-\varrho\partial_\varrho)^{m_\ell}\partial_v^{\beta_\ell}D_\ell Y'_{\alpha_\ell}|\le C_N s^{q-k}|\det \partial_{(s,v)}(Y'_{n-1},Y'_I)|.
\end{align}
\end{enumerate}

\item Consider the region with $\varkappa = s/x' \lesssim 1$.
We require $Y'_I,Y'_{n-1}$ in \eqref{eq:hatLbf-high-components-region2} to satisfy the following conditions. 

\begin{enumerate}
    \item The lower bound of the determinant:
\begin{align}
|\det \partial_{(\varkappa,\hat{\mu'}_I)}(Y'_{n-1},Y'_I)|\ge c(x')^{k+1}.
\end{align}

\item There is an integer $N>k+1$ such that the following holds. 
For every $1\le q\le N$, every choice of $\beta_\ell \in \N^{k+1}$, $\ell = 1,2,...,q$, with $\sum_\ell |\beta_\ell|\le N$, $Y'_{\alpha_\ell}\in\{Y'_{n-1},Y'_{I,1},...,Y'_{I,k}\}$, and $D_\ell\in\{\partial_\varkappa,\partial_{\hat{\mu'}_{I,1}},...,\partial_{\hat{\mu'}_{I,k}}\}$, we have
\begin{align}
\prod_{\ell=1}^q |\partial_{(\varkappa,\hat{\mu'}_I)}^{\beta_\ell}D_\ell Y'_{\alpha_\ell}|\le C_N(x')^{q-(k+1)}|\det \partial_{(\varkappa,\hat{\mu'}_I)}(Y'_{n-1},Y'_I)|.
\end{align}
\end{enumerate}

\end{enumerate}

In all cases above, the estimates near $\bfs\cap\rb$ are required with $x,x'$ exchanged and $\sigma$ replaced by $\theta=x'/x$.
\end{definition}

\begin{remark} \label{remark: admissible-convex-version}
Now we explain why we call the condition in Definition~\ref{def:relaxed-admissible} a relaxed version of the admissible condition in Definition~\ref{definition:Lbf-boundary-admissible}.
The determinant lower bound is satisfied by Proposition~\ref{prop:Y'I-derivative;low-high-region1-region-def} and Proposition~\ref{prop:lower-bound-hessian-high-region2}.
The bound of derivatives can be obtained by Taylor expanding $Y'_I,Y'_{n-1}$ in $s$ or $x'$ around $0$ depending on the region, and noticing that the leading order term vanishes under the admissible condition in Definition~\ref{definition:Lbf-boundary-admissible}.
In fact, for the lower bound of the determinant to hold, we only need the following convexity of the boundary of $\hat{L}^{\bfs,\calchigh}$.
\begin{itemize}
\item For $Y'_I,\, Y'_{n-1}$ as in \eqref{eq:hatLbf-high-components-region1}, there is a constant $c>0$ such that
\begin{align} \label{eq:defn-relaxed-condition-YI-Yn-high-region1}
\begin{pmatrix}
 \partial_sY'_{n-1} & \partial_{\hat{\mu'}_I}Y'_{n-1}
 \\ \partial_s Y'_I & \partial_{\hat{\mu'}_I}Y'_{I}
\end{pmatrix} \Big|_{s=0}
\leq
c \begin{pmatrix}
-1 & 0
\\ 0 & 0
\end{pmatrix}.
\end{align}

\item  For $Y'_I,\, Y'_{n-1}$ as in \eqref{eq:hatLbf-high-components-region2}, we require
\begin{align} \label{eq:defn-relaxed-condition-YI-Yn-high-region2}
\begin{pmatrix}
 \partial_\varkappa Y'_{n-1} & \partial_{\hat{\mu'}_I}Y'_{n-1}
 \\ \partial_\varkappa Y'_I & \partial_{\hat{\mu'}_I}Y'_{I}
\end{pmatrix} \Big|_{x'=0}
\leq 0.
\end{align}
\end{itemize}
This convexity or the determinant bound \eqref{eq:determinant-low-1} is used to estimate the volume of the part near the critical point of the phase, which is crucial for the decay of the oscillatory integral. As shown by the counter-example in \cite[Section~6]{Cowling-etal-damping} in a slightly different setting, the volume estimate for the region near the critical set and the corresponding decay estimate of the oscillatory would fail if this convexity is violated. See also the discussion in \cite{damping-osc-Gressman}.
\end{remark}

We now return to the proof of Proposition~\ref{prop:conic-pair-pointwise-bound-low}, Proposition~\ref{prop:conic-pair-pointwise-bound-high-region1}, and Proposition~\ref{prop:conic-pair-pointwise-bound-high-region2}.
To be definite, we give details in the low-energy setting, i.e., Proposition~\ref{prop:conic-pair-pointwise-bound-low} and sketch minor changes needed for Proposition~\ref{prop:conic-pair-pointwise-bound-high-region1} and Proposition~\ref{prop:conic-pair-pointwise-bound-high-region2} after the proof.

\begin{proof}[Proof of Proposition~\ref{prop:conic-pair-pointwise-bound-low} under the relaxed admissible condition]

Recall that we only need to prove \eqref{eq:conic-osc-est-2}:
\begin{align*}
\Big|  \int_0^\infty \int_{\R^{k}}  e^{i\lambda s \sigma \psi(\sigma,y,y',v,s)/x} 
  s^{\frac{n}{2}-1+ \frac{k}{2}} a(\lambda,\sigma,y,y',\frac{x'}{\lambda},v,s) \, dv \, ds \Big| \lesssim \big(\frac{\lambda}{x'}\big)^{-(k+1)/2} .
\end{align*} 
If $\lambda/x'\lesssim1$, then this estimate is straightforward. Hence we can assume $\lambda/x'\gg1$.

Let $\mk{I}$ denote the left hand side of \eqref{eq:conic-osc-est-2}. We decompose $\mk{I}$ to be 
\begin{align}
\mk{I}=\mk{I}_1+\mk{I}_2
\end{align}
as in \eqref{eq:mkI-decomposition-1} and \eqref{eq:def-mkI-1,2}.
The estimate of $\mk{I}_1$ is unchanged:
\begin{align}
|\mk{I}_1|\lesssim \int_0^{2(\lambda/x')^{-1/2}}s^{\frac n2-1+\frac k2}\,ds\lesssim (\lambda/x')^{-\frac{n+k}{4}}\lesssim(\lambda/x')^{-\frac{k+1}{2}},
\end{align}
where the last inequality uses $k\le n-2$. It remains to estimate $\mk{I}_2$, where $s\gtrsim(\lambda/x')^{-1/2}$.

First consider the central region. For $0<\delta\lesssim1$, consider
\begin{align}
|\partial_s(s\psi)|+|\partial_v\psi|\le\delta.
\end{align}
By \eqref{eq:v-critical-fixed-s}, we have $(\partial_s(s\psi),\partial_v\psi)=(y'_{n-1}-Y'_{n-1},y'_I-Y'_I)$, which gives
\begin{align}
|\det \partial_{(s,v)}(\partial_s(s\psi),\partial_v\psi)|=|\det \partial_{(s,v)}(Y'_{n-1},Y'_I)|.
\end{align}
By \eqref{eq:determinant-low-1}, we have
\begin{align}
\begin{split}
\int_{\substack{s\gtrsim(\lambda/x')^{-1/2}\\ |\partial_s(s\psi)|+|\partial_v\psi|\le\delta}}s^{\frac n2-1+\frac k2}\,dv\,ds &\lesssim \int_{|\partial_s(s\psi)|+|\partial_v\psi|\le\delta}s^{\frac n2-1+\frac k2}|\det \partial_{(s,v)}(Y'_{n-1},Y'_I)|^{-1}\,d(\partial_s(s\psi))\,d(\partial_v\psi)\\
&\lesssim \int_{|\partial_s(s\psi)|+|\partial_v\psi|\le\delta}s^{\frac n2-1-\frac k2}\,d(\partial_s(s\psi))\,d(\partial_v\psi)
\; \lesssim \delta^{k+1},    
\end{split}
\end{align}
where we used $\frac{n}{2}-1-\frac{k}{2} \geq 0$ by Proposition~\ref{prop:minimal-parametrization-low-conic}.
Taking $\delta=(\lambda/x')^{-1/2}$ gives the desired contribution from this critical region.

It remains to estimate the complement. Decompose it dyadically by
\begin{align}
|\partial_s(s\psi)|+|\partial_v\psi|\sim\delta,\qquad \delta=2^j(\lambda/x')^{-1/2},\qquad j\ge0.
\end{align}
On the $j$-th dyadic region, we integrate by parts using the operator
\begin{align} \label{eq:L-def-appendix}
L=(\frac{i\lambda}{x'})^{-1}\frac{\partial_s(s\psi)\partial_s+s^{-1}\partial_v\psi\cdot\partial_v}{|\partial_s(s\psi)|^2+|\partial_v\psi|^2},
\end{align}
which satisfies $L e^{i\lambda s\sigma\psi/x}=e^{i\lambda s\sigma\psi/x}$.
Rewrite $L$ as
\begin{align}
L=(\frac{i\lambda}{x'})^{-1}s^{-1}\frac{\partial_s(s\psi)s\partial_s+\partial_v\psi\cdot\partial_v}{|\partial_s(s\psi)|^2+|\partial_v\psi|^2}.
\end{align}
Thus each application of $L^*$ gives one factor $(\lambda/x')^{-1}$ and one explicit $s^{-1}$ loss, and all $s$-derivatives are counted as conormal derivatives $s\partial_s$. When $s\partial_s$ hits the explicit factor $s^{-1}$, it preserves the same $s^{-1}$ order.

On the dyadic region,
\begin{align}
|\partial_s(s\psi)|^2+|\partial_v\psi|^2\sim\delta^2.
\end{align}
After applying $(L^*)^N$, every term is bounded by a finite sum of terms of the form
\begin{align}
C_N(\lambda/x')^{-N}\delta^{-N-q}s^{-N}s^{\frac n2-1+\frac k2}\prod_{\ell=1}^q |(s\partial_s)^{m_\ell}\partial_v^{\beta_\ell}D_\ell Y'_{\alpha_\ell}|,
\end{align}
where $0\le q\le N,\; Y'_{\alpha_\ell}\in\{Y'_{n-1},Y'_{I,1},...,Y'_{I,k}\},\; D_\ell\in\{s\partial_s,\partial_{v_1},...,\partial_{v_k}\},\; 
\sum_\ell m_\ell+|\beta_\ell|\le N$.

Indeed, each application of $L^*$ contributes the base factor comparable to $(\lambda/x')^{-1}s^{-1}\delta^{-1}$ when all derivatives are falling on the amplitude. 
Whenever a derivative hits $\partial_s(s\psi)$ or $\partial_v\psi$, one uses \eqref{eq:v-critical-fixed-s} and obtains one of the displayed derivative factors. Whenever a derivative hits the denominator or a dyadic cutoff depending on $|\partial_s(s\psi)|+|\partial_v\psi|$, one obtains the same type of factor and one additional power $\delta^{-1}$. Derivatives falling on the amplitude are harmless, and derivatives falling on previously produced factors only increase $m_\ell$ or $\beta_\ell$.

Now integrate the displayed bound over the dyadic region and apply the area formula to the map $(s,v)\mapsto(\partial_s(s\psi),\partial_v\psi)$. The Jacobian contributes $|\det \partial_{(s,v)}(Y'_{n-1},Y'_I)|^{-1}$.
For $q\ge1$, the condition in Definition~\ref{def:relaxed-admissible} gives
\begin{align}
\frac{\prod_{\ell=1}^q |(s\partial_s)^{m_\ell}\partial_v^{\beta_\ell}D_\ell Y'_{\alpha_\ell}|}{|\det \partial_{(s,v)}(Y'_{n-1},Y'_I)|}\lesssim s^{q-k}.
\end{align}
For $q=0$, the same conclusion is interpreted as $|\det \partial_{(s,v)}(Y'_{n-1},Y'_I)|^{-1}\lesssim s^{-k}$, which follows from the determinant lower bound in the definition. Therefore the weighted integrand in the variables $(\partial_s(s\psi),\partial_v\psi)$ is bounded by
\begin{align}
s^{\frac n2-1+\frac k2}s^{-N}s^{q-k}=s^{\frac{n-k-2}{2}+q-N}.
\end{align}
Since $s\gtrsim(\lambda/x')^{-1/2}$ and $q\le N$, we have
\begin{align}
s^{\frac{n-k-2}{2}+q-N}\lesssim(\lambda/x')^{\frac{N-q}{2}}.
\end{align}
The image of the $j$-th dyadic region under $(s,v)\mapsto(\partial_s(s\psi),\partial_v\psi)$ has volume $O(\delta^{k+1})$ with $\delta=2^j(\lambda/x')^{-1/2}$. 
Hence the contribution of such a term is bounded by
\begin{align}
C_N(\lambda/x')^{-N}\delta^{-N-q}\delta^{k+1}(\lambda/x')^{\frac{N-q}{2}}=C_N(\lambda/x')^{-\frac{k+1}{2}}2^{-j(N+q-k-1)}.
\end{align}
Since $N>k+1$ and $q\ge0$, the sum over $j$ converges and is bounded by:
\begin{align}
\sum_{j\ge0}C_N(\lambda/x')^{-\frac{k+1}{2}}2^{-j(N+q-k-1)}\lesssim(\lambda/x')^{-\frac{k+1}{2}}.
\end{align}
Combining the estimate of $\mk{I}_1$, the central region estimate, and the dyadic annulus estimate gives
\begin{align}
\Big|\int_0^\infty\int_{\R^k}e^{i\lambda s\sigma\psi/x}s^{\frac n2-1+\frac k2}a(\lambda,\sigma,y,y',\frac{x'}{\lambda},v,s)\,dv\,ds\Big|\lesssim(\lambda/x')^{-\frac{k+1}{2}}.
\end{align}
This is \eqref{eq:conic-osc-est-2}, hence \eqref{eq:conic-pair-integral-bound-low} follows. The region near $\bfs\cap\rb$ is obtained by interchanging $x,x'$ and replacing $\sigma$ by $\theta=x'/x$. This proves Proposition~\ref{prop:conic-pair-pointwise-bound-low} under the condition in Definition~\ref{def:relaxed-admissible}.
\end{proof}

The proof of Proposition~\ref{prop:conic-pair-pointwise-bound-high-region1} and Proposition~\ref{prop:conic-pair-pointwise-bound-high-region2} follows from the same proof, except that we instead use the corresponding conditions in Definition~\ref{def:relaxed-admissible} and we have the following minor modifications. 
For Proposition~\ref{prop:conic-pair-pointwise-bound-high-region1}, due to the $\varrho$-dependence, $s\frac{d\cdot}{ds}$ acting on $Y'_{n-1},Y'_I$ becomes $s\partial_s-\varrho\partial_{\varrho}$, and all steps remain the same. 
For Proposition~\ref{prop:conic-pair-pointwise-bound-high-region2}, there is no special direction $s$ anymore and we treat all directions isotropically. The proof follows in the same way using the lower bound of the determinant and the upper bound of derivatives in the last part of Definition~\ref{def:relaxed-admissible}.
As pointed out at the beginning of this appendix, using Proposition~\ref{prop:conic-pair-pointwise-bound-low} under this relaxed condition, we can apply the argument in Section~\ref{sec:dispersive-Schrodinger} and Section~\ref{sec:dispersive-wave} to produce our main results under this relaxed condition.

\section{Index of notations}
\label{sec:index-notation}

Background spaces and blow-down maps:
\begin{itemize}
\item $X$: an asymptotically conic manifold. $X_0$: an exact cone. See the discussion in Section~\ref{subsec:set-up}.
    \item  $X_{\rmb}^2$: the b-double space, defined in  \eqref{eq:b-double-space}. 
    \item $\beta_{\mathrm{b}}$: the blow-down map from $X_{\rmb}^2$ to $X^2$, see \eqref{eq:beta-b}.
   \item  $X_{\rmb,\flat}^2$: the low-energy space. See \eqref{eq:def-low-space}.
   \item $X_{\rmb, \calchigh}^2$: the high-energy space, see \eqref{eq:def-high-energy-space}.
   \item $X_{\calc}^2$: the combined space, see \eqref{eq: definition, combined double space}.
\end{itemize}

Cotangent bundles:
\begin{itemize}
\item ${}^{\sct}T^*X$: the scattering cotangent bundle, see the discussion before \eqref{eq:sc-frame}.
\item ${}^{\Phi}T^* X_{\mathrm{b}}^2$: the lifted scattering cotangent bundle, see \eqref{def:pulled-back-bundle}. 
\item ${}^{\Phi}N^*Z_{\lb}$, ${}^{\Phi}N^*Z_{\rb}$: the bundle introduced via boundary fibrations, see \eqref{eq:NZ-lb-def}.
    \item ${}^{\calc}\overline{T}^*X$: the compactified combined cotangent bundle, see \eqref{eq:compactified-combined-cotangent-bundle}.
\end{itemize}

Variables:
\begin{itemize}
    \item $(x,y)$: coordinates on $X$ or $X_0$ near infinity. $\{x=0\} = \partial X$ at infinity and $y$ is a coordinate system on $\partial X = Y$.
    \item $(\tau,\mu)$: scattering frequencies, see \eqref{eq:sc-1-form}.
    \item $\sigma,\theta$: coordinates on $X_{\rmb}^2$ and $X_{\calc}^2$ parametrizing the front-face introduced by the blow up, $\sigma = x/x'$ and $\theta = x'/x$ in the interior.
    \item $(\overline{\nu},\nu_1,\mu,\mu')$: frequencies in the low-energy regime near $\LB \cap \BFS$. \eqref{eq:tautological-1-form-bf}.
    \item $(z,z',\zeta,\zeta',\tau)$: position and frequency variables in the interior, see \eqref{eq:semiclassical-form-interior}. 
    \item $(\overline{\nu},\nu_1,\mu,\mu',\overline{\tau})$: frequencies used in the high-energy regime near $\LB \cap \BFS$, see \eqref{eq:semiclassical-form-bf-lb}.
    \item $Y'_I,Y'_{n-1}$: see \eqref{eq:hatLbf-low-components}\eqref{eq:hatLbf-high-components-region1}\eqref{eq:hatLbf-high-components-region2}.
    \item $\overline{\lambda}, \overline{x}, \overline{x'}$: rescaled variables, see \eqref{eq:defn-rescaled-lambda-x-x'}.
\end{itemize}

Boundary surfaces:
\begin{itemize}
    \item $\zf, \lb_0, \bfs_0, \rb_0$: boundary surfaces at zero energy, see \eqref{eq:defining-functions-low-1}.
    \item $\LB, \BFS, \RB, \smf$: see \eqref{eq:defining-functions-low-3}.
    \item $\rho_\bullet$ with $\bullet$ being one of the boundary surfaces above: defining function of $\bullet$, see \eqref{eq:defining-functions-low-1}\eqref{eq:defining-functions-low-3}.
\end{itemize}

Indices:
\begin{itemize}
    \item $\IF$: the maximal rank drop including the  intersecting conic pair part, see \eqref{eq:IF-full-def}.
    \item $\IFint$: the maximal rank drop excluding the  intersecting conic pair part, \eqref{eq:IFint-def}.
    \item $\IFz,\IFintz$: the analogue of $\IF$ and $\IFint$ defined for exact cones, see \eqref{eq:IFz-def}\eqref{eq:IFintz-def}.
    \item $k_\ell$: the rank drop of the projection from $\mk{G}_\ell$, see the discussion after     \eqref{eq:G-ell-01}.
\end{itemize}

Index sets
\begin{itemize}
    \item $\Jlow$:  see \eqref{eq:sum-chi-j-1}. \quad
    $\Jhigh$: see \eqref{eq:def-Jhigh}.
\end{itemize}

Metrics and operators:
\begin{itemize}
    \item $g_0,\Ymetric$: Metrics on the exact cone and its cross section, see \eqref{eq:def-exact-conic-metric}.
    \item $g,\XYmetric$: Metrics on the asymptotically conic manifold and its cross section, see \eqref{conic_metric_2}.
    \item $P$: the operator on $X$ or $X_0$, see \eqref{eq:P-def} and \eqref{eq:P-def-exact-cone}. 
    \item $\mk{g}$: see \eqref{eq:metric-LCP-parametrization}.
\end{itemize}
Distance functions:
\begin{itemize}
\item $\mk{d}_{X}$: defined before \eqref{eq:mkd-X}.
\item $\mk{d}_{X_0}$: defined before \eqref{eq:mkd-X0}.
\end{itemize}

Density bundles:
\begin{itemize}
    \item $\Omega_{\flat}^{1/2}$: the low-energy density bundle, see \eqref{eq:low-energy-density-1}\eqref{eq:low-energy-density-2}\eqref{eq:low-energy-density-3}.
    \item $\Omega_{\calchigh}^{1/2}$: the high-energy density bundle, see \eqref{eq:high-energy-half-density}.
\end{itemize}

Pseudodifferential algebras
\begin{itemize}
    \item $\Psi_{\flat}^{m}(X;\Omega_{\flat}^{1/2})$: the low-energy pseudodifferential algebra, see \eqref{eq:PsiDO-low-def}.
    \item $\Psi_{\sct,h}^{m,0,0}(X;{}^{\Phi}\Omega^{1/2})$: the high-energy (semiclassical) pseudodifferential algebra, see \eqref{eq:PsiDO-high-def-1}\eqref{eq:PsiDO-high-def-2}.
    \item $\Psi_{\calc}^{m,l,k}$: the combined pseudodifferential algebra, see Definition~\ref{def:combined-PsiDO}.
\end{itemize}

Wavefront sets:
\begin{itemize}
    \item $\WF'_{\flat}(A)$: the low-energy wavefront set, see Definition~\ref{def:low-energy-WF}.
    \item $\WF'_{\sct,h}(A)$: the high-energy wavefront set, see Definition~\ref{def:high-WF}.
    \item $\WF'_{\calc}(A)$: the combined wavefront set, see \eqref{eq:combined-WF-def}.
\end{itemize}

Legendre distributions:
\begin{itemize}
\item $I^{m,r_\lb,r_\rb}(X_{\rmb}^2,\lambda L;{}^{\Phi}\Omega^{1/2})$: the class of Legendre distributions at fixed energy level $\lambda$, see  Definition~\ref{def:Legendre-fixed-energy}.
\item $I_{\calclow}^{m,r_{\LB},r_{\RB};\mathcal{B}}(X_{\rmb,\flat}^2,L ; \Omega_{\flat}^{1/2})$: the class of Legendre distributions associated with the low-energy part, see Definition~\ref{def:Legendre-distribution-low-energy}.
\item $I_{\calclow}^{m, p; r_{\LB}, r_{\RB}; \mathcal{B}}(X_{\rmb,\flat}^2, (L^{\bfs}, L^{\#}); \Omega_{\flat}^{1/2})$: the class of Legendre distributions associated with a conic intersecting pair, see Definition~\ref{def:Legendrian-dis-conic-intersecting-low}.
\item $I^{m_{\calchigh},m ,r_{\LB},r_{\RB}}_{\calchigh}(X_{\rmb,\calchigh}^2,G; \Omega_{\calchigh}^{1/2})$: the class of Legendre distributions in the high-energy regime, see Definition~\ref{def:Legendre-distribution-high-energy}.

\item $I_{\calchigh}^{m_{\calchigh},p;r_{\LB},r_{\RB}}(X_{\rmb,\calchigh}^2,(L^{\bfs,\calchigh},G_1^{\calchigh});\Omega_{\calchigh}^{1/2})$: the class of Legendre distributions associated with a conic intersecting pair in the high-energy case, see Definition~\ref{def:Legendre-dis-conic-high-RB-LB}.
\end{itemize}

Legendre submanifolds:
\begin{itemize}
    \item $L^{\bfs}$: the Legendre submanifold of ${}^{\Phi}T_{\bfs}^*X_{\rmb}^2$ used for the low-energy part for asymptotically conic manifolds and all energy levels for exact cones, see \eqref{eq:Lbf-definition-gamma^2}.
    \item $L_{j,j'}^{\bfs}$: microlocalized $L^{\bfs}$, see \eqref{eq:Lbf-low-jj'}.
    \item $\hat{L}^{\bfs}$: the resolved $L^{\bfs}$, see \eqref{eq:hat-Lbf-def}. 
    \item $\Lsharplow$, $\Lsharplow_\pm$: the Legendre submanifold capturing purely outgoing or incoming oscillations, see 
    \eqref{eq:Lsharplow-def}\eqref{eq:Lsharplow-pm-def}.
    \item $(L^{\bfs}, L^{\#})$: the conic intersecting pair at low-energy, see the discussion after \eqref{eq:hat-Lbf-def}.
    
    \item $L^{\bfs,\calchigh}$: the Legendre submanifold used for the high-energy regime, capturing the propagation in the interior of $X$, see \eqref{eq:L-high-interior}.
    \item $\Lsharphigh$: the Legendre submanifold of ${}^{\Phi}T_{\smf}^*X_{\rmb,\calchigh}^2$ capturing purely outgoing or incoming oscillations,  see \eqref{eq:def-Lsharphigh}.
    \item $\hat{L}^{\bfs,\calchigh}$: the blown up version of $L^{\bfs,\calchigh}$, see \eqref{eq:hatLbf-high-def}.
    \item $(L^{\bfs,\calchigh},\Lsharphigh)$: the conic intersecting pair in the high-energy regime, see the discussion after \eqref{eq:hatLbf-high-def}.
\end{itemize}

Other submanifolds or neighbourhoods:
\begin{itemize}
\item $\Sigma$: the characteristic variety, see \eqref{eq:def-Sigma}.
    \item $\hat{L}^{\bfs}(\sigma_0,y_0)$: The part of $\hat{L}^{\bfs}$ with fixed $(\sigma,y)$ components, see the discussion before Proposition~\ref{prop:Lbf-RYstar-diffeomorphic}.
    \item $\RYstar$: see \eqref{eq:scr-RY-*-definition}.
    \item $\RY$: see \eqref{eq:scr-R-definition}.
    \item $\Ryz$: the part of $\RY$ with fixed left position, see \eqref{eq:Ryz-definition}.
    \item $\mk{G}_{\ell}$: see discussion around \eqref{eq:G-ell-01}.
\end{itemize}

Maps:
\begin{itemize}
  \item $\hatLprojhigh$: the projection from $L^{\bfs,\calchigh}$ to $\smf$: see \eqref{eq:def-Lprojhigh}.
    \item $\hatLprojlow$: the projection from $\hat{L}^{\bfs}$ to $\bfs$, see \eqref{eq:def-Lprojlow}.
    \item $\Pi_{\calclow}$: the projection from $\RYstar$ to $Y \times Y$, see \eqref{eq:def-Pi-Y}.
    \item $\Rprojlow$: the projection from $\RY$ to $Y \times Y$, see \eqref{eq:Rprojlow-def}.
    \item $\tilde{\Pi}_{\calclow,2}$: the projection from $\Ryz$ to $Y$, see \eqref{eq:def-proj-calclow-right}. 
\end{itemize}

\bibliographystyle{plain}
\bibliography{dispersive-conjugate}

\end{document}